\newtheorem{thm}{Theorem}[section]
\newtheorem{lemma}[thm]{Lemma}
\newtheorem{cor}[thm]{Corollary}
\newtheorem{corollary}[thm]{Corollary}
\newtheorem{proposition}[thm]{Proposition}
\newtheorem{prop}[thm]{Proposition}
\newtheorem*{sublem*}{Sublemma}
\theoremstyle{definition}
\newtheorem{definition}[thm]{Definition} 
\newtheorem{construction}[thm]{Construction} 
\newtheorem{notn}[thm]{Notation}
\newtheorem{ass}[thm]{Assumption}
\newtheorem{ex}[thm]{Example}
\newtheorem*{excont*}{Example~\ref{e:main example} (continued)}
\newtheorem{exs}[thm]{Examples}
\newtheorem{remark}[thm]{Remark}
\theoremstyle{remark}
\newcounter{remarks}
\newcounter{enumitemp}
\newenvironment{enumeratecontinue}{
\setcounter{enumitemp}{\value{enumi}}
\begin{enumerate}
\setcounter{enumi}{\value{enumitemp}}
}
{
\end{enumerate}
}
\newcommand\pref[1]{(\ref{#1})}
\def\MW{{\sf MW}}
\def\M{{\sf M}}
\def\W{{\sf W}}
\DeclareMathOperator{\Fix}{{\sf{Fix}}}
\DeclareMathOperator{\FixN}{{\sf{Fix_N}}}
\DeclareMathOperator{\Per}{Per}
\DeclareMathOperator{\rk}{rank}
\newcommand{\Z}{{\mathbb Z}}
\newcommand{\Q}{{\mathbb Q}}
\newcommand{\f}{{F_n}}
\newcommand{\D}{{\mathcal D}}
\newcommand{\E}{{\mathcal E}}
\newcommand{\V}{\mathcal V}
\newcommand{\Out}{\mathsf{Out}}
\newcommand{\Aut}{\mathsf{Aut}}
\newcommand{\Inn}{\mathsf{Inn}}
\newcommand{\Hom}{\mathsf{Hom}}
\newcommand{\Stab}{\mathsf{Stab}}
\newcommand{\ffs}{free factor system}
\newcommand{\pg}{{\sf PG}}
\newcommand{\upg}{{\sf{UPG}}}
\newcommand{\upgn}{{\sf{UPG}}(\f)}
\newcommand{\F}{\mathcal F}
\newcommand{\path}{\sigma}
\def\L{\mathcal L}
\def\B{\mathcal B}
\newcommand{\A}{\mathcal A}
\newcommand{\fG} {f : G \to G}
\newcommand{\ti} {\tilde}
\newcommand{\iNp} {indivisible Nielsen path}
\newcommand{\filt}{\emptyset = G_0 \subset G_1 \subset \ldots  \subset G_N = G}
\newcommand{\eg}{EG}
\newcommand{\ct}{CT}
\newcommand{\cQ}{\mathcal Q}
\newcommand{\cP}{\mathcal P}
\newcommand{\cR}{\mathcal R}
\newcommand{\cS}{\mathcal S}
\newcommand{\stallings}{\Gamma}
\newcommand{\core}{{\sf{core}}}
\def\<{\prec}
\def\bN{\mathbb N}
\def\cC{\mathcal C}
\def\LW{{\sf{L}}}	
\def\arb{\upsilon} 
\def\Arb{\Upsilon} 
\DeclareMathOperator{\Acc}{{\sf Acc}}
\newcommand{\acc}{{\sf{\Omega}}}
\newcommand{\accnr}{{\sf{\acc_{NP}}}}
\newcommand{\both}{{or}}
\newcommand{\X}{\mathcal X}
\newcommand{\ray}{r}
\newcommand{\twistpath}{twist path}
\DeclareMathOperator{\lin}{{\sf Lin}}
\DeclareMathOperator{\Ker}{{\sf Ker}}
\newcommand{\gf}{\Gamma(f)}
 \DeclareMathOperator\Perm{{\sf{Perm}}}
 \DeclareMathOperator{\offset}{{\sf offset}}
 \newcommand{\dle}{<_T}
 \newcommand{\nbhd}{\sH}
\def\ellprime{\mu}
\newcommand{\newrays}{\Delta\cR}
\newcommand{\afterR}{S}
\DeclareMathOperator\objX{{\sf X}}
\DeclareMathOperator\objY{{\sf Y}}
\DeclareMathOperator\objQ{{\sf Q}}
\def\sa{{\sf{SA}}}
\def\lat0{\mathfrak L}
\def\fL{\mathfrak L}
\def\fc{\mathfrak c}
\def\fe{\frak e}
\def\fL{\mathfrak L} 
\def\1edge{{\F^\pm}}
\def\cat{{\sf{IS}}}
\def\atoms{A}
\def\contractible{{\sf contractible}}
\def\cyclic{{\sf infinite cyclic}}
\def\llarge{{\sf large}}
\def\sH{{\sf H}}
\DeclareMathOperator{\order}{or}
\def\seqor{{\mathcal S}_{\order}}
\DeclareMathOperator{\unorder}{un}
\def\sequn{{\mathcal S}_{\unorder}}
\def\catA{{\mathbb A}}
\newcommand{\explicit}{has explicit finite fibers}
\def\compute{we can compute}
\def\ecat{{{\overline{\sf{IS}}}}}
\newcommand{\sbh}{\Sigma_b(H)}
\newcommand{\sbk}{\Sigma_b(K)}
\newcommand{\sh}{\Sigma(H)}
\newcommand{\sk}{\Sigma(K)}
\DeclareMathOperator{\im}{Imm}
\DeclareMathOperator{\rc}{RC}
\def\sHsub{{\sf H}_{\phi,\fc}}
\def\Abig{\catA_\bullet}
\DeclareMathOperator{\alg}{ALG}
\def\dotI{{\sf J}}
\begin{document}


\author{Mark Feighn and Michael Handel}
\title{The conjugacy problem for \upg\ elements of $\Out(\f)$}
\date{\today}
\maketitle
\tableofcontents
\begin{abstract}
An element $\phi$ of the outer automorphism group $\Out(\f)$ of the rank $n$ free group $F_n$ is {\it polynomially growing} if the word lengths of conjugacy classes in $\f$ grow at most polynomially under iteration by $\phi$. It is {\it unipotent} if additionally its action on the first homology of $\f$ with integer coefficients is unipotent. In particular, if $\phi$ is polynomially growing and acts trivially on first homology with coefficients the integers mod 3 then $\phi$ is unipotent and also every polynomially growing element has a positive power that is unipotent. We solve the conjugacy problem in $\Out(\f)$ for the subset of unipotent elements. Specifically, there is an algorithm that decides if two such are conjugate in $\Out(\f)$.
\end{abstract}

\section{Introduction}In this paper, we consider  the conjugacy problem for $\Out(F_n)$, the group of outer automorphisms of the free group of rank $n$.  Namely, given  $\phi, \psi \in \Out(F_n)$,  find  an algorithm that  decides if $\phi$ and $\psi$ are conjugate in $\Out(F_n)$.

The case in which $\phi$ is fully irreducible, also known as iwip, was first solved by Sela \cite{zs:isomorphism} using his solution to the isomorphism problem for torsion-free word hyperbolic groups. This was recently generalized, using a similar approach, by Dahmani \cite{fd:conjugacy} to the case that $\phi$ is hyperbolic, or equivalently, that  every non-trivial element of $F_n$ has exponential growth under iteration by $\phi$.  See also \cite{fd:more}. An alternate approach to the fully irreducible case takes advantage of the fact that the finite set of (unmarked) train track maps  that represent a fully irreducible $\phi$  is a complete invariant for the conjugacy class of $\phi$.   Los \cite{jl:conjugacy} and Lustig \cite{ml:iwip} (see also \cite{hm:axes}) solved the conjugacy problem for fully irreducible $\phi$ by algorithmically constructing the set of (unmarked) train track maps for $\phi$. 

On the other end of the growth spectrum, the conjugacy problem for Dehn twists (equivalently rotationless, linearly growing $\phi$) was solved by Cohen and Lustig \cite{cl:conjugacy} using, among other things, Whitehead's algorithm (see below). Krsti\'c, Lustig and Vogtmann \cite{klv:conjugacy} proved an equivariant Whitehead algorithm and used that to solve the conjugacy problem for all elements with linear growth.

Building on the approach of Sela mentioned above, Dahmani and Touikan \cite{dt:conjugacy} reduce the conjugacy problem for $\Out(\f)$ to a list of problems about mapping tori of polynomial growing elements. This is applied in their solution to the conjugacy problem for outer automorphisms of free groups whose polynomially growing part is unipotent linear \cite{dt:unipotent}.

Dahmani, Francaviglia, Martino, and Touikan \cite{dfmt:conjugacy} solve the conjugacy problem for $\Out(F_3)$.

Lustig \cite{ml:conjugacy1, ml:conjugacy2} posted papers in 2000 and 2001 addressing the general case of the conjugacy problem but these have never been published.

Our main theorem addresses the case that $\phi$ is polynomially growing and rotationless, equivalently $\phi$ is polynomially growing and induces a unipotent action on on $H_1(F_n, \Z)$; we write $\phi \in \upgn$.  Being an element of $\upgn$ is a conjugacy invariant and can be checked  algorithmically.  

It is often the case, when studying $\Out(\f)$, that the techniques required to treat the  $\upgn$ case are very different from those needed for the cases in which there is exponential growth.  For example, the polynomially growing and exponentially growing  cases of the Tits alternative for $\Out(\f)$ are proved in separate papers \cite{bfh:tits2} \cite{bfh:tits1}.

\begin{thm} \label{t:main}  There is an algorithm that takes as input $\phi,\psi \in \upgn$ and outputs {\tt YES} or {\tt NO} depending on whether or not there exists $\theta \in \Out(\f)$  such that $\phi = \psi^\theta:=\theta\psi\theta^{-1}$.  Further, if {\tt YES} then the algorithm also outputs such a $\theta$. 
\end{thm}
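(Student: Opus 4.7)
The plan is to reduce the conjugacy question in $\upgn$ to a combinatorial comparison of normal-form representatives and then carry out that comparison by induction up a filtration. The main tool is the completely split improved relative train track (CT) representative of Bestvina--Feighn--Handel. Given $\phi,\psi\in\upgn$, algorithmically produce CT representatives $\fG$ for $\phi$ and $f':G'\to G'$ for $\psi$. Since we are in the UPG setting, no stratum is exponentially growing; every stratum $H_r$ is NEG, and each edge $E$ of $H_r$ satisfies $f(E)=E\cdot u_E$ with the twist path $u_E$ a completely split path in $G_{r-1}$. Hence the dynamical information of $\phi$ is encoded by the filtered marked graph $G$ together with the list of twist paths, and similarly for $\psi$.

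The first theoretical step is to reformulate conjugacy in $\ofn$ as the existence of a filtration-preserving, marking-compatible simplicial bijection $h:G\to G'$ that sends each $u_E$ to $u'_{h(E)}$ up to the intrinsic CT indeterminacies (choice of base vertex for a linear twistor, choice of principal lift, sliding of NEG edges). Then process the filtration from the bottom up, maintaining at height $r$ a finite, computable set $\cS_r$ of partial identifications of $G_r$ with a subgraph of $G'_r$ together with the auxiliary data needed to track fixed subgroups and twistors. The base case, in which all NEG strata are linear, is the content of Cohen--Lustig \cite{cl:conjugacy} and more generally Krsti\'c--Lustig--Vogtmann \cite{klv:conjugacy}, whose equivariant Whitehead algorithm initializes $\cS_r$ at the top of the linear portion of the filtration. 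The inductive step at a higher NEG stratum enumerates all possible extensions of a partial identification by choosing, for each new edge $E$, a candidate twist path $u'_{h(E)}$ whose twistor-conjugacy class in the $\cS_r$-data matches that of $u_E$; prune those that fail the CT compatibility axioms; and let the survivors form $\cS_{r+1}$. If $\cS_N$ is nonempty the winning identification provides the conjugator $\theta$; otherwise output {\tt NO}.

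The principal obstacle is the massive non-uniqueness of CT representatives. A single outer automorphism admits infinitely many CTs differing by subdivisions, folds, slidings, and changes of principal lift, so each twist path $u_E$ is defined only up to a group of moves. The heart of the proof must therefore extract from these moves a bounded, computable list of normal-form twistors at each stratum, and show that two CTs represent conjugate outer automorphisms exactly when their normalized twistor data match under a filtration-preserving graph isomorphism. For linear strata the indeterminacy reduces to the root-shifting ambiguity handled in \cite{cl:conjugacy}; for higher-degree NEG strata the twist path has the schematic form $v\cdot E'_{k-1}\cdot w$ referencing a lower NEG edge, so the height-$(r{+}1)$ indeterminacy couples nontrivially to the data already accumulated in $\cS_r$. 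Proving that this coupling can nevertheless be resolved by a terminating backtracking search---hence that the search space remains finite and explicitly computable at every level---is the main technical content that the proof must supply, and is where I expect the bulk of the paper's work to lie.
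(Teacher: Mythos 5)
Your plan names its own missing piece and that piece is, in fact, the theorem. The reduction you propose---``$\phi$ and $\psi$ are conjugate iff there is a filtration-preserving identification of CT representatives carrying each twist path $u_E$ to $u'_{h(E)}$ up to the intrinsic CT indeterminacies''---is not established anywhere in your argument, and it is not a routine bookkeeping fact: unlike the fully irreducible case (Los, Lustig), a \upg\ element does not have a finite, canonical set of unmarked train track representatives that one could normalize and compare. The indeterminacy is not just ``subdivision, folding, sliding'': precomposing by automorphisms that stabilize all the lower data (for instance Dehn twists carried by the lower strata, or powers of $\phi$ itself) produces infinitely many distinct CTs with the same underlying filtered graph but different twist data, so your inductive set $\cS_r$ of ``partial identifications together with auxiliary data'' is not finite as claimed. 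The stabilizer of the height-$r$ data inside $\Out(\f)$ is an infinite group, and the real problem is to show that only finitely many cosets of a computable subgroup of it can carry a conjugator, and to decide membership in each coset. Nothing in your proposal supplies that finiteness, and a backtracking search over an infinite set of candidate extensions does not terminate.

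This is exactly where the paper's machinery lives, and it is organized quite differently from a normal-form comparison of CTs. Rather than normalizing representatives, the paper attaches to $\phi$ conjugacy-natural invariants (fixed subgroups, limit lines of eigenrays, axes and strong axes with twist coordinates, the special chains of free factor systems, added lines), encodes them algebraically as iterated sets of conjugacy classes of subgroups so that a generalized Whitehead--Gersten--McCool algorithm applies, and thereby reduces to conjugators in a subgroup $\X_\fc(\phi)$ stabilizing all of this data. The residual infinite indeterminacy is then controlled by the twist homomorphisms $m_b$ on staple pairs, bundled into $\bar Q$, which cut the candidate conjugators down to finitely many computable cosets of $\Ker(\bar Q)$; only after that does an induction up the chain succeed, with each step certified by an adaptation of the Recognition Theorem rather than by matching twist paths directly. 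Your appeal to Cohen--Lustig and Krsti\'c--Lustig--Vogtmann for the linear bottom also undersells the base case: one must solve it relative to the constraints imposed by the higher strata (eigenrays, added lines, strong axes), which is why the paper proves a tailored statement (Lemma~\ref{base case}) inside $\X_\fc(\phi)$ instead of quoting the linear-growth conjugacy algorithm. So the proposal is a reasonable heuristic outline, but the step you defer---bounding and computing the indeterminacy at each stratum---is the substance of the result and is not achievable by the finite enumeration you describe.
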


\begin{remark}
If one knows that $\phi$ and $\psi$ are conjugate, then a conjugator $\theta$ can be produced by searching a list of the elements of $\Out(\f)$.  This is not what we do.  Rather, the construction of a conjugator, when one exists, is an integral part of the  proof of the main statement of Theorem~\ref{t:main}.
\end{remark}
 
 \begin{remark}  Theorem~\ref{t:main} is not an abstract existence theorem.  It is proved by constructing an explicit algorithm satisfying the conclusions of the theorem. The same is true for other results in this paper that begin with \lq There is an algorithm\rq.
   \end{remark}

A detailed description of the algorithm is given in Section~\ref{s:full algorithm} so we restrict ourselves here to four results/observations that underly our proof.

\medskip
$\bullet$    Each $\phi \in \upg$ is rotationless (Lemma~\ref{lem:rotationless}) and so   can be represented by a particularly nice relative train track map $\fG$ call a {\it \ct} ; see See Section~\ref{sec:ct}.   There is an algorithm (Theorem~\ref{t:ct is algorithmic}) to construct one such $\fG$ and from this   we can compute all of the invariants used in this paper.

 \medskip
$\bullet$  
 A set equipped with an action by a group $G$ is a {\it $G$-set}. A  $G$-set $X$ {\it satisfies property} \W\ (for   Whitehead) if it comes equipped with  an algorithm that takes as input $x,y\in X$ and outputs     {\tt YES} or {\tt NO} depending on whether or not there exists $\theta\in G$ such that $\theta(x)=y$ together with such a $\theta$ if {\tt YES}.
 We call such an algorithm an \W-{\it algorithm.}    The Whitehead/Gersten algorithm is a \W-algorithm for  the $\Out(\f)$-set of  finite lists of conjugacy classes of finitely generated subgroups of $\f$ \cite[Theorems W\&M]{sg:whitehead}, see also \cite{sk:gersten} and \cite{bfh:gersten}.  
 
 This can be applied directly to our problem by finding  subgroups associated to elements of $ \upg$.
  For example, there is a free factor system $\F_0(\phi)$ characterized by the fact that a conjugacy class in $F_n$ is carried by $\F_0(\phi)$ if and only if it grows linearly under iteration by $\phi$.  Since a free factor system is an unordered list of conjugacy classes of free factors,  we can check if   there exists $\theta \in \Out(\f)$ such that  $\F_0(\psi) =  \theta(\F_0(\phi))$.
If no such $\theta$ exists  then $\phi$ and $\psi$ are not conjugate.  If there is a such a $\theta$ then after replacing $\psi$ by $\psi^{\theta^{-1}}$, we may assume, as far as the conjugacy problem is concerned,  that $\F_0(\phi) = \F_0(\psi)$.  Moreover, any conjugator will preserve $\F_0(\phi) = \F_0(\psi)$.

In sections \ref{s:gersten}--\ref{sec:all staple pairs} we show that the Whitehead/Gersten algorithm can be used as the platform on which to build  other useful $\Out(\f)$-sets that satisfy property \W.   The  $\Out(\f)$-set of  finite lists of finitely generated subgroups of $\f$ also satisfies property \M\ (for McCool).  Namely, it is equipped with an algorithm that takes as input $x\in X$ and outputs a finite presentation for $G_x:=\{\theta\in G\mid \theta(x)=x\}$.  Although it is not strictly necessary for solving the conjugacy problem,  property \M\ is important in its own right and   we show that all of the $\Out(\f)$-sets constructed in sections \ref{s:gersten}--\ref{sec:all staple pairs}  satisfy property \M.
 \medskip
 
$\bullet$ Lemma~\ref{l:recognition}, which is an adaptation of the Recognition Theorem \cite[Theorem 5.3]{fh:recognition},  
 gives necessary and sufficient conditions for $\theta \in \Out(\f)$ to conjugate $\phi \in \upg$ to $\psi \in \upg$.  The non-numerical condition is that  $\theta(\L(\phi)) = \L(\psi)$ where $\L(\phi)$ is a  certain set of lines associated to $\phi$ and similarly for $\L(\psi)$.  If $\fG$ is a \ct\ representing $\phi$ then  $\L(\phi)$ is the set of lines carried by a finite type Stallings graph   $\stallings(f)$ called the {\it eigengraph}  for $f$.  $\stallings(f)$ depends on $f$ but the set of lines carried by $\stallings(f)$ depends only on $\phi$.  The numerical condition of Lemma~\ref{l:recognition} concerns the   \lq twist coordinates\rq\ associated to the linear parts of $\phi$ and $\psi$ and is relatively easy to handle; see Lemma~\ref{base case} and  Lemma~\ref{lem:twist index counts}.  Almost all of the paper is concerned with the existence or not of $\theta$ satisfying $\theta(\L(\phi)) = \L(\psi)$.

 \medskip
 
$\bullet$  A \ct \ $\fG$ comes equipped with a filtration $G_{i_0} \subset G_{i_1}  \subset \cdots  \subset G_{i_t}$ where  each $G_{i_j}$ is an $f$-invariant core subgraph and where obtained from $G_{i_{j-1}}$ by adding a single topological arc, possibly divided into two edges, for $j > 0$.    Edges  of  $G_{i_j} \setminus G_{i_{j-1}}$ are said to have height $j$.   $\stallings(f )$ has a compact core to which finitely many rays $\{R_E\}$ are added, one  for each  non-fixed non-linear  edge $E$ of $G$. Understanding the structure of rays is an important step in understanding $\L(\phi)$. Each $R_E$ has initial edge $E$ and $R_E \setminus E$ is a ray that crosses only edges with height strictly less than that of $E$.  (This is most definitely a \upg\ phenomenon.  If $E$ belongs to an  exponentially growing stratum then $E$ occurs infinitely often in  $R_E$.) Thus $R_E$ can be studied inductively, working up through the filtration.  This is carried out in Section~\ref{s:limit lines} and Sections~\ref{sec:all staple pairs}--\ref{s:algorithm}.

\medskip

Example~\ref{e:main example} gives an illustrative element of $\upg(\f)$ and is further developed as we progress through the text.

\section*{Acknowledgements}
We are indebted to an anonymous referee for an exceptionally thorough and useful report.

Feighn was supported by the National Science Foundation under Grant No.~DMS-1406167 and also under Grant No.~DMS-14401040 while the author was in residence at the Mathematical Sciences Research Institute in Berkeley, California, during the Fall 2016 semester.

Handel was supported by National Science Foundation grant  DMS-1308710 and by    PSC-CUNY  grants in Program Years 47 and  49.

\section{The algorithm}\label{s:full algorithm}

The logical structure of our proof of Theorem~\ref{t:main}  is a series of reductions 
$$\text{ Theorem~\ref{t:main}}  \Longleftarrow \text{ Proposition~\ref{p:conjugacy in X}}  \Longleftarrow \text{ Proposition~\ref{last step}} \Longleftarrow \text{ Proposition~\ref{inductive step}} $$
and a proof of Proposition~\ref{inductive step}.   The above theorem and  propositions   produce algorithms that we denote by $\alg_{\ref{t:main}}, \   \alg_{\ref{p:conjugacy in X}}, \ \alg_{\ref{last step}}$ and $\alg_{\ref{inductive step}}$ respectively.  The proof of the implication $\text{Theorem~\ref{t:main}}  \Longleftarrow \text{ Proposition~\ref{p:conjugacy in X}}$ shows how to use $\alg_{\ref{p:conjugacy in X}}$ to construct $\alg_{\ref{t:main}}$ and similarly for the other implications.  Thus $\alg_{\ref{t:main}}$  calls $\alg_{\ref{p:conjugacy in X}}$ which calls $\alg_{\ref{last step}}$ which calls $\alg_{\ref{inductive step}}$.

\subsection{$\text{Theorem~\ref{t:main}}  \Longleftarrow \text{ Proposition~\ref{p:conjugacy in X}}$}
One way to make progress on the conjugacy problem for \upg\   is to find {\it \W-invariants} for \upg; i.e. $\Out(\f)$-equivariant maps $J: \upg \to X$ where $X$  is      an $\Out(\f)$-set with a $\W$-algorithm $W_X(\cdot,\cdot)$.     If $W_X(J(\phi),J( \psi)) = {\tt NO}$ then $\phi$ is not conjugate to $\psi$ in $\Out(\f)$.   If  $W_X(J(\phi),J( \psi)) = ({\tt YES}, \xi)$ then $J(\psi^{\xi^{-1}}) = J(\phi)$.  Replacing $\psi$ by $\psi^{\xi^{-1}}$, we may assume that $J(\phi) = J(\psi)$.   In this case, any   $\theta$ conjugating $\phi$ to $\psi$ is   contained in the subgroup of $\Out(\f)$ that fixes $J(\phi)$. 

In Sections \ref{s:limit lines} through \ref{s:applying wg} we  construct seven such \W-invariants and bundle them into a single invariant ${\sf I}_\fc(\phi)$.  Once this is done, it is easy  to use an algorithm satisfying the conclusions of   Proposition~\ref{p:conjugacy in X}   to produce an algorithm satisfying the conclusions of Theorem~\ref{t:main}.   The details are given in the proof of Lemma~\ref{l:reduction}.

  Items (1) - (4) below outline how our ultimate \W-invariant ${\sf I}_{\fc} : \upg \to \ecat(\Abig)$ is chosen.  Item (5) refers to shrinking the set of potential conjugators from the stabilizer of ${\sf I}_{\fc}(\phi)$ to one of its finite index subgroups $\X_\fc(\phi)$.
     
   \begin{enumerate}
\item  \label{item:dyn inv}  {\bf (  Dynamical invariants of $\phi \in \upg$)} 
\begin{itemize} 
\item 
the finite multi-set $\Fix(\phi)$ of conjugacy classes of   {\it fixed subgroups of $\phi$}. (Definition~\ref{d:fix})
\item 
the  {\it linear free factor system} $\F_0(\phi)$.   (Definition~\ref{notn:F0})
\item 
 the finite set  $\{\fc\}$ of {\it  special $\phi$-chains}. (Section \ref{s:canonical ffs} and in particular Notation~\ref{notn:ffs})
\item  
the finite set ${\sf A}_\both(\phi)$   of {\it axes}  for $\phi$.  (Section~\ref{sec:axes}) 
\item  
the finite set   $ \sa(\phi)$ of {\it strong axes}   for $\phi$.  (Section~\ref{sec:axes}) 
 \item  
the finite set $\accnr(\phi)$  of all non-periodic {\it limit lines} for all eigenrays of $\phi$.  (Section~\ref{s:limit lines})
\ \ \item 
for each one-edge extension $\fe$ of each $\frak c$,   the set $\LW_{\fe}(\phi)$ of {\it added lines with respect to $\fe$}. (Definition~\ref{d:added lines})
\end{itemize}
\end{enumerate}

\noindent The invariants in the first four items  are {\it algebraic} in that they take values in  $\Out(\f)$-sets that can be expressed in terms of conjugacy classes of finitely generated subgroups of $\f$ or more generally are {\it iterated sets} (Section~\ref{s:iterated sets}).  In particular, they take values in    $\Out(\f)$-sets with  \W-algorithms and so can be used   as they are.  The others must be modified.
\begin{enumeratecontinue}
\item {\bf   (Algebraic versions of dynamical invariants)}
 \label{item:alg inv}
For the last three dynamical invariants,  define  corresponding (but weaker) algebraic invariants. The last two depend on a  choice of special  chain $\frak c$.  (Section~\ref{s:special chains})
 
\begin{itemize}
\item
 the finite set of {\it algebraic strong axes}. (Section~\ref{s:algebraic sa}) \item
the finite set 
$\{\sH_\fc(L):L\in\accnr(\phi)\}$ of {\it algebraic limit lines}. (Section~\ref{s:algebraic limit lines})
\item
for each one-edge extension $\fe$ in $\frak c$,   the finite set  $\sH_{\fe\in\fc}(\phi)$ of {\it algebraic added lines with respect to $\fe$}. (Section~\ref{s:algebraic added lines})
\end{itemize}
\end{enumeratecontinue}
\begin{remark}  If the seven dynamical invariants in \pref{item:dyn inv}  take the same values on $\phi$ and $\psi$ then, using Lemma~\ref{l:recognition}, it is easy to check if $\phi$ and $\psi$ are conjugate.  The same is not true for the seven algebraic invariants in \pref{item:dyn inv} and \pref{item:alg inv}.  Too much information was lost in  translation.
\end{remark}
\begin{enumeratecontinue}
 \item {\bf (\W-invariants})      Iterated sets, and in particular $\ecat(\Abig)$,  are defined in Sections~\ref{s:gersten} and \ref{s:ultimate atoms}.  By construction, all of our algebraic invariants take values in the iterated set $\ecat(\Abig)$.  We  construct a \W-algorithm for $\ecat(\Abig)$ (and all other iterated sets).
\item {\bf(The total invariant  ${\sf I}_{\fc}(\phi)$)}   is defined by combining  the   algebraic invariants in \pref{item:dyn inv} and \pref{item:alg inv} into a single  algebraic invariant that takes values in $\ecat(\Abig)$.  (Definition~\ref{d:I}). 
  \item{\bf (Reduce potential conjugators)}
   Elements of $\X_{\fc}(\phi) < \Out(\f)$ not only stabilize the algebraic invariants in \pref{item:dyn inv} and \pref{item:alg inv} making up ${\sf I}_{\frak c}(\phi)$, they also induce trivial permutations on those invariants that are finite sets.   (Definition~\ref{d:X}). 
\end{enumeratecontinue}
 As mentioned above Lemma~\ref{l:reduction}  is proved by constructing $\alg_{\ref{t:main}}$ using    $\alg_{\ref{p:conjugacy in X}}$ and properties of ${\sf I}_{\frak c}(\phi)$. Hence to prove Theorem~\ref{t:main}, we are reduced to proving:
  
  \bigskip

\noindent{\bf Proposition~\ref{p:conjugacy in X}}  
{\it    There is an algorithm that takes as input $\phi,\psi\in\upgn$ and a chain $\fc$ such that
\begin{itemize}
\item
$\fc$ is special for both $\phi$ and $\psi$ and
\item
${\sf I}_\fc(\phi)={\sf I}_\fc(\psi)$
\end{itemize}
and that outputs {\tt YES} or {\tt NO} depending whether or not there is $\theta\in\X_\fc(\phi)$ conjugating $\phi$ to $\psi$. Further, if {\tt YES} then such a $\theta$ is produced.
}

\bigskip

\subsection{$\text{Proposition~\ref{p:conjugacy in X}}  \Longleftarrow \text{ Proposition~\ref{last step}}$}

 $\alg_{\ref{p:conjugacy in X}}$ and $\alg_{\ref{last step}}$ differ only in the subgroup of potential conjugators that must be considered.  In  
Proposition~\ref{p:conjugacy in X} it is $\X_\fc(\phi)$ and in Proposition~\ref{last step} it is an  infinite index subgroup 
$\Ker(\bar Q^\phi) < \X_\fc(\phi)$ defined in    Definition~\ref{d:pair equivalence}. See statement of Proposition~\ref{p:conjugacy in X} below.

The set of (eigen)rays $\cR(\phi)$ (Definition~\ref{d:fix}) is a fundamental dynamical invariant of  $\phi$.  Each $r \in \cR(\phi)$ is the conjugacy class $[\ti r]$ of a point $\ti r \in \partial \f$.    There is no $\W$-algorithm for $\partial \f$ so we work with a weaker algebraic  invariant, the conjugacy class $F_{\fc}(r)$ of a free factor determined by $r$ and a special chain $\fc$; see Section~\ref{s:algebraic rays}. We do not list this in \pref{item:alg inv}  because it is built into  the set of algebraic lines and the set of algebraic  added lines.   The great advantage of $\Ker(\bar Q^\phi)$ over $  \X_\fc(\phi)$ is that in the proof of   Proposition~\ref{inductive step} we need only consider conjugating elements that preserve $r$. (See Lemma~\ref{new rays from lines} and Lemma~\ref{check any theta}.)
 Instead of having to check if two rays are conjugate, we need only check if they are equal.  
 
 The definition of $\bar Q^\phi(\xi)$ for $\xi \in \X_\fc$ is given in Definition~\ref{d:pair equivalence}.  The key result, from the algorithmic point of view,  is 
 
\bigskip 
\noindent{\bf Proposition~\ref{prop:barQ}} \ \ \ 
There is   an algorithm that produces a finite set $\{\eta_i\} \subset  \X$ so that the union of the cosets of $\Ker(\bar Q^\phi)$ determined by the $\eta_i$'s contains each $\theta \in \X$ that conjugates $\phi$ to $\psi$.

\bigskip

The proof of Proposition~\ref{prop:barQ} requires a detailed understanding  of the structure of eigenrays and is the most technical part of the paper.  The proof of Lemma~\ref{p:reduction2} shows how to quickly construct $\alg_{\ref{p:conjugacy in X}}$ using 
 $\alg_{\ref{last step}}$ and the coset representatives produced by the algorithm of Proposition~\ref{prop:barQ}. In other words, to prove Proposition~\ref{p:conjugacy in X}, we are reduced to proving:

\bigskip 
\noindent{\bf Proposition~\ref{last step}} 
{\it There is an algorithm that takes as input  $\phi, \psi \in \upgn$ and a chain $\fc$ such that 
\begin{itemize}
\item
$\fc$ is a special chain for $\phi$ and $\psi$ and 
\item
${\sf I}_\fc(\phi) = {\sf I}_\fc(\psi)$
\end{itemize}
and that outputs {\tt YES} or {\tt NO} depending on whether or not there is $\theta \in \Ker(\bar Q^\phi)$ conjugating  $\phi$ to $\psi$.   Further, if  {\tt YES} then   such a $\theta$ is produced. 
}
\bigskip

 \subsection{$\text{Proposition~\ref{last step}}  \Longleftarrow \text{ Proposition~\ref{inductive step}}$}
 
 This is an easy step.  The details are given in  \lq Proof of Proposition~\ref{last step} (assuming Lemma~\ref{base case} and Proposition~\ref{inductive step})\rq\ following the statement of Proposition~\ref{inductive step}. After this step, we may assume that the restrictions of $\phi$ and $\psi$ to the linear free factor system $\F_0(\phi)=\F_0(\psi)$ are equal. This provides the basis for an inductive argument completed in the next step.
 
 \subsection{Proof of Proposition~\ref{inductive step}}  Proposition~\ref{inductive step} is the inductive step of an argument   up the filtration induced by $\fc$.       There are six items labeled (1)--(5), (7) that are sequentially checked.  If any of these is false then return {\tt NO}.  Otherwise,  construct the desired conjugator following  pages~\pageref{d1 part 1}--\pageref{last lemma}.

\section{Background} \label{s:background}
\subsection{Standard Notation}  The free group on $n$ generators is denoted $F_n$. For $a\in\f$, conjugation by $a$ is denoted $i_a$, i.e.\ $i_a(x)=axa^{-1}$ for $x\in\f$. The group of automorphisms of $F_n$, the group of inner automorphisms of $F_n$ and the group of outer automorphisms of $F_n$ are denoted by $\Aut(F_n)$, $\Inn(F_n):=\{i_a\mid a\in\f\}$ and $\Out(F_n) =  \Aut(F_n)/\Inn(F_n)$ respectively. 

For subgroups $H<\f$, $[H]$ denotes the conjugacy class of $H$ and, for elements $a\in\f$, $[a]$ denotes the conjugacy class of $a$. 
 
An outer automorphism $\phi \in \Out(F_n)$ has {\em polynomial growth}, written $\phi \in \pg$, if for each $a \in F_n$ there is a polynomial $P$ such that reduced word length of $\phi^k([a])$ with respect to  a fixed set of generators of $F_n$  is bounded above by $P(k)$.  Equivalently, the set of attracting laminations for $\phi$ \cite[Section 3]{bfh:tits1} is empty.   The set $\upgn$ of {\em unipotent outer automorphisms} is  the subset of $\Out(F_n)$ consisting of polynomially growing   $\phi$ whose  induced action on $H_1(F_n, \Z)$ is unipotent. We sometimes  write $\phi \in \upg$ instead of $\phi \in \upgn$.     In Section~\ref{upg is rotationless} we show that $\phi \in \upg$ if and only $ \phi \in \pg$ and $\phi$ is rotationless  in the sense of  \cite[Definition~3.13]{fh:recognition} (where it is called forward rotationless). There is $K_n>0$ such that if $\phi \in \pg$ then $\phi^{K_n}\in\upg$ \cite[Corollary~3.14]{fh:CTconjugacy}.

The graph with one vertex $*$ and with $n$ edges is the {\em rose} $R_n$.  Making use of the standard identification of $\pi_1(R_n,*)$ with $F_n$,   there are bijections between $\Aut(F_n)$  and the group of  pointed homotopy classes of homotopy equivalences $f:(R_n,*)  \to (R_n,*)$ and  between $\Out(F_n)$  and the group of  free homotopy classes of homotopy equivalences $f:R_n  \to R_n$.

If  $G$  is a graph without valence one vertices then a homotopy equivalence $\mu:R_n \to G$  is  called  a {\em marking} and $G$, equipped with a marking, is called  a {\em marked graph}. A marking $\mu$ induces  an   identification, well-defined  up to inner automorphism,  of  the fundamental group of $G$ with the fundamental group of $R_n$ and hence with $F_n$.   This in turn induces an identification of the group of homotopy classes of  homotopy equivalences $f : G \to G$ with $\Out(F_n)$.  If $\phi \in \Out(F_n)$ corresponds to the homotopy class of $\fG$ then we say that $\fG$ {\em represents} $\phi$. In Section~\ref{sec:ct} we recall the existence of very well behaved homotopy equivalences $f: G \to G$ representing an element of $\upgn$.

\begin{ex}\label{e:main example}
Here is an example of a homotopy equivalence $\fG$ of a marked graph that represents an element $\phi$ of $\upg(F_5)$. Let $F_5$ be represented as the fundamental group of the rose $R_5$, let $G$ be the subdivision of $R_5$ pictured in Figure~\ref{f:G}
\begin{figure}[h!] 
\centering
\includegraphics[width=.25\textwidth]{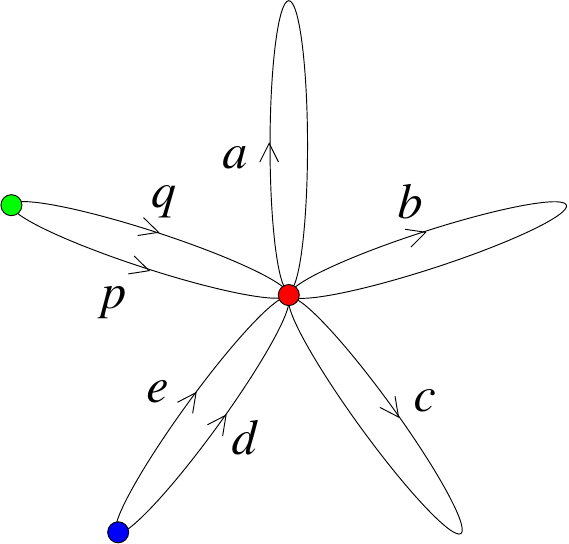}

\caption{$G$
}
\label{f:G}
\end{figure}
and let $\fG$ be given by $a\mapsto a$, $b\mapsto ba$, $c\mapsto cb$, $d\mapsto db^2$, $e\mapsto eb^3$, $p\mapsto pa^2$, $q\mapsto qc$. To see that $\phi$ has polynomial growth, note that
the edge $q$ has cubic growth in that $$|f^k(q)|=|q\cdot c\cdot f(c)\cdot f^2(c)\cdot f^3(c)\cdot\ldots\cdot f^{k-1}(c)|=|q\cdot c\cdot cb\cdot cbba\cdot \ldots \cdot cbba\ldots ba^{k-1}|=\frac{k^3+5k+6}{6}$$ and that no edge grows at a higher rate. In particular conjugacy classes of $\f$ have at most cubic growth.
As we progress through this paper, we will expand upon this example.
\end{ex}

\subsection{Paths, circuits and lines}\label{s:lines}
  
  A {\em path} in a marked graph $G$ is a proper immersion of a closed interval into $G$.  In this paper, we will assume that  the endpoints of a path, if any,  are at vertices. If the interval is degenerate then the path is {\em trivial}; if the interval is infinite or bi-infinite then the path is a {\em ray} or a {\em line} respectively.  We do not distinguish between paths that differ only by a reparameterization of the domain interval.   Thus, every non-trivial path  has a description as a concatenation of oriented edges  and we will use this {\em edge  path} formulation without further mention. Reversing the orientation on a path $\sigma$ produces a path denoted either $\bar \sigma$ or $\sigma^{-1}$.   A {\em circuit} is an immersion of $S^1$  into $G$.  Unless otherwise stated, a circuit is assumed to have an orientation.  Circuits have cylic edge decompositions.  Each conjugacy class in $F_n$ is represented by a unique circuit in $G$. The conjugacy class in $\f$ represented by the circuit $\sigma$ is denoted $[\sigma]$.
  
\begin{notn} \label{n:no partial} Each  $\Phi \in \Aut(F_n)$ induces an equivariant  homeomorphism of $\partial \f$.  To simplify notation somewhat, we refer to this extension as $\Phi$ rather than, say, $\partial \Phi$.    In situations where this might cause confusion, we write $\Phi | \partial \f$ for the induced  homeomorphism of $\partial \f$.  For example, $\Fix(\Phi)$ is the subgroup of $\f$ fixed by $\Phi$ and $\Fix(\Phi | \partial \f)$ is the set of points in $\partial \f$  fixed by the induced homeomorphism.

The action of $F_n$  on $\partial F_n$ is by conjugation, i.e. by $a \cdot P =  i_a(P)$ for $P \in \partial \f$.   For each non-trivial $a \in  F_n$, $ i_a$ fixes two points in $\partial F_n$: a repeller $a^-$ and an  attractor  $a^+$.

A marking $\mu$  induces  an   identification, well-defined  up to inner automorphism,  of  the set of ends of $\ti G$  with $\partial F_n$ and  likewise the group of covering translations of $\ti G$   with  $\Inn(F_n)$. We choose such an identification once and for all.  The covering translation corresponding to $i_a$  is denoted $T_a$ as is the extension of $T_a$ to a homeomorphism of $\partial \f$.  We have  $T_a | \partial \f   = i_a | \f$.   If $\fG$ represents $\phi$ then each lift $\ti f : \ti G \to \ti G$ induces an equivariant  homeomorphism, still called $\ti f$, of $\partial \f$; see, for example, Section~2.3 of \cite{fh:recognition}.     There is a bijection between the set of  lifts $\ti f$ of $\fG$ and the set of   automorphisms $\Phi$ representing $\phi$ defined by $\ti f \leftrightarrow \Phi$ if $\  \ti f | \partial \f=   \Phi | \partial \f$. 
 \end{notn}  
 
A  line $\ti L$ in the universal cover $\ti G$ of a marked graph $G$ is a bi-infinite edge path.  The ends of $\ti L$ determine ends of $\ti G$ and hence points in $\partial F_n$.   In this way,   the  {\em space of oriented lines in the tree $\ti G$}  can be identified with the space $\ti \B$ of ordered pairs of distinct elements of $\partial F_n$.  The {\em space of oriented lines in $G$} is then identified with the space $\B$ of $F_n$-orbits of elements of $\ti \B$.
The topology on $\partial F_n$ induces a topology on $\ti \B$ and hence a topology on $\B$ called the {\em weak topology}.  

\medskip

\subsection{Free factor systems}\label{sec:ffs}
The subgroup system $\F = \{[A_1],\ldots, [A_m]\}$ is a {\em free factor system} if  $A_1,\ldots, A_m$ are non-trivial free factors of $F_n$   and either $F_n =  A_1 \ast \ldots \ast A_m$ or $F_n =  A_1 \ast \ldots \ast A_m \ast B$ for some non-trivial free factor $B$.    The $[A_i]$'s are the {\em components} of $\F$.   If  $G$ is a marked graph and $K$ is a subgraph whose  non-contractible components are $K_1,\ldots,K_m$ then $\F(K, G) := \{[\pi_1(K_1)],\ldots,[\pi_1(K_m)]\}$ is a \ffs \ that is {\it realized by} $K \subset G$.    Every \ffs\ $\A$  can be  realized  by  $K \subset G$ for some marked graph $G$ and some core subgraph $K \subset G$. Recall that a graph is {\it core} if through every edge there is an immersed circuit and that the {\it core of a graph} is the union of the images of its immersed circuits.

We write $\F_1 \sqsubset \F_2$ and say that $\F_1$ {\em is contained in } $\F_2$  if for each component $[A_i]$  of $\F_1$ there is a component $[B_j]$ of $\F_2$ so that $A_i$ is conjugate to a subgroup of $B_j$.  Equivalently, there is a marked graph $G$ with core subgraphs $K_1 \subset K_2$ so that $\F_1 =  \F(K_1,   G)$ and $\F_2 = \F(K_2,   G)$.   If one can choose $K_1$ and $K_2$ so that $K_2 \setminus K_1$ is a single edge then we say that $\F_1 \sqsubset \F_2$ is a {\em one-edge extension}.  For example, $\{[A]\} \sqsubset \{[B]\}$ is a one-edge extension if and only if $\rk(B) =\rk(A) +1$ and   $\{[A_1], [A_2]\} \sqsubset \{[B]\}$ is a one-edge extension if and only if $\rk(B) =\rk(A_1) + \rk(A_2) $. 
 
\begin{ex}  Suppose that $H_1$ is a  subgraph of a marked graph $G$,  that   $H_2 = H_1 \cup E_2  \subset G$  where $E_2$ is an edge that forms a loop that is disjoint from $H_1$ and that $H_3 = H_2 \cup E_3$ where $E_3 \subset G$ is an edge with one endpoint in $H_1$ and the other at the unique endpoint of $E_2$.   Then $\F(H_1,G)\sqsubset \F(H_2,G)$ and  $\F(H_2,G)\sqsubset \F(H_3,G)$ are proper inclusions and $\F(H_1,G)\sqsubset \F(H_3,G)$  is a one-edge extension.     This is essentially the only way in which a one-edge extension can be \lq reducible\rq.   We record a specific consequence of this in the following lemma.
\end{ex} 

\begin{lemma} \label{reducible extensions} Suppose that  $\F(H_1,G)\sqsubset \F(H_2,G)$ and  $\F(H_2,G)\sqsubset \F(H_3,G)$ are proper inclusions and that $\F(H_1,G)$ and $ \F(H_2,G)$ have the same number of components.  Then $\F(H_1,G)\sqsubset \F(H_3,G)$  is a not one-edge extension. 
\end{lemma}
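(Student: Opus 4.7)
My plan is to compare two integer invariants of a free factor system $\F = \{[A_1],\ldots,[A_m]\}$: the number of components $c(\F) = m$ and the total rank $r(\F) = \sum_i \rk(A_i)$. The first step is to classify proper one-edge extensions $\F \sqsubset \F'$ by how these invariants change. If $K \subset K \cup E$ realizes such an extension, case analysis on the endpoints of the added edge $E$ forces $(\Delta c, \Delta r) = (c(\F')-c(\F), r(\F')-r(\F))$ to take one of exactly three values: $(0,+1)$, when both endpoints lie in a single non-contractible component of $K$ (consistent with the paper's identity $\rk(B)=\rk(A)+1$); $(-1,0)$, when the endpoints lie in two distinct non-contractible components (consistent with $\rk(B)=\rk(A_1)+\rk(A_2)$); and $(+1,+1)$, when $E$ is a loop that is either disjoint from $K$ or lies in a contractible portion of $K$, creating a new rank-one component. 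Every other endpoint placement either leaves $\F$ unchanged or fails to produce a proper extension.

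The second step is a monotonicity principle for arbitrary proper inclusions of free factor systems: if $\F \sqsubset \F'$ is proper then $r(\F) \leq r(\F')$, and equality forces $c(\F') < c(\F)$; equivalently, $c(\F) = c(\F')$ forces $r(\F') \geq r(\F)+1$. To see this I choose a common realization and use the fact that each component $[A_i]$ of $\F$ sits as a free factor of the unique component $[B_j]$ of $\F'$ containing it, and that components of $\F$ landing in a fixed $B_j$ embed as independent free factors of $B_j$, so $\rk(B_j) \geq \sum_{\sigma(i)=j}\rk(A_i)$. Summing over $j$ yields $r(\F) \leq r(\F')$; equality forces $\sigma$ to be onto and each $B_j$ to be a free product of the $A_i$'s mapping to it, and a rank-preserving bijective $\sigma$ then collapses each $A_i$ onto $B_{\sigma(i)}$, contradicting properness.

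Applying the monotonicity to the hypotheses gives $r(\F_2) \geq r(\F_1)+1$ (from $c(\F_1)=c(\F_2)$ and properness) and then $r(\F_3) \geq r(\F_2) \geq r(\F_1)+1$. Assuming for contradiction that $\F_1 \sqsubset \F_3$ is a one-edge extension, I rule out each type: type $(0,+1)$ forces $r(\F_3) = r(\F_2)$ with $c(\F_2)=c(\F_3)$, contradicting the monotonicity applied to the proper extension $\F_2 \sqsubset \F_3$; type $(-1,0)$ gives $r(\F_3)=r(\F_1)$, directly contradicting $r(\F_3) \geq r(\F_1)+1$; and type $(+1,+1)$ again forces $r(\F_2)=r(\F_3)$, hence $c(\F_3) < c(\F_2)$, contradicting $c(\F_3)=c(\F_2)+1$. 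The main obstacle I anticipate is the monotonicity lemma, in particular the claim that several components of $\F$ falling inside a single component of $\F'$ can be simultaneously realized as independent free factors whose ranks add; this will use the fact that a \ffs\ is realized by pairwise-disjoint non-contractible subgraphs of a common marked graph together with the free-product rank formula.
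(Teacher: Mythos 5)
Your proof is correct, but it takes a genuinely different route from the paper: the paper disposes of Lemma~\ref{reducible extensions} in one line by citing the Handel--Mosher characterization of one-edge extensions (the same result it invokes later in Lemma~\ref{lemma:three extension types}), whereas you give a self-contained bookkeeping argument with the two invariants $c(\F)$ (number of components) and $r(\F)$ (total rank). Your three numerical types $(0,+1)$, $(-1,0)$, $(+1,+1)$ do exhaust proper one-edge extensions, since in a realization $K\subset K\cup E$ by core subgraphs the added edge must have both endpoints in one component of $K$, endpoints in two distinct components, or be a disjoint loop (and even if one allows non-core realizations, the extra configurations leave $\F$ unchanged, so the same three types remain); and your monotonicity lemma goes through once you justify the one substantive sub-claim you flag, namely that the components of $\F$ carried by a fixed component $[B_j]$ of $\F'$ form a free factor system of $B_j$, which follows from a nested core-graph realization $K\subset K'$ (disjoint core subgraphs of the component $C_j$ realizing $B_j$ have additive first Betti numbers bounded by $b_1(C_j)$, and in the equality case a free factor of full rank is the whole group, which is what rules out $\F=\F'$ against properness). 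What each approach buys: yours is elementary, makes explicit exactly which $(\Delta c,\Delta r)$ configurations a one-edge extension can produce, and does not depend on the external reference; the paper's citation is shorter and leans on a finer structural statement (how the extension sits relative to a given realization or filtration) that it needs elsewhere anyway, information your counting argument deliberately avoids.
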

 
\proof This follows from \cite[Part 2, Definition~2.4 and Lemma 2.5]{handelMosher:subgroups}.
\endproof

If  $\F = \{[A_1],\ldots, [A_m]\}$ and $a \in F_n$ is conjugate into some $A_i$ then $[a]$ is {\em carried} by $\F$.  A line $L \in \B$  is {\em carried} by $\F$ if it is a limit of periodic lines corresponding to conjugacy classes that are carried by $\F$.  Equivalently,  $[a]$ or   $L$ is {\em carried} by $\F$  if for some, and hence every, $K \subset G$ realizing   $\F $,  the realization of $[a]$ or $L$ in  $G$ is contained in $K$.   For every collection of conjugacy classes and lines there is a unique minimal (with respect to $\sqsubset$) free factor system that carries each element of the collection \cite[Corollary~2.6.5]{bfh:tits1}.

\begin{notn}\label{n:restriction to free factor}  
$\Out(\f)$ acts on the set of conjugacy classes $[F]$ of free factors $F$.  If $\phi \in \Out(\f)$ fixes $[F]$  then we say that  {\it $[F]$ is $\phi$-invariant} and write $\phi | [F]$  for the {\it restriction of $\phi$ to $[F]$} (which is well defined because $F$ is its own normalizer in  $\f$).  We often say that $F$ is $\phi$-invariant and write $\phi | F$ just to simplify notation.   \cite[Proposition  4.44]{bfh:tits2} implies that if $\phi $ is \upg\   then $\phi|F$ is \upg.
 If $\F = \{[A_1],\ldots,[A_m]\}$ is a free factor system and each $[A_i]$  is $\phi$-invariant then we say that $\F$ is $\phi$-invariant and denote $ \{\phi |A_1,\ldots,\phi|A_m\}$ by $\phi | \F$. 
\end{notn} 
 \medskip

\subsection{$\FixN(\phi)$, principal lifts and $\cR(\phi)$} \label{sec:FixN}
We continue with Notation~\ref{n:no partial}. If $P \in \Fix(  \Phi |\partial \f)$ and if  there is a neighborhood $U$ of $P$ in $  \partial F_n$ such that $ \Phi(U) \subset U$ and such that $\cap_{i =1}^{\infty}  \Phi^i(U) = P$ then $P$ is {\em attracting}.     If $P$ is an attracting fixed point for $ \Phi^{-1} | \partial \f$ then it is a {\em repelling} fixed point for $ \Phi | \partial \f$.   By $\Fix_+(\Phi), \Fix_-(\Phi)$ and $\FixN(\Phi)$  we denote the   {\em set of attracting fixed points} for $ \Phi | \partial \f$,  the {\em set of  repelling fixed points} for $ \Phi| \partial \f$ and the {\em set of non-repelling fixed points} for $ \Phi |\partial \f$ respectively; thus $\FixN(\Phi) =  \Fix(\Phi | \partial \f) \setminus \Fix_-(\Phi)$.  Note that all of these sets are contained in $\partial \f$.
  
If $A < F_n$ is a finitely generated subgroup then the inclusion of $A$ into $F_n$ is a quasi-isometric embedding and so extends to an inclusion of $\partial A$ into $\partial F_n$ with the property that $\{ a^\pm: \mbox{non-trivial }a \in A\}$ is dense in $\partial A$. In particular, since the subgroup $\Fix(\Phi)$ consisting of elements in $F_n$ that are fixed by $\Phi$ is finitely generated \cite{sg:fixed} (see also \cite{bh:tracks} and the references therein), we have $\partial \Fix(\Phi) \subset \partial F_n$. The following  lemma   implies that $\partial \Fix(\Phi) \subset \Fix( \Phi | \partial \f)$ and that $\Fix_+(\Phi), \Fix_-(\Phi)$ and $\FixN(\Phi)$ are $\Fix(\Phi)$-invariant.

\begin{lemma} \label{basic fix}   Let $\Phi\in\Aut(\f)$ and $0\not= a\in\f$. The following are equivalent:
\begin{itemize}
\item
$a \in \Fix(\Phi)$;
\item
either $a^-$ or $a^+$ is  contained in $ \partial \Fix(\Phi)$;
\item
both $a^-$ and $a^+$ are contained in $ \partial \Fix(\Phi)$;
\item
$i_a$ commutes with $\Phi$; and
\item
$ i_a | \partial \f$ commutes with $ \Phi | \partial \f$.
\end{itemize}
\end{lemma}

\proof  This is well known; see, for example, Lemmas~2.3 and 2.4 of \cite{bfh:tits3} and Proposition I.1 of \cite{gjll:index}.
\endproof

\begin{lemma} \label{two automorphisms}  If $P \in \partial F_n$ is fixed by  automorphisms $\Phi \ne \Phi'$ representing $\phi \in \Out(F_n)$ then $P = a^{\pm}$ for some non-trivial $a \in F_n$.
\end{lemma}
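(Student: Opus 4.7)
The plan is to reduce the statement to the fact that for any non-trivial $c \in F_n$, the only fixed points of $\partial i_c$ on $\partial F_n$ are $c^+$ and $c^-$. Since $\Phi$ and $\Phi'$ represent the same outer automorphism $\phi$, they differ by an inner automorphism: there exists $c \in F_n$ with $\Phi' = i_c \circ \Phi$. The hypothesis $\Phi \ne \Phi'$ forces $c$ to be non-trivial.

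Applying $\partial$ to this relation and evaluating at $P$, we get
\[
P \;=\; \partial \Phi'(P) \;=\; \partial i_c(\partial \Phi(P)) \;=\; \partial i_c(P),
\]
using that $\partial\Phi(P) = P$ by hypothesis. Thus $P \in \Fix(\partial i_c)$.

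It remains to identify $\Fix(\partial i_c)$. Let $c_0$ be a primitive root of $c$, so that $\Fix(i_c) = \langle c_0 \rangle$; this cyclic subgroup is its own boundary limit set, with $\partial \langle c_0\rangle = \{c_0^+, c_0^-\} = \{c^+, c^-\}$. By Lemma~\ref{basic fix} applied with $\Phi$ taken to be $i_c$, we have $\partial\Fix(i_c) \subset \Fix(\partial i_c)$, so $\{c^+, c^-\} \subset \Fix(\partial i_c)$. For the reverse inclusion, if $Q \in \Fix(\partial i_c)$ then the equivalence in Lemma~\ref{basic fix} (applied again to $i_c$) gives that for any non-trivial $b \in F_n$ with $b^{\pm} = Q$, the element $b$ lies in $\Fix(i_c) = \langle c_0\rangle$; since points of the form $b^\pm$ with $b \in \langle c_0\rangle$ are precisely $\{c^+, c^-\}$, and since such points are dense in $\Fix(\partial i_c)$ by standard translation-length arguments (or by invoking that $i_c$ acts as a nontrivial translation on the axis of $c_0$ in any Cayley tree, which acts North-South dynamics on $\partial F_n$), we conclude $\Fix(\partial i_c) = \{c^+, c^-\}$.

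Hence $P \in \{c^+, c^-\}$, so $P = a^\pm$ with $a = c$, as required. The only real content is the identification $\Fix(\partial i_c) = \{c^\pm\}$, and even this is entirely standard—every other step is a single-line computation—so I do not anticipate any obstacle; the lemma is really a packaging of the observation that distinct representatives of $\phi$ differ by an inner automorphism whose boundary fixed-point set is tightly controlled.
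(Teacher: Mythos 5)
Your proof is correct and takes essentially the same route as the paper: the paper's one-line proof observes that $\Phi^{-1}\Phi'$ is a non-trivial inner automorphism $i_a$ fixing $P$, and then invokes the standard fact (stated earlier in the paper) that $\partial i_a$ fixes exactly the two points $a^{\pm}$. You simply spell out that standard fact in more detail; the substance is identical.
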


\proof  There exists non-trivial $a \in F_n$ such that $i_a  = \Phi^{-1} \Phi'$ fixes $P$.
\endproof

\begin{definition} \label{d:PA}
An automorphism $\Phi$ representing $\phi\in\upgn$ is {\em principal} if $\FixN(\Phi)$ contains at least two points and if $\FixN(\Phi) \ne \{a^-,a^+\}$ for any non-trivial $a \in F_n$.  The {\em set of principal automorphisms representing $\phi$} is denoted $\cP(\phi)$.  See Section~3.2 of \cite{fh:recognition} for complete details.
\end{definition} 

\begin{lemma}  \label{gjll} If $\Phi$ is   principal  then $\FixN(\Phi)$ is the disjoint union of $\partial \Fix(\Phi)$ and $\Fix_+(\Phi)$.  Moreover,   $\Fix_+(\Phi)$ is a union of  finitely many $\Fix(\Phi)$ orbits.
 \end{lemma}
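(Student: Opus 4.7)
The plan is to deduce both statements from the index theory for automorphisms of free groups developed in \cite{gjll:index}. By Lemma~\ref{basic fix} we already have $\partial \Fix(\Phi) \subset \Fix(\partial \Phi)$, and by definition $\FixN(\partial \Phi) = \Fix(\partial \Phi) \setminus \Fix_-(\Phi)$; thus the first assertion reduces to showing that $\Fix(\partial \Phi)$ is the disjoint union of $\partial \Fix(\Phi)$, $\Fix_+(\Phi)$, and $\Fix_-(\Phi)$.

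That these three sets cover $\Fix(\partial \Phi)$ and are pairwise disjoint is essentially Proposition~1.1 of \cite{gjll:index}: every fixed point of $\partial \Phi$ is either attracting, repelling, or a non-isolated fixed point lying in $\partial \Fix(\Phi)$. Attracting and repelling points are isolated in $\Fix(\partial \Phi)$ (this follows directly from the definitions: iterating a small forward-invariant neighborhood shrinks to the attractor, so no other fixed points can accumulate at it), and hence they cannot coincide with each other nor with a point of $\partial \Fix(\Phi)$, which, when $\rk \Fix(\Phi) \ge 2$, is a Cantor set with no isolated points. The only case requiring extra attention is $\rk \Fix(\Phi) \le 1$, where $\partial \Fix(\Phi)$ is either empty or of the form $\{a^-, a^+\}$ with $a$ a generator; this is again handled by Proposition~1.1 of \cite{gjll:index}, and the principality hypothesis $\FixN(\Phi) \ne \{a^-, a^+\}$ ensures the statement is not vacuous in this case.

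For the finiteness of $\Fix_+(\Phi)$ up to $\Fix(\Phi)$-orbits, I would invoke the index inequality of \cite{gjll:index}: the action index of $\Phi$, which is a sum of strictly positive contributions indexed by the $\Fix(\Phi)$-orbits in $\Fix_+(\Phi)$ together with a contribution from $\partial \Fix(\Phi)/\Fix(\Phi)$, is bounded above by $\rk(F_n) - 1$. Since each $\Fix(\Phi)$-orbit in $\Fix_+(\Phi)$ contributes a definite positive amount to this bounded sum, only finitely many such orbits can occur, which is the desired conclusion.

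The main obstacle, if there is one, is purely bookkeeping: extracting the precise cited statements from \cite{gjll:index} and matching their indexing conventions to the present notation, and then handling the degenerate low-rank cases for $\Fix(\Phi)$ where the principality hypothesis is needed to avoid pathological overlaps. No independent dynamical argument beyond \cite{gjll:index} and Lemma~\ref{basic fix} should be required.
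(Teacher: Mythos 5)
Your handling of the first assertion is essentially the paper's: both reduce it to Proposition~I.1 of \cite{gjll:index}, which says each point of $\Fix(\partial\Phi)$ is attracting, repelling, or lies in $\partial\Fix(\Phi)$, with principality disposing of the degenerate overlaps. (One small correction of emphasis: the hypothesis $\FixN(\Phi)\ne\{a^-,a^+\}$ is what guarantees \emph{disjointness} in the cyclic/trivial $\Fix(\Phi)$ case -- e.g.\ for $\Phi=i_a$ one has $a^+\in\partial\Fix(\Phi)\cap\Fix_+(\Phi)$ -- not merely that the statement is ``not vacuous.'') For the finiteness of $\Fix_+(\Phi)$ modulo $\Fix(\Phi)$, you take a genuinely different route: the paper simply quotes Lemma~2.5 of \cite{bfh:tits3} (the case $\Fix_+(\Phi)$ infinite), whereas you derive it from the GJLL index inequality, using that each $\Fix(\Phi)$-orbit of attracting points contributes $\tfrac12$ to $\mathrm{ind}(\Phi)=\rk(\Fix(\Phi))+\tfrac12\,a(\Phi)-1$ and the sum of positive indices over isogredience classes is at most $n-1$. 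That deduction is sound provided the inequality in \cite{gjll:index} is established without presupposing $a(\Phi)<\infty$ (which is how it is usually quoted), and it has the advantage of staying entirely within \cite{gjll:index}; the paper's citation is the more direct bookkeeping-free route. Note also that the second summand of the index is the rank term $\rk(\Fix(\Phi))-1$, not a count of $\partial\Fix(\Phi)/\Fix(\Phi)$, though this does not affect your argument since that term is bounded below on the classes that appear in the sum.
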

 
 \proof The first assertion follows from Proposition I.1 of \cite{gjll:index}.   The second is obvious if  $\Fix_+(\Phi)$ is finite and follows from Lemma~2.5 of  \cite{bfh:tits3} if $\Fix_+(\Phi)$ is infinite. 
 \endproof
 
\begin{remark} \label{rem:dense non-fixed points} {We sometimes say that $P \in \partial F_n$ is periodic if it is fixed by $i_a$ for some non-trivial $a \in \F_n$.  } Non-periodic points 
 are dense in $\FixN(\Phi)$ for  each $\Phi \in \cP(\phi)$. Lemmas~\ref{basic fix} and \ref{gjll} imply that no element of $\Fix_+(\Phi)$ is periodic.  If $\Fix_+(\Phi) \ne \emptyset$ then $ \Fix_+(\phi)$ is dense in $\FixN(\phi)$ and we are done.   Otherwise, $\Fix(\Phi)$ has rank at least two  and $ \FixN(\Phi) = \partial \Fix(\Phi)$.
\end{remark}

\begin{definition} \label{d:isogredience}
Two automorphisms $\Phi_1$ and $\Phi_2$ are in the same {\em isogredience class} if there exists $a \in F_n$ such that $\Phi_2 = i_a \Phi_1 i_a^{-1}$, in which case $\FixN(\Phi_2) = i_a\FixN(\Phi_1)$ and similarly for $\Fix_-(\Phi_2) ,\Fix_+(\Phi_2)$ and $\Fix(\Phi_2)$.  It follows that if   $\Phi_1$ and $\Phi_2$ are  isogredient then $[\Fix(\Phi_1)] = [\Fix(\Phi_2)]$ and  $[\FixN(\Phi_1)] = [\FixN(\Phi_2)]$ where $[ \ ]$ denotes the orbit  under the action of $F_n$ on sets of points in $\partial F_n$. It is easy to see that   isogredience defines an equivalence relation on  $\cP(\phi)$.   The {\em set of isogredience classes} of $\cP(\phi)$ is denoted $[\cP(\phi)]$.
\end{definition}

  We recall the following result from Remark~3.9 of \cite{fh:recognition}; see also Lemma~\ref{nielsen classes and principal lifts} of this paper. 
  
\begin{lemma}  $\cP(\phi)$ is a finite union of isogredience classes.
\end{lemma}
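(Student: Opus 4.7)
The plan is to work with a relative train track (in fact, a CT) representative $\fG$ of $\phi$, as recalled in Section~\ref{sec:ct}, and to set up a bijection between isogredience classes of automorphisms representing $\phi$ and $\f$-orbits of lifts $\ti f : \ti G \to \ti G$ of $f$. The bijection comes from the correspondence $\ti f \leftrightarrow \Phi$ noted in Section~\ref{s:lines}: replacing $\ti f$ by $T_a \ti f T_a^{-1}$ corresponds exactly to replacing $\Phi$ by $i_a \Phi i_a^{-1}$. So the problem reduces to bounding the number of $\f$-orbits of lifts whose associated automorphism is principal.

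Next, I would translate the principality condition into combinatorial data on the lift $\ti f$. By Lemma~\ref{gjll}, for a principal $\Phi$ the non-repelling fixed set $\FixN(\partial\Phi)$ decomposes as the disjoint union $\partial\Fix(\Phi) \sqcup \Fix_+(\Phi)$. In a CT representative, $\Fix(\Phi)$ is the subgroup of deck transformations commuting with $\ti f$, which arises from the fundamental group of the fixed subgraph of $\ti f$; and $\Fix_+(\Phi)$ is captured by fixed directions at fixed vertices of $\ti f$, each such direction initiating at most one $\Fix(\Phi)$-orbit of attracting eigenray endpoints. Thus each principal lift $\ti f$ is determined, up to the action of $\Fix(\Phi)$, by a vertex of $\ti G$ with sufficient combinatorial data — enough fixed directions and/or fundamental-group content so that $\FixN(\partial \Phi)$ is neither empty nor a single pair $\{a^-,a^+\}$.

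Since $G$ has only finitely many vertices and each vertex has only finitely many incident directions, there are only finitely many $\f$-orbits of such ``principal'' fixed vertices in $\ti G$. Each orbit contributes a single isogredience class of principal lifts, and every isogredience class of $\cP(\phi)$ arises in this way, yielding the desired finiteness.

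The main technical point is the translation between the dynamical definition of principality (in terms of $\FixN(\partial\Phi)$) and the combinatorial data of fixed vertices and fixed directions on a CT: in particular one must rule out ``hidden'' attracting fixed points not accounted for by eigenrays at fixed directions. This is essentially the content of Remark~3.9 of \cite{fh:recognition} and is refined in the later Lemma~\ref{nielsen classes and principal lifts} of this paper; the CT axioms, especially the periodic structure of fixed points and the classification of Nielsen paths, are precisely what make this translation work.
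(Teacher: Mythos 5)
Your proposal is correct and follows essentially the route the paper itself indicates: the paper recalls this lemma from Remark~3.9 of \cite{fh:recognition} and points to Lemma~\ref{nielsen classes and principal lifts}, whose proof rests on exactly the facts you use — that a lift is principal iff it has a fixed point, in which case it fixes a vertex of $\ti G$ (cited to \cite{fh:recognition}), and that any two principal lifts fixing vertices in the same $F_n$-orbit are isogredient, so the finitely many vertices of $G$ bound the number of isogredience classes. You correctly identify the only genuinely nontrivial input (that principality forces a fixed vertex, i.e.\ no ``hidden'' fixed points) and, like the paper, defer it to the cited results in \cite{fh:recognition}.
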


Our next lemma states that $[\FixN(\Phi)]$ determines the isogredience class of $\Phi \in \cP(\phi)$.
  
\begin{lemma}  Suppose that $\Phi_1, \Phi_2 \in \cP(\phi)$.   Then $\Phi_1$ and $\Phi_2$ are isogredient if and only if $[\FixN(\Phi_1)] = [\FixN(\Phi_2)]$.  More precisely,  $\Phi_2 = i_a \Phi_1 i_a^{-1}$ if and only if $\FixN(\Phi_2) = i_a\FixN(\Phi_1)$.
\end{lemma}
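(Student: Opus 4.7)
My plan is to prove the more precise statement, since the first assertion of the lemma follows immediately from it by choosing an $a$ witnessing $[\FixN(\Phi_1)] = [\FixN(\Phi_2)]$.

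The forward direction is bookkeeping. If $\Phi_2 = i_a \Phi_1 i_a^{-1}$, then $\partial \Phi_2 = \partial i_a \circ \partial \Phi_1 \circ \partial i_a^{-1}$, so $\partial i_a$ carries $\Fix(\partial \Phi_1)$ onto $\Fix(\partial \Phi_2)$ and preserves the attracting/repelling dichotomy (since these are topological dynamical properties that persist under conjugation by a homeomorphism). Hence $\partial i_a(\FixN(\Phi_1)) = \FixN(\Phi_2)$.

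For the reverse direction, assume $\FixN(\Phi_2) = \partial i_a \, \FixN(\Phi_1)$ and set $\Phi_1' := i_a \Phi_1 i_a^{-1}$. By the forward direction just proved, $\FixN(\Phi_1') = \FixN(\Phi_2)$, and clearly $\Phi_1' \in \cP(\phi)$ since principality and the outer class are preserved under isogredience. It therefore suffices to show that for $\Phi, \Phi' \in \cP(\phi)$ with $\FixN(\Phi) = \FixN(\Phi')$ one must have $\Phi = \Phi'$; applied to $\Phi_1'$ and $\Phi_2$ this gives $\Phi_2 = i_a \Phi_1 i_a^{-1}$.

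The main point, which I expect to be the only real content of the proof, is this uniqueness claim, and it is where the \emph{principal} hypothesis gets used. Suppose for contradiction that $\Phi \ne \Phi'$. Since both represent $\phi$, we have $\Phi' = i_b \Phi$ for some non-trivial $b \in F_n$. Every point of $\FixN(\Phi) = \FixN(\Phi')$ is then fixed by two distinct automorphisms representing $\phi$, so by Lemma~\ref{two automorphisms} it lies in $\{b^-,b^+\}$. Thus $\FixN(\Phi) \subseteq \{b^-,b^+\}$; combined with $|\FixN(\Phi)| \ge 2$ (from principality) this forces $\FixN(\Phi) = \{b^-,b^+\}$, directly contradicting the second clause of the definition of principality. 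Hence $\Phi = \Phi'$, completing the proof.
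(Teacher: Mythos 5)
Your proof is correct and essentially the paper's: the forward direction is conjugation equivariance of boundary fixed sets, and the converse reduces to showing that the inner automorphism relating $i_a\Phi_1 i_a^{-1}$ to $\Phi_2$ fixes $\FixN(\Phi_2)$ pointwise and hence is trivial, since by principality $\FixN(\Phi_2)$ has at least two points and is not $\{c^-,c^+\}$ for any non-trivial $c$. One small caveat: the literal statement of Lemma~\ref{two automorphisms} only yields, for each $P$, some element $a_P$ (a priori depending on $P$) with $P=a_P^{\pm}$, whereas you need a single two-point set; what you actually use --- and what that lemma's one-line proof supplies --- is that the single inner automorphism $\Phi'\Phi^{-1}=i_b$ fixes every such $P$, so it is cleaner to say this directly, exactly as the paper does.
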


\proof    It is obvious that if  $\Phi_2 = i_a \Phi_1 i_a^{-1}$  then $\FixN(\Phi_2) =  i_a\FixN(\Phi_1)$.  For the converse note that if  $\FixN(\Phi_2) = i_a\FixN(\Phi_1)= \FixN(i_a\Phi_1i_a^{-1})$ then   $ \Phi_2^{-1}   i_a \Phi_1 i_a^{-1}$ is an inner automorphism whose induced action on $\partial \f$ fixes $\FixN(\Phi_2)$ and so  is not equal to $\{a^-,a^+\}$ for any non-trivial $a$.  This proves  that  $ \Phi_2^{-1}   i_a \Phi_1 i_a^{-1}$ is trivial and so  $\Phi_2 = i_a \Phi_1 i_a^{-1}$.
\endproof

\begin{definition}\label{d:fix}
Define sets \[ \FixN(\phi):=\{ [\FixN(\Phi_1)],  \ldots, [\FixN(\Phi_m)]\}\] and \[\cR(\phi) := \{[P]: P \in  \cup_{i=1}^m  \Fix_+(\Phi_i)\} \subset \partial F_n/F_n\] 
 and a multi-set (repeated elements allowed)
\[ \Fix(\phi):=\{[\Fix(\Phi_1)], \dots,[\Fix(\Phi_m)]\}\] where the $\Phi_i$'s are representatives of the isogredience classes in $\cP(\phi)$.    Thus $\FixN(\phi)$ is a finite set of $F_n$-orbits of subsets of $\partial F_n$ and   $\cR(\phi)$ is a finite set of $F_n$-orbits   of points in $\partial F_n$.
\end{definition}

\begin{definition}
For us a {\it natural invariant of a group $G$} is a map $I:G\to X$ where $X$ is a $G$-set and, for all $\phi, \theta\in G$, we have $I(\phi^\theta)=\theta\big(I(\phi)\big)$. 
\end{definition}

The following lemma says that $[\cP(\phi)]$, $\Fix(\phi)$, $\FixN(\phi)$, and $\cR(\phi)$ are natural invariants of $\Out(\f)$.

\begin{lemma} \label{first theta}  Suppose that $\Theta \in \Aut(\f)$ represents $\theta \in \Out(\f)$ and that $\psi = \theta \phi \theta^{-1}$.  Then 
\begin{enumerate}
\item $\Phi \mapsto \Psi:= \Theta \Phi \Theta^{-1}$ defines a bijection between $\cP(\phi)$ and $\cP(\psi)$ and induces a bijection $[\cP(\phi)] \leftrightarrow [\cP(\psi)] $.
\item $\Fix(\Psi) = \Theta(\Fix(\Phi)),\  \FixN(\Psi) =   \Theta(\FixN(\Phi))  $ and $\Fix_+(\Psi) =   \Theta(\Fix_+(\Phi)) $.
\item $\Fix(\psi) = \theta(\Fix(\phi)),\  \FixN(\psi) = \theta(\FixN(\phi))  $ and $ \cR(\psi) =  \theta( \cR(\phi)) $.
\end{enumerate}
\end{lemma}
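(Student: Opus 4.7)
The plan is to establish the three parts in sequence, with part (1) as the main step and parts (2) and (3) following from direct computations combined with naturality of the boundary action.

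First I would observe that $\Psi := \Theta\Phi\Theta^{-1}$ represents $\psi = \theta\phi\theta^{-1}$ in $\Out(\f)$, so the formula $\Phi \mapsto \Theta\Phi\Theta^{-1}$ indeed maps $\Aut$-lifts of $\phi$ to $\Aut$-lifts of $\psi$, with obvious two-sided inverse $\Psi \mapsto \Theta^{-1}\Psi\Theta$. The main task is then to show this bijection restricts to a bijection $\cP(\phi) \leftrightarrow \cP(\psi)$. For that I would first establish the formulas in item (2), which amount to naturality of fixed sets under conjugation. The equality $\Fix(\Psi) = \Theta(\Fix(\Phi))$ is immediate: $x = \Psi(x) = \Theta\Phi\Theta^{-1}(x)$ iff $\Theta^{-1}(x) \in \Fix(\Phi)$. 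Passing to boundaries, $\partial\Psi = \partial\Theta \circ \partial\Phi \circ \partial\Theta^{-1}$, so $\Fix(\partial\Psi) = \partial\Theta(\Fix(\partial\Phi))$. Because $\partial\Theta$ is a homeomorphism, it carries attracting (resp.\ repelling) neighborhoods for $\partial\Phi$ to attracting (resp.\ repelling) neighborhoods for $\partial\Psi$, giving $\Fix_\pm(\Psi) = \partial\Theta(\Fix_\pm(\Phi))$ and hence $\FixN(\Psi) = \partial\Theta(\FixN(\Phi))$.

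Next, for item (1), I would use (2) to show $\Phi \in \cP(\phi)$ iff $\Psi \in \cP(\psi)$. Having at least two points in $\FixN$ is clearly preserved, since $\partial\Theta$ is injective. For the condition $\FixN(\Phi) \ne \{a^-,a^+\}$: using the equivariance identity $\partial\Theta(b^\pm) = \Theta(b)^\pm$ (which follows from $\partial\Theta \circ \partial i_b \circ \partial\Theta^{-1} = \partial i_{\Theta(b)}$ together with the characterization of $b^\pm$ as the fixed points of $\partial i_b$), we see $\FixN(\Psi) = \{c^-,c^+\}$ forces $\FixN(\Phi) = \{(\Theta^{-1}(c))^-, (\Theta^{-1}(c))^+\}$. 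So the principal condition is invariant under this bijection. To descend to isogredience, I would note $\Theta(i_a\Phi i_a^{-1})\Theta^{-1} = i_{\Theta(a)} \Psi i_{\Theta(a)}^{-1}$, so isogredient lifts map to isogredient lifts; symmetrically for $\Theta^{-1}$, giving a well-defined bijection $[\cP(\phi)] \leftrightarrow [\cP(\psi)]$.

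Finally, item (3) is obtained by choosing representatives $\Phi_1,\dots,\Phi_m$ of the isogredience classes of $\cP(\phi)$ and noting, by item (1), that $\Psi_1,\dots,\Psi_m$ represent the isogredience classes of $\cP(\psi)$. Applying item (2) and recalling that $\theta$ acts on $F_n$-orbits of subsets of $\partial\f$ or of $\f$ via $\partial\Theta$ or $\Theta$ respectively (well-definedness on orbits following from the equivariance $\partial\Theta \circ \partial i_a = \partial i_{\Theta(a)} \circ \partial\Theta$), we obtain $\Fix(\psi) = \theta(\Fix(\phi))$, $\FixN(\psi) = \theta(\FixN(\phi))$, and $\cR(\psi) = \theta(\cR(\phi))$. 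I do not expect a serious obstacle here: the only subtlety is bookkeeping the distinction between the action of $\Theta \in \Aut(\f)$ on subsets of $\f$ or $\partial\f$ and the action of $\theta \in \Out(\f)$ on conjugacy classes of such subsets, which is handled uniformly by the equivariance of $\partial\Theta$ under inner automorphisms.
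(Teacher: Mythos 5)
Your proof is correct and follows essentially the same route as the paper's: the conjugation identity $\Theta(i_c\Phi i_c^{-1})\Theta^{-1}=i_{\Theta(c)}\Psi i_{\Theta(c)}^{-1}$ for isogredience, the equivariance $\partial\Theta(a^\pm)=(\Theta(a))^\pm$ to preserve principality, and passing to $F_n$-orbits by noting that changing $\Theta$ within $\theta$ only conjugates the fixed sets by an inner automorphism. You simply spell out the boundary-fixed-set computations that the paper dismisses as "easy standard facts about conjugation."
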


\proof  The automorphism $\Psi$ represents $\theta \phi \theta^{-1}= \psi \in \Out(F_n)$.     If $\Phi' =  i_c \Phi i_c^{-1}$ then $\Psi' := \Theta \Phi' \Theta^{-1} =   i_{\Theta(c)} (\Theta\Phi\Theta^{-1}) i_{\Theta(c)}^{-1} = i_{\Theta(c)} \Psi i^{-1}_{\Theta(c)} $ so conjugation by $\Theta$ maps isogredience classes of $\phi$ to isogredience classes of $\psi$.    The items in (2) are easy standard facts about conjugation.  Since $ \Theta(a^{\pm})= (\Theta(a))^{\pm}$,  it follows that $\Psi$ is principal if $\Phi$ is principal.    The induced map $\cP(\phi) \to \cP(\psi)$ is obviously invertible and is hence a bijection.   This completes the proof of (1).   If $\Theta$ is replaced by $i_a\Theta$ then $\Psi$ is replaced by $i_a \Psi i_a^{-1}$ and $\Fix(\Psi), \FixN(\Psi)$ and $\Fix_+(\Psi)$ are replaced by $i_a(\Fix(\Psi)),  i_a(\FixN(\Psi))$ and $ i_a(\Fix_+(\Psi))$ respectively.   Thus $\theta([\Fix(\Phi)]) = [\Fix(\Psi)], \  \theta([\FixN(\Phi)]) = [\FixN(\Psi)]$ and $\theta([\Fix_+(\Phi)]) = [\Fix_+(\Psi)]$.  This verifies (3).  
\endproof

{The following lemma is used implicitly throughout the paper.}
\begin{lemma}   {If $A$ is a $\phi$-invariant  free factor   then the inclusion of $\partial A$ into $\partial F_n$ induces an  inclusion of $\cR(\phi | A)$ into $\cR(\phi)$.}
\end{lemma}

\proof  {   An automorphism $\Phi' : A \to A$ representing $\phi | A$ extends to an automorphism $\Phi : F_n \to F_n$ representing $\phi$.  We claim that if $P \in \partial A$ then $P \in \Fix_+(\Phi')$ if and only if $P \in \Fix_+(\Phi)$.  Symmetrically, $P \in \Fix_-(\Phi')$ if and only if $P \in \Fix_-(\Phi)$.   It follows that $\FixN(\Phi') \subset  \FixN(\Phi)$ and hence that $\Phi$ is principal if $\Phi'$ is principal.  This will complete the proof of the lemma.}

{To prove the claim, extend a  basis $\A$ for $A$ to a basis $\B $ for $\f$. Following   \cite{gjll:index}, we view $P \in \partial F_n$ as an infinite word $ P = x_1x_2x_3\ldots$  with each $x_i \in \B$.   For each $i \in \mathbb N$, let $x_1,\ldots, x_{k(i)}$ be the common initial segment of $\Phi(x_1\ldots x_i)$ and $P$.  Then $P \in \Fix_+(\Phi)$ if and only if $k(i) - i \to \infty$ \cite[Proposition I.1]{gjll:index}.  If $P \in \partial A$ then  each $x_i  \in \A$  and each $\Phi(x_1\ldots x_i) = \Phi'(x_1\ldots x_i) \in A$ so $k(i)$ is the same whether we compute using $\Phi$ or $\Phi'$.  }
\endproof

\subsection{UPG is rotationless}  \label{upg is rotationless}Relative train track theory is most effective when  applied to elements of $\Out(F_n)$ that are rotationless as defined in \cite[Definition 3.13 and Remark 3.14]{fh:recognition}.    In this section, we show that for \pg\  elements,  $\phi$ is rotationless if and only if $\phi$ is \upg.  The exact definition of rotationless plays no role in this paper so is not repeated here.

\begin{lemma} \label{lem:rotationless} Each $\phi \in \upg$ is rotationless.
\end{lemma} 

\proof   
By \cite[Proposition 5.7.5]{bfh:tits1}, there is a sequence  $\F_0 \sqsubset \F_1 \sqsubset \ldots \sqsubset \F_K$ of $\phi$-invariant one-edge extensions where $\F_0$ is trivial and $\F_K = \{[F_n]\}$.   We may assume without loss that  $\F_0 \sqsubset \F_1 \sqsubset \ldots \sqsubset \F_K$ is a maximal such chain. 

\cite[Theorem 2.19]{fh:recognition}, which makes no assumptions on $\phi$,   proves the existence of a relative train track map $\fG$  and filtration $\filt$ representing $\phi$ and satisfying a certain list of five properties, two of which are denoted by (P) and  (NEG).  Additionally, the filtration realizes   $\F_1 \sqsubset \F_2 \sqsubset \ldots \sqsubset \F_K$  in the sense that  each $\F_k$   is represented by a core filtration element  $G_{i_k}$.  Since  $\F_0 \sqsubset \F_1 \sqsubset \ldots \sqsubset \F_K$  is maximal, $G_{i_k}$  is obtained from $G_{i_{k-1}}$ by adding either  a topological circle that is disjoint from $G_{i_{k-1}}$  or  a topological arc with both endpoints in $G_{i_{k-1}}$ \cite[Part II Lemma 2.5]{handelMosher:subgroups}.  We denote the closure of $G_{i_k} \setminus G_{i_{k-1}}$, equipped with the simplicial structure inherited from $G_{i_k}$,  by $\hat H_k$.   Since $\phi$ is \pg, there are no EG strata.

We use the following consequences of properties (P) and (NEG).
\begin{enumerate}
\item   The terminal endpoint of a non-periodic edge in $\hat H_k$ is contained in $G_{i_{k-1}}$.
\item  If $\hat H_k$ contains a periodic edge then it is a single periodic stratum \cite[Lemma~2.20(1)]{fh:recognition}. 
\end{enumerate}

 If $\hat H_k$ is a circle that is disjoint from $ G_{i_{k-1}}$, then its conjugacy class is fixed by some iterate of $\phi$ and so is fixed by $\phi$ \cite[Proposition 3.16]{bfh:tits2}.  There are two possibilities;  $\hat H_k$ is a single fixed edge; or $\hat H_k$  has more than one   edge and $f|\hat H_k$ is a non-trivial rotation with one orbit of edges.  In the latter case, we say that $\hat H_k$ is a {\it rotating circle}.
 
 If $\hat H_k$ intersects $ G_{i_{k-1}}$ then it is a topological arc $E_k$ whose ends may or may not be identified.  Either $f(E_k) =  v_k E_k u_k$ or $f(E_k) = v_k \bar E_k u_k$ for some paths $u_k,v_k \subset G_{i_{k-1}}$ \cite[Corollary 3.2.2]{bfh:tits1}.   Since $\phi$ is \upg, the latter is ruled out by \cite[Proposition 5.7.5(2)  - see the second paragraph on page 595 ]{bfh:tits2}.  
If both $u_k$ and $v_k$ are trivial then $E_k$  is a single fixed edge.  If exactly one of $u_k$ and $v_k$ is trivial then $E_k$ is a single non-periodic NEG edge. If neither $u_k$ and $v_k$ are trivial then  $E_k$ consists of two non-periodic NEG edges with a common fixed initial endpoint.  In all three cases, the directions determined by $E_k$ and $\bar E_k$ are either non-periodic or fixed.

An easy induction argument on $k$ shows that  
\begin{enumerate}[(a)] 
\item If a vertex $v$ is not contained in a rotating circle then $v$  is fixed by $f$ and each periodic direction based at $v$ is fixed by $f$.
\item Each rotating circle is a component of $\Per(f)$, the set of periodic points for $f$, and each point in a rotating circle has exactly two periodic directions.
\end{enumerate}
These are exactly the conditions needed to verify that $\fG$  is rotationless in the sense of \cite[Definition 3.18]{fh:recognition}. \cite[Proposition 3.19]{fh:recognition} states that the existence of a rotationless $\fG$ satisfying the conclusions of \cite[Theorem 2.19]{fh:recognition} is equivalent to   $\phi$ being rotationless.
\endproof 

\begin{remark} The converse of Lemma~\ref{lem:rotationless}, that every rotationless \pg\ $\phi$ is \upg,  is also true.  We make no use of this fact but include a proof for completeness.  See Section~\ref{sec:ct} for a review of \ct s. Since $\phi$ is rotationess and $\pg$, it   is represented by a \ct\ $\fG$ without \eg\ or zero strata.  For any such  $\fG$, there is a filtration $\filt$ by $f$-invariant core subraphs  such that $G_{i}$ is obtained from $G_{i-1}$ by adding a single topological edge $E_{i}$ whose image $f(E_i) \subset G_i$ crosses $E_i$ exactly once and crosses $\bar E_i$ not at all.    \cite[Proposition 5.7.5]{bfh:tits1} therefore implies that $\phi$ is \upg.   
\end{remark}

\subsection{\ct s}  \label{sec:ct}  
A {\em \ct}\ is a particularly nice kind of homotopy equivalence $\fG$ of a marked directed graph.   Every rotationless $\phi$, and in particular every $\phi \in \upgn$, is represented by  a \ct; see  \cite[Theorem 4.28]{fh:recognition} or \cite[Theorem 5.1.8]{bfh:tits1}.  Moreover, \ct s  are considerably simpler in the $\upgn$-case than in the general case. 

For the remainder of the section we assume that $\fG$ is a \ct\ representing an element of $\upgn$ and  review its properties. Complete details can be found in \cite{fh:recognition}   (see in particular Section 4.1) and in \cite{fh:CTconjugacy}.  The latter introduces the (Inheritance) property for a \ct \ $\fG$, which states that the restriction of $f$ to each component of each core filtration element is also a \ct, and contains an algorithm to produce \ct s satisfying  (Inheritance).  We say that $\fG$ {\it realizes} a chain $\F_0 \sqsubset \F_1 \sqsubset \ldots \sqsubset \F_K$ of  $\phi$-invariant free factor systems if each $F_j$ is realized by an $f$-invariant core subgraph of $G$; see Section~\ref{sec:ffs}.

 \begin{thm}[{\cite[Theorem~1.1]{fh:CTconjugacy}}]\label{t:ct is algorithmic}
There is an algorithm whose input is a rotationless $\phi\in\Out(\f)$ and whose output is a \ct\ $f:G\to G$ that represents $\phi$ and satisfies (Inheritance). Moreover, for any   chain $\mathcal C$ of $\phi$-invariant free factor systems, one can choose $f:G\to G$ to realize $\mathcal C$.
\end{thm}

We assume throughout this paper that our chosen \ct s  satisfy (Inheritance).

The marked graph $G$ comes equipped with an $f$-invariant  filtration $\emptyset = G_0 \subset G_1 \subset \ldots \subset G_N =  G$ by subgraphs $G_i$  in which each $G_{i}$ is obtained from $G_{i-1}$ by adding a single oriented edge $E_i$. For each $E_i$ there is a  (possibly trivial) closed path   $u_i \subset G_{i-1}$ such that  $f(E_i) = E_i u_i$; if $u_i$  is  non-trivial then it forms a circuit.     A path or circuit has {\em height i} if it crosses $E_i$, meaning that either $E_i$ or $\bar E_i$ occurs in its edge decomposition, but does not cross  $E_j$ for any $j > i$.

\begin{excont*} \label{ex.a}
$\fG$ is a \ct\ with $f$-invariant filtration $\emptyset=G_0\subset G_1\subset\dots\subset G_7=G$ given by adding one edge at a time in alphabetical order.
\end{excont*}

Every map $\alpha$ into $G$ with domain a closed interval or $S^1$  and with endpoints, if any, at vertices   is properly homotopic rel endpoints  to a path or circuit $[\alpha]$;  we say that $[\alpha]$ is obtained from $\alpha$ by {\em tightening}.   If $\sigma$ is a   path or circuit then  we usually denote $[f(\sigma)]$ by $f_\#(\sigma)$.   A decomposition into subpaths $\sigma = \sigma_1 \cdot \sigma_2 \cdot \ldots$ is a {\em splitting} if $f^k_\#(\sigma) =   f^k_\#(\sigma_1)\cdot  f^k_\#(\sigma_2) \cdot \ldots$ for all $k \ge 1$.   In other words, $f^k(\sigma)$ can be tightened by tightening each $f^k(\sigma_i)$.

A finite path $\sigma$  is a {\em Nielsen path} if  $f_\#(\sigma) = \sigma$; it is an {\em \iNp} if it is not a fixed edge and does not split into a non-trivial concatenation of Nielsen paths.  Every Nielsen path has a splitting into fixed edges and \iNp s.   If $f^k_\#(\sigma) =  \sigma$ for some $k \ge 1$ then $\sigma$ is a {\em periodic Nielsen path}.  In a \ct, every periodic Nielsen path is a Nielsen path.   
 
 An edge $E_i$ is {\em linear} if $u_i$ is a non-trivial Nielsen path. 
 The {\em set of oriented linear edges} is denoted $\lin(f)$ and the set obtained from $\lin(f)$ by reversing orientation is denoted $\lin^{-1}(f)$.   In our example, $\lin(f)=\{b,p\}$. 
 
 Associated to a CT $\fG$ is a finite set  of non-trivial closed Nielsen paths called {\em \twistpath s}. This set is well-defined up to a change of orientation on each path. In the remainder of this paragraph we recall some useful properties of twist paths. Each twist path $w$ determines a circuit $[w]$ in $G$ {representing a root-free}\footnote{Non-trivial $a\in\f$ is {\it root-free} if $x\in\f$ and $x^k=a$ implies $k=\pm 1$.} conjugacy class in $\f$ and distinct twist paths determine distinct unoriented circuits; i.e.\  circuits whose cyclic edge decompositions differ by more than a change of orientation.   For each  \twistpath\ $w$,  the set $\lin_w(f)$ of  (necessarily linear) edges $E_i$ such that $f(E_i) = E_iw^{d_i}$ for some $d_i \ne 0$ is non-empty  and is called the {\em linear family associated to $w$};   note that  $f^k_\#(E_i) =  E_i w^{kd_i}$ grows linearly in $k$.   Every linear edge belongs to one of these linear families. If $E_i \in \lin_w(f)$  and $p\ne 0$ then   $E_i w^p \bar E_i$ is an \iNp.  All   \iNp s have this form.  If $E_i$ and $E_j$ are distinct edges in $ \lin_w(f)$ then  $d_i \ne d_j$;  if       $d_i$ and $d_j$ have the same sign, then paths of the form $E_i w^p \bar E_j$ are {\em exceptional paths} associated to $w$.   Note that $f^k_\#(E_i w^p \bar E_j) =  E_i w^{p+k(d_i - d_j)} \bar E_j$ so these paths also grow linearly under iteration.   Exceptional paths have no non-trivial splittings (which would not be true if we allowed $d_i$ and $d_j$ to have the opposite sign).  
 
 \begin{excont*}\label{ex.b}
 In our example, we choose our set of twist paths to be $\{a\}$, as opposed to $\{a^{-1}\}$. 
 \end{excont*}
    
A splitting $\sigma = \sigma_1 \cdot \sigma_2 \cdot \ldots$ is a {\it complete splitting} if each $\sigma_i$ is either a single edge or an \iNp\ or an exceptional path.   If $\sigma_i$ is not a Nielsen path then it is {\it a growing term}; if at least one $\sigma_i$ is growing then {\it $\sigma$} is growing.  We say that $\sigma_i$ is   {\it a linear term} if  it is exceptional or equal to $E$ or $\bar E$ for some $E \in \lin(f)$.   Complete splittings are unique when they exist  (Lemma~4.11 of \cite{fh:recognition}).  A path with a complete splitting is said to be {\it completely split}.    For each edge $E_i$  there is a complete splitting of $f(E_i)$  whose first term is $E_i$ and whose remaining terms define a complete splitting of $u_i$.   The image under $f_\#$ of a completely split path or circuit is completely split.  For each path or circuit $\sigma$, the image $f_\#^k(\sigma)$ is completely split for all sufficiently large $k$ \cite[Lemma 4.25]{fh:recognition}.
  
The {\em set of oriented non-fixed non-linear edges} is denoted $\E_f$  and the set obtained from $\E_f$ by reversing orientation is denoted $\E_f^{-1}$.  We say that an edge in $\E_f$ or $\E_f^{-1}$ has  {\em  higher order}.    An easy induction argument shows that, for each $E _i\in \E_f$ and  $k \ge 1$,   $f^{k}_\#(E_i)$ is completely split and 
   \[ f^k_\#(E_i) = E_i \cdot u_i \cdot f_\#(u_i) \cdot \ldots \cdot f^{k-1}_\#(u_i)\]
Thus $f^{k-1}_\#(E_i)$ is an initial segment of $f^k_\#(E_i)$ and the union  
\[ R_{E_i} = E \cdot u_i \cdot f_\#(u_i) \cdot f^2_\#(u_i) \cdot \ldots \]
of this nested sequence is an $f_\#$-invariant ray called the {\em eigenray associated to $E_i$}.  The complete splittings of the individual $f^{k}_\#(u_i)$'s  define a complete splitting of $R_{E_i}$.  

\begin{excont*}\label{ex.c}
In our example, the edge $a$ is fixed, the edges $b$ and $p$ are linear, and the other edges have higher order, i.e.\ $\E_f=\{c,d,e,q\}$.
As an example of an eigenray, $$R_q=q\cdot c\cdot cb\cdot cbba\cdot \ldots \cdot cbba\ldots ba^{k-1}\cdot\ldots$$
\end{excont*}

\begin{lemma}  \label{not crossed}  If $E \in \E_f \cup \E_f^{-1}$  then $E$  is   not crossed by any Nielsen path or exceptional path.   In particular, each crossing of $E$ by a completely split path is a term in the complete splitting of that path.
\end{lemma}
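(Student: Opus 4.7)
The plan is to analyze which edges can appear in a Nielsen path or an exceptional path, using the structural descriptions of iNps, exceptional paths, and twist paths reviewed just above the statement. Once one knows that such paths cross only fixed and linear edges, the ``in particular'' clause is immediate from the definition of complete splitting.

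The main assertion will be proved by induction on the height of the path. Suppose the claim holds for all Nielsen and exceptional paths of height strictly less than $i$. Any Nielsen path of height $\leq i$ splits into fixed edges and iNps, so it suffices to treat iNps and exceptional paths of height exactly $i$. In the \upg\ case every iNp has the form $E_j w^p \bar E_j$ and every exceptional path has the form $E_i w^p \bar E_j$, where $E_i, E_j$ are linear edges in some $\lin_w(f)$ and $w$ is the associated twist path. Since $f(E_j) = E_j w^{d_j}$ and $u_j = w^{d_j} \subset G_{j-1}$, the twist path $w$ is a closed path of strictly lower height than $E_j$. The twist path $w$ is itself a closed Nielsen path, so by the inductive hypothesis $w$ crosses only fixed and linear edges. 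Combining this with the fact that the remaining edges appearing in the iNp or exceptional path are the linear edges $E_i, E_j$, we conclude that every iNp and every exceptional path of height $i$ crosses only fixed and linear edges, completing the induction. In particular, no Nielsen path and no exceptional path crosses any edge of $\E_f \cup \E_f^{-1}$.

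For the ``in particular'' clause, let $\sigma$ be a completely split path with splitting $\sigma = \sigma_1 \cdot \sigma_2 \cdot \ldots \cdot \sigma_r$, each term being a single edge, an iNp, or an exceptional path. If $E \in \E_f \cup \E_f^{-1}$, the first part rules out any iNp or exceptional path term crossing $E$, so every crossing of $E$ in $\sigma$ must occur as a single-edge term $\sigma_k$ equal to $E$ or $\bar E$, as required.

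The only subtle point is the well-foundedness of the induction: one must verify that the height of $w$ is strictly less than the height of the iNp/exceptional path containing it. This follows from the fact that $w \subset G_{j-1}$ whenever $E_j \in \lin_w(f)$, which is built into the \ct\ filtration. Beyond that, the argument is a direct unpacking of the structural facts about Nielsen paths, exceptional paths, and twist paths recalled from Section~\ref{sec:ct}, so I do not anticipate any further obstacle.
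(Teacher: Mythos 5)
Your proof is correct and follows essentially the same route as the paper: a downward induction on height, using that Nielsen paths decompose into fixed edges and \iNp s of the form $E_jw^p\bar E_j$ (and exceptional paths $E_iw^p\bar E_j$), so that any crossing of a higher-order edge would be forced into the twist path $w$, a Nielsen path of strictly lower height. Your write-up merely spells out explicitly the exceptional-path case and the well-foundedness that the paper leaves as ``the obvious induction argument.''
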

  
\proof
Suppose that some Nielsen path $\sigma$ crosses $E$. Since $\sigma$ is a concatenation of fixed edges and \iNp s and since every \iNp\ has the form $E_iw^p \bar E_i$ for some linear edge and \twistpath\ $w$, $E$ must be crossed by some Nielsen path $w$ with height lower than $\sigma$.   The obvious induction argument completes the proof.  
\endproof

\begin{remark}   One can define $R_E$ for a linear edge $E$ in the same way that one does for a higher order edge.   If $f(E) = Ew^d$ for some \twistpath\ $w$ then $R_E = Ew^\infty$ if $d > 0$ and $R_E = Ew^{-\infty}$ if $d < 0$.  These rays play a different role in the theory than eigenrays. 
\end{remark}

\subsection  {Principal lifts from the \ct\ point of view}  \label{sec:principal}
Suppose that $\fG$ is a \ct\ representing $\phi$.
If $\Phi$ is a principal lift for $\phi$ then we say that the corresponding  {\em $\ti f$  is a principal lift of $\fG$}.
 
\begin{lemma} \label{principal lifts have fixed points} A lift $\ti f : \ti G \to \ti G$ is principal if and only if $\Fix(\ti f) \ne \emptyset$ in which case $\Fix(\ti f)$ contains a vertex.
\end{lemma}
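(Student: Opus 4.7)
The plan is to prove both directions using the CT structure of $\fG$ and the decomposition $\FixN(\partial\Phi) = \partial\Fix(\Phi) \sqcup \Fix_+(\Phi)$ from Lemma~\ref{gjll}. First a preliminary reduction: since every non-fixed edge $E$ of a CT satisfies $f(E) = E \cdot u_E$ with $u_E$ a non-trivial circuit, $\ti f$ strictly stretches any lift of such an $E$ and therefore has no interior fixed point in such a lift. Hence any fixed point of $\ti f$ is either a vertex of $\ti G$ or lies in a lift of a fixed edge, whose two endpoints are themselves fixed vertices. Thus $\Fix(\ti f) \ne \emptyset$ automatically implies $\Fix(\ti f)$ contains a vertex, so the ``contains a vertex'' clause follows once the ``if and only if'' is established.

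For the forward direction, let $\Phi$ be the automorphism with $\partial \Phi = \partial \ti f$ and take two distinct points $P_1, P_2 \in \FixN(\Phi)$. Let $L \subset \ti G$ be the unique bi-infinite line with ends $\{P_1, P_2\}$ in $\partial F_n$; since $\partial \ti f$ fixes both ends of $L$ and a line in the tree $\ti G$ is determined by its pair of ends, $\ti f(L) = L$ setwise. The restriction $\ti f|_L$ is then a continuous self-map of $L \cong \R$ fixing both ideal endpoints. If $\ti f|_L$ had no fixed point in $L$, then $\ti f|_L(x) - x$ would be a nowhere-zero continuous function, hence of constant sign by the intermediate value theorem; so $\ti f|_L$ would move every point strictly in one direction along $L$, making one end attracting and the other repelling in $\partial \ti f$. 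That would place one of $P_1, P_2$ in $\Fix_-(\Phi)$, contradicting $P_1, P_2 \in \FixN(\Phi)$. Hence $\ti f$ has a fixed point on $L$.

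For the reverse direction, suppose $\Fix(\ti f) \ne \emptyset$ and pick a fixed vertex $\ti v$. For each edge $\ti E$ emanating from $\ti v$ whose projection $E$ to $G$ has $v$ as CT-initial vertex, $\ti f(\ti E) = \ti E \cdot \ti u_E'$, so the nested sequence $\{\ti f_\#^k(\ti E)\}_{k\ge 0}$ ascends to an $\ti f_\#$-invariant ray $\ti R_{\ti E}$ at $\ti v$ whose endpoint in $\partial F_n$ is a non-repelling fixed point of $\partial \ti f$; distinct edges yield distinct endpoints in the tree. The no-valence-one assumption on $G$ together with the CT structure ensures at least two such CT-outgoing edges at any fixed vertex, so $|\FixN(\Phi)| \geq 2$.

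Ruling out $\FixN(\Phi) = \{a^-, a^+\}$ is the main obstacle. Assume for contradiction the equality. Since $\partial\Phi$ fixes both ends of $\Axis(T_a)$, $\Phi(a) = a^k$ for some $k \ge 1$, and polynomial growth of $\phi \in \upgn$ forces $k = 1$; so $a \in \Fix(\Phi)$, $T_a$ commutes with $\ti f$, $\Axis(T_a)$ is $\ti f$-invariant, and (since both fixed rays from $\ti v$ end on $\Axis(T_a)$) $\ti v$ lies on $\Axis(T_a)$. I then produce a further non-repelling fixed end of $\partial \ti f$ distinct from $a^\pm$: if some CT-outgoing edge at $\ti v$ is a higher-order edge, its eigenray has endpoint in $\Fix_+(\Phi)$, which is disjoint from $\partial\Fix(\Phi) \ni a^\pm$ by Lemma~\ref{gjll} and so cannot equal $a^\pm$; otherwise all CT-outgoing edges at $\ti v$ are fixed or linear, and the structural constraints of the CT combined with $a \in \Fix(\Phi)$ must be shown to force a fixed end off $\Axis(T_a)$, contradicting the assumption.
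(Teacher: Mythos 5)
Your forward direction rests on a false step. From $\partial\ti f$ fixing both ends of $L$ you cannot conclude that $\ti f$ restricts to a self-map of $L$ (only $\ti f_\#(L)=L$ after tightening, and the image of a point of $L$ may lie far off $L$), so the intermediate value argument does not apply as written; and even granting that $\ti f$ translates along $L$, the inference ``one end attracting and the other repelling in $\partial\ti f$'' is false, because attracting/repelling are statements about neighborhoods in $\partial F_n$, not about the dynamics on a single line. Concretely, take the rose with $f(a)=a$, $f(b)=ba$ and the base lift $\ti f$ fixing a lift $\ti v$ of the vertex, with $\Phi$ the corresponding (principal) automorphism; then $bab^{-1}\in\Fix(\Phi)$, so both ends of the axis $L$ of $T_{bab^{-1}}$ lie in $\partial\Fix(\Phi)\subset\FixN(\Phi)$, yet $\ti f$ maps $L$ to itself translating it by $bab^{-1}$ and has no fixed point on $L$. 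So for this choice of $P_1,P_2\in\FixN(\Phi)$ your claimed conclusion ``$\ti f$ has a fixed point on $L$'' fails, and the argument cannot establish the forward implication. The fact you need — that two nonrepelling fixed points not of the form $\{a^\pm\}$ force a fixed point of the lift — is exactly the nontrivial boundary-dynamics content (bounded cancellation style arguments as in \cite{gjll:index} and \cite{fh:recognition}), which is why the paper's proof simply cites Corollaries~3.17, 3.27 and Remark~4.9 of \cite{fh:recognition} together with the fact that $\Fix(f)$ is a union of vertices and fixed edges.

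The reverse direction also has gaps. Your count ``the no-valence-one assumption ensures at least two CT-outgoing edges at any fixed vertex'' is unjustified and false in general: a vertex can be the terminal endpoint of all but one (or all but a fixed) incident edge, and for a \emph{fixed} outgoing edge the nested sequence $\ti f^k_\#(\ti E)$ does not grow, so it produces no point of $\FixN(\Phi)$ at all; the correct count goes through fixed directions (including those coming from the closed paths $u_i$, via \cite[Lemma~4.21]{fh:recognition}) and through ends of $\partial\Fix(\Phi)$ reached by Nielsen paths inside $\Fix(\ti f)$, i.e.\ through the eigengraph machinery, not just through outgoing higher-order or linear edges at the single vertex $\ti v$. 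Finally, the crucial subcase of ruling out $\FixN(\Phi)=\{a^-,a^+\}$ when all fixed directions at $\ti v$ are fixed or linear is, as you yourself write, ``must be shown'' — it is not shown, and it is precisely the kind of structural statement about CTs that the cited results of \cite{fh:recognition} supply. As it stands the proposal proves neither implication.
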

 
\proof  This follows from Corollary~3.17, Corollary~3.27 and Remark 4.9 of \cite{fh:recognition} and the fact that  $\Fix(f)$ is a union of vertices and fixed edges.
\endproof
 
\begin{lemma} \label{basic fix  2}  Suppose that  $\ti f : \ti G \to \ti G$ is the  lift of $\fG$ corresponding to $\Phi \in \cP(\phi)$ and that $a \in F_n$. The following are equivalent:
\begin{itemize}
\item   $a \in \Fix(\Phi)$; 
\item   $ i_a | \partial \f =  T_a | \partial \f$ commutes with $ \Phi | \partial \f=  \ti f | \partial \f$; 
\item  $T_a | \ti G$ commutes with $\ti f | \ti G$;
\item $T_a (\Fix(\ti f | \ti G)) = \Fix(\ti f | \ti G)$.
\end{itemize}
 
\end{lemma}

\proof This is well known.  All but the  equivalence of the third and fourth bullets can be found in  Lemma~2.4 of \cite{bfh:tits3}.  If $T_a | \ti G$ commutes with $\ti f| \ti G$ then it  preserves the fixed point set of $\ti f | \ti G$.  Conversely, if $\ti x, T_a(\ti x) \in \Fix(\ti f| \ti G)$ then $\ti f | \ti G$ and $T_a \ti f T_a^{-1} | \ti G$ both fix $T_a(\ti x)$ and so must be equal.  This proves the  equivalence of the third and fourth bullets.
\endproof

We say that lifts  $\ti f_1$ and $\ti f_2$ are {\em isogredient} if they correspond to isogredient automorphisms $\Phi_1$ and $\Phi_2$.  Equivalently,   $\ti f_2 = T_a \ti f_1 T_a^{-1}$ for some covering translation $T_a$.  Recall that $x,y \in \Fix(f)$ are {\em Nielsen equivalent} if they are the endpoints of a Nielsen path or equivalently, if for each lift $\ti x$, the unique lift $\ti f$ that fixes $\ti x$ also fixes some lift $\ti y$ of $y$.  

\begin{lemma}\label{nielsen classes and principal lifts}    The map which assigns to each principal lift $\ti f : \ti G \to \ti G$ the projection into $G$ of $\Fix(\ti f)$ induces  a bijection between the set of  isogredience classes of principal lifts and the set of  Nielsen classes for  $f$.   In particular, there are only finitely many   isogredience classes of principal lifts.
\end{lemma}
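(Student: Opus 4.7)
\bigskip

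The plan is to verify the map is well-defined, and then establish the bijection via explicit inverse constructions. Throughout, I use that a lift of $\fG$ is uniquely determined by its value at a single point.

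\emph{Well-defined.} If $\ti f_2 = T_a \ti f_1 T_a^{-1}$, then directly $\Fix(\ti f_2) = T_a(\Fix(\ti f_1))$, so these sets have the same projection to $G$. Thus the map factors through isogredience classes.

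\emph{The projection is always a single Nielsen class.} Fix a principal lift $\ti f$. Given $\ti x, \ti y \in \Fix(\ti f)$, let $\ti\tau$ be the path in $\ti G$ from $\ti x$ to $\ti y$. Since $\ti f$ fixes both endpoints and $\ti G$ is a tree, $\ti f \circ \ti\tau$ is properly homotopic rel endpoints to $\ti\tau$; projecting to $G$ shows the tightened projection of $\ti\tau$ is a Nielsen path between their images $x, y \in \Fix(f)$. Conversely, suppose $y \in \Fix(f)$ is Nielsen equivalent to the projection $x$ of some $\ti x \in \Fix(\ti f)$, witnessed by a Nielsen path $\tau$ from $x$ to $y$. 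Lifting $\tau$ starting at $\ti x$ gives a path $\ti\tau$ ending at some lift $\ti y$ of $y$. Since $\tau$ is Nielsen and $\ti f$ fixes the initial endpoint of $\ti\tau$, the lift $\ti f \circ \ti\tau$ is a lift of $\tau$ starting at $\ti x$, hence equals $\ti\tau$; in particular $\ti f(\ti y) = \ti y$. Therefore the projection of $\Fix(\ti f)$ coincides exactly with the Nielsen class of $x$.

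\emph{Surjectivity.} Given a Nielsen class $N \subset \Fix(f)$, pick $x \in N$ and any lift $\ti x \in \ti G$. There is a unique lift $\ti f$ of $f$ with $\ti f(\ti x) = \ti x$. Then $\Fix(\ti f) \ne \emptyset$, so by the previous lemma $\ti f$ is principal, and by the previous paragraph its projection is $N$.

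\emph{Injectivity.} Suppose principal lifts $\ti f_1, \ti f_2$ project to the same Nielsen class. Pick a common projected fixed point $x \in G$ and lifts $\ti x_i \in \Fix(\ti f_i)$. Write $\ti x_2 = T_a \ti x_1$ for some covering translation $T_a$. Then both $\ti f_2$ and $T_a \ti f_1 T_a^{-1}$ are lifts of $f$ fixing $\ti x_2$, so they are equal, exhibiting $\ti f_1 \sim \ti f_2$ as isogredient.

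\emph{Finiteness.} In a \ct, $\Fix(f)$ is a finite union of fixed vertices and fixed edges, and all points in a fixed edge are Nielsen equivalent (via trivial subpaths). Hence the set of Nielsen classes is finite, and the bijection above gives finiteness of the set of isogredience classes of principal lifts.

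The only delicate step is the projection argument, where one must correctly handle lifting Nielsen paths to track down fixed points of $\ti f$; everything else is a direct unwinding of definitions together with Lemmas~\ref{principal lifts have fixed points} and \ref{basic fix  2}.
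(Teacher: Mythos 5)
Your proof is correct, but it takes a different route from the paper: the paper disposes of this lemma in one line by citing Lemma~3.8 of \cite{fh:recognition} together with Lemma~\ref{principal lifts have fixed points}, whereas you re-derive the correspondence from scratch using only covering-space facts (a lift is determined by its value at one point, deck transformations act freely, geodesics in the tree $\ti G$), the equivalence of the two descriptions of Nielsen equivalence, and the CT fact that $\Fix(f)$ is a finite union of fixed vertices and edges. What the citation buys is brevity and consistency with the source where principal automorphisms and isogredience are developed; what your argument buys is a self-contained verification that makes visible exactly which structural inputs are used (in particular that finiteness comes only from $\Fix(f)$ being a finite union of vertices and fixed edges, and that principality enters only through Lemma~\ref{principal lifts have fixed points} to get nonempty fixed sets both ways). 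One phrasing should be tightened: in the converse direction of your ``single Nielsen class'' step, $\ti f\circ\ti\tau$ is a lift of $f\circ\tau$, not of $\tau$; the correct statement is that the tightened image $\ti f_\#(\ti\tau)$ projects to $f_\#(\tau)=\tau$ and starts at $\ti x$, hence equals $\ti\tau$ by uniqueness of lifts, which gives $\ti f(\ti y)=\ti y$ as you conclude. With that adjustment the argument is complete.
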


\proof This follows from \cite[Lemma 3.8]{fh:recognition} and Lemma~\ref{principal lifts have fixed points}.
\endproof

Recall from Section~\ref{sec:ct} that for each  $E \in \E_f$ there is a closed completely split path $u$ such that $f(E) = E \cdot u$ is a splitting and such that  the eigenray $R_E = E \cdot u \cdot f_\#(u) \cdot f^2_\#(u)\cdot \ldots$ is   $f_\#$-invariant.

The following lemma is similar to \cite[Lemma 3.10]{fh:CTconjugacy}, which applies more generally but has a weaker conclusion.
\begin{lemma}  \label{identifying Fix+} Suppose that $\ti f$ corresponds to $\Phi \in \cP(\phi)$.  If $\ti E$ is a lift of $E \in \E_f$ and if the initial endpoint of $\ti E$ is contained in $\Fix(\ti f)$ then the lift $\ti R_{\ti E}$ of $R_E$   that begins with $\ti E$ converges to a point in $\Fix_+(\Phi)$.  This defines a    bijection between $\Fix_+(\Phi)$ and  the set of all such $\ti E$ and also a bijection between $\cR(\phi)$ and $\E_f$. 
\end{lemma}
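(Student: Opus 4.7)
The plan is to first produce the point $P\in\Fix_+(\Phi)$ associated to each $\ti E$, then prove injectivity using Lemma~\ref{not crossed}, then establish surjectivity via a case analysis on the complete splitting of a ray to $P$, and finally deduce the $\cR(\phi)\leftrightarrow\E_f$ bijection from $F_n$-equivariance.

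For the construction, since $\ti v\in\Fix(\ti f)$ the splitting $f(E)=E\cdot u$ lifts to $\ti f_\#(\ti E)=\ti E\cdot \ti u$. Iterating yields nested initial segments $\ti f_\#^k(\ti E)=\ti E\cdot \ti u\cdot \ti f_\#(\ti u)\cdots \ti f_\#^{k-1}(\ti u)$ whose union is $\ti R_{\ti E}$, and this ray converges to a point $P\in\partial F_n$ with $\partial\Phi(P)=P$. To see $P$ is attracting, let $U\subset\partial F_n$ consist of those ends whose geodesic from $\ti v$ begins with $\ti E$. Because $\ti f$ fixes $\ti v$ and $\ti f_\#(\ti E)$ begins with $\ti E$, one has $\partial\Phi(U)\subset U$; and because $\ti f_\#^k(\ti E)$ exhausts $\ti R_{\ti E}$, $\bigcap_k\partial\Phi^k(U)=\{P\}$, so $P\in\Fix_+(\Phi)$.

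For injectivity, suppose $\ti R_{\ti E_1}$ and $\ti R_{\ti E_2}$ both converge to $P$ with initial fixed endpoints $\ti v_1,\ti v_2$. As geodesic rays in the tree $\ti G$ to a common boundary point, they share a common terminal subray starting at some $\ti w$. If $\ti v_1=\ti v_2$ the rays coincide and $\ti E_1=\ti E_2$; otherwise the geodesic from $\ti v_1$ to $\ti v_2$ passes through $\ti w$ and contains $\ti E_1$ or $\ti E_2$, and having both endpoints in $\Fix(\ti f)$ it is a Nielsen path, contradicting Lemma~\ref{not crossed} since $E_1,E_2\in\E_f$ are higher-order.

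For surjectivity, fix $P\in\Fix_+(\Phi)$, pick $\ti v_0\in\Fix(\ti f)$ (Lemma~\ref{principal lifts have fixed points}), and let $\ti S$ be the ray from $\ti v_0$ to $P$, so $\ti f_\#(\ti S)=\ti S$. Since every sufficiently long prefix becomes completely split under iteration of $\ti f_\#$, the ray $\ti S$ admits a complete splitting $\ti\sigma_1\cdot\ti\sigma_2\cdots$. Uniqueness of complete splittings combined with $\ti f_\#(\ti S)=\ti S$ rules out exceptional-path terms ($\ti f_\#$-images of exceptional paths are distinct exceptional paths with the same endpoints). An initial non-fixed linear term $\ti E$ with $f(E)=Ew^d$ would force $\ti S$ to be $\ti E$ followed by $\ti w^{\pm\infty}$, with boundary endpoint $c^\pm$ for the element $c$ represented by $\ti w$; then $\partial\Phi(c^\pm)=c^\pm$ combined with the \upg\ hypothesis (ruling out $\Phi(c)=c^m$ for $m>1$) and Lemma~\ref{basic fix} puts $c\in\Fix(\Phi)$ and hence $P\in\partial\Fix(\Phi)$, contradicting Lemma~\ref{gjll}. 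An entirely Nielsen $\ti S$ accumulates on fixed vertices $T_{a_k}(\ti v_0)$ with $a_k\in\Fix(\Phi)$; finite generation of $\Fix(\Phi)$ then again puts $P\in\partial\Fix(\Phi)$, another contradiction. So the first non-Nielsen term is a higher-order edge $\ti E$; all prior terms are Nielsen, the initial vertex of $\ti E$ is in $\Fix(\ti f)$, and the tail of $\ti S$ from that vertex equals $\ti R_{\ti E}$ by the first-paragraph construction. For the $\cR(\phi)\leftrightarrow\E_f$ bijection, the initial vertex of each $E\in\E_f$ is fixed by $f$ (as $f(E)=E\cdot u$ splits), each of its lifts is fixed by a unique principal lift, and $F_n$-equivariance of the first bijection (passing to $F_n$-orbits also absorbs isogredience) yields the claim. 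The main obstacle is the surjectivity step: ruling out the linear and all-Nielsen cases requires identifying the limit endpoint as belonging to $\partial\Fix(\Phi)$ via the \upg\ hypothesis and Lemma~\ref{basic fix}.
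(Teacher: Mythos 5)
Your injectivity argument and the final passage to $F_n$-orbits are essentially the paper's own proof: the geodesic joining the two fixed initial vertices projects to a Nielsen path crossing $E_1$ or $E_2$, contradicting Lemma~\ref{not crossed}, and the second bijection is obtained by projecting to orbits. The genuine gap is in your first step, where you claim that the cone $U$ of ends whose geodesic from $\ti v$ begins with $\ti E$ is an attracting neighborhood of $P$. The reasons you give --- that $\ti f$ fixes $\ti v$ and $\ti f_\#(\ti E)$ begins with $\ti E$, hence $\partial\Phi(U)\subset U$, and that $\ti f^k_\#(\ti E)$ exhausts $\ti R_{\ti E}$, hence $\bigcap_k\partial\Phi^k(U)=\{P\}$ --- never use that $E$ is non-linear, and they cannot suffice: for a linear edge $E\in\lin_w(f)$ with $f(E)=Ew^d$ the same two facts hold ($\ti f_\#(\ti E)=\ti E\ti w^{d}$ begins with $\ti E$ and $\ti f^k_\#(\ti E)$ exhausts $\ti E\ti w^{\infty}$), yet the end of $\ti E\ti w^{-\infty}$ is a $\partial\Phi$-fixed point lying in $U$, hence in every $\partial\Phi^k(U)$, so the intersection is not a single point; and indeed the endpoint of $\ti E\ti w^{\infty}$ lies in $\partial\Fix(\Phi)$ rather than $\Fix_+(\Phi)$. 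So both inclusions need real proofs: $\partial\Phi(U)\subset U$ requires ruling out cancellation that erases the initial edge when $\ti f(\ti E\ti\gamma)$ is tightened for an arbitrary end $\ti E\ti\gamma$, and $\bigcap_k\partial\Phi^k(U)=\{P\}$ requires showing that $\partial\Phi^k(U)$ lies in the cone over a long prefix of $\ti R_{\ti E}$, which is exactly where the growth of $|f^k_\#(u)|$ must enter. This is the content the paper imports from \cite[Lemma~4.36-(1)]{fh:recognition} and \cite[Proposition~I.1]{gjll:index} rather than proving directly.

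Your surjectivity argument (the paper simply cites \cite[Lemma~4.36-(2)]{fh:recognition}) has a secondary soft spot of the same kind: the assertion that the $\ti f_\#$-invariant ray $\ti S$ from $\ti v_0$ to $P$ admits a complete splitting does not follow from the finite-path statement you invoke, since $\ti f^k_\#$ of a prefix of $\ti S$ need not be a prefix of $\ti S$ (junction cancellation), and complete splittings of longer and longer subpaths need not cohere along the ray. Granting that splitting, your case analysis is sound in outline --- exceptional and reversed terms are excluded by uniqueness of complete splittings, a linear first growing term forces $P=c^{\pm}\in\partial\Fix(\Phi)$ against Lemma~\ref{gjll} (here root-freeness of $c$ already gives $\Phi(c)=c$; the \upg\ hypothesis is not what is needed), and the all-Nielsen case forces $P$ into the limit set of $\Fix(\Phi)$, though there you should note that the fixed vertices along $\ti S$ fall into finitely many $\Fix(\Phi)$-orbits of basepoints, not necessarily the orbit of $\ti v_0$. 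In short: the easy parts of your proof coincide with the paper's, but the two dynamical inputs the paper cites are exactly the places where your direct arguments are asserted rather than proved.
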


\proof     $\ti R_{\ti E}$ converges to some $P \in\FixN(\Phi)$ by   Lemma~4.36-(1) in \cite{fh:recognition}.    Since $E$ is not linear, $u$ is not a Nielsen path and hence not a periodic Nielsen path.  The length of   $f^k_\#(u)$ therefore  goes to infinity with $k$. Proposition I.1 of \cite{gjll:index} implies that $P \in\Fix_+(\Phi)$.  

Suppose that $\ti E_1$ and $\ti E_2$ are distinct edges that project into $\E_f$, that the initial endpoint $\ti x_i$  of $\ti E_i$ is fixed  by $\ti f$ and that, for $i=1,2$,   $\ti R_i$  is the lift of $R_{E_i}$ with initial edge $\ti E_i$.   The path  that connects  $\ti x_1$ to $\ti x_2$ projects to a Nielsen path   $\sigma \subset G$. If  $\ti R_1$ and $\ti R_2$ converge to the same point in $\Fix_+(\Phi)$ then $\sigma$ crosses $ E_1$ or $E_2$   in contradiction to Lemma~\ref{not crossed}.   This proves that the map  $\{\ti E\} \mapsto \Fix_+(\Phi)$ is injective;  surjectivity  follows from Lemma~4.36-(2) in \cite{fh:recognition}.  The second bijection is obtained from the first by projecting to the sets of $F_n$-orbits.
\endproof

\begin{excont*}\label{ex.d}
Since $\E_f=\{c,d,e,q\}$, $\cR(\phi)$ has four elements, denoted $\{r_c,r_d,r_e,r_q\}$; cf.\ Figure~\ref{f:eigengraph}.
\end{excont*}

\section{Recognizing a Conjugator} 
Associated to each \ct\ $\fG$ representing a rotationless element $\phi \in \Out(\f)$ is a finite type labeled graph $\gf$ that realizes $\FixN(\phi)$.   We refer to $\gf$ as the {\em eigengraph} for $\fG$.    In Section~\ref{sec:stallings} we  recall the construction and relevant properties of $\gf$ in the case that $\phi\in\upgn$.   Every $\phi$-invariant conjugacy class $[a]$ is represented by an oriented circuit in $\gf$. There is a finite set $\A_\both(\phi)$ of such $[a]$ that are root-free and that are represented by more than one oriented circuit in $\gf$.  The \lq extra\rq\ circuits correspond to the linear edges in $\fG$.  In Section~\ref{sec:axes}, we describe how the extra circuits can be incorporated into  an invariant $\sa(\phi)$ of $\phi$ that is independent of the choice of $\fG$.  Moreover, certain pairs of elements of  $\sa(\phi)$ have   {\em twist coordinates} that can be read off from the twist coordinates on linear edges in $\fG$.    Section~\ref{recognition} is an application of  the Recognition Theorem of \cite{fh:recognition}.     Assuming that  $\phi, \psi \in \upgn$, we use eigengraphs, $\sa(\phi)$  and twist coordinates to give necessary and sufficient conditions for a given  $\theta \in \Out(F_n)$ to conjugate $\phi$ to $\psi$.
   
\subsection  {The Eigengraph $\gf$} \label{sec:stallings}
In this section, we recall  a finite type labeled graph  that captures many of the invariants of $\phi$ that are essential to our algorithm. For further details and more examples, see \cite[Sections 9, 10 and 12]{fh:CTconjugacy}.  

A graph $\stallings$ without valence one vertices and equipped with a simplicial immersion $p :\stallings \to G$ to a marked graph $G$ will be called a {\em Stallings graph}.  We label the vertices and edges of $\Gamma$ by their $p$-images in $G$.  Two Stallings graphs $p_1 : \Gamma_1 \to G$ and $p_2 : \Gamma_2 \to G$ are equivalent if there is a  label preserving simplicial homeomorphism $h : \Gamma_1 \to \Gamma_2$.    We will not distinguish between equivalent Stallings graphs.      Since all vertices have valence at least two,  every edge in $\stallings$ is crossed by a line. We say that $\stallings$ has {\em finite type} if its core is finite and if the complement of the core is a finite union of rays.

Given a \ct\ $\fG$ representing $\phi$, we construct  a finite type Stallings graph $\gf$, called the  {\em   eigengraph for $\fG$},    as follows.   Let $\Gamma^0(f)$   be  $G$ with the interiors of all non-fixed edges removed. 
 In particular, $\Gamma^0(f)$ may contain isolated vertices. The labeling on $\Gamma^0(f)$ is the obvious one.  For each $E \in \lin(f)$, first attach  an edge, say $E'$,  to $\Gamma^0(f)$ by identifying  the initial endpoint of $E'$  with  the initial endpoint of $E$, thought of as a vertex in $\Gamma^0(f)$.  The label on $E'$ is  $E$.     Then add  a  path $\omega$ by attaching both of its endpoints  to the  terminal endpoint of $E'$, which now has valence three.  If $w$ denotes the \twistpath\ associated to $E$ then we label $\omega$ by $w$, thought of as an edge path, and subdivide $\omega$ so that each edge in $\omega$ is labeled by a single edge in $G$.  The net effect is to add a {\em lollipop} to $\Gamma^0(f)$ for each edge in $\lin(f)$.    This labeling defines an immersion because $w$ determines a circuit that does not cross the edge $E$.   Finally, for each $E \in \E_f$,  attach a ray  labeled  $R_E$ (as defined in Section~\ref{sec:ct}) by identifying the initial endpoint of this ray with  the initial endpoint of $E$, thought of as a vertex in $\Gamma^0(f)$.  We will also use the term {\it eigenray} for this added ray. The resulting graph is denoted $\gf$. This labeling maintains the  immersion property because $R_E$ is an immersed ray in $G$ and because no other edge labeled $E$ has initial vertex in $\Gamma^0(f)$.
 
 \begin{excont*}\label{ex.e}
 The eigengraph for our running example is pictured in Figure~\ref{f:eigengraph}.
 \begin{figure}[h!] 
\centering
\includegraphics[width=.3\textwidth]{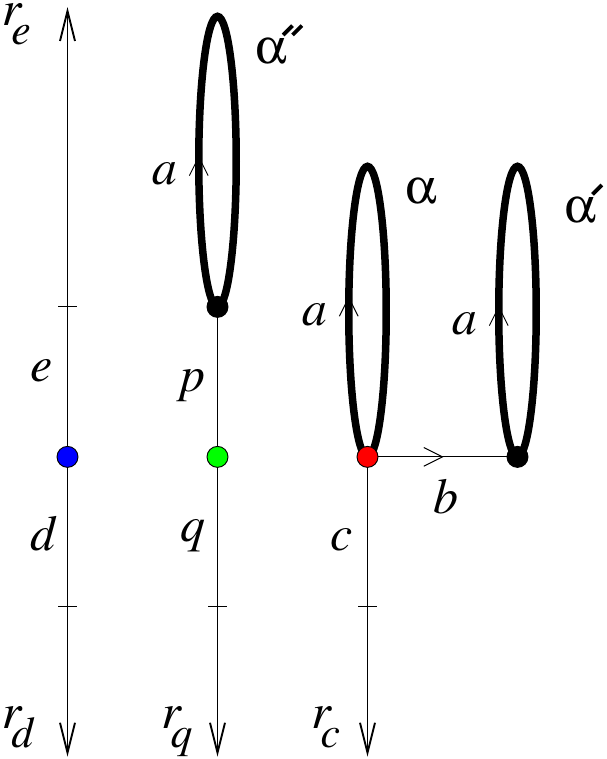}
\caption{The eigengraph $\Gamma(f)$ of our example. 
Only the first edge of the eigenrays is labeled here. For example, the eigenray $R_c=cbbaba^2ba^3\dots$ starts at the red vertex and only its first edge is labeled (by $c$). Other aspects of this figure are explained later.
}
\label{f:eigengraph}
\end{figure}
  \end{excont*}
  
    The vertices of $\stallings(f)$ that are not in $\Gamma^0(f)$ have valence either two or three by construction.   The valence of $v \in \Gamma^0(f)$ in $\stallings(f)$ is equal to the number of fixed directions based at $v$ in $G$.  If $v$, thought of as a vertex in $G$, is not the terminal endpoint of an edge  in $\E_f \cup \lin(f)$, then $v$ has the same valence in $\stallings(f)$ that it does in $G$.  If $v$ is the terminal endpoint of an edge  in $\E_f \cup \lin(f)$,  let $E_i$ be the lowest such edge.   Then $f(E_i) =E_i \cdot u_i$ where $u_i$ is a closed path based at $v$ whose ends determine distinct fixed directions at $v$ by \cite[Lemma 4.21]{fh:recognition}.  This proves that $v$ has valence at least two in   $\stallings(f)$  and hence that $\stallings(f)$ is a Stallings graph.  It has finite type by construction.
    
     As noted in Section~\ref{sec:ct}, each Nielsen path in $G$ decomposes as a concatenation of fixed edges and \iNp s and each \iNp\ is a closed path.  It follows that two vertices in $\Fix(f)$ are in the same Nielsen class if and only if they are connected by a sequence of fixed edges.   In particular, the vertices in each component of $\Gamma^0(f)$ form exactly one Nielsen class in $\Fix(f)$.   Since the inclusion of $\Gamma^0(f)$ into $\gf$ induces a bijection of components, there is a bijection between the set of components of $\gf$ and the set of Nielsen classes in $\Fix(f)$ and hence (Lemma~\ref{nielsen classes and principal lifts}) a bijection between the set of components of $\gf$ and the set of isogredience classes in $\cP(\phi)$.  We denote the component of $\gf$ corresponding to the isogredience class $[\Phi]$  by $\Gamma_{[\Phi]}(f)$ or by $\stallings(\ti f)$ where $\ti f$ is the lift of $f$ that corresponds to $\Phi$.

We say that {\em a line   is carried by $\stallings_{[\Phi]}(f)$} if its realization $L \subset G$ lifts into $\stallings_{[\Phi]}(f)$ and is {\em carried by $\gf$} if it is carried by some component $\stallings_{[\Phi]}(f)$. The following lemma shows that the set of lines carried by $\stallings(f)$ is independent of the choice of  $\fG$. We will sometimes refer to these as {\em principal lines.}
    
\begin{lemma}   \label{lem:lifting}
The  following are equivalent for any \ct\ $\fG$ representing $\phi$, any $\Phi \in \cP(\phi)$ and any line $L \subset G$.
\begin{enumerate}
\item $L$ is carried by $\Gamma_{[\Phi]}(f)$ [resp.\ the core of $\Gamma_{[\Phi]}(f)$]. 
\item There is a lift $\ti L \subset \ti G$ such that $\{\partial_\pm \ti L\}\ \subset \FixN(\Phi)$ [resp. $\{\partial_\pm \ti L\} \subset \partial \Fix(\Phi)$].
\end{enumerate} 
\end{lemma}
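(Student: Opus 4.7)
The plan is to match the two disjoint pieces of $\FixN(\Phi)=\partial\Fix(\Phi)\sqcup\Fix_+(\Phi)$ (from Lemma~\ref{gjll}) with the two ways an end of a line in the finite-type graph $\Gamma_{[\Phi]}(f)$ can escape to infinity, namely along an eigenray or by eventual periodic wrapping of a loop. Throughout, write $\Gamma^0_{[\Phi]}(f):=\Gamma^0(f)\cap\Gamma_{[\Phi]}(f)$; by construction this subgraph has vertex set equal to the Nielsen class labelling $[\Phi]$, which is precisely the projection to $G$ of $\Fix(\tilde f)$, where $\tilde f$ is the principal lift of $f$ corresponding to $\Phi$. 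Since $p:\gf\to G$ is an immersion and hence $\pi_1$-injective on the connected subgraph $\Gamma_{[\Phi]}(f)$, a lift $\hat L\subset\Gamma_{[\Phi]}(f)$ of $L$ determines, after choosing a lift to the universal cover, a bi-infinite line $\tilde L\subset\tilde G$; we can make this choice so that $\tilde L$ passes over the subtree $\Fix(\tilde f)$.

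For (1)$\Rightarrow$(2): each end of $\hat L$ must eventually either escape along an eigenray $R_E$ for some $E\in\E_f$ or wrap some immersed loop $c$ in the core of $\Gamma_{[\Phi]}(f)$. In the eigenray case, the corresponding end of $\tilde L$ eventually coincides in $\tilde G$ with a lift $\tilde R_{\tilde E}$ whose initial vertex lies in $\Fix(\tilde f)$ (because the attachment vertex of $R_E$ is a vertex of $\Gamma^0_{[\Phi]}(f)$), so Lemma~\ref{identifying Fix+} identifies the endpoint as a point of $\Fix_+(\Phi)$. In the loop case, $c$ is either a lollipop loop $\omega$ labelled by a twist path $w$ (attached via a bridge edge labelled $E\in\lin_w(f)$ whose initial vertex lies in $\Gamma^0_{[\Phi]}(f)$) or a loop lying entirely in $\Gamma^0_{[\Phi]}(f)$. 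In both subcases one checks that the $F_n$-element $a$ represented by a based version of $c$ lies in $\Fix(\Phi)$: for a loop in $\Gamma^0_{[\Phi]}(f)$ this is immediate from Lemma~\ref{basic fix  2} since the axis of $T_a$ lifts into the subtree $\Fix(\tilde f)$, and for a lollipop loop one takes $a=EwE^{-1}$ and uses $f(E)=Ew^d$ together with $f_\#(w)=w$ to compute directly that $f_\#(EwE^{-1})=EwE^{-1}$. Hence the endpoint of $\tilde L$ is $a^\pm\in\partial\Fix(\Phi)$. Combining the two cases gives $\{\partial_\pm\tilde L\}\subset\FixN(\Phi)$; when $\hat L$ lies in the core the eigenray case is excluded, so both endpoints land in $\partial\Fix(\Phi)$.

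For (2)$\Rightarrow$(1): construct $\hat L$ end by end. If an endpoint $P$ of $\tilde L$ lies in $\Fix_+(\Phi)$, Lemma~\ref{identifying Fix+} supplies an $E\in\E_f$ and a lift $\tilde E$ with initial vertex in $\Fix(\tilde f)$ and $\tilde R_{\tilde E}\to P$; since $\tilde G$ is a tree, the end of $\tilde L$ converging to $P$ eventually coincides with $\tilde R_{\tilde E}$, and hence the corresponding end of $L$ eventually traverses the eigenray $R_E$, which in $\gf$ is attached at a vertex of $\Gamma^0_{[\Phi]}(f)$. If instead $P=a^\pm\in\partial\Fix(\Phi)$ with $a\in\Fix(\Phi)$, then by Lemma~\ref{basic fix  2} the covering translation $T_a$ commutes with $\tilde f$ and so preserves the subtree $\Fix(\tilde f)$; its axis therefore lies in $\Fix(\tilde f)$, and the end of $\tilde L$ going to $P$ eventually coincides with a subray of that axis, which projects into $\Gamma^0_{[\Phi]}(f)\subset\Gamma_{[\Phi]}(f)$. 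Splicing the two ends together through the compact middle portion gives a lift $\hat L\subset\Gamma_{[\Phi]}(f)$ of $L$. If both endpoints lie in $\partial\Fix(\Phi)$, neither end uses an eigenray, both ends wrap loops inside the core, and the resulting lift lies in the core.

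The main technical subtlety will be the careful matching of the two modes of escape to infinity with the two pieces of $\FixN(\Phi)=\partial\Fix(\Phi)\sqcup\Fix_+(\Phi)$, using the tree structure of $\tilde G$ and the fact that $\Fix(\tilde f)$ is a connected subtree projecting to the Nielsen class of $[\Phi]$. The only honest computation is the $\Fix(\Phi)$-membership of the $F_n$-element associated to a lollipop loop, which reduces to a one-line application of $f(E)=Ew^d$ and $f_\#(w)=w$.
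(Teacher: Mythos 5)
Your overall matching---eigenray ends with $\Fix_+(\Phi)$, ends that stay in the core with $\partial \Fix(\Phi)$---is the right idea, and it is a genuinely direct route: the paper itself disposes of this lemma by citing \cite[Lemma 12.4]{fh:CTconjugacy} and \cite[Lemma 2.3]{fh:recognition}. But as written your argument only treats \emph{eventually periodic} ends, and that is a real gap. The dichotomy in (1)$\Rightarrow$(2), that each end of $\hat L$ either escapes along an eigenray or ``wraps some immersed loop $c$ in the core,'' fails as soon as $\Fix(\Phi)$ has rank at least two: an end of a line carried by the core can cross vertices of $\Gamma^0(f)\cap\Gamma_{[\Phi]}(f)$ infinitely often while spelling an aperiodic word, so it wraps no single loop, and its endpoint is a point of $\partial\Fix(\Phi)$ that is \emph{not} of the form $a^{\pm}$ (the set $\{a^{\pm}: a\in\Fix(\Phi)\}$ is only dense in $\partial\Fix(\Phi)$). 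The same omission recurs in (2)$\Rightarrow$(1), where you write every $P\in\partial\Fix(\Phi)$ as $a^{\pm}$. Covering these ends is exactly where one needs that the core of $\Gamma_{[\Phi]}(f)$ is the Stallings graph of $\Fix(\Phi)$ anchored at the Nielsen class of $\Phi$: a ray that is an infinite concatenation of fixed edges and \iNp s lifts into the core, and conversely any end of $\ti L$ converging to a point of $\partial\Fix(\Phi)$ is, after a bounded initial segment, of that form. This is the substance of the results the paper quotes, and it is absent from your write-up.

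Two further steps do not hold as stated. First, in (2)$\Rightarrow$(1) the claim that the axis of $T_a$ lies in $\Fix(\ti f)$ is false: $T_a$ preserves $\Fix(\ti f)$ only as a set. For example, if $a$ is represented by $E w \bar E$ with $E\in\lin_w(f)$, the axis crosses lifts of the non-fixed edges $E$ and the edges of $w$, none of which are in $\Fix(\ti f)$; what is true is that the path from a point of $\Fix(\ti f)$ to its $T_a$-translate projects to a closed Nielsen path, and it is the splitting of that path into fixed edges and \iNp s (which lift into $\Gamma^0(f)$ and the lollipops, respectively, as in Lemma~\ref{axes in stallings graph}) that lets the periodic ray lift into the core. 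Second, the final ``splicing'' is asserted rather than proved: you must check that the middle segment of $\ti L$, adjusted to run between the anchor vertices of the two rays, projects to a Nielsen path and hence lifts into the same component compatibly with the chosen lifts of the two ends---this is precisely the bookkeeping carried out in the proof of Corollary~\ref{cor:limit lines}\pref{item:endpoints in FixN}. With the Stallings-graph fact supplied and these two points repaired, your approach does give a self-contained proof, which is more than the paper offers in situ; without them, both implications are incomplete.
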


\proof It suffices to prove the unbracketed statement.   Let $q :\ti G \to G$  and $\ q_\Gamma :  \tilde\stallings_{[\Phi]}(f) \to \stallings_{[\Phi]}(f)$ be the universal covering maps.   The labeling map $p : \stallings_{[\Phi]}(f) \to G$ is an immersion and so lifts to an embedding $\ti p : \ti \stallings_{[\Phi]}(f) \hookrightarrow \ti G$.    If a  line $L \subset G$ lifts to a line $L_\Gamma \subset \stallings_{[\Phi]}(f)$  and  if $\ti L_\Gamma \subset \ti \stallings_{[\Phi]}(f)$ is a lift of $L_\Gamma$ then $\ti L := \ti p (\ti  L_\Gamma) \subset \ti G$ is a lift of $L$.   Conversely, if $L \subset G$ lifts to $\ti L \subset \ti G$ and there exists $\ti L_\Gamma \subset \ti  \stallings_{[\Phi]}(f)$ with $\ti p(\ti L_\Gamma) = \ti L$, then $  L_\Gamma :=  q_\Gamma(\ti L_\Gamma)$ is a lift of $L$.  The Lemma therefore follows from \cite[Lemma~12.4]{fh:CTconjugacy} which states that $L \subset  G$ lifts to $\ti p (\ti  L_\Gamma) $ if and only if (2) is satisfied.
\endproof

An end of an immersed line in $\gf$ can either be an end of $\gf$ or can wrap infinitely around one of the lollipop circuits or can cross a vertex in $\Gamma^0(f)$ infinitely often.  This gives the following description of lines that lift into  $\gf$.

\begin{lemma} \label{lem:lines in gf}
A line $\sigma \subset G$ lifts into $\gf$ if and only if it contains a (possibly trivial) subpath $\beta$ that is  a concatenation of fixed edges and \iNp s and such that the complement of $\beta$ is $0,1$ or $2$ rays, each of which is either $R_e$ for some higher order edge $e$  or $Ew^{\pm\infty}$ for some \twistpath\ $w$ and some linear edge $E \in \lin_w(f)$.\qed
\end{lemma}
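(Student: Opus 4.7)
The plan is to prove both directions by examining the structure of $\gf$, which decomposes as the fixed subgraph $\Gamma^0(f)$, one \emph{lollipop} per linear edge $E \in \lin(f)$ (a handle edge $E'$ labelled $E$ plus a circuit $\omega$ labelled by the associated \twistpath\ $w$), and one attached eigenray $R_e$ per higher order edge $e \in \E_f$.

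For $(\Leftarrow)$, I would assemble the lift of $\sigma$ in $\gf$ piece by piece. Each fixed edge of $\beta$ lifts to its copy in $\Gamma^0(f)$; each \iNp\ $E w^p \bar E$ in $\beta$ (with $E \in \lin_w(f)$) lifts to $E' \cdot \omega^p \cdot \bar E'$ inside the lollipop of $E$; each ray of the form $R_e$ lifts tautologically to the attached eigenray; and each ray $E w^{\pm\infty}$ lifts to $E'$ followed by $\omega^{\pm\infty}$ inside the lollipop of $E$. Consecutive pieces meet at vertices of $\Gamma^0(f)$, and by construction the edges of $\gf$ incident to such a vertex are in bijection with the fixed directions at the corresponding vertex of $G$. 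Because each allowed piece uses only fixed directions at its endpoints (invoking \cite[Lemma 4.21]{fh:recognition} for the initial directions of linear and higher order edges), the concatenation is an immersion in $\gf$, so all of $\sigma$ lifts.

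For $(\Rightarrow)$, I would analyze the two ends of a lift $L \subset \gf$ of $\sigma$. Let $C = \Gamma^0(f) \cup (\text{all lollipops})$, a finite subgraph of $\gf$; then $\gf \setminus C$ is a disjoint union of open half-eigenrays, each attached to $C$ at a single vertex of $\Gamma^0(f)$. An end of $L$ either eventually enters an eigenray $R_e$ and then, because eigenrays are dead-end immersed rays with no branching beyond their initial vertex, coincides with a tail of $R_e$ --- yielding an end of $\sigma$ of the form $R_e$ --- or is contained in the finite graph $C$ from some point onward. In the latter case, because each lollipop is attached to $\Gamma^0(f)$ at the single vertex at the initial endpoint of its handle $E'$, any excursion of $L$ into the lollipop of $E$ must either exit along $\bar E'$ after finitely many wrappings of $\omega$ (contributing an \iNp\ $E w^p \bar E$ to $\sigma$) or wrap the circuit forever without returning (contributing an end of the form $E w^{\pm\infty}$). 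Together with the fixed-edge segments of $\Gamma^0(f)$ traversed between successive excursions, each such end of $\sigma$ is either a tail of the form $E w^{\pm\infty}$ or an infinite concatenation of fixed edges and \iNp s, which is absorbed into $\beta$. Doing this for both ends of $L$ produces the required decomposition $\sigma = \sigma_1 \cdot \beta \cdot \sigma_2$.

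The main obstacle is the case in $(\Rightarrow)$ where an end of $L$ lives in $C$ without eventually committing to a single lollipop circuit; the key observation is that an immersed line entering the lollipop of $E$ and later leaving it must traverse $\bar E'$, hence produces a complete \iNp\ factor rather than a partial excursion. Iterating this, such an end is an infinite alternation of fixed-edge segments in $\Gamma^0(f)$ and \iNp\ excursions into lollipops, and so fits into $\beta$ rather than producing a disallowed ray type.
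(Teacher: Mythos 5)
Your proof is correct and follows essentially the same route as the paper, which states the lemma with no written proof beyond the preceding remark that an end of an immersed line in $\gf$ either exits along an eigenray, wraps a lollipop circuit forever, or crosses $\Gamma^0(f)$ infinitely often; your argument is just that end-classification spelled out, together with the routine piecewise lifting for the converse. (The immersion check at junctions in the converse is in fact automatic, since any lift of an immersed line through the labelling immersion $p:\gf\to G$ is immersed, but including it does no harm.)
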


The following lemma, in conjunction with Lemma~\ref{lem:lifting}, implies that the set of   conjugacy classes  determined by twist paths and their inverses is an invariant of $\phi$.  This set is explored further in Section~\ref{sec:axes}.

\begin{lemma} \label{axes in stallings graph}
Suppose that  $\fG$ is a \ct\ representing $\phi$ and that $[a]$ is a root-free conjugacy class that is fixed by $\phi$   and that $\sigma_a$ is the circuit in $ G$  representing  $[a]$.  
\begin{enumerate}
\item If  $[a] = [w]$ (resp.\ $[a] = [\bar w]$)   for some \twistpath\ $w$,  then for each edge $E \in \lin_w(f)$  there is a lift of $\sigma_a$ to the loop $\omega$ (resp.\ $\bar \omega$) in the lollipop associated to $E$.  Additionally there is a unique  lift of $\sigma_a$ to a circuit in $\gf$ that is not contained in any lollipop.
\item Otherwise, there is a unique lift of $\sigma_a$ to a circuit in $\gf$. 
\end{enumerate}
\end{lemma}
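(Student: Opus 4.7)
The plan is to classify all immersed circuits in $\gf$ whose projection to $G$ equals $\sigma_a$, using the lollipop structure of $\gf$ from Section~\ref{sec:stallings}. Because $[a]$ is a root-free $\phi$-invariant conjugacy class, $\sigma_a$ is a root-free closed Nielsen circuit in $G$ and hence admits a splitting as a concatenation of fixed edges and \iNp s of the form $E w^{p}\bar E$ with $E\in\lin_w(f)$. By Lemma~\ref{not crossed}, $\sigma_a$ crosses no higher-order edge, so any lift $\tau$ of $\sigma_a$ avoids the eigenray portion of $\gf$ and lies in the core, which consists of $\Gamma^0(f)$ together with the lollipop stems and loops.

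The key structural observation is that every lollipop loop $\omega$ is attached to the rest of $\gf$ only through its single valence-three head vertex. A short case analysis of non-backtracking turns at the head shows that any circuit in $\gf$ using an edge of $\omega$ falls into exactly one of two mutually exclusive configurations: (i) it enters via the stem $E'$, winds around $\omega$ a nonzero number of times in either direction, and exits via $\bar E'$, in which case the corresponding portion of $\sigma_a$ is an \iNp\ $E w^{p}\bar E$; or (ii) it coincides with $\omega$ (respectively $\bar\omega$), in which case $\sigma_a = [w]$ (respectively $[\bar w]$).

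Using this dichotomy, construct a \emph{canonical lift} $\tau_0$ of $\sigma_a$ by lifting each fixed-edge piece of $\sigma_a$ to its unique preimage in $\Gamma^0(f)$ and lifting each \iNp\ $E w^{p}\bar E$ via the stem-loop-stem configuration in the unique lollipop whose stem is labeled $E$; the turns at junctions are non-backtracking because they are inherited from $\sigma_a$, and the labels uniquely determine which lollipop is used at each \iNp. In Case~(2), where $[a]\ne[w]^{\pm1}$ for any twist path, option (ii) cannot occur, so every lift must match $\tau_0$ edge by edge and hence equals $\tau_0$. In Case~(1), where $[a]=[w]^{\pm1}$, option (ii) yields a lift for each $E\in\lin_w(f)$, namely the loop $\omega$ or $\bar\omega$ according to the orientation of $[a]$; these $|\lin_w(f)|$ lifts lie in distinct lollipops and are each disjoint from stems and from $\Gamma^0(f)$-edges, while $\tau_0$ contains at least one stem (if $w$ has any \iNp) or at least one $\Gamma^0(f)$-edge (if $w$ has any fixed edge), so $\tau_0$ is distinct from all of them and provides the unique additional lift.

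The main technical obstacle is justifying the dichotomy (i)-(ii); beyond that, everything is bookkeeping. That dichotomy reduces to a finite case analysis at the valence-three head vertex of each lollipop, together with the observation that $\omega$ is attached to the rest of $\gf$ solely at that head vertex, so a non-backtracking excursion into $\omega$ is forced either to return through the stem or to close up into the full loop.
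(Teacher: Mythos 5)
Your overall route is close to the paper's: both arguments rest on the lollipop structure of $\gf$, the decomposition of the Nielsen circuit $\sigma_a$ into fixed edges and \iNp s, and the fact that the only ``extra'' lifts are the lollipop loops; your valence/turn analysis at the head vertex is a legitimate, more topological substitute for the statement the paper establishes for Nielsen-path lifts by induction on height. The genuine gap is the step ``every lift must match $\tau_0$ edge by edge.'' Your dichotomy only says that each maximal excursion of a lift into a lollipop projects to \emph{some} subpath of $\sigma_a$ of the form $E w^{p}\bar E$; it does not say that the excursions occur exactly at the \iNp\ terms of the splitting of $\sigma_a$, in the lollipops of those terms, nor that the fixed-edge portions are lifted into $\Gamma^0(f)$. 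The issue is that a \twistpath\ $w'$ of a linear edge $E'$ is itself a Nielsen path and may cross fixed edges and lower linear edges (e.g.\ $f(a)=a$, $f(b)=ba$, $f(c)=c\,(bab^{-1}a^{-1})^{d}$), so the loop $\omega'$ of the lollipop for $E'$ contains edges labelled by fixed and linear edges of $G$. Hence a given occurrence in $\sigma_a$ of a fixed edge, or of an \iNp\ $Ew^{p}\bar E$, has several preimages in $\gf$: one in $\Gamma^0(f)$ (resp.\ in the lollipop of $E$) and one inside each higher loop whose twist path crosses it. Your argument never excludes a lift that threads such an occurrence through a higher loop, i.e.\ a different ``parsing'' of $\sigma_a$ into excursions; ruling this out is precisely the content of the claim the paper proves by induction on height, and it is not bookkeeping.

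The gap is fixable inside your framework. Among the linear edges crossed by $\sigma_a$, fix an occurrence of one of maximal height, $E_{top}$. This occurrence cannot be lifted into $\Gamma^0(f)$ (which has only fixed edges), nor into an eigenray, nor, for a non-loop lift, into any loop $\omega'$: by your dichotomy that excursion would project to $E'w'^{q}\bar E'\subset\sigma_a$ with $w'$ crossing $E_{top}$, so $E'$ would be a linear edge crossed by $\sigma_a$ of strictly greater height, contradicting maximality. So every non-loop lift carries this occurrence to the stem of $E_{top}$'s lollipop, in particular sends the same position of $\sigma_a$ to the same point of $\gf$; since the labelling $p\colon\gf\to G$ is an immersion, the lift is then determined at every subsequent edge, so all non-loop lifts coincide with $\tau_0$ (if $\sigma_a$ crosses no linear edge it consists of fixed edges, every non-loop lift lies in $\Gamma^0(f)$ by the dichotomy, and uniqueness is immediate). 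Finally, a minor imprecision: Lemma~\ref{not crossed} only prevents a lift from using the first edge of an eigenray, since the remaining edges of $R_E$ carry lower labels; the correct reason circuits avoid eigenrays is that each eigenray meets the rest of $\gf$ in a single vertex, the same cul-de-sac argument you use for the loops.
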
  

\proof
We claim that if $\tau'$ is a path in the core of $\gf$  that   projects to a Nielsen path $\tau$ in $  G$  and if the initial vertex $v''$ of $\tau'$ is not in $\Gamma^0(f)$ then $\tau'$ is contained in the loop $\omega$ of some lollipop.     We may  assume without loss that the claim holds for paths with height less than that of $\tau$ and that  $\tau$ is either a single fixed edge or an \iNp.   Since the core of $\gf$ is contained in the union of $\Gamma^0(f)$ with the lollipops associated to the linear edges of $G$,   there is a lollipop composed of an edge $E_1'$   projecting to a linear edge $E_1 \subset G$ and a  loop $\omega$ projecting to its \twistpath\ $w_1$ such that $v''\in \omega$.  If $\tau$ is a fixed edge then $\tau'$ is disjoint from the interior of $E_1'$ and so is contained in $\omega$.  If $\tau$ is an \iNp\ then it has the form $E_2{w_2}^p \bar E_2$ for some linear edge $E_2$ with \twistpath\ $w_2$.    There is an induced decomposition $\tau' = X'{w'_2}Y'$ where $X', w'_2$ and $Y'$ project to $E_2, w_2^p$ and $\bar E_2$ respectively. Since $E_2 \ne \bar E_1$, we have $X' \subset \omega$ and the initial vertex of   ${w'_2}^p$ is contained in $\omega$.  Since $w_2$ has height less than $E_2$ and so height less than $\tau$,  \    ${w'_2}^p\subset \omega$.  Finally, since $X' $ is contained in $\omega$, \ $E_2$ has height less than $E_1$ and in particular, $\bar E_2 \ne\bar  E_1$.  Thus $  Y' \subset \omega$.  This  completes the proof of the claim.

Each $\sigma = \sigma_a$ as in the statement of the lemma has  a cyclic splitting    $\sigma = \sigma_1 \cdot \ldots \cdot \sigma_m$ into fixed edges and \iNp s  $\sigma_i$.  The above claim shows that if $\sigma' = \sigma'_1 \cdot \ldots \cdot \sigma'_m$ is a lift to $\stallings(f)$ in which an endpoint of some $\sigma_i'$ is not contained in $\Gamma(f)^0$ then $\sigma'$ is entirely contained in the loop $ \omega$ associated to one of the lollipops.  
      
      To complete the proof we need only show   each $\sigma$ has a unique lift in which the endpoints of each $\sigma_i$  lift into $\Gamma^0(f)$.  Since the vertices of $G$ have unique lifts into $\Gamma^0(f)$, it suffices to show that each $\sigma_i$ has a unique lift with endpoints in $\Gamma^0(f)$ and this is immediate from the construction of $\stallings^0(f)$.   
\endproof

\subsection  {Strong axes and twist coordinates} \label{sec:axes}
The following lemma describes the extent to which   fixed subgroups fail to be malnormal.

\begin{lemma}  \label{malnormal} For  distinct automorphisms $\Phi_1$ and $\Phi_2$ representing the same outer automorphism and for  any $c \in F_n$:
\begin{enumerate}
\item\label{i:cap}
$\Fix(\Phi_1) \cap \Fix(\Phi_2)$ is either trivial or a maximal cyclic subgroup.
\item  If $c \not \in \Fix(\Phi_1)$ then $\Fix(\Phi_1) \cap (\Fix(\Phi_1))^c =  \Fix(\Phi_1) \cap \Fix(i_c\Phi_1 i_c^{-1})$  is either trivial or a maximal cyclic subgroup.
\item\label{i:normalizer}
$\Fix(\Phi_1)$ is its own normalizer. 
\end{enumerate}
\end{lemma}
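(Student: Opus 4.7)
The plan is to reduce all three parts to two classical properties of the free group $F_n$: centralizers are maximal cyclic, and $F_n$ has unique roots (if $x^k = y^k$ with $k \ne 0$ then $x = y$).

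For \pref{i:cap}, since $\Phi_1$ and $\Phi_2$ represent the same outer automorphism we may write $\Phi_2\Phi_1^{-1} = i_a$ for some $a \in F_n$, and $a$ is non-trivial because $\Phi_1 \neq \Phi_2$. Any $x \in \Fix(\Phi_1) \cap \Fix(\Phi_2)$ satisfies $x = \Phi_2\Phi_1^{-1}(x) = axa^{-1}$, so $x$ lies in the centralizer of $a$, which is the unique maximal cyclic subgroup $\langle r \rangle$ containing $a$ (with $r$ root-free). Hence $\Fix(\Phi_1) \cap \Fix(\Phi_2) \subseteq \langle r \rangle$. If the intersection is non-trivial, pick $x = r^k$ with $k \ne 0$; then $\Phi_1(r)^k = \Phi_1(r^k) = r^k$, and unique $k$-th roots in $F_n$ force $\Phi_1(r) = r$, and similarly $\Phi_2(r) = r$. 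So $\langle r \rangle \subseteq \Fix(\Phi_1) \cap \Fix(\Phi_2)$, giving equality and the maximal-cyclic conclusion.

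For (2), direct computation gives $\Fix(i_c\Phi_1 i_c^{-1}) = c\,\Fix(\Phi_1)\,c^{-1} = \Fix(\Phi_1)^c$, so the two expressions for the intersection agree. By Lemma~\ref{basic fix}, $c \in \Fix(\Phi_1)$ if and only if $i_c$ commutes with $\Phi_1$, i.e., if and only if $i_c\Phi_1 i_c^{-1} = \Phi_1$. So when $c \notin \Fix(\Phi_1)$ the automorphisms $\Phi_1$ and $i_c\Phi_1 i_c^{-1}$ are distinct, and \pref{i:cap} applies with $\Phi_2 := i_c\Phi_1 i_c^{-1}$.

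For \pref{i:normalizer}, assume $\Fix(\Phi_1)$ is non-trivial (the statement is to be read under this hypothesis) and let $c$ normalize $\Fix(\Phi_1)$. Suppose for contradiction $c \notin \Fix(\Phi_1)$. By (2), $\Fix(\Phi_1) = \Fix(\Phi_1) \cap \Fix(\Phi_1)^c$ is either trivial or maximal cyclic. Rank at least two is impossible, so $\Fix(\Phi_1) = \langle r \rangle$ with $r$ root-free, and normalization gives $crc^{-1} = r^{\pm 1}$. If $crc^{-1} = r$, then $c$ centralizes $r$, so $c \in \langle r \rangle$, contradicting $c \notin \Fix(\Phi_1)$. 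If $crc^{-1} = r^{-1}$, then $c^2$ centralizes $r$, so $c^2 \in \langle r \rangle$; in a free group, two elements sharing a non-trivial power lie in a common cyclic subgroup, and maximality of $\langle r \rangle$ then forces $c \in \langle r \rangle$, again a contradiction. The whole argument is routine; the only mild subtlety is ruling out the inversion case in (3), which uses the standard fact that a power of $c$ lying in a maximal cyclic subgroup of a free group forces $c$ itself to lie there.
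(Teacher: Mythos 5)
Your proof is correct and takes essentially the same route as the paper: the intersection is placed inside the cyclic fixed subgroup of the inner automorphism $\Phi_1^{-1}\Phi_2$, item (2) follows from (1) applied with $\Phi_2 = i_c\Phi_1 i_c^{-1}$, and (3) is reduced to the case $\Fix(\Phi_1)=\langle r\rangle$ maximal cyclic. The only differences are cosmetic: you establish maximality via unique roots in $F_n$ where the paper cites Lemma~\ref{basic fix} together with $(a^k)^\pm = a^\pm$, and you spell out the normalizer-of-a-maximal-cyclic-subgroup step that the paper calls obvious.
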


\proof    If $\Phi_1$ and $\Phi_2$ are distinct automorphisms representing the same outer automorphism  then $\Phi_1^{-1} \Phi_2$ is a non-trivial inner automorphism and $\Fix(\Phi_1) \cap \Fix(\Phi_2)$ is a subgroup of the cyclic group $\Fix(\Phi_1^{-1} \Phi_2)$.  Maximality of $\Fix(\Phi_1) \cap \Fix(\Phi_2)$ follows from Lemma~\ref{basic fix}  and the fact that $(a^k)^\pm = a^\pm$ for all non-trivial $a \in \f$ and all $k \ge 1$.   This proves (1).

For (2) note that if $c \not \in \Fix(\Phi_1)$ then $i_c \Phi_1 i_c^{-1}  = i_{c \Phi_1(c^{-1})} \Phi_1 \ne \Phi_1$. 
Note also that $ \Fix(i_c \Phi_1 i_c^{-1})=(\Fix(\Phi_1))^c $.  Item (2) therefore follows from (1) applied with   $\Phi_2 = i_c \Phi_1 i_c^{-1}$.  In  proving (3) we may assume  by (2)   that   $\Fix(\Phi_1) =  \langle a \rangle$ for some root-free $a\in\f$ and in this case (3) is obvious. \endproof

The conjugacy class of a  cyclic subgroup is   determined by  the conjugacy class of either of its generators.  As we have no way to canonically choose a generator, we work, for now, with unoriented conjugacy classes.   The following definition appeared  as Definition~4.6 of \cite{bfh:tits3} under slightly different hypotheses and in the paragraph before Remark 4.39 of \cite{fh:recognition} in the \ct\ context. 

\begin{definition}  \label{def:axes} Elements $a, b \in \f$ are in the same {\em unoriented conjugacy class} if $a = i_c(b)$ or $a=i_c(b^{-1})$ for some $c \in \f$.   An unoriented conjugacy class $[a]_u$ of a non-trivial root-free $a \in \f$  is an {\em axis} for $\phi$ if $\langle a \rangle = \Fix(\Phi_1) \cap \Fix(\Phi_2)$ for distinct $\Phi_1, \Phi_2 \in \cP(\phi)$.  The {\em multiplicity of an axis $[a]_u$} is the number of distinct $\Phi_i \in \cP(\phi)$ that fix $a$. The {\em set of axes for $\phi$} is denoted $\A(\phi)$. The set $\{ [a]: [a]_u \in \A(\phi)\}$ is denoted $\A_{\both}(\phi)$. 
\end{definition}

There is a very useful description of $\A(\phi)$ in terms of a \ct\ $\fG$.

\begin{lemma} \label{first axes in ct} If $\fG$ is a \ct\ representing $\phi$ and $\{w_i\}$ is the set of twist paths for $f$  then $\A(\phi) = \{[w_i]_u\}$.    In particular, $\A(\phi)$ is finite.
\end{lemma}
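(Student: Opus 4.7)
The plan is to prove both inclusions of $\A(\phi)=\{[w_i]_u\}$, from which finiteness is immediate since a \ct\ has only finitely many \twistpath s. For $\{[w_i]_u\}\subset\A(\phi)$, fix a \twistpath\ $w$ and a linear edge $E\in\lin_w(f)$ with $f(E)=Ew^d$, $d\ne 0$. Let $v_0,v$ be the initial and terminal endpoints of $E$. Choose a lift $\ti E\subset\ti G$ from $\ti v_0$ to $\ti v$, and let $\ti f_0,\ti f_v$ be the unique lifts of $f$ fixing $\ti v_0,\ti v$ respectively, corresponding by Lemma~\ref{principal lifts have fixed points} to principal $\Phi_0,\Phi_v\in\cP(\phi)$. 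The \ct\ computation $f_\#(EwE^{-1})=(Ew^d)(w)(w^{-d}E^{-1})=EwE^{-1}$ shows $EwE^{-1}$ is a Nielsen loop at $v_0$, so its associated element $w_0\in\Fix(\Phi_0)$. Lifting $f(E)=E\cdot w^d$ to $\ti G$ yields $\ti f_0(\ti v)=T_{w_0^d}(\ti v)$, whence $\ti f_0=T_{w_0^d}\ti f_v$ and $\Phi_0=i_{w_0^d}\Phi_v$; since $d\ne 0$ and $w_0\ne 1$, these are distinct. Then $\Fix(\Phi_v)=w_0^{-d}\Fix(\Phi_0)w_0^d$ still contains $w_0$, so by Lemma~\ref{malnormal}\pref{i:cap} together with the root-freeness of $w_0$ (inherited from the \twistpath\ $w$), $\Fix(\Phi_0)\cap\Fix(\Phi_v)=\langle w_0\rangle$, proving $[w]_u=[w_0]_u\in\A(\phi)$.

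For the reverse inclusion, suppose $[a]_u\in\A(\phi)$ with $\langle a\rangle=\Fix(\Phi_1)\cap\Fix(\Phi_2)$ for distinct $\Phi_1,\Phi_2\in\cP(\phi)$. The plan is to exhibit two distinct lifts of the circuit $\sigma_a$ in $\gf$ and invoke Lemma~\ref{axes in stallings graph}. Since $a\in\Fix(\Phi_i)$, Lemma~\ref{basic fix} gives $\{a^\pm\}\subset\partial\Fix(\Phi_i)$, and then Lemma~\ref{lem:lifting} supplies a circuit $\ell_i$ in the core of $\Gamma_{[\Phi_i]}(f)$ projecting to $\sigma_a$. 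If $\Phi_1,\Phi_2$ are non-isogredient, $\ell_1$ and $\ell_2$ live in different components of $\gf$ and are therefore distinct. If instead $\Phi_2=i_c\Phi_1 i_c^{-1}$ with $c\notin\Fix(\Phi_1)$, then $a\in\Fix(\Phi_2)=c\Fix(\Phi_1)c^{-1}$ forces $c^{-1}ac\in\Fix(\Phi_1)$; the maximal cyclic subgroups $\langle a\rangle$ and $\langle c^{-1}ac\rangle$ of $\Fix(\Phi_1)$ are non-conjugate within $\Fix(\Phi_1)$, because any conjugating $g\in\Fix(\Phi_1)$ would give $cg\in N_{\f}(\langle a\rangle)=\langle a\rangle\subset\Fix(\Phi_1)$, hence $c\in\Fix(\Phi_1)$, contradicting $c\notin\Fix(\Phi_1)$. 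They therefore determine two distinct unoriented circuits in the core of $\Gamma_{[\Phi_1]}(f)$ projecting to $\sigma_a$. In either case $\sigma_a$ has at least two lifts in $\gf$, so Lemma~\ref{axes in stallings graph} forces $[a]=[w]$ or $[a]=[\bar w]$ for some \twistpath\ $w$, i.e., $[a]_u=[w]_u$.

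The main obstacle is the isogredient subcase above: converting the algebraic distinctness $\Phi_1\ne\Phi_2$ into two geometrically distinct lifts of $\sigma_a$ in $\gf$ requires combining the root-freeness of $a$, the malnormality-type statement of Lemma~\ref{malnormal}, and the standard fact that normalizers of maximal cyclic subgroups in a free group equal themselves. The forward inclusion, by contrast, is essentially a bookkeeping exercise with covering translations once the identity $f_\#(EwE^{-1})=EwE^{-1}$ is observed.
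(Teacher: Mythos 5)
Your proof is correct in substance, but it takes a genuinely different route from the paper: the paper disposes of Lemma~\ref{first axes in ct} by citing Lemma~4.40 of \cite{fh:recognition} (the same result that underlies Notation~\ref{notn:base lift} and Lemma~\ref{lem:ct principal lifts}), which classifies the principal automorphisms fixing a given root-free $a$ directly in a \ct, whereas you give a self-contained argument inside this paper --- the forward inclusion by the covering-translation computation with the Nielsen loop $Ew\bar E$, and the reverse inclusion by exhibiting two distinct circuit lifts of $\sigma_a$ in $\gf$ and then invoking the uniqueness statement in Lemma~\ref{axes in stallings graph}(2). Your route buys independence from the external reference at the cost of leaning on the eigengraph machinery. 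Two points should be tightened. First, the asserted identity $\Fix(\Phi_v)=w_0^{-d}\Fix(\Phi_0)w_0^{d}$ is unjustified and is false in general for two automorphisms differing by an inner automorphism (compare $\Fix(\Phi_{a,j})$ with $\Fix(\Phi_{a,0})$ in Lemma~\ref{lem:ct principal lifts}); fortunately all you use is $w_0\in\Fix(\Phi_v)$, which follows directly from $\Phi_v(w_0)=w_0^{-d}\Phi_0(w_0)w_0^{d}=w_0$. Second, Lemma~\ref{lem:lifting} supplies a \emph{line} carried by the core of $\Gamma_{[\Phi_i]}(f)$, not a circuit lift of $\sigma_a$: a periodic line lifting into a finite immersed graph only guarantees a priori that some power of the circuit lifts. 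To pass from $a\in\Fix(\Phi_i)$ to an actual circuit in that component over $\sigma_a$ (and, in the isogredient case, to identify circuits over $\sigma_a$ with $\Fix(\Phi_1)$-conjugacy classes, which is what makes your two circuits distinct) you need the fact, recalled from \cite{fh:CTconjugacy}, that the core of $\Gamma_{[\Phi]}(f)$ is a Stallings graph whose fundamental group represents $[\Fix(\Phi)]$. With those two repairs the argument goes through as written.
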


\proof  This follows from Lemma~4.40 of \cite{fh:recognition}.
\endproof

 \begin{notn}  \label{notn:base lift}   
If  $[a]_u = [w]_u$ for some twist path $w$ then, up to a reversal of orientation, the axis of the covering translation $T_a : \ti G \to \ti G$ can be viewed as an infinite concatenation  $\ldots \ti w_{-1} \ti w_0 \ti w_1 \ldots $ of paths that project to $w$.     There is a principal lift $\ti f_{a,0}:\ti G \to \ti G$, called the {\em base principal lift for $a$},  that fixes the endpoints of each $\ti w_l$.  The principal automorphism $\Phi_{a,0}$ corresponding to $\ti f_{a,0}$ is called the {\em base principal automorphism} for $a$.   If  $a$ is an element of some basis  for $\f$ then the base principal lift for $a$   depends on the choice of   $\fG$,  and not just on $\phi$.  

For each edge $E^j \in \lin_w(f)$, there is a principal lift $\ti f_{a,j} : \ti G \to \ti G$ that fixes the initial endpoint of each lift $\ti E^j$ with terminal endpoint equal to the initial endpoint of some $\ti w_l$. (We write $E^j$ rather than $E_j$ to emphasize that $j$ is not an indicator of height in $G$.) The principal automorphism corresponding to $\ti f_{a,j}$ is denoted $\Phi_{a,j}$.  Note that $\Phi_{a,0} = \Phi_{a^{-1},0}$ and $\Phi_{a,j} = \Phi_{a^{-1},j}$.    Further details can be found in Lemma~4.40 of \cite{fh:recognition} and the paragraph that precedes it.
 \end{notn}

\begin{lemma}  \label{lem:ct principal lifts} Suppose that  $\fG$ is a \ct\ representing  $\phi$, that $w$ is a twist path for $f$ and that $a \in F_n$  satisfies $[a]_u =  [w]_u$.  Suppose also that $E^1,\ldots,E^{m-1}$ are the edges in $\lin_{w}(f)$.
\begin{enumerate}
\item   $\{\Phi_{a,0}, \Phi_{a,1},\ldots,  \Phi_{a,m-1}\}$ is the set of  principal automorphisms that fix $a$.  In particular, the multiplicity of each element of $\A(\phi)$ is finite.
\item  If   $f(E^j) = E^j{w}^{d_j}$ then $\Phi_{a,j} = i_a^{d_j} \Phi_{a,0}$ if $[a] = [w]$ and $\Phi_{a,j} = i_a^{-d_j} \Phi_{a,0}$ if $[a] = [\bar w]$.
\end{enumerate}
\end{lemma}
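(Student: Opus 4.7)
My plan is to prove part (2) by a direct lift computation, and then bootstrap it to part (1) using the eigengraph $\gf$ together with Lemma~\ref{axes in stallings graph}.

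For part (2), orient $a$ so that $[a] = [w]$ and $T_a(\ti w_l) = \ti w_{l+1}$; the case $[a] = [\bar w]$ follows by symmetry. Choose the lift $\ti E^j$ of $E^j$ whose terminal endpoint equals $\iota(\ti w_0)$; by definition this endpoint lies in $\Fix(\ti f_{a,0})$. Since $f(E^j) = E^j \cdot w^{d_j}$ is a splitting in $G$, $\ti f_{a,0}(\ti E^j)$ is the lift of $E^j w^{d_j}$ whose terminal endpoint is $\iota(\ti w_0)$. Reading the $w^{d_j}$-tail backward from $\iota(\ti w_0)$ along $\Axis(T_a)$ yields the subpath $\ti w_{-d_j}\ti w_{-d_j+1}\cdots\ti w_{-1}$, so the initial $E^j$-segment of $\ti f_{a,0}(\ti E^j)$ ends at $\iota(\ti w_{-d_j}) = T_a^{-d_j}(\iota \ti w_0)$ and is therefore $T_a^{-d_j}(\ti E^j)$. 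Comparing initial endpoints gives $\ti f_{a,0}(\iota \ti E^j) = T_a^{-d_j}(\iota \ti E^j)$, so $T_a^{d_j}\ti f_{a,0}$ fixes $\iota(\ti E^j)$. Since any two lifts of $f$ that agree at a vertex are equal, $T_a^{d_j}\ti f_{a,0} = \ti f_{a,j}$, i.e., $\Phi_{a,j} = i_a^{d_j}\Phi_{a,0}$.

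For part (1), first note that each $\Phi_{a,j}$ is in $\cP(\phi)$ by construction, and each fixes $a$: for $j=0$, the lift $\ti f_{a,0}$ fixes both $\iota(\ti w_0)$ and $T_a(\iota \ti w_0) = \iota(\ti w_1)$, forcing $T_a$ and $\ti f_{a,0}$ to commute, so $\Phi_{a,0}(a) = a$ by Lemma~\ref{basic fix  2}; for $j \ge 1$, part (2) then gives $\Phi_{a,j}(a) = i_a^{d_j}(a) = a$. These $m$ automorphisms are pairwise distinct because the exponents $0, d_1, \ldots, d_{m-1}$ are pairwise distinct. Conversely, suppose $\Phi\in \cP(\phi)$ fixes $a$, with corresponding lift $\ti f$. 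By Lemma~\ref{basic fix  2}, $\Fix(\ti f)$ is a non-empty $T_a$-invariant set, so by Lemma~\ref{lem:lifting} the line $\Axis(T_a)$ descends to a lift of $\sigma_a = [w]$ inside the component $\Gamma_{[\Phi]}(f)$ of $\gf$. Lemma~\ref{axes in stallings graph} says this lift either (A) lies entirely in $\Gamma^0(f)$ (the extra lift), or (B) traverses the loop $\omega$ in the lollipop for some $E^j \in \lin_w(f)$. In case (A), $\Axis(T_a)$ projects to a concatenation of fixed edges of $G$, so $\ti f$ restricts to the identity on $\Axis(T_a)$; in particular it fixes $\iota(\ti w_0)$, forcing $\ti f = \ti f_{a,0}$. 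In case (B), the lollipop of $E^j$ is attached to $\Gamma^0(f)$ at $\iota(E^j)$, and under the correspondence between fixed sets and components of $\gf$ this translates into $\iota(\ti E^j) \in \Fix(\ti f)$ for a lift $\ti E^j$ with $\tau(\ti E^j) \in \Axis(T_a)$; by definition $\ti f = \ti f_{a,j}$. Finiteness of the multiplicity is immediate from finiteness of $\lin_w(f)$.

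The main obstacle is the bookkeeping in case (B): translating the fact that $\sigma_a$ lifts to a particular $\omega$-loop in $\gf$ into the precise identification of which vertex of $\ti G$ lies in $\Fix(\ti f)$. This relies on the explicit construction of $\gf$, specifically that the lollipop of $E^j$ encodes the adjacency of a lift of $\iota(E^j)$ to $\Axis(T_a)$ within the fixed-point structure of the principal lift.
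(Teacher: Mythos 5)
Your proof of part (2) is correct: the computation that the lift of $E^jw^{d_j}$ terminating at $\iota(\ti w_0)$ has its $E^j$-segment equal to $T_a^{-d_j}(\ti E^j)$, hence $T_a^{d_j}\ti f_{a,0}=\ti f_{a,j}$, is exactly the right lift bookkeeping (the paper itself only cites Lemma~4.40 of \cite{fh:recognition} here), and the forward half of part (1) (each $\Phi_{a,j}$ is principal, fixes $a$, and they are pairwise distinct) is also fine.

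The converse half of part (1) is where there is a genuine gap. First, your case (A) misreads Lemma~\ref{axes in stallings graph}: the ``additional'' lift of $\sigma_a$ is characterized by the property that the endpoints of the terms in its splitting into fixed edges and \iNp s lie in $\Gamma^0(f)$, not by being contained in $\Gamma^0(f)$. A \twistpath\ $w$ is a closed Nielsen path and may itself cross lower linear edges through \iNp\ subpaths $E_i w'^p\bar E_i$, in which case the distinguished lift runs up and around lower lollipops; so ``$\Axis(T_a)$ projects to a concatenation of fixed edges, hence $\ti f$ is the identity on $\Axis(T_a)$'' does not follow, and even when the projection does consist of fixed edges, carrying by $\Gamma_{[\Phi]}(f)$ (Lemma~\ref{lem:lifting}) only controls the endpoints $a^\pm\in\FixN(\Phi)$, not which vertices of the lifted line are fixed by the particular lift $\ti f$. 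Second, in case (B) the step you defer is precisely the missing content: if the axis lifts to the loop $\omega$ of the $E^j$-lollipop, that lifted line never meets $\Gamma^0(f)$ at all (the attaching vertex $\iota(E^j)$ sits at the bottom of the stick, off the loop), so no formal appeal to ``the correspondence between fixed sets and components of $\gf$'' yields $\iota(\ti E^j)\in\Fix(\ti f)$; one needs an actual dictionary between where a carried line sits in $\gf$ and the location of $\Fix(\ti f)$ relative to that line (this is what the paper outsources to \cite[Lemma 4.40]{fh:recognition}, or what one could extract from the construction behind Lemma~\ref{lem:lifting} / \cite[Lemma 12.4]{fh:CTconjugacy}). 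A smaller point in the same direction: you should also justify that the line-lift of $\Axis(T_a)$ into $\Gamma_{[\Phi]}(f)$ actually wraps a circuit of $\gf$ (so that Lemma~\ref{axes in stallings graph} applies); this follows because $a\in\Fix(\Phi)$ and the core of $\Gamma_{[\Phi]}(f)$ represents $[\Fix(\Phi)]$, but it is not automatic for an arbitrary immersed line with periodic label. As written, then, the identification of an arbitrary principal $\Phi$ fixing $a$ with one of $\Phi_{a,0},\dots,\Phi_{a,m-1}$ is not established.
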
 

\proof  This follows from Lemma~4.40 of \cite{fh:recognition}.
\endproof

\begin{definition}\label{d:conjugacy pairs}
Suppose that the group $G$ acts on the sets $X_i$, $i=1,\dots, k$, and that $x_i\in X_i$. The orbit of $(x_1,\dots,x_k)$ under the diagonal action of $G$ on $\prod_{i=1}^k X_i$, denoted $[x_1, \dots, x_k]_G$, is a {\it conjugacy $k$-tuple}. If $k=2$ then we say $[x_1,x_2]_G$ is a {\it conjugacy pair}. We sometimes suppress the subscript, in which case $G=\f$.
\end{definition}

\begin{exs}\label{e:conjugacy pairs}
Here are some examples of conjugacy pairs where $G=\f$.
\begin{itemize}
\item
We will often take $X_i$ to be the set of finitely generated subgroups of $\f$ or $\f$ itself with the action of $\f$ given by conjugation. If $H<\f$ (resp.\ $x\in \f$) then $[H]_\f$ (resp.\ $[x]_\f$) is the conjugacy class of $H$ in $\f$ (resp.\ $x$ in $\f$). Conjugacy pairs formed with these $X_i$'s will play an important role in this paper, especially in Section~\ref{s:more atoms}. 
\item
If $X=\partial\f$ and if $x\not= y\in X$ then $(x,y)\in X\times X$ is an oriented line. The conjugacy pair $[x,y]_\f$ is represents an oriented line in any marked graph. 
\item 
If $X$ is the power set of $\partial\f$ and $A$ and $B$ are disjoint subsets of  $\partial \f$, then $(A,B)\in X\times X$ denotes the set of lines $L$ with $\partial_-L\in A$ and $\partial_+L\in B$. The conjugacy pair $[A,B]_\f$ represents a set of oriented lines in any marked graph.
\end{itemize}
\end{exs}

We now define strong axes, the first of our invariants that is expressed as a conjugacy pair.

\begin{definition}  \label{def:strong axis} Let $\A_\both (\phi)$ be the set of conjugacy classes representing elements of $\A(\phi)$, i.e.\ $[a]\in\A_\both(\phi)$ if $\{[a],[a^{-1}]\}\in\A(\phi)$. $\f$ acts on pairs $(\Phi, a)$ where $\Phi\in\cP(\phi)$, $a\in\Fix(\Phi)$, and $[a]\in \A_\both(\phi)$ via $(\Phi,a)^g=(\Phi^{i_g}, a^g)$. The $\f$-orbit, equivalently conjugacy pair, $[\Phi,a]$ is a {\em strong axis for $\phi$}. If $\alpha_s = [\Phi,a]$ then  we let $\alpha^{-1}_s :=  [\Phi,a^{-1}]$. The set of all strong axes for $\phi$ is denoted $\sa(\phi)$.    $\Aut(F_n)$ acts on pairs $(\Phi,a)$ by $\Theta\cdot(\Phi,a) = (\Theta \Phi \Theta^{-1}, \Theta(a))$.    This   descends to an action of $\Out(\f)$ on $\sa(\phi)$.
 
 We can   partition $\sa(\phi)$ according to the second coordinate: for each $ \mu \in \A_\both(\phi)$ let $\sa(\phi,\mu)$ be    the subset of $\sa(\phi)$ consisting of elements  in which some, and hence every, representative $(\Phi,a)$ satisfies $[a] = \mu$.   
\end{definition}

\begin{lemma} \label{lem:strong axis}
Suppose that $a \in \f$,   that $[a] \in \A_\both(\phi)$ and that $\Phi_{a,0},\ldots, \Phi_{a,m-1}$ are as in Notation~\ref{notn:base lift}.   For each $\alpha_s \in \sa(\phi,[a])$,  there is a unique $\Phi_{a,j}$ such that   $\alpha_s=[\Phi_{a,j},a]$.   Thus $\sa(\phi,[a]) = \{[\Phi_{a,0},a], [\Phi_{a,1},a],\ldots, [\Phi_{a,m-1},a]\}$.
\end{lemma}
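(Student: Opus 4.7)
The plan is to verify three things: first, that each $[\Phi_{a,j}, a]$ genuinely lies in $\sa(\phi, [a])$; second, the existence statement that every $\alpha_s \in \sa(\phi, [a])$ has some $\Phi_{a,j}$ as a first coordinate in some representative; third, the uniqueness of that $j$.

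The first point is immediate from Notation~\ref{notn:base lift} together with Lemma~\ref{lem:ct principal lifts}(1): each $\Phi_{a,j}$ is principal, fixes $a$, and by hypothesis $[a] \in \A_\both(\phi)$, so the pair $(\Phi_{a,j}, a)$ satisfies all the requirements in Definition~\ref{def:strong axis}.

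For existence, I would take a representative $(\Phi, a')$ of $\alpha_s$ with $[a'] = [a]$. Pick $g \in \f$ with $a = ga'g^{-1}$; then $(i_g \Phi i_g^{-1}, a)$ is another representative of the same $\f$-orbit $\alpha_s$. The first coordinate is now a principal automorphism fixing $a$, and by Lemma~\ref{lem:ct principal lifts}(1) the full list of such automorphisms is $\{\Phi_{a,0},\dots,\Phi_{a,m-1}\}$, giving the desired $j$.

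The uniqueness step is where the small bit of content lives. Suppose $[\Phi_{a,j},a] = [\Phi_{a,k},a]$, witnessed by $g \in \f$ with $\Phi_{a,k} = i_g \Phi_{a,j} i_g^{-1}$ and $a = gag^{-1}$. The second equation places $g$ in the centralizer $\Cent_{\f}(a)$, which equals $\langle a \rangle$ because $a$ is root-free (recall axes are defined using root-free elements in Definition~\ref{def:axes}). Thus $g = a^p$ for some $p \in \Z$, and the first equation becomes $\Phi_{a,k} = i_a^p\,\Phi_{a,j}\,i_a^{-p}$. But $a \in \Fix(\Phi_{a,j})$, so by Lemma~\ref{basic fix} the inner automorphism $i_a$ commutes with $\Phi_{a,j}$; hence $i_a^p \Phi_{a,j} i_a^{-p} = \Phi_{a,j}$, forcing $\Phi_{a,k} = \Phi_{a,j}$ and therefore $j=k$.

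I do not anticipate a real obstacle: the only subtle input is the identification of $\Cent_{\f}(a)$ with $\langle a \rangle$, which uses root-freeness, and the commutation of $i_a$ with $\Phi_{a,j}$, which is the standard content of Lemma~\ref{basic fix}. The remaining bookkeeping consists of moving representatives around inside an $\f$-orbit until the second coordinate equals $a$ on the nose.
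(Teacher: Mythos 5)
Your argument is correct and follows essentially the same route as the paper: conjugate a representative so its second coordinate is $a$ on the nose, invoke Lemma~\ref{lem:ct principal lifts}(1) to identify the first coordinate with some $\Phi_{a,j}$, and for uniqueness use that the witnessing conjugator lies in $\Cent_{\f}(a)=\langle a\rangle$ (root-freeness) and that $i_a$ commutes with $\Phi_{a,j}$ since $a\in\Fix(\Phi_{a,j})$. The only difference is that you spell out the membership of each $[\Phi_{a,j},a]$ in $\sa(\phi,[a])$ explicitly, which the paper leaves implicit.
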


\proof
Each $\alpha_s \in \sa(\phi,[a])$ is represented by $( \Phi,a^c)$ and hence  by $(  i_c \Phi i_c^{-1}, a)$, for some $c \in \f$ and some $\Phi  \in \cP(\phi)$.  Since $a \in \Fix(i_c \Phi i_c^{-1})$, there exists $j$ such that $i_c \Phi i_c^{-1} = \Phi_j$.

    For uniqueness, note that  if $ (  \Phi_j, a) = c \cdot (\Phi_i,a)$ for some $c \in \f$ then $c = a^p$ for some $p$ so  $i_c$ commutes with $\Phi_j$ and   $  c \cdot (\Phi_i, a) =  (\Phi_i, a)$.     
\endproof

\begin{remark}\label{r:sa}
There is another useful description of  $[\Phi_{a,j},a]\in\sa(\phi,[a])$ in terms of a \ct\ $\fG$.  Let $w$ be the \twistpath\ satisfying   $[a]_u = [w]_u$ and let $v$ be the initial vertex of $w$.  There is an automorphism $f_{v\#} : \pi_1(G,v) \to \pi_1(G,v)$ that sends the homotopy class of the closed path $\sigma$ with basepoint $v$ to the homotopy class of the closed path $f(\sigma)$ with basepoint $v$.  Let $\tau$ be the element of $\pi_1(G,v)$ determined by $w$ if $[a] = [w]$ and by $\bar w$ if $[a] = [\bar w]$.  In both cases, $\tau$   is fixed by $f_{v\#}$.  There is an isomorphism from $\pi_1(G,v)$ to $F_n$ that is well defined up to post-composition with an inner automorphism of $F_n$.    The pair $(f_{v\#},\tau)$   determines a well defined element (namely $[\Phi_{a,0},a]$)  of $\sa(\phi,[a])$.   Similarly if $v_j$ is the initial endpoint of $E^j \in \lin_w(f)$, let $\tau_j$ be the element of $\pi_1(G,v_j)$ determined by $E^jw\bar E^j$ if $[a] = [w]$ and by $E^j\bar w\bar E^j$ if $[a] = [\bar w]$. Then $(f_{v_j\#},\tau_j)$ determines   $[\Phi_{a,j},a]$.
 
 Continuing with this notation, we can relate $\sa(\phi,[a])$ to circuits in the eigengraph $\Gamma(f)$ that are lifts of $[w]_u$.  For $j \ne 0$,  $[\Phi_{a,j},a]$ corresponds to the loop at the end of the lollipop in $\Gamma(f)$ determined by $E^j$.  By Lemma~\ref{axes in stallings graph} there is one more lift of $[w]_u$ into $\Gamma(f)$ and this corresponds to $[\Phi_{a,0},a]$.    
 \end{remark}

\begin{definition}  \label{def:twist} Suppose that $\mu \in \A_\both(\phi)$ and that $\alpha_s, \alpha_s' \in \sa(\phi,\mu)$. Choose $a \in \f$ such that $[a] =\mu$ and let $\Phi, \Phi' \in \cP(\phi)$ be the unique elements such that $\alpha_s=[\Phi,a]$ and $\alpha'_s=[\Phi',a]$.   Since $\Phi$ and $ \Phi'  $ both fix $a$  there exists $\tau \in \Z$ such that  $\Phi'  = i_a^\tau \Phi$; equivalently,  $ \Phi' \Phi^{-1} = i_a^\tau$. 
 We say that $\tau = \tau(\alpha'_s,\alpha_s)$ is the {\em twist coordinate} associated to $\alpha'_s$ and $\alpha_s$.  
\end{definition}

\begin{excont*}\label{ex.f}
In our example, $\sa(\phi,[a])$ is represented in Figure~\ref{f:eigengraph} by the three circles $\alpha$, $\alpha'$, and $\alpha''$ labeled $a$ and drawn with thicker lines. $\sa(\phi)=\sa(\phi,[a])\cup\sa(\phi,[a^{-1}])$. We have for example $\tau(\alpha',\alpha)=1$.
\end{excont*}

\begin{lemma} \label{lem:twist} Twist coordinates are well-defined.
\end{lemma}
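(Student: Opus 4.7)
My plan is to verify two things: (i) that for the given choice of $a$, the integer $\tau$ exists and is uniquely determined, and (ii) that $\tau$ does not depend on which representative $a$ of $\mu$ was chosen.

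For (i), I would first use Lemma~\ref{lem:strong axis} to note that once $a$ is fixed, the automorphisms $\Phi$ and $\Phi'$ with $\alpha_s = [\Phi,a]$ and $\alpha'_s = [\Phi',a]$ are themselves uniquely determined. Since both $\Phi$ and $\Phi'$ represent $\phi$, the composition $\Phi'\Phi^{-1}$ lies in $\Inn(\f)$, so $\Phi'\Phi^{-1} = i_b$ for a unique $b \in \f$. Applying both sides to $a$ and using $\Phi(a) = \Phi'(a) = a$ gives $i_b(a) = a$, so $b$ commutes with $a$. Since $a$ is root-free by the definition of $\A_\both(\phi)$, its centralizer is $\langle a \rangle$, so $b = a^\tau$ for a unique $\tau \in \Z$.

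For (ii), suppose we replace $a$ by another representative $a' = a^c$ of $\mu$. Then $(\Phi,a)$ is $\f$-equivalent to $(i_c\Phi i_c^{-1},\, a^c)$, so the unique principal lifts associated to $a'$ are $\Phi_{\mathrm{new}} = i_c\Phi i_c^{-1}$ and $\Phi'_{\mathrm{new}} = i_c\Phi' i_c^{-1}$. Let $\tau'$ be the corresponding twist, so $\Phi'_{\mathrm{new}} = i_{a'}^{\tau'}\Phi_{\mathrm{new}}$. Using $i_{a^c} = i_c i_a i_c^{-1}$, hence $i_{a^c}^{\tau'} = i_c i_a^{\tau'} i_c^{-1}$, the defining equation becomes
\[
i_c\Phi' i_c^{-1} \;=\; i_c i_a^{\tau'} i_c^{-1} \cdot i_c\Phi i_c^{-1} \;=\; i_c\bigl(i_a^{\tau'}\Phi\bigr) i_c^{-1}.
\]
Conjugating through by $i_c^{-1}$ yields $\Phi' = i_a^{\tau'}\Phi$, so by the uniqueness established in (i), $\tau' = \tau$.

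There is no real obstacle here: the argument is a routine unwinding of definitions, with the only substantive input being that representatives of elements of $\A_\both(\phi)$ are root-free (so their centralizer is cyclic), which is what forces the commuting element $b$ to be a power of $a$.
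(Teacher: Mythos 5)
Your proof is correct and follows essentially the same route as the paper: the independence of the choice of representative $a$ is established by exactly the conjugation computation $i_c\Phi'\Phi^{-1}i_c^{-1} = i_c i_a^{\tau} i_c^{-1} = i_{a^c}^{\tau}$ that the paper uses. Your part (i), pinning down existence and uniqueness of $\tau$ via root-freeness of $a$ and uniqueness of the $\Phi_{a,j}$ from Lemma~\ref{lem:strong axis}, is material the paper absorbs into Definition~\ref{def:twist}, so it is a harmless (and correct) addition rather than a different approach.
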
 

\proof  We have to show that   $ \tau(\alpha'_s,\alpha_s)$ is independent of the choice of $a$ representing  $\mu$.  If $a$ is replaced by $a^c$ then $\Phi$ and $\Phi'$ are replaced by $i_c \Phi i^{-1}_c$ and $i_c \Phi' i^{-1}_c$  respectively and so  $  i_a^\tau = \Phi' \Phi^{-1}  $ is replaced by $i_c \Phi'  \Phi^{-1}i^{-1}_c = i_c i_a^\tau  i^{-1}_c  ={i_{a^c}}^\tau  $.
\endproof

The following lemma allows us to compute twist coordinates for strong axes from a \ct\ $\fG$. It is  an immediate consequence of Lemma~\ref{lem:ct principal lifts} and  the definitions. 
  
  \begin{lemma}  \label{twist facts} 
  \begin{enumerate}
   \item If $[a] = [w]$ and $E^j \in \lin_w(f)$ satisfies $f(E^j) = E^j w^{d_j}$ then $\tau([\Phi_{a,j},a], [\Phi_{a,0},a]) = d_j$.

  \item Suppose that $\mu \in \A_\both(\phi)$ and that  $\alpha_s, \beta_s,\gamma_s \in \sa(\phi,\mu)$.  Then
  \begin{enumerate}
  \item $ \tau( \alpha_s,\beta_s)= -   \tau(\beta_s, \alpha_s)  $ 
 \item 
   $\tau( \alpha_s,\gamma_s) = \tau(\alpha_s, \beta_s) + \tau( \beta_s, \gamma_s) $  
\item  
 $ \tau( \alpha_s,\beta_s)= -   \tau(\alpha^{-1}_s, \beta^{-1}_s) $
 \end{enumerate}
 \end{enumerate}
 \end{lemma}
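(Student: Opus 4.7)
The plan is to extract each statement from the definition of the twist coordinate together with Lemma~\ref{lem:ct principal lifts}; no deeper machinery is needed, as the authors themselves remark. Throughout, fix a representative $a$ of $\mu$ and, for each strong axis $\delta_s \in \sa(\phi,\mu)$, use Lemma~\ref{lem:strong axis} to write $\delta_s = [\Phi_\delta, a]$ for the unique principal automorphism $\Phi_\delta$ fixing $a$ that represents $\delta_s$. Then $\tau(\delta_s,\delta'_s)$ is characterized by $\Phi_\delta(\Phi_{\delta'})^{-1} = i_a^{\tau(\delta_s,\delta'_s)}$.

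For (1), apply Lemma~\ref{lem:ct principal lifts}(2) to the twist path $w$ and the linear edge $E^j$. Under the assumption $[a]=[w]$, one has $\Phi_{a,j} = i_a^{d_j}\Phi_{a,0}$, i.e.\ $\Phi_{a,j}\Phi_{a,0}^{-1} = i_a^{d_j}$. Comparing with the characterization above gives $\tau([\Phi_{a,j},a],[\Phi_{a,0},a]) = d_j$.

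For (2)(a), take inverses of the defining equation: from $\Phi_\alpha\Phi_\beta^{-1} = i_a^{\tau(\alpha_s,\beta_s)}$ we get $\Phi_\beta\Phi_\alpha^{-1} = i_a^{-\tau(\alpha_s,\beta_s)}$, which by the uniqueness of $\tau(\beta_s,\alpha_s)$ forces $\tau(\beta_s,\alpha_s) = -\tau(\alpha_s,\beta_s)$. For (2)(b), chain the defining equations via $\Phi_\alpha\Phi_\gamma^{-1} = (\Phi_\alpha\Phi_\beta^{-1})(\Phi_\beta\Phi_\gamma^{-1}) = i_a^{\tau(\alpha_s,\beta_s)}\,i_a^{\tau(\beta_s,\gamma_s)} = i_a^{\tau(\alpha_s,\beta_s)+\tau(\beta_s,\gamma_s)}$, and again invoke uniqueness of the exponent.

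The only step that requires a moment of thought is (2)(c), since $\alpha_s^{-1}$ and $\beta_s^{-1}$ live in $\sa(\phi,\mu^{-1})$ rather than $\sa(\phi,\mu)$, so one must verify that the same principal automorphisms represent them when the second coordinate is switched from $a$ to $a^{-1}$. This is exactly the content of the relations $\Phi_{a,0}=\Phi_{a^{-1},0}$ and $\Phi_{a,j}=\Phi_{a^{-1},j}$ noted in Notation~\ref{notn:base lift}, so $\alpha_s^{-1}=[\Phi_\alpha,a^{-1}]$ and $\beta_s^{-1}=[\Phi_\beta,a^{-1}]$ with the same $\Phi_\alpha,\Phi_\beta$ as before. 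Then $\Phi_\alpha\Phi_\beta^{-1} = i_a^{\tau(\alpha_s,\beta_s)} = i_{a^{-1}}^{-\tau(\alpha_s,\beta_s)}$, and the defining equation applied to the pair $(\alpha_s^{-1},\beta_s^{-1})$ with base element $a^{-1}$ gives $\tau(\alpha_s^{-1},\beta_s^{-1}) = -\tau(\alpha_s,\beta_s)$. The anticipated (minor) obstacle, therefore, is simply the bookkeeping around the change of orientation in (2)(c); everything else is a one-line manipulation of the defining identity for $\tau$.
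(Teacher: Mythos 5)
Your proof is correct and follows the same route the paper intends: the paper states Lemma~\ref{twist facts} is an immediate consequence of Lemma~\ref{lem:ct principal lifts} and the definitions, and your argument simply fills in those one-line manipulations of the identity $\Phi'\Phi^{-1}=i_a^\tau$, including the only slightly delicate point (2)(c), which you handle correctly via $\alpha_s^{-1}=[\Phi_\alpha,a^{-1}]$ and $i_a^{\tau}=i_{a^{-1}}^{-\tau}$.
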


The next lemma shows that $\A(\phi)$, $\sa(\phi,[a])$, and $\sa(\phi)$ are natural invariants.
  
  \begin{lemma}  \label{theta and axes} Assume that $\psi = \theta \phi \theta^{-1}$ and that $\Theta$ represents $\theta$.  
\begin{enumerate}
\item $[a]_u \leftrightarrow (\theta[a])_u$ defines a bijection  $\A(\phi) \leftrightarrow \A(\psi)$. 
\item $( \Phi,a) \leftrightarrow (\Theta \Phi \Theta^{-1},\Theta(a))$ induces a bijection $ \sa(\phi, [a]) \leftrightarrow \sa(\psi,\theta([a]))$ that preserves twist coordinates.
\end{enumerate}
\end{lemma}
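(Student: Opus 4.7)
Both parts are essentially direct verifications using the naturality statements already established in Lemma~\ref{first theta} together with the elementary identity $\Theta\, i_g\,\Theta^{-1} = i_{\Theta(g)}$. My plan is as follows.

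For (1), I would first show that the map is well-defined.  Suppose $[a]_u\in\A(\phi)$, so that $\langle a\rangle = \Fix(\Phi_1)\cap\Fix(\Phi_2)$ for distinct $\Phi_1,\Phi_2\in\cP(\phi)$. By Lemma~\ref{first theta}, $\Psi_i:=\Theta\Phi_i\Theta^{-1}$ are distinct elements of $\cP(\psi)$ and $\Fix(\Psi_i)=\Theta(\Fix(\Phi_i))$, so
\[
\langle \Theta(a)\rangle \;=\; \Theta(\Fix(\Phi_1))\cap\Theta(\Fix(\Phi_2))\;=\;\Fix(\Psi_1)\cap\Fix(\Psi_2).
\]
Since $\Theta$ is an automorphism it sends root-free elements to root-free elements, so $[\Theta(a)]_u\in\A(\psi)$. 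Replacing $\Theta$ by $i_c\Theta$ multiplies $\Theta(a)$ by a conjugate, so the unoriented conjugacy class $(\theta[a])_u$ depends only on $\theta$ and on $[a]_u$. The analogous construction with $\theta^{-1}$ provides a two-sided inverse.

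For (2), the main point is that the map $(\Phi,a)\mapsto(\Theta\Phi\Theta^{-1},\Theta(a))$ intertwines the two $F_n$-actions in Definition~\ref{def:strong axis}. For any $g\in F_n$,
\[
\Theta\cdot(i_g\Phi i_g^{-1},\,gag^{-1}) \;=\; \bigl(i_{\Theta(g)}(\Theta\Phi\Theta^{-1})i_{\Theta(g)}^{-1},\ \Theta(g)\Theta(a)\Theta(g)^{-1}\bigr),
\]
which is the action of $\Theta(g)\in F_n$ on $(\Theta\Phi\Theta^{-1},\Theta(a))$. Hence the map descends to a bijection on $F_n$-orbits, and replacing $\Theta$ by $i_c\Theta$ changes the image pair by conjugation by $c$, so the conjugacy pair depends only on $\theta$. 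The image clearly lies in $\sa(\psi,\theta[a])$, and the construction for $\theta^{-1}$ provides the inverse.

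Finally, for twist coordinates, let $\alpha_s=[\Phi,a]$ and $\alpha_s'=[\Phi',a]$ in $\sa(\phi,[a])$ with $\Phi' = i_a^{\tau}\Phi$, i.e.\ $\tau=\tau(\alpha_s',\alpha_s)$. Then
\[
\Theta\Phi'\Theta^{-1}\;=\;\Theta\, i_a^{\tau}\,\Theta^{-1}\cdot \Theta\Phi\Theta^{-1}\;=\;i_{\Theta(a)}^{\tau}\cdot(\Theta\Phi\Theta^{-1}),
\]
so the twist coordinate on the $\psi$-side, computed using the representative $\Theta(a)$ of $\theta[a]$, is again $\tau$. Since Lemma~\ref{lem:twist} shows the twist coordinate is independent of the chosen representative, this establishes preservation. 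No step here is hard; the only thing to keep straight is that the map $\Theta\mapsto(\Theta\Phi\Theta^{-1},\Theta(a))$ depends on the lift $\Theta$, while its $F_n$-orbit depends only on $\theta$, and the identity $\Theta i_g\Theta^{-1}=i_{\Theta(g)}$ makes this precise.
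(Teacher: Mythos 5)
Your proposal is correct and follows essentially the same route as the paper: verify that conjugation by $\Theta$ carries principal automorphisms fixing $a$ to principal automorphisms fixing $\Theta(a)$, check the intertwining identity $\Theta\cdot\bigl(c\cdot(\Phi,a)\bigr)=\Theta(c)\cdot(\Theta\Phi\Theta^{-1},\Theta(a))$ so the map descends to conjugacy pairs, and read off preservation of twist coordinates from $\Phi_2=i_a^\tau\Phi_1\Rightarrow\Theta\Phi_2\Theta^{-1}=i_{\Theta(a)}^\tau\,\Theta\Phi_1\Theta^{-1}$. Your write-up is slightly more explicit than the paper's (e.g.\ checking the axis condition via $\Fix(\Psi_1)\cap\Fix(\Psi_2)$ and the independence of the lift $\Theta$), but there is no substantive difference in method.
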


\proof   If $\Phi_1, \Phi_2 \in \cP(\phi) $ fix $a\in \f$  then  $\Psi_1 := \Theta \Phi_1 \Theta^{-1},  \Psi_2 := \Theta \Phi_2 \Theta^{-1}  \in \cP(\psi)$ fix $\Theta(a)$. This proves  (1). 

For (2),  let $\Psi = \Theta \Phi \Theta^{-1}$ and note that if $\Phi \in \cP(\phi)$ and $a \in \Fix(\Phi)$ then $\Psi \in \cP(\psi)$ and $\Theta(a) \in \Fix(\Psi)$.  Moreover
\[  c\cdot ( \Phi,a) =   (  i_c \Phi i_c^{-1},a^c) \mapsto  (i_{\Theta(c)}\Psi  i_{\Theta(c)}^{-1}, \Theta(a)^{\Theta(c)})) = \Theta(c) \cdot ( \Psi,\Theta(a)) \]
This proves that $(\Phi,a ) \mapsto (\Theta \Phi \Theta^{-1},\Theta(a))$ induces a well defined map  $ \sa(\phi,[a]) \to  \sa(\psi,\theta([a]))$ that is obviously invertible and is hence a bijection.    If $\Phi_i$ and $\Psi_i$ are as in the proof of (1) and if $\Phi_2 = i_a^\tau \Phi_1$ then $\Psi_2 = i^\tau_{\Theta(a)} \Psi_1$.  This proves that twist coordinates are preserved.
\endproof

We conclude this section with a conjugacy class of pairs construction that is better suited to the techniques in Section~\ref{s:gersten} than the one in Definition~\ref{def:strong axis} but is only applicable when the fixed subgroups in question have rank at least two.

\begin{definition}  \label{def:subgroup element pairs}
Given $\phi \in \Out(F_n)$, consider pairs $(\Fix(\Phi), a)$ where   $\Phi \in \cP(\phi)$,   $a \in \Fix(\Phi)$ and $[a] \in \A_\both(\phi)$.  Using $\Fix(i_c\Phi i_c^{-1}) =  i_c(\Fix(\Phi))$, the action of $F_n$ on such pairs is given by $c\cdot  (\Fix(\Phi), a) =  (i_c(\Fix(\Phi)), i_c(a))$, giving a conjugacy pair $[\Fix(\Phi),a]$. Similarly, $\Aut(F_n)$ acts on pairs $(\Fix(\Phi),a)$ by $\Theta\cdot(\Fix(\Phi),a) = (\Theta (\Fix( \Phi)),  \Theta(a))$. This descends to an action of $\Out(\f)$ on the set of such conjugacy pairs.   
\end{definition}

\begin{remark}  Since $\Fix(\Phi)$ is its own normalizer (Lemma~\ref{malnormal}\pref{i:normalizer}), $[\Fix(\Phi),a] = [\Fix(\Phi),a']$ if and only if $a' = i_c(a)$ for some $c \in \Fix(\Phi)$; equivalently $a$ and $a'$ are conjugate as elements of  $\Fix(\Phi)$.
\end{remark}

\begin{lemma}  \label{l:alg sa = sa} Suppose that $\Fix(\Phi)$ and $\Fix(\Phi')$ have rank at least two. Then 
$$[\Phi,a] = [ \Phi', a'] \Longleftrightarrow [\Fix(\Phi),a] = [\Fix(\Phi'), a'] $$
\end{lemma}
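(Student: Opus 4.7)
My plan is to prove the two directions separately, with the forward direction being essentially formal and the reverse direction relying on the maximal cyclic conclusion of Lemma~\ref{malnormal}\pref{i:cap}.

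For the forward direction, I would start from a representative $c \in F_n$ witnessing $[\Phi,a] = [\Phi',a']$, so $\Phi' = i_c \Phi i_c^{-1}$ and $a' = i_c(a)$. Using the standard identity $\Fix(i_c \Phi i_c^{-1}) = i_c(\Fix(\Phi))$, this same $c$ witnesses $(\Fix(\Phi'),a') = c \cdot (\Fix(\Phi),a)$, giving $[\Fix(\Phi),a] = [\Fix(\Phi'),a']$. This direction does not even use the rank hypothesis.

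For the reverse direction, suppose $c \in F_n$ satisfies $\Fix(\Phi') = i_c(\Fix(\Phi))$ and $a' = i_c(a)$. Set $\Phi'' := i_c \Phi i_c^{-1}$, so that $\Phi''$ is a principal automorphism representing the same outer automorphism as $\Phi'$, and $\Fix(\Phi'') = i_c(\Fix(\Phi)) = \Fix(\Phi')$. The key step is to show $\Phi' = \Phi''$: if they were distinct, Lemma~\ref{malnormal}\pref{i:cap} would force $\Fix(\Phi') \cap \Fix(\Phi'')$ to be trivial or maximal cyclic, contradicting the assumption that this intersection equals $\Fix(\Phi')$, which has rank at least two. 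Therefore $\Phi' = \Phi'' = i_c \Phi i_c^{-1}$ and $a' = i_c(a)$, which is exactly the condition $c \cdot (\Phi,a) = (\Phi',a')$, so $[\Phi,a] = [\Phi',a']$.

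The only slightly delicate point is confirming that $\Phi'$ and $\Phi''$ represent the same outer automorphism, so that Lemma~\ref{malnormal}\pref{i:cap} applies; but this is immediate, since the definition of strong axis and of the pair $[\Fix(\Phi),a]$ both take place within a fixed $\phi \in \upgn$. The rank hypothesis is used precisely once, to rule out the maximal cyclic alternative, and this is where the equivalence would fail without it: when $\Fix(\Phi)$ has rank one, different principal automorphisms sharing that fixed subgroup (as happens along an axis with multiplicity greater than one) produce distinct strong axes even though their subgroup-element pairs coincide.
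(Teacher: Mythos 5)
Your proof is correct and follows essentially the same route as the paper: the forward direction is formal, and the reverse direction reduces to showing $\Phi' = i_c\Phi i_c^{-1}$ once their fixed subgroups coincide, which the rank-at-least-two hypothesis forces via Lemma~\ref{malnormal}\pref{i:cap}. The paper's version is terser (it states the equivalence $\Phi' = i_c\Phi i_c^{-1} \Leftrightarrow \Fix(\Phi') = i_c\Fix(\Phi)$ without spelling out the appeal to Lemma~\ref{malnormal}), but the underlying argument, including the role of the rank hypothesis, is the same as yours.
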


\proof   By definition, $[\Phi,a] = [ \Phi', a'] $ if and only if there exists $c \in F_n$ such that $ \Phi' = i_c \Phi i_c^{-1}$ and $a' = i_c(a)$.  Similarly,  $[\Fix(\Phi),a] = [ \Fix(\Phi'), a'] $ if and only if there exists $c \in F_n$ such that $ \Fix(\Phi') = i_c \Fix(\Phi)$ and $a' = i_c(a)$.   As we are assuming that  $\Fix(\Phi)$ and $\Fix(\Phi')$ have rank at least two,   $ \Phi' = i_c \Phi i_c^{-1}$ if and only if $\Fix(\Phi')=i_c \Fix(\Phi)$.
\endproof

\subsection{Applying the Recognition Theorem}\label{recognition}
The Recognition Theorem \cite[Theorem 5.1]{fh:recognition} gives invariants that completely determine rotationless elements of $\Out(\f)$. In this paper, via the following lemma, we use it to give a sufficient condition for two elements of $\upgn$ to be conjugate in $\Out(\f)$.

\begin{lemma} \label{l:recognition} Suppose that $\fG$ and $g :G' \to G'$ are \ct s representing $\phi$ and $\psi$ respectively,   that $\theta \in \Out(F_n)$ and that a line $L$  lifts into $\Gamma(f)$ (meaning that the realization of $L$ in $G$ is the image of a line in $\Gamma(f)$) if and only if $\theta(L)$  lifts into $\Gamma(g)$.    Then for each $\Theta \in \Aut(F_n)$ representing $\theta$:
\begin{enumerate}
\item  \label{item:principal}
there is a bijection $B_{\cP} :\cP(\phi) \to \cP(\psi)$    such that $\FixN(B_\cP(\Phi)) =  \Theta(\FixN(\Phi)) = \FixN(\Phi^\Theta)$.  In particular, $\Fix(B_\cP(\Phi)) =  \Theta \Fix(\Phi)  =  \Fix(\Phi^\Theta)$.
\item \label{item:strong axes}
$ [\Phi,a] \mapsto   [B_{\cP}(\Phi), \Theta(a)] $ defines   a bijection $B_{\sa}: \sa(\phi) \to \sa(\psi)$, independent of the choice of $\Theta$, such that $\phi^\theta = \psi$ if and only if  $B_{\sa}$ preserves twist coordinates.  \end{enumerate}
\end{lemma}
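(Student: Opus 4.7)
The plan is to build $B_\cP$ from the line-carrying hypothesis via Lemma~\ref{lem:lifting}, derive the $\FixN$ and $\Fix$ equalities using a clean characterization of $\Fix(\Phi)$ as the setwise stabilizer of $\FixN(\Phi)$, and then handle part~\pref{item:strong axes} by direct verification together with Lemma~\ref{theta and axes} in one direction and the Recognition Theorem \cite[Theorem~5.1]{fh:recognition} in the other.

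For part~\pref{item:principal}, Lemma~\ref{lem:lifting} identifies the lines carried by $\Gamma_{[\Phi]}(f)$ with pairs of distinct points in the $F_n$-orbit of $\FixN(\Phi)$.  Since $\theta$ acts on pairs of boundary points via $\partial\Theta$, the hypothesis gives for each $\Phi\in\cP(\phi)$ that $\partial\Theta(\FixN(\Phi))=\FixN(\Phi^\Theta)$ lies in the $F_n$-orbit of some $\FixN(\Psi)$ with $\Psi\in\cP(\psi)$.  (The verification that a single orbit does not split across multiple components of $\Gamma(g)$ follows from the description of lines in Lemma~\ref{lem:lines in gf} together with the bijection between components of $\Gamma(g)$ and isogredience classes.)  The uniqueness lemma preceding Definition~\ref{d:fix} then picks out a unique such $\Psi$ with $\FixN(\Psi)=\partial\Theta(\FixN(\Phi))$; set $B_\cP(\Phi):=\Psi$.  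The $\FixN$ equalities are built in.  For the $\Fix$ equality, specializing the same uniqueness lemma to $\Phi_1=\Phi_2=\Phi'$ yields the characterization that $a\in\Fix(\Phi')$ iff $\partial i_a$ preserves $\FixN(\Phi')$ setwise.  Applied to $\Phi'=B_\cP(\Phi)$ with $\FixN(B_\cP(\Phi))=\partial\Theta(\FixN(\Phi))$, and using $\partial\Theta^{-1}\circ\partial i_a\circ\partial\Theta=\partial i_{\Theta^{-1}(a)}$, this becomes $a\in\Fix(B_\cP(\Phi))$ iff $\Theta^{-1}(a)\in\Fix(\Phi)$, giving $\Fix(B_\cP(\Phi))=\Theta\Fix(\Phi)=\Fix(\Phi^\Theta)$.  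Bijectivity of $B_\cP$ follows by a symmetric construction with $\theta^{-1}$ and $\Theta^{-1}$.

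For part~\pref{item:strong axes}, well-definedness of $B_\sa$ follows from part~\pref{item:principal}: $\Theta(a)\in\Fix(B_\cP(\Phi))$, and $[\Theta(a)]\in\A_\both(\psi)$ because if $\langle a\rangle=\Fix(\Phi_1)\cap\Fix(\Phi_2)$ for distinct $\Phi_1,\Phi_2\in\cP(\phi)$ then $\langle\Theta(a)\rangle\subseteq\Fix(B_\cP(\Phi_1))\cap\Fix(B_\cP(\Phi_2))$ with $B_\cP(\Phi_1)\neq B_\cP(\Phi_2)$.  Replacing $\Theta$ by $i_c\Theta$ changes $B_\cP(\Phi)$ to $i_c B_\cP(\Phi) i_c^{-1}$ and $\Theta(a)$ to $c\Theta(a)c^{-1}$, so the conjugacy pair $[B_\cP(\Phi),\Theta(a)]$ is unchanged; bijectivity is again symmetric.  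Finally, if $\phi^\theta=\psi$ then $\Phi^\Theta\in\cP(\psi)$ and $\FixN(\Phi^\Theta)=\FixN(B_\cP(\Phi))$ force $B_\cP(\Phi)=\Phi^\Theta$, so $B_\sa$ agrees with the twist-coordinate-preserving bijection of Lemma~\ref{theta and axes}.  Conversely, if $B_\sa$ preserves twist coordinates, the bijection $\Phi^\Theta\leftrightarrow B_\cP(\Phi)$ between $\cP(\phi^\theta)$ and $\cP(\psi)$ matches $\FixN$'s and, via Lemma~\ref{lem:ct principal lifts}, twist coordinates on each $\sa(\cdot,[a])$, so the Recognition Theorem \cite[Theorem~5.1]{fh:recognition} yields $\phi^\theta=\psi$.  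The main obstacle is the very first step---promoting the hypothesis's set-level line correspondence to the statement that $\partial\Theta(\FixN(\Phi))$ is itself the $\FixN$ of a principal automorphism of $\psi$; once this is settled, everything else is bookkeeping.
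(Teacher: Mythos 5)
Your part~\pref{item:strong axes} follows the same route as the paper (direct verification that $B_{\sa}$ is well defined, the identification $B_\cP(\Phi)=\Phi^\Theta$ when $\phi^\theta=\psi$, and the Recognition Theorem for the converse), and that portion is essentially sound. The problem is in part~\pref{item:principal}, and you flag it yourself without closing it: the claim that $\partial\Theta(\FixN(\Phi))$ is exactly $\FixN(\Psi)$ for a single $\Psi\in\cP(\psi)$ does not follow from the parenthetical appeal to Lemma~\ref{lem:lines in gf} and the component/isogredience bijection. The hypothesis is a statement about individual lines: for each line $L$ with a lift $\ti L$ having both endpoints in $\FixN(\Phi)$, it only tells you that $\theta(L)$ lifts into $\Gamma(g)$, hence that the endpoints of the particular lift $\Theta(\ti L)$ lie in $\FixN(\Psi_L)$ for some $\Psi_L\in\cP(\psi)$ depending on $L$ --- and even the uniqueness of $\Psi_L$ for a fixed $L$ requires the endpoints to be non-periodic (Lemma~\ref{two automorphisms}), a point you never invoke. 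A priori, different lines with endpoints in $\FixN(\Phi)$ could produce different $\Psi_L$'s, i.e.\ land in different $F_n$-translates of the $\FixN$'s or even in different components of $\Gamma(g)$; nothing in Lemma~\ref{lem:lines in gf} rules this out.

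The paper closes precisely this hole with a chaining argument: choose $\ti L_1$, $\ti L_2$ with all ends non-periodic in $\FixN(\Phi)$, interpolate the line $\ti L_3$ joining an end of $\ti L_1$ to an end of $\ti L_2$; since $\ti L_1$ and $\ti L_3$ share a non-periodic endpoint, whose $\partial\Theta$-image is a non-periodic point fixed by both $\Psi_{L_1}$ and $\Psi_{L_3}$, Lemma~\ref{two automorphisms} forces $\Psi_{L_1}=\Psi_{L_3}$, and likewise $\Psi_{L_3}=\Psi_{L_2}$. Density of non-periodic points in $\FixN(\Phi)$ then yields $\partial\Theta(\FixN(\Phi))\subset\FixN(\Psi)$, and the reverse inclusion --- which your phrase ``lies in the $F_n$-orbit of some $\FixN(\Psi)$'' quietly assumes --- is obtained by running the same argument for $\theta^{-1}$ and $\Theta^{-1}$. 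Without these steps $B_\cP$ is not well defined and the set equality $\FixN(B_\cP(\Phi))=\partial\Theta(\FixN(\Phi))$ is not established; since everything downstream (your derivation of $\Fix(B_\cP(\Phi))=\Theta\Fix(\Phi)$ via stabilizers, well-definedness of $B_{\sa}$, the identification $B_\cP(\Phi)=\Phi^\Theta$, and condition (a) needed for the Recognition Theorem) rests on exactly this equality, the missing argument is the substance of the lemma, not bookkeeping.
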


 \proof  
Given $\Phi \in \cP(\phi)$, choose a  line $\ti L_1 \subset  \ti G$ with both ends non-periodic and both ends in $\FixN(\Phi)$.  (This is possible by Remark~\ref{rem:dense non-fixed points}).  By Lemma~\ref{lem:lifting}, the projection $L \subset G$ lifts to  the component $\Gamma_{[\Phi]}(f)$ of $\Gamma(f)$ that corresponds to $[\Phi]$.  By hypothesis, the line $L_1'  \subset G'$ corresponding to $\theta(L)$ lifts to a component of $\Gamma(g)$     and so  by a second application of Lemma~\ref{lem:lifting}   there is a unique  $\Psi \in \cP(\psi)$ such that  $\FixN(\Psi)$ contains the endpoints $\{\Theta(\partial_\pm\ti L_1)\}$ of $\Theta(\ti L_1)$; moreover, $L'_1$ lifts into $\Gamma_{[\Psi]}(g)$.     To see that  $\Psi$     is independent of the choice of $\ti L_1$, suppose that we are given some other $\ti L_2$  with both ends non-periodic and both ends in $\FixN(\Phi)$. Let $\ti L_3$ be the line connecting the terminal endpoint of $\ti L_1$ to the initial endpoint of $\ti L_2$.  Since $\ti L_1$ and $\ti L_3$ have a common endpoint, replacing $\ti L_1$ with $\ti L_3$ does not change  $\Psi$.  For the same reason,  replacing $\ti L_3$ with $\ti L_2$ does not change $\Psi$.  We conclude that  $B_\cP(\Phi) = \Psi$ is well-defined.  This argument also shows that $\Theta$ maps each non-periodic element of $\FixN(\Phi)$ to a non-periodic element of $\FixN(\Psi)$.  Since non-periodic points in $\FixN(\Phi)$ are dense in  $\FixN(\Phi)$,  $ \Theta(\FixN(\Phi)) \subset \FixN(\Psi)$.  Reversing the roles of $\phi$ and $\psi$ and replacing $\theta$ with $\theta^{-1}$, we see that $ \Theta^{-1}(\FixN(\Psi)) \subset \FixN(\Phi)$, which completes the proof of \pref{item:principal}.  Note that if $\Psi = B_{\cP}(\Phi)$ then for all $c \in F_n$, 
$$B_\cP(i_c \Phi i_c^{-1}) =  i_{\Theta(c)} \Psi i_{\Theta(c)}^{-1}$$
  because 
  $ \Theta(\FixN(i_c \Phi i_c^{-1})) =  \Theta( i_c \FixN(\Phi)) = i_{\Theta(c)} \Theta( \FixN(\Phi)) = i_{\Theta(c)}\FixN(\Psi)  = \FixN(i_{\Theta(c)}\Psi i_{\Theta(c)}^{-1}) $. 
  
  \medskip

For \pref{item:strong axes}, suppose that $[a] \in \A_\both(\phi)$, that $\Phi \in \cP(\phi)$ fixes $a$ and that $\Psi = B_{\cP}(\Phi)$.
Define 
$$B_{\sa}([\Phi,a]) = [B_{\cP}(\Phi), \Theta(a)] =  [\Psi, \Theta(a)]$$
Then for all $c \in F_n$, 
$$B_{\sa}([\Phi,a]^c) = B_{\sa}([i_c\Phi i_c^{-1}, i_c(a)]) = [i_{\Theta(c)}\Psi i_{\Theta(c)}^{-1}, i_{\Theta(c)}(\Theta(a))]  = [\Psi,a]^{\Theta(c)}$$
so    $ B_{\sa}$ is   well defined.  By symmetry,  $ B_{\sa}$ is a bijection.  If $\Theta$ is replaced by $i_b\Theta$ for some $b \in F_n$
then $ (\Psi, \Theta(a))$ is replaced by $(i_b \Psi i_b^{-1}, i_b \Theta(a)) =  b \cdot (\Psi, \Theta(a))$.  This shows that  $ B_{\sa}$ is independent of the choice of $\Theta$.
 It remains to show that $\phi^\theta = \psi$ if and only if    $B_{\sa}$ preserves twist coordinates.
  \medskip

 Let $\upsilon = \phi^\theta$.  By Lemma~\ref{first theta} and  Lemma~\ref{theta and axes}, conjugation by $\Theta$ induces:
 \begin{itemize}
\item    a bijection $B'_{\cP} :\cP(\phi) \to \cP(\upsilon)$   defined by $\Phi \mapsto \Theta \Phi \Theta^{-1}$ and satisfying  $\FixN(B'_\cP(\Phi)) =  \Theta\FixN(\Phi)$. 
 \item     a bijection $B'_{\sa}: \sa(\phi) \to \sa(\upsilon)$ defined by $[\Phi,a] \mapsto [\Theta \Phi \Theta^{-1}, \Theta(a)]$ that preserves twist coordinates.
\end{itemize}

 The bijections   $B''_{\cP} = B_\cP {B'}_\cP^{-1}:\cP(\upsilon) \to \cP(\psi)$ and $B''_{\sa} = B_{\sa} {B'}_{\sa}^{-1}:\sa(\upsilon) \to \sa(\psi)$ satisfy:
 \begin{description}
\item [(a)] \label{item:prin}    $\FixN( B''_\cP(\Upsilon)) =  \FixN(\Upsilon)$ for all $\Upsilon \in \cP(\upsilon)$. 
 \item  [(b)]   $B''_{\sa} $    preserves twist coordinates if and only if $B_{\sa}$ does.
\end{description}
Applying  (b), it suffices to show that $\phi^\theta = \psi$ if and only if  $B''_{\sa}$ preserves twist coordinates.

\medskip

Suppose that $[b] \in \A_\both(\upsilon)$, that $b \in \Fix(\Upsilon)$ and that  $ \Upsilon, i_{b^d}\Upsilon \in \cP(\upsilon)$.     Let $a = \Theta^{-1}(b)$ and
   $\Phi= \Theta^{-1}\Upsilon \Theta$.  Then 
  $$B''_{\sa}[\Upsilon,b] = B_{\sa}[\Phi, a] = [B_\cP(\Phi),b]$$
  and likewise
  $$B''_{\sa}[i_b^d\Upsilon,b] = B_{\sa}[i_a^d\Phi, a] = [B_\cP(i_a^d\Phi),b] $$
By definition, the twist coordinate for $[i_{b^d}\Upsilon,b]$ and  $[\Upsilon,b]$ is $d$. It follows that $B''_{\sa} $ preserves twist coordinates if and only if 
$$ B_\cP(i_a^d\Phi) = i_{b^d}B_\cP(\Phi)$$
Since  
$$  B_\cP(i_a^d\Phi) = B''_\cP(i_b^d\Upsilon) \qquad \text{and} \qquad  B_\cP(\Phi)= B''_\cP(\Upsilon)   $$
we conclude that $B''_{\sa} $ preserves twist coordinates if and only if 
\begin{description}
\item[(c)]  $B''_\cP(i_b^d\Upsilon) = i_b^dB''_\cP(\Upsilon)$
\end{description}
By the Recognition Theorem \cite[Theorem 5.3]{fh:recognition}, (a) and (c) are equivalent to $\upsilon = \psi$.
\endproof
 
\section{Limit lines $\acc(\ray) \subset {\cal B}$}\label{s:limit lines}

{Each point $P \in \partial \f$ determines a closed set of lines; see for example \cite[Section 2.4]{fh:recognition}, where the closed set of lines is called the accumulation set of $P$.  In this section we focus on the case that $P \in \cR(\phi)$ and analyze these lines using \ct s.}

\begin{definition} \label{defn:limit line downstairs} 
For each $r \in \partial F_n / F_n$, we define the set  $\acc(r) \subset \cal B$ of {\em limit lines of $r$} as follows. Choose a lift $\ti r \in \partial F_n$, a marked graph $K$  and a ray $\ti R \subset \ti K$ with terminal end $\ti r$.  Let $R \subset K$ be the projected image of $\ti R$.  Then    $L \in \acc(r)$  (thought of as a line in $K$) if and only if  the following equivalent conditions are satisfied. 
\begin{enumerate}
\item
Each finite subpath of $L$ occurs as a subpath of $R$.
\item
For each lift $\ti L \subset \ti K$ of $L \subset K$ there are translates $\ti R_j$ of $\ti R$ such that the initial endpoints of $\ti R_j$ converge to the initial endpoint of $\ti L$ and the terminal endpoints of $\ti R_j$ converge to the terminal endpoint of $\ti L$.
\end{enumerate}
Let $\accnr(r)$ be the set of non-periodic elements of $\acc(r)$.
\end{definition}

\begin{lemma} \label{acc is well defined} $\acc(r)$ and $\accnr(r)$ are well-defined.  Moreover, for each $\theta \in \Out(F_n)$, $\theta(\acc(r)) = \acc(\theta(r))$ and  $\theta(\accnr(r)) = \accnr(\theta(r))$. 
\end{lemma}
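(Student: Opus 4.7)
My plan is to first verify the equivalence of conditions (1) and (2) in Definition~\ref{defn:limit line downstairs}, then check independence from the various choices, and finally establish naturality.

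For the equivalence $(1)\Leftrightarrow(2)$, I would argue as follows. Suppose (2) holds and $\beta$ is a finite subpath of $L$. Lift to obtain a finite subpath $\ti\beta\subset\ti L$. Because the ordered pair of endpoints of $\ti R_j$ converges in $\partial F_n\times\partial F_n$ to the ordered pair of endpoints of $\ti L$, for all sufficiently large $j$ the ray $\ti R_j$ fellow-travels $\ti L$ on a segment containing $\ti\beta$; hence $\ti\beta\subset\ti R_j$ and $\beta$ appears in $R$. Conversely, if (1) holds, fix any lift $\ti L$ and exhaust $\ti L$ by finite subpaths $\ti\beta_n$ of length tending to infinity. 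Their projections $\beta_n$ occur in $R$, so there exist covering translations $T_n$ and rays $\ti R_n:=T_n\ti R$ containing translates of $\ti\beta_n$ in matching position. After passing to a subsequence, the endpoints of $\ti R_n$ converge in $\partial F_n$ to the endpoints of $\ti L$, verifying (2).

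For well-definedness, I observe that condition (2) depends only on $\ti r\in\partial F_n$ through the set of translates $\{T\ti R:T\in\Inn(F_n)\}$ of rays with terminal end $\ti r$. Two such rays differ in a bounded subpath, so the statement about convergence of endpoints in $\partial F_n$ is insensitive to the choice of $\ti R$. Replacing $\ti r$ by $T_a\ti r$ multiplies $\ti R$ by $T_a$ and correspondingly translates $\ti L$; at the level of $F_n$-orbits in $\ti{\mathcal B}$, i.e.\ of lines in $K$, nothing changes. For the change of marked graph, any second marked graph $K'$ comes with a marking change $\mu:K\to K'$ whose lift $\ti\mu:\ti K\to\ti K'$ is an $F_n$-equivariant quasi-isometry extending to the identity on $\partial F_n=\partial\ti K=\partial\ti K'$. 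This induces a bijection between lines in $K$ and lines in $K'$, and $\ti\mu(\ti R)$ tightens to a ray with the same end $\ti r$, so condition (2) is preserved. The set $\accnr(r)$ inherits well-definedness because the notion of a line being periodic (i.e.\ the axis of a covering translation) is intrinsic.

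For naturality, given $\theta\in\Out(F_n)$, choose a homotopy equivalence $h:K\to K$ representing $\theta$ and a lift $\ti h:\ti K\to\ti K$; the boundary map $\partial\ti h$ is $\partial\Theta$ for the automorphism $\Theta$ corresponding to $\ti h$. The image $\ti h(\ti R)$, after tightening, is a ray in $\ti K$ with terminal end $\partial\Theta(\ti r)$, which is a lift of $\theta(r)$. Moreover, $\ti h$ carries a lift $\ti L$ of $L$ to (the tightening of) a lift of $\theta(L)$, and the endpoint-convergence in condition (2) is preserved by the homeomorphism $\partial\Theta$. Thus $L\in\acc(r)\Longleftrightarrow\theta(L)\in\acc(\theta(r))$, and similarly for $\accnr$.

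The mildly technical point is the equivalence $(1)\Leftrightarrow(2)$, where one must handle the passage from "combinatorial occurrence of subpaths" to "convergence of endpoints in $\partial F_n$"; this rests on the standard fact that the weak topology on $\mathcal B$ is generated by cylinder sets of finite subpaths, so I would cite or invoke the discussion in \cite[Section 2.4]{fh:recognition}. Everything else is a routine naturality check.
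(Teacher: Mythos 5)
Your proposal is correct and takes essentially the same route as the paper: independence of the choice of ray via the common-terminal-subray/bounded-difference observation, independence of the marked graph and the naturality statement by pushing the translates of $\ti R$ forward under lifts of a marking-preserving (respectively $\theta$-representing) homotopy equivalence and noting that convergence of endpoints in $\partial F_n$ is preserved, with the reverse inclusion by symmetry. The only real difference is that you write out the equivalence of conditions (1) and (2), which the paper simply declares obvious.
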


\proof If $R'$ is another ray with terminal end   $r$ then $R$ and $R'$ have a common terminal subray $R''$.  Let $R = \alpha R''$ and $R' = \alpha' R''$.  Given a finite subpath $\tau_2 \subset K$ of a line $\ell$,  extend it to  a finite subpath $\tau_1\tau_2 \subset  K$ of $\ell$  where $\tau_1$ is longer than both $\alpha$ and $\alpha'$.    If $\tau_1 \tau_2$ occurs in $R$ then $\tau_2$ occurs in $R''$.  Since $\tau_2$ was arbitrary, every finite subpath of $\ell$ occur in $R$ if and only if every finite subpath of $\ell$ occurs in $R''$.  The same holds for $R'$ and $R''$.  This proves that $\acc(r)$ is independent of the choice of $R$.   Independence of the choice of $\ti R$ is obvious as is the equivalence of (1) and (2).   

Suppose that $K'$ is another marked graph and that $g: K \to K'$ is a homotopy equivalence that preserves markings and so represents the identity outer automorphism. Let $\ti g : \ti K \to \ti K'$ be a lift of $g$.  If $\ti L \subset \ti K$ is a lift of $L$  and $\ti R_j\subset \ti K$ is a sequence of translates of   ray $\ti R$  such that the   initial and terminal endpoints of $\ti R_j$ converge to those of $\ti L$, then the same is true of $\ti L ' = \ti g_\#(\ti L) \subset \ti K'$ and $\ti R'_j = \ti g_\#(\ti R_j) \subset \ti K'$.  This proves  that  $\acc(r)$ is independent of the choice of $K$.

For the moreover statement, choose a homotopy equivalence $h : K \to K$ that represents $\theta$ and lifts $\ti L \subset \ti K$ and $\ti h : \ti K \to \ti K$.   If   $\ti R_j\subset \ti K$ is a sequence of translates of   $\ti R$  whose   initial and terminal endpoints  converge to those of $\ti L$, then the   initial and terminal endpoints of $\ti h_\#(\ti R_j)$ converge to those of    $\ti h_\#(\ti L) \subset \ti K$.  This proves that $\theta(\accnr(r)) \subset \accnr(\theta(r))$.  The reverse inclusion follows by symmetry.
\endproof    

We now specialize to $r \in \cR(\phi)$.
\begin{notn}
For $\phi \in \upgn$, let    
 $$\acc(\phi) = \cup_{r \in \cR(\phi)}\acc(r) \qquad \text{and} \qquad \accnr(\phi) = \cup_{r \in \cR(\phi)}\accnr(r)$$
 \end{notn}
 
 As an immediate consequence of  Lemma~\ref{first theta} and the moreover statement of Lemma~\ref{acc is well defined} we have
 \begin{corollary} \label{acc is natural}
 Suppose that $\theta \in \Out(F_n)$ and that $\psi = \theta \phi \theta^{-1} \in \upgn$.  Then $\theta(\acc(\phi)) = \acc(\psi)$ and  $\theta(\accnr(\phi)) = \accnr(\psi)$.\qed
 \end{corollary}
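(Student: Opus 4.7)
The plan is to simply unwind the two ingredients the author cites. By definition,
\[
\acc(\phi) \;=\; \bigcup_{r \in \cR(\phi)} \acc(r),
\qquad
\accnr(\phi) \;=\; \bigcup_{r \in \cR(\phi)} \accnr(r),
\]
so to transport each side under $\theta$ it suffices to know (i) how $\theta$ moves the indexing set $\cR(\phi)$, and (ii) how $\theta$ moves each $\acc(r)$ and $\accnr(r)$.

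For (i), I would quote item (3) of Lemma~\ref{first theta}, which gives $\cR(\psi) = \theta(\cR(\phi))$; in particular $\theta$ restricts to a bijection $\cR(\phi) \to \cR(\psi)$. For (ii), I would invoke the moreover clause of Lemma~\ref{acc is well defined}, which tells us that for every $r \in \partial F_n / F_n$ and every outer automorphism $\theta$, $\theta(\acc(r)) = \acc(\theta(r))$ and $\theta(\accnr(r)) = \accnr(\theta(r))$.

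Combining these and using that $\theta$, as a map on ${\cal B}$, distributes over unions, I would write
\[
\theta\bigl(\acc(\phi)\bigr) \;=\; \bigcup_{r \in \cR(\phi)} \theta\bigl(\acc(r)\bigr) \;=\; \bigcup_{r \in \cR(\phi)} \acc(\theta(r)) \;=\; \bigcup_{r' \in \cR(\psi)} \acc(r') \;=\; \acc(\psi),
\]
where the penultimate equality uses the bijection $\cR(\phi)\to\cR(\psi)$ from (i) to re-index the union. The identical argument with $\accnr$ in place of $\acc$ yields $\theta(\accnr(\phi)) = \accnr(\psi)$. There is no real obstacle here; the whole content is already packaged in the two lemmas cited, so the proof is a one-line re-indexing of a union.
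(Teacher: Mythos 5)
Your proposal is correct and is exactly the argument the paper intends: it cites Lemma~\ref{first theta} (for $\theta(\cR(\phi))=\cR(\psi)$) together with the moreover clause of Lemma~\ref{acc is well defined}, and leaves the re-indexing of the union implicit. Your write-up simply makes that one-line re-indexing explicit, so there is nothing to add.
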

 
\medskip

 {{\bf For the remainder of the section we assume that  $\fG$ is a \ct\ representing $\phi\in \upgn$.}}  Our goal  is to describe  $\acc(\phi)$ and $\accnr(\phi)$ in terms of  $\fG$.
See in particular Corollary~\ref{cor:limit lines}.

One advantage of working in a \ct\ is that we can work with finite paths and not just with lines and rays. 
\begin{definition} \label{defn:accumulation set} Given a path $\sigma\subset G$, we say that a line $L \subset G$ is contained in the {\em accumulation set  $\Acc(\sigma)$ of $\sigma$ with respect to $f$} if   every finite subpath of $L$ occurs as a subpath of $f^k_\#(\sigma)$ for arbitrarily large  $k$.
\end{definition}

\begin{notn}  For each twist path $w$, we write $w^\infty$ for both the ray that is an infinite concatenation of copies of $w$ and the line that is a bi-infinite concatenation of copies of $w$, using context to distinguish between the two.  We use either $\bar w^{\infty}$ or $w^{-\infty}$ for the ray or line obtained from $w^{\infty}$ by reversing orientation on $w$.
\end{notn}
\begin{exs} \label{simple limits}  \begin{enumerate}
\item If $\sigma$ is a Nielsen path then $\Acc(\sigma) = \emptyset$. 
 \item Suppose that  $ E\in \lin(f)$ and $f(E) = Ew^{d} $.  
  \begin{enumerate}
 \item  If  $d  > 0$ then $ \Acc(E) =  \{w^{\infty}\}$ and $\Acc(\bar E) = \{\bar w^{\infty}\}$.
 \item  If  $d  < 0$ then $\Acc( E) =  \{\bar w^{\infty}\}$  and $\Acc(\bar E) =  \{w^{ \infty}\}$.
 \end{enumerate}
\item If $E_i,E_j \in \lin(f)$ satisfy $f(E_i) = E_iw^{d_i}$ and $f(E_j) = E_jw^{d_j}$ for $d_i \ne d_j$  then for all $p \in \Z$, $\Acc(E_iw^p\bar E_j ) =  \{w^{\infty}\}$ if  $d_i  > d_j$ and $\Acc(E_iw^p\bar E_j ) =  \{\bar w^{\infty}\}$  if  $d_i  < d_j$.
\end{enumerate}
\end{exs}

Recall  from Lemma~\ref{identifying Fix+} that  there is a bijection between $\cR(\phi)$ and the set $\E_f$ of non-fixed non-linear  edges of $G$ and that  if  $\ray \in \cR(\phi)$ corresponds to $E \in \E_f$ then the   eigenray $R_E = E\cdot u_E\cdot [f(u_E)]\cdot [f^2(u_E)] \cdot \ldots \subset G$ has terminal end  $r$.   Thus, a line $L\subset G$ is an element of  $ \acc(r)$ if and only if  each finite subpath of $L$ occurs as a subpath of   $R_E$.

Limit lines of eigenrays  are connected to accumulation sets as follows. 

\begin{lemma}\label{two definitions} If $\ray \in \cR(\phi)$ corresponds to $E \in \E_f$  and $f(E) = E\cdot u_E$ then   $\acc(\ray) = \Acc(E) = \Acc(u_E\cdot f_\#(u_E)) = \Acc(u_E\cdot f_\#(u_E)\cdot \ldots \cdot  f^k_\#(u_E))$ for any $k \ge 1$.
\end{lemma}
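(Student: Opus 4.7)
Writing $\tau_j := u_E \cdot f_\#(u_E) \cdots f^{j-1}_\#(u_E)$, the claim is that $\acc(r) = \Acc(E) = \Acc(\tau_j)$ for every $j \geq 2$. The first equality is a direct translation of definitions: the eigenray $R_E$ has terminal end $r$ by Lemma~\ref{identifying Fix+}, and the complete splittings $f^k_\#(E) = E \cdot \tau_k$ form an increasing sequence of initial segments of $R_E$ exhausting $R_E$, so a finite path is a subpath of $R_E$ iff it lies in $f^k_\#(E)$ for arbitrarily large $k$. The inclusion $\Acc(\tau_j) \subseteq \Acc(E)$ is immediate: since $f(E) = E \cdot u_E$ is a splitting, iteration gives the splitting $f^{m+j}_\#(E) = f^m_\#(E) \cdot f^m_\#(\tau_j)$, so $f^m_\#(\tau_j)$ is a subpath of $f^{m+j}_\#(E)$.

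For the reverse inclusion $\Acc(E) \subseteq \Acc(\tau_j)$, I fix $L \in \Acc(E)$ and a finite subpath $\alpha \subset L$ and proceed in two steps. First, $\alpha$ occurs in $R_E$ at positions tending to infinity: since $L$ is bi-infinite, extend $\alpha$ to the left within $L$ to obtain $\alpha_k = \beta_k \cdot \alpha \subset L$ with $|\beta_k| = k \to \infty$; each $\alpha_k$ must appear in $R_E$, and if $\alpha$ had only finitely many occurrences $P_1 < \cdots < P_M$ in $R_E$, then every copy of $\alpha_k$ would start at position $P_i - k \geq 1$, forcing $k \leq P_M - 1$, a contradiction. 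Second, the identity $f^{m+j}_\#(E) = f^m_\#(E) \cdot f^m_\#(\tau_j)$ exhibits $f^m_\#(\tau_j)$ as a subpath of $R_E$ at positions $(|f^m_\#(E)|, |f^{m+j}_\#(E)|]$ for every $m \geq 0$; consecutive such blocks $f^m_\#(\tau_j)$ and $f^{m+1}_\#(\tau_j)$ overlap in a segment of length $|f^{m+j}_\#(E)| - |f^{m+1}_\#(E)| = |f^{m+1}_\#(\tau_{j-1})| \geq |f^{m+1}_\#(u_E)|$. Since $E \in \E_f$ is neither fixed nor linear, $u_E$ is nontrivial and not a Nielsen path, so it has a growing term and $|f^m_\#(u_E)| \to \infty$. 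Thus, for any occurrence of $\alpha$ at a sufficiently late position $p$, the unique $m$ with $|f^m_\#(E)| < p \leq |f^{m+1}_\#(E)|$ satisfies $p + |\alpha| - 1 \leq |f^{m+j}_\#(E)|$, placing $\alpha$ inside $f^m_\#(\tau_j)$; and $m \to \infty$ as $p \to \infty$. Combining the two steps, $\alpha$ lies in $f^m_\#(\tau_j)$ for arbitrarily large $m$, so $L \in \Acc(\tau_j)$.

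The main technical obstacle is the positional bookkeeping in the second step, in particular exploiting the overlap between consecutive $f^m_\#(\tau_j)$ blocks. This is precisely why the hypothesis $j \geq 2$ is essential: for $j = 1$ the consecutive blocks $f^m_\#(u_E)$ are back-to-back with no overlap, and a finite subpath of $R_E$ straddling such a boundary can fail to sit inside any single $f^m_\#(u_E)$, so $\Acc(u_E) \subsetneq \Acc(E)$ in general.
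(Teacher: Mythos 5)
Your proof is correct and follows essentially the same route as the paper: the easy inclusions via the splittings $f^{m+j}_\#(E) = f^m_\#(E)\cdot f^m_\#(\tau_j)$, and the reverse inclusion by noting that occurrences of a finite subpath arbitrarily far down $R_E$ must land inside one of the overlapping blocks because $|f^m_\#(u_E)|\to\infty$. The only cosmetic differences are that you handle all $j\ge 2$ directly (the paper does $j=2$ and sandwiches the rest) and you derive ``occurrences at positions tending to infinity'' by a left-extension argument rather than from the fact that $\acc(r)$ is unchanged by passing to subrays.
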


\proof    The first equality  is an immediate consequence of the definitions and  the fact that $E \subset f(E) \subset f^2_\#(E) \subset \ldots$ is an increasing sequence whose union is $R_E$.   Likewise,  
$$ \Acc(u_E\cdot f_\#(u_E)) \subset  \Acc(u_E\cdot f_\#(u_E)\cdot \ldots \cdot  f^k_\#(u_E)) \subset \Acc(f^{k+1}_\#(E)) = \Acc(E)$$
  is immediate.  It therefore suffices to show that $ \acc(r) \subset \Acc(u_E\cdot f_\#(u_E))$. 
  
If $L \in \acc(\ray)$ then every finite subpath $\sigma$ of $L$ occurs as a subpath of every subray of $R_E$.  Since the length of $f^k_\#(u_E)$ tends to infinity with $k$,   each occurence of $\sigma$ that is sufficiently far away from the initial endpoint of $R_E$ is contained in some $ f^k_\#(u_E)\cdot f^{k+1}_\#(u_E) = f^k_\#(u_E \cdot f_\#(u_E))$.  As the occurence of $\sigma$ moves farther down the ray, $k \to \infty$.   
\endproof

\begin{notn}  \label{notn: strong partial order}  Define a partial order on the set $\E_f \cup \E_f^{-1}$ by $E_1 \gg E_2$  if $E_1 \ne E_2$ and if,  for some $k \ge 0$, $E_2$  is  crossed by $f^k_\#(E_1)$ and so by Lemma~\ref{not crossed} is    a term  in the complete splitting of  $f^k_\#(E_1)$.   (In Notation~\ref{weaker po}  we define a partial order $>$  on $\E_f$ that does not distinguish between $E$ and $\bar E$.)     
\end{notn}

As an immediate consequence of the definition, we have

\begin{lemma} \label{lem:strong po}  If $E, E' \in \E_f \cup \E_f^{-1}$ and $E\gg E'$ then the height of $E'$ is less than the height of $E$ and $\Acc(E') \subset \Acc(E)$.  \qed
\end{lemma}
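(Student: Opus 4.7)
My plan is to verify the two conclusions directly and separately from the definitions, since the excerpt itself flags this lemma as immediate. Fix $E_0 \in \E_f$ such that $E = E_0$ or $E = \bar E_0$, let $h = \height(E) = \height(E_0)$, and let $u = u_{E_0}$ be the closed path in $G_{h-1}$ with $f(E_0) = E_0 \cdot u$.

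For the height inequality, I would write out the complete splitting of $f^k_\#(E)$ using the formula recalled just before Lemma~\ref{identifying Fix+}:
\[
f^k_\#(E_0) = E_0 \cdot u \cdot f_\#(u) \cdot \ldots \cdot f^{k-1}_\#(u),
\]
with the splitting of $f^k_\#(\bar E_0)$ being its reverse. Because the filtration $G_\bullet$ is $f$-invariant and $u \subset G_{h-1}$, every $f^j_\#(u)$ lies in $G_{h-1}$, so every term of its complete splitting (whether a single edge, an iNp, or an exceptional path) has height strictly less than $h$. Hence the only term of height $h$ in the complete splitting of $f^k_\#(E)$ is the leading (or trailing) copy of $E$ itself, and the hypothesis $E \neq E'$ forces $E'$ to appear as an edge in the complete splitting of some $f^j_\#(u)$. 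In particular, $\height(E') < h$.

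For the inclusion $\Acc(E') \subset \Acc(E)$, I would use the definition of $\gg$ to fix $k_0 \ge 0$ such that $E'$, with its chosen orientation, is a term in the complete splitting of $f^{k_0}_\#(E)$. Because complete splittings are preserved under $f_\#$, applying $f^m_\#$ for any $m \ge 0$ gives a complete splitting of $f^{k_0 + m}_\#(E)$ in which $f^m_\#(E')$ appears as a consecutive factor, and hence as a subpath. If now $L \in \Acc(E')$ and $\sigma$ is any finite subpath of $L$, Definition~\ref{defn:accumulation set} supplies arbitrarily large $m$ with $\sigma$ a subpath of $f^m_\#(E')$, and so $\sigma$ is a subpath of $f^{k_0+m}_\#(E)$ with $k_0 + m$ arbitrarily large, witnessing $L \in \Acc(E)$.

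There is no real obstacle: both statements are purely formal consequences of the complete-splitting formula for $f^k_\#(E)$ and the fact that $f_\#$ respects splittings. The only point requiring care is orientation, which is why it matters that $\gg$ is defined on $\E_f \cup \E_f^{-1}$ and that $E'$ is required to appear in its given orientation as a term of the splitting, so that $f^m_\#(E')$ (rather than its reverse) is what appears inside $f^{k_0+m}_\#(E)$.
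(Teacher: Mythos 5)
Your proof is correct and is exactly the unpacking the paper has in mind: the paper offers no argument at all, stating the lemma as an immediate consequence of the definition of $\gg$ together with the splitting $f^k_\#(E_0)=E_0\cdot u\cdot f_\#(u)\cdots f^{k-1}_\#(u)$ and the fact that $f_\#$ preserves splittings. Your attention to orientation (the oriented edge $E'$ appearing as a term, so that $f^m_\#(E')$ rather than its reverse sits inside $f^{k_0+m}_\#(E)$) is the right reading of Notation~\ref{notn: strong partial order}, so there is nothing to add.
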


The terms $\mu_i$ in the complete splitting of   $u_E\cdot f_\#(u_E)$ are   Nielsen paths, exceptional paths  and single edges with height strictly less than that of $E$.   Each $\Acc(\mu_i)$ is a subset of  $\Acc(E) = \acc(r)$.    If  $\mu_i \in \E_f \cup \E_f^{-1}$ then  $\Acc(\mu_i)$ can be understood inductively.  The remaining $\Acc(\mu_i)$'s   are given in Examples~\ref{simple limits}.     The work in identifying $\acc(\ray) = \Acc(u_E\cdot f_\#(u_E))$  is to  determine what additional lines  must be added to $\bigcup \Acc(\mu_i)$. 

\begin{notn} \label{notn: f infinity} For  a path $\alpha \subset G$, we say that {\em $ f^k_\#(\alpha)$  converges to a ray $R \subset G$} if for all $m$ there exists $K$ such that the initial $m$-length segments of $ f^k_\#(\alpha)$ and of $R$  are equal for all $k \ge K$.  Note that $R$ is necessarily unique and $f_\#$-invariant.  We sometimes write $R = f^\infty_\#(\alpha)$.
\end{notn}

\begin{exs} \label{example:converges}
\begin{enumerate}
\item  Suppose that $E \in \lin(f)$ and that $f(E) = Ew^{d}$.  
\begin{enumerate}
\item  If $d > 0$ then  $f_\#^k(E)$ converges to $ Ew^{\infty}$ and $f_\#^k(\bar E)$ converges to $ \bar w^{\infty}$.
\item If $d < 0$ then    $f_\#^k(E)$ converges to $ E\bar w^{\infty}$ and $f_\#^k(\bar E)$ converges to $ w^{\infty}$.
\end{enumerate}
\item  If $E \in \E_f$ then $f_\#^k(E)$ converges to $R_E$.
\item If $E_i,E_j \in \lin(f)$ satisfy $f(E_i) = E_iw^{d_i}$ and $f(E_j) = E_jw^{d_j}$ for $d_i \ne d_j$   then  for all $p \in \Z$, $f_\#^k(E_i w^p\bar E_j) $ converges to $E_iw^{  \infty}$ if $d_i > d_j$ and to $ E_i\bar w^{ \infty}$ if $d_i < d_j$.  
\end{enumerate}
\end{exs}

\begin{notn} \label{notn:stabilization} If $E \in \E_f$, then the first growing term of $f(\bar E)$ has height less than that of $E$.  It follows that there exists    $M > 1$ so that if $\sigma_i$ is a  growing term in the complete splitting of a path $\sigma$    and if $m \ge M$, then the first growing term in the complete splitting of $f_\#^m(\sigma_i)$ is not an element of  $\E_f^{-1}$ and the last growing term in the complete splitting of $f_\#^m(  \sigma_i)$ is not an element of $\E_f$.   We refer to $M$ as the {\em stabilization constant} for $f$.   
\end{notn}

\begin{lemma} \label{paths converge} Let $M$ be the stabilization constant for $f$.  If $\sigma$ is a completely split growing path   then $ f^k_\#(\sigma)$ converges to a ray  $f_\#^\infty(\sigma) = \rho R $   where 
\begin{enumerate}
\item \label{item:ray types} $\rho$ is a  (possibly trivial) Nielsen path and  one of the following holds:  
\begin{enumerate} \item \label{ray type a}$R =R_E$ for some $E \in \E_f$.
\item   \label{ray type b} $R = Ew^{\pm \infty}$ for some $E \in \lin_w(f)$.
\item   \label{ray type c} $R = w^{\pm\infty}$ for some twist path $w$.  
\end{enumerate} 
\item  \label{item:first growing}
If $\sigma =\mu_1\cdot \nu_1\cdot \mu_2 \cdot   \ldots$ is the coarsening of the complete splitting of $\sigma$  into maximal (possibly trivial) Nielsen paths $\mu_i$ and single growing terms $\nu_i$, then 
$f_\#^\infty(\sigma)  = \mu_1 f_\#^\infty(\nu_1) $.
\item  \label{item:the bar E case} In case 1(c) there exists  $E \in \lin_w(f)$ and a smallest $k_\sigma \le M$ such that  the first growing term in the coarsened complete splitting of  $f^k_\#(\sigma)$ is $\bar E$ for all $k \ge k_\sigma$.  Moreover, if  the first growing term in the coarsened complete splitting of $\sigma$ is not an edge in $\E_f^{-1}$  then $k_\sigma =1$.
\end{enumerate}
\end{lemma}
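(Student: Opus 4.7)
The plan is to reduce to the case $\sigma=\nu$ is a single growing term, then perform a case analysis on $\nu$, using the stabilization constant $M$ of Notation~\ref{notn:stabilization} to handle the one subtle case.

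For the reduction, I would use that complete splittings are $f_\#$-invariant and that Nielsen paths are fixed by $f_\#$. Writing the coarsening $\sigma = \mu_1 \cdot \nu_1 \cdot \mu_2 \cdot \nu_2 \cdot \ldots$ one gets
\[
f^k_\#(\sigma) \;=\; \mu_1 \cdot f^k_\#(\nu_1) \cdot \mu_2 \cdot f^k_\#(\nu_2) \cdot \ldots.
\]
Since $\nu_1$ is growing, $|f^k_\#(\nu_1)| \to \infty$, so any fixed-length initial segment of $f^k_\#(\sigma)$ is, for $k$ large, determined entirely by $\mu_1$ and an initial segment of $f^k_\#(\nu_1)$. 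Convergence of $f^k_\#(\sigma)$ thus reduces to convergence of $f^k_\#(\nu_1)$, and this yields the formula $f_\#^\infty(\sigma) = \mu_1 \cdot f_\#^\infty(\nu_1)$ asserted in part (2).

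For $\sigma=\nu$ a single growing term I would dispatch the easy cases by direct calculation: if $\nu \in \E_f$, then $f^k_\#(\nu) \to R_\nu$ by the definition of the eigenray, giving case~(1a); if $\nu \in \lin(f)$ or $\nu$ is an exceptional path or its inverse, Examples~\ref{example:converges} give case~(1b); and if $\nu = \bar E$ with $E \in \lin_w(f)$ and $f(E) = Ew^d$, then $f^k_\#(\bar E) = w^{-kd}\bar E$ converges to $w^{\mp\infty}$, giving case~(1c) with $k_\sigma = 1$, since the first growing term of $f^k_\#(\sigma)$ is $\bar E$ for every $k \ge 1$.

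The remaining case, $\nu = \bar E$ with $E \in \E_f$, is where I expect the main effort to lie. The key claim is that once the first growing term in the coarsened splitting of some iterate $f^k_\#(\nu)$ lands in one of the five stable families $\E_f$, $\lin(f)$, $\lin^{-1}(f)$, exceptional, or inverse exceptional, it persists (indeed, as the same edge or path) under further iteration. I would verify this by inspection of $f_\#$ on each type: for $E' \in \E_f$, $f(E') = E'\cdot u_{E'}$ preserves $E'$ as the first growing term; for $\bar E'$ with $E' \in \lin_w(f)$, $f(\bar E') = \bar w^{d'}\bar E'$ preserves $\bar E'$ as the first growing term past the Nielsen prefix $\bar w^{d'}$; and similarly for exceptional paths and their inverses. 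By the defining property of $M$, the first growing term of $f^M_\#(\nu)$ is not in $\E_f^{-1}$, so there is a smallest $k_0 \le M$ at which the first growing term $\nu'$ of $f^{k_0}_\#(\nu)$ enters a stable family. Writing $f^{k_0}_\#(\nu) = \rho \cdot \nu' \cdot \tau$ with $\rho$ the initial maximal Nielsen subpath, the easy subcases applied to $\nu'$ give $f^{k'}_\#(\nu') \to R'$ of one of the three required shapes, and hence $f^{k_0+k'}_\#(\nu) \to \rho \cdot R'$ after absorbing any Nielsen prefix of $R'$ into $\rho$. Once this stability observation is in hand, assertion (3) follows immediately: in case~(1c), $k_\sigma = k_0 \le M$, while whenever the first growing term of $\sigma$ is not in $\E_f^{-1}$ the easy subcases apply directly and give $k_\sigma = 1$ whenever we land in case~(1c).
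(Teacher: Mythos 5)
Your proposal is correct and follows essentially the same route as the paper: reduce to the first growing term (which also yields part (2)), dispatch the linear, exceptional, and $\E_f$ cases via Examples~\ref{example:converges}, and treat $\bar E$ with $E \in \E_f$ using the stabilization constant $M$ together with an induction on iterates. Your "persistence of the stable first growing term" argument is precisely the "obvious induction argument" the paper invokes, just written out.
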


\proof   There is no loss in replacing  $\sigma$ with its first growing term.     The only case that does not follow   from Example~\ref{example:converges} is  that $\sigma = \bar E  \in \E_f^{-1}$.   This case follows from the definition of $M$ and the obvious induction argument.
\endproof  

\begin{remark}  \label{finite determination} The rays in \ref{paths converge} are finitely determined: in case \pref{ray type a} $R$ is determined by the edge $E$, in case \pref{ray type b}  $R$ is determined by $E, w$ and a choice of $\pm$ and in case \pref{ray type c} $R$ is determined by  $w$ and a choice of $\pm$.  From this data one can write down any finite initial subpath of $R$.
\end{remark}

\begin{lemma} \label{adjacency term}  Suppose that $\sigma \subset G$ is a completely split path  and that $\sigma = \alpha\cdot \beta$ is a coarsening of the complete splitting in which both $\alpha$ and $\beta$ are growing.  Let $R^- = f_\#^\infty(\bar \alpha)$, let $R^+ = f_\#^\infty(\beta)$ and let $\ell = (R^-)^{-1}R^+$.       Then $\Acc(\sigma) = \Acc(\alpha) \cup \Acc(\beta) \cup \{\ell\}$.
\end{lemma}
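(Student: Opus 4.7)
The plan is to prove both inclusions. For the forward inclusion $\Acc(\alpha)\cup\Acc(\beta)\cup\{\ell\}\subset\Acc(\sigma)$, the inclusions $\Acc(\alpha),\Acc(\beta)\subset\Acc(\sigma)$ are immediate from the splitting $f^k_\#(\sigma)=f^k_\#(\alpha)\cdot f^k_\#(\beta)$. For $\ell\in\Acc(\sigma)$, unwind the definitions of $R^-=f_\#^\infty(\bar\alpha)$ and $R^+=f_\#^\infty(\beta)$: for each $m$ there exists $K$ so that for $k\ge K$ the terminal length-$m$ segment of $f^k_\#(\alpha)$ equals that of $(R^-)^{-1}$ and the initial length-$m$ segment of $f^k_\#(\beta)$ equals that of $R^+$. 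Hence the length-$2m$ subpath of $f^k_\#(\sigma)$ centered at the junction between $f^k_\#(\alpha)$ and $f^k_\#(\beta)$ coincides with the length-$2m$ central subpath of $\ell$, so every finite subpath of $\ell$ occurs in $f^k_\#(\sigma)$ for arbitrarily large $k$.

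For the reverse inclusion, let $L\in\Acc(\sigma)$ and fix an exhausting nested sequence $\tau_1\subset\tau_2\subset\cdots$ of finite subpaths of $L$. Each occurrence of $\tau_n$ in $f^k_\#(\sigma)$ lies wholly in $f^k_\#(\alpha)$, wholly in $f^k_\#(\beta)$, or straddles the junction between them. If for every $n$ there are arbitrarily large $k$ with an occurrence of $\tau_n$ inside $f^k_\#(\alpha)$, then $L\in\Acc(\alpha)$; symmetrically for $\beta$. Otherwise, using that $\tau_n\subset f^k_\#(\alpha)$ propagates to smaller subpaths, there exists $n_0$ such that for $n\ge n_0$ and all sufficiently large $k$ every occurrence of $\tau_n$ in $f^k_\#(\sigma)$ straddles the junction. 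Applying the stabilization from the forward direction with $m=|\tau_n|$, each such straddling occurrence lies in the length-$2|\tau_n|$ neighborhood of the junction of $f^k_\#(\sigma)$ that matches the corresponding neighborhood of the junction of $\ell$, so $\tau_n$ is, as an edge path, a subpath of $\ell$.

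To conclude $L=\ell$ in $\B$, work in the universal cover: fix lifts $\ti L,\ti\ell\subset\ti G$ and, for each $n\ge n_0$, a lift $\ti\tau_n\subset\ti L$. Each $\ti\tau_n$ is a subpath of some covering translate $T_n\ti\ell$, so $\ti\tau_n\subset T_n\ti\ell\cap\ti L$. Since $|\ti\tau_n|\to\infty$, the translates $T_n\ti\ell$ share arbitrarily long subsegments with $\ti L$, so $T_n\ti\ell\to\ti L$ in the space of bi-infinite geodesics in $\ti G$. By proper discontinuity of the $F_n$-action on $\partial F_n\times\partial F_n\setminus\Delta$, the orbit $F_n\cdot\ti\ell$ is closed, so $\ti L=T\ti\ell$ for some $T\in F_n$. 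Projecting, $L=\ell$ in $\B$.

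The main obstacle is this last step: sharing of arbitrarily long finite subpaths alone does not pin $L$ down to a specific line, since distinct bi-infinite geodesics in $\ti G$ can share arbitrarily long segments; the convergence/proper-discontinuity argument on the boundary is needed to identify $L$ with a translate of $\ell$.
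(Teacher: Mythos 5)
Your forward inclusion and your straddling analysis track the paper's argument: the paper likewise chooses a finite subpath ($L_1$, your $\tau_{n_0}$) that cannot occur in $f^k_\#(\alpha)$ or $f^k_\#(\beta)$ for large $k$, deduces that its occurrences straddle the junction, and uses stabilization of the two sides to conclude that every $\tau_n$ occurs as a subpath of $\ell$. The gap is in your final step. The action of $F_n$ on $\partial F_n\times\partial F_n\setminus\Delta$ is \emph{not} properly discontinuous and the orbit $F_n\cdot\ti\ell$ is in general \emph{not} closed in $\ti\B$. This fails precisely in the situations the lemma covers: if, say, $R^+=Ew^{\infty}$ or $w^{\pm\infty}$ (cases 1(b),(c) of Lemma~\ref{paths converge}), let $a$ be the root-free element whose axis $\ti A$ is asymptotic to the terminal ray of $\ti\ell$; then $a^{-n}\ti\ell\to\ti A$ in $\ti\B$, and $\ti A$ is not a translate of $\ti\ell$ when $\ell$ is non-periodic (also $\langle a\rangle$ stabilizes $(a^-,a^+)$, so proper discontinuity fails outright). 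So from $T_n\ti\ell\to\ti L$ you cannot conclude $\ti L=T\ti\ell$; the limit could a priori be a line in the orbit closure rather than the orbit, which is exactly the difficulty you flagged but did not actually overcome.

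What closes the gap is information you already have but did not use: since every occurrence of the \emph{fixed} path $\tau_{n_0}$ straddles the junction, the induced occurrence of $\tau_n$ in $\ell$ places the juncture point of $\ell$ inside the sub-occurrence of $\tau_{n_0}$. Upstairs, this says each $T_n$ maps the juncture point of $\ti\ell$ into the fixed finite segment $\ti\tau_{n_0}\subset\ti L$; since the action on $\ti G$ is free, only finitely many distinct $T_n$ can do this, so some single $T$ satisfies $T\ti\ell\supset\ti\tau_n$ for cofinally many $n$, whence $T\ti\ell=\ti L$ and $L=\ell$. The paper runs the same pigeonhole downstairs: because the inclusion of $L_1$ into $\ell$ must contain the juncture, there are only finitely many possible such inclusions, so after passing to a subsequence the inclusions of the $L_j$ into $\ell$ are compatible and assemble to a surjective inclusion $L\to\ell$. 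With this replacement your proof is correct and is essentially the paper's.
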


\proof  The inclusion  $\Acc(\alpha) \cup \Acc(\beta)  \subset \Acc(\sigma)$ follows from the fact that   $\alpha$ and $\beta$ occur as concatenation of  terms in  a splitting of $\sigma$.  It is an immediate consequence of the definitions that   $\ell \in \Acc(\gamma)$.   It therefore suffices to assume that $L \in \Acc(\sigma)$ is not contained in $\Acc(\alpha) \cup \Acc(\beta) $ and prove that $L = \ell$.

      Choose a finite subpath $L_1$  of $L$ and $K > 0$ so  that $L_1$ does not occur as a subpath of $f_\#^k(\alpha)$ or of $  f_\#^k(\beta)$ for $k \ge K$. Extend $L_1$ to an increasing sequence      $L_1 \subset L_2 \subset \ldots$  of finite subpaths of $L$ whose union is $L$. For each  $j \ge 1$, let  $C_j$ be the length of $L_j$.     There exist arbitrarily large $k$ so that $L_j$ includes as a subpath of   $f^{k}_\#(\sigma) = f^{k}_\#(\alpha)\cdot  f^{k}_\#( \beta) $.   The induced inclusion of  $L_1$ in $f^{k}_\#(\sigma)$  must  intersect both $ f^{k}_\#(\alpha)$  and $ f^{k}_\#(\beta)$  and so $L_j$ is included as  a subpath of the concatenation of the terminal segment of $ f^{k}_\#(\alpha)$ of length $C_j$ with the  initial segment of $ f^{k}_\#(\beta)$  of length $C_j $.    If $k$ is sufficiently large then the length $C_j$ initial segments of $R^-$ and of $ f^{k}_\#(\bar \alpha)$ agree and the length $C_j$ initial segments of $R^+$ and of $ f^{k}_\#(\beta)$ agree.   Thus  each $L_j$ can be included as a subpath of $\ell$.  Since the induced inclusion of $L_1$  contains the juncture point between $R^-$ and $R^+$,  we may   pass to a subsequence of $L_j$'s and choose inclusions of  $L_j$ into $\ell$ so that induced inclusion of $L_1$ in $\ell$  is indpendent of $j$.    It follows that if $i < j$ then the inclusion of $L_i$ into $\ell$ is the restriction of the inclusion of $L_j$ into $\ell$ and hence that there is a well-defined inclusion of $L$ into $\ell$.  This inclusion is necessarily onto and so $L = \ell$. 
\endproof

 \begin{cor} \label{cor:limit lines}  For each $r \in \cR(\phi)$,    
 \begin{enumerate}
\item \label{item:ell decomposes}  Each $L  \in \acc(r)$ decomposes as   $L = (R^-)^{-1}\rho R^+$  where $\rho$ is a (possibly trivial) Nielsen path and   $ R^+$ and $R^-$ satisfy 1(a), 1(b) or 1(c) of Lemma~\ref{paths converge}.   In particular, each $L$ is $\phi$-invariant and is finitely determined in the sense of Remark~\ref{finite determination}  and each periodic    $L$ equals $ w^{\pm \infty}$ for some   \twistpath\ $w$.

\item \label{item:algorithmic} $\acc(r)$ is a finite set and the finite data that determines each of its elements   can be read off from     $\fG$. 

\item \label{item:accnr is nonempty}    $\accnr(r) \ne \emptyset$.

\item \label{item:endpoints in FixN}
For each $L \in \acc(r)$ and each lift $\ti L$ there exists $\Phi \in \cP(\phi)$ such that $\partial_- \ti L, \partial_+\ti L
\in \FixN(\Phi)$.  Equivalently  $L$ lifts into $\gf$.
\end{enumerate}
\end{cor}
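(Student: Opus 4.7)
The plan is to identify $\acc(r)$ with the accumulation set of a finite completely split path via Lemma~\ref{two definitions}, decompose that accumulation set by iterated applications of Lemma~\ref{adjacency term}, and extract the three permissible ray forms from Lemma~\ref{paths converge}.

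For (1), let $E \in \E_f$ correspond to $r$. Write the coarsened complete splitting $u_E \cdot f_\#(u_E) = \mu_0 \nu_1 \mu_1 \cdots \nu_s \mu_s$ of the finite path from Lemma~\ref{two definitions}, with $\mu_i$ maximal (possibly trivial) Nielsen subpaths and $\nu_i$ single growing terms. This decomposition persists under $f_\#$ because Nielsen paths are $f_\#$-invariant and complete splittings are preserved. Iterated application of Lemma~\ref{adjacency term} between consecutive growing terms, absorbing any intervening Nielsen bridge into one side, expresses $\Acc(u_E \cdot f_\#(u_E))$ as a finite union of accumulation sets $\Acc(\nu_i)$ of individual growing terms together with adjacency lines of the form $\ell = (R^-)^{-1} \rho R^+$, where $\rho$ is a composite Nielsen bridge and $R^\pm$ are $f_\#^\infty$-limits constrained by Lemma~\ref{paths converge} to be of one of the three types 1(a)--1(c). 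Single-term accumulation sets $\Acc(\nu_i)$ are computed directly by Examples~\ref{simple limits} when $\nu_i$ is linear or exceptional, and inductively at lower height when $\nu_i \in \E_f \cup \E_f^{-1}$, with termination guaranteed by Lemma~\ref{lem:strong po}. Each line $\ell$ so produced is $\phi$-invariant because $\rho$ is Nielsen and every ray type in Lemma~\ref{paths converge} is explicitly $f_\#$-invariant. A periodic line must be an infinite power of a closed circuit, forcing both $R^\pm$ into case 1(c) and $\rho$ into a power of the common twist path $w$, yielding $L = w^{\pm\infty}$.

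For (2), each ray of type 1(a)--1(c) is finitely described by Remark~\ref{finite determination}, and the Nielsen bridges $\rho$ are drawn from the finite list of bridges appearing in the relevant coarsened complete splittings; the height induction terminates since $\gg$ is height-decreasing, and everything is computable from $\fG$. For (3), since $E \in \E_f$ is higher order, $u_E$ is not Nielsen and so carries at least one growing term; applying Lemma~\ref{adjacency term} to $u_E \cdot f_\#(u_E)$ and tracing $f_\#^\infty$ on the first growing term of $u_E$ via Lemma~\ref{paths converge}(2) together with Examples~\ref{example:converges} yields an adjacency line whose central Nielsen bridge together with the leading non-twist edges (for instance the initial $E'$ of a type 1(b) ray, or a type 1(a) eigenray end itself) prevents the line from reducing to a pure $w^{\pm\infty}$ pattern, making the line non-periodic. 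For (4), type 1(a) endpoints lie in $\Fix_+(\Phi)$ for the principal automorphism $\Phi$ identified by Lemma~\ref{identifying Fix+}, while type 1(b) and 1(c) endpoints are $w^\pm \in \partial \Fix(\Phi)$ for the base principal lift associated with $w$; the $\phi$-invariance of $L$, combined with the fact that its Nielsen bridge is fixed by a specific principal automorphism, forces both endpoints to lie in $\FixN$ of a common $\Phi$, which is precisely the criterion in Lemma~\ref{lem:lifting} for $L$ to lift into the component $\Gamma_{[\Phi]}(f) \subset \gf$.

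The main obstacle is the iterated bookkeeping in (1): one must track how the coarsened complete splitting evolves under $f_\#$ and carefully apply Lemma~\ref{paths converge}(3), invoking the stabilization constant $M$ from Notation~\ref{notn:stabilization} whenever a first growing term lies in $\E_f^{-1}$. The existence claim (3) is also delicate when $u_E$'s growing terms are all linear or exceptional, as one must then verify directly that the Nielsen bridge together with the surrounding linear edges in the adjacency line disrupts any pure-twist periodicity.
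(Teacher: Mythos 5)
Your treatment of items (1) and (2) — reduce to $\Acc(u_E\cdot f_\#(u_E))$ via Lemma~\ref{two definitions}, coarsen the complete splitting, apply Lemma~\ref{adjacency term} repeatedly, constrain the rays by Lemma~\ref{paths converge}, and induct downward on height for the higher-order single-edge terms — is exactly the paper's argument, and your item (4), which locates both endpoints of a lift in $\FixN(\Phi)$ for the principal lift fixing the endpoints of the Nielsen bridge (via Lemma~\ref{identifying Fix+} and the base lifts for twist paths) and then quotes Lemma~\ref{lem:lifting}, is a sound mild variant of the paper's direct construction of a lift into $\Gamma(f)$ using the uniqueness of lifts of vertices and Nielsen paths into $\Gamma^0(f)$.

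Item (3), however, has a genuine gap. You claim that ``tracing $f_\#^\infty$ on the first growing term of $u_E$'' produces an adjacency line whose leading non-twist edges rule out periodicity. First, the forward limit $f_\#^\infty(\sigma_1)$ of the first growing term never appears in any adjacency line: the lines are $\ell_i=(R_i^-)^{-1}\rho_i R_{i+1}^+$ with $R_i^-=f_\#^\infty(\bar\sigma_i)$ and $R_{i+1}^+=f_\#^\infty(\sigma_{i+1})$, so only the \emph{backward} limit of $\sigma_1$ enters, and when $\sigma_1$ is linear that ray is of type 1(c) of Lemma~\ref{paths converge} and has no leading edge at all. Second, $\ell_1$ can genuinely be periodic: if $\sigma_1=E_1$ and $\sigma_2=\bar E_2$ with $E_1,E_2\in\lin_w(f)$ of opposite twist signs (a quasi-exceptional configuration), then $R_1^-=\bar w^{\infty}$, $R_2^+=w^{\infty}$ and $\ell_1=w^{\pm\infty}$. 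So the criterion you invoke is unavailable exactly in the situation you flag as delicate, and no argument is supplied there. The repair is a case analysis on the \emph{second} growing term, as in the paper: if $\sigma_2$ is exceptional or lies in $\E_f\cup\lin(f)$ then $R_2^+$ has type 1(a) or 1(b) and $\ell_1$ is non-periodic; otherwise $\bar\sigma_2\in\E_f\cup\lin(f)$ and $\ell_2$ is non-periodic — but $\ell_2$ only exists when $q>2$, so the case $q=2$ (where $\sigma_1$ is forced to be linear and one checks directly that, e.g., $\ell_1=\bar w^{\pm\infty}\rho_1E_1w^{\pm\infty}$ crosses $E_1$ once) must be handled separately. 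As written, your proof of $\accnr(r)\neq\emptyset$ does not go through.
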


\proof
By  Lemma~\ref{two definitions} we can replace   $\acc(r)$ with $\Acc(E)$ where $E \in \E_f$ corresponds to $r$.    
  Lemma~\ref{two definitions} also implies that  $\Acc(E) = \Acc(u \cdot f_\#(u))$  where $f(E)= E\cdot u$.  Let \[u \cdot f_\#(u) = \rho_0\cdot \sigma_1 \cdot \rho_1 \cdot \sigma_2\ldots \sigma_q \cdot \rho_{q}\]   be a coarsening of the  complete splitting of $u \cdot f_\#(u)$ so that each $\sigma_i$ is a single growing term and so that the $\rho_i$'s are (possibly trivial) Nielsen paths.      For $1 \le i \le q-1$, let $R_i^- =  f^{\infty}_\#(\bar \sigma_{i})$ and for  $2 \le i \le q$, let $R_i^+ = f^{\infty}_\#(\sigma_{i})$.  For  $ 1 \le i \le q-1$, define $\ell_i =   (R_i^-)^{-1}\rho_{i} R_{i+1}^+$. Lemma~\ref{adjacency term} and the obvious induction argument imply that
  \[\acc(r) = \Acc(u \cdot f_\#(u)) = \Acc(\sigma_1) \cup \ell_1 \cup \Acc(\sigma_2) \cup \ldots \cup \ell_{q-1} \cup\Acc(\sigma_{q}) \]   

Lemma~\ref{paths converge}\pref{item:ray types}  implies that each $\ell_i$ satisfies \pref{item:ell decomposes}.  If $\sigma_i$ is linear then $\Acc(\sigma_i) = w^{\pm \infty}$ for some \twistpath\ $w $ by Examples~\ref{simple limits}.   
  The remaining $\sigma_i$ have the form $E'$ or $\bar E'$ for some $E' \in \E_f$  with height less than that of $E$.    Downward induction on the height of $E$   completes the proof of \pref{item:ell decomposes}  and \pref{item:algorithmic}.   
    
  We now turn to \pref{item:accnr is nonempty}, assuming at first that $q > 2$.   If $\sigma_2$ is exceptional or an element of $\E_f \cup \lin(f)$ then $\ell_1$ is non-periodic.  Otherwise, $\bar \sigma_2 \in \E_f  \cup \lin(f)$ and $\ell_2$ is non-periodic. Both of these statements follow from Lemma~\ref{paths converge}.  If $q =2$  then  $\sigma_1$ is linear.  One easily checks that $\ell_1$ is non-periodic in the various cases that can occur. 
   For example if $\sigma_1 = E_1 \in \lin(f)$ then $\sigma_2 = E_1$ and $ \ell_1 =   \bar w^{\pm \infty} \rho_1 E_1 w^{\pm \infty}$.  The remaining cases are left to the reader.

The equivalence of  the two conditions in \pref{item:endpoints in FixN} follows from Lemma~\ref{lem:lifting}. To prove that $L$ lifts into $\Gamma(f)$, we make use of the fact that each vertex in $G$ lifts uniquely to $\stallings^0(f)$ and the fact that each Nielsen path in $G$ lifts uniquely into $\stallings(f)$ with one, and hence both, endpoints in $\stallings^0(f)$.   These facts follow immediately from the construction of $\stallings(f)$ and the fact that every Nielsen path is a concatenation of fixed edges and (necessarily closed) \iNp s.  Given these facts, we may assume that $L = (R^-)^{-1}\rho R^+$ is not a concatenation of Nielsen paths and hence that the initial edge $E_j$ of either $R^-$ or $R^+$ is an element of $\lin(f) \cup \E_f$.  The two cases are symmetric so we may assume that $E_j$ is the initial edge of $R^-$.  Let  $E'_j \subset \stallings(f)$ be the unique lift of $E_j$ with initial vertex $v' \in \stallings^0(f)$ and then extend this to a lift of $R^-$ into $\stallings(f)$. The Nielsen path $\rho$ lifts to a path  $\rho' \subset \stallings^0(f)$ with initial vertex $v'$ and terminal vertex, say  $w'$.  If $R^+$ is a concatenation of Nielsen paths then it lifts into $\stallings^0(f)$ with initial vertex $w'$.  Otherwise we lift $R^+$ in the same way that we lifted $R^-$. 
\endproof

\begin{excont*}\label{ex.g}
Recall $R_q=q\cdot c\cdot cb\cdot cbba\cdot \ldots \cdot cbba\ldots ba^{k-1}\cdot\ldots$ and so $\acc(r_q)=\{a^\infty R_c, a^\infty ba^\infty, a^\infty\}$ and $\accnr(r_q)=\{a^\infty R_c, a^\infty ba^\infty\}$.
\end{excont*}

\section{Special free factor systems}

\subsection{A canonical collection of free factor systems}\label{s:canonical ffs}
In this section, we define a canonical partial  order $<$ on $\cR(\phi)$ and then associate a nested sequence $\vec\F(\phi, \dle) = \F_0 \sqsubset \F_1 \sqsubset \ldots \sqsubset \F_t$ of $\phi$-invariant free factor systems to each total order $\dle$ on $\cR(\phi)$ that extends  $<$.  The bottom \ffs\ $\F_0$ 
is  the smallest free factor system that carries all  conjugacy classes that grow at most linearly and is independent of $\dle$.  The inclusions $\F_{i-1} \sqsubset \F_i$ are all one-edge extensions.  The \ct s that represent $\phi$ with filtrations that realize $\vec\F(\phi, \dle)$ are easier to work  with than  generic \ct s - see Lemma~\ref{lemma:three extension types}.

\begin{notn}  \label{weaker po}   Suppose that $\fG$ is a \ct\ representing $\phi$ and  that $E_1$ and $ E_2$ are distinct elements of  $\E_f$.  If $E_1$ or $\bar E_1$ is a term of the complete splitting of   $f_\#^k(E_2)$ for some   $k \ge 1$ then we write $E_1< E_2$.  Lemma~\ref{not crossed} implies that $<$ is a partial order on $\E_f$.  If $E_1 <E_2$ are consecutive elements in the partial order then we write $E_1<_cE_2$.  Note that if we define $E_1 <' E_2$   to mean $E_1$ or $\bar E_1$ is a term of the complete splitting of   $f(E_2)$ then $<$ is the partial order determined from $<'$ by  extending transitively.  Thus $<$ can be computed.

If $r_1, r_2 \in \cR(\phi)$ and $r_1$ is an  end  of some element of $\accnr(r_2)$ then we write $r_1 < r_2$.  Lemma ~\ref{preserves partial order} below implies that $<$ defines a partial order on $\cR(\phi)$.  If $r_1 < r_2$ are consecutive elements in the partial order then we write $r_1<_cr_2$.
\end{notn}

\begin{excont*}\label{ex.h}
In our example, the only relation is $r_c<r_q$.
\end{excont*}

Recall from Lemma~\ref{identifying Fix+} that the map that sends $E$ to the end of $R_E$ defines a bijection between $ \E_f$ and  $\cR(\phi)$. 

\begin{lemma} \label{preserves partial order} For any \ct\ $\fG$, the  bijection between $\E_f$  and $\cR(\phi)$   preserves $<$.
\end{lemma}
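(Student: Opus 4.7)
The plan is to prove the biconditional $E_1 < E_2 \Longleftrightarrow r_{E_1} < r_{E_2}$, which then automatically makes $<$ a partial order on $\cR(\phi)$.  The main tools are the identification $\acc(r_{E_2}) = \Acc(E_2)$ (Lemma~\ref{two definitions}), the construction of a limit line at a juncture of a splitting (Lemma~\ref{adjacency term}), the analysis of $f_\#^\infty$ of a growing path (Lemma~\ref{paths converge}), and the rigidity that no Nielsen path crosses an edge of $\E_f \cup \E_f^{-1}$ (Lemma~\ref{not crossed}).

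For the forward direction, assume $E_1 < E_2$, so $E_1 \ne E_2$ and one of $E_1, \bar E_1$ is a term in the complete splitting of some $f^k_\#(E_2)$, hence of the eigenray $R_{E_2} = E_2 \cdot u_2 \cdot f_\#(u_2) \cdots$.  I split $R_{E_2} = \alpha\beta'$ at this occurrence so that either the last term of $\alpha$ is $\bar E_1$ or the first term of $\beta'$ is $E_1$; in either case one of $\bar\alpha$ or $\beta'$ begins with the edge $E_1 \in \E_f$.  Because $E_2 \ne E_1$, the growing term $E_2$ lies in $\alpha$; because $R_{E_2}$ has growing terms past every point, I may choose a finite initial subpath $\beta \subset \beta'$ that contains a growing term.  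Applying Lemma~\ref{adjacency term} to $\sigma = \alpha\cdot\beta$ produces $\ell = (f_\#^\infty(\bar\alpha))^{-1} f_\#^\infty(\beta) \in \Acc(\sigma)$, and since $\sigma$ is a subpath of the $f_\#$-invariant ray $R_{E_2}$, $\Acc(\sigma) \subset \Acc(E_2) = \acc(r_{E_2})$.  By Lemma~\ref{paths converge}\pref{item:first growing}, whichever of $f_\#^\infty(\bar\alpha)$ or $f_\#^\infty(\beta)$ begins with $E_1$ coincides with $R_{E_1}$, so $r_{E_1}$ is one of the two ends of $\ell$.  Non-periodicity of $\ell$ follows because $\ell$ crosses $E_1$, while by Lemma~\ref{not crossed} no Nielsen path (and hence no power of a twist path $w$) crosses $E_1$, so $\ell \ne w^{\pm\infty}$; combined with Corollary~\ref{cor:limit lines}\pref{item:ell decomposes}, this places $\ell$ in $\accnr(r_{E_2})$, proving $r_{E_1} < r_{E_2}$.

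For the backward direction, choose $L \in \accnr(r_{E_2})$ with $r_{E_1}$ as an end and decompose $L = (R^-)^{-1}\rho R^+$ as in Corollary~\ref{cor:limit lines}\pref{item:ell decomposes}.  Rays of types (b) and (c) in Lemma~\ref{paths converge}\pref{item:ray types} have periodic ends, whereas $r_{E_1}$ is the non-periodic end of $R_{E_1}$; hence the ray with end $r_{E_1}$ is of type (a), equal to $R_{E'}$ for some $E' \in \E_f$, and injectivity of $E \leftrightarrow r_E$ (Lemma~\ref{identifying Fix+}) forces $E' = E_1$.  Thus $L$ crosses $E_1$ at the juncture between $\rho$ and $R_{E_1}$.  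A finite subpath of $L$ containing this crossing lies in $f^k_\#(E_2)$ for some $k$, and by Lemma~\ref{not crossed} this crossing of $E_1$ or $\bar E_1$ is a term in the complete splitting of the completely split path $f^k_\#(E_2)$.  Moreover $E_1 \ne E_2$, since $E_2$ occurs only once in $f^k_\#(E_2)$, namely at its very start, so the two-sided nature of $L$ at the juncture rules out the possibility $E_1 = E_2$.  Hence $E_1 < E_2$.

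The main obstacle is the orientation bookkeeping in the forward direction, ensuring via Lemma~\ref{paths converge} that the convergent ray on the appropriate side is literally $R_{E_1}$ (not merely something accumulating on $r_{E_1}$), and verifying that the constructed $\ell$ is non-periodic so that it actually belongs to $\accnr$; the backward direction is then a fairly direct application of Corollary~\ref{cor:limit lines} and Lemma~\ref{not crossed}.
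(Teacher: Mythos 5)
Your proof is correct and follows essentially the same route as the paper: the forward direction uses Lemma~\ref{two definitions} together with Lemma~\ref{adjacency term} and Lemma~\ref{paths converge} to manufacture a limit line having $R_{E_1}$ as a terminal ray, and the backward direction uses the decomposition of Corollary~\ref{cor:limit lines}\pref{item:ell decomposes} plus Lemma~\ref{not crossed}. The only differences are cosmetic: you split an initial segment of $R_{E_2}$ rather than the paper's path $u_2\cdot f_\#(u_2)\cdots f^k_\#(u_2)$ coarsened into three growing terms, and you make explicit the non-periodicity of the constructed line and the inequality $E_1\ne E_2$, which the paper leaves implicit.
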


\begin{proof}  Suppose that $E_1,E_2 \in \E_f$ correspond to $r_1,r_2 \in \cR(\phi)$ respectively. 

 If $E_1 < E_2$ and $f(E_2) = E_2 \cdot u_2$ then $E_1$ or $\bar E_1$ is a term in the complete splitting of $f^k_\#(u_2)$ for some, and hence all sufficiently large, $k$.  By Lemma~\ref{two definitions}, there exists a completely split path $\gamma$ such that $\acc(r_2) = \Acc(\gamma)$ and such that the complete splitting of $\gamma$ has a  coarsening  $\gamma = \gamma_1 \cdot \gamma_2 \cdot \gamma_3$ into three growing terms with $\gamma_2$ equal to  either $\bar E_1$ or $E_1$.  Lemma~\ref{adjacency term} therefore implies that  $R_{E_1}$ is  a terminal ray of $L$ or $L^{-1}$ for some   $L \in \acc(r_2)$.  Thus   $r_1 < r_2$.
 
 If $r_1 < r_2$ then $R_{E_1}$ is  a terminal ray of $L$ or $L^{-1}$ for some   $L \in \acc(r_2)$ by Corollary~\ref{cor:limit lines}\pref{item:ell decomposes}.  It follows that $f^k_\#(E_2)$ crosses $E_1$ or $\bar E_1$ for all sufficiently large $k$.  Lemma~\ref{not crossed} implies that $E_1 < E_2$.
\end{proof}

\begin{lemma} \label{weaker po conjugacy} If $\psi = \theta \phi \theta^{-1}$ then the bijection $\cR(\phi) \to \cR(\psi)$ induced by $\theta$ (see Lemma~\ref{first theta}) preserves partial orders.
\end{lemma}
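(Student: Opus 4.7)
The plan is to deduce the lemma immediately from Corollary~\ref{acc is natural}, together with the compatibility of the endpoint-formation map with the $\Out(F_n)$-action on $\B$. Unwrapping the definitions, $r_1 < r_2$ in $\cR(\phi)$ precisely when there exists $L \in \accnr(r_2)$ such that $r_1$ is an end of $L$, meaning that some lift $\ti r_1 \in \partial F_n$ of $r_1 \in \partial F_n / F_n$ is an endpoint of some lift $\ti L \subset \ti\B$ of $L \in \B$.

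So given $r_1 < r_2$, I would pick such an $L$ and then check that $\theta(L)$ witnesses $\theta(r_1) < \theta(r_2)$. Fix any $\Theta \in \Aut(F_n)$ representing $\theta$. By construction the bijection $\cR(\phi) \to \cR(\psi)$ is given by $[\ti r] \mapsto [\partial\Theta(\ti r)]$, and the action of $\theta$ on $\B$ is induced by the diagonal action of $\partial\Theta$ on $\ti\B$. In particular, if $\ti r_1$ is an endpoint of $\ti L$, then $\partial\Theta(\ti r_1)$ is an endpoint of $(\partial\Theta \times \partial\Theta)(\ti L)$, so $\theta(r_1)$ is an end of $\theta(L)$.

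Combining these observations, Corollary~\ref{acc is natural} gives $\theta(L) \in \theta(\accnr(r_2)) = \accnr(\theta(r_2))$, while the previous paragraph shows $\theta(r_1)$ is an end of $\theta(L)$. Hence $\theta(r_1) < \theta(r_2)$ in $\cR(\psi)$. The converse implication follows by applying the same argument to $\theta^{-1}$, which conjugates $\psi$ to $\phi$ and induces the inverse bijection. There is no real obstacle here — the content of the lemma is entirely packaged in Corollary~\ref{acc is natural}, and the proof is a short unwinding of how $\theta$ acts on $\partial F_n$, on $\B$, and on pairs (endpoint, line).
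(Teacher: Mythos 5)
Your proof is correct and follows essentially the same route as the paper: establish naturality of $\accnr$ on each ray together with the naturality of $\cR(\phi)$ from Lemma~\ref{first theta}, and then unwind the definition of $<$. The only small point is that the per-ray equality $\theta(\accnr(r_2)) = \accnr(\theta(r_2))$ is really the ``moreover'' statement of Lemma~\ref{acc is well defined} (Corollary~\ref{acc is natural} only records the union over all of $\cR(\phi)$), but this is exactly the fact the paper itself invokes.
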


\proof  By Lemmas~\ref{first theta} and \ref{acc is natural}, we have  $\theta(\cR(\phi)) =  \cR(\psi)$ and  $\theta(\accnr(r)) = \accnr(\theta(r))$ for each $r \in \cR(\phi)$.  The fact that $\theta$ preserves the partial order now follows from the definition of the partial order.
\endproof 

\begin{notn} \label{notn:total order}    Extend the partial order $<$ on $\cR(\phi)$ to a total  order $\dle$ and write $\cR(\phi) = \{r_1,\ldots, r_s\}$ where the elements are listed in increasing order. Given a \ct\ $\fG$ representing $\phi$, transfer the total order $\dle$ on $\cR(\phi)$ to a total order (also called) $\dle$ on $\E_f  = \{E_1,\ldots, E_s\}$ using the bijection between $\E_f$ and $\cR(\phi)$ given in Lemma~\ref{identifying Fix+}.

Recall from Section~\ref{sec:stallings} that each component $C$ of the eigengraph $\Gamma(f)$ is constructed  from a component $C_0$ of $\Fix(f)$ by first adding \lq lollipops\rq, one for each $E \in \lin(f)$ with initial vertex in $C_0$, to form $C_1$, and then adding rays labeled   $R_E$, one for each $E \in \E_f$ with initial vertex in $C_0$.  Each $E \in \E_f$ contributes exactly one ray to $\Gamma(f)$ and we identify that ray with the eigenray $R_E$; it is the unique lift of $R_E$ to $\stallings(f)$.   Each $E \in \lin(f)$  contributes exactly one lollipop to $\Gamma(f)$.    Note that $C$ is contractible if and only if $C_1$ is contractible if and only if $C_0$ is contractible and there are no $E \in \lin(f)$ with initial vertex in $C_0$.   In this contractible case, $C$ is obtained from a (possibly trivial) tree in $\Fix(f)$ by adding eigenrays and we  single out the ray $R_E \subset C$ whose associated edge $E$ is lowest with respect to $\dle$.    These edges define subsets $\cR^*(\phi) \subset \cR(\phi)$ and $\E^*_f \subset \E_f$ that correspond under the bijection between $\cR(\phi)$ and $\E_f$.  
\end{notn}

 \begin{definition}  \label{notn:F0}  A conjugacy class $[a]$ {\em grows at most linearly} under iteration by $\phi$  if for some, and hence every, set of generators  there is a linear function $P$ such that word length of $\phi^k([a])$ with respect to those generators is bounded by $P(k)$.  If $\fG$ represents $\phi$, then word length of $\phi^k([a])$ can be replaced by edge length of $f_\#^k(\sigma)$ in $G$  where $\sigma \subset G$ is the circuit representing $[a]$.  The {\em linear growth \ffs}  $ \F_0(\phi)$ is the  minimal free factor system that carries all  conjugacy classes that grow at most linearly under iteration by $\phi$.  
\end{definition}  

\begin{lemma}   \label{lemma:one edge extension} Suppose that $\fG$ is a \ct\ representing $\phi$ and that $\dle$ and $\E_f = \{E_1,\ldots,E_s\}$ are as in Notation~\ref{notn:total order}.  Let  $K_0 \subset G$  be the subgraph consisting  of all fixed and linear edges for $\fG$.   For $1 \le j \le s$, inductively define $K_j = K_{j-1} \cup  E_j$. Then:
 \begin{enumerate}
 \item $ \F_0(\phi)= \F(K_0, G)$ (as defined at the beginning of Section~\ref{sec:ffs}).
 \item Each $K_j$ is $f$-invariant.  
\item  If  $E_j \in \E^*_f$   then   $ \F(K_j,G) =  \F(K_{j-1},G)$; otherwise $ \F(K_{j-1},G) \sqsubset \F(K_j,G) $ is a proper one-edge extension.  
\end{enumerate} 
\end{lemma}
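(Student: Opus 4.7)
For part (1), I will show that a conjugacy class $[a]$ grows at most linearly iff its circuit $\sigma \subset G$ is contained in $K_0$. For the $(\Leftarrow)$ direction, for $k$ sufficiently large $f^k_\#(\sigma)$ is completely split; by Lemma~\ref{not crossed}, only Nielsen paths, exceptional paths, and edges in $\lin(f)$ can appear as terms, and each of these grows at most linearly under further iteration. For the $(\Rightarrow)$ direction, if $\sigma$ crosses some $E\in\E_f$, then by Lemma~\ref{not crossed} $E$ is a term of the complete splitting of $f^k_\#(\sigma)$ for $k$ large. Since $f(E)=E\cdot u_E$ with $u_E$ not a Nielsen path, $|f^k_\#(u_E)|\to\infty$, so $|f^{k+1}_\#(E)|\ge|f^k_\#(E)|+|f^k_\#(u_E)|$ grows at least quadratically, giving super-linear growth for $\sigma$. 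Since $\F(K_0,G)$ is, by construction, a free factor system carrying exactly those conjugacy classes whose circuits lie in $K_0$, it coincides with $\F_0(\phi)$.

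For part (2), I will argue by induction on $j$. Base case: fixed edges are trivially $f$-invariant; for $E\in\lin(f)$, $f(E)=Ew^d$ where $w$ is a twist path, a closed Nielsen path. A twist path decomposes as a concatenation of fixed edges and INPs of the form $E'w'^{p}\bar E'$ with $E'\in\lin(f)$ and $w'$ of strictly lower height; by induction on height, every twist path lies in $K_0$, so $f(K_0)\subset K_0$. Inductive step: assuming $f(K_{j-1})\subset K_{j-1}$, consider $f(E_j)=E_j u_j$. Each term in the complete splitting of $u_j$ is a Nielsen path, an exceptional path (both in $K_0$), or a single edge of $\E_f\cup\E_f^{-1}$. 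Any such $E'\in\E_f$ satisfies $E'<E_j$ in the partial order of Notation~\ref{notn:total order} (apply the definition with $k=1$), so since $\dle$ extends $<$, $E'$ has smaller $\dle$-index and lies in $K_{j-1}$. Thus $f(E_j)\subset K_j$.

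For part (3), both endpoints of $E_j$ are in $\Fix(f)\subset K_0\subset K_{j-1}$: the initial vertex is fixed because $f(E_j)=E_j u_j$ is a splitting, and the terminal vertex is fixed because $u_j$ is closed. Let $v,w$ be these endpoints and $C_v,C_w$ their components in $K_{j-1}$. A standard Euler-characteristic calculation shows that adding $E_j$ either (A) creates a new loop when $C_v=C_w$, increasing $\mathrm{rk}(\pi_1(C_v))$ by $1$, or (B) merges two components when $C_v\ne C_w$, leaving total rank unchanged. In case (A), the \ffs\ changes by a proper one-edge extension of the form $\{[A]\}\sqsubset\{[B]\}$ with $\mathrm{rk}(B)=\mathrm{rk}(A)+1$. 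In case (B), if both components are non-contractible the extension is the proper merging $\{[A_1],[A_2]\}\sqsubset\{[B]\}$; if at least one component is contractible, attaching a tree does not alter $\pi_1$ of the other, so $\F(K_j,G)=\F(K_{j-1},G)$.

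The heart of the proof—and the main obstacle—is identifying the equivalence: $E_j\in\E^*_f$ if and only if case (B) occurs and at least one of $C_v,C_w$ is contractible. Write $\hat C$ for the component of $\Gamma(f)$ containing $v$ and $\hat C_0$ for the associated Nielsen class in $\Fix(f)$. When $E_j\in\E^*_f$, by definition $\hat C$ is contractible (so $\hat C_0$ is a tree in $\Fix(f)$ and no linear edge has initial vertex in $\hat C_0$) and $E_j$ is the $\dle$-lowest element of $\E_f$ with initial vertex in $\hat C_0$, so no $E_i$ with $i<j$ has initial vertex in $\hat C_0$ either. Consequently, the only edges of $K_{j-1}$ with initial endpoint in $\hat C_0$ are the fixed edges of $\hat C_0$ itself. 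Any other edges of $K_{j-1}$ meeting $\hat C_0$ do so only at their terminal endpoint, so $\hat C_0$ appears as a tree attached at leaves to the rest of $C_v$; arguing by induction on $j$ using the previously established instances of the lemma, one shows that $C_v$ is contractible and $C_v\ne C_w$. Conversely, if $E_j\notin\E^*_f$, then either $\hat C$ is non-contractible—forcing $\hat C_0$ to contain a fixed loop or a linear edge based in $\hat C_0$, which makes $C_v$ non-contractible—or $\hat C$ is contractible but some earlier $E_{j'}\in\E^*_f$ with $j'<j$ is the lowest edge for $\hat C$; then inductively $E_{j'}$ has already connected $\hat C_0$ to some other Nielsen class in a way that forces either $C_v=C_w$ or $C_w$ non-contractible. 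The conclusion of part (3) then follows. Finally, Lemma~\ref{reducible extensions} guarantees that these proper extensions are genuine one-edge extensions.
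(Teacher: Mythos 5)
Parts (1) and (2) of your proposal follow essentially the same route as the paper (complete splittings plus Lemma~\ref{not crossed}, and the observation that the single-edge terms of $f(E_j)$ other than $E_j$ are lower in the partial order and hence already in $K_{j-1}$), and they are fine. The gap is in part (3), and it is twofold. First, you never establish that the terminal endpoint of $E_j$ lies in a \emph{non-contractible} component of $K_{j-1}$. The paper gets this at once from $f(E_j)=E_j\cdot u_j$ with $u_j$ a non-trivial closed path contained in $K_{j-1}$ (by part (2)), and it is genuinely needed: in your converse direction the conclusion you reach, ``$C_v=C_w$ or $C_w$ non-contractible'' (you presumably mean $C_v$), does not give a proper extension when $C_v\ne C_w$ unless you also know $C_w$ is non-contractible; the configuration $C_v$ non-contractible, $C_w$ contractible, $C_v\ne C_w$ would leave $\F(K_j,G)=\F(K_{j-1},G)$, contradicting what you must prove when $E_j\notin\E^*_f$.

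Second, your forward direction is not sound as written. You concede that linear edges or earlier edges $E_i$ ($i<j$) may meet $\hat C_0$ at their \emph{terminal} endpoints and then assert that ``by induction'' $C_v$ is nevertheless contractible. But if such an attachment occurred, $C_v$ would in fact be non-contractible: the terminal vertex of $E_i$ is the basepoint of the non-trivial closed path $u_i\subset K_{i-1}\subset K_{j-1}$, and the terminal vertex of a linear edge is the basepoint of its twist circuit $w\subset K_0$, so $C_v$ would contain a circuit. So no induction can rescue the claim; what is actually needed is to rule such attachments out. This can be done: a closed path $w$ or $u_i$ based at a vertex of $\hat C_0$ decomposes into fixed edges, \iNp s, exceptional paths, linear edges and lower elements of $\E_f$, and reading it from its basepoint forces either a fixed circuit inside $\hat C_0$ or a linear edge, or an element of $\E_f$ lower than $E_j$, with initial vertex in $\hat C_0$ --- contradicting contractibility of the eigengraph component and the minimality of $E_j$ in it. This is exactly the content of the paper's claim that for $E_j\in\E^*_f$ the $K_{j-1}$-component of $v_j$ is a contractible component of $\Fix(f)$, and it is the step your proposal leaves open.
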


\begin{proof}   In proving (1), we work with circuits $\sigma \subset G$ and edge length in $G$ rather than conjugacy classes $[a]$  and word length with respect to a set of generators of $F_n$.  
 If $E$ is an edge of $K_0$, then $f(E)=E\cdot u$ for some (possibly trivial) closed Nielsen path $u$. Lemma~\ref{not crossed} implies that $u \subset K_0$ and  hence that $K_0$ is $f$-invariant. Each circuit in $K_0$ grows at most linearly under iteration by $f_\#$ since every edge in $K_0$ does.  Thus $\F(K_0, G) \sqsubset \F_0(\phi)$.  
 
  After replacing $\sigma$ with   $f^m_\#(\sigma)$ for some $m \ge 0$, we may assume by \cite[Lemma 4.25]{fh:recognition} that $\sigma$ is completely split. Lemma~\ref{not crossed} implies that if $\sigma$ is not contained in $K_0$ then, up to reversal of orientation,  some term in the complete splitting of $\sigma$ is an edge $E \in \E_f$.  In this case,  $\sigma$ grows at least as fast as $E$ does.  If $f(E) = E\cdot u$ then   the length of $f_\#^k(u)$ goes to infinity with $k$ and so the   length of $f_\#^k(E)$  grows faster than any linear function.      This proves that $K_0$ contains every circuit that grows at most linearly so $\F_0(\phi) \sqsubset \F(K_0, G)$.  This completes the proof of (1).

For the remainder of the proof we may assume that $j \ge 1$.
For $E_j \in \E_f$, the terms in the complete splitting of $f(E_j)$, other than $E_j$ itself, are   exceptional paths,  Nielsen paths and  single edges $E_i$ or $\bar E_i$ that are either linear or satisfy $E_i < E_j$. Lemma~\ref{not crossed} implies that the exceptional paths and Nielsen paths are contained in $K_0$.  The single edge terms other than $E_j$ are contained in $K_{j-1}$ by construction.    Thus  $f(E_j) \subset K_j$ and   $K_j$ is $f$-invariant. This proves (2).  
 
  The  terminal endpoint of each $E_j \in \E_f$ is contained in a non-contractible component of $K_{j-1}$ because $f(E_j) = E_j u_j$ for a non-trivial closed path  $u_j \subset K_{j-1}$.    If $E_j \in \E^*_f$ with initial vertex $v_j$ then the component of $K_{j-1}$ that contains  $v_j$ is a contractible component of $\Fix(f)$.    In this case every line in $K_j$ is contained in $K_{j-1}$ so $ \F(K_j,G) =  \F(K_{j-1},G)$.  Otherwise,   $v_j$ is contained in a non-contractible  component of $K_{j-1}$ so   $\F(K_{j-1},G) \sqsubset \F(K_j,G)$ is a proper inclusion.  Obviously  $K_j$ is obtained from $K_{j-1}$ by adding a single edge.
\end{proof}

Recall from Lemma~\ref{lem:lifting} that  the set of lines that lift to $\stallings(f)$ is independent of the choice of \ct\  $\fG$ representing $\phi$. The next lemma shows that the $\F(K_j,G)$'s defined in Lemma~\ref{lemma:one edge extension} depend only on $\phi$ and $<_T$ and not on the choice of \ct\ $\fG$.

\begin{lemma} \label{lines generate} Continue with the notation of Lemma~\ref {lemma:one edge extension}.  For each $r_j \notin \cR^*(\phi)$, there exists at least one line $ L(r_j)$ that lifts to $\stallings(f)$, whose terminal end is  $r_j$ and whose initial end is  not $r_l$ for any $l\ge j$.   Moreover, for any such choice of lines, $\F(K_j,G)$ is the smallest free factor system that contains $\F_0(\phi)$ and carries $\{L(r_l):  l \le j \text { and } r_l \notin\cR^*(\phi)\}$.
\end{lemma}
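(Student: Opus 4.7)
The argument has two parts: existence of the lines $L(r_j)$, and the \emph{moreover} equality of free factor systems.

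\textbf{Existence.} The assumption $r_j\notin\cR^*(\phi)$ means the component $C$ of $\gf$ containing the eigenray $R_{E_j}$ is either non-contractible, or contractible but contains some $R_{E_i}$ with $E_i\dle E_j$ distinct from $E_j$; in the latter case Lemma~\ref{preserves partial order} gives $i<j$. In that second case the initial vertices of $R_{E_i}$ and $R_{E_j}$ both lie in the component $C_0$ of $\Fix(f)$ corresponding to $C$ and so are joined by a (possibly trivial) fixed-edge path $\rho\subset C_0$; I set $L(r_j):=R_{E_i}^{-1}\rho R_{E_j}$, which lifts to $\gf$ by Lemma~\ref{lem:lines in gf}, terminates at $r_j$, and starts at $r_i$ with $i<j$. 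When $C$ is non-contractible, the core of $C$ contains at least one circuit (a lollipop coming from a linear edge based at $C_0$, or a loop within $C_0$ if $\pi_1(C_0)$ is non-trivial), and I take $L(r_j)$ to be a bi-infinite path constructed directly in $\gf$ whose terminal ray is $R_{E_j}$ and whose initial end wraps such a circuit infinitely. The projection to $G$ is a line lifting to $\gf$; its initial end is periodic (a twist-path infinity or fixed-loop infinity) and hence not equal to any $r_l$.

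\textbf{Moreover.} I induct on $j$. Let $\F^*(j)$ denote the smallest free factor system containing $\F_0(\phi)$ and carrying $\{L(r_l):l\le j,\ r_l\notin\cR^*(\phi)\}$. The base case $j=0$ is Lemma~\ref{lemma:one edge extension}(1). For the inductive step, each $L(r_l)$ has image in $K_l\subset K_j$ and $\F_0(\phi)\sqsubset\F(K_j,G)$, so $\F^*(j)\sqsubset\F(K_j,G)$. If $E_j\in\E^*_f$ then no new line enters and $\F(K_j,G)=\F(K_{j-1},G)$ by Lemma~\ref{lemma:one edge extension}(3); the inductive hypothesis concludes this case. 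If $E_j\notin\E^*_f$, induction gives $\F(K_{j-1},G)\sqsubset\F^*(j)$, so it remains to show $\F(K_j,G)\sqsubset\F^*(j)$.

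For this final inclusion, the key observation is that the image of $L(r_j)$ in $G$ contains $E_j$ (the first edge of the terminal eigenray $R_{E_j}$) while every other edge of its image lies in $K_{j-1}$. I would then perform a case analysis on the one-edge extension $\F(K_{j-1},G)\sqsubset\F(K_j,G)$---whether $E_j$ bridges two non-contractible components of $K_{j-1}$, creates a loop in one non-contractible component, or interacts with contractible components. In each case, the hypothesis that $\F^*(j)$ carries $L(r_j)$ forces the two ends of $L(r_j)$ in $\partial\f$ to lie in the boundary of a single component $[A]$ of $\F^*(j)$; combined with $\F(K_{j-1},G)\sqsubset\F^*(j)$, this forces $A$ to contain the free factor corresponding to the component of $K_j$ that now includes $E_j$, with Lemma~\ref{reducible extensions} helping rule out strictly intermediate possibilities. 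The main obstacle is making this case analysis rigorous: from an arbitrary $\F'\sqsupset\F(K_{j-1},G)$ carrying $L(r_j)$, one must identify the component of $\F'$ in whose boundary both ends of $L(r_j)$ lie and show that it realizes the full new factor of $\F(K_j,G)$, exploiting that these two ends are concretely described by the eigengraph data.
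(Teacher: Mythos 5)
Your existence argument is fine and is essentially the paper's: extend $R_{E_j}$ backwards by a ray in the part of its eigengraph component already accounted for (an earlier eigenray when the component is contractible, a periodic end otherwise). The gap is in the \emph{moreover} part, at exactly the step you flag yourself: you never prove the inclusion $\F(K_j,G)\sqsubset\F^*(j)$, you only outline a case analysis on how $E_j$ attaches to $K_{j-1}$ and admit that making it rigorous is "the main obstacle." As written, from an arbitrary free factor system $\F'\sqsupset\F(K_{j-1},G)$ carrying $L(r_j)$ you have no mechanism forcing the component carrying $L(r_j)$ to swallow the entire new factor of $\F(K_j,G)$; "the two ends lie in the boundary of a single component of $\F^*(j)$" is true but by itself only shows $\F^*(j)$ carries one extra line, not that it realizes the one-edge extension. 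So the proposal is an incomplete proof, not a complete alternative one.

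The paper closes this with a counting argument that avoids any case analysis, and you already have all the ingredients on the table. Let $\F'_j$ be the smallest free factor system carrying $K_{j-1}$ and $L(r_j)$, so $\F(K_{j-1},G)\sqsubset\F'_j\sqsubset\F(K_j,G)$. The first inclusion is proper because $L(r_j)$ crosses $E_j$ and so is not carried by $K_{j-1}$. Because the initial ray of $L(r_j)$ lies in $K_{j-1}$, the component of $\F'_j$ carrying $L(r_j)$ already contains a component of $\F(K_{j-1},G)$, so $\F'_j$ has no more components than $\F(K_{j-1},G)$. Now if $\F'_j\sqsubset\F(K_j,G)$ were also proper, Lemma~\ref{reducible extensions} would say $\F(K_{j-1},G)\sqsubset\F(K_j,G)$ is not a one-edge extension, contradicting Lemma~\ref{lemma:one edge extension}(3) (this is where $E_j\notin\E^*_f$ is used). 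Hence $\F'_j=\F(K_j,G)$, and the induction finishes as you set it up. Your instinct to invoke Lemma~\ref{reducible extensions} was right; the missing idea is to apply it to the triple $\F(K_{j-1},G)\sqsubset\F'_j\sqsubset\F(K_j,G)$ via the component count, rather than to analyze where the edge $E_j$ attaches.
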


\proof
Let $C = C(j)$ be the component of $\Gamma(f)$ that contains $R_{E_j}$, let $C_0 \subset C_1 \subset C$ be as in Notation~\ref{notn:total order} and, for each $1 \le q \le s$, let $A_q\subset C$  be the union of $C_1$ with the rays  $R_{E_l}$ in $C$ with $E_l \le E_q$.  By construction, and by Lemma~\ref{lemma:one edge extension}, $R_{E_l}$ is included in $A_q$ if and only if $R_{E_l} \subset K_q$.      Since $r_j \notin \cR^*(\phi)$, either $C_1  $ is non-contractible or   $A_{j-1}$ contains at least one  ray $R_{E_l}$.  In both cases,  the ray $R_{E_j} \subset C$ extends by a ray in $A_{j-1}$ to a line in $A_j$.  The projection $L(r_j)$ of this line into $K_j$ satisfies the conclusions of the main statement of the lemma.

The moreover part of the lemma is proved by induction on $j$ with the base case $j = 0$ following from Lemma~\ref{lemma:one edge extension}(1).  For the inductive case, let $\F'_j$ be   the smallest free factor system   that carries $K_{j-1}$  and $L(r_j)$.  Then   $\F(K_{j-1},G) \sqsubset \F'_j \sqsubset \F(K_j,G)$ with the first inclusion being proper and  $\F_j' $ does not have more components than $\F(K_{j-1},G)$. Lemma~\ref{reducible extensions} implies that    $\F'_j = \F(K_j,G)$. 
\endproof

\def\sHH{{\sf HH}}
\def\sLH{{\sf LH}}

\begin{notn} \label{notn:ffs}   Let $K_0 \subset K_1 \subset \ldots \subset K_s = G$ be as in Lemma~\ref{lemma:one edge extension} and let $\vec\F(\phi, \dle) = \F_0 \sqsubset \F_1 \sqsubset \ldots \sqsubset \F_t$ be  the increasing sequence of distinct free factor systems determined by the $K_j$'s.   (Equivalently, $\vec\F(\phi, \dle)$ is the sequence determined by those $K_j$'s with $r_j \not \in \cR^*(\phi)$.)    We say that $\vec\F(\phi, <_T)$   is the {\em sequence  of free factor systems determined by $\phi$ and $\dle$}. Lemma~\ref{lines generate} justifies this description by showing that   $\vec\F(\phi, \dle)$ depends only on $\phi$ and $\dle$.  To simplify notation a bit, we write $L_k$ for $L(r(j))$ where $r(j)$ is the $k^{th}$ lowest element of $\cR(\phi) \setminus \cR^*(\phi)$.   Thus $\F_k$ is filled  by $\F_0$ and $L_1,\ldots, L_k$.

We sometimes refer to a  nested sequence of free factor systems as a {\it chain}. A chain $\frak c=(\F_0\sqsubset \dots\sqsubset\F_t)$ is {\it special for $\phi$} if $\frak c=\vec\F(\phi,<_T)$ for some extension $<_T$ of $<$ to a total order on $\cR(\phi)$. A free factor system $\F$ is {\it special for $\phi$} if $\F$ is an element of some special chain for $\phi$. The set of special free factor systems for $\phi$ is denoted $\lat0(\phi)$. A free factor $F$ or its conjugacy class is {\it special for $\phi$} if $[F]$ is an element of some special \ffs\ for $\phi$. A pair $\fe=(\F^-\sqsubset \F^+$) of \ffs s is a {\it special one-edge extension for $\phi$} if its appears as consecutive elements of some special chain for $\phi$.
\end{notn}

By applying the existence theorem for \ct s given in \cite[Theorem 1.1]{fh:CTconjugacy},  we can  choose a \ct\ whose filtration  realizes  $\vec \F(\phi, <_T)$ for any given $<_T$.  The following lemma shows that the case analysis for a  \ct\ with this property is simpler than that of a random \ct.    

\begin{lemma}  \label{lemma:three extension types}   Suppose that $\vec\F(\phi, \dle) = \F_0 \sqsubset \F_1 \sqsubset \ldots \sqsubset \F_t$ and  that  $\fG$  and $\filt$   are a \ct\ and filtration representing $\phi$ and realizing $\vec\F(\phi, <_T)$; i.e.   for all $0 \le k \le t$ there is an $f$-invariant core subgraph $G_{i_k}$ such that $\F_k= \F(G_{i_k}, G)$.     Then  $G_{i_k} \setminus G_{i_{k-1}}$ is a single topological arc $A_k$ with both endpoints in $G_{i_{k-1}}$.  Moreover, letting $D_k$ be the element of $\E_f$ corresponding to $\partial_+ L_k \in \cR(\phi)$ (as in Lemma~\ref{lines generate}), $A_k$ can be oriented so that one of the following is satisfied.
\begin{description}
\item \label{c:higher higher} $[\sHH]:$\ \    $A_k =  \bar C_k D_k$ where $C_k\in \E_f$.
\item\label{c:linear-higher order}$[\sLH]:$\ \   $A_k =  \bar C_k D_k$ where $C_k\in \lin(f)$. 
\item\label{c:higher order}$[\sH]:$\ \ $A_k = D_k$.
\end{description}
\end{lemma}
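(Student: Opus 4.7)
The plan is to combine Lemma~\ref{lemma:one edge extension} with the structural axioms of a \ct\ filtration. By that lemma, the inclusion $\F_{k-1} \sqsubset \F_k$ is a proper one-edge extension obtained by adding to $K_{j_{k-1}}$ (which realizes $\F_{k-1}$) some number of $\E_f^*$ edges that do not alter the free factor system, followed by a unique edge in $\E_f \setminus \E_f^*$ that finally does. I would designate this last edge $D_k$. Using the bijection of Lemma~\ref{identifying Fix+} between $\E_f$ and $\cR(\phi)$ together with Lemma~\ref{lines generate}, one checks that this $D_k$ is precisely the edge corresponding to $r(k) = \partial_+ L_k$, which is the $D_k$ named in the statement.

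Next I would use the structure of the CT to understand the endpoints of the added edges. For any edge $E \in \E_f$, the splitting $f(E) = E \cdot u_E$ forces the initial vertex of $E$ to be fixed by $f$, while the terminal vertex of $E$ is the basepoint of $u_E$ and therefore lies in the $f$-invariant subgraph strictly below $E$ in the filtration. The key constraint is that $G_{i_k}$ be core: every vertex must have valence at least $2$. This means any $\E_f^*$ edge added between $G_{i_{k-1}}$ and $D_k$ --- necessarily creating a new valence-$1$ vertex at its initial end, since such an edge has initial vertex in a contractible component of $\Gamma(f)$ disjoint from $K_0$ --- must be resolved to valence $\ge 2$ before we are done; this resolution can only come from $D_k$ itself sharing that vertex. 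Consequently at most one such auxiliary $\E_f^*$ edge $C_k$ can occur, and $G_{i_k} \setminus G_{i_{k-1}}$ is a topological arc with both endpoints in $G_{i_{k-1}}$ as claimed.

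Finally I would perform the case analysis, distinguishing according to where the terminal vertex of $D_k$ lies. Case $[\sH]$ occurs when this terminal vertex already belongs to $G_{i_{k-1}}$ and no auxiliary $\E_f^*$ edge is needed, giving $A_k = D_k$. Case $[\sHH]$ occurs when the terminal vertex of $D_k$ forces the addition of a single $\E_f^*$ edge $C_k \in \E_f$ sharing its initial vertex with $D_k$ at a fixed vertex outside $G_{i_{k-1}}$; then $A_k = \bar C_k D_k$ is literally the concatenation of two new edges meeting at a valence-$2$ junction. Case $[\sLH]$ occurs when the terminal vertex of $D_k$ attaches at the initial vertex of a linear edge $C_k \in \lin(f)$, which is already in $K_0 \subset G_{i_{k-1}}$, so that $A_k = \bar C_k D_k$ describes the natural arc running from the terminal vertex of $C_k$ back through the common fixed vertex to the terminal vertex of $D_k$.

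The main obstacle is verifying that these three cases exhaust all configurations. This requires showing that a second auxiliary edge cannot appear in the same stage (ruled out by the core condition on $G_{i_k}$, as argued above) and that the extension is not further reducible in the sense of Lemma~\ref{reducible extensions}, which is guaranteed by the definition of $\vec\F(\phi,\dle)$ as the sequence of \emph{distinct} free factor systems arising from the $K_j$-filtration.
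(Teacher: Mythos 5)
Your argument conflates two different filtrations, and this causes it to mishandle the \sLH\ case outright. Lemma~\ref{lemma:one edge extension} concerns the subgraphs $K_j$ obtained by putting \emph{all} fixed and linear edges of the \ct\ into $K_0$ and then adding higher order edges one at a time; the present lemma concerns the core filtration elements $G_{i_k}$ of a \ct\ chosen to realize $\vec\F(\phi,<_T)$, and these are not the same subgraphs. In such a \ct\ a linear edge can, and in the \sLH\ case must, occur strictly above the bottom stratum: the content of the lemma is precisely that $A_k=G_{i_k}\setminus G_{i_{k-1}}$ may equal $\bar C_k D_k$ with $C_k\in\lin(f)$ a \emph{new} edge (in the running example, $G_3\setminus G_2=\{p,q\}$ with $p$ linear). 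Your mechanism --- ``some $\E^*_f$ edges followed by one edge of $\E_f\setminus\E^*_f$'' --- only allows higher order auxiliary edges, and your resolution of \sLH, asserting that $C_k$ is ``already in $K_0\subset G_{i_{k-1}}$,'' contradicts the statement being proved: if $C_k\subset G_{i_{k-1}}$ then $A_k=D_k$ and the type is \sH, not \sLH. Relatedly, $C_k$ and $D_k$ share their \emph{initial} vertex, since the terminal vertex of every non-fixed edge lies in a core subgraph below it by \cite[Lemma~4.21]{fh:recognition}; it is not the terminal vertex of $D_k$ that does the attaching.

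Beyond this, the valence-$\ge 2$ observation does not exhaust the possible configurations, so the exhaustiveness step you flag as the main obstacle is genuinely missing. Since the $G_{i_k}$ are filtration elements of an arbitrary \ct\ realizing the chain, $A_k$ could a priori contain fixed edges, or consist of a circuit disjoint from $G_{i_{k-1}}$, possibly together with a connecting edge; the core condition alone forbids none of these. The paper's proof first invokes \cite[Part II, Lemma~2.5]{handelMosher:subgroups} to reduce a one-edge extension realized by core subgraphs to exactly three configurations, eliminates the disjoint-circuit ones because such a circuit would be $f$-invariant and hence carried by $\F_0\sqsubset\F_{k-1}$, contradicting disjointness from $G_{i_{k-1}}$, and then uses the (Periodic Edge) property of a \ct\ together with \cite[Lemma~4.21]{fh:recognition} to show that $A_k$ has at most two edges, neither of them fixed. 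Finally, the fact that $A_k$ actually crosses $D_k$ is not automatic from your identification of $D_k$ in the $K_j$-filtration: the paper proves $D_k\not\subset G_{i_{k-1}}$ and $D_k\subset G_{i_k}$ by analyzing lines lifting to $\Gamma(f)$ and showing every $E\in\E_f$ crossed by $L_1,\dots,L_{k-1}$ satisfies $E<_T D_k$. Your proposal supplies substitutes for none of these steps, so as written it does not prove the lemma.
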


\proof   By Lemma~\ref{lemma:one edge extension}, each  $\F_{j-1} \sqsubset \F_j$  is a one-edge extension.  \cite[Part II, Lemma 2.5]{handelMosher:subgroups}  therefore implies that  $G_{i_k}$ is constructed from $G_{i_{k-1}}$ in one of three ways: add    a single topological edge with both endpoints in $G_{i_{k-1}}$; add a single topological edge that forms a circuit that is disjoint from $G_{i_{k-1}}$;   add an edge forming a disjoint circuit and then add an edge connecting that circuit to $G_{i_{k-1}}$.   In the second and third cases the circuit is $f$-invariant   in contradiction to the fact that  $K_0$ contains all $\phi$-invariant conjugacy classes.  Thus $G_{i_k} $ is obtained from $G_{i_{k-1}}$ by adding a single topological arc $A_k$ with both endpoints in $G_{i_{k-1}}$. 
 
 The arc $A_k$ consists of either one or two edges of $G$.  Indeed, a \lq middle\rq\ edge can not be fixed by the (Periodic Edge) property (\cite[Definition 4.7(5)] {fh:recognition}) of a \ct\  and cannot be non-fixed because in that case its  terminal end would be contained in a core subgraph of  $G_{i_{k-1}}$ by \cite[Lemma 4.21]{fh:recognition}. The (Periodic Edge) property also implies that if $A_k$ consists of two edges, then neither is fixed.    To complete the proof, it suffices to show that $A_k$ crosses $D_k $.  We will do so by showing that $D_k$ is not contained in $G_{i_{k-1}}$ and is contained in $G_{i_k}$.
 
 A line $L \subset G$ that lifts to $\stallings(f)$ but is not contained in $K_0$ either decomposes as the concatenation of a ray  in $K_0$ and an eigenray $R_{E'}$ or decomposes as the concatenation of a finite path in $K_0$ and a pair of eigenrays $R_{E'}$ and $R_{E''}$.  In the former case, each $E  \in \E_f$ crossed by $L$ satisfies $E \le  E'$ and in the latter case each $E\in \E_f$ satisfies $E \le  E'$ or  $E \le  E''$.    It follows that every edge $E \in \E_f$ crossed by $ \bigcup_{q=1}^{k-1} L_q$ satisfies $E <_T D_k$.     Since $K_0$ and $L_1,\ldots,L_{k-1}$ fill $\F_{k-1}$, we conclude that $D_k$ is not contained in  $G_{i_{k-1}}$.   Since $L_k$ lifts to $\stallings(f)$ and $\partial_+ L_k =  \partial R_{D_k}$, it follows that $R_{D_k}$ is   a terminal ray of $L_k$.  In particular,   $L_k$ crosses $D_k$.  Lemma~\ref{lines generate} implies that  $L_k  \subset G_{i_k}$  and we are done.
\endproof

\begin{lemma} \label{l:types}
Let $\fe=(\F^-\sqsubset\F^+)$ be special for $\phi$.
\begin{enumerate}
\item
The types \sHH, \sLH, or \sH\ of $\fe$ as in Lemma~\ref{lemma:three extension types} are mutually exclusive  and  independent of the special chain $\vec\F(\phi,<_T)$ containing $\fe$ and the choice of \ct\ $\fG$ realizing $\vec\F(\phi,<_T)$. 
\item
Suppose that $\fe$ appears as consecutive elements $\F_{k-1}\sqsubset\F_k$ in $\vec\F(\phi,<_T)$, which is realized by the \ct\ $\fG$. Using terminology as in Lemma~\ref{lemma:three extension types}, say that $\fe$ is respectively {\em \contractible, \cyclic, or \llarge} depending on whether the component of the eigengraph $\Gamma_{f|\F_k}$ containing the eigenray $R_{D_k}$ is contractible, has infinite cyclic fundamental group, or has fundamental group with rank at least two. The types \contractible, \cyclic, or \llarge\ of $\fe$  are mutually exclusive and independent of the choices of $\vec\F(\phi,<_T)$ and $f$.
\end{enumerate}
\end{lemma}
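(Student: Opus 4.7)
Mutual exclusivity in both parts is immediate from the definitions: in (1), the arc $A_k$ has either one edge (type $\sH$) or two edges (types $\sHH$, $\sLH$), and in the two-edge case the companion edge $C_k$ is either higher order or linear, with $\E_f \cap \lin(f) = \emptyset$; in (2) the three options correspond to the mutually exclusive conditions $b_1 = 0$, $b_1 = 1$, $b_1 \ge 2$ on a single graph.  My plan for independence is to attach to $\fe$ a canonical invariant that determines the type.  By Lemma~\ref{lines generate}, $\F^+$ is the smallest free factor system containing $\F^-$ and carrying $L(r)$ for a uniquely determined ray $r \in \cR(\phi) \setminus \cR^*(\phi)$; this $r$ depends only on $\fe$, not on the chain or the CT.  Applying Lemma~\ref{identifying Fix+} to $f|\F^+$ produces a unique isogredience class $[\Phi_r] \in \cP(\phi|\F^+)$ such that $r$ represents an element of $[\Fix_+(\Phi_r)]$, and by the discussion at the end of Section~\ref{sec:stallings} the component of $\Gamma(f|\F^+)$ containing $R_{D_k}$ is $\Gamma_{[\Phi_r]}(f|\F^+)$.

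For part (1), I would then characterize each type via $[\Phi_r]$.  Type $\sH$ holds iff the initial vertex of $D_k$ already lies in $G_{i_{k-1}}$, equivalently iff the Nielsen class in $\Fix(f|\F^+)$ corresponding to $[\Phi_r]$ (via Lemma~\ref{nielsen classes and principal lifts}) is already a Nielsen class of $\Fix(f|\F^-)$, in which case $[\Phi_r]$ extends an isogredience class of $\cP(\phi|\F^-)$.  Otherwise the middle vertex $v^*$ of $A_k$ is a new fixed vertex of $f$ and the Nielsen class associated to $[\Phi_r]$ is $\{v^*\}$, which is not a subset of $G_{i_{k-1}}$.  Types $\sHH$ and $\sLH$ are then distinguished by $\Fix(\Phi_r)$: in $\sHH$ no linear edge is attached at $v^*$, so $\Fix(\ti f_r)$ is the single vertex $\{\ti v^*\}$ and $\Fix(\Phi_r) = 1$, while in $\sLH$ the linear edge $C_k \in \lin_w(f)$ forces $\langle w \rangle \subset \Fix(\Phi_r)$ by Lemma~\ref{lem:ct principal lifts}.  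Each of these conditions depends only on $\phi$, $\fe$, and $[\Phi_r]$, so the type is an invariant of $\fe$.

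For part (2), the component $\Gamma_{[\Phi_r]}(f|\F^+)$ has first Betti number equal to $\rk(\Fix(\Phi_r))$ plus the number of elements of $\lin(f|\F^+)$ whose initial vertex lies in the Nielsen class of $[\Phi_r]$; by Lemma~\ref{lem:ct principal lifts} and Lemma~\ref{lem:strong axis} the latter count equals the number of non-base strong axes in $\sa(\phi|\F^+)$ lying over $[\Phi_r]$.  Both summands depend only on $\phi|\F^+$ and $[\Phi_r]$, hence on $\fe$, so the classification by $b_1 = 0, 1, \ge 2$ (contractible, infinite cyclic, large) is well-defined.  The main obstacle will be verifying that $[\Phi_r]$ is genuinely intrinsic to $(\phi, \fe)$ and that the combinatorial invariants of the relevant eigengraph component are captured by $\Fix(\Phi_r)$ and $\sa(\phi|\F^+)$; this requires careful functoriality of the constructions in Section~\ref{sec:stallings} under restriction to a free factor system, but no genuinely new input beyond Lemmas~\ref{identifying Fix+}, \ref{nielsen classes and principal lifts} and \ref{lem:ct principal lifts} applied to $f|\F^+$.
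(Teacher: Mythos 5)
Your strategy for part (2) is essentially the paper's (classify by the principal automorphism attached to a new ray), but your execution of part (1) has gaps at exactly the points where the work lies. First, the claim that $r$ is ``uniquely determined'' by $\fe$ is false in the $\sHH$ case: there are two new rays, and which of them lies outside $\cR^*(\phi)$ (equivalently, which plays the role of $\partial_+L_k$, i.e.\ of $D_k$) depends on the chosen total order $\dle$ and on the ambient \ct, not just on $\fe$. The invariant $[\Phi_r]$ you build on is still well defined, but only because the two new rays are the ends of a single $\phi|\F^+$-fixed line and hence determine the same principal automorphism --- a point the paper makes explicitly and you do not. Second, and more seriously, your separation of $\sH$ from $\sLH$ rests on the assertion that ``$[\Phi_r]$ extends an isogredience class of $\cP(\phi|\F^-)$'' is an invariant of $(\phi,\fe)$ that holds exactly in case $\sH$; you prove neither. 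On its natural reading ($\Phi_r$ preserves a conjugate of a component of $\F^-$ and restricts to a principal automorphism of $\phi|\F^-$), the $\sLH$ case is genuinely delicate: there $\Phi_r$ does preserve the conjugate of $F^-$ containing the twist path $w$, and its restriction is $i_a^{d}\Phi_{a,0}$ for the twist exponent $d$ of $C_k$; to see this is \emph{not} principal downstairs one must invoke, e.g., the fact that exponents of distinct edges in $\lin_w(f)$ are distinct, and in the $\sHH$ case one must also rule out that the new attracting point already lies in $\cR(\phi|\F^-)$. The paper sidesteps all of this by using manifestly invariant quantities: $|\cR(\phi|\F_k)|-|\cR(\phi|\F_{k-1})|$ is $2$ for $\sHH$ and $1$ otherwise, and $\sLH$ is distinguished from $\sH$ by an increase in the number of axes or in the multiplicity of an existing axis. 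As written, the key invariance step of your part (1) is asserted rather than proved.

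In part (2) your Betti number formula is wrong: the lollipop circles lie in the core of the component, and by Lemma~\ref{lem:lifting} the core realizes $[\Fix(\Phi_r)]$, so the first Betti number of the component containing $R_{D_k}$ equals $\rk\big(\Fix(\Phi_r)\big)$ outright; adding ``the number of linear edges attached at the Nielsen class'' double counts (an $\sLH$ component has $b_1=1=\rk\Fix(\Phi_r)$, not $2$). The trichotomy contractible/infinite cyclic/large is therefore governed by $\Fix(\Phi_{\ti r})$ being trivial, infinite cyclic, or large --- which is precisely the paper's invariant description --- so your conclusion survives, but the justification you give for it is incorrect and should be replaced by this identity.
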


\proof 
(1): Suppose $\fe=(\F_{k-1}\sqsubset \F_k)$ in $\vec\F(\phi,<_T)$. The difference between the cardinality of $\cR(\phi | \F_{k})$ and the cardinality of $\cR(\phi | \F_{k-1})$ is $2$ in the [\sHH] case and  $1$ in the [\sLH] and [\sH] cases.  In case [\sLH],   either the number of axes for $\phi | \F_k$ is strictly larger than the number of axes for $\phi | \F_{k-1}$ or there is a common axis of $\phi | \F_{k}$ and $\phi | \F_{k-1}$ whose multiplicity in the former is strictly larger than in the latter.  Neither of these happens in case [\sH].
 
(2): Here is an invariant description. Let $r\in\Delta:=\cR(\phi | \F_{k})\setminus\cR(\phi | \F_{k-1})$, for example we could take $r$ to be determined by $R_{D_k}$. Either $\Delta=\{r\}$ or $\Delta=\{r, s\}$ and there is a $\phi|\F_k$-fixed line $L$ whose ends represent $r$ and $s$. Let $\ti r$ be a lift of $r$ to $\partial\f$ and let $\ti L= [\ti r, \ti s]$ be a lift of $L$ if $\Delta=\{r,s\}$. By definition, $\fe$ is  \contractible, \cyclic, or \llarge\  iff $\Fix(\Phi_{\ti r})$ is trivial, infinite cyclic, or of rank at least two where $\Phi_{\ti r}$ is the unique representative of $\phi$ fixing $\ti r$. We are done by noting that $\Phi_{\ti r}=\Phi_{\ti s}$ if $\Delta=\{r,s\}$.
 \endproof
 
\begin{excont*}\label{ex.i}
If we extend the partial order $r_c< r_q$ on $\cR(\phi)$ to the total order $r_c  \dle r_d \dle r_e \dle r_q$ we get the special chain $\frak c$ represented by the sequence of graphs in Figure~\ref{f:vec F}. See the notation in the {examples on pages~\pageref{e:main example} and \pageref{ex.a}.}
\begin{figure}[h!]
\centering
\includegraphics[width=.5\textwidth]{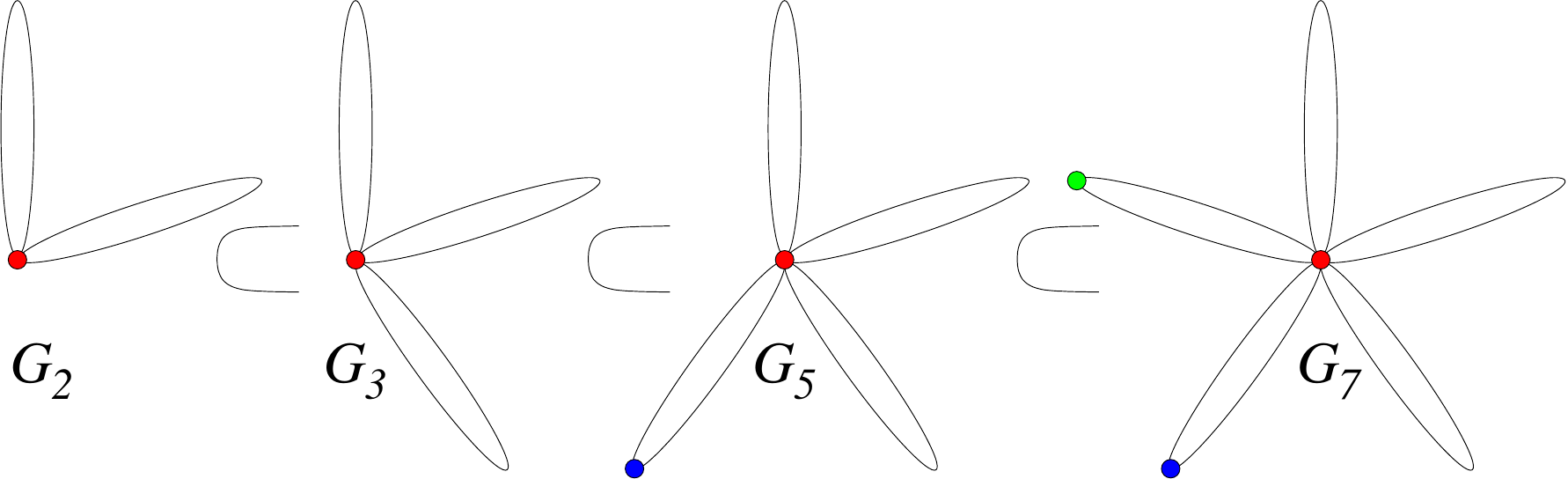}
\caption{$\frak c=\{[G_2]\}\sqsubset \{[G_3]\}\sqsubset \{[G_5]\}\sqsubset \{[G_7]\}$}
\label{f:vec F}
\end{figure}
\end{excont*}
 
 \begin{ex}\label{e:fixed edge}
 Consider the \ct\ $\fG$ given as follows: start with a rose with edges $a$ and $b$. Define $f(a)=a$ and $f(b)=ba$. Add a new vertex $v$ with adjacent edges $c, d$ and define $f(c)=cb$ and $f(d)=db^2$. Add another new vertex $v'$ with adjacent edges $c'$ and $d'$ with $f(c')=c'b^3$ and $f(d')=d'b^4$ finally add an $f$-fixed edge $e$ with endpoints $v$ and $v'$. The $\phi$-fixed free factor system $\F$ represented by the complement of $e$ in $G$ is not in $\lat0(\phi)$. Indeed, $\F\sqsubset\{\f\}$ is 1-edge, but not of type \sH, \sHH, or \sLH, contradicting Lemma~\ref{lemma:three extension types}.
\end{ex}

\begin{ex}
Suppose $\fG$ is a \ct\ containing a circle $C$ with only one vertex $x$ and such that $x$ is  the initial endpoint of an \sH-edge, the terminal  endpoint of a linear edge in an $\sLH$ extension  (so that $C$ is an axis), and there are no other edges containing $x$. Then $\Gamma(f)$ has no   components of rank at least two containing an axis corresponding to $C$.
\end{ex}

\begin{lemma}\label{l:special is natural}
Referring to Notation~\ref{notn:ffs}, suppose $\F, [F], \frak c$, and $\fe$ are special for $\phi$. If $\theta\in\Out(\f)$ then:
\begin{itemize}
\item
$\theta(\F), \theta([F]), \theta(\frak c)$, and $\theta(\fe)$ are special for $\phi^\theta$; and
\item
the types \sH, \sHH, or \sLH\ and the types \contractible, \cyclic, or \llarge\ of $\fe$ and $\theta(\fe)$ are the same.  
\end{itemize}
\end{lemma}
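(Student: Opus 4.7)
My plan is to reduce everything to the naturality of the chain construction $\vec\F(\phi, <_T)$, since the other three notions (special FFS, special free factor, special one-edge extension) are defined in terms of special chains, and then to invoke the invariant characterizations in Lemma~\ref{l:types} for the types.

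First, I would set up the transfer of total orders. By Lemma~\ref{first theta}, $\theta$ induces a bijection $\theta_* : \cR(\phi) \to \cR(\phi^\theta)$, and by Lemma~\ref{weaker po conjugacy} this bijection preserves the partial order $<$. Given any total order $<_T$ on $\cR(\phi)$ extending $<$, transport it via $\theta_*$ to a total order $<_T'$ on $\cR(\phi^\theta)$ extending its partial order. The key claim to establish is then
\[ \theta\bigl(\vec\F(\phi, <_T)\bigr) \;=\; \vec\F(\phi^\theta, <_T'). \]
For the bottom term $\F_0(\phi)$: since word-length growth of conjugacy classes is preserved under conjugation in $\Out(\f)$, $\theta$ restricts to a bijection between at-most-linearly-growing classes of $\phi$ and those of $\phi^\theta$, and minimality of $\F_0$ forces $\theta(\F_0(\phi)) = \F_0(\phi^\theta)$.

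For the inductive step, I would use the characterization in Lemma~\ref{lines generate}: $\F_k$ is the smallest free factor system containing $\F_0(\phi)$ and carrying a choice of lines $L(r_l)$ for all $l \le j$ with $r_l \notin \cR^*(\phi)$. Applying $\theta$, each $\theta(L(r_l))$ has terminal end $\theta_*(r_l)$ and initial end not $\theta_*(r_m)$ for $m \ge l$; since carrying is preserved under $\theta$, and eigengraph-liftability is characterized in terms of $\FixN$ by Lemma~\ref{lem:lifting} (hence natural via Lemma~\ref{first theta}), the $\theta(L(r_l))$ are a valid choice of lines for the construction of $\vec\F(\phi^\theta, <_T')$. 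The moreover-clause of Lemma~\ref{lines generate} then yields $\theta(\F_k) = \F_k'$. Consequently $\theta(\frak c)$ is special for $\phi^\theta$; $\theta(\fe)$ is special because consecutive elements in $\frak c$ map to consecutive elements of $\theta(\frak c)$; $\theta(\F)$ is special because it appears in $\theta(\frak c)$; and $\theta([F])$ is special since $\theta$ sends components of a free factor system to components.

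For the types, I would invoke the intrinsic descriptions in Lemma~\ref{l:types}. The distinction $[\sHH]/[\sLH]/[\sH]$ is read off from the differences $|\cR(\phi | \F_k)| - |\cR(\phi | \F_{k-1})|$ and axis counts as one passes from $\F_{k-1}$ to $\F_k$; applying Lemma~\ref{first theta} and Lemma~\ref{theta and axes} in their relative form (to the restrictions $\phi | \F_k$, which transports under $\theta$ to $\phi^\theta | \theta(\F_k)$) shows these cardinalities are preserved. The contractible/infinite cyclic/large trichotomy is governed by $\Fix(\Phi_{\ti r})$ for the principal lift fixing $\ti r$; by Lemma~\ref{first theta}(2) we have $\Fix(B_\cP(\Phi)) = \Theta(\Fix(\Phi))$, and since $\Theta$ is a group automorphism of $F_n$ the rank (trivial, cyclic, or of rank $\ge 2$) is preserved, so the trichotomy is preserved.

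The main obstacle I anticipate is bookkeeping for the relative invariants: verifying that the characterization of Lemma~\ref{l:types} genuinely applies after restricting to $\phi | \F_k$ and that $\theta$ intertwines these restrictions with those of $\phi^\theta | \theta(\F_k)$. Once that naturality is articulated carefully, each case reduces directly to the naturality statements already available in Sections~\ref{sec:FixN}--\ref{sec:axes}, and no new dynamical input is required.
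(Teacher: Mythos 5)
Your proposal is correct and follows essentially the same route as the paper, whose proof is a one-liner citing the naturality of the partial order on $\cR(\phi)$ under conjugation (Lemma~\ref{weaker po conjugacy}) together with the invariant descriptions of the types given in the proof of Lemma~\ref{l:types}; your argument simply fills in the transport of the chain construction (via $\F_0$-naturality and Lemma~\ref{lines generate}) that the paper treats as immediate.
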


\begin{proof}
This is an immediate consequence of the fact (Lemma~\ref{weaker po conjugacy}) that conjugation preserves partial orders and the invariant description of types given in the proof of Lemma~\ref{l:types}.
\end{proof}

\begin{definition}[added lines]\label{d:added lines}
Suppose that $\frak c$ is a special chain for $\phi$ that is realized by $\fG$ and that 
$\fe=(\F^-\sqsubset\F^+) \in \frak c$.  Then $\cR(\phi| \F^+) \setminus \cR(\phi | \F^-)$ contains two elements if $\fe$ has type $\sHH$ and one element otherwise.  These elements are said to be {\it new} with respect to $\fe$. Similarly 
$\Gamma({f|\F^+})$ carries more lines than $\Gamma({f|{\F^-}})$. The set of {\it added lines with respect to $\fe$}, denoted $\LW_{\fe}(\phi)$, is a $\phi$-invariant subset of these lines. In case $\fe$ is \contractible, $\LW_{\fe}(\phi)$ consists of all lines $L$ in $\Gamma({f|{\F^+}})$ with $\partial_+L$ new. If $\fe$ is not \contractible\ then we also require that $\partial_-L$ is not in $\cR(\phi)$. $\LW_{\fe}(\phi)$ has an equivalent invariant description as follows. Set $\Phi:=\Phi_{\ti r^+}|F^+$ where $r^+$ is new, $\ti r^+\in\partial F^+\subset\partial\f$, and $[F^+]$ is the component of $\F^+$ carrying $r^+$. $\LW_{\fe}(\phi)$:  $=[\partial\Fix(\Phi),\ti r^+]$ if $\Fix(\Phi)$ is non-trivial; else  $=[\FixN(\Phi)\setminus \{\ti r^+\}), \ti r^+]$ if there is only one new eigenray; else = the set consisting of the two lines with lifts with endpoints in $\FixN(\Phi)$. This invariant description shows that $\LW_\fe(\phi)$ is independent of the special chain $\fe\in\frak c$, which is why $\fc$ does not appear in the notation.
\end{definition}

\begin{excont*}\label{ex.j} 
Referring to Figures~{\ref{f:G},} \ref{f:eigengraph} and \ref{f:vec F},  if $\fe_1:=(\{[G_2]\}\sqsubset \{[G_3]\})$ then $\LW_{\fe_1}(\phi)$ consists of the infinitely many lines 
$L$ in the third listed component of $\Gamma(f)$ in Figure~\ref{f:eigengraph} that cross the oriented edge $c$ exactly one and $c^{-1}$ not at all. If $\frak e_2:=(\{[G_3]\}\sqsubset \{[G_5]\})$ then $\LW_{\fe_2}(\phi)$ consists of two lines; they are represented by $(R_d)^{-1}R_e$ and its inverse. If $\fe_3:=(\{[G_5]\}\sqsubset \{[G_7]\})$ then $\LW_{\fe_3}(\phi)$ consists of two lines; they are represented by $a^\infty p^{-1}R_q$ and $a^{-\infty} p^{-1}R_q$. 
\end{excont*}

\begin{lemma} \label{LW is natural} For $\theta \in \Out(F_n)$, $\theta\big(\LW_{\fe}(\phi)\big) = \LW_{ \theta(\fe)}(\phi^\theta)$.
\end{lemma}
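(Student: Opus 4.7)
The plan is to exploit the invariant description of $\LW_\fe(\phi)$ given at the end of Definition~\ref{d:added lines}, which depends only on a principal lift and the new eigenray lifts. Fix $\Theta \in \Aut(\f)$ representing $\theta$ and set $\psi := \phi^\theta$. By Lemma~\ref{l:special is natural}, $\theta(\fe)$ is special for $\psi$ with the same extension type and the same \contractible/\cyclic/\llarge\ type as $\fe$, so the invariant descriptions of $\LW_\fe(\phi)$ and $\LW_{\theta(\fe)}(\psi)$ fall into the same case (non-trivial $\Fix$; single new eigenray with trivial $\Fix$; two new eigenrays with trivial $\Fix$).

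Write $\fe = (\F^-\sqsubset\F^+)$, let $r^+$ be a new eigenray for $\fe$, let $\ti r^+\in\partial F^+$ be a lift (with $[F^+]$ the component of $\F^+$ carrying $r^+$), and set $\Phi = \Phi_{\ti r^+}|F^+$. Since $\theta$ takes $\cR(\phi|\F^\pm)$ bijectively to $\cR(\psi|\theta(\F^\pm))$, the point $\partial\Theta(\ti r^+)\in\partial(\Theta(F^+))$ represents a new eigenray of $\theta(\fe)$, and the next step is to identify the restriction $\Psi := (\Theta|_{F^+})\,\Phi\,(\Theta|_{F^+})^{-1}$ as the principal lift $\Phi_{\partial\Theta(\ti r^+)}|\Theta(F^+)$: it represents $\psi|\theta(\F^+)$ and its boundary action fixes $\partial\Theta(\ti r^+)$, so uniqueness of the principal lift associated to a point in $\FixN$ forces the identification. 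Lemma~\ref{first theta} applied to $\Theta|_{F^+}$ then yields
\[
\partial\Fix(\Psi) = \partial\Theta(\partial\Fix(\Phi)) \qquad \text{and} \qquad \FixN(\Psi) = \partial\Theta(\FixN(\Phi)),
\]
so in particular $\Fix(\Psi)$ is non-trivial exactly when $\Fix(\Phi)$ is, and $\FixN(\Psi)\setminus\{\partial\Theta(\ti r^+)\} = \partial\Theta(\FixN(\Phi)\setminus\{\ti r^+\})$.

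In each of the three cases, $\LW_\fe(\phi)$ is a set of conjugacy pairs $[X,Y]_\f$ in $\partial\f$ built from $\partial\Fix(\Phi)$, $\FixN(\Phi)\setminus\{\ti r^+\}$, and $\ti r^+$. Combined with the naturality $\theta[X,Y] = [\partial\Theta(X),\partial\Theta(Y)]$ of conjugacy pairs (Examples~\ref{e:conjugacy pairs}), the displayed identities deliver $\theta(\LW_\fe(\phi)) = \LW_{\theta(\fe)}(\psi)$. The only delicate point will be the identification of $\Psi$ as the principal lift attached to $\partial\Theta(\ti r^+)$; once that is in hand, the remainder is symbol-pushing through the three cases.
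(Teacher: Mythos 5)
Your proposal is correct and follows essentially the same route as the paper: it relies on Lemma~\ref{l:special is natural} to transport specialness and type, on the invariant description of $\LW_\fe(\phi)$ from Definition~\ref{d:added lines}, and on the naturality statements of Lemma~\ref{first theta}, which is exactly the paper's argument. Your version simply spells out the details the paper leaves implicit (the identification of the conjugated principal lift with $\Phi_{\partial\Theta(\ti r^+)}|\Theta(F^+)$, justified by uniqueness at the non-periodic fixed point, and the case-by-case transport of the conjugacy pairs), and that identification step is sound.
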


\begin{proof}
By Lemma~\ref{l:special is natural}, $\theta(\fe)\in\theta(\fc)$ are special for $\phi^\theta$. The set of new elements of $\cR(\phi)$ with respect to $\fe=(\F^+\sqsubset\F^-)$ has the invariant description $\cR(\phi|\F^+)\setminus\cR(\phi|\F^-)$. In particular, $\theta$ takes the new elements with respect to $\phi$ to those with respect to $\phi^\theta$; see Lemma~\ref{first theta}. The equation in the lemma then follows from the invariant definition of added lines in Definition~\ref{d:added lines} and the naturality results of Lemma~\ref{first theta}.
\end{proof}

In the next lemma, we record some consequences of Lemmas~\ref{weaker po conjugacy}, \ref{lemma:one edge extension}, and \ref{lines generate}.   
\begin{lemma}\label{l:F0 is natural}
\begin{enumerate}

\item

A conjugacy class grows at most linearly under iteration by $\phi$ if and only if it is carried by $\F_0(\phi)$.
\item
$\F_0(\phi)=\F(\Fix(\phi))$, i.e.\ $\F_0(\phi)$ is the smallest \ffs\ carrying $\Fix(\phi)$.
\end{enumerate}
\end{lemma}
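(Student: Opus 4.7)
The plan is to reduce everything to Lemma~\ref{lemma:one edge extension}(1), which identifies $\F_0(\phi)$ with $\F(K_0,G)$ for $K_0\subset G$ the subgraph of fixed and linear edges of any \ct\ $\fG$ representing $\phi$. Part (1) then falls out almost immediately: a circuit in $K_0$ is a concatenation of fixed and linear edges, each of which grows at most linearly under $f_\#$, so the circuit does as well, and the proof of Lemma~\ref{lemma:one edge extension}(1) already shows conversely that a circuit whose $f_\#^m$-image leaves $K_0$ (for $m$ large enough that it is completely split) must cross a higher-order edge in $\E_f$ and hence grow superlinearly.

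For the inclusion $\F(\Fix(\phi))\sqsubseteq\F_0(\phi)$ in Part~(2), any $a\in\Fix(\Phi)$ has $\phi([a])=[a]$, so $[a]$ has bounded growth and is carried by $\F_0(\phi)$ by Part~(1); since $\Fix(\Phi)$ is a single finitely generated subgroup whose generator classes are all carried by $\F_0(\phi)$, the conjugacy class $[\Fix(\Phi)]$ is carried by a single component of $\F_0(\phi)$, and the inclusion follows by minimality of $\F(\Fix(\phi))$.

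The reverse inclusion $\F_0(\phi)\sqsubseteq\F(\Fix(\phi))$ is the main content. First observe that $\F(\Fix(\phi))$ is $\phi$-invariant, because $\phi$ permutes isogredience classes in $\cP(\phi)$ (and hence preserves the multi-set $\Fix(\phi)$), so its minimal carrier is $\phi$-invariant as well. Using the CT existence theorem of \cite{fh:CTconjugacy}, choose a \ct\ $\fG$ for $\phi$ that simultaneously realizes $\F(\Fix(\phi))$ by an $f$-invariant core subgraph $K'\subseteq G$; it then suffices to show $K_0\subseteq K'$, for this gives $\F_0=\F(K_0,G)\sqsubseteq\F(K',G)=\F(\Fix(\phi))$. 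For each linear edge $E$ with twist path $w$ and initial vertex $v$, the \iNp\ $E w \bar E$ is a closed Nielsen path at $v$ lying in $\Fix(\Phi_v)$, so---provided $\Phi_v$ is principal---its conjugacy class lies in $\Fix(\phi)$ and hence in $K'$, which forces both $E$ and $w$ into $K'$. Closed fixed loops are treated analogously using the principal lift at their basepoint.

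The main obstacle is verifying that $\Phi_v$ really is principal in every case that arises, i.e.\ that $\FixN(\Phi_v)$ is never reduced to the excluded form $\{g^\pm\}$. The strategy is to combine the CT prohibition on valence-one vertices with the eigengraph description (Section~\ref{sec:stallings} and Lemma~\ref{lem:lifting}): any additional edge at $v$ must be another linear edge (which supplies an independent \iNp, forcing $\rk\Fix(\Phi_v)\geq 2$), a higher-order edge (which makes $\Fix_+(\Phi_v)\neq\emptyset$), or a fixed edge extending $v$'s Nielsen class to a vertex whose own valence forces one of these alternatives in turn. Connectedness of $G$ together with $n\geq 2$ guarantees that this inductive argument terminates with $\Phi_v\in\cP(\phi)$, completing $K_0\subseteq K'$ and therefore the lemma.
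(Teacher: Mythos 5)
Your part (1) and the inclusion $\F(\Fix(\phi))\sqsubset\F_0(\phi)$ run essentially as in the paper (with one small slip: knowing that the conjugacy classes of a \emph{generating set} of $\Fix(\Phi)$ are carried by $\F_0(\phi)$ does not imply the subgroup is carried; you need that \emph{every} element is carried, which does hold here since every element of $\Fix(\Phi)$ has $\phi$-fixed conjugacy class). The genuine problem is the reverse inclusion. The step ``its conjugacy class lies in $\Fix(\phi)$ and hence in $K'$, which forces both $E$ and $w$ into $K'$'' is not valid: being carried by $\F(K',G)$ is detected by the circuit realization (Section~\ref{sec:ffs}), and the circuit realizing the class of the based loop $Ew^{p}\bar E$ is the twist circuit $w^{p}$ --- the stem $E$ disappears under cyclic reduction --- so nothing forces $E$ into $K'$. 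Worse, the intermediate goal $K_0\subseteq K'$ is false in general, so the argument cannot be patched: in the running Example~\ref{e:main example}, the linear edge $p$ (with $f(p)=pa^2$) has initial vertex incident only to $p$ and the higher order edge $q$, so any core subgraph containing $p$ must contain $q$ and hence cannot realize $\F_0(\phi)=\{[\langle a,b\rangle]\}=\F(\Fix(\phi))$; the class of $pa\bar p$ is just $[a]$, realized by the circuit $a$, illustrating exactly why your forcing fails. This is the {\sf LH} phenomenon of Lemma~\ref{lemma:three extension types}. What is actually needed is the weaker statement that every \emph{circuit} in the core subgraph realizing $\F_0(\phi)$ is carried by $\F(\Fix(\phi))$, and your proof does not produce that.

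The paper supplies precisely this missing device. Assuming the inclusion $\F(\Fix(\phi))\sqsubset\F_0(\phi)$ is proper, it chooses (via \cite[Theorem~1.1]{fh:CTconjugacy}) a \ct\ with nested $f$-invariant core subgraphs $G_k\subsetneq G_l$ realizing the two free factor systems, with $f|G_l$ again a \ct. By Lemma~\ref{lemma:one edge extension} every edge of $G_l$ is fixed or linear, so the eigengraph of $f|G_l$ is a compact core graph in which every edge of $G_l$ occurs as a label and every circuit is Nielsen (Section~\ref{sec:stallings}); hence through any edge $E$ of $G_l\setminus G_k$ there is a Nielsen \emph{circuit} $\rho\subset G_l$. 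Its $\phi$-fixed conjugacy class must be carried by $\F(\Fix(\phi))$, yet its circuit realization crosses $E\not\subset G_k$, a contradiction. Producing a Nielsen circuit through the given edge --- rather than a Nielsen based loop whose cyclic reduction forgets the edge --- is the idea your proof lacks. Finally, the issue you single out as the ``main obstacle,'' principality of $\Phi_v$, is not where any difficulty lies: in the \upg\ setting a lift is principal as soon as it has a fixed point (Lemma~\ref{principal lifts have fixed points}), and the initial vertex of a linear edge is fixed, so that part of your sketch is an unnecessary detour.
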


\begin{proof}

(1): By definition $\F_0(\phi)$ carries all conjugacy classes that grow at most linearly. Conversely, by Lemma~\ref{lemma:one edge extension}, $\F_0(\phi)$ is represented a graph $K_0$ consisting of linear and fixed edges. Hence every conjugacy class carried by $\F_0(\phi)$ grows at most linearly.

(2):  By (1), $\F(\Fix(\phi))\sqsubset\F_0(\phi)$. Suppose $\sqsubset$ is proper. By \cite[Theorem~1.1]{fh:CTconjugacy}, there is a \ct\ $\fG$ realizing $\phi$ with $f$-invariant core subgraphs $G_k\subsetneq G_l$ representing these two \ffs s and such that $f|G_l$ is a \ct. By Lemma~\ref{lemma:one edge extension}, every edge of $G_l$ is fixed or linear.  Let $E$ be an edge of $G_l\setminus G_k$. There is a Nielsen circuit $\rho$ in $G_l$ containing $E$. (Indeed, by the construction of eigengraphs in Section~\ref{sec:stallings}:
\begin{itemize}
\item
every edge of a \ct\ is the label of some edge in its eigengraph;
\item
the eigengraph of a linear growth \ct\ is a compact core graph; and
\item
every circuit in an eigengraph is Nielsen.
\end{itemize}
The existence of $\rho$ now follows from the defining property of a core graph that there is a circuit through every edge.)
The fixed  conjugacy class represented by $\rho$ is not in $\F\big(\Fix(\phi)\big)$, contradiction.
\end{proof}

\subsection{The lattice of special free factor systems}

This section is not needed for the rest of the paper and so could be skipped by the reader. Recall (second paragraph of Notation~\ref{notn:ffs}) that $\fL(\phi)$ denotes the set of special free factor systems for $\phi$.   {The main results, Lemmas~\ref{l:random special} and \ref{l:natural lattice}, are that $(\lat0(\phi), \sqsubset)$ is a lattice that is natural with respect to $\Out(\f)$ in the sense that, for $\theta\in\Out(\f)$, $$\theta(\lat0(\phi),\sqsubset)=(\lat0(\phi^\theta),\sqsubset)$$
In this section, $\fG$ will always denote a \ct\ for $\phi$. We will conflate an element of $\cR(\phi)$ and its image in $\E_f$ under the bijection $\cR(\phi)\leftrightarrow \E_f$; see Lemma~\ref{identifying Fix+}. A subset $S$ of $\cR(\phi)$ is {\it admissible} if it satisfies $$(q\in S)\wedge (r<q)\implies r\in S$$
If $S\subset\cR(\phi)$ then mimicking Notation~\ref{lemma:one edge extension} we let $K(S)$ denote the union of $K_0$ and the edges in $S$. Recall $K_0$ is the union of the fixed and linear edges of $G$.

\begin{lemma}\label{l:admissible}
The following are equivalent.
\begin{enumerate}
\item
$\F$ is special for $\phi$.
\item
$\F=\F(K(S),G)$ for some admissible $S\subset\cR(\phi)$.
\item
$\F=\F(H,G)$ for some $f$-invariant $H\subset G$ containing $K_0$.
\end{enumerate}
\end{lemma}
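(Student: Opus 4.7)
The plan is to prove the cycle by establishing (1)$\Leftrightarrow$(2) and (2)$\Leftrightarrow$(3). Throughout, identify $\cR(\phi)$ with $\E_f$ via the bijection of Lemma~\ref{identifying Fix+}; by Lemma~\ref{preserves partial order} this identification is order preserving, so admissibility of $S\subset\cR(\phi)$ is the same as the condition that for all $E_j\in S$ and all $E_i\in\E_f$ with $E_i<E_j$, we have $E_i\in S$.

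\textbf{(1)$\Rightarrow$(2).} If $\F$ is special, then $\F\in\vec\F(\phi,<_T)$ for some total order $<_T$ extending $<$. By Lemma~\ref{lemma:one edge extension}, the distinct elements of $\vec\F(\phi,<_T)$ are realized by the filtration $K_0\subset K_1\subset\ldots\subset K_s=G$ where $K_j=K_{j-1}\cup E_j$ and $E_1,\ldots,E_s$ list $\E_f$ in $<_T$-order. So $\F=\F(K_j,G)=\F(K(S),G)$ where $S=\{r_1,\ldots,r_j\}\subset\cR(\phi)$. If $q\in S$ and $r<q$ then $r<_T q$ since $<_T$ extends $<$; because $q$ lies among the first $j$ elements of the $<_T$-list, so does $r$, and thus $r\in S$. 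Hence $S$ is admissible.

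\textbf{(2)$\Rightarrow$(1).} Given admissible $S\subset\cR(\phi)$, extend $<|_S$ to an arbitrary total order $<_S$ on $S$ and $<|_{\cR(\phi)\setminus S}$ to a total order on $\cR(\phi)\setminus S$. Define $<_T$ by concatenating these, declaring every element of $S$ to precede every element of $\cR(\phi)\setminus S$. This is total; it extends $<$ because if $p<q$ and $q\in S$, then by admissibility $p\in S$, so the only case that could violate the extension (namely $p\notin S$, $q\in S$) does not arise. The chain $\vec\F(\phi,<_T)$ consists of the distinct elements of the sequence $\F(K_0,G),\F(K_1,G),\ldots$, and $\F(K(S),G)=\F(K_{|S|},G)$ is one of these elements, hence lies in the chain.

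\textbf{(2)$\Rightarrow$(3).} Set $H=K(S)$. Clearly $K_0\subset H$. For $f$-invariance: $K_0$ is $f$-invariant by Lemma~\ref{lemma:one edge extension}(2) (case $j=0$), and for each $E_j$ with $r_j\in S$ we have $f(E_j)=E_j\cdot u_j$ where the terms of the complete splitting of $u_j$ are (i) Nielsen paths or exceptional paths, which lie in $K_0$ by Lemma~\ref{not crossed}, or (ii) single edges $E_i\in\E_f\cup\E_f^{-1}$ with $E_i<E_j$. By admissibility, any such $E_i$ lies in $S$, so $f(E_j)\subset H$. Therefore $f(H)\subset H$.

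\textbf{(3)$\Rightarrow$(2).} Write $H=K_0\cup\{E_i : E_i\subset H,\ E_i\in\E_f\}$ and let $S=\{r_i\in\cR(\phi): E_i\subset H\}$, so $H=K(S)$. For admissibility, suppose $q=r_j\in S$ and $p=r_i<q$. By definition of the partial order on $\E_f$ (Notation~\ref{weaker po}), $E_i$ or $\bar E_i$ appears as a term in the complete splitting of $f^k_\#(E_j)$ for some $k\ge 1$. Since $H$ is $f$-invariant and $E_j\subset H$, the image $f^k(E_j)$ is a path in $H$; tightening only removes backtracking, so $f^k_\#(E_j)$ is also a path in $H$, and thus $E_i\subset H$, giving $p\in S$.

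The main technical point is ensuring the constructed total order in (2)$\Rightarrow$(1) genuinely extends $<$, which is exactly the role of admissibility; everything else is an essentially formal unpacking of Lemma~\ref{lemma:one edge extension} and Lemma~\ref{preserves partial order}.
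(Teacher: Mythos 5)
Your proof is correct and follows essentially the same route as the paper: (1)$\Leftrightarrow$(2) via initial segments of total orders extending $<$ (the paper builds the extension by iteratively swapping adjacent elements to push $S$ to the front, whereas you concatenate linear extensions of $<$ on $S$ and on its complement — an equivalent construction), and (2)$\Leftrightarrow$(3) via $H=K(S)$ and $f$-invariance. The extra details you supply (the $f$-invariance of $K(S)$ and the tightening argument in (3)$\Rightarrow$(2)) are exactly what the paper leaves implicit or states briefly.
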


\begin{proof}
(1)$\implies$(2): By definition, a free factor system $\F$ is special iff there is a total order $<_T$  extending $ <$ and an initial interval $[r_1,\dots, r_k]$ of $(\cR,<_T)$ such that $\F=\F\big(K(\{r_1,\dots,r_k\}),G\big)$. Since an initial interval is admissible, we may take $S=\{r_1,\dots,r_k\}$.

(2)$\implies$(3): We may take $H=K(S)$.

(3)$\implies$(2): Let $S$ be the set of edges in $H$ that are not in $K_0$. It is enough to show that $S$ is admissible. Let $q\in S$ and let $r\in\cR(\phi)$ satisfy $r<q$. By definition of $<$, there is $k>1$ so that the edge $r$ or its inverse is a term in the complete splitting of $f^k_\#(q)$. Since the edge $q$ is in $H$ and $H$ is $f$-invariant, the edge $r$ is also in $H$.

(2)$\implies$ (1): We claim that if $S$ is admissible then there is an extension $<_T$ of $<$ so that $S$ is an initial segment of $(\cR(\phi),<_T)$. Indeed, start with any total order $<_T$ extending $<$ and iteratively interchange $r$ and $s$ if $r<_T s$ are consecutive, $s\in S$, and $r\notin S$. For such a $<_T$, $S$ represents an element of $\vec \F(\phi,<_T)$.

\end{proof}

\begin{lemma}\label{l:random special}
Let $\F$ be a special \ffs\ for $\phi$.
\begin{enumerate}
\item
The set of admissible subsets of $\cR(\phi)$ is a sublattice of $2^{\cR(\phi)}$.
\item
There is a minimal admissible $S\subset \cR(\phi)$ such that $\F=\F\big(K(S),G\big)$. We say that such an admissible $S$ is {\em efficient for $\F$}. In fact, if $S'$ is admissible then the set of edges of $\core\big(K(S')\big)$ not in $K_0$ is efficient for $\F\big(K(S'),G\big)$.
\item
$S=\bigcap\{S'\mid \F=\F(K(S'),G)\}$ is efficient for $\F$.
\item
$S$ is efficient for $\F$ iff $\F= \F\big(K(S),G\big)$ and through every edge representing an element of $S$ there is a circuit in $K(S)$.
\item
If $S_1$ and $S_2$ are efficient admissible and $\F(K(S_1),G)\sqsubset\F(K(S_2), G)$ then $S_1\subset S_2$.
\item
$(\lat0(\phi),\sqsubset)$ is a lattice.
\item
Every maximal chain in $\lat0(\phi)$ is special.
\item
Every minimal pair $\fe=(\F^-\sqsubset\F^+)$ in $\lat0(\phi)$ (i.e.\ if $\F$ special and $\F^-\sqsubset\F\sqsubset\F^+$ then $\F^-=\F$ or $\F=\F^+$) is special.
\end{enumerate}
\end{lemma}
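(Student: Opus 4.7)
The overall strategy is to reduce $\lat0(\phi)$ to a poset of distinguished (``efficient'') subsets of $\cR(\phi)$, and then deduce the lattice structure and characterization of maximal chains from this reduction. Part (1) is immediate: the defining condition of admissibility (closure under $<$-predecessors) is preserved by arbitrary intersections and unions.

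For parts (2)--(5), the key technical step will be closure of admissible representatives under intersection: if $S_1, S_2$ are admissible with $\F(K(S_1), G) = \F(K(S_2), G) = \F$, then since $K_0 \cap \E_f = \emptyset$ gives $K(S_1) \cap K(S_2) = K(S_1 \cap S_2)$, any circuit representing a class carried by $\F$ lies in both $K(S_i)$ and hence in $K(S_1 \cap S_2)$; thus $\F(K(S_1 \cap S_2), G) = \F$. Iterating yields the existence of a minimum admissible representative as $\bigcap \{S' : \F(K(S'),G)=\F\}$, giving (2) and (3). For the ``in fact'' statement of (2), I would set $S'' := (\text{edges of } \core(K(S'))) \setminus K_0$ and verify admissibility via $f$-invariance: if $r \in S''$ lies on an immersed circuit $C \subset K(S')$ and $s < r$, then $s$ (or $\bar s$) appears in $f^k_\#(r)$ for some $k$, hence on the immersed circuit $f^k_\#(C) \subset K(S')$, so $s \in \core(K(S'))$ and thus $s \in S''$. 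The equality $\F(K(S''),G) = \F(K(S'),G)$ holds because the removed edges are precisely bridges of $K(S')$ with a contractible (tree) side, and detaching such tree pieces leaves the fundamental groups of the non-contractible components unchanged.

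Part (4) combines the above: for $(\Rightarrow)$, apply the $S''$ construction to $S$ itself, so minimality forces $S = S''$, meaning every edge of $S$ lies on a circuit; for $(\Leftarrow)$, any proper admissible $S_0 \subsetneq S$ with $\F(K(S_0),G) = \F$ and $r \in S \setminus S_0$ give a circuit through $r$ in $K(S)$ whose class is carried by $\F = \F(K(S_0), G)$, so its unique circuit representative lies in $K(S_0)$, forcing $r \in S_0$ --- contradiction. Part (5) is the same circuit argument with $\F(K(S_1), G) \sqsubset \F(K(S_2), G)$. Part (6) then follows: $S_1 \cup S_2$ of efficient sets is admissible by (1) and efficient (since circuits in each $K(S_i)$ persist in the larger $K(S_1 \cup S_2)$), giving the join, while the meet is $\F(K(S_1 \cap S_2), G)$ with universal property from (5).

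For (7), given a maximal chain $\F^{(0)} \sqsubset \ldots \sqsubset \F^{(m)}$ with efficient representatives $\emptyset = S^{(0)} \subsetneq \ldots \subsetneq S^{(m)} = \cR(\phi)$, admissibility of each $S^{(k)}$ ensures that no element of a later block $A_j := S^{(j)} \setminus S^{(j-1)}$ is a $<$-predecessor of any element of an earlier block. So any total order $<_T$ on $\cR(\phi)$ that totally orders each block (extending $<|_{A_k}$) and concatenates by block index extends $<$. Each $\F^{(k)}$ appears as a cumulative $\F(K_j, G)$ for $j = |S^{(k)}|$ in the construction of $\vec\F(\phi, <_T)$, and every intermediate cumulative \ffs\ is special (since the intermediate initial segments of $<_T$ are admissible) and lies between $\F^{(k-1)}$ and $\F^{(k)}$; by maximality of the given chain, each such intermediate equals $\F^{(k-1)}$ or $\F^{(k)}$, so $\vec\F(\phi, <_T)$ is exactly the given chain. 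Part (8) is immediate: extend the minimal pair to a maximal chain and invoke (7). The main obstacle is the admissibility verification for $S''$ in (2), which bridges the combinatorial definition of $<$ on $\cR(\phi)$ (via complete splittings of $f$-iterates) with the topological condition of lying on an immersed circuit in an $f$-invariant subgraph --- the connection being that $f_\#$ sends immersed circuits to immersed circuits.
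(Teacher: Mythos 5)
Your outline coincides with the paper's proof for most items: (1) is the same observation, (4)--(8) are essentially the arguments the paper gives (in particular your (7) -- nested efficient sets, blocks, and a total order $<_T$ extending $<$ in which each $S^{(k)}$ is an initial interval, with maximality used to rule out skipped intermediate cumulative \ffs s -- is exactly the paper's route via its Lemma~\ref{l:admissible}-style argument, and (8) is handled identically). The genuine divergence is in (2)--(3). The paper notes that any two admissible representatives of $\F$ have the \emph{same} core $C=\core\big(K(S_1)\big)=\core\big(K(S_2)\big)$ (uniqueness of the core realization of a free factor system), that $C$ is $f$-invariant because $G$ is a \ct\ (\cite[Lemma~4.21]{fh:recognition}: deleting a valence-one edge preserves $f$-invariance), and then gets admissibility of the higher-order edge set of $K_0\cup C$ immediately from Lemma~\ref{l:admissible}\,((3)$\Rightarrow$(2)). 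You instead prove closure under intersection and verify admissibility of $S''$ by pushing an immersed circuit forward with $f_\#$.

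Your route can be made to work, but as written it has one unjustified step: from ``$s$ or $\bar s$ appears in $f^k_\#(r)$'' you conclude that $s$ is crossed by the immersed circuit $f^k_\#(C)$. This presupposes that the occurrence of $r$ in $C$ survives the tightening of $f^k(C)$, which is not automatic and needs an argument. One fix within your framework: occurrences of a higher-order edge $E\in\E_f$ in a tight circuit cannot cancel under $f_\#$ (a cancelling pair $E,\bar E$ would force the intervening subpath $\tau$ of the circuit to satisfy $f_\#(\tau)$ trivial, impossible for a homotopy equivalence), so $f^k_\#(C)$ crosses $r$ for all $k$; for large $k$ the circuit $f^k_\#(C)$ is completely split (\cite[Lemma~4.25]{fh:recognition}), each crossing of $r$ is then a term of the complete splitting by Lemma~\ref{not crossed}, hence $f^{k+m}_\#(C)$ contains $f^m_\#(r)$ as a subpath and so crosses $s$. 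Alternatively, simply use the paper's observation that $\core\big(K(S')\big)$ is itself $f$-invariant, which makes the circuit-pushing unnecessary. A smaller point: your intersection step quietly uses that a free factor system carrying every conjugacy class of elements of a free factor $F$ carries $[F]$ (to pass from ``all circuits carried by $\F$ lie in $K(S_1\cap S_2)$'' to $\F\sqsubset\F\big(K(S_1\cap S_2),G\big)$); this is standard but unstated, and the paper sidesteps it by working with the common core directly.
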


\begin{proof}
(1) follows directly from the definition of admissible.

(2): Suppose $S_1, S_2$ are admissible and $K(S_1)$ and $K(S_2)$ each represent $\F$. Hence $C:=\core\big(K(S_1)\big)=\core\big(K(S_2)\big)$ and, since $G$ is a \ct, $C$ is $f$-invariant. (Indeed, by \cite[Lemma~4.21]{fh:recognition}, the removal of an edge with a valence one vertex from an $f$-invariant subgraph results in an $f$-invariant subgraph.) It follows that $K_0\cup C$ is the minimal $f$-invariant subgraph of $G$ representing $\F$ and containing $K_0$; see Lemma~\ref{l:admissible}. Hence $S$ is the set of edges of $K_0\cup C$ not in $K_0$.

(3), (4), and (5) follow easily from (2).

(6): Suppose $S_1$ and $S_2$ are efficient. Then using (4), $S_1\cup S_2$ is efficient. It follows that $\F\big(K(S_1\cup S_2), G\big)$ is the smallest (with respect to $\sqsubset$) special free factor system for $\phi$ containing $\F_1:=\F\big(K(S_1),G\big)$ and $\F_2:=\F\big(K(S_2),G\big)$. Suppose $S$ is efficient and $\F\big(K(S),G\big)\sqsubset F_i$, $i=1,2$. By (5), $S\subset S_1\cap S_2$. Since $K(S_1\cap S_2) = K(S_1)\cap K(S_2)$, the largest special \ffs\ $\F$ for $\phi$ in each of $\F_1$ and $\F_2$ is represented by $K(S_1\cap S_2)$, i.e.\ by $K(S)$ where $S$ is efficient for $\F\big(K(S_1\cap S_2), G\big)$.

(7): Let $\frak c$ be represented by $K(S_1)\subset \dots \subset K(S_N)$ with each $S_i$ efficient. By (5), $S_i\subset S_{i+1}$. An argument similar to that in the proof of Lemma~\ref{l:admissible}\big((2)$\Rightarrow$(1)\big) shows that there is $<_T$ extending $<$ so that each $S_i$ is an initial interval in $\big(\cR(\phi), <_T\big)$. Hence $\frak c$ is special.

(8) follows from (7) by enlarging $\fe$ to a maximal chain.
\end{proof}

\begin{remark}
$\lat0(\f)$ is not a sublattice of the lattice of all $\phi$-invariant free factor systems. For example, reconsider Example~\ref{e:fixed edge}. $S=\{c,d\}$ and $S'=\{c',d'\}$ are efficient. If $\F=\F\big(K(S),G\big)$ and  $\F'=\F\big(K(S'),G\big)$ then the smallest $\phi$-invariant free factor system containing $\F$ and $\F'$ is represented by the complement of the fixed edge $e$ whereas the smallest element of $\lat0(\phi)$ containing $\F$ and $\F'$ is $\f$.

In the proof of Lemma~\ref{l:random special} we noted that the union of efficient sets is efficient. The intersection need not be efficient. For example, suppose highest order edges $a$, $b$, and $c$ share an initial vertex of valence three. Consider the complement $S$ of $a$ and the complement $S'$ of $b$. The edge $c$ is in $S\cap S'$ and has initial vertex of valence one in $K(S\cap S')$.
\end{remark}

\begin{lemma}\label{l:natural lattice}
\begin{itemize}
\item
If $\fe=(\F^-\sqsubset\F^+)$ is minimal in $\lat0(\phi)$ then $\fe$ has a well-defined type $\sH$, $\sHH$, or $\sLH$ and a well-defined type \contractible, \cyclic, or \llarge. 
\item
For  $\theta\in\Out(\f)$, the map $\F\mapsto \theta(\F)$ induces a lattice isomorphism $$(\fL(\phi), \sqsubset)\to(\fL(\phi^\theta),\sqsubset)$$ that preserves the above types.
\end{itemize}
\end{lemma}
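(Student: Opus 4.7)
The plan is to reduce both bullets to results already in hand. For the first bullet, I would invoke Lemma~\ref{l:random special}(8): every minimal pair $\fe=(\F^-\sqsubset\F^+)$ in $\lat0(\phi)$ is special, i.e.\ appears as consecutive elements of some special chain $\vec\F(\phi,<_T)$. By Lemma~\ref{lemma:three extension types} and Lemma~\ref{l:types}(1),(2), the types $[\sH]$/$[\sHH]$/$[\sLH]$ and \contractible/\cyclic/\llarge\ are defined for any such pair and shown to be independent of the choice of the ambient special chain and of the \ct\ realizing that chain. Hence these types are well-defined invariants of $\fe$ itself.

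For the second bullet I would verify the three required properties of the map $\F\mapsto\theta(\F)$ in sequence. First, it sends $\lat0(\phi)$ into $\lat0(\phi^\theta)$: this is immediate from the first bullet of Lemma~\ref{l:special is natural}. Applying the same statement to $\theta^{-1}$ acting on $\phi^\theta$ produces an inverse map, so the induced map is a bijection $\lat0(\phi)\to\lat0(\phi^\theta)$.

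Second, it is a lattice isomorphism. The order $\sqsubset$ is defined in terms of carrying of conjugacy classes of free factors, which is natural with respect to the $\Out(\f)$-action; hence $\F_1\sqsubset\F_2\iff\theta(\F_1)\sqsubset\theta(\F_2)$. As our map is an order-preserving bijection of lattices, it automatically preserves meets and joins.

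Third, it preserves both type assignments on minimal pairs. By the preceding step, $\theta$ carries minimal pairs of $\lat0(\phi)$ to minimal pairs of $\lat0(\phi^\theta)$, so by the first bullet of the current lemma both $\fe$ and $\theta(\fe)$ have well-defined types. Equality of the types $[\sH]$/$[\sHH]$/$[\sLH]$ and \contractible/\cyclic/\llarge\ for $\fe$ and $\theta(\fe)$ is exactly the second bullet of Lemma~\ref{l:special is natural}. This completes the proof; no step should present a real obstacle, as the argument is essentially bookkeeping on top of the naturality lemmas already established.
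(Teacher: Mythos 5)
Your proposal is correct and follows essentially the same route as the paper, whose proof is simply a citation of Lemma~\ref{l:random special} (minimal pairs are special, plus the lattice structure) and Lemma~\ref{l:special is natural} (naturality of specialness and of the types), with Lemma~\ref{l:types} supplying the chain-independence of the type assignments. Your write-up just fills in the bookkeeping that the paper leaves implicit.
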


\begin{proof}
This follows Lemmas~\ref{l:random special} and \ref{l:special is natural}.
\end{proof}

\section{More on conjugacy pairs}  \label{s:conjugacy pairs}
Recall that conjugacy pairs were introduced in Definition~\ref{d:conjugacy pairs}. In this section we define some conjugacy pairs that will be used to define invariants of elements of $\upgn$ and describe their properties.
\subsection{$[\partial H,\partial K]$}

We will want to compare conjugacy pairs of subgroups $[H,K]$ with the set of lines $[\partial H,\partial K]$; see Example~\ref{e:conjugacy pairs}. For this we will use the next lemma which is a corollary of \cite[Lemma~3.9]{KS:Greenberg}.

\begin{lemma}\label{l:stab of boundary}
Suppose that $H<\f$ is finitely generated. Then the stabilizer $G$ in $\f$ of $\partial H\subset \partial \f$ is the  maximal $K<\f$ in which $H$ has finite index.
\end{lemma}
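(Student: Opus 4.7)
The containment $H \subset G$ is immediate: $H$ acts on $\partial H$ via its own boundary action, and this action coincides with the restriction of the $F_n$--action on $\partial F_n$ under the natural inclusion $\partial H \hookrightarrow \partial F_n$ coming from the quasi-isometric embedding $H \hookrightarrow F_n$. So the problem is to show (i) every finite index supergroup of $H$ lies inside $G$, and (ii) $G$ itself has finite index over $H$. Part (i) is easy: if $H < K < F_n$ with $[K:H] < \infty$, then $\Lambda(H) = \Lambda(K)$ (commensurable subgroups of a hyperbolic group have the same limit set), and under the QI-identifications this says $\partial H = \partial K$; since $K$ stabilizes $\partial K$, we get $K \subset G$.

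For part (ii) my plan is to use the action on the Cayley tree $T$ of $F_n$. Let $T_H \subset T$ be the convex hull of $\Lambda(H) = \partial H \subset \partial T$. Then $T_H$ is a subtree, is $H$-invariant, and because $H$ is finitely generated, the quotient $H \backslash T_H$ is a finite graph (this is the core of the cover of the rose corresponding to $H$, or equivalently the Stallings graph). Any $g \in G$ preserves $\partial H$ and therefore preserves its convex hull $T_H$, so $G$ acts on $T_H$. Since $F_n$ acts freely on $T$, the action of $G$ on $T_H$ is free; and since $H \subset G$ already acts cocompactly on $T_H$, so does $G$. Hence $G \backslash T_H$ is a finite graph, and the covering map $H \backslash T_H \to G \backslash T_H$ is finite-sheeted of degree $[G:H]$. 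In particular $[G:H] < \infty$, which completes the proof.

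The main obstacle I anticipate is the bookkeeping around the identification $\Lambda(H) = \partial H$ and the claim that the convex hull of the limit set admits a cocompact $H$-action with finite quotient -- but for finitely generated subgroups of free groups this is standard (it is what makes the Stallings foldings algorithm produce a finite graph), so once set up correctly the argument is essentially a one-line consequence of the cited Kapovich--Short lemma.
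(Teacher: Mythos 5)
Your proof is correct, but it is not the route the paper takes: the paper gives no argument at all, stating the lemma as an immediate corollary of \cite[Lemma~3.9]{KS:Greenberg}, the general fact that for a quasiconvex subgroup $H$ of a word-hyperbolic group the setwise stabilizer of the limit set $\Lambda(H)$ contains $H$ with finite index (every finitely generated subgroup of $F_n$ is quasiconvex, so the hypothesis is automatic here). What you wrote is essentially the free-group specialization of that circle of ideas, made self-contained. Your part (i) — commensurable subgroups have the same limit set, hence every finite-index supergroup of $H$ lands in the stabilizer $G$ — is the half one must add to the citation anyway to justify the word ``largest''. Your part (ii) replaces the coarse quasiconvexity bookkeeping of the hyperbolic-group setting by the exact statement available in a tree: the convex hull $T_H$ of $\partial H$ in the Cayley tree carries a free, cocompact $H$-action whose quotient is the finite Stallings core, $G$ preserves $T_H$ and acts freely (being a subgroup of $F_n$, which acts freely and without inversions), so $H\backslash T_H \to G\backslash T_H$ is a finite-sheeted covering of degree $[G:H]$, bounded by the size of the core. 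The tree argument buys an elementary and algorithmically transparent proof (the quotient graphs are exactly the Stallings graphs used throughout Section 6), while the paper's citation buys brevity and a statement valid for all hyperbolic groups. One caveat shared by your proof and the statement as printed: it implicitly assumes $H$ nontrivial, since for $H=\{1\}$ one has $\partial H=\emptyset$, the stabilizer is all of $F_n$, and your convex hull is empty; for cyclic $H$ the argument degenerates correctly to the axis of a generator.
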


\begin{cor} \label{c:rootfree}
Suppose that finitely generated $H<\f$ is root-closed, i.e.\ $a^k\in H$, $k\not=0$ implies $a\in H$. Then $H$ is the stabilizer in $\f$ of $\partial H$.
\end{cor}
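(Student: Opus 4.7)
The plan is to apply Lemma~\ref{l:stab of boundary} directly. Let $G$ denote the stabilizer in $F_n$ of $\partial H$. The lemma tells us that $G$ is the largest finite index supergroup of $H$; in particular $H \le G$ and $[G:H] < \infty$.

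The remaining step is to use root-closedness to upgrade this containment to equality. Given any $g \in G$, since the coset space $G/H$ is finite, two of the cosets $H, gH, g^2 H, \ldots$ must coincide, yielding integers $i < j$ with $g^{j-i} \in H$ and $j - i \neq 0$. The root-closed hypothesis on $H$ then forces $g \in H$. Hence $G \subseteq H$, and combined with $H \subseteq G$ from the lemma we conclude $G = H$.

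There is no real obstacle here: the proof is a one-line consequence of Lemma~\ref{l:stab of boundary} together with the standard fact that every element of a finite index overgroup has a nonzero power lying in the subgroup. The only thing to be slightly careful about is that \emph{root-closed} as defined in the statement includes negative exponents, which is automatic from the positive case by inverting, so the argument above applies without modification.
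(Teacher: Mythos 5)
Your proof is correct and is essentially the paper's argument: the paper also invokes Lemma~\ref{l:stab of boundary} and then notes (in contrapositive form) that a proper finite-index containment $H<G$ is incompatible with $H$ being root-closed, which is exactly your coset-repetition observation.
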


\begin{proof}
If $H<G$ has finite index and $H\not= G$, then $H$ is not root-closed.
\end{proof}

\begin{cor}\label{c:ff and fix good} 
\begin{enumerate}
\item
If $H<\f$ is a free factor, then $H$ is the stabilizer of $\partial H$.
\item
If $H=\Fix(\Phi)$ for $\Phi\in\Aut(\f)$  then $H$ is the stabilizer of $\partial H$.
\item
If $a\in\f$ is root-free then $A=\langle a\rangle$ is the stabilizer of the two-point set $\partial A$.
\end{enumerate}
\end{cor}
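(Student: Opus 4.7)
The plan is to verify in each case that the subgroup $H$ in question is finitely generated and root-closed, and then invoke Corollary~\ref{c:rootfree}.

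For (1), a free factor $H$ of $\f$ is finitely generated. To see that $H$ is root-closed, recall that in a free group, roots are unique: if $x^k = y^k$ with $k\neq 0$ then $x=y$. Now if $a\in\f$ and $a^k\in H$ with $k\neq 0$, write $\f = H\ast B$. Projecting to $B$ (via the retraction killing $H$) sends $a^k$ to the trivial element, and by uniqueness of roots in the free group $B$, the image of $a$ is trivial, so $a\in H$.

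For (2), $\Fix(\Phi)$ is finitely generated by the theorem of Gersten \cite{sg:fixed} (already cited in Section~\ref{sec:FixN}). For root-closedness, if $\Phi(a^k)=a^k$ with $k\neq 0$, then $(\Phi(a))^k = a^k$, and uniqueness of roots in $\f$ gives $\Phi(a)=a$, hence $a\in\Fix(\Phi)$. (Note that the rotationless hypothesis is not used for this step; it is retained in the statement because $\Fix(\Phi)$ is the form in which principal fixed subgroups arise elsewhere in the paper.)

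For (3), $A=\langle a\rangle$ is cyclic, hence finitely generated. Root-closedness is immediate from the definition of root-free: if $x\in\f$ satisfies $x^m = a^k$ for some $m\neq 0$ and some $k$, then $x$ is a power of $a$ by uniqueness of roots in $\f$ together with the hypothesis that $a$ is root-free, so $x\in A$.

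In all three cases, Corollary~\ref{c:rootfree} now yields that $H$ (respectively $A$) is the stabilizer in $\f$ of $\partial H$ (respectively the two-point set $\partial A$). There is no real obstacle here; the only point requiring a moment of thought is root-closedness in (1), and this follows at once from the retraction-and-unique-roots argument.
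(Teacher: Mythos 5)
Your overall strategy---verify finite generation and root-closedness, then invoke Corollary~\ref{c:rootfree}---is exactly the paper's, and your treatments of (2) and (3) are sound. In (2) you are in fact more direct than the paper: from $\Phi(a)^k=\Phi(a^k)=a^k$ you conclude $\Phi(a)=a$ by uniqueness of roots in $\f$, whereas the paper routes through the rotationless hypothesis via Lemma~3.30 of \cite{fh:recognition} ($[a]$ is $\phi$-invariant, so $\Phi(a)=cac^{-1}$, then $c$ commutes with $a^k$ and hence with $a$). Your remark that rotationlessness is not actually needed for root-closedness of $\Fix(\Phi)$ is correct.

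However, your justification of root-closedness in (1) has a genuine gap. The retraction $\f=H\ast B\to B$ killing $H$ has kernel equal to the \emph{normal closure} $\langle\langle H\rangle\rangle$ of $H$ in $\f$, not $H$ itself, so from the triviality of the image of $a$ you may only conclude $a\in\langle\langle H\rangle\rangle$; the final step ``so $a\in H$'' does not follow. For example, with $H=\langle h\rangle$, $B=\langle b\rangle$ and $a=bhb^{-1}$, the image of $a$ under your retraction is trivial yet $a\notin H$ (this is not a counterexample to the corollary, since no nonzero power of $a$ lies in $H$, but it breaks your inference). A correct elementary argument: we may assume $a^k\in H\setminus\{1\}$; let $\f$ act on the Bass--Serre tree of $H\ast B$ (trivial edge stabilizers), and let $v$ be the vertex with stabilizer $H$. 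Since translation lengths satisfy $\ell(a^k)=|k|\,\ell(a)$, the ellipticity of $a^k$ forces $a$ to be elliptic, fixing some vertex $w$; then $a^k\neq 1$ fixes the segment from $v$ to $w$, and since nontrivial elements fix no edge we get $w=v$, hence $a\in H$. (One can equally argue with normal forms/syllable length in the free product.) The paper simply asserts that free factors and $\langle a\rangle$ for root-free $a$ are ``clearly root-closed,'' so this is the one place where your added detail is wrong rather than merely terse; the rest of the proposal matches or improves on the paper's argument.
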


\begin{proof}
Free factors and the group generated by a root-free element are clearly root-closed. For (2), $\Phi(a^k)=a^k$ for $k\not=0$ implies that $\Phi(a)$ is a $k$th root of $a^k$ and so equals $a$. 
\end{proof}

\begin{remark}
\label{r:our setting}
Corollary~\ref{c:ff and fix good}, which contains the only cases that we need in this paper, does not require the  generality of Lemma~\ref{l:stab of boundary}. Item (3) is elementary.  Items (1) and (2) follow from (3)  and \begin{itemize}
\item
For $H, K<\f$ finitely generated, $\partial H\cap\partial K=\partial(H\cap K)$ (See \cite[Theorem~12.2(9)]{kb:boundaries} in the setting of hyperbolic groups or, for the case at hand, \cite[Fact~1.2]{handelMosher:subgroups}).
\item
If $H$ is a non-trivial free factor then $H\cap H^g=\{1\}$ unless $g\in H$.
\item
$\Fix(\Phi)\cap\Fix(\Phi)^g$ is cyclic unless $g\in\Fix(\Phi)$ (Lemma~\ref{malnormal}\pref{i:cap}).
\end{itemize} 
\end{remark}

\begin{cor}\label{c:lines=conj pairs}
Suppose that $H, K<\f$ are finitely generated and root-closed.  Then $[H,K]$ determines $[\partial H, \partial K]$ and vice versa.
\end{cor}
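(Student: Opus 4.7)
The plan is to exhibit mutually inverse $F_n$-equivariant maps between ordered pairs of subgroups $(H,K)$ and ordered pairs of boundary subsets $(\partial H,\partial K)$ of $\partial F_n$, and then observe that equivariance forces these maps to descend to a bijection between the conjugacy pairs $[H,K]$ and $[\partial H,\partial K]$.

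First I would record the forward map $(H,K)\mapsto(\partial H,\partial K)$ and note that conjugation by $g\in F_n$ takes $(H,K)$ to $(gHg^{-1},gKg^{-1})$ and sends $(\partial H,\partial K)$ to $(g\cdot\partial H,g\cdot\partial K)=(\partial(gHg^{-1}),\partial(gKg^{-1}))$, where $F_n$ acts on $\partial F_n$ via $\partial i_g$. Thus the forward map is $F_n$-equivariant with respect to the diagonal actions and descends to a well-defined map of conjugacy pairs $[H,K]\mapsto[\partial H,\partial K]$, giving the left-to-right implication.

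For the reverse direction, I would use the stabilizer assignment $X\mapsto\Stab_{F_n}(X)$, which is manifestly $F_n$-equivariant. The key input is Corollary~\ref{c:rootfree}: because $H$ and $K$ are finitely generated and root-free, $\Stab_{F_n}(\partial H)=H$ and $\Stab_{F_n}(\partial K)=K$. Hence the assignment $(\partial H,\partial K)\mapsto(\Stab_{F_n}(\partial H),\Stab_{F_n}(\partial K))=(H,K)$ descends to a map on conjugacy pairs that is inverse to the forward map; pick any representative of $[\partial H,\partial K]$, apply stabilizers coordinatewise, and the resulting conjugacy pair is unambiguously $[H,K]$.

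No single step is a genuine obstacle; the whole argument rests on the root-free hypothesis, without which the passage from $\partial H$ back to $H$ would only be finite-to-one by Lemma~\ref{l:stab of boundary}, so the correspondence could fail at that step. Because that hypothesis is built into the statement, the corollary reduces to naturality of the boundary and stabilizer constructions combined with Corollary~\ref{c:rootfree}.
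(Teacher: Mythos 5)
Your proof is correct and follows essentially the same route as the paper: the forward direction is the evident equivariance of $H\mapsto\partial H$, and the reverse direction recovers $[H,K]$ from $[\partial H,\partial K]$ by applying the ($F_n$-equivariant) stabilizer map coordinatewise and invoking Corollary~\ref{c:rootfree}. The extra spelling-out of equivariance is exactly what the paper's terse "clearly ... conversely consider $[\Stab(\partial H),\Stab(\partial K)]$" argument relies on, so there is nothing further to add.
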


\begin{proof}  
Suppose that $H', K'<\f$ are finitely generated and root-closed.

If $[\partial H,\partial K]=[\partial H',\partial K']$ then there is $x\in\f$ such that $(x\partial H,x\partial K)=(\partial H^x, \partial K^x)=(\partial H', \partial K')$. Hence $$(H^x,K^x)=(\Stab(\partial H^x),\Stab(\partial K^x))=(\Stab(\partial H'),\Stab(\partial K'))=(H',K')$$ So, $[H,K]=[H',K']$.

Conversely, if $[H,K]=[H',K']$ then there is $x\in\f$ such that $(H^x,K^x)=(H',K')$. Hence $(\partial H^x,\partial K^x)=(x\partial H, x\partial K)=(\partial H',\partial K')$, and so $[\partial H,\partial K]=[\partial H',\partial K']$.
\end{proof}

\begin{remark} \label{rem:lines=conj pairs}
If we are in the setting of Corollary~\ref{c:lines=conj pairs} and $\partial H\cap\partial K=\emptyset$ we will sometimes abuse notation and think of $[H,K]$ as the set of lines $[\partial H,\partial K]$ and vice versa.
\end{remark}

\subsection{Some Stallings graph algorithms} \label{sec:NewNewfolding}

In this section we assume that $G$ is a marked graph with marking $m :(R_n,\ast)  \to (G,b)$ where $*$ is the unique vertex of the rose $R_n$ and $b = m(*)$ is the basepoint for $G$.  There is an induced identification of $\f$ with $\pi_1(G,b)$.

For each finitely generated subgroup $H < F_n$, Stallings   \cite[5.4]{js:folding} constructs a finite graph $\sbh$ with basepoint $b_H$ and an immersion $p_H :(\sbh,b_H) \to (G,b)$ such that the image of the injection $\pi_1(\sbh, b_H) \to \pi_1(G,b)$  induced by $p_H$ equals $H$. The base point $b_H$ may have valence one but all other vertices of $\sbh$ have valence at least two.  We equip $\sbh$ with the CW-structure whose vertex set is the preimage  of the vertex set of $G$.   The resulting edges of $\sbh$, sometimes called {\it edgelets},  are labeled by their image edges in $G$.  The core of $\sbh$ is denoted $\sh$. The minimal edgelet-path from $b_H$ to $\sh$  is denoted $\beta_H$. The terminal endpoint of $\beta_H$ is denoted $c_H \in \sh$. 

For finitely generated subgroups $K,H < \f$,  we let  $\im(K,H)$ be the set of immersions $J :\sk \to \sh$ that maps edgelets to edgelets and preserves labels; we say that $J$ {\it preserves labels}.  
 We do not distinguish between elements of $\im(K,H)$ that induce the same map on the set of edgelets.  Thus $\im(K,H)$     is finite and can be computed by inspection.   An {\it equivalence} is an element of $\im(K,H)$ that  is a homeomorphism. Note that  elements of $ \im(K,H)$ that agree on a vertex of $K$ are equal.    
 
\begin{lemma}  \label{l:inclusion immersion}If $K< H$ are finitely generated subgroups of $\f$ then there is a (necessarily unique) label-preserving immersion $J_{K,H} : (\sbk,  b_K) \to (\sbh,b_H)$.
\end{lemma}
 
 \proof  We recall Stallings'  construction of $\sbk$ from  \cite[5.4]{js:folding}.  Choose closed paths  $\rho_1,\ldots,\rho_m \subset G$ based at $b$ that represent generators of $K < \pi_1(G,b)$.  Define $\Gamma_1$ to be a rose of rank $m$ with unique vertex $b'$ and define $p_1:(\Gamma_1,b_1') \to (G,b)$ to be an immersion on edges, mapping the $i^{th}$ edge to $\rho_i$. Subdivide $\Gamma_1$ into edgelets labeled by edges of $G$ to obtain $p_2:(\Gamma_2,b_2') \to (G,b)$. The map $p_2$ factors into a sequence of edgelet  folds $(\Gamma_2,b_2') \to (\Gamma_3,b_3') \to \ldots \to (\Gamma_k,b'_k)$ followed by an immersion $p_k : ( \Gamma_k, b'_k) \to (G,b)$.   Define  $ (\Sigma_b(K), b_K) = (\Gamma_k, b'_k)$ and $p_H = p_k$. 
 
   Since $K <H$, each $\rho_i$ lifts to a closed edgelet-path $\rho_i'$ in $\sbh$ based at $b_H$.  Since the $i^{th}$ edge of $\Gamma_2$ and   $\rho_i'$ agree as labeled edgelet-paths,   there is an induced label-preserving map $q_2: (\Gamma_2, b') \to   (\Sigma_b(H),b_H) $ satisfying $p_2 = p_H q_2$.  Since $p_H$ is an immersion, the edgelets that are identified  by the  folding maps $\Gamma_2 \to \Gamma_3 \to \ldots \to \Gamma_k$ are also identified by $q_2$.  Thus, there exists a map $J_{K,H} :   (\Gamma_k, b'_k)\to (\Sigma_b(H),b_H)$ such that $p_K = p_H q_k$.  Since $p_H$ and $p_K$ are immersions, the same is true for $J_{K,H}$.                                                                                                                                                     
\endproof
 
Note that if $a \in \f$ and $K^a < H$ then $K^{ha}< H$ for all $h \in H$.  Let $\rc(K,H)$ be the set of right cosets of $H$ in $\f$ such that $K^a < H$ for some (each) $a$ representing that coset.

\begin{lemma} \label{l:stallings bijection} There is an algorithm with output a    bijection  $\im(K,H) \longleftrightarrow \rc(K,H)$.  In particular, there is an algorithm that produces coset representatives for the elements of $\rc(K,H)$.
\end{lemma}

\proof  ($\longrightarrow$) We associate a coset $Ha \in \rc(K,H)$ to  $J \in \im(K,H)$  as follows. Choose a path $\xi \subset \sh$ from $c_H$ to $J(c_K)$ and note that $p_K(c_K) = p_H(J(c_K))$.  Let  $a \in \pi_1(G,b)$ be    represented by the closed path $[p_H( \beta_H \xi) p_K(\bar \beta_K)] \subset G$  where $[\cdot]$ indicates tightening.  Each $x \in K$ is represented in $\pi_1(G,b)$ by $p_K(\beta_K\gamma  \bar \beta_K)$  for some closed path $\gamma \subset \sk$ based at $c_K$.  It follows that $x^a$ is represented in $\pi_1(G,b)$ by 
$[ p_H( \beta_H \xi)p_K(\gamma) p_H( \bar\xi \bar \beta_H )] = [p_H( \beta_H \xi)p_H(J(\gamma)) p_H( \bar\xi \bar \beta_H )] = p_H(\beta_H[ \xi J(\gamma) \bar\xi]\bar \beta_H)$ which represents an element in $H$.  This proves that $K^a < H$.   If $\xi$ is replaced by another path $\xi'$ connecting $c_H$ to $J(c_K)$ then $a$ is replaced by $ha$ where $h \in H$ is represented by $[p_H(\beta_H \xi' \bar\xi \bar \beta_H)]$. Thus, $Ha$ is independent of the choice of $\xi$.  If $J$ is replaced with $J' \ne J$ and if $\eta\subset \Sigma(H)$ is a path connecting $J(c_K)$ to $J'(c_K)$,  then $\xi$ is replaced with $\xi \eta$ and  $a$ is replaced with $a' = da$  where $d$ is represented in $\pi_1(G,b)$ by $[p_H(\beta_H \xi  \eta\bar \xi \bar \beta_H)]$.  Since $\eta$ is not a closed path,  $d$ does not lift into $\sh$ and $a'$ does not belong to the same right coset of $H$ as $a$.  This shows that  $J \mapsto Ha$ defines an injection from $\im(K,H)$ to $\rc(K,H)$.

\bigskip
\noindent ($\longleftarrow$) We begin the proof of surjectivity   by   constructing $\Sigma_b(K^a)$ from $\sbk$. Represent $a$ in $\pi_1(G,b)$  by a closed edge-path $\alpha \subset G$  based at $b$ and let $\beta'$ be the edgelet path  labeled by the path in $G$ obtained by tightening $\alpha p_K(\beta_K)$. Define $\Sigma'$ from the disjoint union of $\beta'$ and a copy $(\Sigma'(K),c')$ of $(\Sigma(K),c_K)$   by identifying the terminal endpoint of $\beta'$ with $c'$.  The labeling on edgelets induces $p' : (\Sigma', b') \to (G,b)$ where $b'$ is the initial vertex of $\beta'$.   The image of the injection $\pi_1(\Sigma', b') \to \pi_1(G,b)$  induced by $p'$ equals $K^a$.  If $p'$ is an immersion then $(\Sigma(K^a), \beta_{K^a} ,c_{K^a}) = (\Sigma',\beta',  c')$. Otherwise, $\Sigma(K^a)$ is obtained from $\Sigma'$ by folding  a maximal initial edgelet-subpath of $\bar \beta'$ with an edgelet-subpath $\mu \subset \Sigma'(K) $ that begins at $c'$.     In this case, $b_{K^a}$ is the folded  image of $b'$  and $c_{K^a}$ is the terminal endpoint of $\mu$.   
    
    Continuing with the above notation, define the equivalence $J_{K,a} \in \im(K,K^a)$  to be the identifying homeomorphism from $(\sk,c_K)$ to $(\Sigma'(K),c')$.  Assuming that $K^a < H$, apply Lemma~\ref{l:inclusion immersion} and define $J_{a,K,H} = J_{K^a,H}|\Sigma(K^a)\circ J_{K,a}\in \im (K,H)$.  By construction, $[\alpha p_K(\beta_K)] \subset G$ lifts to a the path in $\sbh$ from $b_H$ to $J_{a,K,H}(c_K)$.  Writing this path as $\beta_H  \xi$, we have that $[p_H( \beta_H \xi) p_K(\bar \beta_K)] = \alpha$ and hence that (in the notation of the first paragraph of this proof)  $J_{a,K,H} \mapsto Ha$.  
    \endproof
    
    We will need the following well-known result.
    
    \begin{corollary} \label{cor:normalizer} If $H< \f$ is a finitely generated and $H^a < H$ for $a \in \f$ then $H^a = H$.
    \end{corollary}
    
    \proof The obvious induction argument shows that $H^{a^p} < H^{a^{p-1}} < \ldots < H^a < H$ for all $p \ge 1$.  Each $H{a^s}$, $s \ge 1$, is therefore an element  $\rc(H,H)$, which is finite by Lemma~\ref{l:stallings bijection}.  It follows that $H{a^s} = H{a^t}$ for some $s \ne t$ and hence that $a^p \in H$ for some $p \ge 1$.  Thus $H^{a^p} =  H$ which implies that  $H  <  H^a  < H$  and hence that $H = H^a$.
\endproof

The following three algorithms   are easy consequences of  Lemma~\ref{l:stallings bijection}.
   
\begin{lemma} \label{l:folding conjugacy}There is an algorithm that decides if a given pair $H$ and $K$ of finitely generated subgroups of $F_n$ are conjugate and if so produces an  element $a \in F_n$ satisfying $K^a = H$.
\end{lemma}

\proof  We continue with notation from the proof of Lemma~\ref{l:stallings bijection}.  
   If $K^a  = H$ then $J_{K^a,H}$ is the identity and  hence $J_{a,K,H} \in \im(K,H)$ is an equivalence.  This shows that if   $\im(K,H)$ does not contain an equivalence then $K$ and $H$ are not conjugate.  If $\im(K,H)$ does  contain an equivalence $J$,  apply Lemma~\ref{l:stallings bijection} to $J$ and $J^{-1} \in \im(H,K)$ to produce $a,b \in \f$ such that $K^a < H$ and $H^b <K$.  From $H^{ab} < H$ and   Corollary~\ref{cor:normalizer} it follows that  that $H^{ab} = H$  and hence that  $H = (H^b)^a  < K^a <H$ which implies that $K^a = H$.
\endproof

\begin{lemma}  \label{l:folding normalizer} The normalizer $N(H)$ of a finitely generated subgroup $H < F_n$ is finitely generated.  We have  an algorithm that produces coset representatives $\{a_1,\ldots,a_p\}$ of $H$ in $N(H)$.
\end{lemma}

\proof    Corollary~\ref{cor:normalizer} implies that $N(H)/H = \rc(H,H)$.     Lemma~\ref{l:stallings bijection} therefore completes the proof.
\endproof
  
\begin{lemma} \label{l:folding supergroup}  If  $K < H< F_n$ are finitely generated subgroups then the set of subgroups of $H$ that are $F_n$-conjugate to $K$ determine finitely many $H$-conjugacy classes.  There is an algorithm that produces representatives $K_1,\ldots,K_p$ of these $H$-conjugacy classes. \end{lemma}

\proof If $K^{a_1}, K^{a_2} < H$ then  $K^{a_1}$ and $ K^{a_2}$ determine the same $H$-conjugacy class if and only if $ha_1 = a_2$ for some $h \in H$.  Lemma~\ref{l:stallings bijection}, which produces  representatives of the elements of $\rc(K,H)$, therefore completes the proof. \endproof

\subsection{Good conjugacy pairs}
In addition to conjugacy classes of finitely generated subgroups of $\f$, our adaptation of Gersten's algorithm will also take conjugacy pairs as input; see Notation~\ref{ultimate atoms}. If $H_1$ and $H_2$ are subgroups of $H$ and the natural map $H_1*H_2\to H$ is an isomorphism then we say that $H$ is the {\it internal free product} of $H_1$ and $H_2$. If $A<B<\f$ then $[A]_B$ denotes the conjugacy class of $A$ in $B$. If $B=\f$ then we sometimes suppress the subscript.

\begin{definition}[good conjugacy pairs]\label{d:good}
For $H_1, H_2<\f$, the conjugacy pair $[H_1,H_2]_\f$ is {\it good} if $\langle H_1, H_2\rangle$ is the internal free product of $H_1$ and $H_2$.
\end{definition}

The next lemma collects some facts about good pairs.

\begin{lemma} \label{l:good facts}
Let $H_1, H_2<\f$ be finitely generated. 
\begin{enumerate}
\item\label{i:good iff}
$[H_1,H_2]$ is good iff $\rk(\langle H_1, H_2\rangle)=\rk(H_1)+\rk(H_2)$.
\item\label{i:good disjointness}
If $[H_1,H_2]$ is good then $\partial H_1$ and $\partial H_2$ are disjoint.
\end{enumerate}
\end{lemma}

\begin{proof}
The natural map $H_1*H_2\to\langle H_1, H_2\rangle$ is surjective. Since finitely generated free groups are Hopfian  (surjective endomorphisms are isomorphisms) \cite[Theorem~2.13]{mks:book}, the only if direction of (1) follows. The if direction of (1) is obvious.

(2) follows from $\partial H_1 \cap \partial H_2 = \partial (H_1 \cap H_2)$; cf.\ the first item in Remark~\ref{r:our setting}.
\end{proof}

 Our next goal is necessary and sufficient conditions for two good conjugacy pairs to be equal.    We begin with an important special case.
 
\begin{lemma} \label{l:pairs special case}
Suppose that $K_1, K_2, L_1, L_2<\f$ are finitely generated, that $[K_1, K_2]_\f$ and $[L_1,L_2]_\f$ are good conjugacy pairs and that  $\langle K_1,K_2\rangle =\langle L_1,L_2\rangle = \f $. Then  the following are equivalent.
 \begin{enumerate}
 \item $[K_1, K_2]_\f = [L_1,L_2]_\f$
 \item $[K_1]_\f=[L_1]_\f$ and $[K_2]_\f=[L_2]_\f$. 
 \end{enumerate}
 \end{lemma} 
 
 \proof  
 $\pref{i:=}\Rightarrow\pref{i:restrict}$: If $[K_1, K_2]_\f=[L_1,L_2]_\f$ then by definition there is $g\in\f$ such that $(K_1^g, K_2^g)=(L_1,L_2)$.

 $\pref{i:restrict}\Rightarrow\pref{i:=}$:   By hypothesis there are $g_i\in\f$ such that $K_i^{g_i}=L_i$. In particular, $\Delta :=(i_{g_1}|K_1)*(i_{g_2}|K_2)\in\Aut(\f)$ represents an element $\delta \in \Out(F_n)$. Let $r_i$ be the rank of $K_i$, let $A_i$ be  a rose with rank $r_i$ whose petals are labeled by a basis for $K_i$ and let $A$ be the rose of rank $n$ obtained from $A_1$ and $A_2$ by identifying their unique vertices $v_1$ and $v_2$ to a single vertex $v$.  Blow up  $v$ to an arc.   More precisely,   let $X$ be the graph obtained from the disjoint union of $A_1, A_2$  and a vertex $w$ by adding  oriented edges $E_1$ and $E_2$ connecting $w$ to   $v_1$ and $v_2$ respectively.  Denote the arc $\bar E_1 E_2 \subset X$ by $E$ and the subgraph $A_i \cup E_i \subset X$ by $X_i$.  Identify $\pi_1(X_i, w)$ with $\pi_1(A_i, v_i) = K_i$ via the   map $q_i : X_i  \to A_i$ that collapses $E_i$ to $v$.   Let  $q : X \to A$  be the map that collapses  $E$ to $v$.     If $\alpha_i \subset A$ is the closed  path   based at $v$ that represents $g_i$ then there is a unique closed path $\beta_i \subset X$ based at $w$ that satisfies $q_\#(\beta_i) = \alpha_i$.     The map $f : X \to X$ defined by $f|A_i = $ identity and  $f(E_i) = [\beta_i E_i]$ induces the automorphism $\Delta$ and so is a homotopy equivalence.  Homotop $f$ rel  $A_1\cup A_2$  to a map $f' : X \to X$ whose restriction to  $E$ is an immersion.  Then $f'$ is a topological representative of $\delta$ and  \cite[Corollary~3.2.2]{bfh:tits1} implies that $f'(E) = \bar \gamma_1E \gamma_2$ for some (necessarily closed) paths $\gamma_i  \subset A_i$.   If $k_i \in K_i$ is represented by the homotopy class of $\gamma_i$, then $f'$ induces the automorphism $\Delta' :=(i_{k_1}|K_1)*(i_{k_2}|K_2)$.    There exists $h \in \f$ such that $\Delta = i_h \Delta'$.   We have  $i_{g_i} = i_{hk_i}$  and hence  $i_{g_1 k_1^{-1}}  = i_{g_2 k_2^{-1}}$. Thus 
\begin{align*}
[L_1,L_2]_\f&=[K_1^{g_1},K_2^{g_2}]_\f=[K_1^{g_1}, (K_2^{g_1^{-1}g_2})^{g_1}]_\f\\
&=[K_1,K_2^{g_1^{-1}g_2}]_\f=[K_1,K_2^{k_1^{-1}k_2}]_\f=[K_1^{k_1},K_2^{k_2}]_\f\\&
=[K_1,K_2]_\f
\end{align*} 
\endproof

For each finitely generated   $L<\f$, we define a function $f_L$ as follows.  The domain of $f_L$ is the set of good conjugacy pairs $[K_1,K_2]$ with $K:=\langle K_1, K_2\rangle$ conjugate to $L$.   Any $g\in\f$ such that $K^g=L$ is well-defined up to the normalizer $N(L)$ of $L$ in $\f$. That is, if $L=K^{g}=K^{g'}$ then $g'g^{-1}\in N(L)$. Hence $ ([K_1^g]_L,[K_2^g]_L)$  is well-defined up to the diagonal action of $N(L)$ (equivalently $N(L)/L$) on the   set of pairs of conjugacy classes of subgroups of $L$.  We define $f_L([K_1,K_2])$ to be the  $N(L)/L$ orbit of  $([K_1^g]_L,[K_2^g]_L)$.   Note that if $K = L$ and $\xi_1,\ldots,\xi_r$ are coset representatives of $L$ in $N(L)$  then  $f_L([K_1,K_2]) = \{([K_1^{\xi_1}]_L,[K_2^{\xi_1}]_L),\ldots, ([K_1^{\xi_r}]_L, [K_2^{\xi_r}]_L)\}$. 
\begin{remark} \label{r:fL vs fL'}
Suppose $x\in \f$ and $L^x=L'$. Then ${\sf Domain}(f_L)= {\sf Domain}(f_{L'})$ and conjugation $i_x$ by $x$ induces a bijection (which we give the same name) $i_x:{\sf Codomain}(f_L)\to {\sf Codomain}(f_{L'})$ given by the $N(L)$-orbit of $([L_1]_L,[L_2]_L)$ maps to the $N(L')$-orbit of $([L_1^x]_{L'}, [L_2^x]_{L'})$. It is an easy check that $f_{L'}=i_x\circ f_{L}$.
\end{remark}

\begin{lemma}\label{l:conjugacy pair iff}  Suppose that
$K_1, K_2, L_1, L_2<\f$ are finitely generated and that $[K_1, K_2]_\f$ and $[L_1,L_2]_\f$ are good conjugacy pairs. Set $K:=\langle K_1,K_2\rangle$ and $L:=\langle L_1,L_2\rangle$. Then the following are equivalent:
\begin{enumerate}
\item\label{i:=}
$[K_1, K_2]_\f=[L_1,L_2]_\f$
\item\label{i:restrict}
There is $g\in\f$ such that $K^g=L$, $[K_1^g]_L=[L_1]_L$, and  $[K^g_2]_L=[L_2]_L$.
\item\label{i:fL}
$[K]=[L]$ and $f_L([K_1,K_2])=f_L([L_1,L_2])$
\item  \label{i:fH}
$[K]=[L]$ and, for some (any) $H<\f$ with $[H]=[L]=[K]$, $f_H([K_1,K_2])=f_H([L_1,L_2])$ 
 \end{enumerate}
\end{lemma}

\begin{proof}
$\pref{i:=}\Rightarrow\pref{i:restrict}$: If $[K_1, K_2]_\f=[L_1,L_2]_\f$ then by definition there is $g\in\f$ such that $(K_1^g, K_2^g)=(L_1,L_2)$.

 $\pref{i:restrict}\Rightarrow\pref{i:=}$: By Lemma~\ref{l:pairs special case} applied to $K_i^g$ and $L_i$ with $L$ playing the role of $\f$ we have $[K_1^g,K_2^g]_L=[L_1,L_2]_L$. In particular, $[K_1,K_2]_\f=[L_1,L_2]_\f$.

$\pref{i:restrict}\Rightarrow\pref{i:fL}$ is clear from the definition of $f_L$.

$\pref{i:fL}\Rightarrow\pref{i:restrict}$: Suppose $[K]=[L]$ and $f_L([K_1,K_2])=f_L([L_1,L_2])$.  By the former there is $g'\in \f$ such that $K^{g'}=L$ and by the latter there is $n\in N(L)$ such that $([K_1^{g'}]_L,[K_2^{g'}]_L)=([L_1^n]_L,[L_2^n]_L)$. Take $g=n^{-1}g'$.

$\pref{i:fL}\Leftrightarrow\pref{i:fH}$ follows directly from Remark~\ref{r:fL vs fL'}.
\end{proof}

\begin{cor}\label{c:equal pairs}
There is an algorithm with input two good conjugacy pairs $[K_1, K_2]$ and $[L_1,L_2]$ of finitely generated subgroups of $\f$ and output {\tt YES} or {\tt NO} depending on whether or not $[K_1,K_2]=[L_1,L_2]$.
\end{cor}

\begin{proof}  Apply Lemma~\ref{l:folding conjugacy} to decide if $K$ and $L$ are $F_n$-conjugate.  If not, then output {\tt NO}.  Otherwise, Lemma~\ref{l:folding conjugacy} gives   $x\in\f$ such that $K^x=L$ and we replace $K_1$ and $K_2$ by $K_1^x$ and $K_2^x$ so that now $K=L$.  Apply Lemma~\ref{l:folding normalizer} to produce coset representatives $\xi_1,\ldots, \xi_r$ of $L$ in $N(L)$.
According to Lemma~\ref{l:conjugacy pair iff}, $[K_1,K_2]=[L_1,L_2]$ iff $[K_1^{\xi_i}]_L = [L_1]_L$ and $[K_2^{\xi_i}]_L = [L_2]_L$ for some $1 \le i \le r$. This can be checked by applying Lemma~\ref{l:folding conjugacy} with $F_n$ replaced by $L$.
\end{proof}

The following lemma is used in Lemma~\ref{l:pairs algorithm} to determine which pairs of conjugacy classes correspond to good conjugacy pairs.
 
\begin{lemma}  \label{l:image}There is algorithm with input two  finitely generated subgroups $K_1,K_2 < F_n$ and output {\tt YES} or {\tt NO} depending on whether or not there exist $K'_i< F_n$ such that $[K'_i] = [K_i]$  and such that  $\f$ is the internal free product of $K'_1$ and $K'_2$.  If {\tt YES} then one such $K'_1$ and $K'_2$ are produced.
\end{lemma} 

\proof  Choose any finitely generated subgroups $A_i$ such $\rk(A_i)=\rk(K_i)$ and such that  $\f$ is the internal free product of $A_1$ and $A_2$.    $K'_1$ and $K'_2$  exist if and only if there is a $\theta\in\Out(\f)$ such that $\theta([K_i])=[A_i]$. The existence of such a $\theta$ can be checked using Gersten's generalization of Whitehead's theorem  \cite{sg:whitehead},\cite{bfh:gersten}, which appears as Theorem~\ref{t:original} in this paper.
 Additionally, the algorithm produces such a $\theta$ if one exists; we take  $K_i' = \Theta^{-1}(A_i)$ where $\Theta\in\theta$. 
\endproof

\begin{notn}\label{n:C}
 $\cC(\f)$ denotes the set of conjugacy classes of finitely generated subgroups of $\f$.
 \end{notn}

To aid in working with good conjugacy pairs, we relate them to ordered triples in $\cC(\f)$. Consider the following map from good conjugacy pairs to ordered triples in $\cC(\f)$
\begin{equation}\label{e:pairs}[H_1,H_2]\mapsto ([H_1], [H_2], [H])\end{equation} where $H:=\langle H_1, H_2\rangle$.

\begin{lemma}\label{l:pairs algorithm}
We have an algorithm with input a good conjugacy pair $[H_1,H_2]$ and output a finite enumeration of the fiber $F$ of the above map \pref{e:pairs} containing $[H_1,H_2]$.
\end{lemma}

\begin{proof}
Consider the map induced by $f_H$ from $F$ to $\{([K_1]_H,[K_2]_H)\}/N(H)$ where $K_i$ ranges over finitely generated subgroups of $H$ such that $H_i$ and $K_i$ are conjugate in $\f$. By Lemma~\ref{l:conjugacy pair iff}, this map is injective. So, it remains to produce an element of $F$ for each element of the image. The $[K_i]_H$'s  can be finitely enumerated by Lemma~\ref{l:folding supergroup}.   By  Lemma~\ref{l:image} we can decide if $([K_1]_H,[K_2]_H)$ represents an element of the image.  Applying Lemma~\ref{l:folding normalizer} and then Lemma~\ref{l:folding conjugacy} (with $\f$ replaced by $H$) we can decide if two pairs $([K_1]_H,[K_2]_H)$ and $([K'_1]_H,[K'_2]_H)$ are in the same $N(H)$-orbit and so remove redundancy from our list. 
\end{proof}

\begin{lemma} \label{l:fiber of 1}
We have an algorithm with input an ordered triple $([H_1], [H_2], [H])$ of elements of $\cC(\f)$ and output {\tt YES} or {\tt NO} depending on whether or not the fiber $F$ of the above map \pref{e:pairs} is empty. Further, if {\tt NO} the algorithm also outputs an element of $F$.
\end{lemma}

\proof
Our goal is to either find subgroups $K_i$ in the same $\f$ conjugacy class as $H_i$  such that $K_i < H$ and such that $H$ is  the internal free product of $K_1$ and $ K_2$ or to conclude that no such $K_i$ exist.

By Lemma~\ref{l:stallings bijection}, we can compute coset representatives for the elements of   $\rc(H_i,H)$.  If $\rc(H_1,H) = \emptyset$ then no element of the $\f$-conjugacy of $H_1$ is a subgroup of  $H$ and we  return {\tt YES}.  Similarly, return {\tt YES} if  $\rc(H_2,H) = \emptyset$.     Otherwise, choose $b_i$ representing a coset in $\rc(H_i,H)$.  Replacing $H_i$ by $H_i^{b_i}$ we may assume that $H_i < H$. 
   
Lemma~\ref{l:folding normalizer} produces coset representatives $\{a_1,\ldots,a_p\}$ of $H$ in $N(H)$.   Thus a subgroup $K_i< H$ is in the same $\f$ conjugacy class as $H_i$ if and only if it is in the same $H$-conjugacy class as $H_i^{a_j}$ for some $1 \le j \le p$. Order the pairs $(H_1^{a_j}, H_2^{a_k})$ lexicographically on $ 1 \le j,k \le p$.  Apply Lemma~\ref{l:image}  with $\f$ replaced by $H$ and with $(K_1,K_2 )$ replaced by the first pair on the list $ (H_1^{a_1},H_2^{a_1})$ to either produce $K_1, K_2 < H$ such that:
\begin{itemize}
\item
$K_1$ is in the same $H$ conjugacy class as  $H_1^{a_1}$
\item
$K_2$ is in the same $H$ conjugacy class as  $H_2^{a_1}$
\item
$H$ is the internal free product of $K_1$ and $K_2$
\end{itemize}
or  to conclude that no such $K_1$ and $K_2$ exist.  In the former case return {\tt NO} and $[K_1, K_2]$.  In the latter case proceed on to the next pair on the list.  Continue until you either return {\tt NO} and the desired $[K_1,K_2]$ or reach the end of the list, in which case return {\tt YES}.   
\endproof

\begin{cor}\label{c:fiber of 1}
We have an algorithm with input an ordered triple $([H_1], [H_2], [H])$ of elements of $\cC(\f)$ and output a finite enumeration of the fiber $F$ of the above map \pref{e:pairs} over $([H_1], [H_2], [H])$.
\end{cor}

\begin{proof}
Use Lemma~\ref{l:fiber of 1} to determine if $F$ is empty or not and obtain an element $[H'_1, H'_2]\in F$ if not. Input $[H'_1, H'_2]$ into the algorithm of Lemma~\ref{l:pairs algorithm} to enumerate $F$.
\end{proof}

We will also use conjugacy pairs that aren't necessarily good.

\begin{lemma}\label{l:bad pairs}
Consider the set of conjugacy pairs of the form $[H,A]$ with $A<H<\f$ all finitely generated and non-trivial. (In particular this pair is not good.) 
\begin{enumerate}
\item
Two such $[H,A]$ and $[H',A']$ are equal iff there is $g\in\f$ such that $H^g=H'$ and $[A^g]_{H'}=[A']_{H'}$. In particular, {$[H,A]=[H,A']$} iff $A$ and $A'$ are in the same orbit of the action of $N(H)$ on $H$.
\item
The map from the set of such pairs to ordered sequences in $\cC(\f)$ given by $[H, A]\mapsto ([H], [A])$ has fibers that can be finitely enumerated.
\end{enumerate}
\end{lemma}

\begin{proof}
(1) \ \ The only if direction is obvious.  The if direction follows from the fact that if $[A^g]_{H'}=[A']_{H'}$ then there exists $h' \in H'$ such that $A^{h'g} =  A'$ for some $h' \in H'$.

(2) \ \ Given finitely generated non-trivial subgroups $K,L < \f$,  compute  $\rc(L,K)$ by applying Lemma~\ref{l:stallings bijection}.  If $\rc(L,K) = \emptyset$, then $L$ is not conjugate into $K$ so the fiber over $([K],[L])$ is empty.    Otherwise, choose $b$ representing a coset in $\rc(L,K)$.   Replacing $L$ by $L^b$ we may assume that $L < K$.   Apply Lemma~\ref{l:folding normalizer} to produce coset representatives $\xi_1,\ldots,\xi_p$ of $K$ in $N(K)$.  The fiber containing $[K,L]$ equals $\{ [K,L^{\xi_1}],\ldots,  [K,L^{\xi_p}]\}$ by (1).
\end{proof}

\section{Computable $G$-sets}\label{s:computable}  
The ultimate goal of this paper is to provide an algorithm solving the conjugacy problem for $\upgn$, i.e.\ Theorem~\ref{t:main}. We will need other algorithms as part of our solution. In this section and the following two (Sections~\ref{s:computable}, \ref{s:finite index}, and \ref{s:gersten}), we formalize some of the algorithmic aspects present in the $\Out(\f)$-setting. In particular, we provide what could be viewed as a ``data structure" for the input and output of our algorithms.  These sections require no knowledge of $\f$ and are independent of the rest of the paper.

\begin{definition}[Computable]\label{d:computable}
\begin{itemize}
\item
A function $f:X\to Y$ is {\it computable} if it comes equipped with an algorithm with input $x\in X$ and output $f(x)\in Y$.
\item
An {\it enumeration} of a set $X$ is a surjection $\bN\to X$. A {\it finite enumeration} of $X$ is a surjection $\{1, 2, \dots, N\}\to X$. The {\it index} of $x\in X$ is the minimal $n$ such that $n\mapsto x$.
\item
A set $X$ is {\it computable} if it comes equipped with a computable   enumerataion $\bN\to X$ and an algorithm with input $x,x'\in X$ and output {\tt YES} or {\tt NO} depending on whether or not $x=x'$. By default, the empty set is computable. We sometimes write $X=(x_1, x_2, \dots)$ to indicate the enumeration. See Lemma~\ref{l:index algorithm}.
\item
A group $G$ is {\it computable} if the underlying set is computable and  it comes equipped with a third algorithm with input $\theta,\theta', \theta''\in G$ and output {\tt YES} or {\tt NO} depending on whether or not $\theta\theta'=\theta''$.
\item
A $G$-set $X$ is {\it computable} if $G$ and $X$ are computable and  it comes equipped with yet another algorithm with input $\theta\in G$, $x, x'\in X$ and output {\tt YES} or {\tt NO} depending on whether or not $\theta(x)=x'$.
\end{itemize}
\end{definition}

\begin{lemma}\label{l:index algorithm}
If $X=(x_1, x_2, \dots)$ is a computable set then we have an algorithm with input $x\in X$ and output the index of $x$.
\end{lemma}

\begin{proof}
Starting with $i=1$, iteratively check if $x=x_i$.
\end{proof}

To see how $\Out(\f)$ and our $\Out(\f)$-sets are enumerated and that they are computable, see Section~\ref{s:ultimate atoms}. 

 A set $Y$ of interest is often the quotient of a computable set $X$, i.e.\ $Y=X/\sim$ for some equivalence relation $\sim$. We want to use $X$ to give $Y$ the structure of a computable set. We view elements of $X$ as {\it representatives} of elements of $Y$ and always give elements $y\in Y$ as $y=[x]$ where $x\in X$ and $[x]$ is the equivalence class of $x$.

\begin{lemma}\label{l:rep}
Suppose $X$ is a computable set and $Y=X/\sim$ is a quotient of $X$. If we have an algorithm with input $x, x'\in X$ and output {\tt YES} or {\tt NO} depending on whether or not $[x]=[x']$, then $Y$ is computable.

There are the obvious generalizations for groups, etc.
\end{lemma}

\begin{proof}
The computable enumeration of $Y$ maps $i\in\bN$ to $[x_i]\in Y$. Given input $y=[x]$, $y'=[x']$, we can use the algorithm in the hypothesis to output {\tt YES} or {\tt NO} depending on whether or not $[x]=[x']$, i.e.\ whether of not $y=y'$. 
\end{proof}

\begin{ex}\label{e:fp}
Suppose we are given a finite generating set for a group $G$. Elements of $G$ are represented as finite words in the generators and their inverses. This set $X$ of finite words can be computably enumerated, say using length, and $X$ is computable. The composition of the enumeration for $X$ and the evaluation map $e:X\to G$ computably enumerates $G$. If we have an algorithm with input $x, x'\in X$ and output {\tt YES} or {\tt NO} depending on whether or not $e(x)=e(x')$ then $G$ is computable.  This is the case, for example, if we are given a finite presentation for $G$ and an algorithm solving the word problem for this presentation.
\end{ex}

\begin{lemma} \label{l:subset}
\begin{enumerate}
\item\label{i:subset}
Suppose $Z$ is a subset of the computable set $X$. If we have an algorithm with input $x\in X$ and output {\tt YES} or {\tt NO} depending on whether or not $x\in Z$, then $Z$ is computable.
\item\label{i:product}
  If $X$ and $Y$ are computable sets then $X\times Y$ is a computable set.
\end{enumerate}
There are the obvious generalizations for groups, etc.
\end{lemma}

\begin{proof}
\pref{i:subset}: If $Z$ is empty then it is computable by definition. Suppose $Z\not=\emptyset$. The computable enumeration $f:\bN\to Z$ is given as follows. Applying the algorithm in the hypothesis a finite number of times, we can find the  minimal $i\in\bN$ such that $x_i\in Z$.   Define $f(j)=x_i$, for $1\le j\le i$. If $n>i$, then $f(n)=x_n$ if $x_n\in Z$ and $f(n)=f(n-1)$ otherwise.

 The proof of \pref{i:product} is left to the reader.
\end{proof}

In the setting of Lemma~\ref{l:subset}\pref{i:subset}, we view elements of $Z$ as given to us as elements of $X$ that are in $Z$. One reason for the rather odd looking enumeration $f$ in the proof is that we have to make sure that $f$ is defined on all of $\bN$. (Consider the case where $Z$ is finite.)

We now collect some basic properties of computable groups.

\def\cG{\mathcal G}

\begin{lemma}\label{l:computing basics} 
Let $G=(g_1, g_2, \dots)$ and $G'=(g'_1, g'_2, \dots)$ be computable groups.
\begin{enumerate}
\item\label{i:compute 2}
We have algorithms:
\begin{enumerate}
\item
with input $g\in G$ and output the index of $G$;
\item \label{i:product index}
with input $g,h\in G$ and output the index of $gh$;
\item
with output the index of $1$; and
\item \label{i:inverse index}
with input $g$ and output the index of $g^{-1}$.
\end{enumerate}
\item\label{i:word problem}
We have an algorithm with input a finite word $w$ in $\{g_1,g_2,\dots\}^{\pm 1}$ and output the index of $w$ in $G$. In particular, we have an algorithm to solve the word problem in $G$.
\item\label{i:word}
Suppose we are given a finite generating set $\cG=\{h_1,\dots,h_N\}\subset \{g_1, g_2,\dots\}$ for $G$. Then we have an algorithm with input $g\in G$ and output a word $w$ with letters in $\cG$ such that $g=w$ in $G$.
\item\label{i:morphism}
Suppose $f:G\to G'$ is a homomorphism. If we are given a finite generating set $\cG\subset \{g_1, g_2,\dots\}$ for $G$ and $f(\cG)$ then $f$ is computable (with algorithm given in the proof).
\item\label{i:kernel}
If $f:G\to G'$ is a computable homomorphism, then $\Ker(f)$ is computable.
\end{enumerate}
\end{lemma}

\begin{proof}
\pref{i:compute 2}: For (a),  see Lemma~\ref{l:index algorithm}. For (b), (c), and (d): starting with $i=1$, iteratively use the algorithm that comes with a computable group to respectively check: (b) if $gh=g_i$; (c) if $g_ig_i=g_i$; and (d): if $g g_i=1$.

\pref{i:word problem}: Use \pref{i:inverse index} to remove all negative exponents in $w$. Then, use \pref{i:product index}  to iteratively reduce the length of $w$ by replacing consecutive letters $g_ig_j$ with a single letter $g_k$. 

\pref{i:word}: Enumerate the words in $\cG$ (say using length). Iteratively check if $g$ is the $N$th word.

\pref{i:morphism}: To compute $f(g_i)$, use \pref{i:word} to write $g_i$ as a word $w$ in $\cG$. Then $f(w)$ is a word in $f(\cG)$. Finally, use \pref{i:word problem} to find the index of $f(w)$.

\pref{i:kernel} follows from Lemma~\ref{l:subset} since we can algorithmically check if $f(g_i)=1$ in $G'$.
\end{proof}
\begin{remark}  
Using Lemma~\ref{l:computing basics}\pref{i:compute 2}, we may rewrite a given finite subset of $\{g_1, g_2, \dots\}^{\pm 1}$ as a finite subset of $\{g_1, g_2, \dots\}$. In particular, if $\cG$ generates $G$ then we may algorithmically compute a finite subset of $\{g_1, g_2, \dots\}$ that is a symmetric generating set.\end{remark}

\begin{ex}
A computable group need not be finitely generated; a finitely generated computable group need not be finitely presented; etc. Indeed, the kernel of $f:(F_2)^n\to\Z$ which sends each basis element to $1\in \Z$ has varying finiteness properties depending on $n$; see \cite{bb:morse}. By Lemma~\ref{l:computing basics}\pref{i:kernel}, $\Ker(f)$ is computable.
\end{ex}

\begin{lemma}\label{l:stab is computable}
If the $G$-set $X$ is computable and $x\in X$, then the stabilizer $G_x$ in $G=(g_1, g_2, \dots)$ of $x$ is computable.
\end{lemma}

\begin{proof}  
This follows from Lemma~\ref{l:subset} applied to $G_x<G$ since we can algorithmically check if $g_i (x)=x$.
\end{proof}

\section{Finite presentations and finite index subgroups}\label{s:finite index}
The following lemma is useful for finding presentations of finite index subgroups of a finitely presented group. It is well-known (see, for example \cite[Chapter~2, Section~4.~The Reidemeister-Schreier Method]{ls:book}) but for completeness we provide a proof.

\begin{lemma} \label{finite actions}  There is an algorithm that takes as input:
\begin{itemize}
\item
a finite presentation for a computable group $G=(g_1, g_2,\dots)$;
\item 
the multiplication table for a finite group $Q$;
\item 
a computable surjection  $P: G \to Q$; and 
\item
a subgroup $Q'$ of $Q$ given as a finite list of elements of $Q$.
\end{itemize}
and outputs:
\begin{enumerate}
\item
a finite presentation for the subgroup $G':=P^{-1}(Q')$ of $G$; and
\item
finite sets of   left and right coset representatives for $G'<G$.
\end{enumerate}
\end{lemma}

\begin{remark}\label{r:compute 1}
 In some applications, $G$ will act   on a finite set $S$ and so we have a homomorphism $G\to \Perm(S)$ to the permutation group of $S$. $Q$ will be the image of this map and $P:G\to Q$ the induced map. This  will allow us to compute the multiplication table for $Q$.
\end{remark}

\begin{proof}[Proof of Lemma~\ref{finite actions}] 
(1): Let $G=\langle h_1, \dots, h_i\mid r_1, \dots , r_j\rangle$ be the given finite presentation for $G$ where the generators are in $\{g_1, \dots\}$ and the relations are words in the generators; 
 see Lemma~\ref{l:computing basics}\pref{i:word problem}.   Let $X_G$ denote its presentation 2-complex. We assume the reader is familiar with obtaining a finite presentation for a group $H$ from a finite based 2-complex with fundamental group $H$. Hence (1) is reduced to constructing the finite based cover $X_{G'}$ of $X_G$ whose fundamental group has image in the fundamental group of $X_G$ equal to $G'$.

Set $k:=[G:G']=[Q:Q']$ and note $k\le |Q|$. Then every index $k$ based cover of $X_G$ has $k\cdot i$ 1-cells and   $k\cdot j$ 2-cells. Further, if $|r_q|$ denotes the length of $r_q$ as a word in $\{h_1, \dots, h_i\}$, then each 2-cell in the cover has boundary of length at most $k\cdot \max\{|r_q|\mid 1\le q\le j\}$. Hence we can construct all based covers of $X_G$ of index $k$. Examine each in turn to check whether the image $K$ of its fundamental group in $X_G$ is $G'$.
This can be done by checking whether the image in $X_G$ of a generating set for the fundamental group of the cover has $P$-image contained in $Q'$. (Indeed, if so then $K<G'$ but both $K$ and $G'$ have the same index in $G$.) Since $G'$ has index $k$ in $G$, we are guaranteed that one of these covers satisfies $K=G'$. This completes the proof of (1).

(2): We find left coset representatives, the other case being symmetric. Using the hypotheses on $Q$, choose a set $S_Q$ of left cosets representatives for $Q'<Q$. Then $S_G \subset G$ is a set of left coset representatives for $G'<G$ 
if the restriction $P|S_G$ is injective with image $S_Q$. To find $g\in G$ with $P$-image   $q\in Q$, the $P$-image of a set of generators for $G$ generates $Q$ and in $Q$ we may write  $q$ in terms of these generators for $Q$.
\end{proof}

Given a short exact sequence $1\to N\to G\overset f\to Q\to 1$, we are interested in finding a finite presentation for $G$ from finite presentations for $N$ and $Q$.

\begin{lemma}  \label{l:bw}
Suppose $f:G\to Q$ is a computable surjection between computable groups $G$ and $Q$. 

Suppose we are given:
\begin{itemize}
\item
 a finite presentation for $N:=\Ker(f)$ and 
 \item
a finite presentation for $Q$ (for example if $Q$ is finite and we are given the multiplication table for $Q$ then this item is satisfied).  
\end{itemize}
Then we may find a finite presentation for $G$. (In fact, one is constructed in the proof.)  
\end{lemma}

\proof 
Suppose that the finite presentation for $N$ has generating set $\{g_{N,i}\mid i\in I\}\subset G$ and set of relators $\{r_{N,j}\mid j\in J\}$ and suppose that the finite presentation for $Q$ has generating set  $\{g_{Q,k}\mid k\in K\}$ and set of relators $\{r_{Q,l}\mid l\in L\}$.

For each $g_{Q,k}$, find an element $\hat g_{Q,k} \in G$ with image $g_{Q,k}$ in $Q$. This can be done algorithmically by iteratively searching for $g_i$ such that $f(g_i)=g_{Q,k}$.

Set $\hat r_{Q,l}=r_{Q,l}(\hat g_{Q,k}\mid k\in K)$, i.e.\ $r_{Q,l}$ is a word in $\{g_{Q,k}\mid k\in K\}$ and $\hat r_{Q,l}$ denotes the same word in $\{\hat g_{Q,k}\mid k\in K\}$.  The image of $\hat r_{Q,l}$ in $Q$ represents $1_Q$ and so there is a word $n_{Q,l}$ in $\{g_{N,i}\mid i\in I\}$ such that $\hat r_{Q,l}=n_{Q,l}$ in $G$.  Since $N$ is normal, $\hat g_{Q,k}(g_{N,i})\hat g_{Q,k}^{-1}=n_{N,i,Q,k}$ for some word $n_{N,i,Q,k}$ in $\{g_{N,i}\mid i\in I\}$.  Since $G$ is computable, $n_{Q,l}$ and  $n_{N,i,Q,k}$ can be found algorithmically; see Lemma~\ref{l:computing basics}\pref{i:word}. By \cite[Lemma~2.1]{bw:fp}, there is a finite presentation for $G$ with:
\begin{itemize}
\item
generating set $\{g_{N,i}, \hat g_{Q,k}\mid i\in I, k\in K\}$ and
\item
set of relators that is the union of:
\begin{itemize}
\item
$\{r_{N,j}\mid j\in J\}$;
\item
$\{\hat r_{Q,l}=n_{Q,l}\mid l\in L\}$; and
\item
$\{\hat g_{Q,k}(g_{N,i})\hat g_{Q,k}^{-1}=n_{N,i,Q,k}\mid i\in I,  k\in K\}$.
\end{itemize}
\end{itemize}
\endproof

\section{\MW-algorithms} \label{s:gersten}
Our solution of the conjugacy problem for $\upgn$ in $\Out(\f)$ will use a generalization of an algorithm of Gersten that in turn generalizes algorithms of Whitehead and McCool. This section is devoted to describing these generalizations.

A set equipped with an action by a group $G$ is a {\it $G$-set}.   We will only consider {\it computable} $G$-sets; see Definition~\ref{d:computable}.

\begin{definition}[Property {\MW}]\label{d:weak MW} A computable $G$-set $X$ {\it satisfies property} \MW\ (for McCool and Whitehead) if it comes equipped with  an algorithm that takes as input $x,y\in X$ and outputs:   
\begin{description}
\item[{\M}:]
Finite presentations for $G_x:=\{\theta\in G\mid \theta(x)=x\}$ and for $G_y:=\{\theta\in G\mid \theta(y)=y\}$; and
\item[{\W}:]
 {\tt YES} or {\tt NO} depending on whether or not there exists $\theta\in G$ such that $\theta(x)=y$ together with such a $\theta$ if {\tt YES}.
\end{description}
We call such an algorithm an \MW-{\it algorithm.}  Sometimes we refer to an algorithm that satisfies item \M\ [resp.\ item\ \W] as an  \M-{\it algorithm} [resp.\  \W-{\it algorithm}]. Recall that  $G_x$ and $G_y$ are computable by Lemma~\ref{l:stab is computable} .
\end{definition}

Of course our interest here is in the case $G=\Out(\f)$ where the second bullet is associated with J.H.C.~Whitehead \cite{jhcw:certain, jhcw:equivalent} whose algorithm  decides if there is $\theta$ taking one finite ordered set of conjugacy classes in $\f$ to another and produces such a $\theta$ if one exists. The first bullet is associated with McCool \cite{jm:fp} whose algorithm produces a finite presentation for the stabilizer of a finite ordered set of conjugacy classes of elements of $\f$. S.~Gersten generalized the algorithms of Whitehead and McCool to  finite sequences in the $\Out(\f)$-set $\cC(\f)$; see Notation~\ref{n:C}.  ($\cC(\f)$ is shown to be a computable $\Out(\f)$-set at the beginning of Section~\ref{s:ultimate atoms}.) We state Gersten's result in a slightly weakened form.

\begin{thm}[{\cite[Theorems W\&M]{sg:whitehead}}, see also \cite{sk:gersten} and \cite{bfh:gersten}]\label{t:original}
The action of $\Out(\f)$ on the set of finite ordered sets in $\cC(\f)$ satisfies property \MW.
\end{thm}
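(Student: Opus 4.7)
The plan is to follow Gersten's extension of the Whitehead--McCool approach, replacing conjugacy classes of elements by Stallings' representation of conjugacy classes of finitely generated subgroups as finite labeled core graphs immersed in the rose.

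First I would define a complexity function on ordered $k$-tuples $\vec c = (c_1, \ldots, c_k)$ in $\cC(\f)$. For each $c_i$, let $\Gamma_{c_i}$ be the (algorithmically computable) Stallings core graph representing $c_i$; set $|\vec c| := \sum_i \#E(\Gamma_{c_i})$. Next I would recall Whitehead's finite generating set $\mathcal W \subset \Aut(\f)$ consisting of Whitehead automorphisms, and observe that these descend to a finite set of generators of $\Out(\f)$ that act effectively on Stallings graphs: the image $\theta(c_i)$ is obtained from $\Gamma_{c_i}$ by a controlled sequence of stars and folds, so $|\theta(\vec c)|$ is computable.

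The core analytic step is \emph{peak reduction} for tuples of subgroups, generalizing Whitehead's classical lemma. Precisely: if $\vec c$ is not of minimum complexity in its $\Out(\f)$-orbit, then some single Whitehead generator $w\in\mathcal W$ satisfies $|w(\vec c)| < |\vec c|$; and if $w_1, w_2 \in \mathcal W$ both satisfy $|w_i(\vec c)| \ge |\vec c|$ while $|w_2 w_1(\vec c)| < |\vec c|$, then there is a bounded-length sequence of $\mathcal W$-moves connecting $\vec c$ and $w_2 w_1(\vec c)$ through tuples of complexity no greater than $|\vec c|$. This is Gersten's adaptation of the classical peak-reduction argument, carried out by analyzing how the ``Whitehead hypergraph'' at a vertex of the rose interacts with the Stallings graphs $\Gamma_{c_i}$. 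I expect this to be the main obstacle; the rest of the argument is a mechanical consequence.

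Given peak reduction, the algorithms are straightforward. For \W: on input $\vec x, \vec y$, repeatedly apply the unique Whitehead generator that strictly decreases complexity until we reach local minima $\vec x_0, \vec y_0$ (recording the composite conjugators $\theta_x, \theta_y$). Do a breadth-first search in the finite set $M(\vec x_0) := \{\vec z : |\vec z|=|\vec x_0|,\ \vec z \text{ is } \mathcal W\text{-connected to } \vec x_0 \text{ through min-complexity tuples}\}$; by peak reduction this is a complete list of minimum-complexity tuples in the orbit of $\vec x$. Output {\tt YES} iff $\vec y_0 \in M(\vec x_0)$, and in that case compose the BFS path with $\theta_x$ and $\theta_y^{-1}$ to produce a $\theta$ with $\theta(\vec x) = \vec y$. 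For \M: following McCool, build the finite 2-complex $K$ whose $0$-cells are elements of $M(\vec x_0)$, whose $1$-cells are labeled by Whitehead generators preserving $M(\vec x_0)$, and whose $2$-cells are attached using short stabilizing relations among Whitehead generators (finitely many of bounded length). The stabilizer of $\vec x_0$ is then the fundamental group of $K$ at $\vec x_0$, which yields an explicit finite presentation; conjugating by $\theta_x$ gives a finite presentation for $\Stab(\vec x)$.
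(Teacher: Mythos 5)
The paper does not actually prove Theorem~\ref{t:original}: it is quoted verbatim from Gersten (\cite{sg:whitehead}, with \cite{sk:gersten} for the stabilizer presentations), in a slightly weakened form, and no argument is given in the text. So there is no in-paper proof to measure you against; the relevant comparison is with the cited sources, and your sketch does reconstruct their strategy faithfully — Stallings core graphs as canonical representatives, edge-count complexity, Whitehead automorphisms, peak reduction, and McCool's complex of minimal-complexity tuples to extract a finite presentation of the stabilizer. In that sense you have correctly identified the route.

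As a self-contained proof, however, the proposal has a genuine gap, and you have flagged it yourself: the peak-reduction lemma for tuples of finitely generated subgroups is the entire mathematical content of Gersten's theorem, and you assert it rather than prove it. Everything after that paragraph (the breadth-first search for \W, the McCool-style 2-complex for \M) is indeed mechanical, but only once one knows (i) peak reduction holds for the chosen complexity on Stallings graphs, and (ii) McCool's hypotheses are met, i.e.\ the finitely many relations among Whitehead automorphisms suffice to fill in the 2-cells so that $\pi_1$ of the complex really is the stabilizer. Point (ii) is not automatic from (i); it is the second half of McCool's/Gersten's work. Two smaller inaccuracies: a complexity-decreasing Whitehead move need not be unique (harmless — any choice works), and your statement of the peak case should be the standard one (a peak $|w_1(\vec c)|\ge|\vec c|\le |w_2w_1(\vec c)|$ with at least one inequality strict is replaced by a chain of moves staying strictly below the peak), otherwise the termination/connectedness argument for the set $M(\vec x_0)$ does not quite go through as written. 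If your intent was to cite peak reduction for subgroups as known, then your write-up is an accurate summary of the literature's proof, which is all the paper itself does.
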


We will define algebraic invariants for elements of $\upgn$; see Section~\ref{s:algebraic invariants}. An obstruction to $\phi,\psi\in\upgn$ being conjugate in $\Out(\f)$ is the existence of a $\theta\in\Out(\f)$ taking the algebraic invariants of $\phi$ to those of $\psi$. More specifically, if such a $\theta$ does not exist then $\phi$ and $\psi$ are not conjugate. If such a $\theta$ does exist, then we replace $\phi$ by $\phi^\theta$ (or $\psi$ by $\psi^{(\theta^{-1})}$) and reduce to the case where the algebraic invariants of $\phi$ and $\psi$ agree. One step  in our algorithm for the conjugacy problem for $\upgn$ in 
$\Out(\f)$ will be to check whether such a $\theta$ exists and to produce one if so. 

Some of our invariants are elements of $\cC(\f)$ and so fit nicely into the setting of Gersten's theorem.  We will extend Gersten's theorem so that it applies to all our invariants. These invariants   are best described in terms of {\it iterated sets} of elements of $\cC(\f)$, or more generally   in terms of sets with finite-to-one maps to iterated sets. Roughly,  an iterated set in a $G$-set $\catA$ is a finite set consisting of elements of $\catA$ and previously produced iterated sets. The set may be ordered or not. We commonly take $\catA$ to be $\cC(\f)$.

There are two main results. The first, Proposition~\ref{p:enhanced 1}, promotes \MW-algorithms for finite ordered sets in $\catA$ to \MW-algorithms for the set $\ecat(\catA)$ (of equivalence classes) of iterated sets in $\catA$. More specifically, it states that if the $G$-action on finite ordered subsets of $\catA$ satisfies property \MW\ then so does the $G$-action on $\ecat(\catA)$. The second, Corollary~\ref{cor:hat MW}, is a method for enlarging $\catA$.

After reviewing our invariants in Section~\ref{s:review of the invariants} and defining the algebraic invariants in Section~\ref{s:algebraic invariants}, we apply our generalized Gersten's algorithm to obtain a reduction of the conjugacy problem for $\upgn$ in $\Out(\f)$ to Proposition~\ref{p:conjugacy in X} in Lemma~\ref{l:reduction}.

\subsection{Iterated sets and their equivalence classes}\label{s:iterated sets}

\def\atom{Atom}
\def\nonatom{NonAtom}
\def\bX{\mathbb X}
\def\bY{\mathbb Y}
\def\leaves{\mathcal L}
\def\exit{\mathcal E}

\begin{definition} \label{d:rooted tree}
A {\it rooted tree} $(T, *)$ is a finite, simplicial, directed tree $T$ with a base point $*$ called the {\it root}. A valence 0 vertex  (i.e.\ $T=\{*\}$) or a valence one vertex that is not the root is a {\it leaf}. The set of leaves in $T$ is denoted $\leaves(T)$. All edges are oriented away from the root. The set of edges exiting a vertex $x\in T$ is denoted $\exit_T(x)$. Paths are directed. We also may give some vertices an extra structure: a vertex $x$ that is not a leaf is {\it ordered} if an order has been imposed on $\exit_T(x)$. A vertex that is not a leaf and that is not ordered is {\it unordered}.

We view rooted trees $(T,*)$ as combinatorial objects. In particular, edges are specified by ordered pairs of vertices.  
For technical reasons having to do with computability, we require that all vertices of the trees we consider lie in a  set $V$ that we fix once and for all.  (For our purposes, one can take  $V$ to be  $\mathbb N$.) 
 \end{definition}

\begin{ex}
We will draw rooted trees with the root at the top. Ordered vertices are indicated by using dashed lines for its exiting edges. The imposed ordering is displayed from left to right. In the rooted tree below only the root is ordered. There are 4 leaves.
$$
\begindc{\undigraph}[150]
\obj(0,0)[00]{}
\obj(-1,-2)[-1-2]{}
\obj(0,-2)[0-2]{}
\obj(1,-2)[1-2]{}
\obj(1,-4)[1-4]{}
\obj(2,-4)[2-4]{}
\mor{00}{-1-2}{}[\atright,\dashline]
\mor{00}{0-2}{}[\atright,\dashline]
\mor{00}{1-2}{}[\atright,\dashline]
\mor{1-2}{1-4}{}[\atright,\solidline]
\mor{1-2}{2-4}{}[\atright,\solidline]
\enddc
$$
\end{ex}

\begin{definition}[Iterated set]
An {\it iterated set in a set $\catA$} is a rooted tree such that each leaf is labeled by an element of $\catA$. Specifically, an iterated set in $\catA$ is a pair $\big((T,*), \chi\big)$ where $(T,*)$ is a rooted tree and $\chi:\leaves(T)\to\catA$ is a function. We do not assume that $\chi$  is one-to-one. We will often use sans serif capital letters for iterated sets and write, for example, $\objX=\big((T,*),\chi \big)$. The {\it set of atoms of $\objX$} is $\chi(\L(T))$. We sometimes refer to $\catA$ as {\it the set of atoms}. For $l\in\L(T)$, we sometimes refer to $\chi(l)$ as the {\it label} or {\it atom} of $l$. 
 The set of iterated sets in $\catA$ is denoted $\cat(\catA)$. 
 \end{definition}

\begin{ex}\label{e:ffs seq}  
If we take $\catA=\cC(\f)$, then a nested sequence $\vec \F = \F_1\sqsubset \dots \sqsubset \F_N$ of free factor systems determines an iterated set in $\catA$ as follows. First we identify each free factor system $\F_i=\{[F_{i,1}],\dots,[F_{i,m_i}]\}$  with an iterated set:
$$
\begindc{\undigraph}[125]
\obj(0,0)[00]{}
\obj(-3,-3)[-3-3]{$[F_{i,1}]$}[\south]
\obj(-1,-3)[-1-3]{$[F_{i,2}]$}[\south]
\obj(3,-3)[3-3]{$[F_{i,m_i}]$}[\south]
\mor{00}{-3-3}{}[\atright,\solidline]
\mor{-1-3}{3-3}{$\dots$}[\atleft,\nullarrow]
\mor{00}{-1-3}{}[\atleft,\solidline]
\mor{00}{3-3}{}[\atright,\solidline]
\enddc
$$
Then $\vec \F$ determines the ordered set $\{\F_1,\ldots,\F_N\}$:
$$
\begindc{\undigraph}[140]
\obj(0,0)[00]{}
\obj(-3,-3)[-3-3]{$[F_{1,1}]$}[\south]
\obj(-1,-3)[-1-3]{$[F_{1,2}]$}[\south]
\obj(3,-3)[3-3]{$[F_{1,m_1}]$}[\south]
\mor{00}{-3-3}{}[\atright,\solidline]
\mor{00}{-1-3}{}[\atleft,\solidline]
\mor{00}{3-3}{}[\atright,\solidline]
\obj(10,0)[100]{}
\obj(7,-3)[7-3]{$[F_{N,1}]$}[\south]
\obj(9,-3)[9-3]{$[F_{N,2}]$}[\south]
\obj(13,-3)[13-3]{$[F_{N,m_N}]$}[\south]
\mor{100}{7-3}{}[\atright,\solidline]
\mor{100}{9-3}{}[\atleft,\solidline]
\mor{100}{13-3}{}[\atright,\solidline]
\obj(5,3)[53]{}
\mor{-1-3}{3-3}{$\dots$}[\atright,\nullarrow]
\mor{9-3}{13-3}{$\dots$}[\atright,\nullarrow]
\mor{00}{100}{$\dots$}[\atright,\nullarrow]
\mor{53}{00}{}[\atleft,\dashline]
\mor{53}{100}{}[\atright,\dashline]
\enddc
$$
\end{ex}

\def\catX{{\mathbb X}}
\def\batoms{\mathbb {\atoms}}
\def\morf{{\sf f}}
\def\morg{{\sf g}}
\def\morh{{\sf h}}
\def\boldf{\mathbf f}
\def\boldg{\mathbf g}
\def\morH{\sf  H}

\begin{definition}
\label{d:morphism}
Let $\objX=((T,*),\chi)$ and $\objX'=((T',*'),\chi')$ be iterated sets in $\catA$. 
\begin{itemize}
\item
An {\it order-preserving simplicial isomorphism} $f:(T,*)\to (T',*')$ is a simplicial isomorphism that satisfies:
\begin{enumerate}
\item
a vertex $x$ of $T$ is ordered iff $f(x)$ is ordered and the induced map $\exit_T(x)\to\exit_{T'}\big(f(x)\big)$ is order-preserving.
\end{enumerate}
\item
An {\it equivalence} $\morf:\objX\to\objX'$ is an order-preserving simplicial isomorphism $f:(T,*)\to (T',*')$ that additionally satisfies:
\begin{enumerate}[(2)]
\item
for $l \in\L(T)$, $\chi(l)=\chi'(f(l))$.
\end{enumerate}
Clearly equivalence induces an equivalence relation on $\cat(\catA)$. 
\item
$\ecat(\catA)$ denotes the set of equivalence classes of iterated sets.
\end{itemize}
\end{definition}

\begin{remark}
We will not need this, but if $\catA$ is the set of objects of a category $\hat\catA$, then naturally so are $\cat(\catA)$ and $\ecat(\catA)$. A {\it morphism} $((T,*),\chi))\to ((T',*'),\chi')$ is an order-preserving simplicial isomorphism $f:(T,*)\to (T',*')$ together with a function $m:\chi(\L(T))$ into the morphisms of $\hat \catA$ such that, for $l\in\L(T)$, $m(\chi(l))\in\Hom(\chi(l),\chi'(f(l)))$. An earlier version of this paper used a simplified, but more restrictive variant of this category, which was ultimately not needed.
\end{remark}

\subsection{Promoting property $\MW$}
\begin{definition} \label{d:A}
 Suppose $G$   is a group and $\catA$ is a $G$-set. Then $\cat(\catA)
$ and $\ecat(\catA)$ are $G$-sets with actions given as follows. If $\theta\in G$ and $\objX=((T,*),\chi)$ then $\theta(\objX):=\big((T,*), \theta\circ\chi\big)$, i.e.\ $\theta(\objX)$ is obtained by relabeling $\leaves(T)$ according to $\theta$. The $G$-action descends to $\ecat(\catA)$. 
\end{definition}

We want $\cat(\catA)$ and $\ecat(\catA)$ to be computable. This is the case if our set $V$ of vertices and $\catA$ are computable.

\begin{lemma}\label{l:computable}
\begin{enumerate}
\item
If $V$ is a computable set then the set of rooted trees with vertices in $V$  is computable. 
\item
If additionally $\catA$ is a computable set then the sets $\cat(\catA)$ and $\ecat(\catA)$ are computable.
\item\label{i:computable 3}
If additionally $\catA$ is a computable $G$-set then the $G$-sets $\cat(\catA)$ and $\ecat(\catA)$ are computable.
\end{enumerate}
\end{lemma}

\begin{proof}
(1): We view rooted trees $(T,*)$ as combinatorial objects.  In particular, edges are specified by ordered pairs of vertices.  To completely specify $(T,*)$  we also choose a root vertex, designate some vertices as ordered and  choose an order on exiting edges of those  vertices. Rooted trees  $T$ with vertices in $V$  can be 
 computably enumerated using, say, the sum $|T|$ of the number of vertices and largest index among the vertices of $T$. That is, to enumerate the set of rooted trees, first list all those with $|T|=1$, then 2, etc. Two rooted trees are equal iff the they have the same vertices, edges, root, ordered vertices and same order on outgoing edges of ordered vertices.
      
(2): For an iterated set $\objX=((T,*), \chi)$, let $|\objX|$ denote the sum of  the number of vertices of $|T|$ and the largest index of a label \big(an element of $\chi(\L(T))$\big).  $\cat(\catA)$ may be countably enumerated using $|\objX|$. To enumerate $\cat(\catA)$, for example, list all $\objX$ with $|\objX|=1$, then 2, etc. Two iterated sets are equal iff the underlying rooted trees are equal and the functions on the leaves are equal. The first condition can be checked by (1) and the second can be checked since $\catA$ is computable. We can use the same enumeration for $\ecat(\catA)$. Here if $\objX_1:=((T_1,*_1),\chi_1)$ and $\objX_2:=((T_2, *_2),\chi_2)$ represent respectively $\objQ_1$ and $\objQ_2$ in $\ecat(\catA)$ then $\objQ_1=\objQ_2$ iff $\objX_1$ and $\objX_2$ are equivalent and this is a finite check. Indeed, finitely enumerate the set $S$ of order-preserving simplicial isomorphisms $f:(T_1,*_1) \to (T_2,*_2)$. If $S$ is empty then $\objQ_1\not=\objQ_2$. Otherwise, if some $f\in S$ is an equivalence then $\objQ_1=\objQ_2$ and if not then $\objQ_1\not=\objQ_2$. 

(3): Since $\catA$ is a computable $G$-set, it is a finite check whether $\theta(\objX_1)=\objX_2$ and also whether $\theta(\objX_1)$ and $\objX_2$ are equivalent.
\end{proof}

\begin{ass}
Going forward, we assume that our fixed set $V$ of vertices is computable; see Definition~\ref{d:rooted tree}.  In all applications, $\catA$ will be computable.
\end{ass}

\begin{notn}\label{n:cat conventions}
Unless otherwise specified:
\begin{itemize}
\item
$\catA$ denotes  a computable $G$-set;
\item
$\objX$, $\objX'$, ... denote elements $((T,*),\chi)$, $((T',*'),\chi')$, ... of $\cat(\catA)$;
\item
$\objQ$, $\objQ'$, ... denote elements of $\ecat(\catA)$ and are represented by $\objX$, $\objX'$, ... ; and
\item
an equivalence $\morf:\objX\to\objX'$ is given by $f:(T,*)\to (T',*')$.
\end{itemize}
\end{notn}

\begin{notn}\label{n:seq}
\begin{itemize}
\item
The map $\catA\to\ecat(\catA)$ determined by $a\mapsto ((*,*), *\mapsto a)$ is a $G$-equivariant inclusion. In other words, map $a$ to the trivial tree with vertex labeled $a$. Thus we may think of $\catA$ as a subset of $\ecat(\catA)$.
\item
$\seqor(\catA)$ denotes the subset of $\ecat(\catA)$ represented by iterated sets in which $*$ is ordered and $*$ is the initial endpoint of every edge of $T$. $\seqor(\catA)$ is $G$-invariant. There is an obvious $G$-invariant bijection between the set of non-empty, finite, ordered sequences in $\catA$ and $\seqor(\catA)$. We pass back and forth between these two points of view whenever convenient.
\item
The $G$-set $\sequn(\catA)$ is defined analogously where $*$ is unordered. There is an obvious $G$-invariant bijection between the set of non-empty, finite, multi-sets in $\catA$ and $\sequn(\catA)$.
\end{itemize}
\end{notn}

\begin{prop}[Promoting {$\MW$}]\label{p:enhanced 1}
Let $\catA$ be a computable $G$-set. If $\seqor(\catA)$ satisfies property $\MW$ then so does 
$\ecat(\catA)$.
\end{prop}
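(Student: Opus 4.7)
The plan is to prove the stronger statement that $\seqor(\cat(\catA))$ satisfies property $\MW$, from which the proposition follows by restricting to singleton sequences. The induction is on the complexity $N(\objX^1,\ldots,\objX^m) := \sum_{i=1}^m |V(T_i)|$, the total number of vertices in the underlying trees. In the base case every $\objX^i$ is an atom (its tree is a single vertex) and the tuple is an element of $\seqor(\catA)$, so the hypothesis applies verbatim.

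For the inductive step, decompose each non-atom $\objX^i$ into its root and the (ordered or unordered) collection of children $\objX^i_1,\ldots,\objX^i_{k_i}$, each an iterated set of strictly smaller vertex count. Form the flattened tuple $C = (\objX^1_1,\ldots,\objX^1_{k_1},\objX^2_1,\ldots,\objX^m_{k_m})$ by listing the children in order, fixing an arbitrary ordering for children of unordered roots and leaving any atomic $\objX^i$ as a single entry; flatten a target tuple $((\objX^1)',\ldots,(\objX^m)')$ similarly to obtain $C'$. The complexity strictly decreases whenever at least one $\objX^i$ is a non-atom, so the inductive hypothesis applies to $C$ and $C'$.

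For the $\W$-part, observe that an element $\theta \in G$ takes $(\objX^i)$ to $((\objX^i)')$ iff the roots agree pairwise in ordered/unordered type and valence $k_i = k'_i$, and there is a choice of permutations $\sigma_i \in S_{k_i}$ (with $\sigma_i = \mathrm{id}$ whenever the $i^{th}$ root is ordered) such that $\theta(\objX^i_j) = (\objX^i_{\sigma_i(j)})'$ for all $i,j$. Enumerate the finitely many admissible tuples $(\sigma_i)$ and apply the inductive $\W$-algorithm to $C$ against the correspondingly permuted $C'$ for each; return $\tt{YES}$ with a witness as soon as one choice succeeds, and $\tt{NO}$ otherwise.

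For the $\M$-part, let $H := \Stab_G(C)$; the inductive $\M$-algorithm provides a finite presentation of $H$. Any $\theta \in H$ fixes each $\objX^i_j$ in place and hence preserves each $\objX^i$, so $H \leq \Stab_G(\objX^1,\ldots,\objX^m)$. The homomorphism $\Stab_G(\objX^1,\ldots,\objX^m) \to \prod_i S_{k_i}$ recording how $\theta$ permutes children at each root has kernel exactly $H$ and finite image $Q$ (with $S_{k_i}$ understood as trivial for ordered roots). Determine $Q$ by running an inductive $\W$-query for each element of $\prod_i S_{k_i}$ to test realizability by some $\theta \in G$, recording a witnessing lift each time. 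The resulting lifts, together with the Cayley table of $Q$ (which in particular furnishes a finite presentation of $Q$) and the finite presentation of $H$, meet the hypotheses of Lemma~\ref{l:bw}, which assembles a finite presentation of $\Stab_G(\objX^1,\ldots,\objX^m)$. The main technical burden is the careful case analysis around ordered versus unordered internal vertices and maintaining strict decrease of complexity under flattening; the passage from kernel to full stabilizer via Lemma~\ref{l:bw} is then standard.
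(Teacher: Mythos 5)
Your overall strategy --- a structural induction, one level of the tree at a time, proving the stronger statement for mixed ordered tuples and flattening each object into the collection of its root's children --- is a genuinely different route from the paper, which flattens in a single step to the full ordered tuple of leaf labels, enumerates the finitely many based simplicial isomorphisms between the two trees, and (for \M) uses the canonical homomorphism to the finite group $\Aut(\objQ)$ followed by Lemma~\ref{l:bw}. Your \W-step is correct: a single $\theta$ carries the tuple to the target if and only if some admissible tuple of position permutations matches children entrywise, and this is a finite disjunction of inductive \W-queries of strictly smaller complexity.

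Your \M-step, however, has a genuine gap. The map $\Stab_G(\objX^1,\ldots,\objX^m)\to\prod_i S_{k_i}$ ``recording how $\theta$ permutes children at each root'' is not well defined: the leaf labeling is not assumed injective, and distinct children of an unordered vertex may coincide as atoms or as elements of $\Ob{\cat(\catA)}$ (this is exactly the situation, e.g.\ the multiset $\Fix(\phi)$, that makes the proposition nontrivial). For such a $\theta$ the set of realizing position permutations is an entire coset of the subgroup $A\le\prod_i S_{k_i}$ of permutations preserving the child values, so what you canonically obtain is a map to the coset space $A\backslash\prod_i S_{k_i}$, not a homomorphism; in particular the assertion ``kernel exactly $H$'' is not meaningful, and the short exact sequence required by Lemma~\ref{l:bw} does not exist as stated. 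The repair is standard and keeps your architecture intact: record instead the induced permutation of the set of \emph{distinct} child values at each root (equivalently, map to the finite automorphism groups of the $\objX^i$, which is how the paper sidesteps the issue via $\Aut(\objQ)$). That map is an honest homomorphism, its kernel is exactly your $H=\Stab_G(C)$, and its finite image is computable by precisely the (multiplicity-preserving) \W-queries you describe, after which Lemma~\ref{l:bw} applies as you intended.
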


\proof
We follow the conventions in Notation~\ref{n:cat conventions}. 
First we provide a \W-algorithm for $\ecat(\catA)$; i.e.\ an algorithm that either finds $\theta \in G$ satisfying $\theta(\objQ) = \objQ'$ or concludes that there is no such $\theta$. 
Finitely enumerate the set $S$ of order-preserving simplicial isomorphisms $f:(T,*) \to (T',*')$. If $S$ is empty then return {\tt NO}. Otherwise, start with the first element $f$ of $S$. By hypothesis there is a \W-algorithm that either finds $\theta \in G$ 
such that $\theta(\chi(l))=\chi'(f(l))$ for each $l\in\L(T)$ or concludes that no such $\theta$ exists.   If $\theta$ is found then $f$ gives an equivalence $\theta(\objX)\to\objX'$ and our \W-algorithm returns {\tt YES} and $\theta$.  If no such $\theta$ exists, move on to the next element of $S$ and try again.  If after considering each element of the finite set $S$ we have not returned {\tt YES} then return {\tt NO}. (Equivalently we could make the queries indexed by $S$ in parallel. Note that a different choice of representatives $\objX$, $\objX'$ for $\objQ$, $\objQ'$, would give the same queries.)

 The stabilizer $G_{\objQ}$ in $G$ of $\objQ$ is computable by Lemma~\ref{l:stab is computable}. For the \M-algorithm, we will produce a finite presentation for $G_{\objQ}$ by applying Lemma~\ref{l:bw} to the short exact sequence induced by $\pi:G_{\objQ}\to \Perm(A)$ where $A$ denotes the set $\chi(\L(\objX))$ of atoms of $\objX$. Since the kernel of $\pi$ is the subgroup of $G$ fixing each element of $A$, we can use the \M-algorithm for $\seqor(\catA)$ to produce a finite presentation for this kernel. 

To apply Lemma~\ref{l:bw}, it remains to produce an element of $G_{\objQ}$ realizing each element of the image of $\pi$. This is done as follows. Given $\sigma\in\Perm(A)$, use the \W-algorithm for $\seqor(\catA)$ to produce $\theta\in G$ realizing $\sigma$ if such exists. If there is no such $\theta$ then $\sigma$ is not in the image of $\pi$. Finally, use the computability of $\ecat(\catA)$ (Lemma~\ref{l:computable}(\ref{i:computable 3})) to check if $\theta(\objX)$ is equivalent to $\objX$. If it is then $\sigma$ is in the image of $\pi$ (and is realized by $\theta$) and otherwise it is not. 
(As above with the \W-algorithm, it is easy to see that the choice of representative $\objX$ for $\objQ$ is immaterial. Also, the choice of $\theta$ does not matter.)
\endproof

For reference  we record the following consequence of the previous proof (really just definitions). 
\begin{cor}\label{c:finite index}
If $\objX=((T,*),\chi)$ represents $\objQ\in\ecat(\catA)$ then the subgroup of $G$   fixing each $\chi(l)$, $l\in\leaves(T)$, has finite index in the stabilizer $G_{\objQ}$ of $\objQ$. \qed
\end{cor}
 
\subsection{More atoms}\label{s:more atoms}
Proposition~\ref{p:enhanced 1} concludes, under conditions on $\catA$, that $\ecat(\catA)$ has property \MW. In this section, conclusions have the form $\ecat(\catA')$ satisfies property \MW\ where $\catA'$ is a $G$-set constructed from $\catA$ in various ways. Intuitively, we are enlarging our collection of useful sets of atoms.

\begin{notn}  \label{notn:explicit fibers}
Suppose $p:\hat Y\to Y$ is an equivariant map of   $G$-sets. For $y\in Y$ [resp. $\hat y\in \hat Y$], let $G_y$ [resp. $G_{\hat y}$] denote the stabilizer of $y$  [resp. $\hat y$] with respect to the action of $G$ on $Y$ [resp. $\hat Y$]. Let $F_{y}$ denote the fiber $p^{-1}(y)$. If $p(\hat y)=y$ then by $p$-equivariance $G_{\hat y}<G_y$ and $G_y$ acts on $F_{y}$ inducing a homomorphism $\rho_y: G_y \to \Perm(F_{y})$. (We declare the permutation group of the empty set to be trivial.)

In this setting, we say that {\it $p$ \explicit}\ if the $G$-sets $Y$ and $\hat Y$ are computable and $p$ comes equipped with an algorithm with input $y\in Y$ and output a finite enumeration of $F_y$.
\end{notn}

\begin{lemma}\label{l:explicit algorithms}
Suppose the $G$-map $p:\hat Y\to Y$ \explicit\ and that $Y$ satisfies property \M. Then
\begin{enumerate}
\item
there is an algorithm with input $y\in Y$, $\theta\in G_{y}$ and output $\rho_{y}(\theta)$; 
\item
there is an algorithm with input $y\in Y$ and output the multiplication table for $\rho_{y}(G_y)$; and
\item\label{i:explicit algorithms 3}
$\hat Y$ satisfies property \M.
\end{enumerate}
\end{lemma}

\begin{proof}
Let $y\in Y$. By Lemma~\ref{l:stab is computable}, $G_y$ is computable. Since $p$ \explicit, \compute\ the finite list of elements of $F_y$. Since $Y$ satisfies property \M\ we can produce a finite presentation for $G_y$.

(1): Since $\hat Y$ is computable, \compute\ the action of $\theta$ on $F_y\subset\hat Y$.

(2) follows by applying (1) to our generators of $G_y$.

(3): Since $G_{\hat y}$ is the $\rho_y$-preimage of the stabilizer $S$ of $\hat y$ in $\Perm(F_y)$, we can find a finite presentation for $G_{\hat y}$ by applying Lemma~\ref{finite actions}, taking $P$ to be the surjective homomorphism $G_y\to \rho_y(G_y)$ and $Q':=\rho_y(G_y)\cap S$.
\end{proof}

\begin{lemma}\label{l:composition}
Suppose $f:Z\to Y$ and $g:Y\to X$ each \explicit. Then the composition $h=g\circ f:Z\to X$ \explicit.
\end{lemma}

\begin{proof}
Since $f$ and $g$ each \explicit, the $G$-sets $X$, $Y$, and $Z$ are each computable.
Given $x\in X$, since $g$ \explicit, we can list the elements of $g^{-1}(x)$. Since $f$ \explicit, we can list the elements of  $f^{-1}(y)$ for each $y\in g^{-1}(x)$. We are done by noting $h^{-1}(x)=\sqcup \{f^{-1}(y)\mid y\in g^{-1}(x)\}$.
\end{proof}

\begin{lemma} \label{l:finite fibers} Suppose that $p:\hat Y\to Y$ is a $G$-equivariant map of $G$-sets such that:
\begin{itemize}
\item
$p$ \explicit\ and
\item
$Y$ satisfies property \MW.
\end{itemize}
Then $\hat Y$ satisfies property \MW. 
\end{lemma}

\proof
$\hat Y$ satisfies property \M\ by Lemma~\ref{l:explicit algorithms}\pref{i:explicit algorithms 3}.

For the $\W$-algorithm, we use Notation~\ref{notn:explicit fibers}. Suppose that    $\hat z \in \hat Y$ and that $z = p(\hat z)$. Since $Y$ satisfies property \W\,  we can check whether or not there is $\theta_0$ such that $theta_0(y)=z$ and \compute\ such a $\theta_0$ if it exists.  If not, then return {\tt NO}. If yes, then $\hat y$ and $\theta_0^{-1}(\hat z)$ are in $F_{y}$ and there is an element of $G$ taking $\hat y$ to $\hat z$ iff there is an element $\theta\in G$ (necessarily in $G_y$) taking $\hat y$ to $\theta_0^{-1}(\hat z)$. Our goal becomes to check whether there is $\theta\in G_{y}$ such that $\rho_y(\theta) \in \Perm(F_{y})$ takes  $\hat y$ to $\theta_0^{-1}(\hat z)$ and to produce such a $\theta$ if so. This can be done using our finite generating set for $G_{y}$  and its action on $F_{y}$. If there is no such $\theta$ then return {\tt NO}. Otherwise return {\tt YES} and $\theta_0\cdot\theta$.
\endproof

\begin{construction}[Canonical extension]\label{c:I}
Suppose that $\catA$ and $\catA'$ are $G$-sets and that $I:\catA'\to\ecat(\catA)$ is $G$-equivariant. We now define a $G$-equivariant map $I_{\ecat}:\ecat(\catA')\to \ecat(\catA)$ that restricts to $I$ on $\catA'$ (see Notation~\ref{n:seq}). We call $I_{\ecat}$  the {\it canonical extension of $I$.}
$$
\begindc{\commdiag}[50]
\obj(0,0)[00]{$\catA'$}
\obj(20,0)[20]{$\ecat(\catA)$}
\obj(10,-10)[1-1]{$\ecat(\catA')$}
\obj(1,-2)[1-2]{}
\obj(1,-4)[1-4]{}
\obj(2,-4)[2-4]{}
\mor{00}{20}{$I$}[\atleft,\solidarrow]
\mor{00}{1-1}{}[\atright,\injectionarrow]
\mor{1-1}{20}{$I_{\ecat}$}[\atright,\dashArrow]
\enddc
$$
From $\objQ'\in\ecat(\catA')$, we construct $\objQ:=I_{\ecat}(\objQ')\in\ecat(\catA)$. We do this by constructing a representative $\objX=((T,*),\chi)$ for $\objQ$ from a representative $\objX'=((T',*'),\chi')$ for $\objQ'$ and, for each $l'\in\L(T')$, a representative $\objX_{l'}=((T_{l'},*_{l'}), \chi_{l'})$ for $\objQ_{l'}:=I(\chi'(l'))\in\ecat(\catA)$. Let $T$ be the tree obtained from $T'\sqcup(\sqcup\{T_{l'}\mid l'\in\L(T')\})$ by identifying $l'\in T'$ and $*_{l'}\in T_{l'}$. We declare the image of $*'$ in $T$ to be the root $*$ of $T$. The leaves of $T$ biject naturally with $\sqcup\{\L(T_{l'})\mid l'\in\L(T')\}$ and we define $\chi:\L(T)\to \catA$ by $\chi |\L(T_{l'}):=\chi_{l'}$.

We next show that $\objQ$ is independent of our choices of representatives, i.e.\ that $\objQ'\mapsto \objQ$ is well-defined. Let $\objY'=((S',\star'), \eta')$ also represent $\objQ'$ and so we have a simplicial isomorphism $f':T'\to S'$ inducing an equivalence $\objX'\to\objY'$. In particular, $I(\chi'(l')))=I(\eta'(f(l')))$ in $\ecat(\catA)$. Thus if $\objY_{f(l')}$ is a representative of $I(\eta'(f(l')))$, then we have equivalences $\objX_{l'}\to\objY_{f(l')}$ induced by simplicial isomorphisms $f_{l'}:(T_{l'},*_{l'})\to (S_{f(l')},\star_{f(l')})$ between the underlying trees. The map $f'$ and the $f_{l'}$'s induce a map $$T'\sqcup(\sqcup\{T_{l'}\mid l'\in\L(T')\})\to S'\sqcup(\sqcup\{S_{f_{l'}(l')}\mid l'\in\L(T')\})$$ which descends to a simplicial isomorphism $T\to S$ that induces an equivalence $\objX\to\objY$. Hence the map $I_{\ecat}:\ecat(\catA')\to\ecat(\catA)$ given by $\objQ'\mapsto\objQ$ is well-defined.

If we start the above construction with $\theta(\objX')$ instead of $\objX'$, the only difference is that $\chi$ is replaced by $\theta\circ \chi$, i.e.\ $I_{\ecat}$ is $G$-equivariant. Recall (Notation~\ref{n:seq}) that we identify $\catA'$ with the subset of elements of $\ecat(\catA')$ with underlying tree consisting of only the root. Thus $I$ and $I_{\ecat}$ agree on  $\catA'$.
\end{construction}

\begin{lemma} \label{l:general}
Let $\catA$ and $\catA'$ be $G$-sets and suppose the $G$-map $I:\catA'\to\ecat(\catA)$ \explicit. Then $I_\ecat:\ecat(\catA')\to\ecat(\catA)$ \explicit.
If additionally $\seqor(\catA)$ satisfies property \MW\ then $\ecat(\catA')$ satisfies property \MW.

\end{lemma}

\proof
\def\cQ{\mathcal Q}
Since $I$ \explicit, the $G$-set $\catA'$ is computable. The $G$-set $\ecat(\catA')$ is therefore computable by Lemma~\ref{l:computable}\pref{i:computable 3}. Let $\objQ\in\ecat(\catA)$ be given and let $\objX=((T,*),\chi)\in\cat(\catA)$ represent $\objQ$. Using notation as in Construction~\ref{c:I}, each $\objQ'$ in the fiber $F$ of $I_{\ecat}$ over $\objQ$ has a representative of the form $((T',*),\chi')$ where $(T',*)$ is a rooted subtree of $(T,*)$. Further, each leaf $l'$ of $T'$ then determines a rooted tree $(T_{l'},l')$ where $T_{l'}$ is the subtree of $T$ spanned by $l'$ and all leaves $l$ of $T$ with a directed path from $l'$ to $l$. We then get a representative $((T_{l'}, l'),\chi_{l'})$ for an element $\objQ_{l'}$ of $\ecat(\catA)$ where $\chi_{l'}$ is the restriction of $\chi$ to the leaves of $T_{l'}$. If there are elements $a'_{l'}\in\catA'$ such that $I(a'_{l'})=\objQ_{l'}$ and if we define $\chi'(l'):=a'_{l'}$ then $((T',*),\chi')$ represents an element of $F$ and all elements of $F$ have this form (for some choice of $T'$). Since $I$ \explicit, we can finitely enumerate the fiber of $I$ over $\objQ_{l'}$ and so also find a finite list in $\cat(\catA')$ of representatives for the elements of $F$. (It is easy to see that a different choice of representative $\objX$ for $\objQ$ produces $F$ with a perhaps different enumeration.)

If additionally $\seqor(\catA)$ satisfies property \MW,  then by Proposition~\ref{p:enhanced 1} so does $\ecat(\catA)$.
 That $\ecat(\catA')$ satisfies property \MW\ is now a direct consequence of Lemma~\ref{l:finite fibers}.
\endproof

\begin{cor}\label{cor:hat MW}
Let $\catA$ and $\catA_i$, $i=1,\dots,k$, be $G$-sets with $\catA$ computable. Suppose that $\seqor(\catA)$ satisfies property \MW\ and that $I_i:\catA_i\to\ecat(\catA)$ is $G$-equivariant and \explicit. Then the induced map $\sqcup_i \catA_i\to\ecat(\catA)$ \explicit, as does $\ecat(\sqcup_i \catA_i)\to\ecat(\catA)$, and $\ecat(\sqcup_i\catA_i)$ satisfies property \MW.
\end{cor}

\begin{proof}
It is apparent that, since $I_i$ \explicit, so does $\sqcup_i \catA'_i\to\ecat(\catA)$. The rest of the corollary then follows directly from Lemma~\ref{l:general}.
\end{proof}

\begin{cor}\label{cor:hatMW 2}
Suppose $\catA'$ is a computable $G$-set and $\seqor(\catA')$ satisfies property \MW. Then:
\begin{enumerate}
\item
For $k=2, 3, \dots$, inductively define $\ecat_k(\catA'):=\ecat(\ecat_{k-1}(\catA'))$ where $\ecat_1(\catA'):=\ecat(\catA')$. The $G$-set $\ecat_k(\catA')$ satisfies property \MW.
\item\label{i:cor:hatMW 2 2}  
Here we use Notation~\ref{n:seq}. For $i=1, 2, \dots$, let $s_i\in \{\seqor, \sequn\}$. Set $S_1{(\catA')}:=s_1(\catA'))\subset \ecat(\catA')$ and inductively define $S_k(\catA'):=s_k(S_{k-1}(\catA'))\subset \ecat_k(\catA')$. The $G$-set $S_k(\catA')$ satisfies property \MW. The natural map $S_k(\catA')\subset \ecat_k(\catA')\to\ecat_{k-1}(\catA')\to\dots\to\ecat(\catA')$ is injective where $\ecat_i(\catA')\to\ecat_{i-1}(\catA')$ is the canonical extension (Construction~\ref{c:I}) of the identity map of $\ecat_{i-1}(\catA')$.  
\end{enumerate}
\end{cor}

\begin{proof}
(1): An application of Proposition~\ref{p:enhanced 1} gives that $\ecat(\catA')$ satisfies property \MW.  Suppose that, for $k\ge 2$, we have that $\ecat_{k-1}(\catA')$ satisfies property \MW. Since the identity map of $\ecat_{k-1}(\catA')$ \explicit, so does   $\ecat_{k}(\catA')\to\ecat_{k-1}(\catA')$ by Lemma~\ref{l:general}. By Lemma~\ref{l:finite fibers},
$\ecat_{k}(\catA')$ also satisfies property \MW.

The first conclusion in (2) follows from (1) since $S_k(\catA')\subset\ecat_k(\catA')$, and an \MW-algorithm for $\ecat_k(\catA')$ provides an \MW-algorithm for $S_k(\catA')$.  To prove the injectivity statement by induction, we show that if $\catA''$ is a $G$-set and $f:\catA''\to\ecat(\catA')$ is injective, then the restriction of the canonical extension $s_i(\catA'')\to\ecat(\catA')$ is also injective for $s_i\in \{\seqor, \sequn\}$.

Suppose that $s_i=\seqor$ and that $(a_1,\ldots, a_r), (b_1,\ldots, b_s) \in  s_i(\catA'')$.   If the images $(f(a_1),\ldots,f(a_r))$ and $(f(b_1),\ldots f(b_s))$ under the canonical map are equal, then $f(a_k)=f(b_k)$, $k=1, \dots, m=n$. Since $f$ is injective, $a_k=b_k$. The case where $s_i=\sequn$ is similar. The inductive proof is left to the reader.
\end{proof}

\begin{remark}\label{r:these are iterated sets}
Via the natural map in Corollary~\ref{cor:hatMW 2}\pref{i:cor:hatMW 2 2}, we sometimes view $S_k(\catA')$ as a subset of $\ecat(\catA')$.   
 \end{remark}

\begin{ex} \label{ex:identify with catA}  Let $\catA=\cC(\f)$. A free factor system is a finite unordered set of free factors and so may be interpreted as an element of $\sequn(\catA)$; cf.~Example~\ref{e:ffs seq}. A special chain is an ordered set of free factor systems and so gives an element of $\seqor\sequn(\catA)$.  The set of special chains associated to $\phi$ gives an element of $\sequn\seqor\sequn(\catA)$. We may view $\sequn(\catA)$, $\seqor\sequn(\catA)$, and $\sequn\seqor\sequn(\catA)$ all as subsets of $\ecat(\catA)$ (Remark~\ref{r:these are iterated sets}).
\end{ex}

\section{Our atoms}\label{s:ultimate atoms} 
In this section we apply Section~\ref{s:more atoms} to enlarge the set $\cC(F_n)$ (Notation~\ref{n:C}) to the sets of atoms that we will need for the remainder of the paper.  See  Lemma~\ref{l:hat MW} and also Section~\ref{s:applying wg}.

To start, we need to know that some familiar sets are computable.
Some  proofs refer to the Stallings graph associated to a finitely generated subgroup of $\f$; see Stallings   \cite[5.4]{js:folding} and Section~\ref{sec:NewNewfolding} of this paper.  The proofs also use Lemma~\ref{l:rep} and  Lemma~\ref{l:subset}, sometimes without explicit mention. Let $\B$ be a basis for $\f$.  The following objects are computable.
\begin{itemize}
\item
The group $\f$: Elements of $\f$ are represented by words in $\B\sqcup\B^{-1}$ and can be enumerated using length. Multiplication is given by concatenation. An element is represented uniquely by a reduced word, which can be found by iteratively cancelling a letter and its inverse. See Example~\ref{e:fp}.
\item
The set of conjugacy classes of elements of $\f$: The enumeration of $\f$ serves also as an enumeration of the set of conjugacy classes. An element is represented uniquely by a cyclically reduced word.
\item
The set of finitely generated subgroups of $\f$: An enumeration is given by an enumeration of finite sets of representatives of elements of $\f$. Two such sets are equal iff they determine the same based Stallings graph with labels in $\B$, which can be found using the Stallings folding algorithm.
\item
$\cC(\f)$: The enumeration of the set of finitely generated subgroups of $\f$ also serves as an iteration for $\cC(\f)$. Two representatives are equal in $\cC(\f)$ iff they determine the same (unbased) Stallings graph with labels in $\B$; see Lemma~\ref{l:folding conjugacy}.

\item
The group $\Aut(\f)$: The set of endomorphisms of $\f$ is bijective with $(\f)^{\B}$ under the map $\Theta\mapsto (b\mapsto \Theta(b))$.  Using the Hopf property of $\f$ \cite[Theorem~2.13]{mks:book}, an endomorphism $\Theta$ is an isomorphism iff it is surjective iff the based Stallings graph of $\langle \Theta(\B)\rangle$ is the rose with petals labeled by the elements of $\B$. Thus an enumeration of $\Aut(\f)$ can be obtained using the enumeration for endomorphisms. 
\item
The group $\Out(\f)$: Our enumeration of automorphisms serves also as an enumeration of outer automorphisms. Two automorphisms represent the same outer automorphisms iff they have the same action on conjugacy classes of words of length at most two in $\B\sqcup \B^{-1}$; see \cite{se:trees}.
\item
The $\Aut(\f)$-set $\f$: Representative endomorphisms act on representative words.
\item
The $\Out(\f)$-set of conjugacy classes of elements of $\f$: Representative endomorphisms act on representative words.
\item
The $\Aut(\f)$-set of finitely generated subgroups of $\f$: Representative endomorphisms act on representative finite subsets of $\f$.
\item
The $\Out(\f)$-set $\cC(\f)$: Representative endomorphisms act on representative finite subsets of $\f$.
\item
The $\Out(\f)$-set $\ecat(\cC(\f))$: See Lemma~\ref{l:computable}\pref{i:computable 3}.
\item
The $\Out(\f)$-sets $\seqor(\cC(\f))$ and $\sequn(\cC(\f))$: See Corollary~\ref{cor:hatMW 2}.
\end{itemize}

\def\Asubgroup{\catA_0}
\def\Agoodsubgroupsubgroup{\catA_1}
\def\Agoodsubgroupelement{\catA_2}
\def\Agoodelementsubgroup{\catA_3}
\def\Agoodelementelement{\catA_4}
\def\Aelement{\catA_5}
\def\Abadgroupsubgroup{\catA'_6}
\def\Abadgroupelement{\catA_6}

 \begin{notn} \label{ultimate atoms}
 In the remainder of the paper $G = \Out(F_n)$. We now define $\Out(\f)$-sets $\Asubgroup, \Agoodsubgroupsubgroup, \ldots, \Abadgroupelement$ that will be used to express our algebraic invariants for elements of $\upgn$. At the same time we show each $\catA_i$ is  computable and admits a map that \explicit\ to a previously defined set.
 \begin{itemize}
 \item 
 $\Asubgroup$ denotes $\cC(\f)$, is computable (see the above itemized list), and has been identified (Notation~\ref{n:seq}) as an $\Out(\f)$-subset of $\ecat(\Asubgroup)$.
 \item 
 $\Agoodsubgroupsubgroup$ denotes the set of good conjugacy pairs of non-trivial finitely generated subgroups of $\f$.  We saw above that the set of finitely generated subgroups of $\f$ is computable, hence (Lemma~\ref{l:subset}\pref{i:product}) so is its square. By Lemma~\ref{l:good facts}\pref{i:good iff}, a pair $(K_1, K_2)$ represents a good $[K_1,K_2]$ iff $\rk(\langle K_1, K_2\rangle)=\rk(K_1)+\rk(K_2)$, and this can be checked using the Stallings graphs of $[K_1], [K_2], \mbox{and} [\langle K_1,K_2\rangle]$. Hence (Lemma~\ref{l:subset}\pref{i:subset}) the subset of pairs representing good conjugacy pairs is computable. By Corollary~\ref{c:equal pairs}, there is an algorithm deciding whether good conjugacy pairs are equal. Hence, by Lemma~\ref{l:rep}, $\Agoodsubgroupsubgroup$ is computable.

By Corollary~\ref{c:fiber of 1}, the map $\Agoodsubgroupsubgroup\to\seqor(\Asubgroup)\hookrightarrow\ecat(\Asubgroup)$ \explicit\ where $\Agoodsubgroupsubgroup\to\seqor(\Asubgroup)$ is given by   $[H_1,H_2]\mapsto ([H_1], [H_2], [\langle H_1, H_2\rangle]) $.

\item 
$\Agoodsubgroupelement$ denotes the set of conjugacy pairs $[H,a]$ where $H$ is a non-trivial finitely generated subgroup of $\f$, $a\in \f$ is non-trivial, and $[H,\langle a\rangle]$ is good. It is clear that $[H,a]=[H',a']$ iff $[H,\langle a\rangle]=[H',\langle a'\rangle]$ and $a$ and $a'$ are conjugate. In particular, $\Agoodsubgroupelement$ is computable and the map $\Agoodsubgroupelement\to\Agoodsubgroupsubgroup$ given by $[H,a]\mapsto [H,\langle a\rangle]$ \explicit\ with fibers of size zero or two.
 
 \item 
 $\Agoodelementsubgroup$ denotes the set of conjugacy pairs $[a,H]$ where $H$ is a non-trivial finitely generated subgroup of $\f$, $a\in \f$ is non-trivial, and $[H,\langle a\rangle]$ is good. That $\Agoodelementsubgroup$ is computable and $\Agoodelementsubgroup\to\Agoodsubgroupsubgroup$ given by $[a,H]\mapsto [\langle a\rangle, H]$ \explicit\ with fibers of size zero or two follows exactly as with $\Agoodsubgroupelement$.

 \item 
 $\Agoodelementelement$ is the set of conjugacy pairs $[a,b]$ of elements of $\f$ where $\langle a, b\rangle$ has rank 2. We have $[a,b]=[a',b']$ iff $[\langle a\rangle, \langle b\rangle]=[\langle a'\rangle, \langle b'\rangle]$, $a$ and $a'$ are conjugate, and $b$ and $b'$ are conjugate. It follows that $\Agoodelementelement$ is computable and $\Agoodelementelement\to\Agoodsubgroupsubgroup$ given by $[a,b]\mapsto [\langle a\rangle, \langle b\rangle]$ \explicit\ with fibers of size zero or four.
\item
$\Aelement$ is the set of conjugacy classes $[a]$ of non-trivial elements $a\in\f$. We saw earlier in this subsection  that $\Aelement$ is computable. The map $\Aelement\to\Asubgroup$ given by $[a]\mapsto [\langle a\rangle]$ \explicit\ with fibers of size zero or two.
\item

$\Abadgroupsubgroup$ is the set of conjugacy pairs $[H,A]$ with $A<H<\f$ all finitely generated and non-trivial. (In particular, $[H,A]$ is  not good.) Using Lemma~\ref{l:bad pairs}(1), the proof that $\Abadgroupsubgroup$ is computable is similar to the proof that $\Agoodsubgroupsubgroup$ is computable. By Lemma~\ref{l:bad pairs}(2), $\Abadgroupsubgroup\to \seqor(\Asubgroup)\hookrightarrow \ecat(\Asubgroup)$ given by $[H, A]\mapsto ([H], [A])$ \explicit. $\Abadgroupsubgroup$ is only used to define $\Abadgroupelement$.

\item
$\Abadgroupelement$ is the set of conjugacy pairs $[H,a]$  where $H$ is a non-trivial finitely generated subgroup of $\f$ and $a\not=1$ is in $H$. (In particular, $[H,\langle a\rangle]$ is not good.) $[H,a]=[H',a']$ iff $[H,\langle a\rangle]=[H',\langle a'\rangle]$ and $a$ is conjugate to $a'$. Hence $\Abadgroupelement\to\Abadgroupsubgroup$ \explicit\ with fibers of size zero or two.
\item
$\Abig:=\Asubgroup\sqcup\Agoodsubgroupsubgroup\sqcup\Agoodsubgroupelement\sqcup\Agoodelementsubgroup\sqcup \Agoodelementelement\sqcup\Aelement\sqcup\Abadgroupelement$
 \end{itemize}
 \end{notn}
 
 \begin{lemma} \label{l:hat MW}  Using Notation~\ref{ultimate atoms}, the $\Out(\f)$-set $\ecat(\Abig)$ 
 satisfies property \MW.
 \end{lemma}
 
\begin{proof}
Since $\seqor(\Asubgroup)$ satisfies property \MW\ by Theorem~\ref{t:original}, it follows from Corollary~\ref{cor:hat MW} that it is enough to show that, for each $i$, we have that $\catA_i$ admits $G$-equivariant map to $\ecat(\Asubgroup)$ that \explicit. Using Notation~\ref{ultimate atoms}, we see that each $\catA_i$ admits a map to $\ecat(\Asubgroup)$ that is a composition of two maps, each of which \explicit. We are done by Lemma~\ref{l:composition}.
\end{proof}

\section{List of dynamical  invariants} \label{s:review of the invariants} 
In Section~\ref{s:algebraic invariants} we define algebraic invariants of $\phi\in\upgn$ that are derived from the dynamical invariants of $\phi$ established in the first five sections of this paper. For the convenience of the reader, we list those dynamical invariants here and provide pointers to the relevant sections of the paper. Here $\fG$ always denotes a \ct\ for $\phi$ and  $\Gamma(f)$ its eigengraph; see Section~\ref{sec:stallings}.
We also use the notation of conjugacy pairs; recall Definition~\ref{d:conjugacy pairs},  Examples~\ref{e:conjugacy pairs} and Section~\ref{s:conjugacy pairs}.
\begin{itemize}
\item
$\cP(\phi)$ denotes the set of principal automorphisms for $\phi$  (Definition~\ref{d:PA}) and $[\cP(\phi)]$ denotes the set of isogredience classes in $\cP(\phi)$ (Definition~\ref{d:isogredience}). $[\cP(\phi)]$ parametrizes the components of $\Gamma(f)$.  
$\Fix(\phi) = \{[\Fix(\Phi)]\mid [\Phi]\in [\cP(\phi)]\}$. Since $[\cP(\phi)]$ is finite, $\Fix(\phi)$ is a finite multi-set  of (possibly trivial) conjugacy classes of finitely generated subgroups of $\f$. 
Geometrically it is the core of $\Gamma(f)$. 
$\Fix_{\ge 2}(\phi):=\{[\Fix(\Phi)]\mid [\Phi]\in [\cP(\phi)], \rk(\Fix(\Phi))\ge 2\}$.
See Sections~\ref{sec:FixN}, \ref{sec:principal}, and \ref{sec:axes}. 

\item
We use $\fc=\vec\F(\phi,<_T)$ to denote a special chain for $\phi$ as in Notation~\ref{notn:ffs}. It is a set of free factor systems naturally ordered by $\sqsubset$. We usually work with a pre-chosen $\fc$. For example, the filtration of our \ct\ $\fG$ will usually realize $\fc$.
If $\F\in\fc$ (resp.\ $[F]\in\F\in\fc$) and if the filtration of $\fG$ realizes $\fc$ then $f|\F$ (resp.\ $f|[F]$) denotes the restriction of $f$ to the core filtration element representing $\F$ (resp.\ the component of the core filtration element representing $[F]$). The corresponding eigengraph is denoted $\Gamma({f|\F})$ (resp.\ $\Gamma({f|[F]})$).

\item
A free factor system is special if it is in some special chain. $\fL(\phi)$ denotes the set of special free factor systems of $\phi$; see Notation~\ref{notn:ffs}. Each element of $\fL(\phi)$ is a free factor system and so is a set of conjugacy classes of free factors in $\f$. If $[F]\in\F\in\fL(\phi)$ then $F$ and $[F]$ are also said to be special. The unique minimal (with respect to $\sqsubset$) element of $\fL(\phi)$, denoted $\F_0(\phi)$, is the linear free factor system of $\phi$. It is represented by the core of the subgraph of $G$  that is the union of fixed and linear edges. An invariant description of $\F_0(\phi)$ is $\F(\Fix(\phi))$, i.e.\ the smallest free factor system carrying $\Fix(\phi)$; see Lemma~\ref{l:F0 is natural}.

\item
\[\cR(\phi) := \{[P]: P \in  \cup_{i=1}^m  \Fix_+(\Phi_i)\} \subset \partial F_n/F_n\] 
where the $\Phi_i$'s are representatives of the isogredience classes in $\cP(\phi)$. 
In other words, $\cR(\phi)$ is the set of conjugacy classes of points in $\partial \f$ that are isolated fixed points for some principal lift of $\phi$. See Section~\ref{sec:FixN}.  In any \ct\  $\fG$ representing $\phi$ there is a bijection $r \longleftrightarrow E$ between $\cR(\phi)$ and the set $\E_f$ of higher order edges of $G$. The eigenray $R_E$ has terminal end $r$.

\item
$\fe\in\fc$ denotes a special 1-edge extension in $\fc$, i.e.\ $\fe=(\F^-\sqsubset\F^+)$ is a pair of consecutive elements  of $\fc$. Suppose $\fG$ realizes $\fc$. $\Gamma({ f|\F^+})\setminus\Gamma({f|\F^-})$ has one or two ends; these represent the {\it new (with respect to $\fe$)} elements of $\cR(\phi)$. 
The 1-edge extension $\fe$ has type {\sf H}, {\sf HH}, or {\sf LH}. There are two new elements iff $\fe$ has type {\sf HH}. A new element is often denoted $r^+$. Further, $\fe$ can be {\contractible}, {\cyclic}, or {\llarge}. See Section~\ref{s:canonical ffs}. 

\item
Continuing the previous bullet, if the filtration of $\fG$ realizes $\fc$ then $\Gamma({f|\F^+})$ carries more lines than $\Gamma({f|{\F^-}})$. The set of {\it added lines with respect to $\fe$}, denoted $\LW_{\fe}(\phi)$, is a $\phi$-invariant subset of these lines. See Definition~\ref{d:added lines}.

\item
$\acc(\phi)=\cup_{r\in\cR(\phi)}\acc(r)$ denotes the finite set of limit lines for $\phi$. See Section~\ref{s:limit lines}. Here $\acc(r)$ denotes the accumulation set of $r$ or equivalently of the eigenray in $\Gamma(f)$ representing $r$. The elements of $\acc(\phi)$ are all represented as lines in $\Gamma(f)$. $\accnr(\phi)\subset\acc(\phi)$ is the subset of non-periodic lines. 

\item
$\A_\both(\phi)$ denotes the set of oriented axes of $\phi$ where a root-free conjugacy class $[a]$ of an element of $\f$ is an axis if it has more than one representation in $\Gamma(f)$. An axis has an invariant description: $[a]$ is an axis if there are $\Phi_1,\Phi_2\in\cP(\phi)$ such that $a\in\Fix(\Phi_1)\cap \Fix(\Phi_2)$ and $\Phi_1 \ne \Phi_2$. In this case, we say the conjugacy pair $[\Phi, a]$ is a strong axis; see Definition~\ref{def:strong axis}. It is represented geometrically as a lift to $\Gamma(f)$ of $[a]$. The set of strong axes is denoted $\sa(\phi)$. Associated to each pair of strong axes $\alpha_1=[\Phi_1,a], \alpha_2=[\Phi_2,a]$ is a twist coordinate $\tau(\alpha_1,\alpha_2)$ in $\Z$.   See Definition~\ref{def:twist}.
\end{itemize}

\section{Algebraic data associated to invariants}\label{s:algebraic invariants}
In this section we define algebraic versions of some of our dynamical invariants. We also explain how the algebraic versions can be computed and viewed as an element of $\ecat(\Abig)$; see Notation~\ref{ultimate atoms}. The algebraic invariants are typically weaker than their dynamic versions. However they have the advantage that they are iterated sets and so fit into the framework of Section~\ref{s:gersten}. Some of our invariants, for example chains, are already algebraic in nature and so need no modification. 

All of our algebraic invariants for $\phi\in\upg$ will be computed using a \ct\ $\fG$ for $\phi$ (see Section~\ref{sec:ct}).  Additionally,  the core of the eigengraph $\Gamma(f)$ can  be computed from $\fG$; see Section~\ref{sec:stallings}. In fact, since $\Gamma(f)$ is obtained from its core by adding the eigenrays of $f$ and the eigenrays have a simple form (Section~\ref{sec:ct}), we can compute arbitrarily large neighborhoods of the core in $\Gamma(f)$.

\subsection{Special chains}\label{s:special chains}
Recall from Notation~\ref{weaker po} and Lemma~\ref{preserves partial order} that there is a canonical partial order $(\cR(\phi),<)$ that can be computed from any \ct\ for $\phi$. Hence all extensions of $<$ to a total order $\dle$ can also be computed. The special chain 
$\vec\F(\phi,\dle)$ for $\phi$ can also be computed from any \ct\ for $\phi$; see Notation~\ref{notn:ffs}. A special chain is an element of $\seqor\sequn(\Asubgroup)\subset\ecat(\Asubgroup)\subset\ecat(\Abig)$; see Example~\ref{ex:identify with catA} and Corollary~\ref{cor:hatMW 2}\pref{i:cor:hatMW 2 2}. Similarly, the set of all special chains for $\phi$ and the set $\fL(\phi)$ of all special free factor systems for $\phi$ can be computed from any \ct\ for $\phi$. Note that the former set is in $\sequn\seqor\sequn(\Asubgroup)\subset\ecat(\Abig)$ and $\fL(\phi)\in \sequn\sequn(\Asubgroup)\subset\ecat(\Abig)$. We will tacitly use Corollary~\ref{cor:hatMW 2}\pref{i:cor:hatMW 2 2} throughout the rest of Section~\ref{s:algebraic invariants}.

\begin{itemize}
\item
Throughout the rest of Section~\ref{s:algebraic invariants}, $\phi\in\upg$, $\fc$ denotes a special chain for $\phi$, and $\fG$ denotes a \ct\ that represents $\phi$, satisfies (Inheritance), and   realizes $\fc$. 
\end{itemize}

\subsection{$\Fix(\phi)$}
The multi-set $\Fix(\phi):=\{[\Fix(\Phi)]\mid [\Phi]\in [\cP(\phi)]\}$ is already algebraic and is an element of $\sequn(\Asubgroup)\subset\ecat(\Abig)$. As reviewed in Section~\ref{s:review of the invariants}, $\Fix(\phi)$ is represented by the core of $\Gamma(f)$ and so can be computed.

\subsection{Axes}
The set $\A_\both(\phi)$ of oriented axes of $\phi$ (Definition~\ref{def:axes}) is already algebraic and is an element of $\sequn(\Aelement)\subset\ecat(\Abig)$. In terms of $\fG$, $[a]\in\A_\both(\phi)$ iff either $[a]$ or $[a^{-1}]$ is represented by a twist path, which can be found by inspecting the linear edges of $G$; see Section~\ref{sec:ct}.   In terms of $\Gamma(f)$, $[a]\in\A_\both(\phi)$ iff $a$ is root-free and represented by more than one circuit in the core of $\Gamma(f)$ with at least one representative embedded. 

\subsection{Algebraic rays}\label{s:algebraic rays}
\begin{remark} \label{ray carried} If $F$ is a free factor and $\ti r \in \partial F$ then we say that {\it  $[F]$ carries $r$}.  Equivalently, if $G$ is a marked graph and $H \subset G$ is a core subgraph   representing $[F]$, then there is  a   ray in $\ti G$ that converges to $\ti r$ and projects into $H$.   In the case that concerns us, $r \in \cR(\phi)$ corresponds to some $E \in \E_f$ (see Lemma~\ref{identifying Fix+}) and $[F]$ is a component of a free factor system in $\fc$.  If $C$  is the component  of the core filtration element of $G$ corresponding to $[F]$ then $[F]$ carries $r$ if and only if some subray of $R_E$ is contained in  $C$.  By construction, $R_E = E \cdot u \cdot f_\#(u) \cdot \ldots$ where the closed path $u$ satisfies $f(E)=E\cdot u$.  Since the height of $f^k_\#(u)$ is independent of $k$, $r$ is carried by $[F]$ if and only if $u \subset  C$.
\end{remark}
    
\begin{itemize}
\item
(Algebraic rays): 
For $r\in\cR(\phi)$, $F_\fc(r)$ denotes the minimal special free factor $[F]\in\F\in\fc$ carrying $r$. If $\ti r \in \partial \f$ is a lift of $r$ then we  also write $F_\fc(\ti r):=F$ where $F$ is the unique representative of $F_\fc(r) $  that contains $\ti r$. An algebraic ray $F_\fc(r)$ is an element of $\Asubgroup\subset\ecat(\Abig)$.
\end{itemize}

\begin{remark}\label{r:F(r)}
Continuing Remark~\ref{ray carried}, $F_\fc(r)$ is represented by the minimal component $C$ of a core filtration element of $G$ containing $u$.  In particular, we can compute $F_\fc(r)$ from our \ct\ $f$. In our running example (see pages \pageref{e:main example}, \pageref{ex.a}, \pageref{ex.h}, and \pageref{ex.i}), $\cR(\phi)=\{r_c, r_d, r_e, r_q\}$, the only relation is $r_c<r_q$, and  the choice of total order is $r_c\ \dle\ r_d\ \dle\ r_e\ \dle\ r_q$. We have $[F_\fc(r_q)]=[\langle a, b, c\rangle]$.
\end{remark}

\begin{remark}
We could work with all chains and define $F(r)$ to be the minimal special free factor $[F]\in\F\in\fL(\phi)$ carrying $r$. This would cause some extra work later in Lemma~\ref{d in Ker Q}.  
\end{remark}

\subsection{Algebraic lines}

Recall that $[\cdot,\cdot]$ denotes a conjugacy pair (see Definition~\ref{d:conjugacy pairs},    Examples~\ref{e:conjugacy pairs} and Section~\ref{s:conjugacy pairs}) and, for non-trivial $a\in\f$, $a^+\in\partial\f$ (resp.\ $a^-$)  denotes the attractor (resp.\ repeller) of $ i_a | \partial \f$; see the beginning of Section~\ref{sec:FixN}.  Recall also that a line $L$ is principal with respect to $\phi$ if there is a lift $\ti L$ whose endpoints are contained in $\FixN(\Phi)$ for some  $\Phi \in \cP(\phi)$.  Equivalently, $L$ lifts into the eigengraph $\Gamma(f)$; see   Lemma~\ref{lem:lifting}.  In this section, we define an algebraic version $\sHsub(L)$ for certain principal line $L$ and  associate to $\sHsub(L)$ a set of lines containing $L$ that in turn determines $\sHsub(L)$. \begin{definition}[Algebraic lines]
Suppose that $L$ is a non-periodic principal line for $\phi$ and that the non-periodic ends of $L$ are contained in $\cR(\phi)$. There are four possibilities.
\begin{description}
\item[{[{\sf{P-P}}]:}]
$L$ has type {\sf P-P} if some (hence every) lift $\ti L$ has the form $(a^-, b^+)$ for some root-free $a, b\in\f$ with  $a\not=b^{\pm 1}$ in $\f$. In particular, $a$ and $b$ are non-trivial. $\sHsub(L):=[a^{}, b]$. To $[a,b]$ we associate $\{L\}$. \ We also define $\sHsub(\ti L):=(a,b)$ and associate to it $\{(a^-,b^+)\}$. In this case $\sHsub(L)$ determines $L$. 

\item[{{[\sf{P-NP}}]:}]
$L$ has type {\sf P-NP} if some (hence every) lift $\ti L$ has the form $(a^-, \ti r)$ for some root-free $a\in\f$ and a lift $\ti r$ of some $r\in\cR(\phi)$. $\sHsub(L):=[a^{}, F_\fc(\ti r)]$. To $\sHsub(L)$ we associate the set of lines $[a^-, \partial F_{\fc}(\ti r)]$. $\sHsub(\ti L):=\big(a, F_\fc(\ti r)\big)$ and has the associated set of lines $\big(a^-,\partial F_\fc(\ti r)\big)$.

\item[{{[\sf{NP-P}}]:}]
$L$ has type {\sf NP-P} if some (hence every) lift $\ti L$ has the form $(\ti r,b^+)$ for some root-free $b\in\f$ and a lift $\ti r$ of some $r\in\cR(\phi)$. $\sHsub(L):=[F_\fc(\ti r), b]$. To $\sHsub(L)$ we associate the set of lines $[\partial F_{\fc}(\ti r), b^+]$. $\sHsub(\ti L):=(F_\fc(\ti r),b)$ with associated  set of lines $(\partial F_\fc(\ti r),b^+)$.

\item[{[{\sf{NP-NP}}]:}]
$L$ has type {\sf NP-NP}  if some (hence every) lift  $\ti L$ has the form $(\ti r, \ti s)$ for lifts $\ti r$ of $r\in\cR(\phi)$ and  $\ti s$ of $s\in\cR(\phi)$. $\sHsub(\ti L):=[F_\fc(\ti r), F_\fc(\ti s)]$. To $\sHsub(\ti L)$ we associate the set of lines $[\partial F_{\fc}(\ti r), \partial F_{\fc}(\ti s)]$. $\sHsub(\ti L):=(F_\fc(\ti r),F_\fc(\ti s))$ with associated set of lines $\big(\partial F_\fc(\ti r),\partial F_\fc(\ti s)\big)$.
\end{description}
\end{definition}

\begin{lemma} \label{l:algebraic lines are good pairs}
Suppose that  $L$ lifts to $\Gamma(f)$ and has one of the types   {\sf P-P}, {\sf P-NP}, {\sf NP-P}, or {\sf NP-NP}.  Then with notation as above:   
\begin{description}
\item  [{{[\sf{P-P}}]:}]
  $ [\langle a^{}\rangle, \langle b^{}\rangle]$ is a good conjugacy pair.
\item  [{{[\sf{P-NP}}]:}] 
  $ [\langle a^{}\rangle, F_\fc(\ti r)]$ is a good conjugacy pair.
  \item  [{{[\sf{NP-P}}]:}] 
  $ [ F_\fc(\ti r),\langle b^{}\rangle]$ is a good conjugacy pair.
 \item  [{{[\sf{NP-NP}}]:}]  
  $  [F_\fc(\ti r),  F_\fc(\ti s)]$ is a good conjugacy pair.
\end{description}
\end{lemma}

\begin{proof}[Proof.]
[{\sf P-P}]: Since $a\not=b^{\pm 1}$ and $a$ and $b$ are root-free,  $\langle a, b\rangle$ is a free group of rank~2.  In particular, $ [\langle a^{}\rangle, \langle b^{}\rangle]$ is good by Lemma~\ref{l:good facts}\pref{i:good iff}. 

[{\sf P-NP}]:
Suppose $L$ has the lift $\ti L=(a^-,\ti r)$. Set $A=\langle a\rangle$. Since $L$ lifts to $\Gamma(f)$, $L=\alpha^{\infty}\sigma R_E$ for some $E\in\E_f$ where $\alpha$ is a circuit in the core of $\Gamma(f)$ representing $[a]$ (Lemma~\ref{lem:lines in gf}). We choose $\sigma$ to have minimal length. Let $\star$ be the terminal vertex of $E$ and let $\ti \star$ be the terminal vertex of the unique lift $\ti E$ of $E$ in $\ti L$.

The based labeled graph $(C,\star)$ as in Remark~\ref{r:F(r)} immerses to $(G,\star)$ and similarly the based labeled graph $(G_A,\star)$ that is a lollipop formed by the union of a circle labeled $\alpha$ and a segment labeled $\sigma E$ immerses to $(G,\star)$. If we define $(H,\star)$ to be the one point union of $(G_A,\star)$ and $(C,\star)$ then by construction, the immersions of $(G_A,\star)$ and $(C,\star)$ to   $(G_A,\star)$ induce a map of  $H\to G$ that  does not admit any Stallings folds and so, by \cite[Proposition 5.3]{js:folding}, induces an injection on the level of fundamental groups. We now have an identification of $\big(A, F_\fc(\ti r), \langle A, F_\fc(\ti r)\rangle\big)$ and $\big(\pi_1(G_A,\star), \pi_1(C,\star), \pi_1(H,\star)\big)$. By VanKampen, we see that $\langle A, F_\fc(\ti r)\rangle$ is the internal free product of $A$ and $F_\fc(\ti r)$.

The cases  [{\sf NP-P}] and [{\sf NP-NP}] are similar.
\end{proof}

\begin{remark} \label{r:lines=conj pairs}
$\sHsub(L)$ is an element of $\Agoodsubgroupsubgroup\sqcup\Agoodsubgroupelement\sqcup \Agoodelementsubgroup\sqcup\Agoodelementelement\subset\ecat(\Abig)$.
Not all lines that lift to $\Gamma(f)$ are assigned a type.\ For each $L$ that has a type, $\sHsub(L)$ can be recovered from its associated set of lines. In the {\sf NP-NP} case, this is a direct application of Corollary~\ref{c:lines=conj pairs}, Remark~\ref{rem:lines=conj pairs} and Lemma~\ref{l:good facts}\pref{i:good disjointness}.  The obvious modification needed for the other cases where $\langle a\rangle$ is replaced by $a$ is left to the reader. We often conflate $\sHsub(L)$ with its associated set of lines.
\end{remark}

\begin{excont*}\label{ex.k}
If $L$ is the upward line represented by the contractible component of $\Gamma(f)$ in Figure~\ref{f:eigengraph} then  $L$ has type {\sf NP-NP} and $\sHsub(L)=[\langle a, b\rangle, \langle a,b\rangle^{d^{-1}e}]$ consists of the set of lines in the graph in Figure~\ref{f:good} that cross $d^{-1}e$ once and $e^{-1}d$ not at all. 
\begin{figure}[h!]
\centering
\includegraphics[width=.3\textwidth]{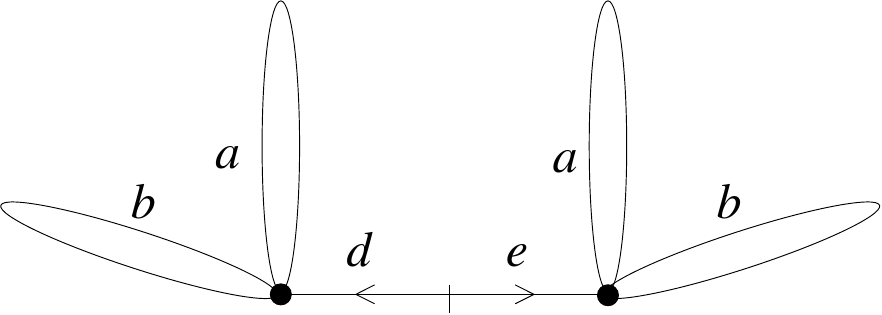}
\caption{}
\label{f:good}
\end{figure}
\end{excont*}

\begin{lemma}\label{l:algebraic lines are natural} Suppose that that $\fc$ is a special chain for $\phi$, that $L$ is a non-periodic principal line for $\phi$ whose non-periodic ends are contained in $\cR(\phi)$  and that $\theta \in \Out(\f)$.  Then $\theta(\fc)$ is a special chain for $\phi^\theta$, $\theta(L)$ is a non-periodic principal line for $\phi^\theta$ whose non-periodic ends are contained in $\cR(\phi^\theta)$ and $\theta\big(\sH_{\phi, \fc}(L)\big)=\sH_{\phi^\theta,\theta(\fc)}\big(\theta(L)\big)$. \end{lemma}

\begin{proof}
We will do the case {\sf P-NP}; the others are similar. Lemma~\ref{l:special is natural} and  Lemma~\ref{first theta} imply that  $\theta(\fc)$ is a special chain for $\phi^\theta$ and that $\theta(L)$ is a principal line for $\phi^\theta$.  If $\Theta\in\theta$ and $\ti L = (a^-,\ti r)$ then $\Theta(\ti L) = (\Theta(a), \Theta(\ti r))$ and 
$$\Theta (\sHsub(\ti L)) = [\Theta(a), \Theta( F_\fc(\ti r))] = [\Theta(a),  F_{\Theta(\fc)}(\Theta(\ti r))]  = \sH_{\phi^\theta,\theta(\fc)}(\Theta(\ti L))$$
\end{proof}

\begin{remark}
For applications, we note that from definitions it follows that if $\theta(\fc)=\fc$ and $\fe\in\fc$ then $\theta(\fe)=\fe$.
\end{remark}

In the next lemma we abuse notation and identify $\sHsub (L)$ with its associated set of lines; see Remark~\ref{r:lines=conj pairs}.

\begin{lemma}  \label{just one eigengraph line} Suppose $\fG$ is a \ct\ for $\phi$ that realizes $\fc$. If $L$ is either an element of $\accnr(\phi)$ or an element of $\LW_\fe(\phi)$ such that $\fe$ is not \llarge, then $L$ is the only line in $\sHsub(L)$ that lifts to $\Gamma(f)$.
\end{lemma}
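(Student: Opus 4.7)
The plan is to proceed by case analysis on the type of $L$ (one of {\sf P-P}, {\sf P-NP}, {\sf NP-P}, or {\sf NP-NP}). The {\sf P-P} case is immediate: by definition $\sH_\fc(L) = [a,b]$ is a conjugacy pair of root-free elements and the associated set of lines is the singleton $\{L\}$, since a pair of boundary points $(a^-,b^+)$ determines a unique downstairs line in $G$. For each of the remaining three cases I would suppose $L' \ne L$ is a line in $\sH_\fc(L)$ that also lifts to $\Gamma(f)$ and aim for a contradiction.

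By Lemma~\ref{lem:lifting}, both $L$ and $L'$ have lifts with endpoints in $\FixN(\Phi)$ and $\FixN(\Phi')$ for principal automorphisms $\Phi,\Phi'$, and by Lemma~\ref{lem:lines in gf} each decomposes as a (possibly trivial) central Nielsen subpath flanked by at most two complementary rays, each of the form $R_e$ for some $e \in \E_f$ or $Ew^{\pm\infty}$ for some $E \in \lin_w(f)$. The containment $L' \in \sH_\fc(L)$ constrains each endpoint of $L'$ to lie in $\partial\langle a\rangle$ (for a periodic end, forcing a twist-terminating ray with $[w]_u = [a]_u$) or in $\partial F_\fc(\ti r)$ (for a non-periodic end, forcing an eigenray $R_{e'}$ with terminal end lifting into $\partial F_\fc(\ti r)$).

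The core of the argument is to show these constraints force the decomposition of $L'$ to equal that of $L$. For $L \in \accnr(\phi)$ I would use the explicit structure $L = (R^-)^{-1}\rho R^+$ from Corollary~\ref{cor:limit lines}\pref{item:ell decomposes}, combined with the minimality of $F_\fc(\ti r)$ from Remark~\ref{r:F(r)}, to pin down the eigenrays and the connecting Nielsen path. For $L \in \LW_\fe(\phi)$ with $\fe$ not \llarge, I would leverage Lemma~\ref{l:types}: in the \contractible\ case $\Fix(\Phi) = \{1\}$ and the component of $\Gamma(f|\F^+)$ carrying $L$ is a finite tree with eigenray leaves all first appearing at filtration level $\F^+$, so minimality of $F_\fc(\ti r^+)$ together with the fact that the newly added eigenrays are precisely those appearing in $\LW_\fe(\phi)$ forces $L' = L$; in the \cyclic\ case $\Fix(\Phi) = \langle a\rangle$ for a root-free $a$, and the extra possibility of $a^{\pm}$ appearing as endpoints of $L'$ is ruled out using the requirement $\partial_-L \notin \cR(\phi)$ from Definition~\ref{d:added lines} and the symmetric constraint on $L'$.

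The main obstacle I expect is the combinatorial matching argument in the {\sf NP-NP} instances, particularly when $\partial F_\fc(\ti r)$ contains terminal ends of several eigenrays: the argument cannot proceed endpoint-by-endpoint, but must exploit the joint structure of both endpoints of $L'$ together with the requirement that the central Nielsen subpath lies in a single component of $\Gamma(f)$. Precise bookkeeping using the typing of the one-edge extensions in $\fc$ (Lemma~\ref{lemma:three extension types} and Lemma~\ref{l:types}) will be needed to ensure that an eigenray $R_{e'}$ with end in $\partial F_\fc(\ti r)$ but distinct from $R_e$ would force $r$ into a strictly smaller special free factor, contradicting the minimality of $F_\fc(\ti r)$.
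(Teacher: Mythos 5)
Your skeleton (case analysis by type, the trivial {\sf P-P} case, the appeal to Lemma~\ref{lem:lifting}, Lemma~\ref{lem:lines in gf} and Corollary~\ref{cor:limit lines}) is the same as the paper's, but the decisive step --- why the endpoint constraints force $L'=L$ --- is left vague, and the one mechanism you do commit to is not correct. You propose to exclude a competing eigenray $R_{e'}$ with end in $\partial F_\fc(\ti r)$ on the grounds that its existence ``would force $r$ into a strictly smaller special free factor, contradicting the minimality of $F_\fc(\ti r)$.'' No such contradiction exists: $\partial F_\fc(\ti r)$ routinely contains ends of other (lower) eigenrays --- in Example~\ref{e:main example}, $F_\fc(r_q)$ carries $r_c$ --- and the presence of such ends has no bearing on which special free factors carry $r$ itself. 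So as written your plan cannot close the {\sf NP} cases, which are exactly the ones where there is something to prove; the worry that the argument ``cannot proceed endpoint-by-endpoint'' and needs the typing of one-edge extensions (Lemma~\ref{lemma:three extension types}, Lemma~\ref{l:types}) is also misplaced.

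What actually does the work --- and is where the hypothesis that $\fG$ realizes $\fc$ enters --- is a height argument at each non-periodic end. By Remark~\ref{r:F(r)}, $F_\fc(r)$ is realized by a filtration element all of whose edges are lower than the edge $E\in\E_f$ corresponding to $r$. So in the {\sf P-NP} case, writing $\ti L=(a^-,\ti r)$ and $L=(R_1)^{-1}R_E$ with $R_1$ consisting of fixed and linear edges, any other $\ti L'=(a^-,\ti s)$ with $\ti s\in\partial F_\fc(\ti r)$ differs from $\ti L$ by the line $(\ti r,\ti s)$, which crosses only edges lower than $E$; hence $L'=(R_1)^{-1}ER_2$ with $R_2$ of height lower than $E$ --- the modification never removes the crossing of $E$, it only changes what follows it. If moreover $L'$ lifts to $\Gamma(f)$, then by Lemma~\ref{lem:lines in gf} together with Lemma~\ref{not crossed} the occurrence of $E$ must lie in an eigenray term $R_e$ of the decomposition of $L'$; since $E$ is the first higher-order edge crossed in that direction, $e=E$ and the occurrence is the initial edge of $R_E$, so $ER_2=R_E$ and $L'=L$. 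Running this argument at each non-periodic end disposes of {\sf NP-P}, {\sf NP-NP}, and the $\LW_\fe(\phi)$ cases with $\fe$ \contractible\ (where $L=R_{E_1}^{-1}\rho R_{E_2}$) or \cyclic, with no case-by-case bookkeeping of extension types required.
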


\begin{proof}
We assume at first that  $L \in \accnr(\phi)$. By Corollary~\ref{cor:limit lines}\pref{item:endpoints in FixN}, all elements of $\accnr(\phi)$ lift to $\Gamma(f)$ and further, by Corollary~\ref{cor:limit lines}\pref{item:ell decomposes}, have one of the types {\sf P-P}, {\sf P-NP}, {\sf NP-P}, or {\sf NP-NP}. In particular, {$\sHsub(L)$} is defined. We will do the case that $L=[a^-,\ti r]$ is {\sf P-NP}, the others being similar. By Corollary~\ref{cor:limit lines}\pref{item:ell decomposes}, $L$ has the form $(R_1)^{-1}R_E$ where $R_1$ consists of only linear and fixed edges and where $E\in\E_f$. In particular, $E$ is the highest edge of $L$.

Every line $L' \in \sHsub(L)$ has a lift of the form $\ti L'=(a^-,\ti s)$ where $\ti s\in F_\fc(\ti r)$.
 Since $\fG$ realizes $\fc$, $F_\fc(r)$ is represented by a subgraph of $G$ whose edges are all lower than $E$. If $\ti r=\ti s$ then $L'=L$ and we are done. We may therefore assume $\ti L'':=(\ti r, \ti s)$ is a line with both endpoints in $\partial F_\fc(\ti r)$. Thus $L''$ only crosses lifts of edges that are lower than $E$. It follows that $L'=(R_1)^{-1}E R_2$ where the ray $R_2$ only crosses edges that are  lower than $E$. If $L'$ lifts to $\Gamma(f)$ then $E$ is the first higher order edge it crosses and so $ER_2=R_E$ and $L'=L$. This completes the proof when $L \in \accnr(\phi)$.

We now assume that $L\in\LW_\fe(\phi)$ and that $\fe$ is not \llarge. If $\fe$ is \contractible\ then, because $L$ lifts to $\Gamma(f)$, $L$ has the form $[\ti r,\ti s]=R_{E_1}^{-1}\rho R_{E_2}$ where $\rho$ is a Nielsen path. By definition $L'\in\sHsub(L)$ has a representative $\ti L'=(\ti r_1, \ti r_2)$ such that either $\ti r_1=\ti r$ or $(\ti r_1, \ti r)$ is a line with both endpoints in $\partial F_\fc(\ti r)$ and such that either $\ti r_2=\ti s$ or $(\ti s, \ti r_2)$ is a line with both endpoints in $\partial F_\fc(\ti s)$. We argue as above to conclude that $L'=L$ if $L'$ lifts to $\Gamma(f)$. The case where $\fe$ is \cyclic\ is similar.
\end{proof}

\begin{lemma} \label{l:unique lift}
Suppose that $L$ has one of the types {\sf P-P}, {\sf P-NP}, {\sf NP-P}, or {\sf NP-NP} and that $\theta\big(\sHsub(L)\big)=\sHsub(L)$. If $\ti L$ is a lift of $L$ then there is a unique $\Theta\in\theta$ such that $\Theta\big(\sHsub(\ti L)\big)=\sHsub(\ti L)$.
\end{lemma}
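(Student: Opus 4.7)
The plan is to unfold $\sH_\fc(\ti L)$ as an ordered pair whose coordinates are either an element of $F_n$ or a component free factor $F_\fc(\ti r)$, and to reduce the problem to a malnormality statement that falls out of goodness of the associated conjugacy pair. Concretely, in each of the four types, Lemma~\ref{l:algebraic lines are good pairs} guarantees that the underlying conjugacy pair $[X,Y]$ of subgroups (where $X=\langle a\rangle$ in the element-valued coordinate cases and $X=F_\fc(\ti r)$ otherwise, and similarly for $Y$) is good; in particular, $\langle X,Y\rangle = X\ast Y$ inside $F_n$, so $X\cap Y=\{1\}$.

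For existence, pick any $\Theta_0\in\theta$. The hypothesis $\theta(\sH_\fc(L))=\sH_\fc(L)$ says that the conjugacy class $[(\Theta_0 X,\Theta_0 Y)]$ equals $[(X,Y)]$, so by definition of the $F_n$-action on conjugacy pairs there is $c\in F_n$ carrying $(\Theta_0 X,\Theta_0 Y)$ to $(X,Y)$ (and, in the element-valued types, simultaneously sending $\Theta_0(a)$ to $a$, $\Theta_0(b)$ to $b$ as appropriate). Then $\Theta:=i_{c^{-1}}\circ\Theta_0\in\theta$ fixes every coordinate of $\sH_\fc(\ti L)$, i.e., $\Theta(\sH_\fc(\ti L))=\sH_\fc(\ti L)$.

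For uniqueness, suppose $\Theta,\Theta'\in\theta$ both stabilize $\sH_\fc(\ti L)$ and write $\Theta'=i_c\circ\Theta$. Applying $i_c$ to an element-valued coordinate $a$ and requiring $i_c(a)=a$ puts $c$ in the centralizer of $a$, which equals $\langle a\rangle$ since $a$ is root-free; applying $i_c$ to a free-factor-valued coordinate $F_\fc(\ti r)$ and requiring $i_c(F_\fc(\ti r))=F_\fc(\ti r)$ puts $c$ in the normalizer of that free factor, which by malnormality of free factors (equivalently, Corollary~\ref{c:ff and fix good}(1) combined with Lemma~\ref{l:stab of boundary}) is the free factor itself. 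Thus $c\in X\cap Y$, and goodness of $[X,Y]$ forces $c=1$, so $\Theta=\Theta'$.

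The main obstacle---and it is a minor one---is only the uniform bookkeeping across the four types, which differ according to whether a coordinate is an element or a free factor: one needs to be clear that $\Theta$ fixing the element $a$ in $\sH_\fc(\ti L)$ is strictly stronger than fixing $\langle a\rangle$, so that the $c$ appearing in uniqueness lands in the centralizer rather than merely the normalizer. Once this is settled, the good-pair structure of Lemma~\ref{l:algebraic lines are good pairs} absorbs all four cases into the same intersection-is-trivial conclusion.
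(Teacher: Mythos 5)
Your proof is correct and follows essentially the same route as the paper: existence by adjusting an arbitrary representative of $\theta$ by an inner automorphism, and uniqueness by observing that the difference of two such representatives is some $i_c$ whose stabilization of the coordinates (centralizer of a root-free element, self-normalizing free factor) forces $c$ into the intersection $X\cap Y$, which Lemma~\ref{l:algebraic lines are good pairs} makes trivial. The only cosmetic point is that in the {\sf P-P} case you should note $a\ne b^{\pm 1}$ (automatic since $\ti L$ is a nondegenerate line), which is exactly where the paper instead invokes non-periodicity of $L$.
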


\begin{proof}
The existence of at least one such $\Theta$ follows from the definitions.

{\sf P-P}: Suppose $\ti L=(a^-, b^+)$. If $\Theta_1\not=\Theta_2$ represent $\theta$ and leave $(a^-, b^+)$ invariant then the difference $\Theta_1\Theta_2^{-1}$ has the form $i_x$ for some $x\not=1$ in $\f$ and leaves $(a^-,b^+)$ invariant. It follows that $a$, $b$, and $x$ share a power. This is impossible since $L$ is not periodic.

{\sf NP-P}: Suppose $\ti L=(\ti r, b^+)$. If $\Theta_1\not=\Theta_2$ in $\theta$ leave $(F_\fc(\ti r), b^+)$ invariant then $\Theta_1\Theta_2^{-1}=i_x$ for some $x\not=1$ in $\f$, $i_x\big(F_\fc(\ti r)\big)=F_\fc(\ti r)$, and $i_x(b)=b$. Hence $x\in F_\fc(\ti r)\cap \langle b\rangle$ which is impossible since $\partial F_\fc(\ti r)\cap\partial\langle b\rangle=\emptyset$ by Lemma~\ref{l:algebraic lines are good pairs}.  

The cases {\sf P-NP} and {\sf NP-NP} are similar.
\end{proof}

\subsection{Algebraic strong axes}\label{s:algebraic sa}
\begin{itemize}
\item (Algebraic strong axes): If $[\Phi, a]$ is a strong axis  and $\Fix(\Phi)$ is not cyclic (equivalently  $\rk(\Fix(\Phi))>1$), then  $[\Fix(\Phi), a]$ is the associated algebraic strong axis.
\end{itemize}

\begin{remark}\label{r:asa}    In (Algebraic strong axes) above we are focusing only on strong axes for the restriction of $\phi$ to its linear free factor system $\F_0(\phi)$; see the proof of Lemma~\ref{base case}. By definition of strong axes, $a\in\Fix(\Phi)$. In particular, $[\Fix(\Phi), a]$ is an element of $\Abadgroupelement\subset\ecat(\Abig)$ and is not good. The set of algebraic strong axes of $\phi$ is an element of $\sequn(\Abadgroupelement)\subset\ecat(\Abig)$. By Lemma~\ref{l:alg sa = sa}, $[\Fix(\Phi), a]$ determines $[\Phi, a]$.

Remark~\ref{r:sa} can be used to compute the algebraic strong axes from our \ct\ $\fG$. Using the notation there, the set $\sa(\phi)$ of strong axes is in a 1-1 correspondence with the set of non-trivial circuits in the core of $\Gamma(f)$ representing elements of $\A_\both(\phi)$.
The strong axis $[\Phi_{a,0},a]$ corresponding to $(f_{v_\#}, \tau)$ has algebraic invariant represented by $[\Fix(f_{v\#}),a]$. $\Fix(f_{v\#})$ is the image in $\pi_1(G,v)$ of $\pi_1(\Gamma(f), v)$ where $v\in\Gamma^0(f)$ is viewed as an element of $G$ (see the construction of $\Gamma(f)$ in Section~\ref{sec:stallings}). The case for $(f_{v_j\#}, v_j)$ is similar.

In our running example (see page~\pageref{ex.f}), the algebraic invariants of $\alpha$ and $\alpha'$ are respectively represented by the pairs $(\langle a, a^b\rangle, a)$ and $(\langle a, a^b\rangle, a^b)$. The strong axis $\alpha''$ doesn't have an algebraic invariant since the corresponding fix set is cyclic. 
This ends Remark~\ref{r:asa}.

\end{remark}

\subsection{Algebraic added lines}\label{s:algebraic added lines}
\begin{itemize}
\item
(Algebraic added lineswith respect to $\fe$): We use notation as in Definition~\ref{d:added lines}. 
$\sH_{\fe\in\fc}(\phi)$ is defined to be $\{\sHsub(L)\mid L\in\LW_\fe(\phi)\}$ if $\LW_\fe(\phi)$ is finite and is defined to be the singleton $\{[\Fix(\Phi),F_\fc(\ti r^+)]\}$ otherwise. The proof that $[\Fix(\Phi),F_\fc(\ti r^+)]$ is good is similar to the proof of Lemma~\ref{l:algebraic lines are good pairs}. $\sH_{\fe\in\fc}(\phi)$ is an element of $\sequn(\Agoodsubgroupsubgroup\sqcup\Agoodsubgroupelement\sqcup\Agoodelementsubgroup)\subset\ecat(\Abig)$. 
\end{itemize}

In terms of our \ct\ $\fG$, algebraic added lines can be computed as follows. We use the notation of Definition~\ref{d:added lines}. Since we already know how to compute algebraic lines, in the cases where the number of added lines is finite, it is enough to describe the added lines. This is done in Definition~\ref{d:added lines} using the eigengraph $\Gamma(f|F^+)$. The case where there are infinitely many added lines is the intersection of [$\sH$] and \llarge\ in Lemma~\ref{l:types}. Let $v$ be the initial vertex of the edge $E\in\E_f$ in $\Gamma(f|F^+)$ corresponding to $r^+$. Then $[\Fix(\Phi),F_\fc(\ti r^+)]$ is represented by $(\Fix(f_{v\#}|F^+), \pi_1(C^E,v))$ where $C$ is the connected, core subgraph of $G$ representing $F_\fc(r^+)$ and $C^E$ denotes the one-point union at the terminal endpoint of $E$ of $C$ and an edge labeled $E$ (so that $C^E$ immerses to $G$). See our running example on page~\pageref{ex.j}  where  $\fe_2$ is \contractible, ${\fe_3}$ is \cyclic, $ \fe_1$ is \llarge\ and 
\begin{align*}
\sH_{\fe_1\in\fc}(\phi)&=\{[\langle a, a^b\rangle,  \langle a, b\rangle^c]\}\\
\sH_{\fe_2\in\fc}(\phi)&=\{[\langle a, b\rangle,  \langle a, b\rangle^{d^{-1}e}], [\langle a, b\rangle,  \langle a, b\rangle^{e^{-1}d}]\}\\
\sH_{\fe_3\in\fc}(\phi)&=\{[a^{-1}, \langle a, b, c\rangle^{p^{-1}q}], [a, \langle a, b, c\rangle^{p^{-1}q}]\}
\end{align*}

\subsection{Algebraic limit lines}\label{s:algebraic limit lines}
\begin{itemize}
\item
$\{\sH_\fc(L):L\in\accnr(\phi)\}$ is the set of algebraic limit lines.
\end{itemize}
 The set of algebraic limit lines is an element of $\sequn(\Agoodsubgroupsubgroup\sqcup\Agoodsubgroupelement\sqcup\Agoodelementsubgroup\sqcup\Agoodelementelement)\subset\ecat(\Abig)$.  As in the case of added lines, to compute algebraic limit lines, we only need to compute $\accnr(\phi)$ from our \ct\ $\fG$. This is done in Section~\ref{s:limit lines}; see Corollary~\ref{cor:limit lines}. Referring to our running example (see page~\pageref{ex.g}), $\accnr(\phi)=\accnr(r_q)=\{a^\infty R_c, a^\infty ba^\infty\}$ and so $\sH_\fc(\phi)=\{[a^{-1},\langle a, b\rangle^c], [a^{-1},a^b]\}$.

\subsection{Naturality}
We will need the following naturality statements.

\begin{lemma} \label{consistency} 
  Suppose $\fe\in\fc$ and  $\Theta \in\theta \in \Out(F_n)$. Then:
\begin{enumerate}

\item
$\theta(\{\sHsub(L):L\in\accnr(\phi)\})= \{\sH_{\phi^\theta, \theta(\fc)}(L'):L'\in\accnr(\phi^\theta)\}$

\item
$\theta(\sH_{\fe\in\fc}(\phi))= \sH_{\theta(\fe)\in\theta(\fc)}(\phi^\theta)$
\item
$  [\Fix(\Phi),a]   \leftrightarrow  \theta\big(  [\Fix(\Phi),a]\big) = ([\Fix(\Phi^\Theta), \Theta(a)]$ defines a bijection between the  algebraic strong axes for $\phi$ and the algebraic strong axes for $\phi^{\theta}$.
\end{enumerate}
\end{lemma}

\begin{proof}
\begin{align*}
\theta(\{\sH_{\fc}(L):L\in\accnr(\phi)\}) 
\overset{}=&\{\theta(\sH_{\fc}(L)):L\in\accnr(\phi)\})\\
\overset{Lemma~\ref{l:algebraic lines are natural}}=&\{\sH_{\theta(\fc)}(\theta(L))):L\in\accnr(\phi)\}\\
\overset{Cor~\ref{acc is natural}}=&\{\sH_{\theta(\fc)}(L')):L'\in\accnr(\phi^\theta)\}\\
\end{align*}

(2) if $\LW_\fe(\phi)$ is finite: 

\vspace{-20pt}\begin{align*}
\theta(\sH_{\fe\in\fc}(\phi))\overset{def}=&\theta(\{\sH_{\fc}(L):L\in\LW_\fe(\phi)\})\\
\overset{}=&\{\theta(\sH_{\fc}(L)):L\in\LW_\fe(\phi)\})\\ 
\overset{Lemma~\ref{l:algebraic lines are natural}}=&\{\sH_{\theta(\fc)}(\theta(L))):L\in\LW_\fe(\phi)\}\\
\overset{Cor~\ref{LW is natural}}=&\{\sH_{\theta(\fc)}(L')):L'\in\LW_{\theta(\fe)}(\phi^\theta)\}\\
\overset{def}=&\sH_{\theta(\fe)\in\theta(\fc)}(\phi^\theta)\\
\end{align*}

(2) if $\LW_\fe(\phi)$ is infinite: 

\vspace{-20pt}

\begin{align*}
\theta(\sH_{\fe\in\fc}(\phi))\overset{def}=&\theta(\{[\Fix(\Phi),F_\fc(\ti r^+)]\})\\
\overset{}=&\{[\Theta(\Fix(\Phi)),\Theta(F_\fc(\ti r^+))]\})\\ 
\overset{Lemma~\ref{l:special is natural}}=&\{[\Fix(\Phi^\Theta),F_{\theta(\fc)}(\Theta(\ti r^+))\}\\
\overset{def}=&\sH_{\theta(\fe)\in\theta(\fc)}(\phi^\theta)\\
\end{align*}

(3):  Lemmas~\ref{theta and axes}(2) and  ~\ref{first theta}(2) imply that $( \Phi,a) \leftrightarrow ( \Phi ^\Theta,\Theta(a))$ induces a bijection $ \sa(\phi, [a]) \leftrightarrow \sa(\psi,\theta([a]))$ and that the ranks of $\Fix(\Phi)$ and $\Fix(\Phi^\Theta)$ are equal.  (3) therefore follows from 
\vspace{-10pt}
\begin{align*}
\theta([\Fix(\Phi),a]) 
\overset{def}=&[\Theta(\Fix(\Phi)), \Theta(a)])\\
\overset{}=&[\Fix(\Phi^\Theta), \Theta(a)]\\
\end{align*}
\end{proof}

\subsection{The algebraic invariant of $\phi$ rel $\fc$}
In this subsection we collect our algebraic invariants into a single master algebraic invariant.

\begin{definition}\label{d:I} 
Fix a special chain $\fc$ for $\phi\in\upg$. 
\begin{enumerate}
\item\label{i:I}
The {\it algebraic invariant of $\phi$ rel $\fc$} is the element of $\ecat(\Abig
)$ that is the ordered set ${\sf I}_\fc(\phi)$  consisting of:
\begin{itemize}
\item
$\fc$
\item
$\Fix(\phi)$;
\item
$(\sH_{\fe\in\fc}(\phi):\fe\in\fc)$ where the special 1-edge extensions $\fe$ are ordered using $\fc$;
\item
$\{\sHsub(L) :L\in\accnr(\phi)\}$;
\item
$\A_{\both}(\phi)$; and 
\item
the set of algebraic strong axes for $\phi$.
\end{itemize}
The six elements in the ordered list ${\sf I}_\fc(\phi)$ are the {\it components of ${\sf I}_\fc(\phi)$}.
\item \label{i:J}
Order (non-canonically) the elements of the union of the six sets defining  ${\sf I}_\fc(\phi)$. The resulting element of $\ecat(\Abig)$ is denoted $\dotI$.
\end{enumerate}
\end{definition}

\begin{remark}\label{r:compute}
In light of Theorem~\ref{t:ct is algorithmic}, we have seen in this section that the set of special chains, ${\sf I}_\fc(\phi)$, and $\dotI$ can be computed. We stress that they   take values in $\ecat(\Abig)$, which satisfies property \MW\ by Lemma~\ref{l:hat MW}.
\end{remark}

 \section{Stabilizers of algebraic invariants }\label{s:applying wg} 
At this point, it is reasonably straightforward to reduce the conjugacy problem for $\upgn$ in $\Out(\f)$ to the problem of deciding whether $\phi, \psi \in \upgn$ with ${\sf I}_\fc(\phi) = {\sf I}_\fc(\psi)$  are conjugate by some $\theta$ in the stabilizer of  ${\sf I}_\fc(\phi)$.  We want a little more.  Namely, we want to restrict the set of potential conjugators  to those elements that stabilize ${\sf I}_\fc(\phi)$ and induce trivial permutations on the components of ${\sf I}_\fc(\phi)$.    Continuing  with the notation of the previous section, we make this precise as follows.

\begin{definition}[$\X_\fc(\phi)$]\label{d:X}
Let $\phi\in\upgn$, $\fc$ be a special chain for $\phi$, and let $\dotI$ be in Definition~\ref{d:I}\pref{i:J}. Let $\X_\fc(\phi)$ denote the stabilizer $\Out_{\dotI}(\f)$ of $\dotI$ in $\Out(\f)$. 
\end{definition}

Unravelling definitions, $\X_\fc(\phi)$ also has a description as the subgroup of $\Out(\f)$ fixing each element in the union of the following six sets.
\begin{enumerate}
\item
$\{[F] : [F]\in\F\in\fc\}$;
\item
$\Fix(\phi)$;
\item
$\cup_{\fe\in\fc}\sH_{\fe\in\fc}(\phi)$;
\item
$\{\sHsub(L) :L\in\accnr(\phi)\}$;
\item
$\A_\both(\phi)$; and
\item\label{i:last}
$\{[\Fix(\Phi),a] :[\Phi,a]\in\sa(\phi), \rk\Fix(\Phi)\ge 2\}$.
\end{enumerate}

\begin{remark} 
As noted in Definition~\ref{d:I}\pref{i:J}, the construction of $\dotI$ was non-canonical. That is, there were choices in its construction. Every choice has the same stabilizer and so $\X_\fc(\phi)$ is independent of choice.
\end{remark}

 In passing and for future use, we have the expected: 

\begin{lemma}\label{l:phi in X} 
$\phi\in\X_\fc(\phi)$
\end{lemma}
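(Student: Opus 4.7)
The plan is to verify the seven conditions in Definition~\ref{d:X} in turn, each time exploiting the naturality of the relevant invariant together with the trivial observation $\phi^\phi = \phi$. In every case the underlying mechanism is the same: if $J$ is any of the natural invariants we have defined, then setting $\theta = \phi$ in the identity $J(\phi^\theta) = \theta(J(\phi))$ gives $J(\phi) = \phi(J(\phi))$. The nontrivial content is only that the naturality is strong enough to yield fixation of each individual component of ${\sf I}_\fc(\phi)$, not merely of the set-level invariant.

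For (1) and (2), $\fc$ is by construction a chain of $\phi$-invariant free factor systems (Notation~\ref{notn:ffs}), and since $\phi$ is rotationless, each component $[F]$ of each $\F \in \fc$ is itself $\phi$-invariant. For (3), write $[H] = [\Fix(\Phi)]$ for some $\Phi \in \cP(\phi)$; since $\Phi$ preserves $\Fix(\Phi)$ setwise, $\phi([H]) = [\Phi(\Fix(\Phi))] = [H]$ (alternatively, apply Lemma~\ref{first theta}(3) with $\theta = \phi$). For (6), an axis $[a] \in \A_\both(\phi)$ satisfies $a \in \Fix(\Phi)$ for some principal $\Phi$, so $\phi([a]) = [\Phi(a)] = [a]$. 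For (7), $\phi([\Fix(\Phi),a]) = [\Phi(\Fix(\Phi)),\Phi(a)] = [\Fix(\Phi),a]$ by the same two facts.

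For (4) we split according to the invariant description of $\LW_\fe(\phi)$ in Definition~\ref{d:added lines}. When $\fe$ is large, $\sH_{\fe \in \fc}(\phi)$ is the singleton $\{[\Fix(\Phi), F_\fc(\ti r^+)]\}$; here $\Phi$ preserves $\Fix(\Phi)$, and $\Phi(F_\fc(\ti r^+)) = F_\fc(\ti r^+)$ because $F_\fc(\ti r^+)$ is the unique representative of the $\phi$-invariant conjugacy class $\F_\fc(r^+)$ containing the $\Phi$-fixed point $\ti r^+$. When $\LW_\fe(\phi)$ is finite, each individual line $L \in \LW_\fe(\phi)$ has a lift whose endpoints lie in $\FixN(\Phi) \cup \partial\Fix(\Phi)$ for a single $\Phi$, so that lift is pointwise $\partial\Phi$-fixed; hence $L$ itself is $\phi$-invariant, and Lemma~\ref{l:algebraic lines are natural} gives $\phi(\sH_\fc(L)) = \sH_\fc(L)$. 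For (5), Corollary~\ref{cor:limit lines}(\ref{item:ell decomposes}) shows every $L \in \acc(\phi)$ is $\phi$-invariant, and Lemma~\ref{l:algebraic lines are natural} again gives $\phi(\sH_\fc(L)) = \sH_\fc(L)$.

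There is no real obstacle here: the lemma is a bookkeeping statement that collates the naturality results established for each invariant. The only mild subtlety is condition (4) in the finite case, where set-level naturality (Lemma~\ref{consistency}(2)) only tells us that $\phi$ permutes $\sH_{\fe\in\fc}(\phi)$, and we must instead appeal to the invariant description to see that each individual line in $\LW_\fe(\phi)$, and hence each element of $\sH_{\fe\in\fc}(\phi)$, is individually fixed by $\phi$.
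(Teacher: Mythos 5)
Your proof is correct and follows essentially the same route as the paper's: the paper also checks conditions (1)--(7) directly, treating (1), (2), (3), (6), (7) as immediate from the definitions, handling (5) via the $\phi$-invariance of limit lines from Corollary~\ref{cor:limit lines} together with Lemma~\ref{l:algebraic lines are natural}, and handling (4) via the invariant description of added lines (Lemma~\ref{LW is natural} and Lemma~\ref{l:algebraic lines are natural}). Your extra care in case (4), distinguishing element-wise fixation from set-level naturality, is just a fuller unpacking of what the paper compresses into ``follows from definitions.''
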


\begin{proof}
We have to check that $\phi$ fixes each of the sets (1--\ref{i:last}) above elementwise. In (1), (2), (5), and (6) this is immediate from definitions.   For set (4), this is because $\phi(L)=L$ for all $L\in  \accnr(\phi)$ \big(Corollary~\ref{cor:limit lines}\pref{item:endpoints in FixN}\big) and Lemma~\ref{l:algebraic lines are natural}. That the elements of set (3) are fixed follows from definitions,   Lemma~\ref{LW is natural} and   Lemma~\ref{consistency}(2).  \end{proof}

\begin{definition}\label{d:vf} 
A group $G$ {\it is of type }{\sf F} if it has a finite Eilenberg-MacLane space. $G$ {\it is of type} {\sf VF} if it has a finite index subgroup of type {\sf F}.
\end{definition}

\begin{proposition}\label{p:vf} 
The stabilizer $\Out_{\sf Y}(\f)$ of an element $\sf Y\in \ecat(\Abig)$ is of type {\sf VF}.
\end{proposition}

\begin{proof}
We saw in the proof of Lemma~\ref{l:hat MW} that there is a  map $\ecat(\Abig)\to\ecat(\catA_0)$ that \explicit\ (Notation~\ref{notn:explicit fibers}).   If $\overline{\sf Y}$ is the image of $\sf Y$, then $\Out_{\sf Y}(\f)$ has finite index in $\Out_{\overline{\sf Y}}(\f)$. By Corollary~\ref{c:finite index}, the subgroup $G$ of $\Out(\f)$ fixing each label of $\overline{\sf Y}$ has finite index in $\Out_{\overline{\sf Y}}(\f)$. Also, the subgroup $G$ has type $\mathsf{VF}$ by \cite[Theorem~1.1]{bfh:gersten}. $\Out_{\sf Y}(\f)$, being commensurate with a group of type $\mathsf{VF}$, also has type $\mathsf{VF}$.
\end{proof}

As usual naturality will be important.

\begin{lemma}\label{l:X is natural}
Suppose $\xi\in\Out(\f)$.
\begin{enumerate}
\item\label{i:X is natural}
$(\X_\fc(\phi))^\xi=\X_{\xi(\fc)}(\phi^\xi)$.
\item\label{i:I is natural}
$\xi(\sf I_\fc(\phi))=\sf I_{\xi(\fc)}(\phi^\xi)$
\end{enumerate}
\end{lemma}

\begin{proof}
\pref{i:X is natural}: By Lemma~\ref{l:special is natural}, $\xi(\fc)$ is special for $\phi^\xi$ and so the statement makes sense. The lemma follows easily from the naturality of the quantities appearing in Definition~\ref{d:X}. For example, we verify that if $\theta\in\X_{\fc}(\phi)$ then $\theta^\xi(\sH_{\phi^\xi,\xi(\fc)}(L)\big)=\sH_{\phi^\xi,\xi(\fc)}(L)$ for all $L\in  \accnr(\phi^\xi)$. Indeed, 
\begin{align*}\xi\theta\xi^{-1}\big(\sH_{\phi^\xi,\xi(\fc)}(L)\big)
&=\xi\theta\big(\sH_{\phi,\fc}(\xi^{-1}(L))\big)\\
&=\xi\big(\sH_{\phi,\fc}(\xi^{-1}(L))\big)\\
&=\sH_{\phi^\xi,\xi(\fc)}(L)
\end{align*}
where the first and third equalities use Lemma~\ref{l:algebraic lines are natural} and the second uses Corollary~\ref{acc is natural}. The remainder of the proof consists of similar checks and is left to the reader.

The proof of \pref{i:I is natural} is similar.
\end{proof}

We next reduce the proof of the main result (Theorem~\ref{t:main}) of this paper, i.e.\  the conjugacy problem for $\upgn$ in $\Out(\f)$, to the proof of Proposition~\ref{p:conjugacy in X} stated immediately below. 

\begin{prop}\label{p:conjugacy in X}  
  There is an algorithm that takes as input $\phi,\psi\in\upgn$ and a chain $\fc$ such that
\begin{itemize}
\item
$\fc$ is special for both $\phi$ and $\psi$ and
\item
${\sf I}_\fc(\phi)={\sf I}_\fc(\psi)$
\end{itemize}
and that outputs {\tt YES} or {\tt NO} depending whether or not there is $\theta\in\X_\fc(\phi)$ conjugating $\phi$ to $\psi$. Further, if {\tt YES} then such a $\theta$ is produced.
\end{prop}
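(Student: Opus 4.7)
The plan is to follow the two-phase strategy outlined in Section~\ref{s:full algorithm}, items \pref{item:reduction to Ker barQ} and beyond. The first phase further restricts the pool of candidate conjugators inside $\X_\fc(\phi)$; the second phase solves the restricted conjugacy problem by induction up the chain $\fc$.

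First I would introduce, for each staple pair $b$ (to be defined in Section~\ref{sec:staples}), a twist homomorphism $m_b: \X_\fc(\phi) \to \Z$ (Definition~\ref{defn:m}). These $m_b$ measure how much $\theta \in \X_\fc(\phi)$ shears along the linear structure detected by $b$. Since twist coordinates are natural under conjugation (Lemma~\ref{theta and axes}(2) and Lemma~\ref{lem:twist}), and since $\X_\fc(\phi)$ already stabilizes all the axes and algebraic strong axes appearing in ${\sf I}_\fc(\phi)$, the $m_b$ fit together into a homomorphism $Q: \X_\fc(\phi) \to \Z^{\{\text{staple pairs}\}}$ (Definition~\ref{d:Q}). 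Using the natural equivalence relation on staple pairs (Definition~\ref{d:pair equivalence}) we pass to the quotient homomorphism $\bar Q$. After normalizing so that $m_b(\phi) = 1$ for all $b$, Proposition~\ref{prop:barQ} produces a finite list of coset representatives of $\Ker(\bar Q)$ in $\X_\fc(\phi)$ that covers all possible conjugators taking $\phi$ to $\psi$. This reduces the problem to the conjugacy problem for $\phi, \psi$ with conjugator in $\Ker(\bar Q)$; we run the remainder of the algorithm once for each coset representative.

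Second, I would carry out the inductive argument of Theorem~\ref{last step} to solve the conjugacy problem with conjugator in $\Ker(\bar Q)$. The induction is along the special chain $\fc = (\F_0 \sqsubset \F_1 \sqsubset \cdots \sqsubset \F_t)$. The base case (Lemma~\ref{base case}) handles the linear free factor system $\F_0(\phi)$: here every higher-order eigenray has been stripped away, and one works entirely with the linear structure already encoded by the algebraic strong axes and twist coordinates. For the inductive step (Proposition~\ref{inductive step}), one assumes a conjugator has been found for $\phi|\F_{k-1}$ and $\psi|\F_{k-1}$, and uses the added-line data $\sH_{\fe \in \fc}(\phi)$ together with Lemma~\ref{l:recognition} (our adaptation of the Recognition Theorem) to either extend the conjugator across the one-edge extension $\F_{k-1} \sqsubset \F_k$ or conclude that no extension exists. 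At each stage one must check that the conjugator produced remains in $\Ker(\bar Q)$, which uses the fact that the twist data has already been pinned down in the first phase.

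The main obstacle will be the inductive step. The case analysis follows the types $[\sH]$, $[\sHH]$, $[\sLH]$ of Lemma~\ref{lemma:three extension types} together with the \contractible/\cyclic/\llarge\ dichotomy of Lemma~\ref{l:types}, and in each case one must verify two things: that Lemma~\ref{l:recognition} applies to the restrictions $\phi|\F_k$ and $\psi|\F_k$ (which requires checking that the candidate conjugator, determined on $\F_{k-1}$ and forced on the new ray $r^+$ by the hypothesis ${\sf I}_\fc(\phi) = {\sf I}_\fc(\psi)$, induces the correct bijection between limit lines lifting to $\Gamma(f|\F_k)$ and $\Gamma(g|\F_k)$), and that the twist coordinates introduced in the extension are already controlled by membership in $\Ker(\bar Q)$. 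Lemma~\ref{just one eigengraph line} and Lemma~\ref{l:unique lift} will be essential here, since they guarantee that the algebraic data $\sH_\fc(L)$ pins down $L$ uniquely (and its lift uniquely) among lines lifting to the eigengraph, so that the algebraic hypothesis on ${\sf I}_\fc$ actually feeds back into the dynamical hypothesis required by Lemma~\ref{l:recognition}. Everything else in the proof is assembly of previously established reductions and algorithms.
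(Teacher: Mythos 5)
Your proposal matches the paper's own proof: the paper establishes Proposition~\ref{p:conjugacy in X} exactly by combining Proposition~\ref{prop:barQ} (finitely many coset representatives of $\Ker(\bar Q)$ in $\X_\fc(\phi)$ covering all potential conjugators, via the twist homomorphisms $m_b$ and the bundled $\bar Q$) with Proposition~\ref{last step}, which is proved in Section~\ref{s:algorithm} by induction up $\fc$ using Lemma~\ref{base case} and Proposition~\ref{inductive step}, with Lemma~\ref{l:recognition}, Lemma~\ref{just one eigengraph line}, and the type analysis of Lemma~\ref{lemma:three extension types} playing the roles you describe. No substantive deviation from the paper's argument.
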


Proposition~\ref{p:conjugacy in X} is proved in Sections~\ref{s:XD} and \ref{s:algorithm} below. 

\begin{lemma} \label{l:reduction} Proposition~\ref {p:conjugacy in X} implies Theorem~\ref{t:main}.  That is,  an algorithm that  satisfies the conclusions of Proposition~\ref{p:conjugacy in X} can be used to produce an  algorithm that     satisfies the conclusions of Theorem~\ref{t:main}.
\end{lemma}

\begin{proof}
Assume Proposition~\ref{p:conjugacy in X} holds and $\phi,\psi\in\upg(\f)$.

View the multiset $\sf I(\phi):=\{\sf I_\fc(\phi)\mid \fc$  is a special chain for $\phi$\} as an element of $\ecat(\Abig)$ as described in Section~\ref{s:special chains}. By Lemma~\ref{l:X is natural}\pref{i:I is natural}, $\sf I(\phi)$ is a conjugacy invariant of $\phi$. That is, if $\phi^\theta=\psi$ then $\theta(\sf I(\phi))=\sf I(\psi)$. We may compute $\sf I(\phi)$ and $\sf I(\psi)$; see Remark~\ref{r:compute}.

Since $\ecat(\Abig)$ satisfies property \MW\ \ (Lemma~\ref{l:hat MW}), we can algorithmically check if there is $\theta'\in\Out(\f)$ such that $\theta'(\sf I(\phi))=\sf I(\psi)$. If there is no such $\theta'$ then $\phi$ and $\psi$ are not conjugate; return {\tt NO}. If there is such $\theta'$, then one is produced by the \M-algorithm for $\ecat(\Abig)$. Note that $\phi$ and $\psi$ are conjugate in $\Out(\f)$ iff $\phi$ and $\psi':=\theta'^{-1}\psi \theta'$ are conjugate in $\Out(\f)$ 
 iff $\phi$ and $\psi'$ are conjugate in the stabilizer $G:=\Out_{\sf I(\phi)}(\f)$ of $\sf I(\phi)$.

$G$ acts by permutation on the set of labels of ${\sf I(\phi)}$. Let $G'<G$ denote the subgroup fixing each label. By the \M-algorithm for $\ecat(\Abig)$, we may construct a finite presentation for $G$. Using our finite set of generators for $G$, we may construct the image $Q$ of $G$ in our permutation group. By Lemma~\ref{finite actions}, we can compute a finite set $\theta_i$ such that $G=\sqcup_i \theta_i G'$. Hence, $\phi$ and $\psi'$ are conjugate in $\Out(\f)$ iff $\phi$ and some $\theta_i^{-1}\psi'\theta_i$ are conjugate in $\Out(\f)$ iff $\phi$ and some $\theta_i^{-1}\psi'\theta_i$ are conjugate in $G'$. Since $G'<\X_\fc(\phi) < \Out(\f)$ for all $\fc$, $\phi$ and $\psi'$ are conjugate in $\Out(\f)$ iff $\phi$ and some $\theta_i^{-1}\psi'\theta_i$ are conjugate in $\X_\fc(\phi)$. We may use the supposed algorithm of Proposition~\ref{p:conjugacy in X} to decide whether or not this is the case and return a conjugator if it is.  The returned conjugator allows us to compute a conjugator for $\phi$ and $\psi$.
\end{proof}

\section{Staple Pairs} \label{sec:all staple pairs} 
\subsection{Limit lines $\accnr(\phi, \ti r) \subset \ti {\cal B}$} \label{sec:second limit lines}
    In Section~\ref{s:limit lines},  we associated a finite set $\accnr(r) \subset \cal B$ of $\phi$-invariant non-periodic lines to each  $r \in \cR(\phi)$.    In this section we  associate, to each lift $\ti r$ of $r$,  a subset $\accnr(\phi, \ti r) \subset \ti \B$ of the full pre-image  of   $\accnr(r)$ and then establish properties of $\accnr(\phi,\ti r)$ that will be needed later in the paper.

\begin{definition}  \label{defn:lines in ti r}
Choose a marked graph $K$.  For each lift  $\ti r \in \partial F_n$  of   $r \in \cR(\phi)$, let $\Phi_{\ti r}$ be the  unique lift of $\phi$ that fixes $\ti r$ and let $\ti R \subset \ti K$ be a ray with terminal end  $\ti r$. If $\ti L$ is a lift of $L \in \accnr(r)$ then  $L$ is $\phi$-invariant by Corollary~\ref{cor:limit lines}\pref{item:ell decomposes} and so each $\Phi_{\ti r}^j(\ti L)$ is a translate of $\ti L$, say $\Phi_{\ti r}^j(\ti L) = T_j(\ti L)$,  for some unique $T_j$. Define  $\ti L$ to be in $\accnr(\phi, \ti r)$ if for every finite subpath $\ti \beta$ of $\ti L$ there exists $ J(\ti \beta)$ such that $T_j(\ti \beta) \subset\ti R$ (equivalently $\ti \beta \subset T_j^{-1}(\ti R)$), for all   $j\ge J(\ti \beta)$. 
 \end{definition} 
 
\begin{remark}
As defined, $\accnr(\phi, \ti r)$ depends on $\Phi_{\ti r}$ and hence on $\phi$, in contrast to $\accnr(r)$ which  is independent of $\phi$.
\end{remark}

\begin{lemma} \label{accnr by conjugacy} $\accnr(\phi,\ti r)$ is well defined and $\Phi_{\ti r}$-invariant.   Moreover, if   $\psi = \theta \phi \theta^{-1}$ for some $\theta \in \Out(F_n)$ and if $\Theta$ is a lift of $\theta$ then $\Theta(\accnr(\phi, \ti r)) = \accnr(\psi, \partial \Theta(\ti r))$.
\end{lemma}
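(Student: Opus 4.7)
The plan is to verify, in order, well-definedness, $\Phi_{\ti r}$-invariance, and the conjugation naturality. The main obstacle is well-definedness, which depends crucially on $L \in \accnr(\phi)$ being non-periodic.

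First I would show independence of the choice of ray $\ti R$ within a fixed marked graph $\ti K$. Given two rays $\ti R, \ti R'$ with terminal end $\ti r$ sharing a common terminal subray $\ti R_0$, fix an edge $\ti e_0 \subset \ti L$. The sequence $\{T_j(\ti e_0)\}_{j \geq 0}$ has no repetitions, since $T_j = T_k$ for $j \ne k$ would yield $\Phi_{\ti r}^{j-k}(\ti L) = \ti L$, contradicting non-periodicity of $L$. Assuming the defining condition holds for $\ti R$ and applying it to subpaths containing $\ti e_0$, the distinct edges $T_j(\ti e_0)$ lie in $\ti R$ for $j$ large and must therefore escape to infinity along $\ti R$, hence eventually lie in $\ti R_0$. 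Since covering translations are isometries of $\ti K$, enlarging any prescribed finite subpath $\ti\beta \subset \ti L$ to $\ti\gamma \subset \ti L$ containing $\ti e_0$ forces $T_j(\ti\gamma) \subset \ti R_0 \subset \ti R'$ for $j$ large, so $T_j(\ti\beta) \subset \ti R'$. Independence of the marked graph follows by taking a marking-preserving homotopy equivalence; its lift is a quasi-isometry equivariant under covering translations and preserves terminal ends of rays, so after a standard bounded cancellation argument the within-graph independence completes the comparison.

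For $\Phi_{\ti r}$-invariance, set $\ti L' = T_1(\ti L) = \Phi_{\ti r}(\ti L)$ with covering translations $T'_j$ defined by $\Phi_{\ti r}^j(\ti L') = T'_j(\ti L')$. The identity
\[
\Phi_{\ti r}^j(\ti L') = \Phi_{\ti r}^{j+1}(\ti L) = T_{j+1}(\ti L) = T_{j+1}T_1^{-1}(\ti L')
\]
yields $T'_j = T_{j+1}T_1^{-1}$, whence $T'_j(T_1(\ti\beta)) = T_{j+1}(\ti\beta) \subset \ti R$ for $j \geq J(\ti\beta) - 1$. Every finite subpath of $\ti L'$ has the form $T_1(\ti\beta)$ for some $\ti\beta \subset \ti L$, so this proves $\Phi_{\ti r}(\accnr(\phi, \ti r)) \subset \accnr(\phi, \ti r)$; the reverse inclusion follows by the analogous computation with $\Phi_{\ti r}^{-1}$.

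For naturality, choose homotopy equivalences $f, h: K \to K$ representing $\phi$ and $\theta$, with lifts $\ti f, \ti h$ corresponding to $\Phi_{\ti r}$ and $\Theta$. Then $\ti h \ti f \ti h^{-1}$ represents $\psi$ with corresponding automorphism $\Theta\Phi_{\ti r}\Theta^{-1} \in \cP(\psi)$; by Lemma~\ref{first theta} this automorphism fixes the attractor $\partial\Theta(\ti r)$, so $\Psi_{\partial\Theta(\ti r)} = \Theta\Phi_{\ti r}\Theta^{-1}$. Using the independence established in part one, apply the definition with the ray $\ti h_\#(\ti R)$, whose terminal end is $\partial\Theta(\ti r)$. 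The identity $\Psi_{\partial\Theta(\ti r)}^j(\ti h_\#(\ti L)) = \ti h_\#(T_j(\ti L))$ yields $T'_j = \ti h T_j \ti h^{-1}$, so $T'_j(\ti h_\#(\ti\beta)) = \ti h_\#(T_j(\ti\beta)) \subset \ti h_\#(\ti R)$ for $j \geq J(\ti\beta)$, proving $\ti h_\#(\ti L) \in \accnr(\psi, \partial\Theta(\ti r))$. The reverse inclusion follows by symmetry via $\Theta^{-1}$ and $\ti h^{-1}$.
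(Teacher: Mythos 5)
The central problem is in your naturality computation (and it is implicitly present in your one-sentence treatment of graph-independence): from $T_j(\ti\beta)\subset\ti R$ you conclude $T'_j(\ti h_\#(\ti\beta))=\ti h_\#(T_j(\ti\beta))\subset \ti h_\#(\ti R)$, but tightening does not commute with passing to subpaths. When $\ti h(\ti R)$ is tightened, edges of the tightened image of the subpath $T_j(\ti\beta)$ near its two ends may cancel against the adjacent pieces, so the asserted containment fails in general; this is exactly where the paper's proof does its work. One needs the bounded cancellation constant $C$ of $h$, the observation that the $C$-trimmed subpath of $\ti h_\#(T_j(\ti\beta))=T'_j\ti h_\#(\ti\beta)$ does lie in $\ti h_\#(\ti R)$, and then a further exhaustion step: membership in $\accnr(\psi,\partial\Theta(\ti r))$ must be checked on \emph{every} finite subpath $\ti\beta'$ of $\Theta_\#(\ti L)$, not only those of the form $\ti h_\#(\ti\beta)$, so for a given $\ti\beta'$ one must choose $\ti\beta\subset\ti L$ whose $C$-trimmed image contains $\ti\beta'$ and set $J(\ti\beta')=J(\ti\beta)$. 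Your proposal invokes bounded cancellation only for independence of the marked graph and then drops it in the naturality step, which is the technical heart of the lemma (the paper in fact proves a single statement covering graph-independence and naturality simultaneously by precisely this trimming argument, with ray-independence being the only part that is immediate).

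A second, more local flaw: your justification that the $T_j$ are pairwise distinct is invalid. Having $\Phi_{\ti r}^{j-k}(\ti L)=\ti L$ does not contradict non-periodicity of $L$: ``non-periodic'' in $\accnr(\phi)$ only means $L$ is not of the form $w^{\pm\infty}$, while limit lines are $\phi$-invariant (Corollary~\ref{cor:limit lines}) and a lift with both ends in $\FixN(\Phi_{\ti r})$ is genuinely fixed by $\Phi_{\ti r}$; non-periodicity is what gives uniqueness of $T_j$, not non-repetition. The non-repetition is nevertheless true for a lift satisfying your standing hypothesis, but for a different reason: if $T_j=T_k$ with $j\ne k$ then the sequence $(T_i)$ takes only finitely many values, and the defining condition would place a fixed translate of every finite subpath of $\ti L$, hence a translate of the entire line, inside the ray $\ti R$, which is impossible. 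Alternatively you can avoid distinctness altogether by the buffer argument used in Lemma~\ref{acc is well defined}: enlarge $\ti\beta$ to $\ti\gamma\subset\ti L$ with more than $|\ti R\setminus\ti R_0|$ edges on each side of $\ti\beta$; then $T_j(\ti\gamma)\subset\ti R$ already forces $T_j(\ti\beta)\subset\ti R_0$. Your $\Phi_{\ti r}$-invariance computation, by contrast, is correct as written.
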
 

\proof
Replacing $\ti R$ by a subray does not change  $\accnr(\phi,\ti r)$.  Since any two rays with terminal end $\ti r$  share a common subray, $\accnr(\phi,\ti r)$ is independent of the choice of $\ti R$.

As defined above,   $\accnr(\phi,\ti r)$ depends on the marked graph $K$ so we write $\accnr(\phi,\ti r, K)$ to make this explicit. We will prove
\begin{description}
\item ($*$) $\Theta(\accnr(\phi,\ti r, K)) = \accnr( \theta \phi \theta^{-1},\Theta(\ti r), K')$ for any marked graphs $K$ and $K'$ and any  $\Theta \in \Aut(F_n)$ representing any $\theta \in \Out(F_n)$.
\end{description}
  Applied with $\Theta = $ identity, ($*$) proves that $\accnr(\phi,\ti r, K)$ is independent of $K$ and hence that $\accnr(\phi,\ti r)$ is well defined.  The moreover statement is equivalent to $(*)$ and  $\Phi_{\ti r}$-invariance of $\accnr(\phi,\ti r)$ is  an immediate consequence of the definitions.  Thus the proof of the lemma will be complete once we prove ($*$). 

Assume the notation of Definition~\ref{defn:lines in ti r}.  Let $\ti r' =  \Theta(\ti r)$ and  $\psi  = \theta \phi \theta^{-1}$;  note that $\Psi_{\ti r'} = \Theta \Phi_{\ti r} \Theta^{-1}$.  Choose a homotopy equivalence $g : K \to K'$  of marked graphs that represents $\theta$ when $\pi_1(K)$ and $\pi_1(K')$ are identified with $F_n$ via their markings.  Let $\ti g : \ti K \to \ti K'$ be the lift  satisfying $ \ti g | \partial \f =  \Theta | \partial \f$, let $\ti R' = \ti g_\#(\ti R) \subset \ti K'$, let $\ti L' = \Theta(\ti L) = \ti g_\#(\ti L)$ and let $T'_j : \ti K' \to \ti K'$ be the covering translation satisfying $ T'_j |\partial \f= (\Theta   T_j   \Theta^{-1})|\partial \f$.     
Then   
$${\Psi^j_{\ti r'}}(\ti L') \cap \ti R' =  {\Psi^j_{\ti r'}}(\ti g_\#(\ti L)) \cap  \ti g_\#(\ti R) = \ti g_\#({\Phi_{\ti r}^j}(\ti L)) \cap \ti g_\#(\ti R)$$
By \cite{co:bcc} (see also \cite[Lemma~3.1]{bfh:tits0}), there is a constant $C$, depending only on $g$,  such that  $\ti g_\#({\Phi_{\ti r}^j}(\ti L)) \cap \ti g_\#(\ti R)$ contains the subpath of  $\ti g_\#({\Phi_{\ti r}^j}(\ti L) \cap  \ti R)$ obtained by $C$-trimming  (i.e.\ removing the first and last $C$ edges) and so contains the subpath of  $\ti g_\#(T_j(\ti \beta)) = T'_j \ti g_\#(\ti \beta)$   obtained by $C$-trimming for any chosen $\ti\beta$ and all $j \ge J(\ti \beta)$.     Given  a finite subpath $\ti \beta'$ of $\ti L'$ choose a finite subpath $\ti \beta$ of $\ti L$ such that the $C$-trimmed subpath of $\ti g_\#(\ti \beta)$ contains $\ti \beta'$. Then ${\Psi^j_{\ti r'}}(\ti L') \cap \ti R' \supset T'_j (\ti \beta')$ for all $j \ge J(\ti \beta)$.  Letting $J(\ti \beta') = J(\ti \beta)$,   we conclude that $\ti L' \in \accnr( \phi,\ti r)$.  By symmetry, we have proved ($*$).
\endproof  

Our goal in the remainder of this subsection is to understand $\accnr(\phi,\ti r)$ from the \ct\ point of view.

\begin{notn}\label{notn:elli} Choose $r \in \cR(\phi)$ and a \ct\ $\fG$ representing $\phi$; let  $E \in \E_f$ correspond to $r$ as in Lemma~\ref{identifying Fix+}.    Following the proof of Corollary~\ref{cor:limit lines},   let 
\[R_E =E \cdot \rho_0 \cdot \sigma_1 \cdot \rho_1 \cdot \sigma_2 \cdot \ldots \]
be the {\em coarsened  complete splitting} of $R_E$ where each $\sigma_i$ is a single growing term in the complete splitting of $R_E$ and each $\rho_i$ is a  (possibly trivial) Nielsen path.     For future reference, note that if $f(E) =  E\cdot u$ then $Eu$ is an initial subpath of $R_E$ whose terminal endpoint is a splitting vertex in the complete splitting of $R_E$ and hence is contained in some $\rho_p$.  

Following Notation~\ref{notn: f infinity} and Lemma~\ref{paths converge},  define, for all $i \ge 1$, 
 \[R_i^- =  f_\#^{\infty}(\bar \sigma_{i}) \qquad \qquad R_i^+ = f_\#^{\infty}(\sigma_{i}) \qquad  \qquad\ell_i =   (R_i^-)^{-1}\rho_{i} (R_{i+1}^+)\] 
Choose a lift $\ti r$ of $r$, let $\Phi_{\ti r}$ be the automorphism representing $\phi$ that fixes $\ti r$ and let $\ti f : \ti G \to \ti G$ be the lift corresponding to $\Phi_{\ti r}$.    Let $\ti R_{\ti E}$ be the lift of $R_E$ whose terminal end converges to $\ti r$ and whose initial edge is denoted $\ti E$, let  
  \begin{equation*} \label{splitting}  \ti R_{\ti E} =\ti E \cdot \ti \rho_0 \cdot \ti \sigma_1 \cdot \ti \rho_1 \cdot \ti \sigma_2 \cdot \ldots \end{equation*}  be the induced decomposition and let $\ti \ell_i$ be the lift of $\ell_i $ in which $ \rho_{i}$ lifts to $\ti \rho_{i}$.    Thus \[\ti\ell_i =   (\ti {R_i}^-)^{-1}\ti \rho_{i} \ti R_{i+1}^+\] where $\ti \sigma_{i+1}$ and $\ti R_{i+1}^+$ have the same initial endpoint and likewise for $\ti \sigma_{i}^{-1}$ and $\ti R_{i}^-$. We say that lines $\ti \ell_1, \ti \ell_2,\ldots$ are {\em visible} in $\ti R_{\ti E}$.  
\end{notn}
 
\begin{lemma} \label{visible lines occur}  Assume Notation~\ref{notn:elli}. 
\begin{enumerate}
\item
Each $\ell_i \in \acc(r)$ (Definition~\ref{defn:limit line downstairs}).
\item \label{ti ell occurs in ti r}
If $\ell_i \in \accnr( r)$ then $\ti \ell_i \in \accnr(\phi,\ti r)$.
\end{enumerate}
 \end{lemma}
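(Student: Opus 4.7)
For part (1), my plan is to apply Lemma~\ref{adjacency term} to the finite subpath $\sigma_i\cdot\rho_i\cdot\sigma_{i+1}$ of $R_E$, coarsening its complete splitting as $\alpha\cdot\beta$ with $\alpha=\sigma_i$ and $\beta=\rho_i\sigma_{i+1}$. Since $\rho_i$ is Nielsen and $\sigma_{i+1}$ is a single growing term, Lemma~\ref{paths converge} yields $f_\#^\infty(\beta)=\rho_i\cdot R^+_{i+1}$, while $f_\#^\infty(\bar\alpha)=R^-_i$, so Lemma~\ref{adjacency term} produces exactly $\ell_i=(R^-_i)^{-1}\rho_iR^+_{i+1}\in\Acc(\sigma_i\rho_i\sigma_{i+1})$. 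Because $\sigma_i\rho_i\sigma_{i+1}$ is a subpath of $R_E$ in the coarsened splitting, every finite subpath of $\ell_i$ occurring in some $f_\#^k(\sigma_i\rho_i\sigma_{i+1})$ also occurs in $f_\#^k(R_E)=R_E$, which by Definition~\ref{defn:limit line downstairs} gives $\ell_i\in\acc(r)$.

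For part (2), assume $\ell_i\in\accnr(r)$; then $\ti\ell_i$ is non-periodic, and taking $\ti R:=\ti R_{\ti E}$ in Definition~\ref{defn:lines in ti r} makes the covering translation $T_j$ defined by $\ti f_\#^j(\ti\ell_i)=T_j(\ti\ell_i)$ unique (using $\partial\ti f=\partial\Phi_{\ti r}$). My first step is to exploit the fact that the decompositions $\ti\ell_i=(\ti R^-_i)^{-1}\cdot\ti\rho_i\cdot\ti R^+_{i+1}$ and $\ti R_{\ti E}=\ti E\cdot\ti\rho_0\cdot\ti\sigma_1\cdots$ are genuine splittings and hence preserved by $\ti f_\#^j$. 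Matching the three pieces of $T_j(\ti\ell_i)=\ti f_\#^j(\ti\ell_i)$ term-by-term then produces $T_j(\ti\rho_i)=\ti f_\#^j(\ti\rho_i)$ together with analogous identifications on the two tails; simultaneously, the preserved splitting of $\ti R_{\ti E}$ places the three paths $\ti f_\#^j(\ti\sigma_i)$, $\ti f_\#^j(\ti\rho_i)=T_j(\ti\rho_i)$, and $\ti f_\#^j(\ti\sigma_{i+1})$ as three consecutive subpaths of $\ti R_{\ti E}$.

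The main technical step, which I anticipate to be the crux, is to match the two tails of $T_j(\ti\ell_i)$ around $T_j(\ti\rho_i)$ with $\ti R_{\ti E}$ over lengths that grow with $j$. Given a finite $\ti\beta\subset\ti\ell_i$, I will choose $L$ so that $\ti\beta$ sits inside the concatenation of the terminal $L$-segment of $(\ti R^-_i)^{-1}$, the path $\ti\rho_i$, and the initial $L$-segment of $\ti R^+_{i+1}$. The convergence $f_\#^k(\bar\sigma_i)\to R^-_i$ will ensure that for $j$ large the initial $L$-segment of $f_\#^j(\bar\sigma_i)$ -- equivalently, the reversal of the terminal $L$-segment of $f_\#^j(\sigma_i)$ -- equals the initial $L$-segment of $R^-_i$; an analogous statement holds for $R^+_{i+1}$ and $\sigma_{i+1}$. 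Lifting to $\ti G$, the terminal $L$-segment of $T_j((\ti R^-_i)^{-1})$ and the terminal $L$-segment of $\ti f_\#^j(\ti\sigma_i)\subset\ti R_{\ti E}$ both end at the initial vertex of $T_j(\ti\rho_i)$ and project to the same edge path in $G$, so uniqueness of lifts forces them to coincide; symmetrically on the right. Consequently $T_j(\ti\beta)$ lies in the translated concatenation, which in turn lies in $\ti R_{\ti E}=\ti R$ for all sufficiently large $j$, yielding $\ti\ell_i\in\accnr(\phi,\ti r)$.
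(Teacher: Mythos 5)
Your proof is correct and takes essentially the same route as the paper: part (1) is the same application of Lemma~\ref{adjacency term} with $\alpha=\sigma_i$, $\beta=\rho_i\sigma_{i+1}$, and part (2) uses the same reduction to a finite neighborhood of $\ti\rho_i$, the same convergence of $f^j_\#(\bar\sigma_i)$ and $f^j_\#(\sigma_{i+1})$ to $R^-_i$ and $R^+_{i+1}$, and the same uniqueness-of-lifts matching inside $\ti R_{\ti E}$. The only cosmetic difference is that you pin down $T_j$ from the whole (non-periodic) line and then deduce $T_j(\ti\rho_i)=\ti f^j_\#(\ti\rho_i)$, whereas the paper defines $T_j$ by the piecewise identities and then identifies it with the translation in the definition of $\accnr(\phi,\ti r)$; the content is the same.
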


\proof
Item (1) follows from  Lemmas~\ref{two definitions} and \ref{adjacency term} applied with $\alpha = \sigma_i$ and $\beta=\rho_i\sigma_{i+1}$. 

When verifying that $\ti \ell_i$ satisfies Definition~\ref{defn:lines in ti r}, it suffices to consider finite subpaths $\ti  \beta =  \ti \mu^{-1}\ti  \rho_{i} \ti  \nu$  of $\ti \ell_i$ with projections   $ \beta =  \mu^{-1} \rho_{i}  \nu$ where $\mu$ is an initial segment of $R_i^- = f_\#^{\infty}(\bar \sigma_i)$ that is a concatenation of terms in the coarsened  complete splitting of $R_i^-$ and $\nu$ is an initial segment of $R_{i+1}^+ = f_\#^{\infty}( \sigma_{i+1})$ that is a concatenation of terms in the coarsened complete splitting of $R_{i+1}^+$.  It follows from the definition of $f_\#^\infty$ (Notation~\ref{notn: f infinity}) that for all sufficiently large $j$, the lift of $\rho_i$ to $\ti f^j_\#(\ti \rho_i)$ extends to a lift of $ \beta$  to  a path 
\[\ti \beta_j  \subset      \ti f^j_\#( \ti \sigma_ i) \cdot  \ti f^j_\#(\ti  \rho_{i}) \cdot  \ti f^j_\#(\ti \sigma_{i+1}) =  \ti f^j_\#(\ti\sigma_ i \cdot \ti \rho_{i}\cdot \ti \sigma_{i+1}) \subset \ti R_{\ti E} \] 
    Since $f^j_\#$ preserves $\rho_i, R_i^-$ and $R_{i+1}^+$ there is a   covering translation $T_j$ such that \[T_j(\ti \rho_i) = \ti f^j_\#(\ti \rho_i) \qquad \qquad T_j(\ti R_i^-) = \ti f^j_\#(\ti R_i^-) \qquad \qquad T_j(\ti R_{i+1} ^+) = \ti f^j_\#(\ti R_{i+1}^+)\]   and so \[T_j(\ti \ell_i) =  \ti f^j_\#(\ti \ell_i)\] From $T_j(\ti \rho_i) = \ti f^j_\#(\ti \rho_i)$ we conclude that  $T_j(\ti \beta) = \ti \beta_j$  and so $T_j(\ti \beta) \subset \ti R_{\ti E}$.  This completes the proof of \pref{ti ell occurs in ti r}.
\endproof

  Our next result is a weak   converse of Lemma~\ref{visible lines occur}\pref{ti ell occurs in ti r}, namely that if $\ti L \in \accnr(\phi,\ti r)$ then $\ti L$ is in the $\Phi_{\ti r}$-orbit of some $\ti \ell_i$.  

\begin{prop} \label{prop:accnr}
Assume Notation~\ref{notn:elli}.
\begin{enumerate}
\item
For each $\ti L \in \accnr(\phi,\ti r)$ there exists $K$ such that $\ti f^k_\#(\ti L) \in \{ \ti \ell_i\}$ for all $k \ge K$.   Moreover,  $\accnr(\phi,\ti r) = \bigcup \Phi^m_{\ti r}(\ti \ell_i)$ where the union varies over all $\ti \ell_i \in \accnr(\phi, \ti r)$ and all $m \in \Z$. 
\item
For each $L \in \accnr(r)$ there is a lift   $\ti L \in  \accnr(\phi, \ti r)$.
\end{enumerate}
\end{prop}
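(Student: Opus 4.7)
Part (2) follows by locating $L$ as an $\ell_{i_1}$ in the coarsened complete splitting of $R_E$ and invoking Lemma~\ref{visible lines occur}. Part (1) is proved by showing that for large $k$ the translate $T_k(\ti L)$ must coincide with a visible $\ti\ell_{i_k}$, using the uniqueness of complete splittings.

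For (2): Given $L\in\accnr(r)$, Corollary~\ref{cor:limit lines}\pref{item:ell decomposes} writes $L=(R^-)^{-1}\rho R^+$ with $\rho$ a (possibly trivial) Nielsen path and each $R^\pm$ of one of the three forms in Lemma~\ref{paths converge}\pref{item:ray types}.  I claim $L=\ell_{i_1}$ for some index $i_1$ in Notation~\ref{notn:elli}'s coarsened complete splitting $R_E=E\cdot\rho_0\cdot\sigma_1\cdot\rho_1\cdot\sigma_2\cdots$.  Because $R_E=f_\#^\infty(E)$ is completely split and $f_\#$-invariant, every lower-height eigenray and every linear/exceptional configuration that appears in the inductive proof of Corollary~\ref{cor:limit lines} resurfaces in the infinite splitting of $R_E$ as a flanking triple $(\sigma_j,\rho_j,\sigma_{j+1})$ realizing the germ of $L$; picking such a $j$ gives $L=\ell_j$ downstairs.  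Lemma~\ref{visible lines occur}\pref{ti ell occurs in ti r} then supplies $\ti\ell_{i_1}\in\accnr(\phi,\ti r)$ as the required lift.

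For (1): Let $\ti L\in\accnr(\phi,\ti r)$ and let $L$ be its projection.  Projecting the defining condition shows $L\in\accnr(r)$, so by (2) we have $L=\ell_{i_1}$.  Write $\Phi_{\ti r}^k(\ti L)=T_k(\ti L)$ as in Definition~\ref{defn:lines in ti r}.  Choose a finite subpath $\ti\beta$ of $\ti L$ whose coarsened complete splitting contains $\ti\sigma_{i_1},\ti\rho_{i_1},\ti\sigma_{i_1+1}$ as interior pieces flanked on each side by at least one further growing term as a buffer.  For $k\ge J(\ti\beta)$ we have $T_k(\ti\beta)\subset\ti R\subset\ti R_{\ti E}$, and the coarsened complete splitting of $T_k(\ti\beta)$, transported by $T_k$ from that of $\ti\beta$, must agree, by uniqueness of complete splittings (\cite[Lemma~4.11]{fh:recognition}), with the restriction of $\ti R_{\ti E}$'s splitting to $T_k(\ti\beta)$.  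The buffer ensures that $\ti\sigma_{i_1}$ and $\ti\sigma_{i_1+1}$ are interior growing terms, so there is an index $i_k$ with $T_k(\ti\sigma_{i_1})=\ti\sigma_{i_k}$, $T_k(\ti\sigma_{i_1+1})=\ti\sigma_{i_k+1}$, and hence $T_k(\ti\rho_{i_1})=\ti\rho_{i_k}$.  The rays $\ti R_{i_k}^-,\ti R_{i_k+1}^+$ visible in $\ti R_{\ti E}$ are determined by their starting vertices, which are exactly the endpoints of $T_k(\ti\sigma_{i_1}),T_k(\ti\sigma_{i_1+1})$, so $T_k(\ti L)=\ti\ell_{i_k}$.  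Since $\partial\ti f=\partial\Phi_{\ti r}$ gives $\ti f_\#^k(\ti L)=\Phi_{\ti r}^k(\ti L)=T_k(\ti L)$, this proves the first assertion of (1), and rearranging yields $\ti L=\Phi_{\ti r}^{-k}(\ti\ell_{i_k})$.  The reverse containment in the moreover statement is immediate from the $\Phi_{\ti r}$-invariance of $\accnr(\phi,\ti r)$ recorded in Lemma~\ref{accnr by conjugacy}.

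The principal obstacle is the alignment step: ensuring that the complete splitting of $\ti R_{\ti E}$ cannot subdivide or straddle a growing term transported from $\ti L$.  The uniqueness of complete splittings combined with the flanking-buffer choice of $\ti\beta$ handles this.  A secondary subtlety in (2), namely that non-periodic limit lines produced via the lower-height $\Acc(\sigma_i)$-summands of Corollary~\ref{cor:limit lines}'s proof do resurface as $\ell_j$'s in $R_E$'s own splitting, reflects the self-similar structure of $R_E=f_\#^\infty(E)$, which exhibits every lower-height configuration as a flanking piece infinitely often.
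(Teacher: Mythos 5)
There is a genuine gap, and it sits exactly where the paper expends its technical effort. In part (1) you choose a finite subpath $\ti\beta$ of $\ti L$ containing $\ti\sigma_{i_1},\ti\rho_{i_1},\ti\sigma_{i_1+1}$ ``flanked on each side by at least one further growing term.'' But the line $\ell_{i_1}=(R_{i_1}^-)^{-1}\rho_{i_1}R_{i_1+1}^+$ is built from the \emph{forward limits} $f_\#^\infty(\bar\sigma_{i_1})$ and $f_\#^\infty(\sigma_{i_1+1})$, and these need not contain the terms themselves: for instance if $\sigma_{i_1}=E_1\in\lin_w(f)$ with positive exponent then $f_\#^\infty(\bar E_1)=\bar w^{\infty}$, so $\ell_{i_1}$ does not cross $E_1$ at all, and that end of $\ell_{i_1}$ is the periodic ray $w^{\pm\infty}$ with no growing terms available to serve as a buffer. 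So the subpath you need cannot in general be chosen. More fundamentally, even when it can, the appeal to uniqueness of complete splittings does not give the alignment you assert: uniqueness says a given path has at most one complete splitting, but an occurrence of a completely split finite path as a subpath of the completely split ray $\ti R_{\ti E}$ need not be compatible with the ambient splitting of $\ti R_{\ti E}$ (the occurrence can begin or end inside a term, and a long power of a twist path $w$ carried by $\ti L$ could a priori sit inside an \iNp\ term $E_1w^p\bar E_1$ or an exceptional term $E_1w^p\bar E_2$ of $\ti R_{\ti E}$'s splitting). Ruling out exactly these misaligned occurrences is the content of Lemma~\ref{bounded twistpath} (a power $w^m$ with $|m|\ge M$ occurring in $f^k_\#(E)$ extends to $E'w^q$ or $w^q\bar E'$ and lies in no Nielsen subpath) together with the case analysis of Lemma~\ref{all are visible}, and the paper's proof of both parts of the proposition is essentially a direct application of Lemma~\ref{all are visible}: for (2) one takes the ``central'' subpath $\tau_L$ (built using the constant $M$), observes it occurs in $R_E$ by the definition of $\acc(r)$, lifts it into $\ti R_{\ti E}$, and concludes the extension of that lift is some $\ti\ell_i$; for (1) one applies the same lemma to $T_k(\ti L)$ once $T_k(\tau_L)\subset\ti R$. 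Your buffer-plus-uniqueness argument does not substitute for this quantitative input, so the key step ``$T_k(\ti\sigma_{i_1})=\ti\sigma_{i_k}$'' is unjustified.

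A secondary weakness: in (2) you reduce to the claim that every non-periodic element of $\accnr(r)$ already appears as some adjacency line $\ell_j$ of $R_E$'s own coarsened splitting, justified only by a ``self-similarity'' remark. That claim is true and could be proved by induction on height (the terms $f^k_\#(E')$ for $E'<E$ are concatenations of terms of $R_E$'s splitting, so adjacency lines of $R_{E'}$ reappear, with orientation reversed when $\bar E'$ occurs), but as written it is an assertion, not a proof; the paper avoids needing it at this stage precisely because Lemma~\ref{all are visible} identifies the line from a single sufficiently long central occurrence. Finally, in the ``moreover'' statement of (1) you cite only $\Phi_{\ti r}$-invariance for the reverse inclusion; you also need Lemma~\ref{visible lines occur}\pref{ti ell occurs in ti r} to know the visible lines themselves lie in $\accnr(\phi,\ti r)$.
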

 
We delay the proof of Proposition~\ref{prop:accnr} for   two needed lemmas.

\begin{lemma} \label{bounded twistpath}  Given a \ct\ $\fG$, there exists $M \ge 1$ so that the following holds for each \twistpath\ $w$, each non-fixed edge $E$ and each $k \ge 0:$    If $|m| \ge M$ and $\alpha_0 =  w^m$ is a subpath of $f^k_\#(E)$ then $\alpha_0$ extends to a subpath $\alpha_1$ of  $f^k_\#(E)$ satisfying the following two properties.
\begin{enumerate}
\item \label{M prop1}  $\alpha_1 = E'w^{q}$ or $\alpha_1  =  w^{q}\bar E'$ for some $E' \in \lin_w(f)$.
\item  \label{M prop2}  $\alpha_1$ is not contained in any Nielsen subpath of $f^k_\#(E)$.
\end{enumerate}
\end{lemma}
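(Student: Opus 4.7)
The plan is to leverage two finiteness facts about a \ct, then to analyze where a high power $w^m$ can sit in the complete splitting of $f_\#^k(E)$.

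First, I would establish a uniform bound $P_0$ on the exponent $|p|$ appearing in any \iNp\ of the form $E'w^p\bar E'$ that occurs as a term in the complete splitting of $f_\#^k(E)$ for some edge $E$ and some $k \ge 0$. The reasoning: every \iNp\ is $f_\#$-fixed, and a \lq new\rq\ \iNp\ term can only first arise as a term in the complete splitting of $f(E_j)$ for some edge $E_j$ of $G$. Since there are finitely many edges and each $f(E_j)$ has a fixed complete splitting, only finitely many iNps appear as such terms, giving both a finite $P_0$ and a bound on the edge-length of any such iNp term.

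With $P_0$ in hand, I would choose $M$ to be somewhat larger than $2P_0$ (plus a small offset depending on the edge-length $|w|$, so that $w^M$ genuinely cannot fit within an \iNp\ of the bounded family). Given $\alpha_0 = w^m$ with $|m| \ge M$, I pass to the coarsened complete splitting $f_\#^k(E) = \mu_0\nu_1\mu_1\nu_2\cdots$ where each $\nu_j$ is a single growing term and each $\mu_i$ is a maximal (possibly trivial) Nielsen subpath. Because $|m| > P_0$, $\alpha_0$ cannot lie inside a single \iNp\ term. In the case that $\alpha_0$ crosses the boundary between some $\mu_i$ and an adjacent growing term $\nu_j$, I argue that $\nu_j$ must contain a linear edge in $\lin_w(f)$: the only way a copy of $w$ can appear in a growing term is either directly as $\nu_j = E' \in \lin_w(f)$ or via an exceptional path $\nu_j = E_iw^p\bar E_j$ with $E_i,E_j \in \lin_w(f)$ (higher-order edges and linear edges in $\lin_{w'}(f)$ for $w' \neq w$ would force the $w$-copies to cross into lower height, contradicting their appearance as a single term). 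In either sub-case I extend $\alpha_0$ across the $\mu_i$-$\nu_j$ boundary to obtain $\alpha_1 = E'w^q$ or $w^q\bar E'$; this $\alpha_1$ is not contained in any Nielsen subpath of $f_\#^k(E)$ since it properly crosses the maximal Nielsen region $\mu_i$.

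The main obstacle is the remaining case, where $\alpha_0$ lies entirely inside a single maximal Nielsen subpath $\mu_i$. Here I would argue by induction on $k$ that the \lq intrinsic\rq\ $w$-content of each $\mu_i$ (meaning the $w$-content not explained by accretion from adjacent linear edges) is uniformly bounded, independent of $k$. The induction step rests on the fact that $f_\#$ preserves each iNp term of $\mu_i$ (being Nielsen) but modifies the boundary by converting an adjacent linear-edge term $\nu_j = E' \in \lin_w(f)$ into $E'w^{d'}$, with the $w^{d'}$ absorbed into $\mu_i$. Thus the only mechanism producing large amounts of $w$ within $\mu_i$ is this accretion from an adjacent linear edge in $\lin_w(f)$, which means a large power $w^m \subset \mu_i$ must sit at one end of $\mu_i$ and is therefore within reach of the adjacent linear edge; extending to that edge reduces this case to the boundary-crossing case handled above. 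Establishing this controlled-accretion statement cleanly — and verifying that the extension really escapes every Nielsen subpath — is the technical heart of the argument.
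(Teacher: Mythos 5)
Your outline correctly isolates where the difficulty lies, but the step you defer --- the case where $w^m$ lies inside a maximal Nielsen subpath (or, more generally, is not already sitting next to the linear edge you want) --- is essentially the whole content of the lemma, and the induction on $k$ you sketch for it does not work as formulated. The quantity ``intrinsic $w$-content of each $\mu_i$'' is not a well-defined inductive invariant, because the collection of maximal Nielsen subpaths of $f^k_\#(E)$ is not stable under $f_\#$: each time a higher order edge $e$ occurs as a term, its image $e\cdot u_e$ creates brand-new Nielsen regions, new linear edges and new exceptional terms in the interior of what was a single growing term, so accretion from a linear edge adjacent to a pre-existing $\mu_i$ is not the only mechanism feeding $w$ into Nielsen regions, and there is no fixed family of $\mu_i$'s to run the induction over. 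The paper sidesteps exactly this by inducting on the \emph{height} of $E$ rather than on $k$: writing $f^k_\#(E)=E\cdot u\cdot f_\#(u)\cdots f^{k-1}_\#(u)$, a sufficiently long $w^m$ contains a still-long sub-power inside a single $f^l_\#(\tau_j)$, where $\tau_j$ is a term of the complete splitting of $u$; since $f^l_\#(\tau_j)$ is long, $\tau_j$ is growing, hence either exceptional (take $\alpha_1$ to be that term minus its last edge) or an edge of lower height, where the inductive hypothesis applies. Some localization of this kind is what your ``controlled accretion'' claim would have to be turned into, and it is the part you have not supplied.

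Two further points. First, your case split is incomplete and partly vacuous: a long $w^m$ can lie entirely inside the interior $w^p$ of a single exceptional term $E_iw^p\bar E_j$ whose exponent has grown with iteration --- this crosses no $\mu_i$--$\nu_j$ boundary and is not inside a Nielsen subpath of the splitting --- while the configuration you describe as ``crossing into $\nu_j=E'\in\lin_w(f)$'' cannot occur at all, since $w$ lies in a lower stratum than $E'$, so $w^m$ never crosses $E'$; the genuine situation there is that $\alpha_0$ lies in the Nielsen region next to $E'$ and must be \emph{extended} across $E'$, i.e.\ it is an instance of the deferred case. Second, for property (2), ``$\alpha_1$ properly crosses the maximal Nielsen region'' is not by itself a proof: a Nielsen subpath of $f^k_\#(E)$ need not respect the complete splitting and can cross linear edges inside \iNp s. What closes this in the paper is the hard splitting property of complete splittings (an \iNp\ in a completely split path is contained in a single term, Lemma~4.11(2) of \cite{fh:recognition}), applied to the occurrence of $E'$ or of the exceptional term as a single splitting term; you would need to invoke something of that kind explicitly.
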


\proof  Let us first note that if the conclusions of the lemma hold for a subpath of $\alpha_0 = w^m$ of the form $w^{t}$ then they also hold for $\alpha_0$.  We may therefore  shorten $\alpha_0$ whenever  it is convenient. After replacing $E$ by $\bar E$ if necessary, we may assume that $E \in \E_f \cup \lin(f)$.

Choose $M''> 0$ so that  if $w_1 \ne w_2$ are twist paths then $w_1^{M''}$ is not a subpath of $w_2^m$ for any $m \in \Z$.   Items \pref{M prop1} and \pref{M prop2} hold for $M = M''$  and  $E \in \lin(f)$.   We may therefore assume that $E \in \E_f$ and that there exists $M' \ge M''$ so that \pref{M prop1} and \pref{M prop2} hold for $M=M'$ and for all edges $E' \in \E_f$ with height   less than that of $E$.

There is a path $u$ with height less than that of $E$ and a  complete splitting \[u = \tau_1 \cdot \ldots \cdot \tau_s\] such that 
  \[f^k(E) = E \cdot u \cdot f_\#(u) \cdot \ldots \cdot f_\#^{k-1}(u)\] for all $k \ge 1$.  Assuming without loss that $M'$ is greater than the length of any $\tau_j$, choose $M_1 \ge sM'$.  
  
    As a special case, we prove the lemma when $|m|\ge M_1$  and when   $\alpha_0 =  w^m$ is contained in   some $f_\#^l(u)$.  In this case  there exists $1 \le j \le s$ and $|m'| \ge M'$ and a subpath $\alpha_0' = w^{m'}$ of $\alpha_0$  such that $\alpha_0' \subset f_\#^l(\tau_j)$ for some $1 \le l \le k-1$.  As observed above, we can replace $\alpha_0$ with $\alpha_0'$. Since   the length of $\tau_j$ is less than $M'$ and the length of  $f_\#^l(\tau_j)$ is at least $M'$, $\tau_j$   is not a Nielsen path and so is either exceptional or an edge $E' $ with height less than that of $ E$.
 If $\tau_j$ is exceptional then its linear edges must be in the family determined by $w$ and  we take $\alpha_1$ to be all of $\tau_j$ except for the terminal edge.   If $\tau_j = E'$, then the inductive hypothesis implies that $\alpha_0$ extends to a   subpath $\alpha_1$ of $f_\#^l(E')$ that is not contained in a Nielsen subpath of $f_\#^l(E')$ and that satisfies \pref{M prop1}.    The hard splitting property of a complete splitting (Lemma 4.11(2) of \cite{fh:recognition}) implies that an  \iNp\ in a completely split path is contained in a single term of that splitting.  Thus  $\alpha_1$ is not contained in a Nielsen subpath of $f_\#^k(E)$ and so \pref{M prop2} is satisfied and we have completed  the proof of the special case.
 
Now choose $M$ so large that if $|m| \ge M$ and $\alpha_0 =  w^m$ is a subpath of $f^k_\#(E)$ then there is a subpath $\alpha_0' = w^{m'}$ of $\alpha_0$   with $m' \ge M_1$ so that $\alpha_0' \subset f_\#^l(u)$ for some $l$.   The existence of $M$ follows from the fact that the length of  $ f_\#^l(u)$ goes to infinity with $l$.   Replacing $\alpha_0$ with $\alpha'_0$, we are reduced to the special case.
\endproof

We choose a \lq central\rq\ subpath  $ \tau_L $ of  $L \in \accnr(r)$ as follows.   By Corollary~\ref{cor:limit lines}, $L =  (R^-)^{-1}\cdot \rho \cdot R^+$  where $R^{\pm}$ satisfy 1(a), 1(b) or 1(c) of Lemma~\ref{paths converge}.   In all three cases we will choose $ \tau =   \tau_L   = \tau_-^{-1} \rho \tau_+$ where $\tau_\pm$ is an initial segment of $R^\pm$.    Let $M$ be the constant from Lemma~\ref{bounded twistpath}.  \begin{itemize}
\item In case 1(a),   $R^+ = R_{E'}$ for some $E' \in \E_f$ and we take $\tau_+ = E'$.  
\item  In case 1(b), $R^+ = E'w^{\pm \infty}$  for some   $E' \in \lin_w(f)$ and we take $\tau_+ = E'w^{\pm M}$. 
\item   In case 1(c), $R^+ = w^{\pm \infty}$ and  we take $\tau_+ =  w^{\pm M}$.  
\end{itemize}The subpath $\tau_-$ is defined symmetrically.

\begin{lemma} \label{all are visible} Assume the notation of Notation~\ref{notn:elli} and of the previous paragraph.   Suppose that $\widetilde{  \tau_L}  \subset \ti L$ is a lift of  $\tau_L \subset L$  and that $\widetilde{  \tau_L} \subset \ti R_{\ti E}$. Then $\ti L = \ti \ell_i$ for some $i$.  
\end{lemma}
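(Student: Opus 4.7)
The plan is to use the central subpath $\widetilde{\tau_L} = \widetilde{\tau_-}^{-1} \widetilde{\rho}\, \widetilde{\tau_+}$ to pin down an index $i$, then extend the agreement from $\widetilde{\tau_L}$ outward to show $\ti L = \ti\ell_i$.

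First I would locate $\widetilde{\rho}$ inside $\ti R_{\ti E}$. The coarsened complete splitting $\ti R_{\ti E} = \ti E \cdot \ti\rho_0 \cdot \ti\sigma_1 \cdot \ti\rho_1 \cdots$ has the property that the $\ti\rho_j$ are maximal Nielsen subpaths between consecutive growing terms. Since Nielsen paths are concatenations of fixed edges and \iNp s and since each \iNp\ is closed (so does not cross a single edge in $\E_f \cup \E_f^{-1}$ by Lemma~\ref{not crossed}), any Nielsen subpath of $\ti R_{\ti E}$ must be contained in some $\ti\rho_i$. In particular, $\widetilde{\rho} \subset \ti\rho_i$ for some $i$; fix this $i$. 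The goal now is to show $\widetilde{R^+} = \widetilde{R_{i+1}^+}$ with the Nielsen piece $\widetilde{\rho}$ matching the initial (or final) portion of $\ti\rho_i$; the argument for $\widetilde{R^-}$ is symmetric.

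Next I would use $\widetilde{\tau_+}$ and Lemma~\ref{bounded twistpath} (with $M$ chosen as in the paragraph before the lemma) to show that $\widetilde{\rho}$ ends at the end of $\ti\rho_i$ and that $\ti\sigma_{i+1}$ starts with the initial edge of $\widetilde{R^+}$. In case 1(a), $\tau_+ = E' \in \E_f$, so the next edge of $\ti R_{\ti E}$ after $\widetilde{\rho}$ is $\ti E'$; since $E' \in \E_f$ cannot appear in any Nielsen path or any exceptional path (Lemma~\ref{not crossed}), the edge $\ti E'$ must be a growing term, forcing $\ti\sigma_{i+1} = \ti E'$ and $\widetilde{\rho}$ to end at the end of $\ti\rho_i$. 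In cases 1(b) and 1(c), the central subpath contains $\ti w^{\pm M}$, which by Lemma~\ref{bounded twistpath} cannot lie inside any Nielsen subpath of $\ti R_{\ti E}$; thus $\widetilde{\tau_+}$ must escape $\ti\rho_i$, and its structure $\ti E' \ti w^{\pm M}$ (case (b)) or $\ti w^{\pm M}$ (case (c)) lines up with a linear edge-and-twist pattern realized as $\ti\sigma_{i+1}$ (either a linear edge, an exceptional path, or part of one). In each case, the data of the first growing term $\ti\sigma_{i+1}$ is determined by $\widetilde{\tau_+}$.

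Once the first growing term is identified, Lemma~\ref{paths converge} gives $R_{i+1}^+ = f_\#^\infty(\sigma_{i+1})$, and the three cases for the form of $R^+$ correspond to the three cases 1(a)--1(c), with the sign/index data matching by construction. The same argument applied to $\widetilde{\tau_-}$ (working with $\bar\sigma_i$ in place of $\sigma_{i+1}$) shows $\widetilde{R^-} = \widetilde{R_i^-}$. Combining, $\ti L = (\widetilde{R^-})^{-1}\,\widetilde{\rho}\,\widetilde{R^+}$ aligns with $\ti\ell_i = (\ti R_i^-)^{-1} \ti\rho_i\, \ti R_{i+1}^+$, provided $\widetilde{\rho}$ fills out all of $\ti\rho_i$. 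This last point follows by applying the same argument to the opposite end of $\widetilde{\tau_L}$ and using that the choice of $\rho$ in the canonical decomposition of $L$ is the maximal Nielsen middle. The main obstacle is case 1(c), where $\tau_+$ contains no distinguished edge and one must rely entirely on Lemma~\ref{bounded twistpath} to rule out the twist power being absorbed into a Nielsen piece; this is precisely why $M$ was chosen as in that lemma.
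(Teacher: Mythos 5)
There is a genuine gap, and it sits exactly at the two places where your argument diverges from what actually happens in $\ti R_{\ti E}$. First, your opening claim that ``any Nielsen subpath of $\ti R_{\ti E}$ must be contained in some $\ti \rho_i$'' is false: Nielsen subpaths can occur inside a \emph{growing} term, most obviously the twist portion $w^p$ of an exceptional term $E_jw^p\bar E_l$, so you cannot fix the index $i$ by an a priori containment $\widetilde \rho \subset \ti \rho_i$. Ruling out such configurations is precisely what the $w^{\pm M}$ padding in $\tau_\pm$ and Lemma~\ref{bounded twistpath} are for, and it requires the case analysis (on whether each of $\tau_\pm$ is a higher order edge, $E'w^{\pm M}$ with $E'$ linear, or a bare power $w^{\pm M}$) rather than the blanket statement; you gesture at this for cases 1(b) and 1(c) but the index $i$ in your argument is only pinned down by the unjustified step 1.

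Second, and more seriously, your target conclusion is wrong in the twist-ray cases. You try to show that $\widetilde\rho$ ends at the end of $\ti\rho_i$, that $\ti\sigma_{i+1}$ begins with the initial edge of $\widetilde{R^+}$, and finally that $\widetilde\rho$ ``fills out all of $\ti\rho_i$'' by maximality of the Nielsen middle. None of this holds when an end of $L$ is $w^{\pm\infty}$: in that situation one finds (as in the paper's analysis via Lemma~\ref{bounded twistpath}) that $\ti\sigma_{i+1}$ is a linear edge $\bar E_2$ (or the first edge of an exceptional term), not an edge of $R^+=w^{\pm\infty}$, and that $\ti\rho_i = \ti w^{t}\widetilde\rho$ or $\ti\rho_i=\ti w_1^{p}\widetilde\rho\,\ti w_2^{q}$, i.e.\ $\widetilde\rho$ is a \emph{proper} subpath of $\ti\rho_i$. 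The equality $\ti L = \ti\ell_i$ still holds because $R_i^-=f_\#^\infty(\bar\sigma_i)$ and $R_{i+1}^+=f_\#^\infty(\sigma_{i+1})$ are themselves infinite twist rays (Examples~\ref{example:converges}), so the surplus powers of $w$ are absorbed into the rays; moreover the decomposition $L=(R^-)^{-1}\rho R^+$ of Corollary~\ref{cor:limit lines} is not unique in these cases, so there is no ``maximal Nielsen middle'' to appeal to. So the correct finish is not to force $\widetilde\rho=\ti\rho_i$ but to verify, case by case, that $(\ti R_i^-)^{-1}\ti\rho_i\ti R_{i+1}^+$ and $(\ti R^-)^{-1}\widetilde\rho\,\ti R^+$ are the same line; as written, your proof would break exactly in the case you flagged as the main obstacle.
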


\proof
As a first case, suppose that $\tau_+ =  E' \in \E_f$ and so $R^+  = R_{E'}$.   Then $\ti \tau_+$ is a term $\ti \sigma_{i+1}$  in the coarsened complete splitting of $\ti R_{\ti E}$ by Lemma~\ref{not crossed} and $ R^+ = f_\#^{\infty}( \sigma_{i+1})$ by Example~\ref{example:converges}.  There are three subcases to consider, the first  being that  $\tau_- = E''  \in \E_f$.  In this subcase, $\ti \tau_-^{-1}$ is also a term  $\ti \sigma_j$ in coarsened complete splitting.  Since $\ti \tau_-$ is separated from $\ti \sigma_{i+1} = \ti \tau_+$ by  the Nielsen subpath $\ti \rho$, we have $\ti \tau_-^{-1} = \ti \sigma_{i}$ and $\ti \rho = \ti \rho_i$.   Thus  $f_\#^{\infty}(\bar \sigma_{i}) =  R^-$     and  $\ti  L = \ti \ell_i$.
 
 The second subcase is that $\tau_- =  E''w^{\pm M}$ where $E'' \in \lin_w(f)$ .  By Lemma~\ref{bounded twistpath}\pref{M prop2},  $\tau_-^{-1}$ is not contained in a Nielsen subpath of $R_E$.  It follows that the    terminal edge $\ti {E''}^{-1}$ of $\ti \tau_-^{-1}$ is contained in a $\ti \sigma_j$ that is either a single edge or an exceptional path.  As in the previous subcase, $j =i$.   Also as in the previous subcase,   $\ti \rho = \ti \rho_i$, $f_\#^{\infty}(\bar \sigma_{i}) = \ R^-$ and $\ti \ell_i = \ti L$.  
 
 The third and final subcase is that $\tau_- = w^{\pm M}$.  Since $\ti \tau_-^{-1}$ is followed in $\ti R_{\ti E}$ by $\ti \rho \ti E'$, it is not contained in a subpath of $\ti R_{\ti E}$ of the form $\ti w^m  \ti E_1^{-1}$ where $E_1  \in \lin_w(f)$.  Items \pref{M prop1} and \pref{M prop2} of Lemma~\ref{bounded twistpath} imply that $\ti \sigma_i = \ti E_1$ where $E_1 \in \lin_w(f)$  and that $\ti \rho_i =  \ti w^t \ti \rho$ for some $t$.      Example~\ref{example:converges} implies that $f_\#^{\infty}(\bar \sigma_{i}) =  w^{\pm \infty} =  R^-$ and so $\ti \ell_i = \ti L$.   We have now completed the proof in the case that $\tau_+ =  E' \in \E_f$.  Symmetric arguments apply in the case that $\tau_- =  E' \in \E_f$.

 Our next case is that $\tau_+ =  E'w^{\pm M}$ where $E' \in \lin_w(f)$ and   $R^+  =  E'w^{\pm \infty}$.    Lemma~\ref{bounded twistpath}\pref{M prop2} implies that the initial edge $\ti E'$ of $\ti \tau_+ $ is not contained in a Nielsen subpath of $\ti R_{\ti E}$ and so is either equal to some $\ti \sigma_{i+1}$ or is the first edge in some $\ti \sigma_{i+1}$ that projects to an exceptional path.  In either case $ f_\#^{\infty}( \sigma_{i+1}) =  E' w^{\pm \infty} =  R^+$.  The remainder of the proof in this second case  is exactly the same as in the first case.  Symmetric arguments apply in the case that $\tau_- =  E'w^{\pm M}$ with $E' \in \lin_w(f)$.
 
 We are now reduced to the case that $\tau_+ = w_2^{\pm M}, \  R_+  =  w_2^{\pm \infty}, \tau_- = w_1^{\pm M}$ and $R_-  =  w_1^{\pm \infty}$.   Thus $\ti \tau = \ti w_1^{\mp M} \ti \rho \ti w_2^{\pm M}$.   If $w_1 = w_2$ then $\rho$ is not an iterate of $w_1 = w_2$ because  $L$ is not periodic.    Lemma~\ref{bounded twistpath}\pref{M prop1} implies that  $\ti \tau$ extends to a subpath $\ti E_1\ti  w_1^p \ti \rho \ti w_2^q \ti E_2^{-1}$ where $E_i \in \lin_{w_i}(f)$ and $p,q \in \Z$.  It follows that  $\ti E_1  = \ti \sigma_i$, $\ti \rho_i = \ti w_1^p\ti \rho\ti w_2^q$ and $ \ti E_2^{-1} =  \ti \sigma_{i+1}$ for some choice of $i$.  As in the previous cases,   $\ti \ell_i = \ti L$.
 \endproof
 
 \medspace
 \noindent\emph {Proof of Proposition~\ref{prop:accnr}:} \ \   The first statement of (1) follows from Lemma~\ref{all are visible} and the definition of $\accnr(\phi, \ti r)$.  The moreover statement of (1) follows from the first statement and $\Phi_{\ti r}$-invariance of $\accnr(\phi, \ti r)$. 
 
For (2), let $E \in \E_f$ correspond to $r$.  Let $\tau_L \subset L$ be as in Lemma~\ref{all are visible}.  Since $L \in \accnr(r)$, $\tau_L$  lifts to a subpath $\ti \tau_L \subset \ti R_{\ti E}$.    Lemma~\ref{all are visible}   implies that the lift of $\tau_L$ to $\ti \tau_L$ extends to a lift of $L$ to an element of $\accnr(\phi,\ti r)$.
\qed

\begin{lemma} \label{forward invariance} Continue with  Notation~\ref{notn:elli}.   \begin{enumerate}  
\item   For all $i \ge 1$, there exists $j = j(i)> i$ such that $\ti f_\#(\ti \ell_i) = \ti \ell_j$.  More precisely, there exists $j > i$ such that  $\ti f_\#(\ti \rho_{i}) \subset \ti \rho_{j}$ and  $\ti f_\#(\ti \ell_i) = \ti \ell_j$ and there is  a covering translation $T$ such that   $T( \ti \rho_i) = \ti f_\#(\ti \rho_{i}) \subset \ti \rho_j$  and $T(\ti \ell_i) = \ti \ell_j$.
\item The assignment $i \mapsto j(i)$ is order preserving and $j(1) > p$. 
\end{enumerate}
\end{lemma}

\proof It suffices to prove (2) and  the \lq more precisely\rq\ statement of (1).  We begin with the latter. Since $\ti f_\#(\ti \rho_i)$ is a Nielsen path that is a concatenation of terms in the complete splitting of $\ti R_{\ti E}$, there exists $j$ such that $\ti f_\#(\ti \rho_{i}) \subset \ti \rho_{j}$.   Let $T$ be the unique covering translation satisfying $T( \ti \rho_i) = \ti f_\#(\ti \rho_{i})$.  It suffices to prove that  $\ti f_\#(\ti \ell_i) = \ti \ell_j$ and $T(\ti \ell_i) = \ti \ell_j$.

From  $\ti f_\#(\ti \rho_{i}) \subset \ti \rho_{j}$ it follows that  
\[\ti \rho_{j} =\ti \alpha \cdot  \ti f_\#(\ti \rho_{i}) \cdot \ti \beta\]
  where $\ti \alpha$ and $\ti \beta$ are, possibly trivial, Nielsen paths.  Since $\ti \sigma_{i}$ and $\ti \sigma_{i+1}$ are growing,  
  \[\ti \sigma_j  \cdot \ti \rho_{j}\cdot  \ti \sigma_{j+1}  = \ti \sigma_j  \cdot \ti \alpha \cdot \ti f_\#(\ti \rho_{i})\cdot \ti \beta \cdot  \ti \sigma_{j+1} \subset \ti f_\#(\ti \sigma_i ) \cdot \ti f_\#(\ti \rho_{i})\cdot \ti f_\#(  \ti \sigma_{i+1})\]
By Lemma~\ref{paths converge}\pref{item:first growing} 
   \[R_{i+1}^+ =  f_\#^\infty( \sigma_{i+1}) =   \beta  f_\#^\infty( \sigma_{j+1})  = \beta \cdot R_{j+1}^+\] 
    and  
  \[R_i^- =  f_\#^\infty( \bar \sigma_{i}) =   \bar \alpha f_\#^\infty( \bar \sigma_{j}) = \bar \alpha \cdot R_j^-\]
which implies that 
\[\ti f_\#(\ti \ell_i) = \ti f_\#((\ti R_i^-)^{-1} \cdot \ti \rho_{i} \cdot \ti R_{i+1}^+) = (\ti R_j^-)^{-1} \cdot\ti \alpha \cdot \ti f_\#(\ti \rho_{i})\cdot  \ti \beta \cdot  \ti R^+_{j+1} = (\ti R_j^-)^{-1}  \cdot \ti\rho_{j} \cdot    \ti R^+_{j+1}  =  \ti \ell_j\]
This completes the proof that $\ti f_\#(\ti \ell_i) = \ti \ell_j$. 

The covering translation $T$ that carries $\ti \rho_i$ to $\ti f_\#(\ti \rho_{i}) \subset \ti \rho_j$ also carries $\ti R_{i+1}^+$ to $ \ti \beta \cdot \ti R_{j+1}^+$ and $R_i^-$ to $  \ti \alpha^{-1} \cdot \ti R_j^-$.  Thus $T(\ti \ell_i) = \ti \ell_j$.

Finally, note that $j(i+1) - j(i)$ is equal to the number of growing terms in $\ti f_\#( \ti\sigma_i)$.  This implies (2) and hence also $j(i) > j$. Since $\ti E \cdot \ti f_\#(\ti u) \cdot  \ti f_\#(\ti \rho_0) \cdot f_\#(\ti \sigma_1)  \cdot \ti f_\#(\ti \rho_1)$ is an initial segment of $\ti R_E$ and since $ \ti f_\#(\ti \rho_0) \subset \ti \rho_p$, it follows that $j(1) > p$.
\endproof

We conclude this subsection by  defining a total order on $\accnr(\phi,\ti r)$.

\begin{definition}  \label{prec}Continue with  Notation~\ref{notn:elli}.  Given distinct $\ti L_1,  \ti L_2 \in \accnr(\phi,\ti r)$,   choose $k\ge 0$ so that $\ti f^k_\#(\ti L_1) = \ti \ell_i$ and $\ti f^k_\#(\ti L_2) = \ti \ell_{j}$ for some $i \ne j$.  (The existence of $k$ is guaranteed by   Proposition~\ref{prop:accnr}.)   Define $\ti L_1 \prec \ti L_2$ if $i < j$.   
\end{definition}

\begin{lemma}  \label{lem:prec}$\prec$ is a well defined,   $\Phi_{\ti r}$-invariant total order on $\accnr(\phi, \ti r)$ that is independent of the choice of $\fG$ representing $\phi$.  Moreover,  if    $\psi = \theta \phi \theta^{-1}$ for some $\theta \in \Out(F_n)$ and if $\Theta$ is a lift of $\theta$ then $\Theta :  \accnr(\phi, \ti r) \to  \accnr(\psi, \Theta(\ti r))$ preserves $\prec$.
\end{lemma}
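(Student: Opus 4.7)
The plan is to first nail down well-definedness, totality, and $\Phi_{\ti r}$-invariance for a single fixed choice of $\fG$, and then to prove the moreover statement in such a way that independence from the choice of $\fG$ drops out as the special case $\psi = \phi$, $\theta = \mathrm{id}$, $\Theta = \mathrm{id}$ but with the CTs on the domain and codomain of $\prec$ potentially different.

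First, fix a \ct\ $\fG$ representing $\phi$, use it to set up Notation~\ref{notn:elli}, and consider distinct $\ti L_1, \ti L_2 \in \accnr(\phi,\ti r)$. Proposition~\ref{prop:accnr}(1) supplies an integer $K$ such that for every $k \ge K$ one has $\ti f_\#^k(\ti L_\epsilon) = \ti\ell_{i_\epsilon(k)}$ for some index $i_\epsilon(k)$. Since $\ti f_\#$ realizes the bijection $\Phi_{\ti r}$ on $\accnr(\phi,\ti r)$, distinct inputs yield distinct visible lines, so $i_1(k) \ne i_2(k)$. By Lemma~\ref{forward invariance}\pref{item:order preserving} the passage $i_\epsilon(k) \mapsto i_\epsilon(k+1) = j(i_\epsilon(k))$ is strictly order preserving, so the sign of $i_1(k) - i_2(k)$ is independent of $k \ge K$.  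This proves that $\prec$ is well defined; trichotomy, antisymmetry, and transitivity are inherited from $<$ on $\Z$. $\Phi_{\ti r}$-invariance is immediate from the same lemma: if $\ti L_1 \prec \ti L_2$ is witnessed at level $k$, then $\ti f^k_\#(\Phi_{\ti r}(\ti L_\epsilon)) = \ti\ell_{j(i_\epsilon(k))}$ and $j(i_1(k)) < j(i_2(k))$.

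Next I would prove the moreover statement, from which CT independence follows by taking $\psi = \phi$, $\theta = \mathrm{id}$, $\Theta = \mathrm{id}$ but allowing the CTs used to define $\prec$ on the two sides to differ. Fix a \ct\ $\fG$ for $\phi$, a \ct\ $g : K \to K$ for $\psi$, and a homotopy equivalence $h : G \to K$ of marked graphs realizing $\theta$. Lift to $\ti h : \ti G \to \ti K$ with $\partial \ti h = \partial \Theta$. Lemma~\ref{accnr by conjugacy} already guarantees that $\partial\Theta$ gives a bijection $\accnr(\phi,\ti r) \to \accnr(\psi,\partial\Theta(\ti r))$; what remains is that it preserves $\prec$. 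Given $\ti L_1 \prec \ti L_2$, choose $k$ large enough that $\ti f_\#^k(\ti L_\epsilon)$ are visible in $\fG$, with indices $i_1 < i_2$, and such that the pushforwards of their projections are also visible lines for $g$ after no further iteration. The image $\ti h_\#(\ti R_{\ti E}) \subset \ti K$ is a ray converging to $\partial\Theta(\ti r)$ and, after applying sufficiently many iterates of $\ti g_\#$ and invoking the bounded cancellation lemma \cite{co:bcc} exactly as in the proof of Lemma~\ref{accnr by conjugacy}, the image $\ti h_\#(\ti\ell_{i_\epsilon}^G)$ eventually equals a visible line $\ti \ell_{j_\epsilon}^K$ of $g$ and the position of the central Nielsen subpath along $\ti h_\#(\ti R_{\ti E})$, hence the relative order of the indices, is preserved. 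Combined with $\Phi_{\ti r}$-invariance of the two $\prec$'s (already shown), this gives $j_1 < j_2$, i.e.\ $\Theta(\ti L_1) \prec \Theta(\ti L_2)$.

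I expect the main obstacle to be precisely this last paragraph: the ordering is formulated entirely in terms of CT-dependent data (the sequence $\ti \ell_1, \ti \ell_2, \ldots$ of visible lines along the specific ray $\ti R_{\ti E}$), and one must argue that the integer labels obtained from two different CTs agree up to an order-preserving reindexing. The technical engine is bounded cancellation, which guarantees that $\ti h_\#$ transports \lq\lq eventually far along the ray toward $\ti r$\rq\rq\ to \lq\lq eventually far along the ray toward $\partial\Theta(\ti r)$\rq\rq; the bookkeeping required to turn this into a clean statement about indices of visible lines after sufficient iteration is the only genuinely delicate part of the argument.
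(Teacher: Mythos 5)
Your first paragraph is fine and is essentially the paper's argument: well-definedness, totality and $\Phi_{\ti r}$-invariance of $\prec$ follow from Proposition~\ref{prop:accnr} together with the order-preserving property in Lemma~\ref{forward invariance}, and the paper likewise deduces CT-independence as a special case of the moreover statement. The gap is in your second paragraph, i.e.\ in the naturality step, and it is exactly the point you flag as delicate but do not actually resolve. Your plan is: push the ray and the visible lines forward by $\ti h$, iterate the CT for $\psi$ until the images become visible, and argue via bounded cancellation that ``the position of the central Nielsen subpath along $\ti h_\#(\ti R_{\ti E})$, hence the relative order of the indices, is preserved.'' But the images $\ti h_\#(\ti\ell_{i_\epsilon})$ are not yet visible for the target CT, and before they become visible the signed gap between their intersection points with the eigenray can be smaller than the bounded cancellation constant; comparing positions at one moment therefore does not pin down the order of the eventual indices. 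Worse, the gap need not ever become large: by Lemma~\ref{equivalent to prec}, the relation $i<j$ splits into two cases, and in case (2a) --- when $\ti\ell_j\in\accnr(\phi,\partial_+\ti\ell_i)$, e.g.\ when $\partial_+\ti\ell_i$ is a lift of a lower ray of $\cR(\phi)$ whose accumulation set contains $\ell_j$ --- the offsets $y_{j,k}-y_{i,k}$ do \emph{not} tend to infinity, so no amount of iterating before applying $\ti h$ makes the positional comparison robust against cancellation. Your sketch in effect only treats case (2b), and it also never verifies the analogue of condition (1), which is what rules out the comparison flipping in the target system.

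The paper's proof supplies precisely the missing ingredient: Lemma~\ref{equivalent to prec} re-expresses $i<j$ in terms of data that is manifestly transported by $\Theta$ --- condition (1) and (2a) are statements about membership in sets of the form $\accnr(\phi,\partial_+\ti\ell)$, which are carried to the corresponding sets for $\psi$ by Lemma~\ref{accnr by conjugacy}, while condition (2b) (divergence of the offsets) is the only part where bounded cancellation is invoked, and divergence is insensitive to a bounded error. To repair your argument you would either have to prove such an invariant characterization of the order yourself, or find another way to handle the pairs of lines falling into case (2a); as written, the step ``hence the relative order of the indices is preserved'' does not follow from bounded cancellation alone.
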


We delay the proof of Lemma~\ref{lem:prec} to state and prove a technical lemma that allows us to redefine $\prec$ with less dependence on the location of $\ti \rho_i$ and $\ti \rho_j$ in $\ti R_{\ti E}$.

\begin{lemma} \label{equivalent to prec}
Continue with  Notation~\ref{notn:elli}.  Suppose that  $\ti \ell_i$ and $\ti \ell_j$ are distinct non-periodic visible lines. For all $k \ge 0$, let $\ti \ell_{i_k} = \ti f^k_\#(\ti \ell_i)$ and  $\ti \ell_{j_k} = \ti f^k_\#(\ti \ell_j)$   and let   $\ti y_{i,k}$ and  $\ti y_{j,k}$ be the   terminal endpoints of $\ti \ell_{i_k} \cap \ti R_{\ti E}$  and $\ti \ell_{j_k} \cap \ti R_{\ti E}$  respectively.   Then $i < j$ if and only if the following two conditions are satisfied.
\begin{enumerate}
\item
$\ti \ell_{i} \not \in \accnr(\phi,\partial_+ \ti \ell_j)$. 
\item
One of the following is satisfied.
\begin{enumerate}
\item 
$\ti \ell_{j}  \in \accnr(\phi, \partial_+ \ti \ell_{i})$
\item
$y_{j,k} - y_{i,k} \to \infty$ where $y_{j,k} - y_{i,k}$  $ = \pm$ the number of edges in the subpath connecting $  \ti y_{i,k}$ to $\ti  y_{j,k}$ and the sign is $+$ if and only if  $\ti y_{,jk} >  \ti  y_{i,k}$ in the orientation on $\ti R_{\ti E}$.
\end{enumerate}
\end{enumerate}
\end{lemma}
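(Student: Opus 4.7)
The plan is to deduce both directions from Proposition~\ref{prop:accnr} (which characterizes $\accnr(\phi, \ti r')$ as iterates of visible lines in any ray with terminal end $\ti r'$) combined with Lemma~\ref{forward invariance} (order preservation for $\ti f_\#$ on visible-line indices). The key geometric fact I will establish is that, when $\partial_+\ti\ell_m$ is non-periodic and hence a lift of some $r' \in \cR(\phi)$, the $\ti f_\#$-invariant ray $\ti R_{m+1}^+$ (going to $\partial_+\ti\ell_m$) shares a common terminal subray with $\ti R_{\ti E}$, so that the visible lines of $\ti R_{m+1}^+$ with respect to the automorphism fixing $\partial_+\ti\ell_m$ are, after sufficient iteration, precisely the visible lines $\ti\ell_n$ of $\ti R_{\ti E}$ with index $n > m$.

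For the direction $i<j \Rightarrow$ (1) and (2): Lemma~\ref{forward invariance} gives $i_k < j_k$ for all $k$ and hence $\ti y_{i,k}$ strictly precedes $\ti y_{j,k}$ along $\ti R_{\ti E}$. Condition (1) then follows because if $\ti\ell_i \in \accnr(\phi, \partial_+\ti\ell_j)$, then by Proposition~\ref{prop:accnr} some iterate of $\ti\ell_i$ would be visible at $\partial_+\ti\ell_j$ and hence, by the key fact, equal a $\ti\ell_n$ with $n > j$, contradicting $i_k < j_k$ for all $k$. For condition (2), when $\partial_+\ti\ell_i$ is non-periodic the key fact immediately yields (2a), since $\ti\ell_j$ (with $j > i$) becomes visible at $\partial_+\ti\ell_i$ after iteration; when $\partial_+\ti\ell_i$ is periodic, one invokes the twist-path axis structure from Section~\ref{sec:axes} to replay the analysis, and in the subcases where this does not directly give (2a), the intervening segment $[\ti y_{i,k},\ti y_{j,k}]$ of $\ti R_{\ti E}$ contains arbitrarily long iterates of higher-order or linear edges, yielding (2b).

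For the converse, trichotomy excludes $i=j$ by distinctness, and excludes $i>j$ as follows: applying the forward direction with the roles of $i$ and $j$ swapped would force $\ti\ell_j \notin \accnr(\phi, \partial_+\ti\ell_i)$ together with either $\ti\ell_i \in \accnr(\phi, \partial_+\ti\ell_j)$ or $\ti y_{i,k} - \ti y_{j,k} \to +\infty$. The first alternative directly contradicts hypothesis (1); the second is incompatible with whichever of (2a) or (2b) is supplied by hypothesis (2), since $\ti\ell_j \notin \accnr(\phi, \partial_+\ti\ell_i)$ contradicts (2a), and the signed quantity $\ti y_{j,k} - \ti y_{i,k}$ cannot tend to both $+\infty$ (as (2b) would require) and $-\infty$.

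The main obstacle will be rigorously establishing the key geometric fact above, namely the identification of visible lines of $\ti R_{m+1}^+$ (an $\ti f_\#$-invariant ray for a different principal automorphism) with large-index visible lines of $\ti R_{\ti E}$. This requires a careful comparison of the two coarsened complete splittings on the common terminal subray, and tracking the correspondence through Proposition~\ref{prop:accnr}. A secondary challenge is handling periodic $\partial_+\ti\ell_m$, where Proposition~\ref{prop:accnr} does not apply directly and the analysis must be adapted using the twist-path structure associated to the corresponding axis.
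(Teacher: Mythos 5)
Your converse direction is the same formal trichotomy argument the paper uses and is fine, but the forward direction — the real content — rests on a ``key geometric fact'' that is false as stated. The ray $\ti R^+_{m+1}$ does \emph{not} share a terminal subray with $\ti R_{\ti E}$: the two rays agree only along the single term $\ti \sigma_{m+1}$ and then diverge, and their ideal endpoints are different points of $\partial F_n$ (typically lifts of different elements of $\cR(\phi)$; in the running example, compare the end of $R_c$ with the end of $R_q$). Consequently your identification of the visible lines of $\accnr(\phi,\partial_+\ti\ell_m)$ with the lines $\ti\ell_n$, $n>m$, of $\ti R_{\ti E}$ is unjustified and not literally true; the correct relation (cf.\ Lemma~\ref{subpath}) only produces $\Phi_{\ti r}^{-m}$-pullbacks of those $\ti\ell_n$ whose defining subpaths lie inside $\ti f^m_\#(\ti\sigma_{m+1})$, and it requires exactly the covering-translate bookkeeping ($T_k$ and $S_k$ from Lemma~\ref{forward invariance}) that your sketch omits. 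This also breaks your proof of (1): even if some $\Phi_{\partial_+\ti\ell_j}$-iterate of $\ti\ell_i$ were a visible line of large index, that would not contradict $i_k<j_k$, since those indices track iterates under $\Phi_{\ti r}$, a different automorphism. The paper's proof of (1) avoids any such identification: it exhibits the concrete subpath $S_k^{-1}T_k(\ti\rho_i)$ and shows it is disjoint from $\ti\rho_j\ti R^+_{j+1}$ for every $k$, directly violating Definition~\ref{defn:lines in ti r}.

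Second, your claim that non-periodic $\partial_+\ti\ell_i$ ``immediately yields (2a)'' is wrong, and this is precisely why alternative (2b) appears in the statement. In the running example, a visible line of $R_q$ of the form $a^{\infty}R_c$ (terminal end a lift of $r_c$) can be followed, further along $R_q$, by another visible line projecting to $a^{\infty}R_c$; the latter is not even in $\acc(r_c)$, since the edge $c$ occurs only once in $R_c$, so (2a) fails although $\partial_+\ti\ell_i$ is non-periodic. The dichotomy (2a) versus (2b) is not governed by periodicity of $\partial_+\ti\ell_i$. The paper instead proves: if (2a) fails then (2b) holds, by producing a finite subpath $\ti\beta\subset\ti\ell_j$ with $T_k^{-1}S_k(\ti\beta)\not\subset\ti R^+_{i+1}$ for all $k$ (via Definition~\ref{defn:limit line downstairs} when $\ell_j\notin\acc(\partial_+\ell_i)$, and via Lemmas~\ref{visible lines occur} and \ref{all are visible} otherwise), and then combining $S_k(\ti\beta)\subset\ti R_{\ti E}$ for large $k$ with the fact that the length of $S_k(\ti R^+_{j+1})\cap\ti R_{\ti E}$ tends to infinity to get $y_{j,k}-y_{i,k}\to\infty$. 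Your treatment of (2b) (``arbitrarily long iterates\dots yielding (2b)'') and of the periodic case (``replay the analysis using the twist-path structure'') supplies none of these steps, so the forward direction is not established as proposed.
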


\proof
For the only if direction, assume that $i < j$. Lemma~\ref{forward invariance}, and an obvious induction argument  imply that  $i_k < j_k$ and that there are  unique covering translations  $T_k$  satisfying 
     $$T_k(\ti \rho_i) \subset \ti \rho_{i_{k}} \qquad \text{and} \qquad T_k(\ti\ell_i) = \ti f^k_\#(\ti \ell_i) = \ti  \ell_{i_k}$$     and $S_k$ satisfying 
     $$S_k(\ti \rho_j) \subset \ti \rho_{j_{k}} \qquad \text{and} \qquad S_k(\ti\ell_j) = \ti f^k_\#(\ti \ell_j) = \ti  \ell_{j_k}$$
Note that  $\ti h_k := S_k^{-1} \ti f^k$ is the lift of $f^k$ that preserves $\ti \ell_j$ and  so corresponds to  the automorphism  $\Phi_{\partial_+ \ti \ell_j}^k$.  Note also that $S_k^{-1}T_k(\ti \ell_i) = S_k^{-1}\ti f^k_\#(\ti \ell_i) = (\ti h_k)_\#(\ti \ell_i)$ and that $T_k(\ti \rho_i)  \subset \ti \rho_{i_k}$ is disjoint from $\ti \rho_{j_k}\ti R^+_{j_k+1}$.  The latter implies that  $S_k^{-1}T_k(\ti \rho_i)$ is disjoint from $\ti \rho_j \ti R_{j+1}^+$. It now follows from the definition that $\ti \ell_i \not \in \accnr(\phi, \partial_+ \ti \ell_j)$.  This completes the proof of  (1).  

For (2), we assume that $\ti \ell_{j} \not  \in \accnr(\phi, \partial_+ \ti \ell_{i})$ and prove that $\ti y_{j,k} - \ti y_{i,k} \to \infty$.    Continuing with the above notation,  $\ti g_k := T^{-1}_k\ti f^k$ corresponds to $\Phi^k_{\partial_+\ti \ell_i}$ and $T_k^{-1}S_k(\ti \ell_j)=\ti g_k(\ti \ell_j)$.  We claim that  there is a finite subpath  $\ti \beta \subset \ti \ell_j$    so that  for all $k \ge 0$, $T_k^{-1}S_k(\ti \beta) \not \subset \ti R_{i+1}^+$ and hence $S_k(\ti \beta) \not \subset T_k (\ti R_{i+1}^+)$.  If $\ell_j \not \in {\accnr}(\partial_+\ell_i)$ then this follows from Definition~\ref{defn:limit line downstairs}  and the fact that $T_k^{-1}S_k(\ti \ell_j)$ is a lift of $\ell_j$.   If $\ell_j \in {\accnr}(\partial_+\ell_i)$ then this follows from Lemma~\ref{all are visible} and Lemma~\ref{visible lines occur}.
      
On the other hand, $S_k(\ti \beta) \subset \ti R_{\ti E}$ for all sufficiently large $k$.  It follows that $\ti y_{i,k}$ precedes the terminal endpoint of $S_k(\ti \beta)$ in $\ti R_{\ti E}$.  Since  the number of edges in $S_k (\ti R_{j+1}^+) \cap \ti R_{\ti E}$ goes to infinity with $k$,  $\ti y_{j,k} - \ti y_{i,k} \to \infty$.

For the  if direction, we assume that $j < i$ and  we prove that either (1) or    (2) fails.  From the only if direction we know that (1) with $i$ and $j$ reversed is satisfied.  Thus (2a) fails.   Similarly, either (2a) or (2b), with the roles of $i$ and $j$ reversed is satisfied.  If the former holds then (1) fails and we are done.  Suppose then that (2b) with the roles of $i$ and $j$ reversed is satisfied.   Then $\ti y_{i,k} - \ti y_{j,k} \to \infty$ so (2b) fails. 
\endproof

\medskip
\noindent\emph{Proof of Lemma~\ref{lem:prec}:}  To make the dependence of $\prec$ on $\fG$ explicit we will write $\prec_f$.   Lemma~\ref{forward invariance} implies that $\prec_f$ is a well defined,   $\Phi_{\ti r}$-invariant total order on $\accnr(\phi, \ti r)$.  

Suppose that $\theta, \psi$ and $\Theta$ are as in the moreover statement, that $f' :G' \to G'$ is a \ct\ representing $\psi$, that $g : G \to G'$ is a homotopy equivalence representing $\theta$ and that $\ti g : \ti G \to \ti G'$ is the lift corresponding to $\Theta$.    Letting $\ti r' = \Theta(\ti r)$, we have $\Theta \Phi_{\ti r} = \Psi_{\ti r'} \Theta$.  By Lemma~\ref{accnr by conjugacy}, $\Theta(\accnr(\phi, \ti r)) = \accnr(\psi, \ti r')$. Let $\ti f'$ be the lift of $f'$ corresponding to $\Psi_{\ti r'}$.    

Given  $\ti L_1,  \ti L_2 \in \accnr(\phi, \ti r)$ such that $\ti L_1 \prec_f \ti L_2$,  we must show that    $\ti L_1' \prec_{f'} \ti L_2'$ where $\ti L_1'= \Theta(\ti L_1) = \ti g_\#(\ti L_1) $ and  $\ti L_2'= \Theta(\ti L_2) = \ti g_\#(\ti L_2)$.   We may replace $\ti L_1$ and $\ti L_2$   with $\Phi^k_{\ti r} \ti L_1$ and $\Phi^k_{\ti r} \ti L_2$ for any $k \ge 1$.   This follows from the $\Phi_{\ti r}$-invariance of $\prec_f$, the $\Psi_{\ti r'}$-invariance of $\prec_{f'}$ and the fact that  $\Theta \Phi_{\ti r} = \Psi_{\ti r'} \Theta$.    In particular, we may assume that there exists $i < j$ and $i' \ne j'$ such that $\ti L_1 = \ti \ell_i, \ti L_2 = \ti \ell_j, \ti L'_1 = \ti \ell'_i$ and $\ti L_2' = \ti \ell'_j$ where the $\ti \ell'_i$ and $\ti \ell'_j$  are  visible lines determined for $\accnr(\psi, \ti r')$ defined with respect to $f'$.  

To prove that $i' < j'$,  and thereby complete the proof of the lemma, we will verify items (1) and (2) of Lemma~\ref{equivalent to prec} in the prime system, which we will call $(1)'$ and $(2)'$.   Items $(1)'$ and $(2a)'$ follow from (1), (2a)  and Lemma~\ref{accnr by conjugacy}. Item $(2b)'$  follows from (2b) and the bounded cancellation lemma applied to $g$. 
\qed
\vspace{.1in}

{We conclude this section with a result that will be used in Lemma~\ref{new rays from lines}.

\begin{lemma}\label{l:in F}  
Suppose that $F$ is a free factor, that $\ti r \in \partial F$ is a lift of $r \in \cR(\phi)$  and that   $[F]$ is $\phi$-invariant.  Then each endpoint of each $\ti \ell \in \accnr(\phi,\ti r)$ is contained in $\partial F$.
\end{lemma}

\proof
Choose a \ct\ $\fG$ representing $\phi$ in which $ F$ is realized by a component $C$ of a core filtration element $H$.  By assumption, there is a ray $R$  in $C$ with terminal end $r$.  For each $\ell \in \accnr(r)$,  each finite subpath of $\ell$ is contained in $C$  and hence $\ell$ is contained in $C$.  Let $\ti C$ be the unique lift of $C$ whose boundary contains $\ti r$ and note that $\partial \ti C = \partial F$. Let $\ti R \subset \ti C$ be the   lift of $R$ with terminal endpoint $\ti r$ and let $\Phi_{\ti r}$ be the  automorphism representing $\phi$ that fixes $\ti r$.  From  uniqueness of $\ti C$, it follows that $\partial \ti C$ is $\Phi_{\ti r}$-invariant.  For all sufficiently large $j$, $\Phi_{\ti r}^j(\ti \ell) \cap \ti R \ne \emptyset$.  Since $\ell \subset C$ and distinct lifts of $C$ are disjoint, $\Phi_{\ti r}^j(\ti \ell) \subset \ti C$.  It follows that  the endpoints of $\Phi_{\ti r}^j(\ti \ell)$,  and hence  the endpoints of   $\ti \ell $, are contained in $\partial F$.
\endproof

\subsection{Topmost lines,  translation numbers and offset numbers}  \label{sec:topmost}
We continue with Notation~\ref{notn:elli} and with the partial orders $<$ on $\cR(\phi)$ and $\E_f$ given in Notation~\ref{weaker po} and Lemma~\ref{preserves partial order}.    

\begin{definition}\label{defn:topmost}An element  $L \in \accnr( r)$ is {\em $\phi$-topmost} if one of the following mutually exclusive properties is satisfied for the partial order $<$  on $\cR(\phi)$.
\begin{enumerate}
\item   $r$ is minimal in the partial order $<$.
\item  \label{item:topmost 2}  $L$ has an end $r_1 \in \cR(\phi)$ such that   $r_1<_c r$.  
\end{enumerate}
If $\ti L \in \accnr(\phi,\ti  r)$ projects to a $\phi$-topmost element of $\accnr(r)$  then $\ti L$ is a {\em topmost} element of $\accnr(\phi, \ti r)$.  Let ${\cal T}_{\phi,\ti r}$ be the {\em set   of  topmost elements of $\accnr(\phi, \ti r)$}. 
\end{definition}

\begin{lemma} \label{topmost is invariant}   ${\cal T}_{\phi,\ti r}$ is non-empty and $\Phi_{\ti r}$-invariant.\end{lemma}

\proof 
 Lemma~\ref{accnr by conjugacy} implies that $\accnr(\phi, \ti r)$ is $\Phi_{\ti r}$-invariant.  Since each element of $\cR(\phi)$ is $\phi$-invariant,  $\Phi_{\ti r}$-invariance of ${\cal T}_{\phi,\ti r}$  follows from the definitions.  If $r$ is minimal with respect to $<$ then every element of $\accnr(\phi, \ti r)$ is topmost and we are done.  Otherwise,  apply Lemma~\ref{preserves partial order} to choose $E'\in \E_f$ such that $E' <_c E$.  Either $E'$ or $\bar E'$ occurs as a term $\sigma_j$  in the coarsened complete splitting of $R_E$.  In the former case, $\ti  \ell_{j-1}$ is topmost in $\accnr(\phi, \ti r)$; in the latter case $\ti \ell_j$ is topmost in $\accnr(\phi, \ti r)$.  
\endproof

\begin{lemma}
There is an algorithm that lists the $\phi$-topmost elements of ${\accnr( r)}$.
\end{lemma}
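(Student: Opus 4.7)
The plan is to build the algorithm directly from the algorithmic content already established. First, I would invoke \cite[Theorem~1.1]{fh:CTconjugacy} to construct a \ct\ $\fG$ representing $\phi$. From $\fG$ I can read off the set $\E_f$ of higher order edges, which is in bijection with $\cR(\phi)$ (Lemma~\ref{identifying Fix+}), and identify the particular $E \in \E_f$ corresponding to $r$.

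Next I would compute the partial order $<$ on $\cR(\phi)$. By Lemma~\ref{preserves partial order}, this amounts to computing the partial order on $\E_f$, which is done by iteratively applying $f_\#$ to each edge in $\E_f$ and recording which lower edges appear as terms in the complete splittings; the complete splittings are themselves algorithmically available from the \ct. Once $<$ is computed, checking minimality of $r$ and computing the consecutive predecessors $\{r_1 : r_1 <_c r\}$ is straightforward finite combinatorics.

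The next step is to enumerate $\accnr(r)$ using Corollary~\ref{cor:limit lines}\pref{item:algorithmic}. That corollary states explicitly that $\acc(r)$ is finite and its elements can be read off from $\fG$; more concretely, the proof produces $\acc(r) = \Acc(\sigma_1) \cup \{\ell_1\} \cup \cdots \cup \{\ell_{q-1}\} \cup \Acc(\sigma_q)$ where the coarsened complete splitting of $u \cdot f_\#(u)$ has growing terms $\sigma_i$ and Nielsen paths $\rho_i$ in between. Recursion on height produces each $L$ in the form $L = (R^-)^{-1} \rho R^+$ as in Corollary~\ref{cor:limit lines}\pref{item:ell decomposes}, and I may discard the periodic ones to obtain $\accnr(r)$.

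Finally, I would sift through $\accnr(r)$ and flag the $\phi$-topmost elements. If $r$ is minimal in $<$, every element of $\accnr(r)$ is topmost and I output the entire list. Otherwise I check, for each $L = (R^-)^{-1} \rho R^+ \in \accnr(r)$, whether either of the rays $R^\pm$ has type \pref{ray type a} of Lemma~\ref{paths converge}, i.e.\ $R^\pm = R_{E'}$ for some $E' \in \E_f$; in that case the corresponding end of $L$ is the element $r' \in \cR(\phi)$ associated to $E'$, which I have already computed, and I simply check whether $r' <_c r$ in the pre-computed partial order. The main (minor) obstacle is just bookkeeping: making sure that the inductive extraction of $\acc(r)$ from the coarsened complete splitting of $u \cdot f_\#(u)$ tracks the $R^\pm$ decomposition explicitly, so that the endpoints of each $L$ in $\cR(\phi)$ (when such endpoints exist) are identified with specific edges of $\E_f$; all of this is already implicit in the proof of Corollary~\ref{cor:limit lines}.
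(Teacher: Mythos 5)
Your proposal is correct and takes essentially the same route as the paper: the paper's proof is simply "if $r$ is minimal every element is topmost; otherwise inspect the finite, computable set $\accnr(\phi,r)$ (Corollary~\ref{cor:limit lines}) and check condition \pref{item:topmost 2} of Definition~\ref{defn:topmost}," which is exactly what you do, with the computation of the partial order $<$ and the identification of ends with edges of $\E_f$ spelled out explicitly.
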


\proof  Recall that the elements of ${\accnr( r)}$ can be enumerated by Lemma~\ref {cor:limit lines}\pref{item:algorithmic} and that the partial order on $\accnr(r)$ can be computed  by Notation~\ref{weaker po}.
 If $r$ is minimal then every element of ${\accnr( r)}$ is topmost.  Otherwise inspect the elements of ${\accnr( r)}$ to see which satisfy \ref{defn:topmost}.\pref{item:topmost 2}.
\endproof

Recall {from Notation~\ref{notn:elli}} that $p$ is chosen so that $\ti f_\#(\rho_0) \subset \ti \rho_p$.
\begin{lemma}  \label{finding topmost lines}     Each   $\ti L \in {\cal T}_{\phi,\ti r}$ is in the $\Phi_{\ti r}$-orbit of $\ti \ell_j$ for some $1 \le j \le p$.     \end{lemma}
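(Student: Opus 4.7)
The approach is to use Proposition~\ref{prop:accnr} to reduce to visible lines, then exploit the orbit structure given by Lemma~\ref{forward invariance}. Given $\ti L \in {\cal T}_{\phi,\ti r}$, Proposition~\ref{prop:accnr} supplies $K \geq 0$ and $m \geq 1$ with $\Phi_{\ti r}^K(\ti L) = \ti \ell_m$. Since each $L \in \accnr(r)$ is $\phi$-invariant (Corollary~\ref{cor:limit lines}), $\ti L$ and $\ti \ell_m$ have the same projection to $\cal B$, so $\ti \ell_m$ is also topmost. It therefore suffices to show that every topmost visible $\ti \ell_m$ lies in the $\Phi_{\ti r}$-orbit of some $\ti \ell_j$ with $1 \leq j \leq p$.

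Lemma~\ref{forward invariance} produces a strictly increasing map $j\colon \{1,2,\ldots\} \to \{1,2,\ldots\}$ with $\ti f_\#(\ti \ell_i) = \ti \ell_{j(i)}$, and the formula $\ti \rho_{j(i)} = \ti \alpha \cdot \ti f_\#(\ti \rho_i) \cdot \ti \beta$ shows that $\ti \rho_{j(i)}$ is precisely the Nielsen path of $\ti R_{\ti E}$ straddling the boundary between $\ti f_\#(\ti \sigma_i)$ and $\ti f_\#(\ti \sigma_{i+1})$ in the coarsened complete splitting. Thus $m$ lies in the image of $j$ iff $\ti \rho_m$ is such a ``boundary'' Nielsen path. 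The plan reduces to the claim: \emph{if $\ti \ell_m$ is topmost and $m > p$, then $m$ is in the image of $j$.} Granting this, we start from our topmost $\ti \ell_m$; if $m > p$, pass to $\ti \ell_{m'}$ with $m' = j^{-1}(m) < m$, which remains topmost since projections coincide along $\Phi_{\ti r}$-orbits; iterate. Strict monotonicity terminates the descent at some index in $\{1,\ldots,p\}$.

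For the claim, suppose $m > p$ and $\ti \rho_m$ is strictly interior to $\ti f_\#(\ti \sigma_{i''})$ for some $i'' \geq 1$; in particular both $\ti \sigma_m$ and $\ti \sigma_{m+1}$ are growing subterms of $\ti f_\#(\ti \sigma_{i''})$. By Definition~\ref{defn:topmost} and Lemma~\ref{paths converge}\pref{item:ray types}(a), topmostness forces one of $\sigma_m, \sigma_{m+1}$ to equal $E_1^{\pm 1}$ for some $E_1 \in \E_f$ with $E_1 <_c E$ (when $r$ is minimal the claim is vacuous: no $\sigma_{i''} \in \E_f \cup \E_f^{-1}$ appears as a term of $R_E$, so no interior Nielsen paths arise at all). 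Since $\sigma_{i''}$ linear or exceptional produces a single growing subterm in its $\ti f$-image, we must have $\sigma_{i''} \in \E_f \cup \E_f^{-1}$ with underlying edge $E''$. Inspecting $f(E'') = E'' \cdot u''$ and $f(\bar E'') = \bar u'' \cdot \bar E''$ while tracking orientations, $E_1$ must appear as a growing subterm of the expansion $u''$ of $\sigma_{i''}$, so $E_1 < E''$. Combined with $E'' < E$ (as $\sigma_{i''} \ne E$ is a growing subterm of $R_E$ past $\ti E$), this yields $E_1 < E'' < E$, contradicting $E_1 <_c E$.

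The main obstacle is this last case analysis: one must carefully distinguish ``strictly interior'' from ``first'' and ``last'' growing subterms of $\ti f_\#(\ti \sigma_{i''})$ while tracking orientations, so that both topmost configurations ($\sigma_{m+1} = E_1$ and $\sigma_m = \bar E_1$) produce the same strict inequality in the partial order $<$ on $\E_f$ and the covering relation $E_1 <_c E$ yields the contradiction.
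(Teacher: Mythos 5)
Your proposal is correct and takes essentially the same route as the paper: reduce via Proposition~\ref{prop:accnr} and $\Phi_{\ti r}$-invariance of topmostness to a visible topmost $\ti \ell_m$, then show that if $m>p$ the Nielsen path $\ti \rho_m$ must contain an $\ti f_\#$-image of an earlier $\ti \rho_j$ (so Lemma~\ref{forward invariance} applies), since otherwise $\ti\sigma_m\ti\rho_m\ti\sigma_{m+1}$ sits inside the image of a single higher-order term and the partial order on $\E_f$ contradicts topmostness. The only cosmetic difference is that you run a one-step descent where the paper applies $\ti f^{k'}_\#$ for arbitrary $k'$ and lands in $\{1,\ldots,p\}$ with a single contradiction, bounding every $\cR(\phi)$-end of $\ti\ell_i$ strictly below $E'<E$ rather than pinning the relevant term to $E_1^{\pm 1}$ as you do.
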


\proof   By  Proposition~\ref{prop:accnr} and Lemma~\ref{topmost is invariant}, we may assume that $\ti L = \ti \ell_i$ for some $i > p$.  By Lemma~\ref{forward invariance}, it suffices to show that there exist $1 \le j \le p$ and $k \ge 1$ such that $\ti f^k_\#(\ti \rho_j) \subset \ti \rho_i$.   If this fails then there exists $1 \le j' \le p$ and $k' \ge 1$ such that $\ti \rho_i$ separates $\ti f^{k'}_\#(\ti \rho_{j'-1})$ from $\ti f^{k'}_\#(\ti \rho_{j'})$.  Assuming this we argue to a contradiction by showing that neither (1) nor (2) in Definition~\ref{defn:topmost} is satisfied.  First note that $\ti \sigma_i \ti \rho_i \ti \sigma_{i+1} \subset \ti f^{k'}_\#(\ti \sigma_{j'})$.  It follows that $\sigma_{j'}$ is not a linear term and so $\sigma_{j'} = E'$ or $\bar E'$ for some  $E'\in \E_f$. Since $E' < E$,  (1) is not satisfied.  If  an end $r''$ of $\ti \ell_i$ corresponds to an element $E'' \in \E_f$ then $E'' < E' < E$  and so (2)   is not satisfied.
\endproof

\begin{notn}   \label{notn:translation number}
The total order $\prec$ on $\accnr(\phi, \ti r)$ given in Definition~\ref{prec}   induces a total  order (also called) $\prec$ on ${\cal T}_{\phi,\ti r}$. Let $\ti L_1,\ldots,\ti L_{\tau(\phi,\ti r)}$ be, in order, the elements of $\{\ti\ell_1,\ldots,\ti \ell_p\} \cap {\cal T}_{\phi,\ti r}$. For $k \in \Z$ and $1 \le j \le \tau(\phi,\ti r)$, define $\ti L_{j+k\tau(\phi, \ti r)} = \Phi_{\ti r}^k(\ti L_j)$.
 
The following lemma allows us to change our notation from $\tau(\phi,\ti r)$ to $\tau(\phi,r)$.
\end{notn}

\begin{lemma}\label{lem:depends on r} $\tau(\phi,\ti r)$ depends only on $\phi$ and $r$ and not on the choice of $\ti r$.
\end{lemma}

\proof  The definition of $\tau(\phi,\ti r)$  uses the lift $\ti f :\ti G \to \ti G$  corresponding to $\Phi_{\ti r}$, the lift $\ti R_{\ti E}$ of $R_E$ whose terminal endpoint is $\ti r$, the lines $\{\ti \ell_i\} $ determined by $\ti R_{\ti E}$ as described in Notation~\ref{notn:elli} and the integer $p$, which depends only on $E$ and $f$.  If $a \in F_n$ and $T_a : \ti G \to \ti G$ is the corresponding covering translation, then the data associated to  $\ti r' = a  \ti r$ is $\ti f' = T_a \ti f T_a^{-1}$, $\ti R_{\ti E'} = T_a R_{\ti E}$, $\ti \ell_i' = T_a \ti \ell_i$ and $p$.  Since $\ti \ell_i$ and $\ti \ell_i'$  are lifts of the same line,  $\ti \ell_i \in {\cal T}_{\phi,\ti r}\Longleftrightarrow\ti \ell_i' \in   {\cal T}_{\phi,\ti r'}$.  This proves that $\tau(\phi,\ti r) =  \tau(\phi,\ti r')$ as desired.
\endproof

\begin{lemma} \label{lemma:translation} With notation as above:
\begin{enumerate}
\item  $s \mapsto \ti L_s$ defines an order preserving bijection between $\Z$ and  ${\cal T}_{\phi,\ti r}$.
\item  \label{item:translation}   ${\Phi_{\ti r}}(\ti L_s) = \ti L_{s+\tau(\phi,r)}$ for all $s$
\item  \label{item:visible}  $\ti L_s$ is visible if and only if $s \ge 1$.   
\end{enumerate}
\end{lemma}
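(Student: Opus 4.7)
The plan is to reduce all three items to two preliminary facts about $T := \Phi_{\ti r}$ acting on $\cT_{\phi,\ti r}$: (A) $T$ strictly increases $\prec$, and (B) for every $i \ge 1$, the index $j(i)$ provided by Lemma~\ref{forward invariance} satisfies $j(i) > p$. Note first that (2) is immediate from the definition $\ti L_{j + k\tau} = \Phi_{\ti r}^{k}(\ti L_j)$, so the real work is (1) and (3).

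For (A), I will argue the visible case directly using Lemma~\ref{forward invariance}: if $\ti L = \ti \ell_i \in \cT_{\phi,\ti r}$, then $T(\ti \ell_i) = \ti \ell_{j(i)}$ with $j(i) > i$, so $\ti \ell_i \prec T(\ti \ell_i)$ by definition of $\prec$. For a non-visible $\ti L \in \cT_{\phi, \ti r}$, Proposition~\ref{prop:accnr} gives some $m \ge 1$ with $T^m(\ti L)$ visible, hence $T^{m+1}(\ti L) = T^m(T(\ti L))$ is visible as well. The visible case gives $T^m(\ti L) \prec T^m(T(\ti L))$; since $T$ preserves $\prec$ (Lemma~\ref{lem:prec}) and is a bijection, $T^m$ is an order automorphism of the totally ordered set $\accnr(\phi,\ti r)$, so we can cancel $T^m$ to get $\ti L \prec T(\ti L)$.

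The main obstacle — and the key new content — is (B). Here I will exploit the rigid structure of $\ti R_{\ti E}$ near the image $\ti f(\ti E) = \ti E \cdot \ti u$: since the terminal endpoint of $\ti u$ lies in $\ti \rho_p$, the Nielsen path $\ti f_\#(\ti \rho_0)$ begins inside $\ti \rho_p$. A Nielsen path cannot cross the growing term $\ti \sigma_{p+1}$, so $\ti f_\#(\ti\rho_0) \subset \ti \rho_p$, i.e.\ $j(0) = p$. Since $i \mapsto j(i)$ is order preserving (Lemma~\ref{forward invariance}), every $i \ge 1$ gives $j(i) > p$, which is (B).

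Given (A) and (B), item (3) follows because for $s = j+k\tau$, $k \ge 0$ the formula $\ti L_s = T^k(\ti L_j) = \ti \ell_{j^k(i_j)}$ is visible; while for $k \le -1$, assuming $\ti L_s = \ti \ell_m$ visible would force $\ti L_j = \ti \ell_{j^{-k}(m)}$, whose index exceeds $p$ by iterating (B), contradicting $i_j \le p$. For (1), I will prove $\ti L_s \prec \ti L_{s+1}$ in two cases: when $s+1$ stays in the same block $\{k\tau+1,\ldots,(k+1)\tau\}$ the claim reduces via the order-preservation of $T^k$ to $\ti L_j \prec \ti L_{j+1}$, which holds by the definition of the $\ti L_j$; when $s+1$ crosses into the next block, it reduces to $\ti L_\tau \prec T(\ti L_1)$, which follows because both are visible and $i_\tau \le p < j(i_1)$ by (B). This gives order preservation, hence injectivity; surjectivity is exactly Lemma~\ref{finding topmost lines}.
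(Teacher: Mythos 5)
Your proof is correct and follows essentially the same route as the paper: surjectivity via Lemma~\ref{finding topmost lines}, order preservation from $\Phi_{\ti r}$-invariance of $\prec$ together with the block inequality $\ti L_{\tau(\phi,r)} \prec \ti f_\#(\ti L_1)$, item (2) from the definitions, and item (3) from Lemma~\ref{forward invariance}. Your step (B) is exactly the fact the paper invokes (that $\ti f_\#(\ti L_1) = \ti \ell_j$ with $j > p$, using that $p$ was chosen so that $\ti f_\#(\ti \rho_0) \subset \ti \rho_p$); you merely spell out the justification the paper leaves implicit.
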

 
\proof  The map $s \mapsto \ti L_s$ is surjective by Lemma~\ref{finding topmost lines} and is order preserving (and hence injective) because $\ti f_\#$ preserves $\prec$ and because $\ti L_1\prec \ti L_2 \prec \ldots\prec \ti L_{\tau(\phi,r)} \prec \ti f_\#(\ti L_1)$ where the last inequality follows {Lemma~\ref{forward invariance}, which implies that} $ \ti f_\#(\ti L_1) = \ti \ell_j$ for some $j > p$.  Item \pref{item:translation} follows from  the definitions.  Item \pref{item:visible} follows from Lemma~\ref{forward invariance}.  
 \endproof
 
 For the next lemma, we must choose a \ct\ $f': G' \to G'$ representing $\psi$ and then define ${\cal T}_{\psi, \ti r'}$  and $\tau(\psi,r')$ with respect to $f' : G' \to G'$.

 \begin{lemma}  \label{translation}  Suppose that  $\theta \in \Out(F_n)$ conjugates $\phi$ to $\psi$,  that $\theta(r) = r' \in \cR(\psi)$, that $\ti r, \ti r' \in \partial F_n$ represent $r$ and $r'$ respectively  and that $\Theta$ is  the lift of $\theta$ such that $\Theta(\ti r) = \ti r'$. Then:
\begin{enumerate}
\item \label{item:same translation}  $\tau(\phi,r) = \tau(\psi,r')$. 
 \item \label{item:offset} There is an integer $\offset(\theta,r)$ such that $ \Theta(\ti L_s) =\ti L'_{s + \offset(\theta,r)}  $ for all $s$. 
\end{enumerate}
\end{lemma}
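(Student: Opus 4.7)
The plan is to show that $\Theta$ induces an order-preserving bijection $\T_{\phi,\ti r}\to\T_{\psi,\ti r'}$, transport this bijection to $\Z$ via Notation~\ref{notn:translation number} to get part (2), and then deduce part (1) from the identity $\Psi_{\ti r'}\Theta=\Theta\Phi_{\ti r}$.

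First I would assemble the ingredients that make the bijection work. By Lemma~\ref{accnr by conjugacy}, $\Theta(\accnr(\phi,\ti r))=\accnr(\psi,\ti r')$, and by Lemma~\ref{lem:prec} this bijection preserves $\prec$. To see that $\Theta$ also preserves the topmost condition, I would use Lemma~\ref{weaker po conjugacy}: the bijection $\cR(\phi)\to\cR(\psi)$ induced by $\theta$ is an isomorphism of the partial order $<$, so $r$ is minimal for $\phi$ iff $r'$ is minimal for $\psi$, and $r_1<_c r$ iff $\theta(r_1)<_c r'$. Since $\Theta$ carries the endpoints of $\ti L$ to those of $\Theta(\ti L)$ via $\theta$, the two clauses in Definition~\ref{defn:topmost} are preserved. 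Thus $\Theta$ restricts to an order-preserving bijection $\T_{\phi,\ti r}\to\T_{\psi,\ti r'}$.

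For part (2), I would compose this with the order-preserving bijections $\Z\to\T_{\phi,\ti r}$, $s\mapsto \ti L_s$, and $\T_{\psi,\ti r'}\to\Z$, $\ti L'_t\mapsto t$, supplied by Lemma~\ref{lemma:translation}(1). The composition is an order-preserving bijection $\Z\to\Z$, which must be a translation $s\mapsto s+c$ for some integer $c$; set $\offset(\theta,r):=c$, and we obtain $\Theta(\ti L_s)=\ti L'_{s+\offset(\theta,r)}$ for all $s\in\Z$.

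For part (1), I would use the fact that $\Psi_{\ti r'}=\Theta\Phi_{\ti r}\Theta^{-1}$: both sides represent $\psi$ and both fix $\ti r'$, so by Lemma~\ref{two automorphisms} (noting that a non-trivial difference would force $\ti r'=a^{\pm}$ for some $a$, which is incompatible with $\ti r'$ being an attracting fixed point that is moved by distinct representatives) they agree. Applying both sides to $\ti L'_s$ and using part (2) together with Lemma~\ref{lemma:translation}\pref{item:translation} gives
\[
\ti L'_{s+\tau(\psi,r')}=\Psi_{\ti r'}(\ti L'_s)=\Theta\Phi_{\ti r}(\ti L_{s-\offset(\theta,r)})=\Theta(\ti L_{s-\offset(\theta,r)+\tau(\phi,r)})=\ti L'_{s+\tau(\phi,r)},
\]
so $\tau(\phi,r)=\tau(\psi,r')$. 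The main point requiring care is the verification that $\Theta$ preserves the topmost condition; once that is in hand, everything else is a bookkeeping argument using the two commuting actions. A minor technical check is confirming $\Psi_{\ti r'}=\Theta\Phi_{\ti r}\Theta^{-1}$, for which I would fall back on the density of non-periodic points in $\FixN$ (as in the proof of Lemma~\ref{l:recognition}) if the direct argument via Lemma~\ref{two automorphisms} needs more care.
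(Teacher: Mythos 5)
Your proposal is correct and follows essentially the same route as the paper: the $\prec$-preserving bijection from Lemmas~\ref{accnr by conjugacy} and \ref{lem:prec}, preservation of topmost lines via Lemma~\ref{weaker po conjugacy}, the observation that order-preserving bijections of $\Z$ are translations for \pref{item:offset}, and the computation using $\Theta\Phi_{\ti r}=\Psi_{\ti r'}\Theta$ for \pref{item:same translation}. Your extra verification that $\Psi_{\ti r'}=\Theta\Phi_{\ti r}\Theta^{-1}$ is a harmless elaboration of an identity the paper uses implicitly.
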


\proof  Lemmas~\ref{accnr by conjugacy} and \ref{lem:prec}  imply that $\Theta $ induces a  $\prec$-preserving bijection between $\accnr(\phi,\ti r)$ and $\accnr(\psi,\ti r')$.  Lemma~\ref{weaker po conjugacy} implies that this bijection restricts to a bijection between  ${\cal T}_{\phi,\ti r}$ and  ${\cal T}_{\psi,\ti r'}$.  Since the only order preserving bijections of $\Z$ are translations, there is an integer $\offset(\theta,\ti r, \ti r')$ such that $ \Theta(\ti L_s) =\ti L'_{s + \offset(\theta,\ti r, \ti r')}  $ for all $s$.    If we replace $\ti r$ by another lift $\ti r^* = i_a \ti r$ then $\Theta$ is replaced by $\Theta^* = \Theta i_a^{-1}$ and $\ti L_i \in {\cal T}_{\phi,\ti r}$ is replaced by $\ti L^*_i = i_a \ti L_i  \in {\cal T}_{\phi,\ti r^*}$;  see the proof of Lemma~\ref{lem:depends on r}.  It follows that  $\Theta^*(L_i^*) = \Theta(\ti L_i)$ and hence that $\offset(\theta,\ti r, \ti r')$ is independent of the choice of lift $\ti r$.  The symmetric argument implies that $\offset(\theta,\ti r, \ti r')$ is also independent of the choice of $\ti r'$.  This completes the proof of \pref{item:offset}.

  Item \pref{item:same translation} therefore follows from
$$\ti L'_{s+  \tau(\phi,r)+\offset(\theta,r) } = \Theta \Phi_{\ti r}\ti L_s =  \Psi_{\ti r'}\Theta\ti L_s= \ti L'_{s+ \offset(\theta,r) + \tau'(\psi,r')}$$
\endproof

\begin{remark}
The bijection between $\Z$ and  ${\cal T}_{\phi,\ti r}$ depends on the notion of visible lines and so depends on the choice of \ct.  On the other hand,  Lemma~\ref{lemma:translation}\pref{item:translation} implies that  $\tau(\phi,r)$ depends only on $\phi$ and $r$ and not the choice  of a \ct. As such it can be computed from any \ct\ for $\phi$.  
The integer $\offset(\theta, r)$  depends on the choices of \ct s. 
\end{remark}

\subsection{ Staple pairs}\label{sec:staples}   
We continue with  Notation~\ref{notn:elli}.  We set further notation as follows.

\begin{notn}
If $E_i$ and  $E_j $ are distinct elements of $ \lin_w(f)$ then there exist non-zero $d_i \ne d_j$ so that $f(E_i) = E_i w^{d_i}$ and $f(E_j) = E_j w^{d_j}$.  Recall that a path of the form $E_iw^p\bar  E_j$ is called {\em exceptional} if $d_i$ and $d_j$ have the same sign.   If $d_i$ and $d_j$ have different signs then we say $E_iw^p\bar  E_j$  is {\em quasi-exceptional}.
\end{notn}

\begin{notn} \label{notn:staple pairs} 
We write $L \in \cS(\phi)$ and say that $L$ is a {\em staple} if  $L \in \accnr(\phi)$ has at least one periodic end; if both ends of $L$ are periodic then $L$ is a {\em linear staple}.
If $\ti L \in \accnr(\phi, \ti r)$ projects to an element of $\cS(\phi)$ for  $r \in \cR(\phi)$ and lift $\ti r$, then we write $\ti L \in \cS(\phi, \ti r)$ and $L \in \cS(\phi, r)$ and we say that $L$ and $\ti L$ {\em occur} in $ r$ and $\ti r$ respectively.

For each  $r \in \cR(\phi)$, an ordered pair  $b = ( L_1,  L_2)$ of elements of $\cS(\phi,r)$ is a {\em staple pair}  if there are   lifts $\ti L_1, \ti L_2 \in \cS(\phi,\ti r)$ and  a  periodic line $\ti A$ such that $\{\partial_+\ti L_1, \partial_- \ti L_2\}  \subset \{\partial_- \ti A, \partial_+ \ti A\}$. We write $b \in \cS_2(\phi, r)$ and $\ti b = (\ti L_1,\ti L_2) \in \cS_2(\phi, \ti r)$ and say that $b$ and $\ti b$ {\em occur} in $r$ and $\ti r$ respectively and that $\ti A$ is the {\em common axis} of $\ti b$.   By Corollary~\ref{cor:limit lines},   $\ti A$ corresponds to an element of $\A(\phi)$.   Define $\cS_2(\phi)= \cup \cS_2(\phi,r)$ where the union is taken over all $r \in \cR(\phi)$.
\end{notn}

\begin{lemma} \label{lem:S2 invariance} Each  $b \in \cS_2(\phi, r)$ is $\phi$-invariant.   The set $\cS_2(\phi, \ti r)$ is $\Phi_{\ti r}$-invariant.
\end{lemma}

\proof The first statement follows from the second and the fact (Lemma~\ref{cor:limit lines}\pref{item:ell decomposes}) that each element of $\accnr(\phi)$ is $\phi$-invariant. The second follows from the $\Phi_{\ti r}$-invariance of $\accnr(\phi, \ti r)$ (Lemma~\ref{accnr by conjugacy}) and the definition of $\cS_2(\phi, \ti r)$. 
\endproof

\begin{excont*}\label{ex.l}
In our example, $\cS(\phi)=\{a^\infty R_c, a^\infty ba^\infty\}$ and $\cS_2(\phi)=\{(a^\infty b a^\infty, a^\infty b a^\infty), (a^\infty b a^\infty, a^\infty R_c)\}$.
\end{excont*}

Throughout this section, $M$ is the stabilization constant defined in Notation~\ref{notn:stabilization}.    

Our next  lemma explains how staple pairs occur in an eigenray $\ti R_{\ti E}$.

\begin{lemma}  \label{staple pair examples}
Assume Notation~\ref{notn:elli}.   
\begin{enumerate}
\item \label{qe} If  $\sigma_i \rho_i \sigma_{i+1}$ is quasi-exceptional then 
 $  (\ti \ell_{i-1},\ti \ell_{i+1}) \in \cS_2(\phi, \ti r)$ with common axis  $\ti \ell_i$. 
  \item \label{three cases} If   one of the following hold 
 \begin{enumerate} 
 \item \label{exc}   $\sigma_i $ is exceptional;
 \item \label{lin} $\sigma_i \in \lin_{{w}}(f)$ and $\ell_{i}$ is not periodic;  
 \item  \label{bar lin}  $ \bar \sigma_{i} \in \lin_{{w}}(f)$ and $\ell_{i-1}$ is not periodic;
\end{enumerate}
then $  (\ti \ell_{i-1},\ti \ell_{i}) \in \cS_2(\phi, \ti r)$. 
\item \label{must be qe}If $\ti \ell_i$ is periodic and neither $\sigma_i$ nor $\bar \sigma_{i+1}$ is in $\E_f$  then $\sigma_i \rho_i \sigma_{i+1}$ is quasi-exceptional and so $  (\ti \ell_{i-1},\ti \ell_{i+1}) \in \cS_2(\phi, \ti r)$ with common axis  $\ti \ell_i$.  See also Remark~\ref{rem:periodic line}.
\item \label{all b}   For each $\ti b\in \cS_2(\phi, \ti r)$ there exists $K =  K(\ti b)$ such that $\Phi_{\ti r}^k(\ti b)$ is as in \pref{qe} or \pref{three cases} for all $k \ge K$.  {Moreover, in case \pref{lin}, $R_i^- = w^{\pm \infty}$ and in case \pref{bar lin}, $R_{i-1}^+ =  w^{\pm \infty}$.
}
\end{enumerate}
\end{lemma}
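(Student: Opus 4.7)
The proof is a case analysis based on the coarsened complete splitting of $\ti R_{\ti E}$ and on the explicit computations of $R_i^\pm = f_\#^\infty(\sigma_i^{\mp 1})$ from Lemma~\ref{paths converge} and Example~\ref{example:converges}. For (1), the plan is to first compute $\ell_i$ directly: with $\sigma_i = E_i$, $\rho_i = w^p$, $\sigma_{i+1} = \bar E_j$, $E_i,E_j\in\lin_w(f)$, and $d_i, d_j$ of opposite signs, Example~\ref{example:converges} gives $R_i^- = f_\#^\infty(\bar E_i)$ and $R_{i+1}^+ = f_\#^\infty(\bar E_j)$ each equal to $w^{\pm\infty}$, with the signs determined by those of $d_i$ and $d_j$. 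Because $d_i$ and $d_j$ have opposite signs, the concatenation $\ell_i = (R_i^-)^{-1}\rho_i R_{i+1}^+$ collapses to a single $w^{\pm\infty}$, and $\ti \ell_i$ is a lift of this $w$-axis. The rays $\ti R_i^+ = \ti E_i\,\ti w^{\pm\infty}$ and $\ti R_{i+1}^- = \ti E_j\,\ti w^{\pm\infty}$, which form the side branches of $\ti \ell_{i-1}$ at its right end and of $\ti \ell_{i+1}$ at its left end, then converge to the two endpoints of $\ti \ell_i$ in $\partial F_n$, producing the staple pair $(\ti \ell_{i-1},\ti \ell_{i+1})$ with common axis $\ti \ell_i$.

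For (2), each subcase is handled by a direct computation. In (a), the exceptional $\sigma_i = E_a w^p \bar E_b$ (with $E_a, E_b \in \lin_w(f)$ and same-sign $d_a, d_b$) satisfies $R_i^+ = E_a w^{\pm\infty}$ and $R_i^- = E_b w^{\mp\infty}$, so $\partial_+ \ti \ell_{i-1}$ and $\partial_- \ti \ell_i$ land on a common $\ti w$-axis. In (b), $\sigma_i = E \in \lin_w(f)$ gives $R_i^+ = E w^{\pm\infty}$ and $R_i^- = w^{\mp\infty}$, and the non-periodicity of $\ell_i$ rules out the degenerate alignment; case (c) is symmetric. In each subcase the alignment of the two periodic boundary points on the same lift of $w^\infty$ is forced by the fact that the two rays emanate from vertices separated only by $\ti \sigma_i$ in $\ti R_{\ti E}$, together with the fixed-orientation structure of lifts of $w$ near $\ti \sigma_i$.

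For (3), I would argue contrapositively via Corollary~\ref{cor:limit lines}\pref{item:ell decomposes}, which classifies periodic elements of $\acc(r)$ as $w^{\pm\infty}$ for twist paths $w$. Assuming $\ti \ell_i$ periodic, $\ell_i = w^{\pm\infty}$, and both $R_i^-$ and $R_{i+1}^+$ must be pure iterates of $\ti w^{\pm 1}$. By Example~\ref{example:converges} this forces $\sigma_i$ and $\sigma_{i+1}$ to each lie in $\lin_w(f)\cup\lin_w(f)^{-1}\cup\E_f\cup\E_f^{-1}$; excluding the higher-order possibilities by hypothesis, $\sigma_i$ and $\sigma_{i+1}$ are linear. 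Tracking which direction of $w$ each ray advances along relative to $\ti \rho_i$ shows that $\sigma_i = E_i$ and $\sigma_{i+1} = \bar E_j$ for $E_i, E_j \in \lin_w(f)$, and that $d_i, d_j$ must have opposite signs (else some $\bar E_j$ or $E_i$ would survive in $\ell_i$ rather than cancelling to a pure $w^{\pm\infty}$). This is exactly the quasi-exceptional form, so we invoke (1).

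Part (4) is the main obstacle. Given $\ti b = (\ti L_1,\ti L_2)\in\cS_2(\phi,\ti r)$ with common periodic axis $\ti A$, Proposition~\ref{prop:accnr} produces $K$ such that for all $k\ge K$, $\Phi_{\ti r}^k(\ti L_a) = \ti \ell_{m_a(k)}$ is visible; since elements of $\accnr$ are non-periodic we must have $\ti L_1\ne\ti L_2$ (else $\ti L_1$ would have both endpoints on $\ti A$), and hence $m := m_1(k) \ne m_2(k) =: n$, say $m<n$. The crux is the claim that $n-m \le 2$: the ray $\ti R_{m+1}^+$ merges into $\Phi_{\ti r}^k(\ti A)$ at some vertex $v_1$ on $\ti R_{\ti E}$, and $\ti R_n^-$ merges at some $v_2$; by tree-uniqueness, the $\ti R_{\ti E}$-subpath from $v_1$ to $v_2$ coincides with the corresponding subpath of $\Phi_{\ti r}^k(\ti A)$, hence is Nielsen. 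If $n-m \ge 3$ this subpath strictly contains a growing term $\ti \sigma_j$ for some $m+2\le j\le n-1$, but no growing term can sit inside a Nielsen path (since $f_\#$ fixes Nielsen paths yet expands growing terms indefinitely), a contradiction; the main technical work is pinning down precisely where $v_1$ and $v_2$ sit in each configuration of $\sigma_{m+1}$ and $\sigma_n$ so that the intermediate $\ti \sigma_j$ is genuinely inside the Nielsen subpath. Given $n - m \le 2$, the case $n-m=1$ yields case (2) since the convergence of $\ti R_{m+1}^+$ and $\ti R_n^-$ to a common axis forces $\sigma_i := \sigma_n$ to be linear or exceptional in the relevant $w$-family, with $\ell_i$ non-periodic automatic from $\ti \ell_n\in\accnr(\phi,\ti r)$; the case $n-m=2$ makes $\ti \ell_{m+1}$ periodic (both endpoints on $\Phi_{\ti r}^k(\ti A)$), so by (3) the triple $\sigma_{m+1}\rho_{m+1}\sigma_{m+2}$ is quasi-exceptional and we are in case (1).
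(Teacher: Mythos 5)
Your treatment of parts (1)--(3) is essentially the paper's own argument: you compute $R_i^{\pm}$ from Lemma~\ref{paths converge} and Examples~\ref{example:converges} and locate the relevant endpoints on a common lift of the twist axis, exactly as the paper does. Two small repairs are needed there. In (3), the hypothesis only excludes $\sigma_i\in\E_f$ and $\bar\sigma_{i+1}\in\E_f$; the remaining higher-order orientations $\sigma_i\in\E_f^{-1}$ and $\sigma_{i+1}\in\E_f$ are not excluded ``by hypothesis'' but by the periodicity requirement itself (in those cases $R_i^-$, resp. $R_{i+1}^+$, is an eigenray, whose end is non-periodic). Likewise the opposite-sign conclusion follows because with equal signs the concatenation $(R_i^-)^{-1}\rho_i R_{i+1}^+$ would fail to be a reduced periodic line (and such a configuration cannot occur as consecutive terms of a complete splitting), not because ``some $\bar E_j$ or $E_i$ would survive.'' These are easily fixed.

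Part (4), however, has a genuine gap, and it sits precisely where you defer ``the main technical work.'' First, your claim that $\ti R^+_{m+1}$ and $\ti R^-_n$ merge into the common axis at vertices $v_1,v_2$ lying on $\ti R_{\ti E}$ is unjustified and false in general: if, for example, $\sigma_{m+1}\in\E_f^{-1}$, then $\ti R^+_{m+1}=\ti f^\infty_\#(\ti\sigma_{m+1})$ leaves $\ti R_{\ti E}$ at the initial vertex of $\ti\sigma_{m+1}$ and can still have a periodic end, so its merge point with the axis lies in a subtree off $\ti R_{\ti E}$. This is exactly why the paper first replaces $\ti b$ by $\Phi^{M}_{\ti r}(\ti b)$ for the stabilization constant $M$ of Notation~\ref{notn:stabilization}, so that the relevant first growing term is not in $\E_f^{-1}$, and only then runs the case analysis on the three linear shapes $E_i$, $\bar E_i$, $E_iw^{*}\bar E_l$ (midpoint-separation arguments plus the Nielsen-interval argument along $\ti A\cap\ti R_{\ti E}$). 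Your bound $n-m\le 2$ rests on the same unestablished location claims. Second, in your $n-m=2$ case the assertion that $\ti\ell_{m+1}$ is periodic ``(both endpoints on $\Phi^k_{\ti r}(\ti A)$)'' does not follow from the staple-pair condition: that condition constrains $\partial_+\ti\ell_m=\partial\ti R^+_{m+1}$ and $\partial_-\ti\ell_{m+2}=\partial\ti R^-_{m+2}$, whereas the endpoints of $\ti\ell_{m+1}$ are $\partial\ti R^-_{m+1}$ and $\partial\ti R^+_{m+2}$, which the condition says nothing about. Periodicity of the intermediate line only comes out after one has pinned down the orientations ($\sigma_{m+1}\in\lin(f)$ and $\bar\sigma_{m+2}\in\lin(f)$ in the same linear family, with the intervening segment of $\ti R_{\ti E}$ a power of the twist path) --- that is, after the very analysis you postpone. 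Since that analysis is the substance of (4), the proposal as written does not prove it.
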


\proof
If  $\sigma_i \rho_i \sigma_{i+1}$ is quasi-exceptional then there are a twist curve $w$ and edges $E', E'' \in \lin_w(f)$ such that $ \sigma_i =  E'$, $\rho_i = w^q$ for some $q\in \Z$ and $\sigma_{i+1} =  \bar E''$.
Moreover, $f(E') = E'w^{d'}$ and $f(E'') = E''w^{d''}$ where $d'$ and $d''$ have opposite signs.  If $\ti \sigma_i = \ti E'$,   let  $\ti w$ be the lift of $w$ that begins with the terminal endpoint $\ti x$ of $\ti E'$.  Extend $\ti w$ to a periodic line $\ti A$ that projects bi-infinitely to the circuit determined by $w$ and is oriented consistently with   $\ti w$ . Let $\ti y \in \ti A$ be the terminal endpoint of the lift of $w^q$  that begins at $\ti x$ and let $\ti E''$ be the lift of $E''$ that ends at $\ti y$ .   
  Then  $\ti R^+_i $ is the concatenation of $\ti E'$ and a ray in $\ti A$ beginning at $\ti x$ and terminating at $\partial_+\ti A$ if $d' > 0$ and at $\partial_-\ti A$ if $d' < 0$.  Similarly, $\ti R^-_{i+1} $ is the concatenation of $\ti E''$ and a ray in $\ti A$ beginning at $\ti y$ and terminating at $\partial_+ \ti A$ if $d'' > 0$ and at $\partial_-\ti A$ if $d'' < 0$.  Neither $\ti \ell_{i-1} $ nor $\ti \ell_{i+1}$ is periodic.  Up to a change of orientation, $\ti \ell_i = \ti A$.   Thus  $  (\ti \ell_{i-1},\ti \ell_{i+1}) \in  \cS(\phi, \ti r)$ with common axis  $\ti \ell_i$ and (1) is proved. 
  
 If $\sigma_i $ is exceptional then $\sigma_i =E'w^q\bar E''$ where $E', w$ and $E''$ are as above except that $d'$ and $d''$ have the same sign.  Following the above notation, $\ti R^+_i $ begins with $\ti E'$, $\ti R^-_i$ begins with $\ti E''$ and both rays terminate at the same endpoint of $\ti A$. Neither $\ti \ell_{i-1} $ nor $\ti \ell_{i}$ is periodic. This completes the proof of (2a).
 
 If $\sigma_i = E' \in \lin_{{w}}(f)$, then following the above notation, $\ti \ell_{i-1}$ is   non-periodic (because it crosses $\ti E'$) with terminal endpoint in $\{\partial_-\ti A, \partial_+\ti A\}$   and $ \ti R^-_i $ has terminal endpoint in $\{\partial_-\ti A, \partial_+\ti A\}$.  If $\ti \ell_{i}$ is non-periodic then  $  (\ti \ell_{i-1},\ti \ell_{i}) \in  \cS_2(\phi, \ti r)$.  This completes the proof of (2b).    The proof of (2c) is similar.   
 
 Suppose  that $\ti \ell_i$ is as in (3).   If  $\bar \sigma_i \in \E_f$ then $ \ti R^-_i$ is not asymptotic to a periodic line in contradiction to the assumption that $\ti \ell_i$ is periodic.     If  $\bar \sigma_i \in \lin(f)$ or if $\sigma_i$ is exceptional then $ R^-_i = E_iw^{\pm\infty}$ where $E_i \in \lin_w(f)$, again in contradiction to the assumption that $\ti \ell_i$ is periodic.  We conclude that  $ \sigma_i = E' \in \lin(f)$.  The symmetric argument shows that $  \sigma_{i+1} = \bar E'' $ for some $E'' \in \lin(f)$.   Thus $  \ell_i$ has the form $(w')^{\pm \infty} \rho_i (w'')^{\pm \infty}$ where $w'$ is the \twistpath\ for $E'$ and $w''$ is the \twistpath\ for $E''$. Since $\ti \ell_i$ is a periodic line, $w' = w''$ and $\rho_i = (w')^q$ for some $q \in \Z$. This proves that $\sigma_i \rho_i \sigma_{i+1}$ is quasi-exceptional, which in conjunction with (1), completes the proof of (3).

For (4), suppose that $\ti b \in \cS_2(\phi, \ti r)$.   After replacing $\ti b$ with some $\Phi_{\ti r}^k(\ti b)$, we may assume by Proposition~\ref{prop:accnr} that $\ti b = (\ti \ell_{i-1},\ti \ell_j)$ for some {$i -1 \ne j$}. After replacing $\ti b$ with  $\Phi_{\ti r}^M(\ti b)$, we may assume that $\bar \sigma_i \not \in \E_f$.  (To see this note that if $\ti f_\#^M(\ti \rho_{i-1}) \subset \ti \rho_{s-1}$ then $\ti f_\#^M(\ti \ell_{i-1}) = \ti \ell_{s-1}$ and  $\ti \sigma_s$ is the first growing term of $\ti f_\#^M(\ti \sigma_i)$.)  By assumption, $\partial_+\ti R^+_i$  is an endpoint of the common axis $\ti A$ of $\ti b$.  Lemma~\ref{paths converge} therefore implies that $\sigma_i \not \in \E_f$ and hence that $\sigma_i$ is linear. In other words, $\sigma_i = E_i$ or $\sigma_i = \bar E_i$ or $\sigma_i =  E_i w_i^* \bar E_l$ for some \twistpath\ $w_i$ and for some $E_i, E_l \in \lin_{w_i}(f)$.  In all three cases, the terminal endpoint of $\ti E_i$ is contained in $\ti A$.      For the same reasons, we may assume that $\sigma_j = E_j$ or $\sigma_j = \bar E_j$ or $\sigma_j =  E_m w_j^* \bar E_j$ for some \twistpath\ $w_j$ and for some $E_j, E_m \in \lin_{w_j}(f)$; moreover,     the terminal endpoint of $\ti E_j$ is in $\ti A$. 

The proof now proceeds by a case analysis.  If $\sigma_i =  E_i w_i^* \bar E_l$ then the  midpoint of $\ti E_i$ [resp. $\ti E_l^{-1}$] separates $\ti A$ from $\ti \sigma_q$ for all $q< i$ [resp. $q >i]$ so  $j = i$ and we are in case \pref{exc}. The same argument, with the same conclusion, applies if $\sigma_j =  E_m w_j^* \bar E_j$.    
 We may now assume that $\sigma_i$ is either $E_i$ or $\bar E_i$ and that $\sigma_j$ is either $E_j$ or $\bar E_j$.  By considering the midpoints of $\sigma_i$ and $\sigma_j$ as in the previous case we see that:
 \begin{enumerate}[(a)]
 \item  If $\sigma_i = E_i$ then $j \ge i$.
 \item If  $\sigma_i = \bar E_i$ then $j \le i$.
 \item If $\sigma_j = E_j$ then $i \ge j$.
  \item If  $\sigma_j = \bar E_j$ then $i\le j$.
\end{enumerate}
If (a) and (c) are satisfied then   $j = i$,   we are in case \pref{lin} and $R_i^- = w^{\pm \infty}$.  Similarly,  if (b) and (d) are satisfied then  $j = i$,  we are in case \pref{bar lin}  and $R_{i-1}^+ = w^{\pm \infty}$.  Suppose next that (a) and (d) are satisfied.  In this case, $j \ge i$,  $w_i =  w_j$ and  the interval $\ti \tau$ of $\ti A$ bounded by the terminal endpoints of $\ti E_i$ and $\ti E_j$ equals $\ti w_i^q$ for some $q \in \Z$;  in particular, $\tau$  is a Nielsen path.   It must be that $\tau =  \rho_i$ and $j=i+1$, which is \pref{qe}.  Finally, suppose that (b) and (c) are satisfied.  Then $j \le i$,  $w_i =  w_j$ and  the interval $\ti \tau$ of $\ti A$ bounded by the terminal endpoints of $\ti E_j$ and $\ti E_i$ equals $\ti w_i^q$ for some $q \in \Z$;  in particular, $\tau$  is a Nielsen path.   It must be that $\tau =  \rho_{i-1}$ and $j=i-1$ which contradicts the fact that $i-1 \ne j$.  Thus this last case does not happen and we are done.
\endproof
    
\begin{notn} \label{notn:visible staples} We say that the staple pairs  $(\ti \ell_{i-1},\ti \ell_{i+1})$ and  $(\ti \ell_{i-1},\ti \ell_{i})$   that occur in items (1) and (2) of Lemma~\ref{staple pair examples} are {\em visible with index $i$} or just {\em visible} if the index is not explicitly given. Note that if $\ti b$ is visible then $\Phi_{\ti r}^k(\ti b)$ is visible for all $k \ge 0$.  \end{notn}
    
\begin{corollary}
The set of visible elements of $\cS_2(\phi, \ti r)$ is infinite.
\end{corollary} 

\proof
From $\Phi_{\ti r}$-invariance of $\cS_2(\phi, \ti r)$ (Lemma~\ref{lem:S2 invariance}) and  Lemma~\ref    {forward invariance},  we need only show that  $\cS_2(\phi, \ti r)$ contains a visible element.  There are always linear edges crossed by   $R_E$.  We are therefore reduced, by   Lemma~\ref{staple pair examples}, to the case that  some $\ti \ell_i$ is periodic.    If     $\ti f^M_\#(\ti \rho_i) \subset \ti \rho_j$ then $\ti \sigma_{j}$ is the last growing term in $\ti f^M_\#(\ti \sigma_i) $ and so $\sigma_j \not \in\E_f$.  Similarly, $\ti \sigma_{j+1}$ is the first growing term in $\ti f^M_\#(\ti \sigma_{i+1})$ and so $\bar \sigma_{j+1} \not \in\E_f$.    Lemma~\ref{staple pair examples}\pref{must be qe} implies that    $ (\ti \ell_{j-1}, \ti \ell_{j+1}) \in \cS_2(\phi, \ti r)$ and we are done. 
\endproof

Recall from Notation~\ref{notn:elli} that the $\ti \ell_i$'s are said to be {\it visible}.

\begin{lemma}  \label{visible soon}Suppose that  $\ti b = (\ti L_1,\ti L_2) \in \cS_2(\phi, \ti r)$ with common axis $\ti A$ and that one of the following two conditions are satisfied.
\begin{enumerate} [(a)]
\item Either $\ti L_1$  or  $\ti L_2$ is visible and there exist $k\ge 0$  such that    $\Phi^k_{\ti r}(\ti L_1,\ti L_2) = (\ti \ell_{i-1}, \ti \ell_i)$ for some $i$.
\item  Either $\ti L_1, \ti L_2$ {or the common axis} of  $\ti A$ is visible and there exist $k\ge 0$  such that    $\Phi^k_{\ti r}(\ti L_1,\ti L_2) = (\ti \ell_{i-1}, \ti \ell_{i+1})$ with common axis $\ti \ell_i$ for some $i$.
\end{enumerate}
Then    $\Phi^{2M}_{\ti r}(\ti b)$ is visible.
\end{lemma}
 
 \proof  We begin by establishing the following properties for each {visible line} $\ti \ell_j$. 
  \begin{enumerate}
 \item Suppose that $\partial_+\ti \ell_j$ is periodic and that $\ti \ell_s = \Phi_{\ti r}^{M}(\ti \ell_j)$.  Then  $\Phi_{\ti r}^{m}(\ti \ell_s)$ and  $\Phi_{\ti r}^{m}(\ti \ell_{s+1})$
are consecutive  (i.e. their indices differ by $1$) for all $m \ge 0$.
\item Suppose that $\partial_-\ti \ell_j$ is periodic and that $\ti \ell_s =  \Phi_{\ti r}^{M}(\ti \ell_j)$.  Then  $\Phi_{\ti r}^{m}(\ti \ell_{s-1})$ and  $\Phi_{\ti r}^{m}(\ti \ell_{s})$
are consecutive for all $m \ge 0$.
   \end{enumerate}
For (1), Lemma~\ref{forward invariance} implies that  $\ti f_\#^{M}(\ti \rho_{j}) \subset \ti \rho_{s}$ and our choice of $M$ implies that $\ti \sigma_{s+1} \not \in \E_f^{-1}$.  Since    $\partial_+\ti \ell_j$ is periodic, the same is true for $\partial_+\ti \ell_s$ and so  $\ti \sigma_{s+1} \not \in \E_f$.  We conclude that $\sigma_{s+1}$ is linear.  In particular,    $\ti f^{m}_\#(\ti \sigma_{s+1})$ has exactly one growing term.   If $\ti f^{m}_\#(\ti \rho_{s}) \subset \ti \rho_a$  then $\ti f^{m}_\#(\ti \rho_{s+1}) \subset \ti \rho_{a+1}$.  Lemma~\ref{forward invariance} implies that $\Phi_{\ti r}^{m}(\ti \ell_s) =\ti \ell_a$ and  $\Phi_{\ti r}^{m}(\ti \ell_{s+1}) = \ti \ell_{a+1}$.   This completes the proof of (1).  Item (2) is proved by the symmetric argument.

  We now apply  (1) and (2) to prove the lemma, assuming without loss that $k > M$.  In case (a),   we will show that      $\Phi^{M}_{\ti r}(\ti b)$ is visible. 
      If $\ti L_1$ is visible  let $  \ti \ell_{s-1} = \Phi_{\ti r}^M(\ti L_1)$.  Since $\Phi_{\ti r}^{k-M} (\ti \ell_{s-1})  = \Phi_{\ti r}^{k} (\ti L_1) = \ti \ell_{i-1}$, (1), applied with $m = k-M$,  implies that  $\Phi_{\ti r}^{k-M} (\ti \ell_{s}) =  \ti \ell_{i}$.  Since  $\Phi_{\ti r}^{k-M}(\Phi_{\ti r}^M(\ti L_2)) = \ti \ell_i$, we have      $\Phi_{\ti r}^M(\ti L_2) = \ti \ell_s$.  Thus $\Phi_{\ti r}^M(\ti b) = (\Phi_{\ti r}^M(\ti L_1), \Phi_{\ti r}^M(\ti L_2)) = (\ti \ell_{s-1}, \ti \ell_s)$ is visible.  This completes the proof when $\ti L_1$ is visible.  When  $\ti L_2$ is visible, a symmetric argument, using (2) instead of (1) shows that    $\Phi_{\ti r}^M(\ti L_1)$  and hence $\Phi_{\ti r}^M(\ti b)$ is visible.

In case (b), {note that } $\partial_+\ti L_1, \partial_-\ti L_2$ and both ends of $\ti A$ are periodic.   If   $\ti L_1$ is visible then   the above argument shows that the common axis of $\Phi_{\ti r}^M(\ti b)$ is visible and a second application shows that $\Phi_{\ti r}^{2M}(\ti L_2)$ is visible. The other cases are similar.
\endproof 

\begin{notn} \label{notn:topmost staple pair}
Suppose that $\ti b = (\ti L_1,\ti L_2) \in \cS_2(\phi, \ti r) $ projects to  $b \in \cS_2(\phi, r)$. If $\ti \ell_j \prec \ti L_1$ (see Definition~\ref{prec}) then we write  $\ti \ell_j \prec \ti b$.  We say that $b$  and $\ti b $ are {\em topmost} elements of $\cS_2(\phi,r)$ and $\cS_2(\phi, \ti r)$ respectively if for all $r_1 < r$ (see Definition~\ref{weaker po}) neither $b$ nor $b^{-1} :=  (L_2^{-1}, L_1^{-1})$ is an element of $\cS_2(\phi, r_1)$.  Since $\ti b$ and $\Phi_{\ti r}(\ti b)$ project to the same element of $\cS_2(\phi, r)$ and since $\cS_2(\phi, r)$ is $\Phi_{\ti r}$-invariant,
 it follows that the set of topmost element of $\cS_2(\phi, \ti r)$ is $\Phi_{\ti r}$-invariant.  
\end{notn}

\begin{lemma} \label{new bound for staple pairs}    The   set of topmost elements of $\cS_2(\phi, \ti r)$ is the union of a finite number of $\Phi_{\ti r}$-orbits.      Moreover, there exists a computable $B(r)> 0$ so that each of these orbits has a visible representative     with index at most $B(r)$.
 \end{lemma}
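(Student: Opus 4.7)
My plan is to parameterise visible staple pairs by their index (Notation~\ref{notn:visible staples}) and show that every $\Phi_{\ti r}$-orbit in $\cS_2(\phi,\ti r)$ has a visible representative whose index is bounded in terms of the length of the path $u$ with $f(E) = E\cdot u$ and the stabilisation constant $M$. Since topmost-ness is a $\Phi_{\ti r}$-invariant condition on $\ti b$ (it depends only on the projection $b$, and $\phi$ preserves each $\cS_2(\phi,r_1)$), finiteness of all orbits with bounded visible representatives will carry over to the topmost sub-collection.

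First I would invoke Lemma~\ref{staple pair examples}\pref{all b}, which already guarantees that every $\Phi_{\ti r}$-orbit in $\cS_2(\phi,\ti r)$ contains a visible representative of one of the four shapes (qe), (exc), (lin), (bar lin). Let $N$ be the number of edges of $u$, computable from $f$, and set $B(r):=N+2M$. I would then argue by downward induction on the index: if $\ti b$ is a visible staple pair of index $i>B(r)$, then the local configuration $\ti\sigma_i\ti\rho_i\ti\sigma_{i+1}$ (or $\ti\sigma_i$) lies inside some $\ti f_\#^k(\ti u)$ with $k\ge 1$, and is the $\ti f_\#$-image of an analogous configuration at a smaller index in $\ti f_\#^{k-1}(\ti u)$. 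Using Lemma~\ref{forward invariance}, this yields a preimage staple pair (possibly not visible) at a smaller index; applying $\Phi_{\ti r}^{2M}$ and Lemma~\ref{visible soon} restores visibility and keeps the index strictly below $i$. Iterating drives the index down to $\le B(r)$.

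The crucial structural fact that makes this induction work is that the (qe) and (exc) configurations are literally $\ti f_\#$-invariant in shape, since $\ti f_\#(E'w^p\bar E'') = E' w^{p+d'-d''}\bar E''$; in particular quasi-exceptional remains quasi-exceptional and exceptional remains exceptional, with only the twist exponent $p$ changing. The (lin) and (bar lin) configurations are more delicate: the linear edge $E'$ of $\ti\sigma_i$ survives in $\ti f_\#(\ti\sigma_i)$ as its last growing term, but the new Nielsen factor $w^{d'}$ produced by $\ti f_\#(E')=E'w^{d'}$ can temporarily merge into the adjacent $\ti\rho_{j(i)}$ and destroy visibility of the corresponding staple pair. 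This is exactly the situation Lemma~\ref{visible soon} is designed to repair: if either component or the common axis of $\ti b$ is visible, then $\Phi_{\ti r}^{2M}(\ti b)$ is visible, and in our setting the common axis (the relevant $\ti w$-axis) is always visible in the shifted configuration.

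Once the index is bounded by $B(r)$, the number of visible topmost staple pairs of index $\le B(r)$ is finite and can be listed directly from the coarsened complete splitting of $\ti E\cdot\ti u\cdot \ti f_\#(\ti u)\cdots$ out to index $B(r)$, by inspecting each triple $\ti\sigma_i\ti\rho_i\ti\sigma_{i+1}$ for a (qe) configuration and each $\ti\sigma_i$ for (exc), (lin), (bar lin) and then testing topmost-ness against the finitely many lower $r_1<r$ (using Lemma~\ref{preserves partial order} to enumerate them). The main obstacle I anticipate is the careful bookkeeping for the (lin)/(bar lin) cases, where one must track which linear edges reappear as visible $\ti\sigma$'s after the $2M$-step delay and verify that the predecessor index is indeed strictly smaller; here the order-preservation of $i\mapsto j(i)$ from Lemma~\ref{forward invariance}\pref{item:order preserving} together with the explicit description in Lemma~\ref{visible soon} should suffice.
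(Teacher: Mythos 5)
There is a genuine gap: your descent never uses the topmost hypothesis, and the stronger statement you are implicitly proving --- that \emph{every} $\Phi_{\ti r}$-orbit in $\cS_2(\phi,\ti r)$ has a visible representative of index at most $N+2M$ --- is false. The problematic step is the claim that a visible configuration of index $i>B(r)$ ``is the $\ti f_\#$-image of an analogous configuration at a smaller index in $\ti f^{k-1}_\#(\ti u)$.'' This fails exactly when the configuration is born inside $\ti f_\#$ of a \emph{higher order} term: the new growing terms created by $f$ on an edge of $\E_f$ have no analogous predecessor. Concretely, in Example~\ref{e:main example} take $r=r_q$, so $R_q=q\cdot c\cdot f_\#(c)\cdot f^2_\#(c)\cdots$ with $f^{m-1}_\#(c)=c\,b\,ba\cdots ba^{m-2}$. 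The staple pairs formed by consecutive $b$-terms inside a block $f^{m-1}_\#(c)$ are visible elements of $\cS_2(\phi,\ti r_q)$, and since $\ti f$ sends the $j$-th $b$ of a block to the $(j+1)$-st $b$ of the next block (the first $b$ of each block is created from $f(c)=cb$ and has no $b$-preimage), these pairs fall into infinitely many distinct $\Phi_{\ti r_q}$-orbits, parametrized by (block index)$-$(position in block), and the minimal index of a visible representative in the $t$-th such orbit grows like $t^2$. So your induction cannot terminate at a uniform bound for arbitrary staple pairs; these pairs are of course non-topmost (they already occur in $r_c<r_q$, cf.\ Lemma~\ref{lower rays and staple pairs}), which is precisely why the lemma is only asserted for topmost pairs.

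The paper's proof uses topmost-ness at exactly this point, in the spirit of Lemma~\ref{finding topmost lines}: if the whole configuration $\ti\sigma_{i-1}\ti\rho_{i-1}\ti\sigma_i\ti\rho_i\ti\sigma_{i+1}$ were contained in $\ti f^{k'}_\#(\ti\sigma_{j'})$ for some $\sigma_{j'}\in\E_f\cup\E_f^{-1}$ with $j'\le p$, then by Lemma~\ref{staple pair examples} the pair or its inverse would occur in the lower ray $r'$ corresponding to that edge, contradicting topmost-ness. Hence for a topmost $\ti b$ some $\ti f^k_\#(\ti\rho_j)$ with $1\le j\le p$ is contained in $\ti\rho_{i-1}$ or $\ti\rho_i$, so $\Phi_{\ti r}^{-k}(\ti b)$ has a component equal to a visible line $\ti\ell_j$ with $j\le p$, and then Lemma~\ref{visible soon} makes $\Phi_{\ti r}^{2M-k}(\ti b)$ visible. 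Your reduction to Lemma~\ref{visible soon} is the right closing move, but it must be fed by this topmost-based pullback rather than by a shape-preservation argument (which controls forward images, not preimages). A secondary issue: even for the corrected argument your constant $N+2M$ is too small; the paper takes $B(r)$ defined by $\Phi^{2M}_{\ti r}(\ti\ell_p)=\ti\ell_{B(r)}$, since applying $\Phi^{2M}_{\ti r}$ multiplies the number of growing terms and can push indices far beyond $N+2M$.
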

 
\proof
Define $B(r) > 2M$ by $\Phi^{2M}_{\ti r}(\ti \ell_p) = \ti \ell_{B(r)}$.
 
 Suppose that $\ti b$ is a topmost element of $\cS_2(\phi, \ti r)$.  After replacing $\ti b$ with some $\Phi^{k}_{\ti r}(\ti b)$, we may assume  by Lemma~\ref{staple pair examples}\pref{all b}  that  $\ti b = (\ti \ell_{i-1}, \ti \ell_i)$ or $\ti b = (\ti \ell_{i-1}, \ti \ell_{i+1})$ with common axis $\ti l_i$.  We consider the  $\ti b = (\ti \ell_{i-1}, \ti \ell_i)$ case first, assuming without loss that $i > 2M$.   The proof {below} is similar to that of Lemma~\ref{finding topmost lines}.

Suppose that  there exists $1 \le j' \le p$ and $k' > 0$ so that $\ti \sigma_{i-1} \ti \rho_{i-1} \ti \sigma_{i} \ti \rho_{i}\ti \sigma_{i+1} \subset \ti f^{k'}_\#( \ti \sigma_{j'})$. Then $\sigma_{j'} = E'$ or $\bar E'$ for some $E' \in \E_f$ so either $ \sigma_{i-1}   \rho_{i-1}   \sigma_{i}  \rho_{i} \sigma_{i+1}$ or $ \bar \sigma_{i+1}  \bar  \rho_{i}  \bar \sigma_{i} \bar\rho_{i-1} \bar\sigma_{i-1}$ occurs as a concatenation of terms in the coarsening of the complete splitting of $R_{E'}$.  Letting $r' \in \cR(\phi)$ correspond to $E'$, Lemma~\ref{staple pair examples}  implies that either  $b$ or $b^{-1}$ is an element of $\cS_2(\phi,r')$  in contradiction to the assumption that $b$ is topmost in $\cS_2(\phi, r)$.  Thus no such $j'$ and $k'$ exist. It follows  that there exists $1 \le j \le p$ and $k> 0$ such that $\ti f^{k}_\#(\ti \rho_{j})$ is contained in either $\ti \rho_{i-1}$ or $\ti \rho_{i}$.    Equivalently, $\Phi_{\ti r}^{k}(\ti \ell_{j})$ is equal to either $\ti \ell_{i-1}$ or $\ti \ell_i$. Since $\ti \ell_j$ is one of the lines comprising the pair $ \Phi_{\ti r}^{-k}(\ti b)$, Lemma~\ref{visible soon} implies that $\Phi_{\ti r}^{2M-k} (\ti b) =  \Phi_{\ti r}^{2M}(\Phi_{\ti r}^{-k}(\ti b)) $ is visible with index at most $B(r)$. 

In the remaining case, $\ti b = (\ti \ell_{i-1}, \ti \ell_{i+1})$ with common axis $\ti \ell_i$.  Arguing as in the first case, we conclude that there exists $k \ge 0$ and $1 \le j \le p$ such that $\Phi_{\ti r}^k(\ti \ell_j)$ is equal to either $\ti \ell_{i-1}$ or $\ti \ell_i$ or $\ti \ell_{i+1}$.  The proof then concludes as in the first case.
\endproof

\begin{remark}  \label{visible staple pairs} The $\Phi_{\ti r}$-image  of a visible topmost staple pair is a visible topmost staple pair.  It follows that if a topmost staple pair $\ti b$ occurs in $\ti r$ and if   $\ti \ell_{B(r)} \prec \ti b$ then $\ti b$ is visible. \end{remark}   

\begin{remark}
The set of topmost elements of $\cS_2(\phi, r)$ could be empty.
\end{remark}  
 
 \begin{lemma}\label{lower rays and staple pairs}  If $\hat r < r$ and $b \in \cS_2(\phi,\hat r)$ then either $b \in \cS_2(\phi,r)$  or $b^{-1} \in \cS_2(\phi,r)$. 
 \end{lemma}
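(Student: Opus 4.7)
By Notation~\ref{weaker po} combined with Lemma~\ref{preserves partial order}, the assumption $\hat r<r$ produces a line $L\in\accnr(r)$ with $\hat r$ as an endpoint. I would use such an $L$ as a bridge from $\accnr(\phi,\ti{\hat r})$ to $\accnr(\phi,\ti r')$ for suitable lifts $\ti r'$ of $r$.

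Lift $L$ via Proposition~\ref{prop:accnr}(2) to some $\ti L\in\accnr(\phi,\ti r')$, so that some lift $\ti{\hat r}^{*}$ of $\hat r$ is an endpoint of $\ti L$.  Corollary~\ref{cor:limit lines}\pref{item:endpoints in FixN} applied to $\ti L$ places both endpoints in $\FixN(\Phi)$ for a single $\Phi\in\cP(\phi)$; since both $\ti r'$ and $\ti{\hat r}^{*}$ are non-periodic, this forces $\Phi_{\ti r'}=\Phi_{\ti{\hat r}^{*}}=\Phi$. Translating the given lifts $\ti L_1,\ti L_2,\ti A$ that witness $\ti b\in\cS_2(\phi,\ti{\hat r})$ by an appropriate covering translation, we may assume $\ti{\hat r}=\ti{\hat r}^{*}$; this translation preserves the common axis condition.

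The key point is that, whichever end of $\ti L$ carries $\ti{\hat r}$, the corresponding terminal ray of $\ti L$ is a specific lift $\ti R_{\ti{\hat E}}$ of the eigenray $R_{\hat E}$ converging to $\ti{\hat r}$, and as a subset of $\ti G$ one has $\ti R_{\ti{\hat E}}\subset\ti L$. Using $\ti R_{\ti{\hat E}}$ as the reference ray for $\accnr(\phi,\ti{\hat r})$ (valid by Lemma~\ref{accnr by conjugacy}), finite subpaths of any $\ti L'\in\accnr(\phi,\ti{\hat r})$ are eventually $\Phi$-translated into $\ti R_{\ti{\hat E}}\subset\ti L$; since $\ti L\in\accnr(\phi,\ti r')$, further $\Phi$-iterates push these subpaths into a ray to $\ti r'$. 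Because $\Phi_{\ti r'}=\Phi_{\ti{\hat r}}$ is a single principal automorphism, the composition of these two translations is a $\Phi^k$-translation, and by uniqueness of the covering translation matching the $\Phi^k$-action on a non-periodic line, it is the covering translation required by Definition~\ref{defn:lines in ti r} for membership in $\accnr(\phi,\ti r')$. Applying the conclusion with $\ti L'=\ti L_1$ and $\ti L'=\ti L_2$ yields $\ti L_1,\ti L_2\in\accnr(\phi,\ti r')$; the common axis condition on $\ti A$ is automatic since it involves only endpoints. Hence $\ti b\in\cS_2(\phi,\ti r')$, giving $b\in\cS_2(\phi,r)$, and $b^{-1}\in\cS_2(\phi,r)$ follows by symmetry of $\cS_2(\phi,r)$ under inversion.

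The main technical obstacle is the identification, in the last step of the key point, of the composed covering translations with a single $\Phi^k$-translation relevant to $\accnr(\phi,\ti r')$. This hinges essentially on the collapse $\Phi_{\ti r'}=\Phi_{\ti{\hat r}}$ to one and the same principal automorphism, without which the two pushes (into $\ti L$ and then into $\ti R_{\ti E}$) would involve incompatible dynamics.
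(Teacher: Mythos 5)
There is a genuine gap, and it sits exactly where you flag "the main technical obstacle." Your argument hinges on the claim that $\Phi_{\ti r'}=\Phi_{\ti{\hat r}^{*}}$, but $\ti r'$ is not an endpoint of $\ti L$: the bridging line $L\in\accnr(r)$ never crosses the top edge $E$, so both of its ends correspond to strictly lower strata, and Corollary~\ref{cor:limit lines}\pref{item:endpoints in FixN} only puts the two endpoints of $\ti L$ in $\FixN(\Phi)$ for the principal automorphism attached to the component of $\gf$ carrying $L$ --- an automorphism that has nothing to do with $\Phi_{\ti r'}$ in general. In fact $\Phi_{\ti r'}$ does not even fix the lines of $\accnr(\phi,\ti r')$: by Lemma~\ref{forward invariance} it translates the visible lines $\ti\ell_i$ forward along $\ti R_{\ti E}$, so their endpoints are typically not in $\FixN(\Phi_{\ti r'})$ at all. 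Once the identification $\Phi_{\ti r'}=\Phi_{\ti{\hat r}}$ is gone, the two families of covering translations in your composition step (those matching $\Phi_{\ti{\hat r}}^j$ on $\ti L'$ and those matching $\Phi_{\ti r'}^m$ on $\ti L$) come from genuinely different dynamics, and there is no reason their composite is the translation required by Definition~\ref{defn:lines in ti r} for $\ti L'\in\accnr(\phi,\ti r')$. A secondary error: the closing "by symmetry of $\cS_2(\phi,r)$ under inversion" is false --- $\accnr(r)$, hence $\cS(\phi,r)$ and $\cS_2(\phi,r)$, is orientation-sensitive (e.g.\ $a^\infty R_c\in\accnr(r_q)$ but its inverse is not), which is precisely why the lemma asserts only the dichotomy "$b$ or $b^{-1}$"; an argument that always yields $b\in\cS_2(\phi,r)$ cannot be right.

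For comparison, the paper avoids boundary dynamics entirely and argues downstairs with finite data: by Lemma~\ref{staple pair examples}, $b$ is realized by a short block of consecutive terms in the coarsened complete splitting of $R_{\hat E}$; since $\hat r<r$, either $\hat E$ or $\hat E^{-1}$ occurs as a splitting term of $R_E$, so every finite subpath of $R_{\hat E}$, or the inverse of every such subpath, occurs as a concatenation of splitting terms of $R_E$; transporting the block and applying Lemma~\ref{staple pair examples} again gives $b\in\cS_2(\phi,r)$ or $b^{-1}\in\cS_2(\phi,r)$ according to which orientation of $\hat E$ appears. If you want to salvage a lift-based proof, you would need to track, via Proposition~\ref{prop:accnr} and Lemma~\ref{forward invariance}, how the $\Phi_{\ti r'}$-orbit of the finite configuration sits inside $\ti R_{\ti E}$, which in effect reproduces the paper's finitary argument.
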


\proof
The proof is similar to that of Lemma~\ref{new bound for staple pairs}.  Let $\hat E$ be the higher order edge corresponding to $\hat r$, let $R_{\hat E} =  \hat \rho_0\cdot \hat \sigma_1 \cdot \hat \rho_1 \cdot \ldots$ be the  coarsening of the complete splitting into single growing terms and Nielsen paths and let $\hat \ell_1,\hat \ell_2,\ldots$ be the associated visible lines.     By Lemma \ref{staple pair examples}\pref{all b} and L  emma~\ref{lem:S2 invariance}, there exists $i \ge 1$ so that $b = (\hat \ell_{i-1}, \hat \ell_i)$ or $b = (\hat \ell_{i-1}, \hat \ell_{i+1})$.     Since $\hat r < r$, there exists $j > 1$ so that either $\sigma_j = \hat E$ or $\sigma_j = \hat E^{-1}$.  The cases are symmetric so we assume that  $\sigma_j = \hat E^{-1}$ and leave the $\sigma_j = \hat E$ case to the reader.  Since $  \ell_j \in \accnr( r)$,  the inverse of every finite subpath of $R_{\hat E}$ occurs as  subpath of $R_E$.  In particular, the inverse of $\hat \rho_{i-2} \cdot \hat \sigma_{i-1} \ldots \cdot \hat \sigma_{i+2} \hat \rho_{i+2}$ occurs as a concatenation of terms in $R_E$.   Lemma~\ref{staple pair examples} therefore implies that $b^{-1} \in \cS_2(\phi,r)$.
\endproof
 
\begin{lemma}  \label{topmost is natural}
Suppose that  $\theta \in \Out(F_n)$ conjugates $\phi$ to $\psi$,  that $\theta(r) = r' \in \cR(\psi)$, that $\ti r, \ti r' \in \partial F_n$ represent $r$ and $r'$ respectively  and that $\Theta$ is  the lift of $\theta$ such that $\Theta(\ti r) = \ti r'$.  Then $\Theta$ induces a bijection $\cS_2(\phi,\ti r) \mapsto\cS_2(\psi, \ti r')$ that restricts to a bijection on topmost elements.
\end{lemma}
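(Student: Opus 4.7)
The plan is to derive the statement from the naturality of $\accnr(\phi,\ti r)$ under conjugation (Lemma~\ref{accnr by conjugacy}) together with the naturality of the partial order on $\cR(\phi)$ (Lemma~\ref{weaker po conjugacy}). First, Lemma~\ref{accnr by conjugacy} already gives a bijection $\Theta:\accnr(\phi,\ti r)\to\accnr(\psi,\ti r')$. I would observe that membership in $\cS(\phi,\ti r)$ is detected by the existence of a periodic endpoint, i.e.\ an endpoint of the form $a^\pm$ for some non-trivial $a\in F_n$; since $\partial\Theta(a^\pm)=\Theta(a)^\pm$, this property is preserved by $\Theta$. Thus $\Theta$ restricts to a bijection $\cS(\phi,\ti r)\to\cS(\psi,\ti r')$.

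Next, I would promote this to a bijection on staple pairs. Given $\ti b=(\ti L_1,\ti L_2)\in\cS_2(\phi,\ti r)$ with common axis $\ti A$ (the axis of some covering translation $T_a$), the image $\Theta(\ti A)$ is the axis of $T_{\Theta(a)}$, hence again periodic, and $\partial\Theta$ carries $\{\partial_-\ti A,\partial_+\ti A\}$ bijectively to $\{\partial_-\Theta(\ti A),\partial_+\Theta(\ti A)\}$. The defining condition $\{\partial_+\ti L_1,\partial_-\ti L_2\}\subset\{\partial_-\ti A,\partial_+\ti A\}$ is therefore transported to the corresponding condition for $\Theta(\ti b)$, so $\Theta(\ti b)\in\cS_2(\psi,\ti r')$. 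Since the lift $\Theta^{-1}$ of $\theta^{-1}$ satisfies $\partial\Theta^{-1}(\ti r')=\ti r$, the same argument produces an inverse, and we obtain the asserted bijection $\cS_2(\phi,\ti r)\to\cS_2(\psi,\ti r')$.

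For the topmost statement, I would first note that the bijection above descends, independently of the choice of lift, to a $\theta$-equivariant bijection $\cS_2(\phi,r_1)\to\cS_2(\psi,\theta(r_1))$ for every $r_1\in\cR(\phi)$: any two lifts of $\theta$ taking a lift of $r_1$ to a lift of $\theta(r_1)$ differ by an inner automorphism, which does not affect unlifted staple pairs. Now suppose $b\in\cS_2(\phi,r)$ is topmost but $b':=\Theta(b)$ is not topmost in $\cS_2(\psi,r')$. Then $b'$ or $(b')^{-1}$ lies in $\cS_2(\psi,r_1')$ for some $r_1'<r'$ in the partial order on $\cR(\psi)$. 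By Lemma~\ref{weaker po conjugacy} there is a unique $r_1\in\cR(\phi)$ with $\theta(r_1)=r_1'$, and $r_1<r$. Applying the descended bijection in the reverse direction with the pair $(r_1',r_1)$, we conclude that $b$ or $b^{-1}$ lies in $\cS_2(\phi,r_1)$, contradicting topmostness of $b$. The symmetric argument applies to $\Theta^{-1}$, so the bijection restricts to topmost elements.

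The main obstacle here is essentially bookkeeping: one must check that the lift-dependent bijection on $\cS_2(\phi,\ti r)$ factors through a well-defined bijection at the unlifted level, and that the partial order, the periodicity of axes, and the endpoint condition are all preserved simultaneously. None of these steps requires new ideas beyond Lemmas~\ref{accnr by conjugacy} and \ref{weaker po conjugacy} and the basic equivariance $\partial\Theta(a^\pm)=\Theta(a)^\pm$.
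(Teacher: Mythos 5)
Your proposal is correct and follows essentially the same route as the paper: the paper's own proof is a one-line appeal to Lemma~\ref{accnr by conjugacy} (the bijection $\accnr(\phi,\ti r)\to\accnr(\psi,\ti r')$) "and the definitions," and your argument simply spells out those definitional checks (periodic ends, common axes, and the order-preservation from Lemma~\ref{weaker po conjugacy} for the topmost condition). No substantive difference or gap.
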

  
\proof
This follows from Lemma~\ref{accnr by conjugacy},  which provides a bijection between $\accnr(\phi, \ti r)$ and $\accnr(\psi, \ti r')$, and the definitions.
\endproof

\begin{definition} \label{defn:m} Given $b = (L_1,L_2) \in \cS_2(\phi,r)$,  choose  lifts $\ti L_1, \ti L_2$ and  a  periodic line $\ti A$ such that $\{\partial_+\ti L_1, \partial_- \ti L_2\}  \subset \{\partial_- \ti A, \partial_+ \ti A\}$.  Orient $\ti A$ to be consistent with  the \twistpath\ $w$ to which it projects and  let $a \in F_n$ be the root-free element of $F_n$ that stabilizes $\ti A$ and satisfies $a^+ = \partial_+\ti A$.  Each $\theta \in \X_\fc(\phi)$ (Definition~\ref{d:X}) satisfies $\theta(\sHsub(L_i))=\sHsub(L_i)$ for $i=1,2$.   Lemma~\ref{l:unique lift} therefore implies that there  are unique $\Theta_i \in \theta$ such that $\Theta_i(\sHsub(\ti L_i))=\sHsub(\ti L_i)$.   Since both $\Theta_1$ and $\Theta_2$   represent $\theta$ and fix $a$ there exists $m_b(\theta) \in \Z$ such that $\Theta_1=i_a^{m_b(\theta)}\Theta_2$.
\end{definition}

\begin{excont*} \label{ex.m} See Figure~\ref{Figbarestaple}.
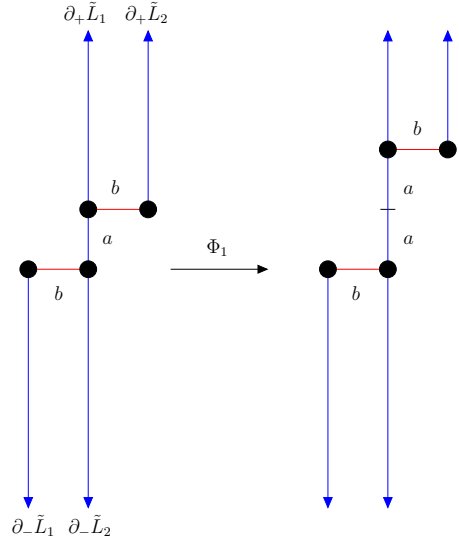
\begin{figure}[h!] 
\centering
\resizebox{6cm}{!}{\begin{tikzpicture}[y=-1cm]
\sf
\draw[semithick,red] (6.19125,12.54125) -- (7.46125,12.54125);
\draw[black] (7.3025,11.27125) -- (7.62,11.27125);
\draw[semithick,arrows=triangle 45-triangle 45,blue] (7.46125,17.62125) -- (7.46125,7.46125);
\draw[red] (7.46125,11.27125) -- (8.73125,11.27125);
\draw[arrows=-triangle 45,black] (9.2075,12.54125) -- (11.27125,12.54125);
\draw[semithick,arrows=-triangle 45,blue] (8.73125,11.27125) -- (8.73125,7.46125);
\draw[semithick,red] (12.54125,12.54125) -- (13.81125,12.54125);
\draw[red] (13.81125,10.00125) -- (15.08125,10.00125);
\draw[black] (13.6525,11.27125) -- (13.97,11.27125);
\draw[semithick,arrows=-triangle 45,blue] (15.08125,10.00125) -- (15.08125,7.46125);
\draw[semithick,arrows=-triangle 45,blue] (12.54125,12.54125) -- (12.54125,17.62125);
\draw[semithick,arrows=triangle 45-triangle 45,blue] (13.81125,17.62125) -- (13.81125,7.46125);
\draw[semithick,arrows=-triangle 45,blue] (6.19125,12.54125) -- (6.19125,17.62125);
\path (6.6,13.2) node[text=black,anchor=base west] {$b$};
\path (7.62,12) node[text=black,anchor=base west] {$a$};
\path (7.8,10.95375) node[text=black,anchor=base west] {$b$};
\path (14.2,9.68375) node[text=black,anchor=base west] {$b$};
\path (14,12) node[text=black,anchor=base west] {$a$};
\path (14,10.95375) node[text=black,anchor=base west] {$a$};
\path (12.9,13.2) node[text=black,anchor=base west] {$b$};
\filldraw[black] (6.19125,12.54125) circle (0.1778cm);
\filldraw[black] (7.46125,12.54125) circle (0.1778cm);
\filldraw[semithick,black] (8.73125,11.27125) circle (0.1778cm);
\filldraw[black] (7.46125,11.27125) circle (0.1778cm);
\filldraw[black] (12.54125,12.54125) circle (0.1778cm);
\filldraw[black] (13.81125,10.00125) circle (0.1778cm);
\filldraw[semithick,black] (15.08125,10.00125) circle (0.1778cm);
\filldraw[black] (13.81125,12.54125) circle (0.1778cm);
\node [above] at (10.2,12.4) {$\Phi_1$};
\node [above] at (7.4,7.5) {$\partial_+\tilde L_1$};
\node [above] at (8.7,7.5) {$\partial_+\tilde L_2$};
\node [below] at (7.5,17.6) {$\partial_-\tilde L_2$};
\node [below] at (6.3,17.6) {$\partial_-\tilde L_1$};
\end{tikzpicture}%

\caption{The graphs here are parts of $\ti G$ with each horizontal segment a lift of the edge $b$ and where vertical segments project into $a^\infty$. $\tilde L_1$ and $\tilde L_2$ are lifts of the staple $a^\infty ba^\infty$ and $(L_1,L_2)$ is a staple pair. $\Phi_1$ is the lift of $\phi$ that fixes $\ti L_1$. Intuitively $\ti L_2$ slides away from $\ti L_1$ under the action of $\Phi_1$ by 1 period and so ${m_{(L_1,L_2)}\big(\phi \big)=1}$}.\label{f:staple}
\label{Figbarestaple}
\end{figure}
\end{excont*}

\begin{lemma}  \label{m is a homomorphism}
For each $b=(L_1, L_2)\in \cS_2(\phi)$, \   $m_b : \X_\fc(\phi)  \to \Z$ is a well defined homomorphism.
\end{lemma}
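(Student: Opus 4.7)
The plan is to first check that the quantity $m_b(\theta)$ produced in Definition~\ref{defn:m} is genuinely an integer attached to $\theta$ (and not just an element of $\Fix(a)\!\setminus\!\Fix(a)$ measured modulo something), then verify independence of the choice of lifts, and finally deduce the homomorphism property from a direct calculation. The key preliminary observation is that each $\Theta_i$ must fix $a$: because $\ti L_i$ is a staple, $\partial_+\ti L_1$ and $\partial_-\ti L_2$ equal $a^{\pm}$; in each of the types (\textsf{P-P}, \textsf{P-NP}, \textsf{NP-P}) the periodic endpoint is visible in $\sH_\fc(\ti L_i)$ as either $a^{\pm 1}\in F_n$ or $a^{\pm}\in\partial F_n$. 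Since $\Theta_i$ preserves the ordered pair $\sH_\fc(\ti L_i)$, it fixes that periodic component. Combined with the fact that $\theta$ stabilizes the axis $[a]$ (Definition~\ref{d:X}(6)), this forces $\Theta_i(a)=a$. Consequently $\Theta_2\Theta_1^{-1}$ is an inner automorphism that commutes with $i_a$, hence equals $i_a^{m_b(\theta)}$ for a unique $m_b(\theta)\in\Z$.

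Next I would verify that $m_b(\theta)$ does not depend on the chosen lifts $(\ti L_1,\ti L_2,\ti A)$. Given alternative lifts $(\ti L_1',\ti L_2',\ti A')$, the fact that $\partial_+\ti L_1'\in\{\partial_\pm\ti A'\}$ and the uniqueness of the $\langle a\rangle$-orbit forces $\ti L_1'=T_c\ti L_1$, $\ti A'=T_c\ti A$, and $\ti L_2'=T_{ca^k}\ti L_2$ for some $c\in F_n$ and $k\in\Z$. Then $a'=cac^{-1}$. The uniqueness statement of Lemma~\ref{l:unique lift} applied to the translated pairs, together with $\Theta_i(a)=a$ (so that conjugation by $a^k$ fixes $\Theta_i$), yields $\Theta_1'=i_c\Theta_1 i_c^{-1}$ and $\Theta_2'=i_c\Theta_2 i_c^{-1}$. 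A direct computation then gives
\[
\Theta_2'(\Theta_1')^{-1} = i_c\, i_a^{m_b(\theta)}\, i_c^{-1} = i_{cac^{-1}}^{m_b(\theta)} = i_{a'}^{m_b(\theta)},
\]
so the integer is unchanged.

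For the homomorphism property, take $\theta,\theta'\in\X_\fc(\phi)$ and fix lifts $\ti L_1,\ti L_2,\ti A,a$ once and for all. Let $\Theta_i,\Theta_i'$ be the unique representatives of $\theta,\theta'$ that preserve $\sH_\fc(\ti L_i)$. Then $\Theta_i\Theta_i'$ represents $\theta\theta'$ and preserves $\sH_\fc(\ti L_i)$, so by the uniqueness of Lemma~\ref{l:unique lift} it is the distinguished representative used to define $m_b(\theta\theta')$. Using $\Theta_1(a)=a$ to commute $\Theta_1$ past $i_a^{m_b(\theta')}$, I obtain
\[
\Theta_2\Theta_2' \;=\; i_a^{m_b(\theta)}\Theta_1\, i_a^{m_b(\theta')}\Theta_1' \;=\; i_a^{m_b(\theta)+m_b(\theta')}\Theta_1\Theta_1',
\]
which identifies $m_b(\theta\theta')=m_b(\theta)+m_b(\theta')$.

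The main obstacle is really the preliminary step: showing that the distinguished representatives $\Theta_i$ must fix the element $a\in F_n$ (not merely its conjugacy class). Once this is in hand, both well-definedness and additivity reduce to symbolic manipulation with inner automorphisms. The verification that $\Theta_i(a)=a$ requires a case analysis over the possible types of $\ti L_i$ and careful use of the definition of $\sH_\fc(\ti L_i)$, but in each case the periodic endpoint of $\ti L_i$ is either itself a component of $\sH_\fc(\ti L_i)$ or forces a component to be a power of $a$, and combining with the axis-preserving condition (6) of Definition~\ref{d:X} rules out the orientation-reversing possibility $\Theta_i(a)=a^{-1}$.
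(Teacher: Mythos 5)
Your proof is correct and follows essentially the same route as the paper: well-definedness is obtained by transporting one choice of lifts to the other via an inner automorphism $i_c$ (using that the distinguished representatives $\Theta_i$ commute with $i_a$), and additivity follows from the same direct computation combined with the uniqueness statement of Lemma~\ref{l:unique lift}. (Your appeal to Definition~\ref{d:X}(6) to rule out $\Theta_i(a)=a^{-1}$ is unnecessary—preservation of the periodic coordinate of $\sH_\fc(\ti L_i)$, which is $a^{\pm 1}$ or the point $a^{\pm}\in\partial\f$, already forces $\Theta_i(a)=a$—but this is harmless.)
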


\proof
We first check  that  $m_b(\theta)$ is independent of the choice of $\ti L_1$ and $\ti L_2$ and so is well defined.  Suppose that $\ti L_1'$ and $\ti L_2'$ are another choice with corresponding   $\ti A', a'$ and $\Theta_i'$.         Choose $c \in \f$ so that $i_c(a) = a'$.     For $i=1,2$,  $\ti L_i'$ and $i_c(\ti L_i)$ are lifts of $L_i$ with an endpoint in $\{{a'}^-, {a'}^+\}$   and so there exists $n_i$ such that \[\ti L_i' = i_{a'}^{n_i}i_c(\ti L_i) = i_ci_{a}^{n_i} (\ti L_i)\]   By uniqueness,   \[\Theta'_i = (i_ci_{a}^{n_i}) \Theta_i (i_ci_{a}^{n_i})^{-1} = i_ci_{a}^{n_i} \Theta_i i_{a}^{-n_i}i_c^{-1}= i_c \Theta_i i_c^{-1}\]
so \[\Theta'_1 {\Theta'}_2^{-1} = i_c \Theta_1\Theta_2^{-1}i_c^{-1} =i_c i_a^{m_b(\theta)} i_c^{-1} = i_{a'}^{m_b(\theta)}\]
as desired.

 To prove  that $m_b(\theta)$ defines a homomorphism, suppose that $\psi \in \X_{\fc}(\phi) $ and  $\Psi_i$ satisfies $\Psi_i(\sHsub(\ti L_i))=\sHsub(\ti L_i)$.   Then   $\Psi_i \Theta_i (\sHsub(\ti L_i))=\sHsub(\ti L_i)$ and $$ i_a^{m_b(\psi\theta)} =  \Psi_1 \Theta_1 \Theta_2^{-1}\Psi_1^{-1} =  \Psi_1  i_a^{m_b(\theta)} \Psi_2^{-1}=  \Psi_1   \Psi_2^{-1} i_a^{m_b(\theta)}=i_a^{m_b(\psi)}  i_a^{m_b(\theta)} =  i_a^{m_b(\psi)+ m_b(\theta)} $$ so 
$m_b(\psi\theta)=m_b(\psi)+ m_b(\theta)$.
\endproof

\begin{remark}   \label{rem:broader def of m}The same proof shows that $m_b$ defines a homomorphism on  both $\{\theta \in \Out(F_n):  \theta( L_i)  = L_i \text{ for }  i=1,2\}$  and  $\{\theta \in \Out(F_n):  \theta(\sHsub(L_i))=\sHsub(L_i ) \text{ for }  i=1,2\}$.  The former is the stabilizer of $b$ and the latter can be thought of as   the \lq weak stabilizer\rq\ of $b$.   
\end{remark}

The next lemma relates $m_b(\phi)$ to the twist coefficients of $\phi$.

\begin{lemma} \label{lem:m(phi)} Suppose that $b = (L_1,L_2) \in \cS_2(\phi,r)$ where $L_1 = ({R_1^-})^{-1} \rho_1 R_1^+$  and $L_2 = ({R_2^-})^{-1} \rho_2 R_2^+$  are the decompositions of Corollary~\ref{cor:limit lines}\pref{item:ell decomposes}.  Suppose also that $w$ is a \twistpath\ and that $E', E'' \in \lin_w(f)$ satisfy $f(E') = E'w^{d'}$ and $f(E'') = E''w^{d''}$. 
\begin{enumerate}
\item  If $R_1^+ = E' w^{\pm \infty}$ and $R_2^- = E'' w^{\pm \infty}$ then $m_{b} (\phi) = d'-d''$.
\item If  $R_1^+ = E' w^{\pm \infty}$ and $R_2^- =  w^{\pm \infty}$ then $m_{b} (\phi) = d'$.
\item If $R_1^+ = w^{\pm \infty}$ and $R_2^- = E'' w^{\pm \infty}$ then $m_{b} (\phi) = -d''$.
\end{enumerate} 
In particular, $m_b(\phi) \ne 0$ for   all $b \in \cS_2(\phi)$.
\end{lemma}

\proof  Choose lifts $\ti L_1 = ({\ti R_1^-})^{-1} \ti \rho_1 \ti R_1^+, \ \ti L_2 = ({\ti R_2^-})^{-1} \ti \rho_2 \ti R_2^+$ and $\ti A  =\ti w^\infty$ so that $\partial \ti R_1^+, \partial \ti R_2^{-} \in \{\partial_- \ti A, \partial_+ \ti A\}$.   Denote the initial endpoints of $\ti R_1^+$ and $\ti R_2^{-}$ by $\ti x$ and $\ti y$ respectively.  There exist  $\Phi_{1}, \Phi_{2} \in \phi$ such that $\Phi_{1}$ fixes the endpoints of $\ti L_1$ and $\Phi_{2}$ fixes the endpoints of $\ti L_2$.   The corresponding lifts  $\ti f_{1}$ and $\ti f_{2}$ fix $\ti x$ and $\ti y$ respectively.    In particular, $ \ti f_1 (\ti y) = i_a^{m_b(\phi)} \ti f_2(\ti y) =  i_a^{m_b(\phi)} \ti y$. In  case (1), the path $\ti \tau$ connecting $\ti x$ to $\ti y$ equals  $\ti E'\ti w^p (\ti  E'')^{-1}$ for some $p\in \Z$ and   $(\ti f_1)_\#(\ti E'\ti w^p (\ti  E'')^{-1}) = \ti E'\ti w^{p+d'-d''} (\ti  E'')^{-1}$.  It follows that $\ti f_1(\ti y) = i_a^{d' - d''} \ti y$ and hence that $m_b(\phi) = d' - d''$.  In case (2), $\ti \tau = \ti E'\ti w^p$ and $(\ti f_1)_\#(\ti E'\ti w^p) = \ti E'\ti w^{p+d'} $.  Thus, $\ti f_1(\ti y) = i_a^{d'} \ti y$ and $m_b(\phi) = d'$.  Case (3) is proved similarly.  
 
{ 
    Lemma~\ref{staple pair examples} implies that if $b$ is as in case (1) then either $b =  (\ell_{i-1},\ell_{i+1})$ where $\sigma_i \rho_i \sigma_{i+1}$ is quasi-exceptional or $b =  (\ell_{i-1},\ell_{i})$ where $\sigma_i $ is  exceptional.  In either case, $E' \ne E''$ so $d' \ne d''$. This completes the proof that $m_b(\phi) \ne 0$ and hence the proof of the lemma.
 }
\endproof

\subsection{Spanning Staple Pairs}\label{s:induction 1}
\def\ccs{coarsened complete splitting}
We continue with the notation of the preceding subsections{; in particular, see Notation~\ref{notn:elli}}.  In addition, we let   $\ti \ellprime_0, \ti \ellprime_1\dots$ be the sequence of lines obtained from $\ti \ell_0,\ti \ell_1,\dots$  by removing all periodic lines.  In other words,  $\ti \ellprime_0, \ti \ellprime_1\dots$ is the set of visible lines in $\accnr(\phi,\ti r)$.

If $E$ has quadratic growth (equivalently, each $\sigma_i$ is linear) then every $(\ti \ellprime_t,\ti \ellprime_{t+1})$ is an element of $\cS_2(\phi,\ti r)$ by Lemma~\ref{staple pair examples}; see Figure~\ref{f:staples}.
\begin{figure}[h!] 
\centering
\includegraphics[width=4cm,height=4cm,angle=0]{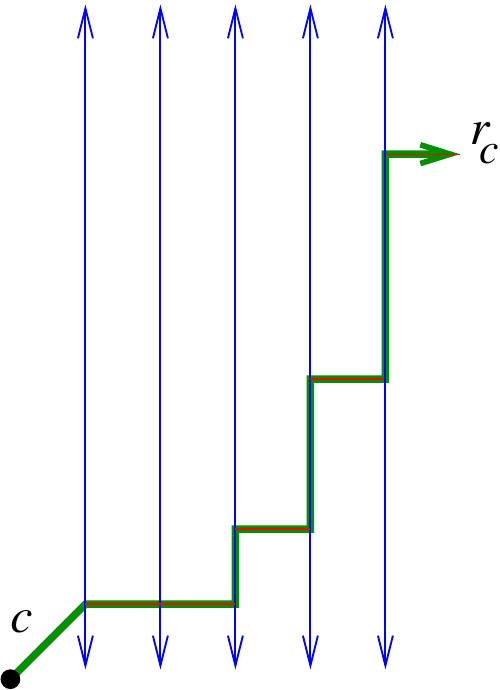}
\caption{An end of $R_c=cbbaba^2ba^3\dots$ is in a union of staples pairs, cf.\ Figure~\ref{f:staple}. The relation between staple pairs and $R_E$ for $E$ of higher than quadratic growth is more complicated.}
\label{f:staples}
\end{figure}
This is not the case when $E$ has higher order.   We now define a related but weaker property that does hold for every   $(\ti \ellprime_t,\ti \ellprime_{t+1})$.  Its utility is illustrated in    the proof of Lemma~\ref{new rays from lines} which is applied in the proof of Lemma~\ref{check any theta}.

\begin{definition}
We say that an ordered pair  $(\ti \eta_1,\ti \eta_2)$ of elements of $\accnr(\phi,\ti r)$ is {\em spanned by  a  staple pair $\ti b = (\ti L_1,\ti L_2) \in \cS_2 (\phi,\ti r)$} if the following two conditions are satisfied.
\begin{itemize}
\item  either $\ti L_1 = \ti \eta_1$ or $\ti L_1 \in \accnr(\phi,\partial_+\ti \eta_1)$.   
\item either $\ti L_2 = \ti \eta_2 $   or  $\ti L_2^{-1}  \in \accnr(\phi,\partial_-\ti \eta_2)$.
\end{itemize}
Note that if $(\ti \eta_1,\ti \eta_2)$ is spanned by a staple pair then  $(\Phi_{\ti r}^k\ti \eta_1,\Phi_{\ti r}^k\ti \eta_2)$ is spanned by a staple pair   for all $k \in \Z$.
\end{definition}

Our next result uses techniques from the proofs of  Lemma~\ref{equivalent to prec} and  Lemma~\ref{new bound for staple pairs}.

\begin{lemma}  \label{subpath} \begin{enumerate} \item  Suppose that $\sigma_i \in \E_f$ and that $\ti r_i = \partial_+ \ti \ell_{i-1} = \partial \ti R^+_{i}$.  If  $ \ti \sigma_j \ti \rho_j \ti \sigma_{j+1}$ is a subpath of $\ti f^m_\#(\ti \sigma_i)$ for $m > 0$  and if $\ti \ell_j$   is non-periodic  then ${\Phi^{-m}_{\ti r}}(\ti \ell_j) \in\accnr(\phi,\ti r_i)$.
\item  Suppose that $\sigma_i \in \E_f^{-1}$ and that $\ti r_i = \partial_- \ti  \ell_{i} = \partial \ti R_i^-$.  If $ \ti \sigma_j \ti \rho_j \ti \sigma_{j+1}$ is a subpath of $\ti f^m_\#(\ti \sigma_i)$ for $m > 0$   and if $\ti \ell_j$   is non-periodic  then ${\Phi^{-m}_{\ti r}}(\ti \ell_j^{-1}) \in \accnr(\phi,\ti r_i)$.
\end{enumerate}
\end{lemma}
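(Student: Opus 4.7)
The two parts are symmetric, so the plan is to prove (1); part (2) would follow by the same argument with orientations reversed, working with $\ti R^-_i$ and $\ti \ell_j^{-1}$. First I would observe that the non-trivial case is $\ell_j$ non-periodic, since $\Phi^{-m}_{\ti r}$ preserves periodicity of lines and $\accnr$ is defined to exclude periodic lines.

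The strategy is to verify $\Phi^{-m}_{\ti r}(\ti \ell_j)\in\accnr(\phi,\ti r_i)$ directly from Definition~\ref{defn:lines in ti r}, taking $\ti R := \ti R^+_i$ as the reference ray for $\ti r_i$. Let $\ti f_i$ be the lift of $f$ corresponding to $\Phi_{\ti r_i}$; since $\ti R^+_i$ is the eigenray starting at the $\ti f_i$-fixed vertex $\ti v_i$ (the initial endpoint of $\ti \sigma_i$), it is $\ti f_i$-invariant. Writing $\ti f_i = T_i\ti f T_i^{-1}$ with $T_i=T_{a_i}$ (equivalently, $\Phi_{\ti r_i}=i_{a_i}\Phi_{\ti r} i_{a_i}^{-1}$), I would then, using that $\accnr(\phi,\ti r_i)$ is $\Phi_{\ti r_i}$-invariant by Lemma~\ref{accnr by conjugacy}, reduce to proving $\Phi^m_{\ti r_i}\Phi^{-m}_{\ti r}(\ti \ell_j)\in\accnr(\phi,\ti r_i)$.

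The main identity to establish is
\[  \Phi^m_{\ti r_i}\Phi^{-m}_{\ti r}(\ti \ell_j) \;=\; S(\ti \ell_j), \qquad S := T_{a_i\Phi^m_{\ti r}(a_i^{-1})}. \]
This is a routine computation from $m$-fold iteration of $i_c\Phi = \Phi i_{\Phi^{-1}(c)}$. The payoff is that the same algebra, combined with $T_i^{-1}(\ti v_i)\in\Fix(\ti f)$ (which follows from $\ti f_i(\ti v_i)=\ti v_i$) and the intertwining $\ti f^m T_c = T_{\Phi^m_{\ti r}(c)}\ti f^m$, characterizes $S$ geometrically: it is the unique covering translation with $S(\ti f^m(\ti v_i))=\ti v_i$. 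Thus $S$ carries $\ti f^m_\#(\ti \sigma_i)\subset \ti R_{\ti E}$ onto $\ti f_i^m{}_\#(\ti \sigma_i)\subset \ti R^+_i$, since both project to $f^m_\#(\sigma_i)$ and start at corresponding endpoints. Applying $S$ to the hypothesis yields $S(\ti \sigma_j\ti \rho_j\ti \sigma_{j+1})\subset \ti f_i^m{}_\#(\ti \sigma_i)\subset \ti R^+_i$.

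To finish, I would rerun the setup of Notation~\ref{notn:elli} with the higher order edge $E_i$ (and eigenray $\ti R^+_i = \ti R_{\ti E_i}$) in place of $E$, producing a coarsened complete splitting of $\ti R^+_i$ and visible lines $\ti \ell'_q$ defined relative to $\ti f_i$ and $\ti r_i$. Since $\ti f_i^m{}_\#(\ti \sigma_i)$ and $\ti f^m_\#(\ti \sigma_i)$ are two lifts of the same completely split path, $S$ matches their coarsened splittings term by term, so $S(\ti \sigma_j\ti \rho_j\ti \sigma_{j+1})$ appears as one of the triples $\ti \sigma'_q\ti \rho'_q\ti \sigma'_{q+1}$ in the splitting of $\ti R^+_i$, and $S(\ti \ell_j)$ is the corresponding visible line $\ti \ell'_q$. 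Lemma~\ref{visible lines occur}\pref{ti ell occurs in ti r}, applied in this new setup, gives $\ti \ell'_q = S(\ti \ell_j)\in\accnr(\phi,\ti r_i)$, finishing the argument. The main technical obstacle will be the double bookkeeping that pins down $S$ both algebraically and geometrically; once that is in hand, everything else reduces to reapplying Lemma~\ref{visible lines occur} in a relabeled eigenray.
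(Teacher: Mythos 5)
Your overall route is the same as the paper's: you produce the covering translation $S$ relating the lifts $\ti f^m$ and $\ti f_i^m$ (the paper works with its inverse, called $T$), observe that $S$ carries $\ti f^m_\#(\ti \sigma_i)$, and hence the triple $\ti \sigma_j\ti \rho_j\ti \sigma_{j+1}$, into $\ti R^+_i$ as consecutive terms of its coarsened complete splitting, conclude that $S(\ti \ell_j)$ is a visible line in $\accnr(\phi,\ti r_i)$ via Lemma~\ref{visible lines occur}, and finish using $\Phi_{\ti r_i}$-invariance (Lemma~\ref{accnr by conjugacy}, Proposition~\ref{prop:accnr}). Structurally this is the paper's proof.

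There is, however, one step whose justification fails in general: you write $\ti f_i = T_{a_i}\ti f T_{a_i}^{-1}$, equivalently $\Phi_{\ti r_i}=i_{a_i}\Phi_{\ti r} i_{a_i}^{-1}$, and both your formula $S = T_{a_i\Phi^m_{\ti r}(a_i^{-1})}$ and your derivation of the geometric characterization of $S$ rest on it. Two principal automorphisms representing $\phi$ are conjugate by an inner automorphism only when they are isogredient, which by Lemma~\ref{nielsen classes and principal lifts} happens exactly when the Nielsen classes of the fixed sets of $\ti f$ and $\ti f_i$ agree, i.e.\ when the initial vertices of $E$ and of $\sigma_i$ are joined by fixed edges. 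Nothing in the hypotheses forces this: the higher order edge $\sigma_i$ may be attached to a different component of the eigengraph than $E$, so in general $\Phi_{\ti r_i}$ and $\Phi_{\ti r}$ are not isogredient and no such $a_i$ exists. Fortunately the false step is not needed for what you actually use downstream: $\ti f_i^m$ and $\ti f^m$ are both lifts of $f^m$, hence differ by a unique covering translation $S$ with $\ti f_i^m = S\,\ti f^m$; then $S(\ti f^m(\ti v_i)) = \ti f_i^m(\ti v_i) = \ti v_i$ gives your geometric characterization directly, and since $\Phi^m_{\ti r_i}$ and $\Phi^m_{\ti r}$ both represent $\phi^m$, the composition $\Phi^m_{\ti r_i}\Phi^{-m}_{\ti r}$ is the inner automorphism corresponding to $S$ --- no isogredience required. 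With that replacement (which is exactly how the paper proceeds, defining $T$ as the translation carrying the initial edge of $\ti \sigma_i$ to the initial edge of $\ti f^m_\#(\ti \sigma_i)$, so that $T^{-1}\Phi^m_{\ti r}=\Phi^m_{\ti r_i}$), your argument goes through and coincides with the paper's.
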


\proof The two cases are symmetric so we prove (1) and leave (2) to the reader.    Assuming that $\sigma_i \in \E_f$, let $T : \ti G \to \ti G$ be the covering translation that carries   $\ti \sigma_i$ to the initial edge of $\ti f^m_\#(\ti \sigma_i)$ and hence satisfies $T(\ti R^+_i) =  \ti f^m_\#(\ti R^+_i) = {\Phi_{\ti r}^m}(\ti R^+_i)$.  Then $  T^{-1} \ti f^m$ is the lift of $f^m$ that preserves the terminal endpoint $\ti r_i$ of $\ti R_i^+$ and so $i_c^{-1} \Phi_{\ti r}^m = \Phi_{\ti r_i}^m$ where $i_c$ is the inner automorphism corresponding to $T$.   Note that $\ti f^m_\#(\ti \sigma_i)$ is a concatenation of terms in the complete splitting of 
$\ti R_{\ti E}$ whose first edge  $T(\ti \sigma_i)$ projects into $\E_f$.    Thus $\ti f^m_\#(\ti \sigma_i) = \ti \sigma_a  \cdot \ti \rho_a \cdot\ldots \cdot \ti \rho_{b-1}  \cdot\ti \sigma_b  \cdot\ti  \tau$  for some $a \le j < b$ and some (possibly trivial) Nielsen path $\ti \tau$ that is an initial segment of $\ti \rho_b$.    Note also that $T^{-1}\ti f^m_\#(\ti \sigma_i)$ is a concatenation of terms in the complete splitting of 
$\ti R_i^+$.    It follows that  $T^{-1}\ti \sigma_a \cdot  T^{-1} \ti \rho_a \cdot \ldots \cdot  T^{-1}\ti  \rho_{b-1}\cdot   T^{-1}\ti \sigma_b$   is a concatenation of terms in the coarsened complete splitting of $\ti R_i^+$. In particular, $T^{-1} \ti \sigma_j \cdot T^{-1} \ti \rho_j  \cdot T^{-1}\ti \sigma_{j+1}$  is a concatenation of terms in the coarsened complete splitting of $\ti R_i^+$ .  Since $\ti \ell_j$ is non-periodic, the same is true for $T^{-1}(\ti \ell_j)$ and we conclude that   $T^{-1}(\ti \ell_j) \in \accnr (\phi, \ti r_i)$. 
Proposition~\ref{prop:accnr} implies that ${\Phi^{-m}_{\ti r}}(\ti \ell_j) = {\Phi_{\ti r_i}^{-m}}i_c^{-1}(\ti \ell_j) = {\Phi_{\ti r_i}^{-m}}(T^{-1}(\ti \ell_j))$ is an element of  $ \accnr(\phi, \ti r_i)$.
\endproof 

\begin{prop}\label{l:chaining}  Each ordered pair $(\ti \ellprime_t,\ti \ellprime_{t+1})$ is  spanned by an element $(\ti L_1, \ti L_2) \in \cS_2(\phi,\ti r)$. If $\partial_+ \ti \ellprime_t$ $[{resp. }\   \partial_-\ti \ellprime_{t+1}]$ is periodic then $\ti L_1 = \ti \ellprime_t$ $[ {resp. } \  L_2 = \ti \ellprime_{t+1}]$.
\end{prop}

\proof  Set $\Phi = \Phi_{\ti r}$.
We first show that if  $\ti \ell_i$ is periodic then $\ti \ell_{i-1}$ and $\ti \ell_{i+1}$ are non-periodic and  $(\ti \ellprime_t,\ti \ellprime_{t+1}) := (\ti \ell_{i-1},\ti \ell_{i+1})$ satisfies the conclusions of the lemma.  Let $M$ be the stabilization constant for $f$ (Notation~\ref{notn:stabilization}).   If $\ti f^M_\#(\ti \rho_i) \subset \ti \rho_j$ then $\ti \sigma_{j}$ is the last growing term in $\ti f^M_\#(\ti \sigma_i)$ and $\ti \sigma_{j+1}$ is the first growing term in $\ti f^M_\#(\ti \sigma_{i+1})$.  Moreover, Lemma~\ref{forward invariance} implies that $\ti \ell_j = \ti f^M_\#(\ti \ell_i)  $  and so $\ti \ell_j$ is periodic.         By our choice of $M$,  $\ti \sigma_{j} \not \in \E_f$  and  $  \ti \sigma_{j+1} \not \in \E_f^{-1}$. Lemma~\ref{staple pair examples}\pref{must be qe} implies that  $\ti \sigma_{j}\ti \rho_j \ti \sigma_{j+1}$ is quasi-exceptional and  $ (\ti \ell_{j-1}, \ti \ell_{j+1}) \in \cS_2(\phi,\ti r)$. 
  Since $\sigma_j \in \lin(f)$, it follows that either $\sigma_i \in \lin(f)$ or $ \sigma_i \in \E_f$.  In the former case,    $\ti f^M_\#(\ti \rho_{i-1}) \subset \ti \rho_{j-1}$ and  $\ti f^M_\#(\ti \ell_{i-1}) = \ti \ell_{j-1}$ so $ \Phi^{-M}(\ti \ell_{j-1}) =  \ti \ell_{i-1} $; in particular $\ti \ell_{i-1}$ is non-periodic and $\partial_+\ti \ell_{i-1}$ is periodic.  In the latter case, $\ti \sigma_{j-1} \ti \rho_{j-1}\ti \sigma_{j}$ is a terminal subpath of    $\ti f^M_\#(\ti \sigma_{i})$ so Lemma~\ref{subpath} implies that $\Phi^{-M}(\ti \ell_{j-1}) \in \accnr(\phi, \partial _+\ti \ell_{i-1})$;  note that in this latter case, $\partial_+ \ti \ellprime_t = \partial_+\ti \ell_{i-1}$ is not periodic.    A symmetric argument shows that either $ \Phi^{-M}(\ti \ell_{j+1}) =  \ti \ell_{i+1} $ or $\Phi^{-M}(\ti \ell_{j+1}^{-1})  \in \accnr(\phi,\partial _-\ti \ell_{i+1})$. Thus $(\ti \ell_{i-1},\ti \ell_{i+1})$ is  spanned by $\Phi^{-M}(\ti \ell_{j-1}, \ti \ell_{j+1})$ and the if statement of the lemma is satisfied.
  
  \begin{remark}\label{rem:periodic line} The above argument includes a proof that if $\ti \ell_i$ is periodic and if $\ti \ell_j = \ti f^M_\#(\ti \ell_i)  $ then $\ti \sigma_{j}\ti \rho_j \ti \sigma_{j+1}$ is quasi-exceptional and  $\ti \ell_j$ is the common  axis of $ (\ti \ell_{j-1}, \ti \ell_{j+1}) \in \cS_2(\phi,\ti r)$. 
    \end{remark}

{Continuing with the proof, we}  now know that for each $(\ti \ellprime_t,\ti \ellprime_{t+1})$ there exists $i$ so that $(\ti \ellprime_t,\ti \ellprime_{t+1})$ is equal to either $(\ti \ell_{i-1},\ti \ell_{i})$ or $(\ti \ell_{i-1},\ti \ell_{i+1})$. Moreover, the conclusions of the lemma hold in the latter case so we may assume that   $(\ti \ellprime_t,\ti \ellprime_{t+1}) =  (\ti \ell_{i-1},\ti \ell_{i})$.  Lemma~\ref{staple pair examples}\pref{qe} implies that $\sigma_i \rho_i \sigma_{i+1}$ is not quasi-exceptional.   If $\sigma_i$ is linear (i.e. $\sigma_i$ is exceptional or $\sigma_i \in \lin(f)$ or $\bar \sigma_i \in \lin(f)$)   then $(\ti \ellprime_t,\ti \ellprime_{t+1}) = (\ti \ell _{i-1},\ti \ell_{i}) \in \cS_2(\phi,\ti r)$ by  Lemma~\ref{staple pair examples}\pref{three cases}.  

It remains to consider the  $\sigma_i \in \E_f$ and  $ \sigma_i \in \E_f^{-1}$ cases.  These   are symmetric so we assume that  $\sigma_i \in \E_f$ and leave the $ \sigma_i \in \E_f^{-1}$ case to the reader.   For the if statement, note that $\partial_+\ti \ell_{i-1}$  is non-periodic.  As above, there exists $j > i$ such that $\ti f^M_\#(\ti \rho_i) \subset \ti \rho_j$ and $\ti \ell_j = \ti f^M_\#(\ti \ell_i)$; in particular, $\ti \ell_j$ is not periodic.   Since    $\ti \sigma_{j}$ is the last growing term in $\ti f^M_\#(\ti \sigma_i)$ and since $\ti f^M_\#(\ti \sigma_i)$ contains at least two growing terms, Lemma~\ref{subpath} implies that  if $\ti \ell_{j-1}$ is non-periodic then $\Phi^{-M}(\ti \ell_{j-1}) \in   \accnr(\phi, \partial_+ \ti \ell_{i-1})$.   If  $ \sigma_j$ is exceptional or $\sigma_j \in \lin(f)$ then another application of Lemma~\ref{staple pair examples} \pref{three cases} shows that $(\ti \ell_{j-1}, \ti \ell_j) \in \cS_2(\phi,\ti r)$. In this case,   $(\ti \ell_{i-1}, \ti \ell_i)$ is spanned by $\Phi^{-M}(\ti \ell_{j-1}, \ti \ell_j) = (\Phi^{-M}(\ti \ell_{j-1}), \ti \ell_i)$ and we are done.   The same argument works if  $\bar \sigma_j \in \lin(f)$ and $\ti \ell_{j-1}$ is not periodic.  Suppose then that $\bar \sigma_j \in \lin(f)$ and $\ti \ell_{j-1}$ is  periodic.    There exists $s > j$ such that $\ti f^M_\#(\ti \rho_j) \subset \ti \rho_s$ and $\ti \ell_s = \ti f^M_\#(\ti \ell_j) = \ti f^{2M}_\#(\ti \ell_i)$.  Since  $\ti \sigma_j$ is linear, $\ti f^M_\#(\ti \sigma_{j})$ contains a single growing term and so $\ti f^M_\#(\ti \rho_{j-1})\subset \ti \rho_{s-1}$.  Thus  $ \ti \ell_{s-1} = \ti f^M_\#(\ti \ell_{j-1})$ is periodic.  Remark~\ref{rem:periodic line}  implies that  $\ti \sigma_{s-1}\ti \rho_{s-1} \ti \sigma_{s} $  is quasi-exceptional and  $\ti \ell_{s-1}$ is the common axis of $ (\ti \ell_{s-2}, \ti \ell_{s}) \in \cS_2(\phi,\ti r)$.  Moreover, $\ti \sigma_{s-2}\ti \rho_{s-2} \ti \sigma_{s-1}\ti \rho_{s-1} \ti \sigma_{s} \subset f^{2M}_\#(\ti \sigma_i)$. The proof now concludes as in the previous case with   $(\ti \ell_{i-1}, \ti \ell_i)$  spanned by $\Phi^{-2M}(\ti \ell_{s-2}, \ti \ell_s) = (\Phi^{-2M}(\ti \ell_{s-2}), \ti \ell_i).$

We may now assume that  $  \sigma_j$ has higher order and so $\sigma_j \in \E_f^{-1}$ by our choice of $M$.   In particular, $\partial_-\ti \ell_j$, and hence $\partial_-\ti \ell_i$, is non-periodic.

Choose $k > 0$ so that the coarsened complete splitting of  $\ti f^k_\#(\ti \sigma_j)$ has at least one linear term $\ti \sigma_s$ that is neither the first nor second term nor the last or next to last growing  term in that splitting.  Thus $\ti \sigma_{s-2}\cdot \ti \rho_{s-2}\cdot\ldots\cdot \ti \rho_{s+1} \cdot \ti \sigma_{s+2}$  is a subpath of $\ti f^{k}_\#(\ti \sigma_{j})$ and hence also a subpath of $\ti f^{M+k}_\#(\ti \sigma_{i})$.  By Lemma~\ref{staple pair examples}, two of the three lines $\ti \ell_{s-1}, \ti \ell_s, \ti \ell_{s+1}$ form an element of $\cS_2(\phi,\ti r)$   that we denote $ (\ti L_1,\ti L_2)$.    Lemma~\ref{subpath} implies that    
 $$\Phi^{-M-k}(\ti L_1) \in \accnr(\phi,\partial_+ \ti \ell_{i-1})   \qquad \text { and } \qquad   
 \Phi^{-k}(\ti L_2^{-1}) \in \accnr(\phi, \partial_- \ti \ell_{j})$$  
 It follows   that  $$\Phi^{-M-k}(\ti L_2^{-1})\in \accnr(\phi, \partial_-\ti \ell_i)$$ 
 and hence that $\Phi^{-M-k}((\ti L_1,\ti L_2))$  spans $(\ti \ell_{i-1}, \ti \ell_{i})$.
 \endproof 

In Lemma~\ref{new rays from lines} below we use Proposition~\ref{l:chaining} to give conditions on $\arb \in \Out(F_n)$ which imply that $\arb$ fixes $r$.  The proof of Lemma~\ref{new rays from lines} is inductive and it is useful in the induction step to know that $\arb$ strongly fixes $r$ in the following sense.

\begin{definition}  We say that $\arb\in\Out(F_n)$ {\it strongly fixes} $r \in \cR(\phi)$ if for some (and hence every) lift $\ti r$ there is a lift $\Arb \in  \arb$ that fixes each element of $\accnr(\phi,\ti r)$. 
\end{definition}

\begin{lemma}\label{new rays from lines}
Suppose that $[F]$ is a $\phi$-invariant free factor conjugacy class,     that $r \in \cR(\phi)$ is carried by $[F]$ and that $\arb\in\Out(F_n)$ satisfies:
\begin{enumerate}
\item \label{item:preserves filtration}
$[F]$ is $\arb$-invariant.
\item
$\arb$ fixes each element of $\accnr(r)$ and each $r' < r$ (as defined in  Notation~\ref{weaker po}). 
\item
The restriction $\arb| F$  commutes with $\phi|F$.
\item \label{item:Q is zero}
$m_b(\arb) = 0$ for all $b\in\cS_2(\phi,r)$. (See Definition~\ref{defn:m} and Remark~\ref{rem:broader def of m}.)
\end{enumerate}
Then $\arb$ strongly fixes $r$.
\end{lemma}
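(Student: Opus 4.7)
The plan is to proceed by induction on $r$ in the partial order $<$ on $\cR(\phi)$. For each non-periodic $\ti L \in \accnr(\phi, \ti r)$, let $A_{\ti L}$ denote the unique lift of $\arb$ that fixes $\ti L$ as an oriented line; uniqueness holds because two lifts agreeing on both endpoints of a non-periodic line must differ by an inner automorphism that fixes both endpoints, which forces triviality. The goal is to show that $A_{\ti L}$ is independent of $\ti L$; once this is established, $\Arb := A_{\ti L}$ will be the required lift. The base case $r$ minimal is handled automatically, since then every consecutive pair of visible non-periodic lines in $\ti R_{\ti E}$ is itself a staple pair by Lemma~\ref{staple pair examples}.

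The first key point is that for any staple pair $\ti b=(\ti L_1, \ti L_2) \in \cS_2(\phi, \ti r)$, condition (4) gives $A_{\ti L_1} = A_{\ti L_2}$. To see this, let $\Theta_i$ be the unique lift of $\arb$ preserving $\sH_\fc(\ti L_i)$ from Lemma~\ref{l:unique lift}; the hypothesis $m_b(\arb) = 0$ gives $\Theta_1 = \Theta_2$. I will verify $A_{\ti L_i} = \Theta_i$ by checking that $A_{\ti L_i}$ satisfies the defining property of $\Theta_i$: $A_{\ti L_i}$ fixes the periodic endpoint $a^\pm$ of $\ti L_i$ (which is the first coordinate of $\sH_\fc(\ti L_i)$), and since $A_{\ti L_i}$ lifts $\arb\in\X_\fc(\phi)$ and fixes the non-periodic endpoint in $\partial F_\fc(\cdot)$, it preserves the unique representative $F_\fc(\cdot)$ of the relevant conjugacy class of free factors (since $\arb$ fixes that conjugacy class and there is a unique representative containing the fixed boundary point).

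Next, apply Lemma~\ref{l:chaining} to express each consecutive pair $(\ti \ellprime_t, \ti \ellprime_{t+1})$ of visible non-periodic lines as spanned by some staple pair $(\ti L_1, \ti L_2)\in\cS_2(\phi,\ti r)$. If $\ti L_1 = \ti \ellprime_t$ then trivially $A_{\ti L_1} = A_{\ti \ellprime_t}$; otherwise $\ti L_1 \in \accnr(\phi, \partial_+ \ti \ellprime_t)$ where $\partial_+ \ti \ellprime_t$ is a lift of some $r' < r$. The inductive hypothesis, applied after verifying that conditions (1)-(4) descend to the minimal special free factor $F' := F_\fc(\partial_+ \ti \ellprime_t)$, yields a lift $\Arb'$ of $\arb$ fixing every element of $\accnr(\phi, \partial_+ \ti \ellprime_t)$; in particular $\Arb' = A_{\ti L_1}$. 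To identify $\Arb'$ with $A_{\ti \ellprime_t}$, observe that $A_{\ti \ellprime_t}|F'$ fixes $\partial_+ \ti \ellprime_t$ and commutes with $\Phi_{\ti r}|F'$: the commutativity in $\Out(F)$ given by (3) descends to $F'$, so $A_{\ti \ellprime_t}|F'\cdot \Phi_{\ti r}|F' = i_c\cdot \Phi_{\ti r}|F'\cdot A_{\ti \ellprime_t}|F'$ for some $c\in F'$; evaluating at the non-periodic point $\partial_+ \ti \ellprime_t$ forces $c=1$. By the uniqueness built into the inductive lift, $A_{\ti \ellprime_t}|F' = \Arb'|F'$; since $F'$ is non-cyclic (it carries an element of $\cR(\phi)$), the centralizer of $F'$ in $F_n$ is trivial, hence $A_{\ti \ellprime_t} = \Arb'$ globally. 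Combining with the symmetric statement at the other end and the staple pair equality gives $A_{\ti \ellprime_t} = A_{\ti \ellprime_{t+1}}$.

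Setting $\Arb := A_{\ti \ellprime_0}$ therefore yields a common lift fixing every visible non-periodic $\ti \ellprime_t$. By Proposition~\ref{prop:accnr}, every $\ti L \in \accnr(\phi, \ti r)$ has the form $\Phi_{\ti r}^m(\ti \ellprime_t)$. The same commutator-evaluation argument as above (applied to a fixed non-periodic endpoint of $\ti \ellprime_0$) shows that $\Arb|F$ commutes with $\Phi_{\ti r}|F$ in $\Aut(F)$, and hence $\Arb(\Phi_{\ti r}^m \ti \ellprime_t) = \Phi_{\ti r}^m \Arb(\ti \ellprime_t) = \Phi_{\ti r}^m(\ti \ellprime_t)$, proving $\Arb$ fixes every element of $\accnr(\phi,\ti r)$. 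The most delicate step is the synchronization in the spanning chain: matching $A_{\ti L_i}$ with $\Theta_i$ and $A_{\ti \ellprime_t}$ with the inductive $\Arb'$, both of which ultimately rely on the various uniqueness statements combined with non-cyclicity of the special free factors carrying eigenrays.
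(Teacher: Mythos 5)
Your overall architecture is the paper's: induct on the height of $r$ in $<$, attach to each non-periodic $\ti L\in\accnr(\phi,\ti r)$ the unique lift of $\arb$ fixing it, use hypothesis (4) to equate these lifts across a staple pair, chain consecutive visible lines via Lemma~\ref{l:chaining} and the inductive hypothesis at lower rays, and finally promote ``fixes the visible lines'' to ``fixes all of $\accnr(\phi,\ti r)$'' by making the restrictions of $\arb$ and $\phi$ commute. However, both of your synchronization steps rest on a commutator evaluation that fails as written. In the identification of the inductive lift $\Arb'$ with $A_{\ti\ellprime_t}$ you form ``$\Phi_{\ti r}|F'$'' and evaluate at $\partial_+\ti\ellprime_t$; but $\Phi_{\ti r}$ does not fix $\partial_+\ti\ellprime_t$ (it carries $\ti\ellprime_t$ to a strictly later visible line, by Lemma~\ref{forward invariance}, and the relevant covering translation moves the non-periodic point $\partial_+\ti\ellprime_t$), and consequently $\Phi_{\ti r}$ need not even preserve $F'=F_\fc(\partial_+\ti\ellprime_t)$, so the restriction is undefined and the evaluation does not force $c=1$. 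The same defect occurs in your last paragraph: you evaluate the commutator at a non-periodic endpoint of $\ti\ellprime_0$, a point $\Phi_{\ti r}$ does not fix, and after applying $\partial\Phi_{\ti r}^{-1}$ you land at a point you do not yet know to be $\Arb$-fixed, so the computation does not close up.

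The missing idea is that the common lift fixes the endpoint of the ray itself: the endpoints of the $\ti\ellprime_t$ (resp.\ of the elements of $\accnr(\phi,\ti r')$) accumulate on $\ti r$ (resp.\ on $\ti r'$), so a lift fixing all of them fixes $\ti r$ (resp.\ $\ti r'$), and since these points are non-periodic, Lemma~\ref{two automorphisms} says at most one lift of $\arb$ can fix such a point. This yields $\Arb'=A_{\ti\ellprime_t}$ immediately (both fix $\ti r'=\partial_+\ti\ellprime_t$, the latter because it fixes the line), with no restriction to $F'$ and no commutator; and in the final step it gives $\partial\Arb(\ti r)=\ti r$, whence $\Arb(F)=F$ by (1), and the commutator $[\Phi_{\ti r}|F,\Arb|F]$, inner by (3), fixes $\ti r$ and is therefore trivial---the correct evaluation point is $\ti r$, not an endpoint of $\ti\ellprime_0$. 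Two smaller issues: running the induction with $F_\fc(\partial_+\ti\ellprime_t)$ requires hypothesis (1) for that free factor, which the stated assumptions do not give; it is cleaner to keep the same $[F]$, which is legitimate because by Lemma~\ref{l:in F} the endpoints of limit lines lie in $\partial F$, so every $r'<r$ arising here is carried by $[F]$. Likewise your appeal to $\arb\in\X_\fc(\phi)$ is not among the hypotheses; what is available is only the weak-stabilizer condition implicit in $m_b(\arb)$ being defined, and that is what should be quoted when matching $A_{\ti L_i}$ with the lifts in Definition~\ref{defn:m}.
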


\proof 
Given a lift $\ti r$, we continue with the $\ti \sigma_i, \ti \rho_i, \ti \ell_i, \ti \mu_t$ and $\Phi_{\ti r}$ notation. We may assume without loss that $\ti r \in \partial F$.  By uniqueness,  $F$   is $\Phi_{\ti r}$-invariant.    By Lemma~\ref{l:in F} each $\ti \mu_t$ has endpoints in $\partial F$.

For each $t\ge 1$, item (2) implies the existence of a (necessarily unique) lift $\Arb_t$ of $\arb$ that fixes $\ti \mu_t$.  
We show below that $\Arb_t$ is independent of $t$, say $\Arb_t = \Arb$ for all $t$.  Assuming this for now, the proof concludes as follows.
Since the endpoints of the $\ti \mu_t$'s limit on $\ti r$, we have $\Arb(\ti r) = \ti r$.   From this and  (1) it follows that  $\Arb(F) = F$.  Item (3) implies that the commutator  $[\Phi_{\ti r}| F,\Arb| F]$ is  inner.  Since {the commutator} $[\Phi_{\ti r}| F,\Arb| F]$ fixes $\ti r$,  it must be trivial.  Thus, $\Phi_{\ti r}|F$ and $\Arb|F$ commute and   the    same is true for $ \Phi_{\ti r} | \partial F$ and $ \Arb | \partial F$.   Given $\ti L \in \accnr(\phi,\ti r)$, there exist $m \ge 0$ and $t \ge 1$  such that  ${\Phi_{\ti r}^m}(\ti L) = \ti \mu_t$ by  Proposition~\ref{prop:accnr}. Since $\ti \mu_t$ has endpoints in $\partial F$, the same is true for $\ti L$.  Thus 
     $$\Arb(\ti L) = (\Phi_{\ti r}^{-m} \Arb \Phi_{\ti r}^m)(\ti L) = { \Phi_{\ti r}^{-m}} \Arb({\Phi_{\ti r}^m}(\ti L)) =   {\Phi_{\ti r}^{-m}} \Arb(\ti \mu_t) = {\Phi_{\ti r}^{-m}}(\ti \mu_t) = \ti L$$
as desired.

\smallskip
It remains to prove that $\Arb_t= \Arb_{t+1}$ for all $t \ge 1$.   
 
 The proof is by induction on the height of $r$ in the partial order $<$ on $\cR(\phi)$.  In the base case, $r$ is a minimal element of $\cR(\phi)$ so each $\sigma_i$ is linear by Lemma~\ref{preserves partial order} and Lemma~\ref{not crossed}.  In this case,  each $\ti \mu_t\in \cS(\phi,\ti r)$ and each  $(\ti \mu_t, \ti \mu_{t+1}) \in \cS_2(\phi, \ti r)$ by Lemma~\ref{staple pair examples}.  Item (4) completes the proof.

For the inductive step, we   use   Proposition~\ref{l:chaining}.   Let $\mu_t = \ell_{i}$ and $\mu_{t+1} = \ell_j$. As a first case, suppose that   $\ \partial_+ \ti \ell_{i} = \ti r' $  for some $r' \in \cR(\phi)$.  Let   $E' \in \E_f$ be the higher order edge corresponding to $r'$.  Then $r > r'$ and  Lemma~\ref{preserves partial order} and Lemma~\ref{not crossed} imply that either $E'$ or $\bar E'$ occurs as a term in the complete splitting of some $f^k_\#(E)$.   Thus either $\Acc(E') \subset \Acc(E)$  or $\Acc(\bar E') \subset \Acc(E)$.    Lemma~\ref{two definitions} therefore implies that if $L' \in \accnr(r')$ then either $L' \in \accnr(r)$ or ${L'}^{-1} \in \accnr(r)$.  From (2) we see that $\arb$ fixes each element of $\accnr(r')$.  Lemma~\ref{lower rays and staple pairs} and (4) imply that $m_b(\arb) = 0$ for all $b \in \cS_2(\phi, r')$  so $r'$ and $[F]$ satisfy the hypotheses of this lemma.  By the inductive hypothesis,  there is a (necessarily unique) lift $\Arb'$ that fixes each element of $\accnr(\phi,\ti r')$.  As noted above, it follows that $\Arb'$ fixes $\ti r'$ and so $\Arb' = \Arb_t$. 

 There are two subcases.  The first is that   $\partial_- \ti \ell_{j} = \ti r'' $ for some $r''\in \E_f$.  Arguing as in the previous paragraph we see that     $\Arb_{t+1}$  fixes each element of $\accnr(\phi,\ti r'')$.  By Proposition~\ref{l:chaining}, there exist $\ti L' \in \accnr(\phi,\ti r')$ and $\ti {L''}^{-1} \in \accnr(\phi,\ti r'')$  such that $(\ti L', \ti L'') \in \cS_2(\phi,\ti r)$.  Item (4) implies that $\Arb_t = \Arb_{t+1}$.
 
The second subcase is that    $\partial_- \ti \ell_{j}$  is the end of a periodic line and hence $\ell_j \in \cS(\phi,r)$.  By Proposition~\ref{l:chaining},  there exists $\ti L' \in \accnr(\phi, \ti r')$  such that $(\ti L', \ti \ell_{j}) \in\cS_2(\phi,\ti r)$. Once again, (4) implies that $\Arb_t = \Arb_{t+1}$.  This completes the proof when  $\partial_+ \ti \ell_{i}$ projects into $\cR(\phi)$.  
  
A symmetric argument handles the case that $\partial_- \ti \ell_{j}$ projects into $\cR(\phi)$.  The remaining case is that both $\partial_+\ti \ell_{i}$  and $\partial_- \ti \ell_{j}$  are endpoints of  periodic lines.  Proposition~\ref{l:chaining} implies that $(\ti \ell_i, \ti \ell_{j}) \in\cS_2(\phi,\ti r)$ so (4) completes the proof as in previous cases.   \endproof

\section{$\bar Q$} \label{s:XD}

Recall that our main theorem is reduced to    Proposition~\ref{p:conjugacy in X} by Lemma~\ref{l:reduction}.

For the rest of the paper, we assume  the hypotheses of Proposition~\ref{p:conjugacy in X},   i.e.\ $\phi, \psi\in\upgn$, $\fc$ is a special chain for $\phi$ and $\psi$, and ${\sf I}_\fc(\phi)={\sf I}_\fc(\psi)$. Our goal is to find a conjugator $\theta\in\X_\fc(\phi)$ {or prove that no such conjugator exists.} 

Because $\phi$ and $\fc$ are fixed for the rest of the paper, we will often write $\X$ for $\X_\fc(\phi)$. In fact, we will often suppress $\fc$ when it appears as a decoration.

 \begin{lemma} \label{lem:fixes each r} For each   $\F_i \in \frak c$ and       each $r \in \cR(\phi|\F_i)$ there exists $r' \in \cR(\psi|\F_i)$   such that $\theta(r) = r'$ for each $\theta \in \X$  whose restriction $\theta | \F_i $ conjugates $\phi|\F_i$ to $\psi | \F_i$. 
 \end{lemma} 
\proof Let $\fe = \F^- \sqsubset \F^+ \in \frak c$ be the one-edge extension with respect to which $r$ is new (Definition~\ref{d:added lines}).  In other words, $r \in \cR^+(\fe, \phi) := \cR(\phi|\F^+) \setminus \cR(\phi|\F^-)$.  Since $\theta$ fixes $\frak c$,  it follows that   $\theta | \F^\pm$ conjugates $\phi | \F^\pm$ to $\psi| \F^\pm$ and so  $\theta$ induces a bijection between  $\cR^+(\fe, \phi)$ and $\cR^+(\fe, \psi)$.  This completes the proof if $r$ is the only element of  $\cR^+(\fe, \phi)$.  Otherwise $\cR^+(\fe, \phi) = \{r,s\}$ and  $\cR^+(\fe, \psi) = \{r',s'\}$ and   we are in case \sH\sH.   By definition,      
  $\LW_\fe(\phi)=\{L, L^{-1}\}$ where $\partial_-L = r$ and $\partial_+L = s$.      By Lemma~\ref{LW is natural},    $\theta(L)\in\LW_\fe(\psi)$. Since $\theta\in\X$, $\theta(\sHsub(L))=\sHsub(L)=\sH_{\psi,\frak c}\big(\theta(L)\big)$. By Lemma~\ref{just one eigengraph line}, there is a unique $L'\in\LW_\fe(\psi)$ that is in $\sH_{\psi,\frak c}\big(\theta(L)\big)$. Hence $\theta(L)=L'$ and $\theta(r) = r' := \partial_-L'$.
\endproof

We continue with the notation of Section~\ref{sec:all staple pairs} and also assume that a \ct\ $f' :G ' \to G'$  representing $\psi$  has been chosen that realizes $\fc$. We use prime notation when working with $\psi$ and $r'$; for example, $E' \in \E_{f'}$ is the edge corresponding to $r'$ and   $\ti \ell_1',\ti \ell_2',\ldots$ are the visible lines in $\ti R'_{\ti E'}$  and $\Psi_{\ti r'}$ is the lift $\Psi \in \psi$ that fixes $\ti r'$. 

\begin{definition} \label{d:Q}Recall from Corollary~\ref{cor:limit lines}, Definition~\ref{defn:m} and Lemma~\ref{lem:m(phi)}  that $\cS_2(\phi)$ is finite,    that for all $b \in \cS_2(\phi)$ there is a homomorphism $m_b : \X  \to \Z$  and  that $m_b(\phi) \ne 0$.  Define a homomorphism $Q^\phi:\X\to \Q^{\cS_2(\phi)}$ by letting the $b$-coordinate  of $Q^\phi(\theta)$ be $Q^\phi_b(\theta) = m_b(\theta)/m_b(\phi)$.
\end{definition}

\begin{definition}\label{d:pair equivalence}
 Let $\sim$ be the equivalence relation on $\cS_2(\phi)$ generated by $b\sim b'$ if $b$ and $b'$ occur in the same $r\in\cR(\phi)$ (as defined in  Notation~\ref{notn:staple pairs}) and let 
 $$\cS_2(\phi)=\cS_2^1(\phi)\sqcup\cS_2^2(\phi)\sqcup\dots$$ be the decomposition of $\cS_2(\phi)$ into $\sim$-equivalence classes. For each $i$, consider the diagonal action of $\Z$ on $\Q^{\cS_2^i(\phi)}$, i.e.\ $k\vec{s}=\vec{s}+k(1,1,\dots)$. Let $\bar Q^\phi$ denote the homomorphism
$$\X_\fc(\phi)\overset{Q^\phi}{\to}\Q^{\cS_2(\phi)}{\to}  \bar\Q^{\cS_2(\phi)}:=(\Q^{\cS_2^1(\phi)}/\Z)\oplus(\Q^{\cS_2^2(\phi)}/\Z)\oplus\dots
$$
\end{definition} 

For the rest of the paper, $Q$ and $\bar Q$ will always denote $Q^\phi$ and $\bar Q^\phi$. 

We can now state the second reduction of the conjugacy problem for $\upgn$ in $\Out(\f)$.

\begin{prop} \label{last step}
{\it There is an algorithm that takes as input  $\phi, \psi \in \upgn$ and a chain $\fc$ such that 
\begin{itemize}
\item
$\fc$ is a special chain for $\phi$ and $\psi$ and 
\item
${\sf I}_\fc(\phi) = {\sf I}_\fc(\psi)$
\end{itemize}
and that outputs {\tt YES} or {\tt NO} depending on whether or not there is  $\theta \in \Ker(\bar Q^\phi)$ conjugating  $\phi$ to $\psi$.  Further, if {\tt YES} then  such a $\theta$ is produced. 
}

\end{prop}

\begin{lemma} \label{p:reduction2}  Proposition~\ref{last step} implies {Proposition~\ref{p:conjugacy in X}} and hence Theorem~\ref{t:main}.
\end{lemma}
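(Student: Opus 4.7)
By Lemma~\ref{l:second reduction} it suffices to show that Proposition~\ref{last step} implies Proposition~\ref{p:conjugacy in X}, since the latter was already shown to imply Proposition~\ref{p:reduction1}. So suppose we are given $\phi,\psi\in\upgn$ and a chain $\fc$ that is special for both with ${\sf I}_\fc(\phi)={\sf I}_\fc(\psi)$, and we wish to decide whether there exists $\theta\in\X_\fc(\phi)$ with $\phi^\theta=\psi$.

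The plan is to combine Proposition~\ref{last step} with the algorithm promised by Proposition~\ref{prop:barQ}. First, apply Proposition~\ref{prop:barQ} to produce a finite list $\theta_1,\ldots,\theta_m\in\X_\fc(\phi)$ such that every $\theta\in\X_\fc(\phi)$ conjugating $\phi$ to $\psi$ lies in $\bigcup_{i=1}^m\theta_i\Ker(\bar Q)$. For each $i$, set $\psi_i:=\psi^{\theta_i^{-1}}$. Because $\theta_i\in\X_\fc(\phi)$, it stabilizes every component of ${\sf I}_\fc(\phi)$, so by naturality of ${\sf I}_\fc$ (Lemma~\ref{consistency} together with the naturality properties collected in Section~\ref{s:review of the invariants}) we have
\[
{\sf I}_\fc(\psi_i)=\theta_i^{-1}\!\bigl({\sf I}_\fc(\psi)\bigr)=\theta_i^{-1}\!\bigl({\sf I}_\fc(\phi)\bigr)={\sf I}_\fc(\phi),
\]
and $\fc$ is special for $\psi_i$ since it is $\theta_i$-invariant. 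Thus the hypotheses of Proposition~\ref{last step} are met for the pair $(\phi,\psi_i)$, so we may run its algorithm to decide whether there is $\eta_i\in\Ker(\bar Q)$ with $\phi^{\eta_i}=\psi_i$, and to produce such an $\eta_i$ when it exists.

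If some pair $(i,\eta_i)$ is found, return {\tt YES} with conjugator $\theta_i\eta_i\in\X_\fc(\phi)$; if every pair fails, return {\tt NO}. Correctness is immediate: if $\theta\in\X_\fc(\phi)$ conjugates $\phi$ to $\psi$, then by the covering property of $\{\theta_i\}$ we may write $\theta=\theta_i\eta$ with $\eta\in\Ker(\bar Q)$, and $\phi^\eta=(\phi^{\theta_i^{-1}\theta})=\psi^{\theta_i^{-1}}=\psi_i$, so the $i$-th call to the algorithm of Proposition~\ref{last step} succeeds. Conversely, any $\theta_i\eta_i$ returned by the procedure lies in $\X_\fc(\phi)$ and satisfies $\phi^{\theta_i\eta_i}=\psi_i^{\theta_i}=\psi$.

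The only non-routine ingredient is Proposition~\ref{prop:barQ}, which is where the technical study of eigenrays, staple pairs, and the twist homomorphism $m_b$ from Sections~\ref{sec:all staple pairs}--\ref{s:XD} is used to prove that the $\bar Q$-image of the (possibly infinite) set of conjugators in $\X_\fc(\phi)$ lies in finitely many cosets of $\Ker(\bar Q)$ that can be listed effectively. Given that proposition, the reduction above is entirely formal.
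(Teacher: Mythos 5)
Your argument is correct and is essentially the paper's own proof: both take the finite set of coset representatives supplied by Proposition~\ref{prop:barQ}, replace $\psi$ by $\psi^{\theta_i^{-1}}$ for each representative, run the algorithm of Proposition~\ref{last step} on each resulting pair, and conclude via Lemma~\ref{l:second reduction}. Your explicit verification that ${\sf I}_\fc(\psi_i)={\sf I}_\fc(\phi)$ and that $\fc$ remains special for $\psi_i$ (so the hypotheses of Proposition~\ref{last step} apply) is a detail the paper leaves implicit, but the reduction is the same.
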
 

Proposition~\ref{last step} is proved in Section~\ref{s:algorithm}. 

Lemma~\ref{p:reduction2} is proved by  applying the following technical proposition, whose proof takes up the rest of this section.
 
\begin{prop}  \label{prop:barQ}
 We have  an algorithm that produces a finite set $\{\eta_i\} \subset  \X$ so that the union of the cosets of $\Ker(\bar Q)$ determined by the $\eta_i$'s contains each $\theta \in \X$ that conjugates $\phi$ to $\psi$.
\end{prop}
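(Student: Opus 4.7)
The plan is to show that for every conjugator $\theta \in \X$ taking $\phi$ to $\psi$, the value $\bar Q(\theta) \in \bar Q(\X)$ lies in a finite computable set $T \subset \bar \Q^{\cS_2(\phi)}$; then for each element of $T$ that actually arises I will exhibit a coset representative, producing the required finite set $\{\eta_i\}$.

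First I will compute \ct s $\fG$ and $f' : G' \to G'$ for $\phi$ and $\psi$ realizing $\fc$, together with a finite presentation of $\X$ (available from Lemma~\ref{l:second reduction}). As noted at the start of Section~\ref{s:XD}, every conjugator $\theta \in \X$ induces the same bijection $r \mapsto r'$ from $\cR(\phi)$ to $\cR(\psi)$, determined by the shared invariants; I record this bijection along with the common translation numbers $\tau(\phi,r) = \tau(\psi, r')$ from Lemma~\ref{translation}. By Lemma~\ref{translation}\pref{item:offset}, each conjugator and each $r$ produce an integer $\offset(\theta,r)$ whose class modulo $\tau(\phi,r)$ depends only on $\theta$ and $r$, since varying the lift $\ti r$ by a covering translation shifts the offset by a multiple of $\tau(\phi,r)$. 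Hence the tuple $\bigl(\offset(\theta,r) \bmod \tau(\phi,r)\bigr)_{r \in \cR(\phi)}$ takes only finitely many values as $\theta$ ranges over conjugators.

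The core step is to show that $\bar Q(\theta)$ is entirely determined by this finite offset tuple. Fix an equivalence class $\cS_2^i(\phi)$ and a reference pair $b_0 \in \cS_2^i(\phi)$. Lemma~\ref{new bound for staple pairs} implies that every staple pair in $\cS_2^i(\phi)$ lies in the $\Phi_{\ti r}$-orbit of one of finitely many visible topmost staple pairs with bounded index, while Lemma~\ref{lower rays and staple pairs} transfers staple pairs between comparable rays; since applying $\Phi_{\ti r}$ shifts each $m_b$ by $1$ --- exactly the diagonal $\Z$-action factored out by $\bar Q$ --- I may restrict attention to visible topmost pairs. For such a pair $b = (\ti \ell_{j_1}, \ti \ell_{j_2})$ with common axis projecting to a root-free $a \in \f$, the canonical lifts $\Theta_i \in \theta$ fixing $\sH(\ti L_i)$ in Definition~\ref{defn:m} are uniquely determined by Lemma~\ref{l:unique lift}; on the other hand, by Lemma~\ref{lem:prec} and Lemma~\ref{topmost is natural}, the action of $\theta$ sends the visible line $\ti \ell_{j_k}$ on the $\phi$-side to $\ti \ell'_{j_k + \offset(\theta,r)}$ on the $\psi$-side. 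Comparing how $\Theta_1$ and $\Theta_2$ move the endpoints of the common axis expresses $m_b(\theta)$ purely as a function of $\phi, \psi, r, j_1, j_2$, and the offset --- independent of the particular conjugator $\theta$. Taking differences $m_b(\theta) - m_{b_0}(\theta)$ yields the coordinates of $\bar Q(\theta)$ restricted to $\cS_2^i(\phi)$, and these depend only on the finite offset tuple.

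Finally, enumerating the candidate offset tuples gives the finite set $T$. I evaluate $\bar Q$ on the generators of $\X$ to obtain $\bar Q(\X)$ as an explicit finitely generated subgroup, intersect with $T$, and for each element of the intersection produce a preimage $\eta \in \X$ by expressing it in terms of the generators. The resulting finite collection is the desired $\{\eta_i\}$. The main obstacle is the middle step: translating the algebraic definition of $m_b$ via canonical lifts into the dynamical shift $\offset(\theta,r)$ acting on the visible-line sequence of $\ti R_{\ti E}$. The defining property of $\X$ --- that $\theta$ preserves each individual $\sH(L)$, not just the collection --- is precisely what makes the canonical lifts $\Theta_i$ unique (Lemma~\ref{l:unique lift}) and hence makes this bridge rigorous.
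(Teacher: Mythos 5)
Your outer structure (finiteness of the possible values of $\bar Q$ on conjugators, then pulling back coset representatives by expressing elements of a finite candidate set in terms of $Q$ of the generators of $\X$) matches the paper's last step, but the core step of your argument has a genuine gap. You claim that for a conjugator $\theta$ the exact value $m_b(\theta)$ is "purely a function of $\phi,\psi,r,j_1,j_2$ and the offset," so that $\bar Q(\theta)$ is determined by the tuple of residues $\offset(\theta,r) \bmod \tau(\phi,r)$. Nothing in your sketch establishes this, and the mechanism you invoke cannot deliver it: $m_b(\theta)$ is read off by comparing twisting data in the \ct\ for $\phi$ with twisting data in the \ct\ for $\psi$ via a marking-preserving homotopy equivalence, and that comparison is only well defined up to a bounded-cancellation error. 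This is exactly why the paper's Lemma~\ref{compute m} computes $m(\theta,b)$ only \emph{up to an additive constant independent of $\theta$}, and why Lemma~\ref{new finding b} converts a bound on $|\offset(\theta,r)|$ into a \emph{bound} on $|m(\theta,b)|$, not a value. Equivalently, since two conjugators differ by an element $\eta\in\X$ centralizing $\phi$, your claim amounts to asserting that every such $\eta$ whose offsets are divisible by the translation numbers satisfies $\bar Q(\eta)=0$; this is a substantive statement about the centralizer that you never address (and which the paper never needs).

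There is also a quantitative problem even if one tries to repair your step using the paper's lemmas: knowing $\offset(\theta,r)$ only modulo $\tau(\phi,r)$ gives no bound whatsoever on the integer $\offset(\theta,r)$, so Lemmas~\ref{new finding b} and \ref{compute m} cannot be applied from your residue data. The paper's actual route avoids this: since replacing $\theta$ by $\theta\phi^s$ changes neither the differences $Q_{b_1}(\theta)-Q_{b_2}(\theta)$ (as $m_b$ is a homomorphism, Lemma~\ref{m is a homomorphism}) nor the conjugating property, one may normalize the \emph{actual} offset at a chosen ray to lie in $[0,\tau(\phi,r)]$, propagate computable offset bounds to lower rays $r_1<r$ via Lemma~\ref{new extreme} (needed because a staple pair may be topmost only in a lower ray), bound each $|m_b(\theta)|$ by Lemma~\ref{new finding b}, and conclude that $|Q_{b_1}(\theta)-Q_{b_2}(\theta)|<D$ for a computable $D$ whenever $b_1\sim b_2$. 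Finiteness of the image of $\bar Q$ on conjugators then follows from discreteness plus boundedness near the diagonal, with no exact-determination claim required. As written, your proposal's middle step is not a proof, and without it the finite set $T$ you construct is not known to capture $\bar Q$ of all conjugators.
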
  

 \noindent {\it Proof of Lemma~\ref{p:reduction2} (assuming Proposition~\ref{prop:barQ}).} \ \   
Let $\{\eta_i\}$ be the finite set produced by Proposition~\ref{prop:barQ}  and let $\psi_i = \psi^{(\eta_i^{-1})}$.  It follows that   $\phi^\theta = \psi$ if and only if $\phi^{\theta_i'} = \psi_i$ where $\theta_i' = \eta_i^{-1}\theta$ and that  $\theta$ is in the coset represented by $\eta_i$ if and only if $\theta_i' \in \Ker(\bar Q)$.   Thus, by applying Proposition~\ref{last step} to $\phi$ and $\psi_1$, we can decide if there exists $\theta$ in the coset represented by $\eta_1$ that conjugates $\phi$ to $\psi$.  If {\tt YES} then return {\tt YES} and one such $\theta$.  Otherwise move on to $\eta_2$ and repeat.  If {\tt NO} for each $\eta_i$, then return {\tt NO}.
  \qed  
  
  \medskip

The following two lemmas  are proved in Section~\ref{proof of 7.3} and Section~\ref{proof of 7.4} respectively.  In the remainder of this  section we use them to prove Proposition~\ref{prop:barQ}.    
The definition  of topmost staple pair appears in Notation~\ref{notn:topmost staple pair}.  
The definition  of  $\offset(\theta,r)$   is given in Lemma~\ref{translation}\pref{item:offset}.  The partial order $<$ on $\cR(\phi)$ is defined in Notation~\ref{weaker po}.

\begin{lemma}  \label{new finding b}   Suppose that $b \in \cS_2(\phi,r)$ is topmost    and that $\theta \in \X$ conjugates $\phi$ to $\psi$. Then  given an upper bound for $|\offset(\theta, r)|$  one can compute an upper bound for $|m_\theta(b)|$.    
\end{lemma}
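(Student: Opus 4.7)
The plan is to replace $b$ by a visible representative of bounded index via Lemma~\ref{new bound for staple pairs}, apply $\Theta$ and use that lemma again on the $\psi$ side to obtain another visible representative, and extract a bound on $m_b(\theta)$ from the resulting bounded geometric data together with the given bound on $|\offset(\theta, r)|$.

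Fix a lift $\ti r$ of $r$, a lift $\ti b = (\ti L_1, \ti L_2) \in \cS_2(\phi, \ti r)$ of $b$, and any $\Theta \in \theta$; set $\ti r' = \partial\Theta(\ti r)$. By Lemma~\ref{new bound for staple pairs}, there is a visible staple pair $\ti b_0$ of index at most $B(r)$ and an integer $k_0$ with $\ti b = \Phi_{\ti r}^{k_0}\ti b_0$. By Lemma~\ref{topmost is natural}, $\Theta(\ti b)$ is a topmost element of $\cS_2(\psi, \ti r')$, so Lemma~\ref{new bound for staple pairs} applied to $\psi$ gives $\Theta(\ti b) = \Psi_{\ti r'}^{k_1}\ti b'_0$ for a visible $\ti b'_0$ of index at most $B(r')$. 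Using $\Theta\Phi_{\ti r} = \Psi_{\ti r'}\Theta$, we obtain $\Theta(\ti b_0) = \Psi_{\ti r'}^{k_1 - k_0}\ti b'_0$. Since $\ti b_0$ and $\ti b'_0$ range over finite computable sets, it suffices to bound $|k_1 - k_0|$ and then to express $m_b(\theta)$ from the bounded data $(\ti b_0, \ti b'_0, k_1 - k_0)$.

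To bound $|k_1 - k_0|$, I would compare against the topmost-line shift. For a bounded $N$ depending only on $B(r)$ and $\fG$, any line $\ti\ell_j$ of index $j \le B(r)+1$ occurring in $\ti b_0$ satisfies $\Phi_{\ti r}^N(\ti\ell_j) = \ti L_s \in {\cal T}_{\phi, \ti r}$ for a topmost line $\ti L_s$ of bounded index $|s|$, by Proposition~\ref{prop:accnr}\pref{ti ell occurs in ti r} and Lemma~\ref{lemma:translation}. Applying $\Theta$ yields, on the one hand, $\ti L'_{s+\offset(\theta,r)}$ by Lemma~\ref{translation}\pref{item:offset}, and on the other hand a topmost line sitting inside $\Psi_{\ti r'}^{k_1 - k_0}\ti b'_0$ at bounded topmost index within that visible pair. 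Since $\Psi_{\ti r'}$ shifts topmost indices by $\tau(\psi, r') = \tau(\phi, r)$ \big(Lemma~\ref{translation}\pref{item:same translation}\big), equating positions gives $|k_1 - k_0|\cdot\tau(\phi, r) \le |\offset(\theta, r)| + C_1$ for a computable constant $C_1$, which bounds $|k_1 - k_0|$.

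Finally I would compute $m_b(\theta)$ from the bounded data. By Definition~\ref{defn:m}, $m_b(\theta)$ is the unique integer with $\Theta_2 = i_a^{m_b(\theta)}\Theta_1$, where $\Theta_i \in \theta$ is the unique lift \big(Lemma~\ref{l:unique lift}\big) satisfying $\Theta_i(\sH_\fc(\ti L_i)) = \sH_\fc(\ti L_i)$ and $a$ generates the stabilizer of the common axis $\ti A$ with $a^+ = \ti A^+$. Writing $\Theta_i = i_{c_i}\Theta$, the element $c_i$ is determined by the requirement that $i_{c_i}$ carries $\sH_\fc(\Theta(\ti L_i))$ to $\sH_\fc(\ti L_i)$. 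In the visible setting, $\ti L_i$ is a concrete subpath of $\ti R_{\ti E}$ of bounded length, and $\Theta(\ti L_i)$ is a subpath of $\ti R'_{\ti E'}$ at a position controlled by $(\ti b_0, \ti b'_0)$ and $k_1 - k_0$; hence $c_i$ can be read off in terms of this bounded data, and $a^{m_b(\theta)} = c_2 c_1^{-1}$ is bounded in $a$-length. The hard part will be the case analysis for $c_i$: the formula bifurcates according to which case of Lemma~\ref{staple pair examples} realizes $\ti b_0$ (quasi-exceptional, exceptional, or linear of either orientation), and each case requires tracking the twist exponents of the relevant Nielsen paths $\ti\rho_i, \ti\rho'_j$ and their interaction with the two lifts $\Theta_1, \Theta_2$. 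In every case, once $|k_1 - k_0|$ is bounded the resulting bound on $|m_b(\theta)|$ is explicit and computable.
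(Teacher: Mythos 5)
Your overall strategy is the same as the paper's: normalize $b$ to a visible lift of index at most $B(r)$ via Lemma~\ref{new bound for staple pairs}, transport by $\Theta$ and normalize again on the $\psi$ side using Lemma~\ref{topmost is natural}, use the bound on $|\offset(\theta,r)|$ to control the remaining power of the principal automorphism, and then extract $|m_b(\theta)|$ from the finitely many explicit candidates (the paper packages this last extraction as Lemma~\ref{compute m}). However, the step where you convert the offset bound into a bound on $|k_1-k_0|$ has a genuine gap. You assert that for a bounded $N$ any line $\ti\ell_j$ occurring in $\ti b_0$ satisfies $\Phi_{\ti r}^N(\ti \ell_j)\in{\cal T}_{\phi,\ti r}$. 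That is false: ${\cal T}_{\phi,\ti r}$ is $\Phi_{\ti r}$-invariant and topmostness of a line is a property of its projection to $\accnr(r)$, which is $\phi$-invariant (Corollary~\ref{cor:limit lines}), so a non-topmost line never becomes topmost under powers of $\Phi_{\ti r}$. The constituent lines of a topmost staple pair need not be topmost lines -- a linear staple has both ends periodic and is a topmost line only when $r$ is minimal in $<$, whereas topmostness of the pair $b$ only says $b, b^{-1}\notin\cS_2(\phi,r_1)$ for $r_1<r$. Neither of your citations (Proposition~\ref{prop:accnr}, Lemma~\ref{lemma:translation}) supplies such a statement, and since $\offset(\theta,r)$ is defined only through the action on ${\cal T}_{\phi,\ti r}$, this broken bridge is exactly where the hypothesis of the lemma has to enter.

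The repair is the comparison the paper actually makes: do not try to make the lines of $\ti b_0$ topmost, but sandwich the staple pair between genuinely topmost lines of computable index using the total order $\prec$, the fact that $\Theta$ preserves $\prec$ (Lemma~\ref{lem:prec}), and Lemma~\ref{translation}. Concretely, the paper chooses a computable $q$ with $\ti\ell'_{B'}\prec\theta(\Phi^q\ti b_0)$ (with $B'$ large enough that everything above $\ti\ell'_{B'}$ is visible and $\ti\ell'_{B'}$ is topmost), and then bounds the index $I'$ of $\theta(\Phi^q\ti b_0)$ from above by a second topmost comparison; this yields your bound on the relative power without the false intermediate claim. Separately, your final step (``$c_i$ can be read off in terms of this bounded data'') quietly requires the content of Lemma~\ref{compute m}: the two lines live in different marked graphs $G$ and $G'$, so comparing positions along the common axis needs a marking-preserving homotopy equivalence $G'\to G$ and its bounded cancellation constant, which is where the uniform additive constant comes from. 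That is a fixable computational point, but as written your proof neither supplies it nor invokes the lemma that does.
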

 
\begin{lemma}\label{new extreme}  Suppose that $\theta \in \X$ conjugates $\phi$ to $\psi$ and that  $r,r_1 \in \cR(\phi)$ satisfy $r_1 < r$.
 Then given an upper bound for $|\offset(\theta,r)|$ one can compute  an upper bound for $|\offset(\theta,r_1)|$.  
\end{lemma}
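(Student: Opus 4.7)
\medskip
\noindent\emph{Proof plan for Lemma~\ref{new extreme}.}  By an obvious induction on the length of a chain $r_1<r_2<\cdots<r$ in the partial order on $\cR(\phi)$, it suffices to treat the case $r_1<_c r$.  Fix \ct s $\fG$ representing $\phi$ and $f':G'\to G'$ representing $\psi$, both realizing $\fc$, and use the notation of Notation~\ref{notn:elli} and Section~\ref{sec:topmost} (the analogous primed data on the $\psi$-side).  Let $E_1\in\E_f$ correspond to $r_1$.  Since $r_1<_c r$, Lemma~\ref{preserves partial order} and the fact that $r_1$ is an end of some element of $\accnr(r)$ force $E_1$ or $\bar E_1$ to occur as a term $\sigma_j$ in the coarsened complete splitting of $R_E$.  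The two cases are symmetric (Lemma~\ref{subpath}(1) vs.\ (2)), so assume $\tilde\sigma_j=\tilde E_1$.  Fix the lift $\tilde r_1:=\partial_+\tilde\ell_{j-1}=\partial\tilde R^+_j$.  The index $j$ can be chosen $\le p$ (after, if necessary, passing to a $\Phi_{\tilde r}$-translate, which only shifts indices by multiples of $\tau(\phi,r)$); in particular it is explicitly computable from $\fG$.  The analogous index $j'$ and lift $\tilde r_1^\ast:=\partial_+\tilde\ell'_{j'-1}$ on the $\psi$-side are also explicitly computable from $f'$.

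By Definition~\ref{defn:topmost}(2), $\tilde\ell_{j-1}$ is topmost in $\mathcal T_{\phi,\tilde r}$, so $\tilde\ell_{j-1}=\tilde L_{s_0}$ for a computable $s_0\in\{1,\dots,\tau(\phi,r)\}$.  Similarly $\tilde\ell'_{j'-1}=\tilde L'_{s_0^\ast}$ for a computable $s_0^\ast$.  Applying the given $\theta\in\X$ and invoking Lemma~\ref{translation},
\[
\partial_+\bigl(\Theta(\tilde\ell_{j-1})\bigr)=\partial_+\tilde L'_{s_0+\offset(\theta,r)}=\Theta(\tilde r_1).
\]
Writing $s_0+\offset(\theta,r)-s_0^\ast=q\,\tau(\psi,r')+\delta$ with $0\le\delta<\tau(\psi,r')$, Lemma~\ref{lemma:translation} gives $\tilde L'_{s_0+\offset(\theta,r)}=\Psi_{\tilde r'}^{\,q}\bigl(\tilde L'_{s_0^\ast+\delta}\bigr)$; since $\tilde L'_{s_0^\ast+\delta}$ lies in a finite computable list of topmost visible lines of $\tilde R'_{\tilde E'}$, so does its $\partial_+$-endpoint $\tilde r_1^{(\delta)}$, and we get an equation
\[
\Theta(\tilde r_1)=T_c\bigl(\tilde r_1^\ast\bigr)
\]
for a covering translation $T_c=T_c(q,\delta)$ explicitly determined by $\offset(\theta,r)$ (here we use that the finitely many $\tilde r_1^{(\delta)}$ all differ from $\tilde r_1^\ast$ by computable covering translations, built from the  lollipop/Nielsen-path data on the $\psi$-side).

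Now we measure $\offset(\theta,r_1)$.  By Proposition~\ref{prop:accnr} every element of $\accnr(\phi,\tilde r_1)$ has the form $\Phi^{-m}_{\tilde r_1}(\tilde\ell^{(1)}_{s})$ for the $\tilde r_1$-visible lines $\tilde\ell^{(1)}_{s}$, and Lemma~\ref{subpath}(1) identifies each such $\tilde\ell^{(1)}_s$ with a unique $\Phi^{-m}_{\tilde r}$-translate of some $\tilde\ell_{j(s,m)}$ in $\tilde R_{\tilde E}$.  Apply $\Theta$ to $\tilde\ell^{(1)}_1$ (say): using $\Theta\Phi_{\tilde r}=\Psi_{\Theta(\tilde r)}\Theta$ and the just-computed shift of $\tilde\ell_{j(1,m)}=\tilde L_{s_1}$ to $\tilde L'_{s_1+\offset(\theta,r)}$, one rewrites $\Theta(\tilde\ell^{(1)}_1)$ as an explicit element of $\accnr(\psi,T_c\tilde r_1^\ast)$, and a second application of Lemma~\ref{subpath}(1) on the $\psi$-side identifies that element with an element of the topmost sequence $\{\tilde L'^{(1)}_{s^\ast}\}$ for the \emph{standard} lift $\tilde r_1^\ast$.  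Conjugating the standard identification at $\tilde r_1^\ast$ by $T_c$ (using that $T_c$ corresponds on the $\psi$-side to an element of $\f$ that commutes with $\Psi_{T_c\tilde r_1^\ast}$-translation by the usual equivariance), we obtain $\offset(\theta,r_1)$ as an explicit integer-linear combination of $s_1$, $s_1+\offset(\theta,r)$, $s_0^\ast$, $q$, $\delta$ and a bounded correction term from the $T_c$-twist.

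All input quantities in this formula except $\offset(\theta,r)$ are absolute constants depending only on $\fG$, $f'$, $\fc$ and the edges $E,E_1,E',E'_1$; the dependence on $\offset(\theta,r)$ is linear.  This yields a computable affine bound $|\offset(\theta,r_1)|\le C_1\,|\offset(\theta,r)|+C_0$, completing the inductive step and hence the lemma.  The main obstacle is the bookkeeping in the last paragraph, namely confirming that the identification of topmost elements of $\accnr(\phi,\tilde r_1)$ with a $\Phi_{\tilde r}$-orbit of visible lines in $\tilde R_{\tilde E}$ is $\Theta$-equivariant up to the explicit covering-translation twist $T_c$; once this is verified the promised bound follows from a direct index computation.
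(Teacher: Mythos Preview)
Your reduction to $r_1<_c r$ and your identification of $\tilde r_1=\partial_+\tilde\ell_{j-1}$ as a lift of $r_1$ determined by an occurrence of $E_1$ in $R_E$ are fine, and match the paper's setup.  The proposal breaks down in the last two paragraphs, where the actual work has to happen.

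The central problem is your claim that the visible line $\tilde\ell_{j(1,m)}$ (arising via Lemma~\ref{subpath} from a topmost line $\tilde\ell^{(1)}_1$ of $\accnr(\phi,\tilde r_1)$) is itself topmost in $\mathcal T_{\phi,\tilde r}$, i.e.\ equals some $\tilde L_{s_1}$.  A topmost line of $\accnr(\phi,\tilde r_1)$ has (in the non-minimal case) an end $r_2$ with $r_2<_c r_1$, \emph{not} $r_2<_c r$; so its avatar in $\accnr(\phi,\tilde r)$ is generally \emph{not} topmost there.  Since $\offset(\theta,r)$ only controls the shift on $\mathcal T_{\phi,\tilde r}$, you have no handle on where $\Theta$ sends $\tilde\ell_{j(1,m)}$, and the ``explicit integer-linear combination'' you promise cannot be written down.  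The related claim that the finitely many $\tilde r_1^{(\delta)}=\partial_+\tilde L'_{s_0^\ast+\delta}$ all differ from $\tilde r_1^\ast$ by computable covering translations ``built from lollipop/Nielsen-path data'' is also unjustified: different topmost lines of $\mathcal T_{\psi,\tilde r'}$ can have terminal ends that are lifts of \emph{different} $r_2'<_c r'$, or have the relevant end at $\partial_-$ rather than $\partial_+$; there is no uniform covering-translation relationship among them.

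What you are missing is exactly what the paper supplies: a $\Theta$-equivariant object that lives simultaneously in $\mathcal T_{\phi,\tilde r_1}$ and in $\accnr(\phi,\tilde r)$.  The paper defines the $(\tilde r,\tilde r_1)$-\emph{extreme} line as the $\prec$-maximal element of $\mathcal T_{\phi,\tilde r_1}\cap\accnr(\phi,\tilde r)$; this is preserved by $\Theta$ (Lemma~\ref{extreme is canonical}), so $\offset(\theta,r_1)=s'-s$ where $s,s'$ are the indices of the extreme lines on the two sides.  The remaining (and substantial) task is to compute $s$ and $s'$ up to bounded error from the \ct\ data.  That requires the auxiliary Lemma~\ref{eventually split} (``Stabilizing a ray'') to arrange that certain lines $S_j$ connecting $\tilde r$ to $\tilde r_{1,j}$ become completely split, and then a careful highest-edge-splitting argument pinning the extreme line's index between two computable markers $\tilde L^1$ and $\tilde L^4$.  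None of this is bookkeeping; it is the content of the proof, and your proposal does not supply a substitute for it.
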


 \medskip
 
 \noindent {\it Proof of Proposition~\ref{prop:barQ} (assuming Lemma~\ref{new finding b} and Lemma~\ref{new extreme})}.   
 We begin by computing 
 $D =  D(\phi,\psi)$ so that  $|Q_{b_1}(\theta)-Q_{b_2}(\theta)| < D$  for all $\theta\in \X$ that conjugate $\phi$ to $\psi$     and  all $b_1,b_2 \in \cS_2(\phi)$ satisfying $b_1 \sim b_2$. 
 
 Given $r \in \cR(\phi)$ we will find $D_{r}$ such that $|Q_{b_1}(\theta)-Q_{b_2}(\theta)| < D_{r}$  for all $\theta\in \X$ that conjugate $\phi$ to $\psi$ and all $b_1,b_2 \in \cS_2(\phi, r)$. We then take $D = |\cR(\phi)|  \max\{ D_{r}\}$ (where the $|\cR(\phi)|$ factor allows us to consider equivalent staple pairs that do not occur in the same ray).

For all $s \in \Z$,  $ \theta\phi^s$  is an element of $\X$ and conjugates $\phi$ to $\psi$; see Lemma~\ref{l:phi in X}.
  The translation number  $\tau(\phi,r)$ is defined in   Notation~\ref{notn:translation number}.    By definition and by Lemma~\ref{translation} we have    
 $$\offset(\theta\phi^s,r) = \offset(\theta,r)+  \tau(\phi^s, r)  =  \offset(\theta,r)+ s \tau(\phi, r)$$
Since
$$Q_{b_1}(\theta\phi^s) -Q_{b_2}(\theta\phi^s) =  (Q_{b_1}(\theta)+s)  - (Q_{b_2}(\theta)+s) = Q_{b_1}(\theta)  - Q_{b_2}(\theta)$$
 we may assume without loss that $$0 \le   \offset(\theta,r) \le \tau(\phi,r) $$   
 
 Using only this inequality we will produce an upper bound  $D_0$  for $|m_\theta(b)|$  when $b \in \cS_2(\phi, r)$.  
 This determines an  upper bound for $|Q_b(\theta)|$  when $b \in \cS_2(\phi, r)$, which when doubled gives  the desired upper bound $D_r$ for  $|Q_{b_1}(\theta)-Q_{b_2}(\theta)|$ when $b_1,b_2 \in \cS_2(\phi,r)$.
  
If $b$ is topmost in $r$ then  Lemma~\ref{new finding b} gives us $D_0$.   Otherwise, choose $r_1 < r$ so that $b$ is topmost in $r_1$.  Apply Lemma~\ref{new extreme} to find an upper bound for  $|\offset(\theta,r_1)|$ and then apply   Lemma~\ref{new finding b} to $b$ and $r_1$ to produce $D_0$ and hence $D$. 

To complete the proof {Proposition~\ref{prop:barQ}}, define $$\X(D):=\{\theta\in \X\mid |Q_{b_1}(\theta)-Q_{b_2}(\theta)| < D \mbox{ for all }b_1 \sim b_2 \in \cS_2(\phi)\}$$  Our choice of $D$ guarantees that   $\X(D)$ contains all $\theta \in \X$ that conjugate $\phi$ to $\psi$.  For each $i$, the image of $\X(D)$ by 
$$Q^i:\X\overset{Q}{\to}\Q^{\cS_2(\phi)}{\to} \Q^{\cS_2^i(\phi)}
$$ is discrete, $\Z$-invariant, and contained in a bounded neighborhood of the diagonal in $\Q^{\cS_2^i(\phi)}$. Hence the image of $\X(D)$ by
$$\bar Q^i:\X\overset{Q^i}\to \Q^{\cS_2^i(\phi)}\to\Q^{\cS_2^i(\phi)}/\Z$$ is finite and  $\X(D)$ is contained in finitely many cosets of $\Ker(\bar Q^i)$ and so also in finitely many cosets of $\Ker(\bar Q)$. 

To get representatives of these cosets we must find, for each   $\bar q \in \bar Q(\X(D))$, an element of $\X \cap \bar Q^{-1}(\bar q)$.  For this, it is enough to express $\bar q$ as a word in the $\bar Q$-image of the finite generating set $\mathcal G_{\X}$ for $\X=\Out_{\sf J}(\f)$ supplied by  Lemma~\ref{l:hat MW}}. To accomplish this, we find a finite subset $S\subset \Q^{\cS_2(\phi)}$ whose image in $\bar\Q^{\cS_2(\phi)}$ covers $\bar Q(\X(D))$ and then express the elements of $S$ in terms of the $Q(\mathcal G_\X)$. To find $S$, we first find finite $S^i\subset\Q^{\cS_2^i(\phi)}$ whose image in $\Q^{S^i_2(\phi)}/\Z$ covers $\bar Q^i(\X(D))$ and then take for $S$ the direct sum of the $S^i$'s, i.e.\ $$S:=\{q\in\Q^{\cS_2(\phi)}\mid \mbox{the projection of $q$ to $\Q^{\cS_2^i(\phi)}$ is in $S^i$}\}$$

We now find $S^i$. By definition of $Q$, the denominators of the coordinates of the image of $Q$ are bounded by $\max\{m_b(\phi)\mid b\in\cS_2(\phi)\}$. For convenience, we assume we have cleared denominators and all coordinates in the image of $Q$ are integers. Each $\bar q_i$ in the image of $\bar Q^i$ is represented by $q_i \in \Q^{\cS_2^i(\phi)}$ with first coordinate equal to $0$. Hence we may then take $S^i$ to be the set of vectors in $\Q^{\cS_2^i(\phi)}$ with integer coordinates of absolute value at most $D$ and $S$ to be the set of vectors in $\Q^{\cS_2(\phi)}$ with integer coordinates of absolute value at most $D$. 

The desired set of coset representatives can then be taken to be $\{\theta_s\mid s\in S\cap Q(\X(\phi))\}$ where by definition $\theta_s$ is a choice of element of $\X(\phi)$ satisfying $Q(\theta_s)=s$. We compute $S\cap Q(\X(\phi))$ and $\theta_s$ as follows. First compute $Q(\mathcal G_\X)$. It remains to check which elements of $S$ can be expressed as $\Z$-linear combinations of elements of this $Q(\mathcal G_\X)$ and to produce such a $\Z$-linear combination if it exists. For this, recall that given a finite set of vectors in $\Z^N$ it is standard (see for example \cite{vf:integers}) to find compatible bases $B_0$ for the free $\Z$-submodule they generate and $B$ for $\Z^N$. ($B_0$ and $B$ are {\it compatible} if there is a subset $\{b_m\}$ of $B$ and integers $n_m$ such that $B_0=\{n_m\cdot b_m\}$.) Without concern for efficiency, write each element of $S$ in terms of $B$ and check using divisibility of coordinates if it can be written in terms of $B_0$.
\qed

\subsection{Proof of Lemma~\ref{new finding b}} \label{proof of 7.3}

\begin{lemma} \label{compute m}  Assume that:
\begin{enumerate}
\item  $\theta \in \X$ conjugates $\phi$ to $\psi$.
\item  $b \in \cS_2(\phi,r)$   and  $b' =\theta(b)   \in \cS_2(\psi,r')$ where $r' =\theta(r)${}.  
  \item We are given
\begin{enumerate}
\item  [(a)]  a lift $\ti r$  of $r$ and a lift $\ti b$   of $b$ that is visible in $\ti r$ with index $i$
\item [(b)] a lift $\ti r'$  of $r'$ and a lift $\ti b'$   of $b'$ that is visible in $\ti r'$ with index $i'$
 \end{enumerate}
such that  $\Theta(\ti b) = \ti b'$ where $\Theta$ is the unique automorphism representing $\theta$ and  satisfying $\Theta(\ti r) =\ti r'$.  
\end{enumerate}
Then one can compute $m_b(\theta)$ up to an additive constant  that is independent of $\theta$. 
\end{lemma}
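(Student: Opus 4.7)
My plan is to realize the automorphisms $\Theta_1,\Theta_2$ of Definition~\ref{defn:m} explicitly as $\Theta_j = i_{c_j}\Theta$ for some $c_j \in \f$, and to read off $m_b(\theta)$ from the equation $c_2 c_1^{-1} = a^{m_b(\theta)}$. The inputs for this computation come from the visibility hypothesis (3).

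First, from (3a) together with the \ct\ $\fG$, I would algorithmically extract a finite description of $\sH_\fc(\ti L_1)$ and $\sH_\fc(\ti L_2)$ from the coarsened complete splitting of $\ti R_{\ti E}$ near index $i$: the root-free generator $a$ of the stabilizer of the common axis $\ti A$ (with $a^+ = \partial_+\ti A$); the non-periodic endpoints $\ti r_j$ of $\ti L_j$ (as elements of $\partial \f$); and the free factors $F_\fc(\ti r_j)$. An analogous inspection of $g$ near index $i'$ produces the primed data $a', \ti r'_j, F_\fc(\ti r'_j)$. Second, although $\Theta$ is not supplied directly, the hypothesis $\Theta(\ti b) = \ti b'$ pins its $\partial$-action down on exactly the objects that enter $\sH_\fc$: we get $\partial\Theta(\partial_\pm \ti A) = \partial_\pm \ti A'$, forcing $\Theta(a) = a'$ since both are root-free and share an attractor, and $\partial\Theta(\ti r_j) = \ti r'_j$, whence $\Theta(F_\fc(\ti r_j)) = F_\fc(\ti r'_j)$. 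Then $\Theta_j = i_{c_j}\Theta$ preserves $\sH_\fc(\ti L_j)$ if and only if
\[
c_j \, a' \, c_j^{-1} = a \qquad \text{and} \qquad c_j \, F_\fc(\ti r'_j) \, c_j^{-1} = F_\fc(\ti r_j).
\]
Existence of such $c_j$ follows from the assumption $\theta \in \X$ (so the conjugacy classes of $\sH_\fc(\ti L_j)$ and $\sH_\fc(\ti L'_j)$ agree) together with Lemma~\ref{l:unique lift}, and uniqueness is a consequence of the goodness of the pair $\sH_\fc(\ti L_j)$ (Lemma~\ref{l:algebraic lines are good pairs}) combined with Corollary~\ref{c:ff and fix good}: the only $\f$-element that both centralizes $a$ and normalizes $F_\fc(\ti r_j)$ is the identity. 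Solving the displayed system for $c_1$ and $c_2$ is a routine computation in $\f$ from the data already enumerated, and substituting $\Theta_j = i_{c_j}\Theta$ into $\Theta_2 = i_a^{m_b(\theta)}\Theta_1$ gives $c_2 c_1^{-1} = a^{m_b(\theta)}$.

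The chief obstacle, and the source of the additive constant, is that the explicit identifications of $\ti A$, of the root-free generator $a$ (versus $a^{-1}$), and of the reference lift of $\ti\sigma_i \ti\rho_i \ti\sigma_{i+1}$ used to locate $\sH_\fc(\ti L_j)$ in $\ti G$ each involve a small combinatorial choice; the analogous choices occur for $\ti b'$. Each such choice shifts $c_j$ on the left or right by a bounded power of $a$, and hence shifts the exponent of $c_2 c_1^{-1}$ by a bounded integer. A finite case check over the two visible configurations of Lemma~\ref{staple pair examples} (the quasi-exceptional case and the linear or exceptional cases) shows that this shift is controlled by $\fG$, $g$, $i$, $i'$, and the geometric type of $\ti b$ and $\ti b'$, but not by $\theta$; this is precisely the additive constant of the conclusion.
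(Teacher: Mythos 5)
Your route is genuinely different from the paper's. The paper never tries to identify the correcting inner automorphisms explicitly: it measures the twisting geometrically along the common axis $\ti A$, comparing nearest-point projections onto $\ti A$ of the relevant endpoints of $\ti L_2$ and of the image of $\ti L_2'$ under a lift of a marking-preserving homotopy equivalence $h : G' \to G$, and counts copies of the twist path; the index data $i,i'$ enter only through counting twist letters in the coarsened complete splittings near those indices, and the additive constant is precisely the bounded-cancellation constant of $h$ (plus $1$). You instead solve for $c_1,c_2$ with $\Theta_j = i_{c_j}\Theta$, using naturality ($\Theta(\sH_\fc(\ti L_j)) = \sH_\fc(\ti L'_j)$, via Lemma~\ref{l:algebraic lines are natural} and $\theta(\fc)=\fc$) together with the uniqueness coming from goodness of the pairs (Lemmas~\ref{l:algebraic lines are good pairs} and \ref{l:unique lift}), and read $m_b(\theta)$ off from $c_2c_1^{-1} = a^{m_b(\theta)}$. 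Carried out carefully this computes $m_b(\theta)$ exactly, which is stronger than the statement (and is consistent: any two $\theta$ satisfying the hypotheses with the same $\ti b \mapsto \ti b'$ have equal $m_b$, since their ratio has a representative fixing $\ti b$, hence $m_b = 0$). The price is that you must actually solve the conjugation equations; this is doable by Stallings-graph computations of the kind the paper itself sets up (one conjugator $c_0$ with $c_0 a' c_0^{-1} = a$, then a coset-intersection computation deciding for which $k$ the element $(a')^k$ conjugates the relevant free factors, cf.\ Corollary~\ref{c:equal pairs}), whereas the paper's bounded-cancellation argument deliberately avoids any exact identification across the two marked graphs.

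Two points need repair. First, ``routine computation'' deserves justification along the lines just sketched; as written it is an assertion. Second, and more substantively, your closing paragraph misdiagnoses where an additive constant would come from. The data you list are not actually ambiguous: $\ti A$ is determined by $\ti b$ (two distinct points of $\partial\f$ of the form $a^{\pm}$ determine the axis, and a single such point already determines it), the choice of $a$ versus $a^{-1}$ is fixed by Definition~\ref{defn:m} (orientation consistent with the twist path of $f$), and the lifts $\ti L_j$, $\ti L'_j$ are pinned down exactly by $\ti r$, $\ti r'$ and the indices via Notation~\ref{notn:elli}. So if your main argument goes through, no constant is needed at all. Worse, the one ambiguity you name explicitly --- $a$ versus $a^{-1}$ --- would flip the sign of $m_b(\theta)$, which is not an additive error independent of $\theta$, so the asserted ``finite case check'' could not establish what you claim. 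Delete that paragraph (or replace it with the observation that your computation is exact), supply the computability argument, and your proof establishes the lemma.
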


\proof   We  give a formula for $m_b(\theta)$ up to an error of at most one in terms of quantities $s$ and $s'$ (defined below) and then show how to compute $s$ and $s'$, up to a uniform additive constant, from $i$ and $i'$.

Let $\ti b = (\ti L_1, \ti L_2)$ where $\ti L_1 = \ti \ell_{i-1}$ and $\ti L_2 = \ti \ell_{i}$ or $\ti \ell_{i+1}$ and let  $\ti A$ be the common axis for $\ti b$.  By Corollary~\ref{cor:limit lines}, $\ti A$ projects to a \twistpath\  $w$ and we assume that the orientation on $\ti A$ agrees with that of $w$.   Similarly, $\ti b' = (\ti L'_1, \ti L'_2)$ where $\ti L'_1 = \ti \ell'_{{i'}-1}$ and $\ti L'_2 = \ti \ell'_{i'}$ or $\ti \ell'_{i'+1}$ and $\ti A'$ is the common axis for $\ti b'$. Let $\ti x_1$  be the nearest point on $\ti A$  to the initial end $\partial_- \ti L_1$. (See Figure~\ref{Bare1a}.) If $L_1$ is not a linear staple then $\sH_{\phi,\fc}(L_1)=[F(\partial_- \ti L_1), \partial_+ \ti L_1]$.   In this case, the ray from $\ti x_1$ to $\partial_-\ti L_1 $ contains an edge $\ti \sigma_{i-1}$ of height greater than that of $F(\partial_- \ti L_1)$ and so $\ti x_1$ is the nearest point on $\ti A$ to any point in $F(\partial_- \ti L_1)$.

\begin{figure}[h!] 
\centering
\resizebox{12cm}{!}{\begin{tikzpicture}[y=-1cm]
\sf
\filldraw[black] (12.7,24.13) circle (0.1143cm);
\filldraw[black] (12.7,26.67) circle (0.1143cm);
\filldraw[black] (12.7,29.21) circle (0.1143cm);
\draw[black] (12.28725,29.5275) -- cycle;
\draw[black] (8.89,26.51125) -- (8.89,26.82875);
\draw[black] (10.16,26.51125) -- (10.16,26.82875);
\draw[arrows=-triangle 45,black] (12.7,29.21) -- (22.86,29.21);

\draw[arrows=-triangle 45,black] (17.7,29.21) -- (22.86,33);

\draw[arrows=-triangle 45,black] (12.7,24.13) -- (22.86,24.13);
\draw[arrows=triangle 45-triangle 45,black] (12.7,22.86) -- (12.7,33.02);
\draw[arrows=-triangle 45,black] (6.35,26.67) -- (2.54,30.48);
\draw[arrows=triangle 45-,black] (2.54,26.67) -- (12.7,26.67);
\node [above left] at (10,26.5) {$\tilde\sigma_{i-1}$};
\node [above left] at (22.86,24.13) {$\tilde h^1(\partial_+\ti L'_2)$};
\node [above] at (12.7,22.86) {$\partial_+\ti A$};
\node [right] at (13,26.67) {$\tilde x_1$};
\node [left] at (12.5,24.13) {$\tilde y_2$};
\node [right] at (11.7,29.21) {$\tilde x_2$};
\node [below] at (12.7,33.02) {$\partial_-\tilde A$};

\node [below] at (22.86,33) {$\tilde h^2(\partial_+\ti L'_2)$};

\node [above left] at (22.86,29.21) {$\partial_+\ti L_2$};
\node [above left] at (2.54,30.48) {$\tilde h^1(\partial_-\ti L'_1)$};
\node [above left] at (2.54,26.67) {$\partial_-\ti L_1$};
\end{tikzpicture}%

\caption{ 
}
\label{Bare1a}
\end{figure}

By hypothesis, $\theta(L_1) = L'_1$.  Since $\theta \in \X$, it follows that $L'_1 \in \theta (\sH_{{\phi, \fc}}(L_1))=\sH_{{\phi, \fc}}(L_1)$.  Choose a     homotopy equivalence $h :G' \to G$ that preserves markings.  If $L_1$ is linear then $\sH_{{\phi, \fc}}(L_1) = \{L_1\}$  so $L_1'  = L_1$.  In this case,  we let $\ti h^1 : \ti G' \to \ti G$   be the lift of $h$ satisfying $\ti h^1(\ti L'_1) = \ti L_1$.
 If $L_1$ is not linear then $\sH_{{\phi, \fc}}(L_1) =  [ F_{{\fc}}(\partial_-\ti L_1), \partial_+\ti L_1]$.  In this case,  we let $\ti h^1 : \ti G' \to \ti G$   be the lift of $h$ satisfying $ \ti h_\#^1(\ti L'_1) \in (\partial  F_{{\fc}}(\partial_-\ti L_1), \partial_+\ti L_1)$.   Let $\Theta_1$ be the unique lift of $\theta$ satisfying $\Theta_1(\ti L_1) = \ti h_\#^1(\ti L_1') \in\sH_{{\phi, \fc}}(\ti L_1)$.

Let $\ti x_2$ and $\ti y_2$ be the  nearest points on $\ti A$  to the terminal ends $\partial_+ \ti L_2$ and $\ti h^1_\#(\partial_+\ti L'_2)$  respectively.    Arguing as above, there is a   lift $\ti h^2 : \ti G' \to \ti G$ such that $\ti h^2_\#(\ti A') = \ti A$ and such that $\ti x_2$ is the nearest point to  $\ti h^2(\partial_+\ti L'_2)$.    Moreover, there is a lift $\Theta_2$ of $\theta$ such that $\ti h^2_\#(\ti L_2') = {\Theta_2}(\ti L_2) \in (\partial_-L_2, \partial  F_{{\fc}}(\partial_+\ti L_2))$. It follows from  Definition~\ref{defn:m} that  the  oriented path $\ti \alpha \subset \ti A$ from    $\ti x_2$      to  $\ti y_2$  has the  form  $\ti w^{m_b(\theta)}$.
   
Let $\ti \beta$ and $\ti \beta'$    be the paths in $\ti A$ connecting   $\ti x_2$ to $\ti x_1$ and $\ti x_1$ to $\ti y_2$ respectively. Let $s$ and $s'$ be the number of complete copies of $\ti w$ (counted with orientation) crossed by the paths $\ti \beta$ and $\ti \beta'$ respectively.  Then   $|m_b(\theta) -(s' +s)| \le 1$.
 
Determining $s$ from the index $i$ is straightforward.  We  consider the cases  of Lemma~\ref{staple pair examples}.  In case \pref{qe}, $\sigma_i \rho_i \sigma_{i+1}$ is quasi-exceptional and $\rho_i = w^s$ where $w$ is the \twistpath\ for $\sigma_i$ and $\bar \sigma_{i+1}$.   In case \pref{exc}, $\sigma_i = E' w^s \bar E''$ for some $E',E'' \in \lin_w(f)$.  In case \pref{lin}, $\sigma_i = E'$ is linear with \twistpath\ $w$ and $\ell_{i}$ is not periodic.   If $\rho_i$ has an initial segment of the form $w^j$ for some $j >0$ then $s$ is the maximal such $j$; otherwise $-s$ is  the maximal $j \ge 0$ such that $\rho_i$ has an initial segment of the form $w^{-j}$.    In case \pref{bar lin}, $\bar \sigma_{i+1} = E'$ is linear with \twistpath\ $w$ and $\ell_{i-1}$ is not periodic.   In this case    $s$ is determined by the maximal initial segment of $\bar \rho_i$ of the form $w^{\pm j}$ as in the  case \pref{lin}.  

Let $\ti x_1'$ and $\ti x_2'$ be the nearest points on $\ti A'$ to $\partial_-\ti {L_1'}$ and $\partial_+\ti {L_1'}$ respectively.  Let $\ti w'$ be a    fundamental domain for the natural action of $\Z$ on  $\ti A'$. Arguing as above, using $G'$ in place of $G$, we can compute   the number $t'$ of complete copies of $\ti w'$ (counted with orientation) crossed by the path connecting $\ti x_2'$ to $\ti x_1'$.  We can also compute the  bounded cancellation constant $C'$ for $h$ \cite{co:bcc} (see also  \cite[Lemma~3.1]{bfh:tits0}).  Since $| s'-t'| < 2 C'$,    $m_b(\theta) =  t'+s$ up to the additive constant  $C = 2C'+1$ .     
\endproof
 
\medskip

\noindent{\it Proof of Lemma~\ref{new finding b}:}  By Lemma~\ref{new bound for staple pairs} and Remark~\ref{visible staple pairs}, applied to $\psi$, $b' = \theta(b)$ and $r' = \theta(r)$ we can find $B'$ so that each lift  $\ti b' \in \cS_2(\psi,\ti r')$ of $b'$ that satisfies $\ti \ell'_{B'} \prec \ti b'$   is visible in $\ti r'$.   After increasing $B'$ if necessary, we may assume that $\ti \ell'_{B'}$ is topmost in $\ti r'$. Now apply  Lemma~\ref{new bound for staple pairs} to find a lift $\ti b_0 \in \cS_2(\phi, \ti r)$ of $b$.  Using the given upper bound $C$ on $|\offset(\theta,r)|$, choose $q \ge 0$ so that $\ti \ell'_{B'} \prec \theta(\Phi^q \ti b_0)$.   Let $\ti b = \Phi^q \ti b_0$.  From   $C$ and $q$ we can compute an upper bound $I'$ for the index of $\theta(\ti b)$.  By Lemma~\ref{new bound for staple pairs}, we can list all visible  $\ti b'$  with index at most $I'$ and so have finitely many candidates for $\theta(\ti b)$.  Applying Lemma~\ref{compute m} to $b$ and each of these candidates gives us the desired upper bound for $m_b(\theta)$.
\qed

\subsection{Stabilizing a ray}

Suppose that $E_i$ is the unique edge of height $i> 0$ and that  $\sigma \subset G$ is  a path with  height $i $ whose endpoints, if any, are not contained in the interior of $E_i$.  Recall from Definition 4.1.3 and Lemma 4.1.4 of \cite{bfh:tits1} that  $\sigma$ has a unique splitting, called the {\em highest edge splitting of $\sigma$} whose splitting vertices are the initial endpoints of each occurence of $E_i$ in $\sigma$ and the terminal vertices of each occurence of $\bar E_i$ in $\sigma$.  In particular,  each term   in the splitting has  the form $E_i\gamma\bar E_i$, $E_i \gamma$, $\gamma \bar E_i$ or $\gamma$ for some $\gamma \subset G_{i-1}$.

The following lemma is used in the proof of Lemma~\ref{new extreme}.    We make implicit use of \cite[Lemma 4.6]{fh:recognition} which states that  if $\fG$ is completely split and  a path $\sigma \subset G$ is completely split then $f^k_\#(\sigma)$ is completely split for all $k \ge 0$.

\begin{lemma} \label{eventually split}
Suppose that  $\fG$ is a \ct\ representing $\phi$, that the edge $E$ corresponds to  some $r \in \cR(\phi)$, that $\xi$ is a finite subpath with endpoints at vertices and that $R=[\xi R_E]$.  Equivalently, $R = \tau R_1$ for some finite path $\tau$ with endpoints at vertices and some subray    $R_1$ of $R_E$.     Then there exists a computable   $k \ge 0$ so that   $f^k_\#(R)$ is completely split.  
\end{lemma}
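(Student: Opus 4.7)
The plan is to exhibit $R$ as a splitting $R = \tau_0 \cdot R_0$ (i.e., a decomposition respected by every $f^k_\#$) in which $R_0$ is a completely split tail of $R_E$ and $\tau_0$ is a finite path, and then to iterate $\tau_0$ until it becomes completely split. Since $\fG$ is a \ct, the eigenray $R_E = E \cdot u_E \cdot f_\#(u_E) \cdot \ldots$ carries a canonical complete splitting into single edges, \iNp s, and exceptional paths, whose splitting vertices $v_0 < v_1 < v_2 < \ldots$ along $R_E$ can be read off effectively from $\fG$.

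To construct the splitting, I would first locate the initial vertex $v$ of $R_1$ in the sequence $\{v_i\}$: either $v = v_{j-1}$ for some $j$, or $v$ lies in the interior of some piece $\mu_j$, which must then be an \iNp\ or exceptional path. Set $\ell = j$ in the first case and $\ell = j+1$ in the second, define $\tau_0$ to be the extension of $\tau$ along $R_1$ from $v$ up to $v_\ell$ (so that $\tau_0$ contains the entire piece $\mu_\ell$ as a terminal subpath), and let $R_0 := \mu_{\ell+1} \cdot \mu_{\ell+2} \cdots$, a completely split tail of $R_E$. The claim is that $R = \tau_0 \cdot R_0$ is a splitting under every $f^k_\#$: near $v_\ell$ the path $R$ agrees with $R_E$ (the incoming edge at $v_\ell$ is the last edge of $\mu_\ell$ in both), so the no-cancellation property at $f^k(v_\ell)$ inside $f^k_\#(R_E) = R_E$ (present because $v_\ell$ is a splitting vertex of $R_E$'s CS) transfers to $f^k_\#(R)$, provided one checks that any bounded cancellation arising at the $\tau$--$\mu_\ell$ junction inside $\tau_0$ does not propagate across all of $f^k_\#(\mu_\ell)$.

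With the splitting in hand, I would apply Lemma~4.25 of \cite{fh:recognition} in its effective form to the finite path $\tau_0$, producing a computable $k_1$ with $f^{k_1}_\#(\tau_0)$ completely split; combined with the fact that $f^{k_1}_\#(R_0)$ is completely split (automatic from Lemma~4.6 of \cite{fh:recognition} since CS is preserved by $f_\#$) and the splitting property, we get $f^{k_1}_\#(R) = f^{k_1}_\#(\tau_0) \cdot f^{k_1}_\#(R_0)$ as a completely split ray. The main obstacle is the verification that $R = \tau_0 \cdot R_0$ really is a splitting under all iterates $f^k_\#$: the delicate point is that $\tau_0$ must include an entire piece $\mu_\ell$ at its terminal end (not merely terminate at the vertex $v_\ell$), and one then invokes bounded cancellation, together with growth/stability estimates on $f^k_\#(\mu_\ell)$ in the \iNp, exceptional, and single edge cases, to ensure that the local structure at $f^k(v_\ell)$ inherited from $\mu_\ell$ is preserved, so that the splitting of $R_E$ at $v_\ell$ transfers to $R$.
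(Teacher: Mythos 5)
The gap is your central claim that $R=\tau_0\cdot R_0$ is a splitting. That the complete splitting of $R_E$ has a splitting vertex at $v_\ell$ is a statement about $R_E$ alone; it does not ``transfer'' to $R$, because everything to the left of $v_\ell$ has been replaced by $\tau$, and under iteration the material arriving at the junction from the left is completely different. Concretely, the cancellation at the internal $\tau$--$R_1$ junction when tightening $f^k$ is controlled only by the bounded cancellation constant of $f^k$, which grows with $k$, whereas the single piece $\mu_\ell$ you insert may be a fixed edge or a Nielsen path, so $f^k_\#(\mu_\ell)$ has bounded length; nothing in your argument prevents the cancellation from crossing $v_\ell$ for large $k$ and then eating into $f^k_\#(R_0)$, and the same problem occurs when $\mu_\ell$ grows more slowly than the cancellation (e.g.\ $\mu_\ell$ linear while $\tau$ contains higher order edges). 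The structural point is that $R_E\setminus E$ lies in a strictly lower filtration element, so a proper subray $R_1$ is entirely low, while $\tau$ is an arbitrary finite path and may have greater height; complete-splitting vertices of a low stratum carry no train-track protection against cancellation driven from higher strata. The only division points that are protected are occurrences of the highest edge of $R$ itself, which is exactly why the deferred ``check'' at the end of your proposal is not a technicality but the entire content of the lemma, and why it cannot be supplied by bounded cancellation with a division point fixed in advance.

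For comparison, the paper's proof never splits $R$ at an arbitrary complete-splitting vertex of $R_E$. It inducts on the height $h$ of $R$: if the height of $R_1$ equals $h$, the highest edge splitting of $R_1$ does give a splitting of $R$ and one finishes quickly; otherwise, after the replacement moves (justified by \cite[Lemmas 4.11 and 4.25]{fh:recognition}), one reduces to $R=E_h\,\mu\,R_1$ with $E_h$ the unique top edge at the front, and the crux is the claim—proved using the second fixed point $P\in\partial\Gamma$ and the $\ti f_\#$-invariant line from $P$ to $Q$—that some computable iterate $f^k_\#(R)$ begins with $E_h u_h f_\#(u_h)$; only then do the highest edge splitting vertices of the tail give a genuine splitting of $R$, and the induction on height proceeds. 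Your outline has no counterpart to this step. (A minor remark: you do not need an a priori ``effective form'' of \cite[Lemma 4.25]{fh:recognition}; as in the paper, once existence is known the exponent $k$ can be found by search-and-verify, so computability is not where the difficulty lies.)
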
 
 
\proof
The proof is by induction on the height $h$ of $R$ with the base case being vacuous because the lowest stratum in the filtration is a fixed loop.
  
We are free to replace $R $ by an iterate $f^l_\#(R)$ whenever it is convenient.  We also have a less obvious replacement move.
\begin{enumerate}
\item   \label{cut off beginning}   If  there is a splitting $R = \nu \cdot R'$ where $\nu$ has endpoints at vertices then we may replace $R$ by $R' $. 
\end{enumerate}

This follows from :   
  \begin{itemize}
  \item   \cite[Lemma 4.25]{fh:recognition} For any finite path $\nu$ with endpoints at vertices, $f_\#^k(\nu)$ is completely split for all sufficiently large $k$.
   \item  \cite[Lemma 4.11]{fh:recognition}  If a path $\sigma$ has a decomposition $\sigma = \sigma_1 \sigma_2$ with $\sigma_1$ and $\sigma_2$ completely split and the turn $(\bar \sigma_1,\sigma_2)$ legal then $\sigma = \sigma_1 \sigma_2$ is a complete splitting.
   \item  One can check if a given finite path with endpoints at vertices has a complete splitting (because there are only finitely many candidate decompositions).
  \end{itemize}
  
  Let $h_1$ be the height of $R_1$. 
  Each splitting vertex $v$ for the highest edge splitting of $R_1$ is also a splitting vertex for the complete splitting of $R_E$ and so determines a splitting of  $R_1$ into a finite initial subpath followed by a completely split terminal ray $\gamma$.  If $h = h_1$ then $v$ determines a splitting of $R$ into a finite initial subpath followed by  $\gamma$.  In this case,  an application of \pref{cut off beginning} completes the proof. 

We may therefore assume that  $h_1 <h$ and so the highest edge splitting of $R$ is finite.  Applying \pref{cut off beginning}, we may assume that the highest edge splitting of $R$ has just one term.  Thus   $R =  E_h \mu R_1$ where $E_h $ is the unique edge with height $h$ and $\mu$ has height less than $h$.   Let $h_2 < h$ be the height of $R_2 = \mu R_1$.  (At various stages of the proof, we will let $R_2$ be the ray obtained from $R$ by removing its initial edge. The exact edge description of $R_2$ will vary with the context.)

Let   $u_h$ be the path satisfying $f(E_h) = E_h \cdot u_h$. If the height of $u_h$ is $> h_2$ then $f_\#(R) =  E_h \cdot [u_hf_\#(R_2)]$   is a splitting so we may replace $R$ by  $ [u_hf_\#(R_2)]$ which has height less than $h$.  In this case the induction hypothesis completes the proof.  If the height of $u_h$ is $< h_2$     and $R_2 = \sigma_1\cdot \sigma_2 \cdot \ldots$ is the highest edge splitting of $R_2$, then  $R =  [E_h u_h\sigma_1] \cdot  \sigma_2 \cdot \ldots$  is a splitting and the same argument completes the proof.  We are now reduced to the case that  the height of $u_h$ is $h_2$ and we make this assumption for the rest of the proof.

\begin{figure}[h!] 
\centering
\resizebox{12cm}{!}{\begin{tikzpicture}[y=-1cm]
\sf
\filldraw[black] (12.7,13.97) circle (0.1143cm);
\filldraw[black] (12.7,16.51) circle (0.1143cm);
\filldraw[black] (7.62,16.51) circle (0.1143cm);
\draw[black] (13.97,16.35125) -- (13.97,16.66875);
\draw[arrows=triangle 45-triangle 45,black] (5.08,16.51) -- (18.57375,16.51);
\draw[ultra thick,arrows=triangle 45-,black] (7.62,19.05) -- (7.62,16.51) -- (12.7,16.51) -- (12.7,13.97);
\draw[ultra thick,arrows=triangle 45-,black] (12.7,15.5) -- (12.7,13.97);
\draw[thick,arrows=triangle 45-,black] (7.62,19.05) -- (7.62,16.51) -- (12.7,16.51) -- (12.7,13.97);
\draw[arrows=-triangle 45,black] plot [smooth, tension=1] coordinates {(12.8,14.12875) (14,16) (17.4625,16.4)};
\node [below] at (5.08,16.51) {$P$};
\node [below] at (18.57375,16.51) {$Q$};
\node [left] at  (12.7,13.97) {$\tilde x$\ \ };
\node [left] at  (12.7,15.2) {$\tilde E_h$\ \ };
\node [below left] at (12.7,16.51) {$\tilde y$\ \ };
\node [below right] at (12.7,16.51) {\ \ \ $\tilde u_h$};
\node [right] at (7.62,19.05) {\  $\tilde r$};
\node [above] at (7.62,16.51) {$p(\tilde r)$};
\node [above right] at (7.62,16.51) {\ \ \ \ \ \ \ \ \  \ \ \ \ \  $\tilde R$};
\node [above right] at (14,16) {$\tilde R_{\tilde E_{h}}$};
\end{tikzpicture}

\caption{}
\end{figure}

We claim that there exists $k \ge 0$ so that $E_hu_h f_\#(u_h) $ is an initial segment of $f^k_\#(R)$. (Note that for any given $k$, one can check if $E_hu_h f_\#(u_h) $ is an initial segment of $f^k_\#(R)$ and so $k$ with this property can be computed once one knows that it exists.) Choose a lift $\ti E_h \subset \ti G$ of $E_h$,    let $\Gamma$ be the component of the full pre-image of $G_{h_2}$ that contains the terminal $\ti y$ endpoint of $\ti E_h$ and let $\ti f  : \ti G \to \ti G$ be the lift of $f$ that fixes the initial endpoint $\ti x$ of $\ti E_h$.  Then $\Gamma$ is $\ti f$-invariant and the lift of $R$ whose first edge is $\ti E_h$ decomposes as $\ti R =\ti E_h \ti R_2$ where $\ti R_2 \subset \Gamma$ is a lift of $R_2$.  Let $\ti u_h$ be the lift of $u_h$ with initial endpoint $\ti y$.  Then $\ti f(\ti E_h) = \ti E_h \cdot \ti u_h$ and $\ti R_{\ti E_h} \setminus \ti E_h = \ti u_h\cdot f_\#(\ti u_h) \cdot f^2_\#(\ti u_h)\cdot \ldots$  
      is a ray  of height $h_2$ that converges to an attracting fixed point $Q \in \partial \Gamma$ for the action of  $ \ti f$ on $\partial \Gamma$.  By Lemma~2.8(ii) of \cite{bfh:tits3} there is another fixed point   $P\ne Q \in \partial \Gamma$ for the action of $\partial \ti f$.     The line $  \overrightarrow{PQ} \subset \Gamma$ from $P$ to $Q$  is $\ti f_\#$-invariant  and has height $h_2$.      Let $\V$ be the set of highest edge splitting vertices of $\overrightarrow{PQ}$ with the order induced by the orientation on $\overrightarrow{PQ}$.  Then  $\ti f_\#$ preserves the highest edge splitting of $\overrightarrow{PQ}$ and so $\ti f$  induces an order preserving bijection of $\V$.  Our choice of $Q$ guarantees that $\ti f$ moves points in   $\V$  away from $P$ and towards  $Q$.  Since $\ti f$ induces an order preserving injection of the set $\V'$ of highest edge splitting vertices of $\ti R_{\ti E_h} \setminus \ti E_h$ into itself,  it follows that   $\V' \subset \V$. To see this, note that for each $\ti v' \in  \V'$ and all sufficiently large $m$, $\ti f^m(\ti v')$ is a highest edge splitting vertex for the common terminal ray of  $\overrightarrow{PQ}$ and $R_{\ti E_h} \setminus \ti E_h$ and so $\ti f^m(\ti v') \in \V$.  Since the restriction of $\ti f^m$ to the vertex set of $\Gamma$ and the restriction of $\ti f^m$ to $ \V$ are  bijections,  $\ti v' \in \V$. 

Since $\ti r$ is an attractor for $\Phi_{\ti r}$,   $\ti r \ne P$.  If $\ti r = Q$ then the lemma is obvious so we may assume that the nearest point projection $p(\ti r)$ of $\ti r$ to $ \overrightarrow{PQ}$ is well defined.   The line $\overrightarrow{ \ti rQ}$  intersects  $\overrightarrow{PQ}$ in the ray    $\overrightarrow{p(\ti r) Q}$.  The set of highest edge splitting vertices of  $\overrightarrow{p(\ti r) Q}$ equals the intersection of the set of highest edge splitting vertices  of $ \overrightarrow{PQ}$ and the set of highest edge splitting vertices  of $\overrightarrow {\ti r Q}$.    It follows that  the set of highest edge splitting vertices of  $\overrightarrow{p(\ti f_\#(\ti r)) Q}$  is the $\ti f_\#$-image of the set of highest edge splitting vertices of  $\overrightarrow{p(\ti r) Q}$.   Thus $p(f^k_\#(\ti r))  \to Q$ and, after replacing $R$ by some $\ti f^k_\#(R)$, we may assume that $p(\ti r)$ is contained in  $f^2_\#(\ti u_h)\cdot f^3_\#(\ti u_h)\cdot \ldots$. This completes the proof of the claim.

We now fix $k$ satisfying the conclusions of the above claim and replace $R$ by $f^k_\#(R)$.    Thus $R = E_hu_h f_\#(u_h) \dots$  and we let $R_2 = u_h f_\#(u_h) \dots$ be the terminal ray of $R$ obtained by removing its initial edge.   We will prove that the decomposition of $R$ determined by the highest edge splitting vertices of $R_2$   is a splitting of $R$.  The proof then concludes as in previous cases.   

We continue with the notation established in the proof of the  claim.  Choose $\ti v \in \V \cap \ti u_h$ and decompose $\ti R$ as $\ti R = \ti \alpha \ti \beta\ti  \gamma$ where $\ti \alpha =\overrightarrow{\ti x \ti v}$,   $\ti \beta= \overrightarrow{\ti v \ti f(\ti v)}$ and  $\ti \gamma = \overrightarrow{\ti f(\ti v) \ti r}$.  Since $\ti \alpha \ti \beta$ is a subpath of   $\ti E_h \cdot \ti u_h\cdot f_\#(\ti u_h) \cdot f^2_\#(\ti u_h)\cdot \ldots$,   no edges of height $h_2$ are cancelled when $\ti f(\ti \alpha \ti \beta)$ is tightened to $\ti f_\#(\ti \alpha \ti \beta)$.   Similarly, no edges of height $h_2$ are cancelled  when   $\ti f( \ti \beta\ti \gamma)$ is tightened to $\ti f_\#( \ti \beta\ti \gamma)$ because $\ti \beta \ti  \gamma$ is a concatenation of terms in the highest edge splitting of $\ti R_2$.     Since $\ti \beta$ contains at least one edge of height $h_2$, it follows that  no edges of height $h_2$ are cancelled  when   $\ti f(\ti R) = \ti f(\ti \alpha \ti \beta\ti  \gamma)$ is tightened to $\ti f_\#(R)$.   This proves that the highest edge splitting of  $\ti R_2$   is a splitting of $\ti R$ as desired.  
\endproof

\subsection{Proof of Lemma~\ref{new extreme}} \label{proof of 7.4}
Recall from Notation~\ref{notn:translation number} and Lemma~\ref{lem:fixes each r} that ${\cal T}_{\phi,\ti r}$ is the set   of  topmost elements of $\accnr(\phi, \ti r)$ and that $r'= \theta(r)$ and $r'_1 = \theta(r_1)$ are independent of $\theta\in \X$ that conjugates $\phi$ to $\psi$.

Suppose that $\ti r_1$ and $\ti r'_1$ are lifts  of $r_1$ and $r_1'$ respectively and  that $\Theta$ is the lift of $\theta$  satisfying $\Theta(\ti r_1) = \ti r_1'$.   If  $\Theta(\ti L) = \ti L'$ where $\ti L \in{\cal T}_{\phi,\ti r_1}$ has index $s$ and $\ti L' \in{\cal T}_{\psi,\ti r'_1}$ has index $s'$  then  $\offset(\theta,r_1) = s'-s$.  We will not be able to find   $\ti L$ and $\ti L'$ whose indices we know exactly but we will be able to find $\ti L$ and $\ti L'$ whose indices we know   up to a  uniform bound and this is sufficient.
    
Before beginning the formal proof, we introduce a way to find distinguished elements of ${\cal T}_{\phi,\ti r_1}$. 
\medskip

 \begin{notn}\label{defn:extreme line}   Suppose that $r_1 <_c r$ (Notation~\ref{weaker po}) and that $\ti r_1$ and $\ti r$ are   lifts   such that    ${\cal T}_{\phi, \ti r_1} \cap \accnr(\phi,\ti r) \ne \emptyset$.    The  {\em $(\ti r, \ti r_1)$-extreme} line is the element of ${\cal T}_{\phi,\ti r_1} \cap \accnr(\phi,\ti r) $ that  is maximal  in the order on   ${\cal T}_{\phi, \ti r_1}$ . 
 \end{notn}

The next lemma states that  extreme lines behave well with respect to conjugation.

  \begin{lemma}   \label{extreme is canonical} Suppose that  $\theta$ conjugates $\phi$ to $\psi$, that $\Theta \in  \theta$, that  $\ti r$ and $\ti r_1$ are lifts of $r >_c r_1 $  and that  $\ti L_2 \in {\cal T}_{\phi,\ti r_1}$ is  $(\ti r, \ti r_1)$-extreme.   Then $\Theta(\ti L_2)$ is   $( \Theta(\ti r),  \Theta(\ti r_1))$-extreme. 
  \end{lemma}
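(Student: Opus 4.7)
The plan is to invoke the naturality results already established for $\accnr$, $\cal T$, and the order $\prec$ under conjugation, and then observe that an order-preserving bijection must send the maximum of one intersection to the maximum of the corresponding intersection.

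First I would verify that the notion of $(\Theta(\ti r), \Theta(\ti r_1))$-extreme makes sense. Since $r >_c r_1$ in $\cR(\phi)$, Lemma~\ref{weaker po conjugacy} gives $\theta(r) >_c \theta(r_1)$ in $\cR(\psi)$, so $\Theta(\ti r)$ and $\Theta(\ti r_1)$ are lifts of a consecutive pair in the partial order on $\cR(\psi)$, exactly as required by Notation~\ref{defn:extreme line}.

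Next I would assemble three facts about the map $\ti L \mapsto \Theta(\ti L)$:
\begin{enumerate}
\item By Lemma~\ref{accnr by conjugacy}, $\Theta$ induces a bijection $\accnr(\phi,\ti r_1) \to \accnr(\psi,\Theta(\ti r_1))$ that restricts to a bijection $\accnr(\phi,\ti r) \to \accnr(\psi,\Theta(\ti r))$.
\item By Lemma~\ref{translation}\pref{item:same translation}--\pref{item:offset} (or directly by the argument behind Lemma~\ref{lemma:translation} combined with Lemma~\ref{weaker po conjugacy}, which shows that $\Theta$ preserves the topmost property encoded in Definition~\ref{defn:topmost}), $\Theta$ restricts to a bijection ${\cal T}_{\phi,\ti r_1} \to {\cal T}_{\psi,\Theta(\ti r_1)}$.
\item By the moreover statement of Lemma~\ref{lem:prec}, $\Theta : \accnr(\phi,\ti r_1) \to \accnr(\psi, \Theta(\ti r_1))$ preserves the total order $\prec$, so the restriction to ${\cal T}_{\phi,\ti r_1}$ is order-preserving as well.
\end{enumerate}

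Combining (1) and (2), $\Theta$ restricts to a bijection
\[
{\cal T}_{\phi,\ti r_1} \cap \accnr(\phi,\ti r) \;\longrightarrow\; {\cal T}_{\psi,\Theta(\ti r_1)} \cap \accnr(\psi,\Theta(\ti r)),
\]
and by (3) this bijection is order-preserving. By hypothesis, $\ti L_2$ is the (necessarily unique) $\prec$-maximum of the left-hand set, so $\Theta(\ti L_2)$ is the $\prec$-maximum of the right-hand set, which by definition means that $\Theta(\ti L_2)$ is $(\Theta(\ti r),\Theta(\ti r_1))$-extreme. There is no real obstacle here; the proof is a bookkeeping exercise assembling naturality statements that have already been proved.
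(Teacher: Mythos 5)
Your proposal is correct and follows essentially the same route as the paper: the paper's proof simply cites Lemma~\ref{translation} and Lemma~\ref{accnr by conjugacy} to conclude that $\Theta$ carries ${\cal T}_{\phi,\ti r_1}$ to ${\cal T}_{\psi,\Theta(\ti r_1)}$ order-preservingly and $\accnr(\phi,\ti r)$ to $\accnr(\psi,\partial\Theta(\ti r))$, so the maximum of the intersection goes to the maximum of the corresponding intersection. You have merely spelled out the same bookkeeping (adding the harmless preliminary check via Lemma~\ref{weaker po conjugacy} and the explicit appeal to Lemma~\ref{lem:prec}) in more detail.
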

  
  \begin{proof} This follows from Lemma~\ref{translation}   and Lemma~\ref{accnr by conjugacy} which imply that $\Theta$ maps  ${\cal T}_{\phi,\ti r_1}$ to ${\cal T}_{\psi, \Theta(\ti r_1)}$ preserving order and maps $\accnr(\phi,\ti r)$ to $\accnr(\psi, \Theta(\ti r))$.
  \end{proof}

\begin{proof}[Proof of Lemma~\ref{new extreme}]  If $C$ is an upper bound for $|\offset(\theta,r)|$, it suffices to find, for each $|c| \le C$, an upper bound $C_{1,c}$ for $|\offset(\theta,r_1)|$ assuming that $\offset(\theta,r) = c$.  The desired upper bound $C_1$ for $|\offset(\theta,r_1)|$ is then $\max\{C_{1,c}\}$.  Going forward we may therefore assume that we know $|\offset(\theta,r)|$ exactly.

There is no loss in assuming $r_1<_cr$.   Let $E$ and $E_1$ be the elements of $\E_f$ corresponding to $r$ and $r_1$ respectively.  We will assume that $E_1$ occurs in $R_E$; the remaining case, in which $\bar E_1$ but not $E_1$ itself occurs in $R_E$, is  similar and is left to the reader.  Recall from Notation~\ref{notn:translation number} that  the visible elements of ${\cal T}_{\phi,\ti r}$ are enumerated $ \ti L_1, \ti L_2,\ldots$.   For $j \ge 0$, define $q_j \ge 0$ by $\ti L_j = \ti \ell_{q_j}$ and so $\ti L_j = (\ti R^-_{q_j})^{-1}\ti \rho_{q_j} \ti R^+_{q_j+1}$. 
\medskip

The first step of the proof is to show that 
\begin{itemize}
\item [(a)]
there is a computable $J > 0$ so that if $j \ge J$ and if $\ti r_{1,j} := \partial_+\ti L_j$ is a lift of  $r_1$ (equivalently, $\sigma_{q_j+1} = E_1$ and $ R^+_{q_j+1} = R_{E_1}$), then the line $S_j$ connecting  $\ti r =\partial_+\ti R_{\ti E}$ to $\ti r_{1,j}$ is completely split. See Figure~\ref{Bare2a}.
\end{itemize}
Lemma~\ref{lemma:translation} implies that $\ti f^k_\#(\ti L_j) = \ti L_{j +k \tau(\phi,r)}$  and hence $ \ti f^k_\#( S_j)  =   S_{j +k \tau(\phi,r)}$.  It therefore suffices to show that  for each $0 \le j \le \tau(\phi,r)$, there is a computable $K \ge 0$ so that   $ \ti f^K_\#(S_j)$ (and hence $ \ti f^k_\#(S_j)$ for all $k \ge K$)  is completely split.

 The line $S_j$ decomposes as a concatenation of (the inverse of) a ray in $\ti R_{\ti E} \setminus \ti E$ and a proper subray of a lift of  $R_{E_1}$. The height of the former is at least that of $E_1$ and the height of the latter is at most that of $E_1$.  Moreover, $\ti R_{\ti E_1} \setminus \ti E_1$ has height less than that of $E_1$. It follows that $\ti R_{\ti E} \setminus \ti E$ and $ S_j$ have the same height and that each    splitting vertex $\ti v$  for the highest edge splitting of $ S_j$ is contained in $\ti R_{\ti E} \setminus \ti E$ and is a splitting vertex for $\ti R_{\ti E}$.  Splitting $ S_j$ at one such $\ti v$ we write $ S_j = \ti A^{-1}_j \cdot \ti B_j$ where  $A_j$ is a concatenation of terms in the complete splitting of $R_{E}$ and $B_j$ has a subray in common with $R_{E_1}$. For all $k \ge 0$,  $  S_{j +k \tau(\phi,r)} = \ti f^k_\#( S_j) = \ti f^k_\#(\ti A^{-1}_j) \cdot \ti f^k_\#(\ti B_j)$.  By Lemma~\ref{eventually split},  we can find $K$ so that $ f^K_\#( B_j)$ is completely split. It follows (see the second bullet in the proof of Lemma~\ref{eventually split}) that  $ \ti f^K_\#(S_j) $ is completely split. This completes the first step.
\medskip

\begin{figure}[h!] 
\centering
\resizebox{12cm}{!}{\begin{tikzpicture}[y=-1cm]
\sf
\draw[thick,black] (11.35063,8.81063) +(87:1.35171) arc (87:3:1.35171);
\filldraw[black] (10.95375,10.16) circle (0.1143cm);
\filldraw[black] (10.16,10.16) circle (0.1143cm);
\filldraw[black] (9.36625,10.16) circle (0.1143cm);
\filldraw[black] (12.22375,9.8425) circle (0.1143cm);
\filldraw[black] (12.62168,9.28793) circle (0.1143cm);
\draw[thick,arrows=-triangle 45,black] (5.08,10.16) -- (17.78,10.16);
\draw[black] (6.985,10.00125) -- (6.985,10.31875);
\draw[black] (8.09625,10.00125) -- (8.09625,10.31875);
\draw[thick,arrows=-triangle 45,black] (5.08,8.5725) -- (5.08,10.16);
\draw[thick,arrows=-triangle 45,black] (12.7,8.89) -- (12.7,5.715);
\draw[arrows=-triangle 45,black] plot [smooth, tension=1] coordinates {(4.3,8.6) (7.1,11) (17.4625,10.7)};
\draw[arrows=-triangle 45,black] plot [smooth, tension=1] coordinates {(5.3,8.6) 
(10,9.3) (12.5,5.87375)};
\draw[arrows=-triangle 45,black] plot [smooth, tension=1] coordinates {(17.2,9.9) (14,9.2) (13,6.0325)};
\node [right] at (4.4,9.2) {$\tilde E$};
\node [below right] at (6.985,10.2) {$\tilde\sigma_{q_j+1}$};
\node [below] at (9.36625,10.3) {$\tilde w_0$};
\node [right] at (12.7,9.28793) {$\tilde w_4$};
\node [above right] at (14,9.2) {$S_j$};
\node [above left] at (10,9.3) {$T_j$};
\node [below] at (7.1,11) {$\tilde R_{\tilde E}$};
\node [right] at (17.78,10.16) {$\tilde r$};
\node [above] at (12.6,5.7) {$\tilde r_{1,j}$};
\end{tikzpicture}%

\caption{}
\label{Bare2a}
\end{figure}

Let ${\cal T}_{\psi,\ti r'} = \{\ti L'_1,\ti L'_2,\ldots\}$ be the set of  topmost elements of $\accnr(\psi,\ti r')$. By definition, $\Theta(\ti L_j) = \ti L'_{j +\offset(\theta,r)}$.  The following $\psi$ and $r'$ analog of (a) is verified by the same arguments given for (a).
 \begin{itemize}
\item [(b)]
there is a computable $J' > 0$ so that if $j \ge J'$ and if $\ti r'_{1,j} :=\partial_+\ti {L'_j} $   projects to  $r'_1$  then the line $S'_j$ connecting  $\ti r'$   to $ \ti r'_{1,j} $ is completely split.
\end{itemize}

Note also that 
\begin{itemize}
\item [(c)]
For all $j\ge 1$, the line $T_j$ connecting the initial vertex of $\ti R_{\ti E}$ to $\ti r_{1,j}$ is completely split and similarly for the line $T'_j$ connecting the initial vertex of $\ti R'_{\ti E'}$ to  $ \partial_+ \ti {L'_j}$.
\end{itemize}
    
For $j \ge 0$, let $\V_j$ be the set of highest edge splitting vertices of $\ti R^+_{q_j+1} \setminus \ti \sigma_{q_j+1}$    (which is a terminal ray of $\ti L_j$) and let $\V'_j$ be the set of highest edge splitting vertices of $\ti R'^+_{q'_j+1} \setminus \ti \sigma'_{q'_j+1}$. The second step of the proof is to choose an   index $j$ so that the following four  properties are satisfied.
\begin{description}
\item [(i)] $S_j$ is completely split.
\item [(ii)] There exist $\ti w \in \V_j$ such that $\ti w, \ti f(\ti w), \ti f^2(\ti w) \in \ti R^+_{q_j+1} \cap \ti R_{\ti E} $.
\item [(iii)]  Letting $j' = j +\offset(\theta,r)$, the line $S'_{j'}$ is completely split. 
\item [(iv)] There exist $\ti w' \in \V'_{j'}$ such that $\ti w', \ti g(\ti w'), \ti g^2(\ti w') \in  \ti {R'}^+_{q'_{j'}+1} \cap \ti R'_{\ti E'}$.  
\end{description}

Items (i) and (iii) hold for all $j \ge \max\{J,J' - \offset(\theta,r)\}$.   For (ii),  write $j = a_j + c_j \tau(\phi,r)$ where $0 \le a_j < \tau(\phi,r)$.  Then $\ti L_j =  \ti f^{c_j}_\#(\ti L_{a_j})$ and  $\ti L_j \cap \ti R_{\ti E}$  contains an initial segment of  $\ti {R}^+_{q_{j}+1}$ whose length goes to infinity with $j$.   
  If $c_j$ is sufficiently large then (ii) is satisfied.  Item (iv) is established in the same way, completing the second step.   

\medskip

We have $\Theta(\ti r) = \ti r'$ and $\Theta(\ti L_j) = \ti L'_{j'}$.  The latter implies that $\Theta(\ti r_{1,j}) = \ti r'_{1,j'}$.    Lemma~\ref{extreme is canonical}  implies that $\Theta$ maps the $(\ti r, \ti r_{1,j})$-extreme line to the $(\ti r', \ti r'_{1,j'})$-extreme line.  Let $s_j$ be the index  of  the $(\ti r, \ti r_{1,j})$-extreme line (as an element of ${\cal T}_{\ti r_{1,j}}$) and  let $s'_{j'}$ be the index  of  the $(\ti r', \ti r'_{1,j'})$-extreme line (as an element of ${\cal T}_{\ti r'_{1,j'}}$).  Then $\offset(\theta, r_1) = s'_{j'} - s_j$.  We will complete the proof by   finding $a_j  \le s_j \le  b_j$ such that $b_j - a_j \le 3 \tau(\phi, r_1)$ and $a'_{j'}  \le s'_{j'} \le  b'_{j'}$ such that $b'_{j'} - a'_{j'} \le 3 \tau(\psi, r'_1) = 3\tau(\phi, r_1)$.  These allow us to compute $\offset(\theta, r_1)$ with an error at most $ 6\tau(\phi, r_1)$ and hence compute an upper bound for $\offset(\theta, r_1)$.

\medskip

Let $h_2$ be the height of  $R_{E_1} \setminus E_1$ (which is the same as the height of $  \ti R^+_{q_j+1} \setminus \ti \sigma_{{q_j}+1}$) and  let $E_2$ be the unique edge of height $h_2$.      We claim that 
 \begin{itemize}
 \item [(d)] each $\ti w \in \V_j \cap \ti R_{\ti E}$ is a splitting vertex for the      complete splittings of  $T_j$ and  $\ti R_{\ti E}$.
 \end{itemize}
 It suffices to show that  $\ti w$ is not contained in the interior of a term $\ti \mu$ in one of those splittings.   Such a $\ti \mu$ would be an     \iNp\ or exceptional path   with height $\ge h_2$ and whose first edge is contained in  $  \ti R^+_{q_j+1} \setminus \ti \sigma_{{q_j}+1}$ (because $\ti \sigma_{{q_j}+1}$ is a term in both splittings)  and so has height at most $h_2$.  Thus $E_2$ would be a linear edge with twist path $w_2$ and $\mu$ would have one of the following forms: $E_2 w_2^p\bar E_2$, $E_2 w_2^p \bar E_3 $ or $E_3 w_2^p \bar E_2$ where $p \ne 0$ and where $E_3 \ne E_2$ is a linear edge of height $< h_2$ with twist path $w_2$.   In none of these cases does the interior of $\mu$ contain a vertex that is the initial endpoint of $E_2$ or the terminal endpoint of $\bar E_2$.   
 This completes the proof of (d).  
\medskip

A similar analysis shows that 
\begin{itemize}
\item [(e)]
each    $\ti w \in \V_j  $ that is disjoint from $\ti R_{\ti E}$ is a splitting vertex for the complete splittings of $S_j$ and   $T_j$.
\end{itemize}

\medskip

Let $\ti w_0$ be the last element of $\V_j$ such that $\ti w_1 = \ti f(\ti w_0)$ and $\ti w_2 = \ti f^2(\ti w_0)$ are contained in $ \ti R_{\ti E}$ (and hence contained in  
 $ \V_j \cap \ti R_{\ti E}$).   Item (d) implies that    
  the path $\ti \alpha$ connecting $\ti w_0$ to $\ti w_2$     inherits the same complete splitting from   $\ti R_{\ti E}$ and from $  \ti R^+_{q_j+1}$.  Thus the lift $\ti \sigma_a$ of $E_2$  or $\bar E_2$ with endpoint $\ti w_1$ determines   an element   $\ti L^1$  of ${\cal T}_{\phi,\ti r_{1,j}} \cap \accnr(\phi,\ti r) $. (If $\ti \sigma_a$ is a lift of $E_2$  
  then $\ti w_1$ is the initial endpoint of $\ti \sigma_a$ and $\ti L^1 = \ti \ell_{a-1}$; If $\ti \sigma_a$ is a lift of $\bar E_2$ then $\ti w_1$ is the terminal endpoint of $\ti \sigma_a$ and then $\ti L^1 = \ti \ell_{a_1}$.) In particular, the index $s_j$ of  the $(\ti r, \ti r_{1,j})$-extreme line (as an element of ${\cal T}_{\phi,\ti r_{1,j}} \cap \accnr(\phi, \ti r)$) is at least as big as that of $\ti L^1$.   

Let $\ti w_3=\ti f^3(\ti w_0)$ and $\ti w_4 = \ti f^4(\ti w_0)$, neither of which   is contained in $\ti R_{\ti E}$.  The  lift of $E_2$ or $\bar E_2$ with endpoint $\ti w_4$ determines  an element $\ti L^4$  in {$ {\cal T}_{\phi, \ti r_{1,j}}$}.  Item (e) and the hard splitting property of a complete splitting (Lemma 4.11 of \cite{fh:recognition}) implies that no point in the terminal ray of $  \ti R^+_{q_j+1}$ that begins with $\ti w_4$  is ever identified, under iteration by  $\ti f$,  with a point in $\ti R_{\ti E}$. It follows that $\ti L^4$ is not an element of $\accnr(\phi,\ti r) $ and so   $s_j$  is  less than the index   of $\ti L^4$.    

Combining the inequalities established in the preceding two paragraphs we are able to compute $s_j$ with an error of at most $3\tau(\phi, r_1) $.  The parallel argument allows us to   compute the index $s'_j$ of  the {$(\ti r', \ti {r'}_{1,j'})$}-extreme line (as an element of {$ {\cal T}_{\phi, \ti r_{1,j}}$}) with an error of at most $3 \tau(\psi, r'_1)=3 \tau(\phi,r_1)$.  As noted above, this completes the proof.
\end{proof}

\section{Proof of Proposition~\ref{last step}}\label{s:algorithm}

 {Some of out  arguments are} by induction up through the elements $\F_k$ of the chain $\fc$.  We write $\phi | \F_k = \psi | \F_k$ if $\phi | [F] = \psi | [F]$ for each component $[F]$ of $\F_k$.  Similarly, we say $\theta|\F_k$ conjugates $\phi|\F_k$ to $\psi | \F_k$ if $\phi^\theta |\F_k =  \psi |\F_k$.   If $G_s$ is the core filtration element corresponding to $\F_k$  and if $C$ is a component of $G_s$  with rank one then {$[C]$ is a component of $\F_0$} and we define $\stallings(f|C) = C$.  With this convention, $\stallings(f|G_s)$   is the disjoint union ${ \sqcup} \stallings(f|C_i)$ as $C_i$ varies over the components of $G_s$.  (See Section~\ref{sec:stallings}.)   

We show below that Proposition~\ref{last step}  is a consequence  of the following lemma and proposition. The former  addresses the restrictions to $\F_0$ and the latter provides the inductive step for the higher order one-edge extensions.  

\begin{lemma}  \label{base case}  
Suppose that $\phi, \psi \in \upgn$ share the special chain $\fc$ and satisfy ${\sf I}_\fc(\phi) = {\sf I}_\fc(\psi)$. Let $\F_0 = \F_0(\phi) = \F_0(\psi)$.  Then 
\begin{enumerate}
\item \label{item:good on lines} $\theta(L) = L$ for each $\theta \in \X$ and each $L \in \acc(\phi)$ that is carried by $\F_0$.
\item  \label{item:conjugate} If there exists $\theta_0 \in \X$ such that  $\phi^{\theta_0} | \F_0 = \psi | \F_0$  then $\phi | \F_0 = \psi | \F_0$ and $\phi^\theta | \F_0 = \psi | \F_0$ for all  $\theta \in \X$.
\end{enumerate}
\end{lemma}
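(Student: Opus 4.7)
For (1), fix a \ct\ $\fG$ for $\phi$ whose filtration realises $\fc$, so that $\F_0$ is represented by the subgraph $K_0\subset G$ of fixed and linear edges. Any $L\in\acc(\phi)$ carried by $\F_0$ crosses no edge of $\E_f$, so Corollary~\ref{cor:limit lines}\pref{item:ell decomposes} forces both ends of $L$ to be periodic. If $L$ is itself periodic then $L=w^{\pm\infty}$ for a twist path $w$, and $[w]\in\A_\both(\phi)$ is fixed by $\theta$ via Definition~\ref{d:X}(6), giving $\theta(L)=L$. Otherwise $L$ has type P-P, and by construction the set of lines with algebraic invariant $\sH_\fc(L)=[a,b]$ is the singleton $\{L\}$. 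Definition~\ref{d:X}(5) together with the naturality Lemma~\ref{l:algebraic lines are natural} gives $\sH_\fc(\theta(L))=\sH_\fc(L)$, so $\theta(L)=L$.

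For (2), the plan is to first reduce to the stronger statement that $\phi^\theta|\F_0=\phi|\F_0$ for every $\theta\in\X$. Once this is known, applying it to $\theta_0$ combined with the hypothesis $\phi^{\theta_0}|\F_0=\psi|\F_0$ yields $\phi|\F_0=\psi|\F_0$, and applying it to an arbitrary $\theta\in\X$ then yields $\phi^\theta|\F_0=\psi|\F_0$. Fix $\theta\in\X$ and set $\upsilon:=\phi^\theta$; I treat each component $[F]$ of $\F_0$ separately. Choose a lift $\Theta\in\theta$ with $\Theta(F)=F$ and \ct s $\fG$, $g:G'\to G'$ for $\phi,\upsilon$ realising $\fc$; then $f|F$ and $g|F$ are linear \ct s for $\phi|F,\upsilon|F\in\Out(F)$. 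I intend to invoke Lemma~\ref{l:recognition} on the pair $(\phi|F,\upsilon|F)$ using the identity of $\Out(F)$ as candidate conjugator, which requires verifying its two hypotheses.

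For the line-lifting hypothesis, $\phi|F$ is linear so $\cR(\phi|F)=\emptyset$ and $\FixN(\Phi)=\partial\Fix(\Phi)$ for every principal $\Phi$. By Lemma~\ref{lem:lifting}, the lines carried by $\Gamma(f|F)$ are exactly those admitting a lift whose endpoints lie in $\partial\Fix(\Phi)$ for some $\Phi\in\cP(\phi|F)$, so this set depends only on the multiset $\Fix(\phi|F)$ of conjugacy classes in $F$. Definition~\ref{d:X}(3) and the naturality of Fix under restriction to $F$ identify $\Fix(\phi|F)$ with $\Fix(\upsilon|F)$, so the two carried-line sets coincide. For the twist-coordinate hypothesis, the bijection $B_\sa^{\theta_F}:\sa(\phi|F)\to\sa(\upsilon|F)$ induced by $\theta_F:=\theta|F$ preserves twist coordinates by Lemma~\ref{theta and axes}(2), so it suffices to identify the identity-induced bijection $B_\sa$ with $B_\sa^{\theta_F}$ on each fiber $\sa(\phi|F,[a])$. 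Definitions~\ref{d:X}(6) and (7) together with Lemma~\ref{lem:strong axis} handle the strong axes with $\Fix(\Phi)$ of rank at least two.

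The main obstacle is the case in which every strong axis on an axis $[a]$ has cyclic fixed subgroup $\langle a\rangle$, since Definition~\ref{d:X}(7) is silent there. I would handle this by observing that in that regime $\sa(\phi|F,[a])$ is parametrised by the linear family $\lin_w(f|F)$ through the explicit formula of Lemma~\ref{lem:ct principal lifts}, whose twist-exponent data $(d_j)$ are recorded by the restriction $\phi|F$ itself and hence match on the $\upsilon$ side via the line-lifting equality already obtained; the only genuine ambiguity is an overall shift of origin on $\sa(\phi|F,[a])$, which is absorbed in the normalisation of twist coordinates. Once both hypotheses of Lemma~\ref{l:recognition} are verified we conclude $\upsilon|F=\phi|F$ for each component $[F]$ of $\F_0$, completing the reduction and hence the proof.
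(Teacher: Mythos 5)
Your proof of part \pref{item:good on lines} is the paper's own argument, and your reduction of \pref{item:conjugate} to the stronger claim that $\phi^\theta|\F_0=\phi|\F_0$ for every $\theta\in\X$ is legitimate; your route through Lemma~\ref{l:recognition} with the trivial conjugator, together with Lemma~\ref{theta and axes} and the normalisation supplied by Definition~\ref{d:X}(6) and \pref{i:last}, runs parallel to the paper, which instead proves an all-or-none dichotomy for conjugating $\phi|F$ to $\psi|F$ by showing the recognition bijection $\B_{\sa,F}$ is independent of $\theta\in\X$. The genuine gap is in your decisive case analysis: the ``main obstacle'' you isolate is vacuous, and your workaround for it is not a proof. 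Since $F$ is a component of $\F_0$, we have $\cR(\phi|F)=\emptyset$, so Lemma~\ref{gjll} gives $\FixN(\Phi_F)=\partial\Fix(\Phi_F)$ for every $\Phi_F\in\cP(\phi|F)$, and principality (Definition~\ref{d:PA}) then forces $\Fix(\Phi_F)$ to have rank at least two; hence every strong axis of $\phi|F$ comes from a large fixed subgroup and Definition~\ref{d:X}\pref{i:last} applies across the board. This observation is the hinge of the paper's proof and is absent from yours, while your substitute argument (``the twist-exponent data are recorded by $\phi|F$ \dots the only genuine ambiguity is an overall shift of origin, absorbed in the normalisation of twist coordinates'') is not valid: twist coordinates are differences attached to pairs of strong axes, equality of carried-line sets does not determine the bijection $\sa(\phi|F,[a])\to\sa(\upsilon|F,[a])$, and there is no normalisation that absorbs a shift.

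A second gap: your one-line verification of the line-lifting hypothesis (``Definition~\ref{d:X}(3) and the naturality of Fix under restriction identify $\Fix(\phi|F)$ with $\Fix(\upsilon|F)$'') skips the bridge between $F_n$-level and $F$-level data, and the same bridge is needed before (6) and \pref{i:last} can be invoked at all. The group $\X$ and the invariant ${\sf I}_\fc$ only control $F_n$-conjugacy classes, whereas the hypothesis of Lemma~\ref{l:recognition} applied to $(\phi|F,\upsilon|F)$ concerns $F$-conjugacy data. You must first show that each $\Fix(\Phi_F)$, $\Phi_F\in\cP(\phi|F)$, equals $\Fix(\Phi)$ for a (unique) $\Phi\in\cP(\phi)$ with $\Fix(\Phi)\subset F$ --- this uses $\F_0=\F(\Fix(\phi))$ (Lemma~\ref{l:F0 is natural}) and the rank bound above --- so that Definition~\ref{d:X}(3) and \pref{i:last} apply to it, and then upgrade $\theta$-invariance of the $F_n$-class $[\Fix(\Phi)]$ to invariance of the $F$-class, e.g.\ by malnormality of the free factor $F$: if $\Theta(F)=F$, $H\le F$ is nontrivial and $\Theta(H)=gHg^{-1}$, then $g\in F$. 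This is exactly the content of the paper's steps ``there is a unique $\Phi$ with $\Phi|F=\Phi_F$'', ``$\Fix(\Phi)=\Fix(\Psi)$'' and ``$\Theta(F)=F$''. Two smaller omissions: rank-one components of $\F_0$ require the separate (trivial) remark that both restrictions are the identity because the outer automorphisms are rotationless, and your choice of a \ct\ for $\phi^\theta$ realising $\fc$ should be justified by $\theta(\fc)=\fc$, so that $\fc$ is special for $\phi^\theta$ (Lemma~\ref{l:special is natural}).
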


\begin{prop}  \label{inductive step}  
Suppose that $\phi, \psi \in \upgn$ share the special chain $\fc$ and satisfy ${\sf I}_\fc(\phi) = {\sf I}_\fc(\psi)$ and that the   special one-edge extension $\fe=(\F^- \sqsubset \F^+)$ in $\fc$ satisfies 
\begin{enumerate}
\item $\phi | \F^- = \psi | \F^-$
\item  $\{L \in \acc(\phi): L \subset \F^-\} = \{L' \in \acc(\psi): L' \subset \F^-\}$
\end{enumerate}
Then there is an algorithm to decide if there exists  $\theta \in \Ker(\bar Q) < \X$ such that the following are satisfied.
\begin{enumeratecontinue}
\item  \label{item:ind step conjugate}$\phi^\theta | \F^+ = \psi | \F^+$
\item \label{item:ind step lines}   $\theta(\{L \in \acc(\phi): L \subset \F^+\}) = \{L' \in \acc(\psi): L' \subset \F^+\}$.  
\end{enumeratecontinue}
Moreover, if such an element $\theta$ exists then one is produced.
\end{prop}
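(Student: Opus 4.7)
The plan is to reduce the existence question to checking finitely many candidate conjugators via the adapted Recognition Theorem (Lemma~\ref{l:recognition}), using the algebraic added-line data to pin down the image of the new edge(s) up to twist and the hypothesis $\theta \in \Ker(\bar Q)$ to bound the remaining twist ambiguity.

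First I would observe that condition \pref{item:ind step lines} is essentially automatic given \pref{item:ind step conjugate} and $\theta \in \X$. Indeed, by Corollary~\ref{acc is natural}, any $\theta$ conjugating $\phi$ to $\psi$ induces a bijection $\theta : \acc(\phi) \to \acc(\psi)$, and this bijection restricts to the $\F^+$-carried sublines because ``carried by $\F^+$'' is a $\phi$-invariant property preserved by conjugation. So the real task is to decide \pref{item:ind step conjugate} and produce a certifying $\theta$.

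Next I would case-analyze $\fe$ by its type (\sH, \sHH, \sLH) and component type (\contractible, \cyclic, \llarge), using Lemmas~\ref{lemma:three extension types} and \ref{l:types}. In each case the new part $\F^+ \setminus \F^-$ contributes at most two new higher-order eigenrays and possibly one new axis (or a new principal-lift copy of an existing axis). Fix \ct s $\fG$ and $g:G'\to G'$ realizing $\fc$ for $\phi$ and $\psi$; let $E$ be the new higher-order edge of $G$, so $f(E) = E\cdot u$ with $u \subset G|\F^-$, and similarly $g(E') = E'\cdot u'$. Since $\phi|\F^- = \psi|\F^-$ by hypothesis (1), a candidate $\theta$ must restrict to the centralizer of $\phi|\F^-$, and only the image of $E$ (and the second new edge in the \sHH\ and \sLH\ cases) is at stake. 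The algebraic added-line equality $\sH_{\fe\in\fc}(\phi) = \sH_{\fe\in\fc}(\psi)$, which is part of ${\sf I}_\fc(\phi) = {\sf I}_\fc(\psi)$, together with Lemma~\ref{just one eigengraph line} in the non-\llarge\ cases and the rank-$\ge 2$ fixed-subgroup data in the \llarge\ case, pins down $\theta(E)$ up to a twist along the relevant axis (and up to composition with a power of $\phi$, harmless by Lemma~\ref{l:phi in X}).

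Third, I would use $\theta \in \Ker(\bar Q)$ to bound the remaining twist ambiguity. The newly contributed elements of $\sa(\phi|\F^+) \setminus \sa(\phi|\F^-)$, present in the \sLH\ case and wherever a new rank-$\ge 2$ fixed subgroup appears, carry twist coordinates (Definition~\ref{def:twist}) whose preservation under $\theta$ translates, via the dictionary of Definition~\ref{defn:m} and Lemma~\ref{twist facts}, into prescribed values of $Q_b(\theta)$ for appropriate staple pairs $b$. The hypothesis $\bar Q(\theta) = 0$ says these $Q_b$ values are constant on each $\sim$-class $\cS_2^i(\phi)$, and Lemma~\ref{new finding b} together with Lemma~\ref{new extreme} bounds these constants in terms of the known restriction $\theta|\F^-$. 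Only finitely many candidate twist vectors therefore survive, and for each we obtain a concrete candidate $\theta$ by modifying $\theta_0|\F^-$ (extending a chosen centralizer element) by an explicit twist on the new edge. Each candidate is then verified by applying Lemma~\ref{l:recognition} to $\phi|\F^+$ and $\psi|\F^+$: we check (i) that a line in $\F^+$ lifts to $\Gamma(f|\F^+)$ iff its $\theta$-image lifts to $\Gamma(g|\F^+)$, and (ii) that the induced bijection $B_\sa$ preserves twist coordinates. Both checks are algorithmic from the \ct\ data.

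The main obstacle I anticipate is the $\sLH$-\llarge\ subcase, where both a new axis of multiplicity $\ge 2$ and a new eigenray feeding into it appear simultaneously. The challenge is to verify that the combinatorial equivalence defining $\bar Q$ (identifying staple pairs occurring in a common eigenray in $\ti r$) matches exactly the dynamical equivalence on twist coordinates of new strong axes that is induced by the freedom to post-compose $\theta$ with a power of $\phi$. Reconciling these two equivalences will require careful tracking, via the spanning staple pair machinery of Section~\ref{s:induction 1} and especially Lemma~\ref{l:chaining}, of how twist coordinates propagate from staple pairs upward through the chain $\fc$; I expect the argument to mirror the inductive bookkeeping of Lemma~\ref{new rays from lines} and to feed directly into the verification of clause (ii) of Lemma~\ref{l:recognition} for the new strong axes.
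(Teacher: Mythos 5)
There is a genuine gap, and it concerns the role of $\Ker(\bar Q)$. In your plan the kernel condition is used to ``bound the remaining twist ambiguity'' via Lemmas~\ref{new finding b} and \ref{new extreme}, but those lemmas take as a hypothesis that $\theta$ already conjugates $\phi$ to $\psi$ (they belong to the proof of Proposition~\ref{prop:barQ}, where one bounds quantities attached to a hypothetical conjugator); invoking them to cut down a candidate list for the inductive step is circular. The actual function of $\Ker(\bar Q)$ in the paper is different and is the heart of the matter: for any $\theta\in\Ker(\bar Q)$ whose restriction conjugates $\phi|\F^-$ to $\psi|\F^-$, one writes $\arb=\theta^{-1}\phi^p$ so that $Q_b(\arb)=0$ for all staple pairs occurring in a new ray, and then Lemma~\ref{new rays from lines} (the spanning/staple-pair machinery) forces $\theta$ to \emph{fix each new ray} (Lemma~\ref{check any theta}). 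This rigidity is what makes the problem decidable: it yields the necessary condition $\newrays(\phi)=\newrays(\psi)$ (Corollary~\ref{same rays}), checkable by comparing eigenrays, after which the paper does \emph{not} enumerate and test finitely many twist candidates at all. Instead it runs a short list of necessary checks (same extension type, equal twist index of $C$ and $C'$ in the \sLH\ case via Lemma~\ref{lem:twist index counts}, equality of new rays, orientation consistency in the \sHH\ case) and then explicitly constructs a single conjugator $\theta^+=[dh]$, where $h$ is the marking-preserving map and $d$ is a correction supported on the new edge with $d(E')=\bar\mu'E'\nu'$; the delicate points are that $\mu$ is a Nielsen path in case \sH\ (Lemma~\ref{controlling mu}) and that $[d]\in\Ker(Q)$ (Lemma~\ref{d in Ker Q}), so that $\theta^+$ really lies in $\Ker(\bar Q)$. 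Your candidate-enumeration scheme never addresses this last point (why the produced $\theta$ is in $\Ker(\bar Q)$), nor how to algorithmically verify, for an abstractly given candidate, the lifting condition of Lemma~\ref{l:recognition} for \emph{all} lines; the paper sidesteps both by the explicit construction.

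Two smaller but real problems: first, your claim that conclusion \pref{item:ind step lines} is automatic from \pref{item:ind step conjugate} via Corollary~\ref{acc is natural} is not right as stated, because that corollary governs $\acc(\phi|\F^+)$, whereas the set in \pref{item:ind step lines} consists of lines of $\acc(\phi)$ carried by $\F^+$, which may come from rays \emph{above} $\F^+$; the correct argument (Lemma~\ref{lem:X+ is good enough}) uses $\theta\in\X$ (so $\theta$ preserves $\sH(L)$ for every limit line $L$) together with the lifting criterion and Lemma~\ref{just one eigengraph line} to pin $\theta(L)$ down to the unique line of $\sH(L')$ lifting to the eigengraph. Second, the \sLH\ twist datum is not a parameter to be searched: it is the integer $d$ with $f(C)=Cw^d$ versus $g(C')=C'w^{d'}$, and equality $d=d'$ is a directly checkable necessary condition (Lemma~\ref{lem:twist index counts}); no finite enumeration of twist vectors occurs anywhere in the argument.
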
 
 
Before proving Lemma~\ref{base case} and Proposition~\ref{inductive step}, we  use them to prove Proposition~\ref{last step}.

\bigskip

\noindent {\it Proof of Proposition~\ref{last step} (assuming Lemma~\ref{base case} and Proposition~\ref{inductive step}):} \ \   If $\phi | \F_0 \ne \psi | \F_0$ then  no element of $\X$ conjugates $\phi$  to $\psi$ by Lemma~\ref{base case}\pref{item:conjugate} so we return {\tt NO} and {\tt STOP}.  Otherwise, $\phi^\theta | \F_0 = \psi | \F_0$ for all $\theta \in \X(\phi)$ and we define $\theta_0 = $identity and $\psi_0 =  \psi$.

  Suppose $\fc=(\F_0 \sqsubset \F_1 \sqsubset \ldots \sqsubset \F_t)$. Apply Proposition~\ref{inductive step} with $(\phi, \psi_0, \F_0, \F_1)$ in place of $(\phi, \psi, \F^-,\F^+)$.   If the \ref{inductive step}-algorithm returns {\tt NO} then  there is no $\theta$ as in the conclusion of Proposition~\ref{last step} because any such $\theta$  would satisfy items (3) and (4) of Proposition~\ref{inductive step}; we return {\tt NO} and {\tt STOP}.  Otherwise, Proposition \ref{inductive step} gives us an element $\theta_1 \in \Ker(\bar Q)$. Letting $\psi_1  = \psi_0^{(\theta_1^{-1})}$ we have that $\phi |  \F_1 = \psi_1 | \F_1$ and $\{L \in \acc(\phi): L \subset \F_1\} = \{L' \in \acc(\psi_1): L' \subset \F_1\}$.  From $\theta_1 \in \X$ and Lemma~\ref{l:X is natural}, it follows that ${\sf I}(\phi)={\sf I}(\psi_1)$. 

 Apply Proposition~\ref{inductive step} with $(\phi, \psi_1, \F_1, \F_2)$ in place of $(\phi, \psi, \F^-,\F^+)$.  Suppose that  the \ref{inductive step}-algorithm returns {\tt NO}. Then there are no elements of $\Ker(\bar Q)$ that conjugate $\phi|\F_2$ to $\psi_1|\F_2$, and so also no elements of $\Ker(\bar Q)$ that conjugate $\phi$ to $\psi_1$. It follows also then that there are no elements $\theta$ of $\Ker(\bar Q)$ that conjugate $\phi$ to $\psi$. Indeed for such a $\theta$, $\theta_1^{-1}\theta$ would conjugate $\phi$ to $\psi_1$.   We therefore return {\tt NO} and {\tt STOP}. Otherwise, Proposition \ref{inductive step} gives us an element $\theta_2 \in \Ker(\bar Q)$.     Letting $\psi_2  = \psi_1^{(\theta_2)^{-1}} $we have that $\phi|  \F_2 = \psi_2 | \F_2$ and $\{L \in \acc(\phi): L \subset \F_2\} = \{L' \in \acc(\psi_2): L' \subset \F_2\}$. As in the previous case, ${\sf I}(\phi) = {\sf I}(\psi_2)$.  Repeat this until either some application of Proposition~\ref{inductive step} returns {\tt NO} or until we reach $\psi_t = \psi^{(\theta_1 \ldots \theta_t)^{-1}}$   satisfying $\phi =  \phi| \F_t = \psi_t | \F_t = \psi_t$.   In the former case there  is no $\theta$ as in the conclusion of Proposition~\ref{last step} and we return {\tt NO}  and {\tt STOP}.  In the latter case $\theta = \theta_1 \ldots \theta_t$ conjugates $\phi$ to $\psi$ and is an element of $\Ker(\bar Q)$; we return {\tt YES} and $\theta$ and then {\tt STOP}.
\qed
 
\medskip

\noindent {\it Proof of Lemma~\ref{base case}:}\ \      
If  $L \in \acc(\phi)$ is carried by $\F_0$ then the ends of $L$ are periodic. If $L$ is periodic then $\ti L=a^\infty$ for some $[a]\in\A(\phi)$; see Corollary~\ref{cor:limit lines}\pref{item:ell decomposes}. By definition of $\X$, $\theta([a])=[a]$ and so $\theta(L)=L$. Otherwise, $L\in\accnr(\phi)$ has type {\sf P-P}, in which case $\sH(L)$ determines $L$; see Section~\ref{s:algebraic invariants}. Again by definition of $\X$, $\theta(\sH(L))=\sH(L)$ and so $\theta(L)=L$. This verifies \pref{item:good on lines}. 

It suffices to show that if a free factor $F$ represents a component of $\F_0$ then either $\phi^\theta | F = \psi | F$  for all $\theta \in \X(\phi)$ (and in particular for $\theta =$ identity) or $\phi^\theta | F = \psi | F$  is satisfied by no element of $\X(\phi)$.  

Let $\phi_F = \phi|F$ and $\psi_F = \psi | F$. If  $F$ has rank one then $\phi_F$ and $\psi_F$ are both the identity because $\phi$ and $\psi$  are rotationless.      We may therefore assume that $F$ has rank at least two.   Since $\cR(\phi_F) =  \emptyset$, Lemma~\ref{gjll}  implies that $\FixN(\Phi_F) = \partial \Fix(\Phi_F)$ for each $\Phi_F \in \cP(\phi_F)$.  Also,  $\FixN(\Phi_F)$ contains at least three points, so $ \Fix(\Phi_F)$  has rank at least two  and $\Fix(\Phi_F) \ne \Fix(\Phi'_F)$ for $\Phi_F \ne \Phi'_F \in \cP(\phi_F)$ by Lemma~\ref{malnormal}.      

There is  a  unique $\Phi \in \cP(\phi)$  such that $\Phi_F = \Phi | F$. From ${\sf I}(\phi) = {\sf I}(\psi)$, it follows that there exists $\Psi \in \cP(\psi)$ such that $\Fix(\Phi) = \Fix(\Psi)$.    Since $\theta \in \X$, there exists $\Theta$ representing $\theta$ such that $\Fix(\Phi)$   is $\Theta$-invariant. It follows that $\Theta(F) \cap F$ is non-trivial and hence that $\Theta(F) = F$ (because $F$ is a free factor and $\theta$ preserves $[F]$).  Letting $\Psi_F = \Psi | F$ and $\Theta_F = \Theta | F$, we have that $\FixN(\Phi_F) =  \partial \Fix(\Phi_F) = \partial \Fix(\Psi_F) = \FixN(\Psi_F)$ is $\Theta_F$-invariant.Lemma~\ref{lem:lifting} implies that  the eigengraphs for $\phi_F$ and for $\psi_F$ carry the same lines and that $\theta$ preserves this set of lines.      Thus $\phi_F,\psi_F$ and $\theta_F$ satisfy the hypotheses of Lemma~\ref{l:recognition}.

If $a \in F$ is fixed by distinct $\Phi_F, \Phi'_F \in \cP(\phi_F)$ then $[\Phi_F,a]$ is an element of $\sa(\phi_F)$ and $[\Phi, a]$ is an element of $\sa(\phi)$.  Lemma~\ref{l:recognition} implies that  $$[\Phi_F,a] \mapsto [\Psi_F, \Theta_F(a)]$$
defines a  bijection $\B_{\sa,F} : \sa(\phi_F) \to \sa(\psi_F)$ that is independent of the choice of $\Theta_F$ representing $\theta_F$ and preserving $\FixN(\Phi_F)$. 
Since $\theta \in \X$, by Definition~\ref{d:X}\pref{i:last} we have   $[\Fix(\Phi), a]  =  \theta([\Fix(\Phi),  a]) = [\Theta(\Fix(\Phi)),   \Theta(a)]= [\Fix(\Phi),   \Theta(a)]$.   Equivalently, there exists $c \in F_n$ such that  $i_c(\Fix(\Phi)) =  \Fix(\Phi)$ and $i_c \Theta(a) = a$.   Thus $c \in \Fix(\Phi)$ and after replacing $\Theta$ by $i_c\Theta$ we may assume that $\Theta(a) =  a$ and hence that $\Theta_F(a) = a$.       We conclude that $\B_{\sa,F}$ is independent of $\theta$.

Check by inspection if $\B_{\sa,F} $ preserves twist coordinates.  If it does then Lemma~\ref{l:recognition} implies that each $\theta \in \X(\phi)$ conjugates $\phi_F$ to $\psi_F$; if not, then no element of $\X(\phi)$ conjugates $\phi_F$ to $\psi_F$.  \qed

\bigskip

The rest of the paper is dedicated to the proof of   Proposition~\ref{inductive step}. 

\bigskip

 Set $\fc=(\F_0 \sqsubset \F_1 \sqsubset \ldots \sqsubset \F_t$) and thus $\fe\in\fc$ has the form $\F^-\sqsubset \F^+$ where $\F^- = \F_{k-1}$ and $\F^+ = \F_{k}$ for some $1\le k\le t$.   (We will use these notations interchangeably depending on the context.)

\begin{definition}\label{defn:Xepsilon}
For $\epsilon = \pm$, $\X^\epsilon$ is the set of $\theta\in  \X$ such that:
\begin{enumerate}[(a)]
\item   \label{item:KerBarQ} $\theta \in \Ker(\bar Q)$; and 
\item   \label{item:conjugates epsilon} $\theta | \F^\epsilon$ conjugates $\phi|\F^\epsilon$ to $\psi | \F^\epsilon$. 
\end{enumerate}
\end{definition}

By the next lemma, our goal is to produce an element of $\X^+$ or deduce that  $\X^+$ is empty. 

\begin{lemma}  \label{lem:X+ is good enough}
$\theta \in    \Ker(\bar Q)$ satisfies items \pref{item:ind step conjugate} and \pref{item:ind step lines} of Proposition~\ref{inductive step} if and only if $\theta \in \X^+$.
\end{lemma}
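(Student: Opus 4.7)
The forward implication is essentially by definition. Since $\bar Q$ is a homomorphism defined on $\X = \X_\fc(\phi)$, we have $\Ker(\bar Q) < \X$; so $\theta \in \Ker(\bar Q)$ already gives condition (a) of Definition~\ref{defn:Xepsilon} together with membership in $\X$. Condition (b) of that definition is identical to item~\pref{item:ind step conjugate} of Proposition~\ref{inductive step}, which is hypothesized, so there is nothing further to check.

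For the reverse direction, suppose $\theta \in \X^+$. Conditions (a) and (b) of Definition~\ref{defn:Xepsilon} supply $\theta \in \Ker(\bar Q)$ and item~\pref{item:ind step conjugate} directly, so only item~\pref{item:ind step lines} remains. The plan is to derive it from the naturality statement Corollary~\ref{acc is natural}, applied after restricting everything in sight to $\F^+$. Since $\F^+ \in \fc$ and every element of $\X$ fixes each component of each element of $\fc$, the restrictions $\phi|\F^+$, $\psi|\F^+$, and $\theta|\F^+$ are all well defined; item~\pref{item:ind step conjugate} then says $(\phi|\F^+)^{\theta|\F^+} = \psi|\F^+$, and componentwise application of Corollary~\ref{acc is natural} yields $\theta(\acc(\phi|\F^+)) = \acc(\psi|\F^+)$.

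The crux is now the identification $\acc(\phi|\F^+) = \{L \in \acc(\phi): L \subset \F^+\}$, and the analogous one for $\psi$. To verify it, choose a \ct\ $\fG$ representing $\phi$ whose filtration realizes $\fc$, and let $K \subset G$ be the core filtration element representing $\F^+$; then $f|K$ is a \ct\ for $\phi|\F^+$. For $\subseteq$: each $r \in \cR(\phi|\F^+)$ corresponds to an edge $E \in K$ whose eigenray $R_E$ lies entirely in $K$, so Lemma~\ref{two definitions} shows that every line of $\acc(r)$ lies in $K$. For $\supseteq$: given $L \in \acc(\phi)$ with $L \subset K$, Corollary~\ref{cor:limit lines}\pref{item:ell decomposes} writes $L = (R^-)^{-1}\rho R^+$, where each $R^\pm$ is either an eigenray $R_{E'}$, a linear ray $E' w^{\pm\infty}$, or a periodic ray $w^{\pm\infty}$. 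Because every edge crossed by $L$ lies in $K$, any eigenray $R_{E'}$ appearing in the decomposition must have $E' \in K$; letting $r'$ denote the corresponding element of $\cR(\phi|\F^+)$, Lemma~\ref{two definitions} applied within $f|K$ gives $L \in \acc(r') \subset \acc(\phi|\F^+)$. If neither $R^-$ nor $R^+$ is an eigenray then $L$ is built from linear and periodic pieces alone, hence $L \subset \F_0(\phi) \sqsubset \F^+$, and $L \in \acc(\phi|\F_0) \subset \acc(\phi|\F^+)$.

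The main obstacle is precisely the $\supseteq$ inclusion in this identification: one must show that a limit line of $\phi$ which happens to be supported in $\F^+$ is already a limit line of the restriction. The sketch above handles this via the structural description of limit lines from Corollary~\ref{cor:limit lines} together with the $f$-invariance of the core subgraph $K$; once it is in place, item~\pref{item:ind step lines} follows by formal naturality, and the reverse implication is complete.
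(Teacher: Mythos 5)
The forward direction and the reduction of the reverse direction to item~\pref{item:ind step lines} of Proposition~\ref{inductive step} are indeed purely definitional, and there you agree with the paper. But your reverse direction rests on the identification $\acc(\phi|\F^+) = \{L \in \acc(\phi): L \subset \F^+\}$, and the inclusion $\supseteq$ --- which you correctly single out as the crux --- is false; your argument for it breaks at the appeal to Lemma~\ref{two definitions}. If $L = (R^-)^{-1}\rho R_{E'}$ with $E'$ a higher order edge of the core subgraph $K$ realizing $\F^+$, then membership in $\acc(r')$ requires \emph{every} finite subpath of $L$ to occur in $R_{E'}$; but $E'$ occurs exactly once in $R_{E'}$ (as its initial edge, since $f(E')=E'\cdot u$ with $u$ of lower height), so no subpath of $L$ ending in $E'$ and preceded by an edge of $\rho$ or $(R^-)^{-1}$ ever occurs in $R_{E'}$, and hence $L \notin \acc(r')$. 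The paper's running example (Example~\ref{e:main example}) already gives a counterexample to your identification: $L = a^\infty R_c \in \acc(r_q)$ is carried by $\F_1$, yet $\acc(\phi|\F_1)=\acc(r_c)$ does not contain it, because $a^k c$ is never a subpath of $R_c=cbbaba^2ba^3\cdots$. Your fallback case is also wrong: when neither end of $L$ is an eigenray you claim $L \in \acc(\phi|\F_0)$, but $\cR(\phi|\F_0)=\emptyset$ (the linear free factor system has no higher order edges), so $\acc(\phi|\F_0)=\emptyset$.

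A telling symptom is that your argument never uses the standing hypothesis ${\sf I}_{\fc}(\phi)={\sf I}_{\fc}(\psi)$, nor the defining properties of $\X$ beyond invariance of the chain; yet the lines you must control are limit lines of eigenrays of $\phi$ that are \emph{not} carried by $\F^+$ (such as $a^\infty R_c$ above, which comes from $r_q$), and item~\pref{item:ind step conjugate} alone says nothing about those. The paper's proof addresses exactly this point: for non-periodic $L \in \acc(\phi)$ carried by $\F^+$, the equality of algebraic invariants provides some $L' \in \accnr(\psi)$ with $\sH_{\fc}(L)=\sH_{\fc}(L')$; since $\theta\in\X$, $\theta(L)\in\theta(\sH_{\fc}(L))=\sH_{\fc}(L')$; since $\theta|\F^+$ conjugates $\phi|\F^+$ to $\psi|\F^+$, $\theta(L)$ lifts into the eigengraph of the restriction of the \ct\ for $\psi$ (Corollary~\ref{cor:limit lines}\pref{item:endpoints in FixN} together with Lemmas~\ref{lem:lifting} and~\ref{first theta}); and the rigidity statement Lemma~\ref{just one eigengraph line} then forces $\theta(L)=L'\in\accnr(\psi)$, with the periodic case handled via axes and the definition of $\X$. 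Some argument of this kind is unavoidable; restricting $\acc$ to $\F^+$ cannot replace it.
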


\proof
Comparing the definitions, it suffices to show that each $\theta \in \X^+$ satisfies Proposition~\ref{inductive step}\pref{item:ind step lines}; namely, $\theta(\{L \in \acc(\phi): L \subset \F^+\}) = \{L' \in \acc(\psi): L' \subset \F^+\}$. By symmetry, it suffices to show that if   $L \in \acc(\phi)$ is carried by $\F^+$  then $\theta(L) \in \acc(\psi)$ is carried by $\F^+$.   Since $\F^+$ is $\theta$-invariant, it suffices to show that $\theta(L)\in \acc(\psi)$.      If $L$ is periodic then $\ti L=a^\infty$ for some $[a]\in\A(\phi)$ by Corollary~\ref{cor:limit lines}\pref{item:ell decomposes}.  Since $\theta \in \X$,  $\theta([a])=[a]$ and   $\theta(L)=L \in \acc(\psi)$. Otherwise $L\in\accnr(\phi)$ and, as  ${\sf I}_\fc(\phi) = {\sf I}_\fc(\psi)$, there exists $L' \in \accnr(\psi)$ such that $ \sH(L) = \sH(L')$. Since $\theta| \F^+$ conjugates $\phi | \F^+$ to $\psi | \F^+$, Corollary~\ref{cor:limit lines}\pref{item:endpoints in FixN} and Lemmas~\ref{lem:lifting} and \ref{first theta} imply that $\theta(L)$ lifts into $\stallings(g_{u'})$.   Also,   $\theta(L) \in \theta\big(\sH(L)\big) = \sH(L) = \sH(L')$   because  $\theta \in \X$.  Lemma~\ref{just one eigengraph line} implies that $\theta(L) = L' \in \accnr(\psi)$ and we are done.   
\endproof

    By \cite[Theorem 7.4]{fh:CTconjugacy}
      we can choose \ct s   $\fG$ and $g:G' \to G'$   representing $\phi$ and $\psi$ respectively such that  each $\F_i$ is realized by a core filtration element and such that  the core filtration elements of $G$ and $G'$ realizing $\F^- = \F_{k-1}$ are identical as marked graphs and that after identifying them to a common subgraph $G_s$, the restrictions $f_s  = f|G_s$ and $g_s = g|G'_s$ are equal.  
In particular
  \begin{enumerate}
  \item
  $\stallings(f_s)=\stallings(g_s)$
 \end{enumerate}
 
 Before describing $f_s$ and $g_s$ in more detail, we record some  useful properties of   $\X^-$.  We define $\cR(\phi | \F_-) =  \cup \cR(\phi | [F])$ as $[F]$ varies over the components of $\F_-$.

 \begin{lemma}  \label{three properties} Each $\theta \in \X^-$ satisfies the following properties.
\begin{enumerate}
\item \label{item:commutes}$\theta | \F^-$ commutes with $\phi|\F^- = \psi|\F^-$.
\item   \label{item:lines from above} $\theta$ fixes each element of  
  $\acc(\phi)$ that is carried by $\F^-$.
\item \label{item:fixes r}$\theta$  fixes each element of $\cR(\phi | \F^-)$.
\end{enumerate}
\end{lemma}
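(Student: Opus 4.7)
For (1), this is essentially a restatement: by the standing hypothesis of Proposition~\ref{inductive step} we have $\phi|\F^-=\psi|\F^-$, and by Definition~\ref{defn:Xepsilon}\pref{item:conjugates epsilon} an element $\theta\in\X^-$ satisfies that $\theta|\F^-$ conjugates $\phi|\F^-$ to $\psi|\F^-=\phi|\F^-$; this is exactly the statement that $\theta|\F^-$ commutes with $\phi|\F^-$.

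For (2), let $L\in\acc(\phi)$ be carried by $\F^-$. If $L$ is periodic, then Corollary~\ref{cor:limit lines}\pref{item:ell decomposes} and Lemma~\ref{first axes in ct} give $L=w^{\pm\infty}$ with $[w]\in\A_\both(\phi)$, and Definition~\ref{d:X}(6) yields $\theta([w])=[w]$, hence $\theta(L)=L$. Otherwise $L\in\accnr(\phi)$ has one of the types {\sf P-P}, {\sf P-NP}, {\sf NP-P}, {\sf NP-NP}, so $\sH_\fc(L)$ is defined and $\theta(\sH_\fc(L))=\sH_\fc(L)$ by Definition~\ref{d:X}(5); in particular $\theta(L)\in\sH_\fc(L)$. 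Since $L$ is carried by $\F^-$, Corollary~\ref{cor:limit lines}\pref{item:endpoints in FixN} and Lemma~\ref{lem:lifting} imply that $L$ lifts into $\Gamma(f_s)\subset\Gamma(f)$; since $\theta|\F^-$ conjugates $\phi|\F^-$ to $\psi|\F^-$, another application of Lemma~\ref{first theta} together with Lemma~\ref{lem:lifting} shows that $\theta(L)$ lifts into $\Gamma(g_s)=\Gamma(f_s)\subset\Gamma(f)$. Then Lemma~\ref{just one eigengraph line} forces $\theta(L)=L$, since $L$ is the unique line in $\sH_\fc(L)$ that lifts to $\Gamma(f)$.

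For (3), I would argue by induction on the height of $r\in\cR(\phi|\F^-)$ in the partial order $<$, with the plan of applying Lemma~\ref{new rays from lines} after a $\phi^{-k}$-twist. Note first that if $r'<r$ in $\cR(\phi)$ and $r\in\cR(\phi|\F^-)$ is carried by the component $[F]$ of $\F^-$, then Lemma~\ref{l:in F} forces $r'\in\cR(\phi|\F^-)$ (and carried by $[F]$), so the induction stays inside $\cR(\phi|\F^-)$. For the inductive step, fix $\theta\in\X^-$ and let $\cS_2^i(\phi)$ be the $\sim$-equivalence class containing $\cS_2(\phi,r)$; since $\theta\in\Ker(\bar Q)$, there is a unique $k\in\Z$ with $Q_b(\theta)=k$ for all $b\in\cS_2^i(\phi)$, and $\theta':=\theta\phi^{-k}$ satisfies $m_b(\theta')=0$ on $\cS_2(\phi,r)$ by Lemma~\ref{m is a homomorphism}. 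Using Lemma~\ref{l:phi in X} and the computation $\bar Q(\phi^{-k})=0$, one checks $\theta'\in\X^-$. Now I would verify the remaining hypotheses of Lemma~\ref{new rays from lines} for $\arb=\theta'$: $[F]$ is $\theta'$-invariant by Definition~\ref{d:X}(2); $\theta'$ fixes each line in $\accnr(r)$ because $\theta$ does (by (2) of the current lemma) and $\phi$ does (by $\phi$-invariance in Corollary~\ref{cor:limit lines}\pref{item:ell decomposes}); $\theta'$ fixes each $r'<r$ by the inductive hypothesis; and $\theta'|F$ commutes with $\phi|F$ by (1). Lemma~\ref{new rays from lines} then says $\theta'$ strongly fixes $r$, so in particular fixes $r$; since $\phi$ fixes $r$, $\theta=\theta'\phi^k$ fixes $r$ as well.

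The main obstacle is the passage from the kernel condition $\theta\in\Ker(\bar Q)$, which only forces the coordinates $Q_b(\theta)$ to share a common integer value within each $\sim$-class, to the sharper hypothesis $m_b(\arb)=0$ required by Lemma~\ref{new rays from lines}; this is precisely what the replacement $\theta\rightsquigarrow\theta\phi^{-k}$ is designed to accomplish, and the verification that this replacement preserves membership in $\X^-$ and all the other hypotheses of Lemma~\ref{new rays from lines} is the heart of the argument.
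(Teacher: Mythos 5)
Your proofs of (1) and (2) are essentially the paper's own: (1) is immediate from Definition~\ref{defn:Xepsilon} and the hypothesis $\phi|\F^-=\psi|\F^-$, and (2) is the same combination of Corollary~\ref{cor:limit lines}, Lemmas~\ref{lem:lifting}, \ref{first theta} and \ref{just one eigengraph line} that the paper uses.

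For (3), however, you take a genuinely different route. The paper simply reuses the uniqueness argument from the opening of Section~\ref{s:XD}: for each $r\in\cR(\phi|\F^-)$ there is a single $r'\in\cR(\psi|\F^-)$ with $\theta(r)=r'$ for \emph{every} $\theta\in\X$ whose restriction to $\F^-$ conjugates $\phi|\F^-$ to $\psi|\F^-$ (this uses only that $\theta$ fixes $\fc$, naturality of new rays and added lines, and Lemma~\ref{just one eigengraph line} in the \sH\sH\ case); since by (1) the identity is such a $\theta$, one gets $r'=r$, so every $\theta\in\X^-$ fixes $r$. You instead transplant the proof of Lemma~\ref{check any theta} to the rays of $\cR(\phi|\F^-)$: normalize $\theta$ by a power of $\phi$ to kill the twist coordinates on the relevant $\sim$-class, verify the hypotheses of Lemma~\ref{new rays from lines} (with an induction up the partial order supplying the condition on the rays $r'<r$, which indeed remain in $\cR(\phi|\F^-)$ as you note via Lemma~\ref{l:in F}), and conclude that the normalized map strongly fixes $r$. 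This is valid — Lemma~\ref{new rays from lines} and the $\bar Q$ formalism are established before this point, so there is no circularity, and your verifications (that $\theta\phi^{-k}\in\X$ so that $m_b$ is defined, that $Q_b(\theta\phi^{-k})=0$ on the class, and that hypothesis (2) of Lemma~\ref{new rays from lines} follows from your items (1)--(2) plus the inductive hypothesis) all go through. The trade-off is that your argument is heavier and genuinely uses $\theta\in\Ker(\bar Q)$ via hypothesis (4) of Lemma~\ref{new rays from lines}, whereas the paper's two-line argument proves the stronger fact that (3) holds for every $\theta\in\X$ whose restriction conjugates $\phi|\F^-$ to $\psi|\F^-$, with no kernel condition, and keeps the staple-pair machinery in reserve for the new rays handled in Lemma~\ref{check any theta}.
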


\proof  \pref{item:commutes} follows from  Definition~\ref{defn:Xepsilon}(b) and the  hypothesis that $\phi|\F^- = \psi|\F^-$.  For \pref{item:lines from above},  note that if $L \in \acc(\phi)$ is carried by $\F^-$ then  $L$  lifts to $\stallings(f_s)$ by  Corollary~\ref{cor:limit lines}\pref{item:endpoints in FixN} so    \pref{item:commutes} and  Lemmas~\ref{lem:lifting} and \ref{first theta}  imply that $\theta(L) \in \stallings(g_{s})$.   Since $\theta(L)=L$ if $L$ is periodic and otherwise $\theta(L) \in \theta(\sH(L)) = \sH(L)$, \pref{item:lines from above} follows from  \pref{item:commutes} and  Lemma~\ref{just one eigengraph line}.  By \pref{item:commutes}  and Lemma~\ref{lem:fixes each r},  $\theta(r)$ is independent of $\theta \in \X^-$.   Item \pref{item:fixes r} therefore follows from the fact that $\X^-$ contains the identity.
\endproof

 Suppose that $G_u \subset G$ and $G'_{u'} \subset G'$ are the core filtration elements realizing $\F^+ = \F_{k}$.  Let   $f_{u} = f | G_u$ and $g_{u'} = g | G'_{u'}$.
  Since $\F^- \sqsubset \F^+$ is a special  one-edge extension, $G_u $ is obtained from $G_s$ by adding a single topological arc $E$ which is either a single edge $D$   or is the union $E = \bar C D$ of a pair of edges $C,D$ with common initial endpoint not in $G_s$.   (We have previously denoted edges in $G$ by $E$  and now we are using $C$ and $D$ instead and using $E$ for a topological arc.  This is more convenient for the current argument and should not cause confusion.) By Lemma~\ref{lemma:three extension types},   there are three possibilities.    In each case, there is one component $\stallings_*(f_u)$ of $\stallings (f_u)$ that is not a component of $\stallings (f_s)$.   \label{p:types}
\begin{description}
\item \label{higher higher} [{\sf HH}]:\ \ 
($E=\bar CD$ consists of two higher order edges) $\stallings (f_u)$ is obtained from $\stallings (f_{s})$ by adding a new component  $\stallings_*(f_u)$ which is a line labeled $ R^{-1}_{C}\cdot R_{D}$.  
\item\label{linear-higher order}[{\sf LH}]:\ \ 
($E=\bar CD$ where $C$ is linear and $D$ is higher order) $\stallings (f_u)$ is obtained from $\stallings (f_{s})$ by adding a new component $\stallings_*(f_u)$ which is a one point union of a lollipop corresponding to $C$ and a ray labeled $R_{D}$.
\item\label{higher order}[{\sf H}]:\ \ 
($E=D$, a higher order edge)  $\stallings (f_u)$ is a one-point union of $\Gamma(f_{s})$ and  a ray labeled $R_{D}$.    $\stallings_*(f_u)$ is the one-point union of a component $\stallings_*(f_{s})$ of $\stallings (f_{s})$ and  a ray labeled $R_{D}$.
 \end{description}

Similarly, we can orient the topological arc $E'$ that is added to $G_s = G'_s$ to form $G'_{u'}$ so that $\stallings (g_{u'})$ is obtained from $\stallings(g_{s})$ in one of these three ways.

 By Lemmas~\ref{l:types} and \ref{l:special is natural}, we may assume 
\begin{enumerate} 
\setcounter{enumi}{1}
\item \label{same type} The extensions $\stallings(f_s) \subset \stallings(f_u)$ and $\stallings(g_s) \subset\stallings(g_{u'})$ have the same type  {\sf HH}, {\sf LH} or {\sf H} 
\end{enumerate} 
 for if not, then  $\phi|\F^+$ and $\psi | \F^+$ are not conjugate by an element of $\X$ so we  return {\tt NO} and {\tt STOP}.

\begin{remark} \label{rem:structure of eigengraphs}    A  vertex $v$  in $G$ that is new in an \sHH\ extension,    is not incident to any fixed or linear edge. It therefore follows from the construction of {$\Gamma(f)$} given at the beginning of Section~\ref{sec:stallings}
 that the component $\Gamma(f,v)$ of {$\Gamma(f)$}  corresponding to $v$ is obtained from the disjoint  union of eigenrays $R_E$, one for each $E \in \E(f)$ with initial vertex $v$, by identifying their initial vertices.   Similarly, if $v$ is new in an \sLH\ extension, then $\Gamma(f,v)$ is the one-point union of the lollipop associated to the unique linear edge with $v$ as initial vertex and the eigenrays $R_E$ associated to $E \in \E_f$ with $v$ as initial vertex.   
\end{remark}

 \begin{lemma} {Suppose that $\frak e$ has type    ${\sf H}$  and that $\X^+ \ne \emptyset$.  Then  $\stallings_*(f_{s})=\stallings_*(g_{s})$.}  
 \end{lemma}
 \proof  Assume that $\theta \in \X^+$.  Denote the set of lines that lift into a Stallings graph $\stallings$ by $\Lambda(\stallings)$.   Lemma~\ref{first theta}  and Lemma~\ref{lem:lifting} imply that $\theta(\Lambda({\stallings_*}(f_{u}))) = \Lambda({\stallings_*}(f_{u'}))$.  By construction, $\Lambda(\stallings_*(f_{s})) = \{ L \in \Lambda({\stallings_*}(f_{u})): L \subset G_s\}$ and $\Lambda(\stallings_*(g_{s})) = \{ L \in \Lambda({\stallings_*}(g_{u})): L \subset G_s\}$.  Thus 
 $\theta(\Lambda(\stallings_*(f_{s}))) = \Lambda(\stallings_*(f_{s}))$.
 
 The proof now divides into cases.  If $\stallings_*(f_{s}))$ contains a ray corresponding to some $r \in \cR(\phi)$ then 
 $\Lambda(\stallings_*(f_{s}))$ contains a line that ends at $r$.  Lemma~\ref{three properties}\pref{item:fixes r} then implies that  $\Lambda(\stallings_*(g_{s}))$ contains a line that ends at $r$ and hence that $\stallings_*(f_{s}))$ contains a ray corresponding to $r \in \cR(\phi)$. This proves that $\stallings_*(f_{s})=\stallings_*(g_{s})$.
 
 We may now assume  that  { $\stallings_*(f_{s})$}  is compact.  If $\stallings_*(f_{s})$ has rank at least two then $\pi_1(\stallings_*(f_{s}))$ is a component of $\Fix(\phi)$ and is hence $\theta$-invariant.   In this case, $\pi_1(\stallings_*(f_{s})) =  \pi_1(\stallings_*{(g_{s})})$.  Lemma~\ref{malnormal}\pref{i:cap} implies that $\stallings_*(f_{s})=\stallings_*(g_{s})$.   
 The final case is that  $\stallings_*(f_{s}))$ has rank one and so is a topological circle labeled by a component $Y$ of $G_0$ consisting of a  single edge $e$.  In this case, $\Lambda(\stallings_*(f_{s})) = \{e^\infty, e^{-\infty}\}$, which is $\theta$-invariant.  It follows that $\Lambda(\stallings_*(g_{s}))   = \{e^\infty, e^{-\infty}\}$ and hence that $\stallings_*(f_{s})=\stallings_*(g_{s})$.
 \endproof

We may therefore assume that 
\begin{enumeratecontinue}
\item \label{item:stallingsk} in case {\sf H},  $\stallings_*(f_{s})=\stallings_*(g_{s})$. 
\end{enumeratecontinue}

We next apply the Recognition Theorem to give criteria for an  element  in $\X^-$ to be in $\X^+$. 

\begin{lemma} \label{lem:twist index counts}The following are equivalent for each $\theta \in \X^-$.
\begin{enumerate}
\item \label{item:conjugates}
$\theta \in \X^+$; equivalently $\theta|\F^+$ conjugates $\phi | \F^+$ to $\psi | \F^+$.
\item \label{item:recog}
\begin{enumerate}
\item \label{item:lifts} a line $L$  lifts into $\Gamma(f_u)$ if and only if $\theta(L)$  lifts into $\Gamma(g_{u'})$. 
\item \label{item:LH index} If $\F^- \sqsubset \F^+$ has type {\sf LH} then the twist index for $C$ with respect for $f$  equals the twist index for $C'$ with respect to $g$. Equivalently, if $f(C) = Cw^d$ then $g(C') = C'w^d$.
\end{enumerate} 
\end{enumerate}
\end{lemma}

\proof
\pref{item:conjugates}  implies  \pref{item:lifts}  by  Lemmas~\ref{lem:lifting} and \ref{first theta}.  We may therefore    assume that  \pref{item:lifts} is satisfied and  prove that  \pref{item:conjugates} is equivalent to \pref{item:LH index}.

If $[F]$ is a component of $\F^+$ that is also a component of $\F^-$ then $\theta|F$ conjugates $\phi | F$ to $\psi | F$ because $\theta \in \X^-$.  We may therefore restrict our attention to the unique component of $\F^+$ that is not also a component of $\F^-$.  In other words, we may assume that $G_u $ is connected and so may assume that $G_u= G$ and $\F^+ = \{[F_n]\}$.

By Lemma~\ref{l:recognition}, there is  a  bijection $B : \sa(\phi) \to  \sa(\psi)$  that preserves twist coordinates if and only if  $\theta$ conjugates $\phi$ to $\psi$.  By definition of $\X$, $\theta$ preserves each element of $\A_{\both}(\phi)$. We are therefore reduced to showing that \pref{item:LH index} is satisfied if and only if the following is satisfied for each $[a] \in \A_\both(\phi)$:
\begin{description}
\item($*$) 
the restricted bijection $B : \sa(\phi,[a]) \to  \sa(\psi,[a])$ preserves twist coordinates.
\end{description}  

Since ($*$) is satisfied for  $[a]$ if and only if it is satisfied for $[\bar a]$, we may  assume that the \twistpath\ $w$ for $[a]_u$ satisfies $[a] = [w]$. Extending Notation~\ref{notn:base lift},   we define
$$\cP(\phi,a):= \{\Phi_{a,0},\ldots, \Phi_{a,m-1}\}$$ 
In particular, $\Phi_{a,0}$ is the base principal lift for $a$ (with respect to $f$) and    there  is an order-preserving bijection between the set   $\{E^1,\ldots,E^{m-1} \}$ of linear edges with axis $[a]$ and $\{\Phi_{a,1},\ldots,\Phi_{a,m-1}\}$.   For $1 \le j \le m-1$, there exist distinct twist indices $d_j \ne 0$ such that $f(E^j) = E^j w^{d_j}$.  Define $d_0 = 0$.     Lemma~\ref{lem:ct principal lifts} and Lemma~\ref{lem:strong axis} imply that  
$$\sa(\phi,[a]) = \{[\Phi_{a,0},a],\ldots ,[\Phi_{a,m-1},a]\}$$
and  that the twist coefficient for the pair $([\Phi_{a,i},a],[\Phi_{a,j},a])$ is   $d_i -d_j$. 

We consider two cases. In the first, we assume that either: 
\begin{itemize}
\item
$\F^- \sqsubset \F^+$ has  type {\sf LH}  and $C \not \in \{E^1,\ldots,E^{m-1} \}$; or 
\item
$\F^- \sqsubset \F^+$ does not have  type {\sf LH}
\end{itemize}
and we prove that ($*$) is satisfied.

In this case, 
      $$\cP(\psi,a) = \{\Psi_{a,0},\ldots, \Psi_{a,m-1}\}$$
       and    
$$\sa(\psi, [a]) = \{[\Psi_{a,0},a],\ldots ,[\Psi_{a,m-1},a]\}$$
 with the same sequence  of linear edges  $\{E^1,\ldots,E^{m-1} \}$ and the same sequence of twist indices $\{d_0,\ldots,d_{m-1}\}$. The bijection $B : \sa(\phi) \to  \sa(\psi)$ induces a permutation $\pi $ of $\{0,\ldots,m-1\}$ satisfying $B([\Phi_{a,i},a]) = [\Psi_{a,\pi(i)},a]$. We will show that $\pi$ is the identity and hence that $B : \sa(\phi,[a]) \to  \sa(\psi,[a])$ preserves twist coordinates.

Choose an automorphism $\Theta$ representing $\theta$ and fixing $a$. By Lemma~\ref{l:recognition},  
$$\Theta(\FixN(\Phi_{a,i})) =\FixN(\Psi_{a,\pi(i)})$$
Let $C_s$ be the component of $G_s$ that contains  $w$, and hence contains each $E^i$,   and let $[F]$ be the corresponding component of $\F^-$; we may assume without loss that $a \in F$.  Applying Notation~\ref{notn:base lift} to $\phi | F = \psi | F$ represented by the \ct\ $f |C_s$, we see that 
$$\cP(\psi | F,a) = \cP(\phi | F,a) = \{\Phi_{a,0}|F ,\ldots, \Phi_{a,m-1}|F\}$$
       and    
$$\sa(\psi | F,[a]) =\sa(\phi | F,[a]) = \{[\Phi_{a,0}|F,a],\ldots ,[\Phi_{a,m-1}| F,a]\}$$
 with the same sequence  of linear edges  $\{E^1,\ldots,E^{m-1} \}$ and the same sequence of twist indices $\{d_0,\ldots,d_{m-1}\}$.   Since $C_s$ is $f$-invariant and $[F]$ is $\theta$-invariant, \pref{item:lifts} implies that the set of lines that lift to $\Gamma(f_s)$ is $\theta$-invariant.  Applying Lemma~\ref{l:recognition} produces a  permutation $B_F$ of $\sa(\phi | F,[a]) $  and an induced permutation $\pi_F$ of $\{0,\ldots,m-1\}$.   Since $\theta | F$ commutes with  $\phi|F$, $B_F$ preserves twist-coordinates.  Thus, $d_i -d_j = d_{\pi_F(i)} - d_{\pi_F(j)}$ for all $i$ and $j$.  The only possibility is that $\pi_F$ is the identity and so
 
 $$\FixN(\Psi_{a,\pi(i)})\cap  \partial F = \Theta(\FixN(\Phi_{a,i})) \cap \partial F $$ 
 $$
 =  (\Theta| F)(\FixN(\Phi_{a,i}| F)) = \FixN(\Psi_{a,i} | F) = \FixN(\Psi_{a,i})\cap \partial F$$
 It follows that   $   \FixN(\Psi_{a,\pi(i)}) \cap \FixN(\Psi_{a,i})$ contains $ \FixN(\Psi_{a,i})\cap \partial F$ which  has cardinality at least three. Lemma~\ref{two automorphisms} implies that $\pi(i) = i$ as desired.  This completes the first case.

For the second case, we assume that:
\begin{itemize}
\item
$\F^- \sqsubset \F^+$ has  type {\sf LH} and   $C \in \{E^1,\ldots,E^{m-1} \}$
\end{itemize}
and prove that ($*$) is equivalent to \pref{item:LH index}.

Assuming without loss that $C =  E^{m-1}$,   the sequence of linear edges for $\psi$ is $\{E^1,\ldots,E^{m-2}, C' \}$ with twist indices $\{d_0,\ldots,d_{m-2}, d_{m-1}'\}$.   Thus \pref{item:LH index} is the statement that $d_{m-1} = d'_{m-1}$    and we are reduced to showing that $\pi$ is the identity.

If $m >2$ then  $\cP(\phi | F,a) = \cP(\psi | F,a)$ is indexed by $\{E^1,\ldots,E^{m-2} \}$ and the above analysis applies to show that $\pi$ restricts to the trivial permutation of $\{0,\ldots,m-2\}$.  It then follows that $\pi$ must fix the one remaining element $m-1$ of $\{0,\ldots,m-1\}$.  
  
We are now reduced to the case that $m=2$.  In particular, $[a] \not \in \A(\phi | F)$.   By construction, the base lift $\Phi_{a,0}$ restricts to an element of $\cP(\phi | F,a)$. It follows that $\Phi_{a,1}$ does not restrict to an element of $\cP(\phi | F,a)$.   The same holds for $\Psi_{a,0}$ and $\Psi_{a,1}$.   Since conjugation by $\theta | F$ preserves $\cP(\phi | F,a)$, it must be that $\Phi^\Theta_{a,0} = \Psi_{a,0}$ and $\Phi^\Theta_{a,1} = \Psi_{a,1}$.  This completes the proof of the lemma.
\endproof 

The next step in the algorithm is to check if the following condition is satisfied.
 
\begin{enumerate}
\setcounter{enumi}{3}
\item \label{item:twist consistency}
If $\F^- \sqsubset \F^+$ has type {\sf LH} then the twist index  for $C$ with respect to $f$ equals the twist index for $C'$ with respect to $g$.
\end{enumerate}

If not, return {\tt NO} and {\tt STOP}.   This is justified by Lemma~\ref{lem:twist index counts}.

\begin{lemma} \label{check any theta}    $\theta(r) = r$ for all   $\theta \in \X^-$ and $r \in \newrays(\phi)=\cR(\phi|\F^+) \setminus \cR(\phi| \F^-)$.  
\end{lemma}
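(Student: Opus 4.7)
The lemma is proved by case analysis on the type of the one-edge extension $\fe=(\F^-\sqsubset\F^+)$ given by Lemma~\ref{lemma:three extension types}. By Lemma~\ref{three properties}\pref{item:fixes r}, $\theta$ already fixes each element of $\cR(\phi|\F^-)$ pointwise, so it suffices to treat $r\in\newrays(\phi)$. From the three eigengraph descriptions \sH, \sLH, \sHH\ recalled at the beginning of this section, $|\newrays(\phi)|=1$ in types \sH\ and \sLH\ (with the unique new ray being $r_D$), while in type \sHH\ one has $\newrays(\phi)=\{r_C,r_D\}$ joined by the unique added line $L\in\LW_{\fe}(\phi)$.

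In types \sH\ and \sLH\ the ray $r_D$ is the unique non-periodic end of every $L\in\LW_{\fe}(\phi)$. When $\fe$ is not \llarge, Lemma~\ref{just one eigengraph line} characterises $L$ as the only line in $\sH_\fc(L)$ that lifts to $\Gamma(f_u)$; condition~(4) of Definition~\ref{d:X} gives $\theta(\sH_\fc(L))=\sH_\fc(L)$, and once $\theta(L)$ is shown to lift to $\Gamma(f_u)$ this forces $\theta(L)=L$, whence $\theta(r_D)=r_D$. In the \sLH\ subcase one additionally invokes condition~(6) of $\X$ to keep the twist-path axis fixed. When $\fe$ is \llarge, conditions~(3) and (4) applied to the algebraic added line $[\Fix(\Phi),F_\fc(\ti r_D)]$, combined with Lemma~\ref{l:unique lift}, yield a lift $\Theta\in\theta$ simultaneously preserving $\Fix(\Phi)$ and $F_\fc(\ti r_D)$; the intrinsic description of $r_D$ as the unique new eigenray attached to the fixed core then gives $\theta(r_D)=r_D$. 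In type \sHH\ the extension is automatically \contractible\ (the new component of $\Gamma(f_u)$ is a bi-infinite line, whose core is a point), so Lemma~\ref{just one eigengraph line} applies to both $L$ and $L^{-1}$. Condition~(4) applied separately to the two distinct ordered conjugacy pairs $\sH_\fc(L)=[F_\fc(\ti r_C),F_\fc(\ti r_D)]$ and $\sH_\fc(L^{-1})=[F_\fc(\ti r_D),F_\fc(\ti r_C)]$ rules out $\theta$ swapping $r_C$ and $r_D$; the same lifting argument then gives $\theta(L)=L$ as an oriented line, and hence both $\theta(r_C)=r_C$ and $\theta(r_D)=r_D$.

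The main obstacle is establishing that $\theta(L)$ lifts to $\Gamma(f_u)$ in each case: a priori $\theta\in\X^-$ is only known to send lines lifting to $\Gamma(f_s)$ to lines lifting to $\Gamma(g_s)=\Gamma(f_s)$, and has no guaranteed action on lines whose ends involve eigenrays of the new edges of $G_u$; in particular $\theta(\cR(\phi))$ is only obviously contained in $\cR(\theta\phi\theta^{-1})$. I would handle this by decomposing $L$ along its complete splitting into a Nielsen-path part contained in $\F^-$ (preserved by $\theta$ via Lemma~\ref{three properties}\pref{item:lines from above}) together with one or two eigenrays of new edges, and then using the goodness/disjointness statements of Lemma~\ref{l:algebraic lines are good pairs} in conjunction with Lemma~\ref{l:unique lift} to pin $\theta(L)$ down as the unique line in $\sH_\fc(L)$ whose ends have lifts lying in the prescribed boundaries $\partial F_\fc(\ti r)$, thereby forcing that line to lift to $\Gamma(f_u)$ and completing the argument.
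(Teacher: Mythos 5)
The pivotal step of your argument---that $\theta(L)$ lifts into the eigengraph $\Gamma(f_u)$ (equivalently $\Gamma(g_{u'})$), so that Lemma~\ref{just one eigengraph line} forces $\theta(L)=L$ and hence $\theta(r)=r$---is exactly the point at issue, and your proposed repair does not supply it. By definition, \emph{every} line in the set associated to $\sH_\fc(L)$ has its endpoints in the prescribed boundaries $\partial\langle a\rangle$, $\partial\Fix(\Phi)$, $\partial F_\fc(\ti r)$; so ``the ends have lifts lying in the prescribed boundaries'' singles out nothing, and Lemma~\ref{l:algebraic lines are good pairs} and Lemma~\ref{l:unique lift} only give you a preferred representative $\Theta\in\theta$ preserving $\sH_\fc(\ti L)$, with no control over where $\partial\Theta$ sends $\ti r$ inside $\partial F_\fc(\ti r)$. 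What distinguishes $L$ inside $\sH_\fc(L)$ is precisely that its non-periodic end is the eigenray point $\ti r$ rather than an arbitrary point of $\partial F_\fc(\ti r)$, i.e.\ that it lifts to the eigengraph---and knowing this for $\theta(L)$ is essentially the conclusion you are trying to prove (and, via Lemma~\ref{lem:twist index counts}\pref{item:lifts}, amounts to the membership in $\X^+$ that the whole induction is trying to decide). For a general element of $\X$ the lifting property simply fails, so the argument is circular at its key step.

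Symptomatically, you never use the hypothesis $\theta\in\Ker(\bar Q)$ from Definition~\ref{defn:Xepsilon}\pref{item:KerBarQ}, and the conditions you do use cannot suffice: they are blind to relative twisting. For instance, if the eigenray of $D$ has blocks of the form $E_1a_1^k\gamma E_2a_2^k\bar\gamma$ for two linear families, the shear fixing $a_1,a_2,\gamma,E_1,D$ and sending $E_2\mapsto E_2a_2^m$ preserves conditions (3)--(7) of Definition~\ref{d:X} (it even fixes each limit line of $r_D$) and commutes with $\phi|\F^-$, yet it changes the tail of $R_D$ to $E_1a_1^k\gamma E_2a_2^{k+m}\bar\gamma\cdots$, which lies in a different $F_n$-orbit, so it moves $r_D$; what detects and excludes it is exactly the inequality of its twist values $m_b$ on equivalent staple pairs, i.e.\ the condition $\theta\in\Ker(\bar Q)$. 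The paper's proof runs through this mechanism: since $\theta\in\Ker(\bar Q)$ there is a single $p$ with $Q_b(\theta)=p$ for all staple pairs $b$ occurring in $r$, so $\arb=\theta^{-1}\phi^{p}$ satisfies $m_b(\arb)=0$ for all such $b$; the remaining hypotheses of Lemma~\ref{new rays from lines} follow from Lemma~\ref{three properties}, and that lemma (whose spanning-staple-pair argument is what forces one single lift of $\arb$ to fix all of $\accnr(\phi,\ti r)$, hence $\ti r$) gives $\arb(r)=r$ and so $\theta(r)=\phi^{p}(r)=r$. Some version of this twist-coordinate control is indispensable and is absent from your proposal.
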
    

\proof  Since $\theta \in \Ker(\bar Q)$, there exists $p \in \Z$ such that $Q_b(\theta) = p$ for all $b \in \cS_2(\phi)$ that occur in $r$.  Letting $\arb=\theta^{-1} \phi^{p}$, it follows that  $Q_b(\arb) =0$ for all $b \in \cS_2(\phi)$ that occur in $r$.  Thus $\arb$ satisfies   Lemma~\ref{new rays from lines}\pref{item:Q is zero}.   Lemma~\ref{new rays from lines} \pref{item:preserves filtration}  is obvious and  the two remaining  items in the hypotheses of that lemma follow from Lemma~\ref{three properties}.  We may therefore apply  Lemma~\ref{new rays from lines}    to conclude that $\arb(r)=r$ and hence that $\theta(r) = \theta \arb(r)=  \phi^{p}(r)=r$.
\endproof

\begin{corollary} \label{same rays}
If $\X^+ \ne \emptyset$ then $\newrays(\phi) = \newrays(\psi)$.
\end{corollary}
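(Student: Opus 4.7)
The plan is to fix any $\theta \in \X^+$ and show directly that $\newrays(\phi) \subset \newrays(\psi)$, and then close the argument by a cardinality comparison.

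First, since $\theta \in \X^+$ conjugates $\phi|\F^+$ to $\psi|\F^+$, Lemma~\ref{first theta} produces a bijection $\cR(\phi|\F^+) \to \cR(\psi|\F^+)$ given by $r \mapsto \theta(r)$. The hypothesis $\phi|\F^- = \psi|\F^-$ of Proposition~\ref{inductive step} gives $\cR(\phi|\F^-) = \cR(\psi|\F^-)$, and on this common subset the map $r \mapsto \theta(r)$ is the identity by Lemma~\ref{three properties}\pref{item:fixes r}.

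Next, take $r \in \newrays(\phi) = \cR(\phi|\F^+) \setminus \cR(\phi|\F^-)$. Lemma~\ref{check any theta} applies to $\theta \in \X^- \supset \X^+$ and yields $\theta(r) = r$. Hence $r \in \cR(\psi|\F^+)$. If $r$ were in $\cR(\psi|\F^-) = \cR(\phi|\F^-)$, this would contradict $r \in \newrays(\phi)$; thus $r \in \newrays(\psi)$, proving $\newrays(\phi) \subset \newrays(\psi)$.

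Finally, because $\X^+ \ne \emptyset$, item (2) of the normalization immediately preceding Definition~\ref{defn:Xepsilon} (equivalently, the type-preservation statement of Lemma~\ref{l:special is natural} applied to $\fe$ under conjugation by $\theta$) guarantees that the extensions $\F^- \sqsubset \F^+$ have the same type in $\phi$ and in $\psi$; whether this type is \sHH, \sLH, or \sH\ determines $|\newrays(\phi)| = |\newrays(\psi)|$ (namely $2$ or $1$). Since $\newrays(\phi) \subset \newrays(\psi)$ are finite sets of the same cardinality, they coincide. The main content of the proof is really Lemma~\ref{check any theta}; everything here is bookkeeping around it.
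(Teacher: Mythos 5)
Your proof is correct and is essentially the paper's argument: the paper likewise combines Lemma~\ref{check any theta} (for $\theta\in\X^+\subset\X^-$) with Definition~\ref{defn:Xepsilon}(b) and Lemma~\ref{first theta}(3), but it concludes in one step via $\newrays(\phi)=\theta\big(\newrays(\phi)\big)=\newrays(\psi)$, since $\theta$ carries $\cR(\phi|\F^{\pm})$ bijectively onto $\cR(\psi|\F^{\pm})$ and hence $\newrays(\phi)$ onto $\newrays(\psi)$. Your closing cardinality/type comparison is thus unnecessary (the same bijection already gives equal cardinalities), though it is valid given the standing assumption that the two extensions have the same type.
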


\proof
If $\theta \in \X^+$ then $\newrays(\phi) = \theta\big(\newrays(\phi)\big) = \newrays(\psi)$   where the first equality follows from $\X^+ \subset \X^-$ and   Lemma~\ref{check any theta} and the second equality follows from Definition~\ref{defn:Xepsilon}\pref{item:conjugates epsilon} and Lemma~\ref{first theta}(3).
\endproof

\begin{notn}
$f(D) =  D\cdot \sigma$ for some completely split  path $\sigma \subset G_s$, and letting   $\afterR_D=  \sigma\cdot f_\#(\sigma) \cdot \ldots\cdot f^j_\#(\sigma) \cdot \ldots$,   the eigenray $ R_{D}$ determined by $D$ decomposes as  $R_D =   D\afterR_D$.   In the {\sf HH} case, $\afterR_C$ is defined analogously and  $R_C = C \afterR_C$.    The  rays $R'_{D'}, \afterR'_{D'}, R'_{C'}$ and $\afterR'_{C'}$ are defined similarly using $g : G' \to G'$ in place of $f : G \to G$.  
\end{notn}

 Each element of $\newrays(\phi)$ is represented by $R_D=DS_D$ or $R_C =CR_C$ and similarly for each element of $\newrays(\psi)$.    \cite[Lemma 6.3]{fh:CTconjugacy} therefore supplies an algorithm to  decide if a given  $r \in\newrays(\phi)$  and a given  $r' \in \newrays(\psi)$ are equal.  Applying this up to three times, we can     decide if  $\newrays(\phi)= \newrays(\psi)$.  If  $\newrays(\phi)\ne  \newrays(\psi)$ then $\X^+ = \emptyset$ by Corollary~\ref{same rays};  return {\tt NO} and {\tt STOP}.    We may therefore assume   that 
  \begin{enumeratecontinue}
  \item \label{r equals r'} $\newrays(\phi) = \newrays(\psi)$.  Denote this common set by $\newrays$.  In the {\sf H} and {\sf LH} cases, the unique element of $\newrays$ corresponds to $D$ and $D'$ and is denoted $r_D$.  In the {\sf HH} case, $\newrays = \{r_C, r_D\}$ where $r_C$ corresponds to $C$ and $C'$ and $r_D$ corresponds to $D$ and $D'$; this may require reversing the orientation on $E'$.    Remark~\ref{ray carried},    implies that $S_D$ and $S'_{D'}$ are contained in the core filtration element $G_p$ that realizes $F(r_D)$ and that, in the {\sf HH} case, $S_C$ and $S'_{C'}$ are contained in the core  filtration element $G_q$ realizing $F(r_C)$.
 \end{enumeratecontinue}

\cite[Lemma 6.3]{fh:CTconjugacy} also gives us initial subpaths of $S_D$ and $S'_{D'}$ whose terminal complements are equal. We may therefore assume 

\begin{enumeratecontinue}
\item \label{common terminal ray}
There is a  finite path $\kappa_D \subset G_p$  such that $S'_{D'}$ is obtained from $\kappa_D S_D$ by tightening.  Similarly, in case {\sf HH},  there is a  finite path $\kappa_C \subset G_q$  such that $S'_{C'}$ is obtained from $\kappa_C S_C$ by tightening. 
\end{enumeratecontinue}
 
We record the following for convenient referencing.  

\begin{lemma} \label{quotable}
Suppose that $\fG$ is a \ct\ representing $\phi$ and realizing $\fc$ and that either $L$ is  an element of $\acc(\phi)$ or $L$ is an element of $\LW_{\fe}(\phi)$ where $\fe\in\fc$ is not \llarge.  Let $\sigma$ be a line in $ 
\nbhd(L)$.  Then one of the following (mutually exclusive) properties is satisfied.    
 \begin{itemize}
\item
$L$ does not cross any higher order edges;    $\sigma = L$. 
\item
$L = \beta^{-1} R_e$  [resp. $  R_e^{-1} \beta$] for some higher order edge $e$ and ray $\beta$ that does not cross any higher order edges;    $\sigma = \beta^{-1} e \tau$  [resp.\ $\tau^{-1} \bar e \beta$]   where $\tau$ is a ray in the core filtration element  that realizes $F(r_e)$.
\item
$L = R_{e_1}^{-1}\rho R_{e_2}$ where $e_1,e_2$ are higher order edges and $\rho$ is a Nielsen path;
$\sigma = \tau_1^{-1}e_1^{-1}\rho e_2\tau_2$ where  $\tau_1$ is a ray in   the core filtration element   that realizes $F(r_{e_1})$ and  $\tau_2$ is a ray in   the core filtration element that realizes $F(r_{e_2})$.  
\end{itemize}
\end{lemma}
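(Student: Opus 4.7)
My plan is case analysis on the type $\in\{{\sf P\text{-}P}, {\sf P\text{-}NP}, {\sf NP\text{-}P}, {\sf NP\text{-}NP}\}$ assigned to $L$ by the framework of Section~\ref{s:algebraic invariants}; in each case I read this off from the decomposition $L=(R^-)^{-1}\rho R^+$ of Corollary~\ref{cor:limit lines}\pref{item:ell decomposes}. Types 1(b) and 1(c) of $R^\pm$ in Lemma~\ref{paths converge} give periodic ends and cross no higher order edges, while type 1(a) gives a non-periodic end of the form $R_e$; the three bullets then correspond to neither, exactly one, or both of $R^\pm$ being eigenrays $R_e$. In the {\sf P\text{-}P} case $L$ is a concatenation of two type-1(b)/1(c) rays with a Nielsen path, so $L$ contains only fixed and linear edges; since the associated set of lines of $\sH_\fc(L)$ is $\{L\}$, $\sigma = L$ and bullet 1 is immediate.

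For the {\sf P\text{-}NP} case I would lift $L$ to $\ti L = \ti\beta^{-1}\ti R_{\ti e}$ from $a^-$ to $\ti r$, where $\ti e$ is the initial edge of the eigenray $\ti R_{\ti e}$, and let $\ti K\subset\ti G$ be the lift of the core filtration element realizing $F(r)$ whose ideal boundary equals $\partial F_\fc(\ti r)$. By Remark~\ref{r:F(r)} the tail $\ti u_e\,\ti f_\#(\ti u_e)\cdots$ of $\ti R_{\ti e}$ lies in $\ti K$, so the $\ti K$-intersection of $\ti L$ extends back at least to the terminal vertex $\ti v$ of $\ti e$. By Lemma~\ref{l:algebraic lines are good pairs}, $a^-\notin\partial\ti K$, so any lift $\ti\sigma$ of any $\sigma$ in the associated set of lines $[a^-,\partial F_\fc(\ti r)]$ enters $\ti K$ at the unique nearest-point projection of $a^-$ onto $\ti K$; identifying this projection from $\ti L$ gives that $\ti\sigma$ shares the sub-ray from $a^-$ to $\ti v$ with $\ti L$, and continues as a ray $\ti\tau$ in $\ti K$ to the other endpoint of $\ti\sigma$. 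Projecting to $G$ gives $\sigma=\beta^{-1}e\tau$ with $\tau$ a ray in the core filtration element realizing $F(r)$, establishing bullet 2; the {\sf NP\text{-}P} case is symmetric.

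The {\sf NP\text{-}NP} case follows the same template with two subtrees $\ti C_1,\ti C_2$ lifting the core filtration elements realizing $F_\fc(\ti r_1),F_\fc(\ti r_2)$. By Lemma~\ref{l:algebraic lines are good pairs} the conjugacy pair $[F_\fc(\ti r_1),F_\fc(\ti r_2)]$ is good, which forces $\partial\ti C_1\cap\partial\ti C_2=\emptyset$ and, via the free-product structure, $\ti C_1\cap\ti C_2$ to contain no edge; hence $\ti C_1,\ti C_2$ are joined in $\ti G$ by a unique (possibly degenerate) geodesic bridge. Every lift of every $\sigma\in[\partial F_\fc(\ti r_1),\partial F_\fc(\ti r_2)]$ must traverse this same bridge, which one reads from $\ti L$ as $\ti e_1^{-1}\ti\rho\,\ti e_2$; hence $\ti\sigma=\ti\tau_1^{-1}\ti e_1^{-1}\ti\rho\,\ti e_2\ti\tau_2$ with each $\ti\tau_i$ a ray in $\ti C_i$, which projects to bullet 3.

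The main obstacle is the identification of the entry vertex of $\ti L$ into $\ti K$ (respectively into each $\ti C_i$) with the terminal vertex of the distinguished initial edge $\ti e$ (respectively $\ti e_i$) of the eigenray; this rests on Remark~\ref{r:F(r)} placing $u_e$ in the relevant subgraph together with the tree structure of $\ti G$. Once that identification is in place, the lemma reduces to two standard tree-geometry facts: a ray in a tree from an exterior ideal point to a boundary point of a convex subtree enters the subtree at a uniquely determined vertex, and two convex subtrees with disjoint boundaries and no shared edge are joined by a unique geodesic bridge.
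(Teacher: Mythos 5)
Your overall strategy is sound and is essentially an expanded version of the paper's (very terse) proof: the paper gets the three possible shapes of $L$ from the description of lines carried by the eigengraph (Lemma~\ref{lem:lines in gf}, together with the fact that such $L$ lift to $\gf$, as in Lemma~\ref{just one eigengraph line}) and then declares the shape of $\sigma$ ``immediate from the definitions of $\sH(L)$''; you make that last step explicit with the nearest-point-projection/median and bridge arguments, which is exactly the right tree geometry. However, there are three concrete soft spots. First, you source the decomposition of $L$ from Corollary~\ref{cor:limit lines}\pref{item:ell decomposes}, which only applies when $L\in\acc(\phi)$; the hypothesis also allows $L\in\LW_{\fe}(\phi)$ with $\fe$ not large, and for those you need Lemma~\ref{lem:lines in gf} (or the explicit descriptions in Definition~\ref{d:added lines}, as used in the proof of Lemma~\ref{just one eigengraph line}) rather than the corollary you cite.

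Second, the identification of the entry vertex of $\ti L$ into $\ti K$ (and of the bridge endpoints in the {\sf NP-NP} case) with the terminal vertex of $\ti e$ does not follow from Remark~\ref{r:F(r)} alone: placing $u_e$ in the core filtration element $G_p$ realizing $F(r_e)$ shows the tail of $\ti R_{\ti e}$ lies in $\ti K$, but you also need that the edge $e$ itself is \emph{not} in $G_p$ (since $\ti L\cap\ti K$ is connected, an earlier intersection point would force $\ti e\subset\ti K$); this holds because $u_e$ already lies in the filtration element realizing the lower free factor system of the extension where $r_e$ is new, so $G_p$ has height below $e$, but you must say it. Third, in the {\sf NP-NP} case the deduction ``good conjugacy pair $\Rightarrow$ $\ti C_1\cap\ti C_2$ contains no edge'' is not valid as stated, and allowing a ``possibly degenerate'' bridge would not suffice: if the two subtrees met even in a vertex, lines from $\partial\ti C_1$ to $\partial\ti C_2$ need not cross $e_1^{-1}\rho e_2$. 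What you need, and can get, is $\ti C_1\cap\ti C_2=\emptyset$: either from $\partial\ti C_1\cap\partial\ti C_2=\emptyset$ (Lemma~\ref{l:algebraic lines are good pairs}) together with the nestedness of the two core filtration elements (a nonempty intersection would put one subtree inside the other and force the boundaries to meet), or by a median argument applied to $\ti L$ itself once the previous point ($e_i\notin G_{q_i}$) is in place, since $\ti L\cap\ti C_1$ and $\ti L\cap\ti C_2$ are then disjoint. With those repairs your argument is complete.
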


\proof
The description of $L$ comes from Lemma~\ref{lem:lines in gf}  and  the fact (Lemma~\ref{just one eigengraph line})  that each such $L$ is carried by $\gf$.  The description of $\sigma$ is immediate from the definitions of $\sH(L)$.
\endproof
     
\begin{notn}
Represent the trivial element of $\Out(F_n)$ by a  homotopy equivalence  $h:G\to G'$  that restricts to the identity on $G_{s}$. We may assume that $h(G_u) = G'_{u'}$ because $G_u$ and $G'_{u'}$ are core graphs that  represent $\F^+$.  Recall from \pref{r equals r'} that $G_p$ is the core filtration element  that realizes $F(r_D)$ and that in the {\sf HH} case, $G_q$ is   the core  filtration element  realizing $F(r_C)$.
\end{notn}

\begin{remark} \label{rem:only one crossing}
If the endpoint set of  $E$ is equal to the endpoint set  of $E'$ then $G$ and $G'$ differ only by a marking change so one can view  $h$, combinatorially,  as a homotopy equivalence from $G$ to $G$ (that does not preserve markings). In this case, \cite[Corollary 3.2.2]{bfh:tits1} implies that $h(E) = \bar \mu E' \nu$  or $h(E) = \bar \mu \bar E' \nu$ for some paths $\mu, \nu \subset G_s$. { The same conclusion holds if the endpoint sets of $E$ and $E'$ are not equal because} one can fold initial and terminal segments of $E'$ into $G_s$ to arrange  that    the endpoint set of  $E$ is equal to the endpoint set of $E'$.
\end{remark}

The next step in the algorithm is to check if the following statement is satisfied.
\begin{enumeratecontinue}
\item \label{contractible consistency} If $\fe$ has type \sH\sH\   then $h(E) = \bar \mu E' \nu$ for some paths $\mu \subset G_q$ and $\nu \subset G_p$.
\end{enumeratecontinue}

If \pref{contractible consistency} fails then  we  return {\tt NO} and {\tt STOP}.  We justify this by the following lemma.  

\begin{lemma} \label{l:check orientation}
If $\fe$ has type \sH\sH\ and  $\X^+ \ne\emptyset$    then $h(E) = \bar \mu E \nu$ for some paths $\mu, \nu \subset G_s$.  Moreover,  $\mu \subset G_q$ and $\nu \subset G_p$.
\end{lemma}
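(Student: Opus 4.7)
The strategy is to identify the line $L$ in $G$ with the line $L'$ in $G'$ as elements of $\B$ and then unwind what $h$ must do on $E$. In the \sH\sH\ case, the added component $\Gamma_*(f_u)$ is contractible, so $\fe$ is \contractible, and $\Gamma_*(f_u)$ consists of a single line realized in $G$ as $L=S_C^{-1}E S_D$; similarly $L'=(S'_{C'})^{-1}E'S'_{D'}$ realizes $\Gamma_*(g_{u'})$. Substituting (5), $L'=S_C^{-1}\bar\kappa_CE'\kappa_DS_D$ after tightening, where $\kappa_C\subset G_q$ and $\kappa_D\subset G_p$.

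The heart of the argument is the identity $L=L'$ as oriented elements of $\B$. Choose a specific lift $\ti r_D\in\partial F_n$ of $r_D\in\cR(\phi)=\cR(\psi)$ (from (4)), and let $\Phi_{\ti r_D}\in\cP(\phi)$ and $\Psi_{\ti r_D}\in\cP(\psi)$ be the unique principal automorphisms fixing $\ti r_D$. Since $\fe$ is \contractible, Lemma~\ref{gjll} and Lemma~\ref{identifying Fix+} give $\FixN(\Phi_{\ti r_D})=\{\ti r_C,\ti r_D\}$ for a specific lift $\ti r_C$ of $r_C$, and likewise $\FixN(\Psi_{\ti r_D})=\{\ti r'_C,\ti r_D\}$ for some lift $\ti r'_C$ of $r_C$. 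From ${\sf I}_\fc(\phi)={\sf I}_\fc(\psi)$ the multisets of $F_n$-orbits $\FixN(\phi)$ and $\FixN(\psi)$ coincide, so the $F_n$-orbit of $\{\ti r_C,\ti r_D\}$ is also represented in $\cP(\psi)$. Choosing the isogredience representative whose $\FixN$ is exactly $\{\ti r_C,\ti r_D\}$ forces this representative to fix $\ti r_D$, hence to equal $\Psi_{\ti r_D}$, and thus $\ti r'_C=\ti r_C$. A short case analysis, using that $r_C\ne r_D$ as elements of $\cR$ and that a non-periodic boundary point has trivial $F_n$-stabilizer, rules out the alternative that an $F_n$-translate swaps the roles of $\ti r_C$ and $\ti r_D$. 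Consequently, the lifts of $L$ and $L'$ with $\partial_+=\ti r_D$ share $\partial_-=\ti r_C$, yielding $L=L'$ in $\B$.

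Next I would apply $h$. Since $h$ represents the identity of $\Out(F_n)$, $h_\#(L)=L=L'$ in $\B$, so the tightened image $[h(L)]=L'$ as a line in $G'$. Using $h|G_s=\mathrm{id}$ and $S_C,S_D\subset G_s$, we have $h(L)=S_C^{-1}h(E)S_D$, so $[h(L)]=S_C^{-1}[h(E)]S_D$; no cancellation crosses the $G_s$-boundary because the edges of $E'$ lie outside $G_s$. Equating with $L'=S_C^{-1}\bar\kappa_CE'\kappa_DS_D$ gives $[h(E)]=\bar\kappa_CE'\kappa_D$, and taking $\mu=\kappa_C$, $\nu=\kappa_D$ yields the lemma, with $\mu\subset G_q$ and $\nu\subset G_p$ by (5).

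The main obstacle is the combinatorial bookkeeping in the second paragraph that pins down $\ti r'_C=\ti r_C$; the rest is a direct comparison of two explicit descriptions of the same line. Notably, the hypothesis $\X^+\ne\emptyset$ is not invoked directly in this approach—the conclusion follows from the algebraic data already encoded in ${\sf I}_\fc(\phi)={\sf I}_\fc(\psi)$ together with the \sH\sH-contractible structure of $\fe$.
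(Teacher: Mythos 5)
There is a genuine gap at the heart of your argument: the claim that ${\sf I}_\fc(\phi)={\sf I}_\fc(\psi)$ forces $\FixN(\phi)=\FixN(\psi)$, and hence that the added lines satisfy $L=L'$ in $\B$. The invariant ${\sf I}_\fc$ does \emph{not} contain $\FixN(\phi)$; by Definition~\ref{d:I} its components are $\fc$, $\Fix(\phi)$ (conjugacy classes of fixed \emph{subgroups}, which are trivial here because the new eigengraph component in the \sH\sH\ case is contractible), the algebraic added lines $\sH_{\fe\in\fc}(\phi)$, the algebraic limit lines, axes, and algebraic strong axes. The algebraic added line $\sH_\fc(L)$ is only the conjugacy pair $[F_\fc(\ti r_C),F_\fc(\ti r_D)]$ — a set of lines, not the line $L$ itself — so equality of the algebraic invariants cannot recover the actual boundary pair $\{\ti r_C,\ti r_D\}$ up to the $F_n$-action. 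Indeed $L=L'$ is false in general: the hypotheses only yield $\theta_0(L)=L'$ for some $\theta_0\in\X^+$ (this is how the paper argues, via Lemma~\ref{check any theta}), and the conjugator eventually produced is typically nontrivial, so $L'$ is usually a different line from $L$ with the same end-orbits. Your closing observation that $\X^+\ne\emptyset$ is never used is the symptom of this over-claim: the hypothesis is exactly what produces $\theta_0$ with $\theta_0(L)=L'$, which in turn shows $L\in\sH(L')$ rather than possibly $\sH({L'}^{-1})$; that weaker membership is what rules out $h(E)$ crossing $\bar E'$. Without it, the algebraic data $\{\sH(L),\sH(L^{-1})\}=\{\sH(L'),\sH({L'}^{-1})\}$ leaves the orientation ambiguous, which is precisely why the algorithm must test condition \pref{contractible consistency} and return {\tt NO} when it fails.

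Because the identity $L=L'$ is unjustified, the final comparison $[h(E)]=\bar\kappa_C E'\kappa_D$ (and the identification $\mu=\kappa_C$, $\nu=\kappa_D$) also does not follow; it is in any case stronger than what the lemma asserts or what the paper proves. The correct route, as in the paper, is: first invoke \cite[Lemma 3.2.2]{bfh:tits1} to get $h(E)=\bar\mu E'\nu$ or $\bar\mu\bar E'\nu$ with $\mu,\nu\subset G_s$; then use $\X^+\ne\emptyset$ to find $\theta_0$ with $\theta_0(L)=L'$, conclude $h_\#(L)\in\sH(L')$ since $h$ represents an element of $\X^-$ and $\theta_0$ preserves the conjugacy pairs in $\sH_\fe$, and read off from $h_\#(S_C^{-1}\bar C D S_D)=[\bar S_C\bar\mu]\bar C'D'[\nu S_D]$ both that $\bar E'$ is not crossed and that $\nu\subset G_p$, $\mu\subset G_q$.
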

  
\proof
By Remark~\ref{rem:only one crossing},   $h(E) =\bar  \mu E' \nu$  or $h(E) = \bar \mu \bar E' \nu$ for some paths $\mu,\nu \subset G_s$ so for the main statement,we just want to rule out the latter possibility.   Let $L = R_C^{-1}R_D$ and $L' = R_{C'}^{-1}R_{D'}$.   Then  $\LW_{\fe}(\phi) = \{L,L^{-1}\}$ and $\LW_{\fe}(\psi) = \{L', {L'}^{-1}\}$. Since $\X^+ \ne \emptyset$, there exists   $\theta_0 \in \X^-$ such that $\theta_0 |\F^+$ conjugates $\phi | \F^+$ to $\psi | \F^+$. Thus  $\theta_0(\LW_{\fe}(\phi)) = \LW_{\fe}(\psi)$ and, by     Lemma~\ref{check any theta},   $L$ and $\theta_0(L)$ have the same initial ends and the same terminal ends. It follows that   $\theta_0(L) = L'$. Since $h$ represents an element of $\X^-$,  $h_\#(L) \in \sH(L')$.   In particular,  $h_\#(R_{C}^{-1}R_D) $ does not cross $\bar E'$ which implies that $h(E)$ does not cross $\bar E'$. This completes the proof of the main statement.  

From $h_\#(L)  = h_\#(S_C^{-1}\bar C D S_D) =
  [\bar S_C \bar \mu] \bar C'D' [\nu S_D]$ it follows that $[\nu S_D] \subset G_p$ and $[\mu S_C] \subset G_q$. Thus  $\nu \subset G_p$ and $\mu \subset G_q$.
\endproof
 
\medspace

The remainder of the proof of Proposition~\ref{p:reduction2} is the construction of an element  $\theta^+ \in\X^+$.  By Lemma~\ref{lem:twist index counts} and \pref{item:twist consistency}, it suffices to find $\theta \in \X^-$ that induces a bijection between lines that lift to   $\stallings(f_u)$ and lines that lift to  $\stallings(g_{u'})$.

The next lemma states that the conclusions of Lemma~\ref{l:check orientation} are satisfied in the \sH\ and {\sf L}\sH\ cases without the assumption that $\X^+ \ne \emptyset$.

\begin{lemma} \label{no need for kappa}
$h(E) =\bar  \mu E' \nu$ for some $\mu \subset G_s$ and  $\nu \subset G_p$.  In case {\sf HH},   $\mu \subset G_q$. 
\end{lemma}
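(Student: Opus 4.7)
The plan is to treat the three types of one-edge extension separately. The \sH\sH\ case is immediate: the algorithm has just verified condition~\pref{contractible consistency} (read as $h(E)=\bar\mu E'\nu$ with $\mu\subset G_q$ and $\nu\subset G_p$, the $E$ in the displayed formula being a typo for $E'$), which is precisely the conclusion. Had \pref{contractible consistency} failed we would have returned \texttt{NO} and stopped, so we may assume it.

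For the \sH\ and \sLH\ cases I focus first on the higher-order edge $D$. By the standard decomposition \cite[Lemma~3.2.2]{bfh:tits1}, $h(D) = \eta\, (D')^{\epsilon}\, \xi$ with $\eta,\xi$ edge paths in $G_s$ and $\epsilon \in \{\pm 1\}$; in the \sLH\ case I first arrange $h$ so that it sends the new vertex of $G_u$ to the new vertex of $G'_{u'}$, so that $\eta,\xi$ genuinely lie in $G_s$. To rule out $\epsilon=-1$: if $h(D) = \eta\, \bar{D'}\, \xi$, then $h_\#(R_D)$ tightens to $\eta\, \bar{D'}\, [\xi S_D]$, whose terminal ray $[\xi S_D]$ lies entirely in $G_s$, so its terminal end is carried by $\F^-$. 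But $h$ represents the trivial outer automorphism, hence $h_\#(R_D)$ converges to $r_D$, and by \pref{r equals r'} we have $r_D \in \newrays \not\subset \cR(\phi|\F^-)$, so $r_D$ is not carried by $\F^-$---a contradiction. Thus $\epsilon=+1$ and $h(D) = \eta\, D'\, \xi$ with $\mu := \bar\eta \subset G_s$.

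To upgrade $\xi$ to lie in $G_p$, I pass to universal covers and pick the lift $\ti h:\ti G\to\ti G'$ representing the trivial automorphism; this $\ti h$ fixes each point of $\partial F_n$ and restricts to the identity on corresponding lifts of $G_s$. Choose lifts $\ti D,\ti D'$ with $\partial_+\ti R_{\ti D} = \ti r_D = \partial_+\ti R_{\ti D'}$. Both $\ti h_\#(\ti R_{\ti D})$ and $\ti R_{\ti D'}$ converge to $\ti r_D$ in $\ti G'$, and a tree argument shows that the particular lift of $D'$ appearing in $\ti h(\ti D)$ must be exactly $\ti D'$---since its terminal endpoint lies in the unique lift of $G_s$ whose boundary in $\ti G'$ contains $\ti r_D$. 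Then $[\ti\xi\, \ti S_{\ti D}]$ and $\ti S_{\ti D'}$ are rays in that lift of $G_s$ starting at $\ti w_{D'}$ and converging to $\ti r_D$, so they coincide. Projecting: $[\xi S_D] = S_{D'}$. Combining with \pref{common terminal ray} ($S_{D'} = [\kappa_D S_D]$ with $\kappa_D \subset G_p$) and the uniqueness of tightened representatives of paths in $G_s$ between $w_{D'}$ and $w_D$, we conclude $\xi = \kappa_D \subset G_p$. This finishes the \sH\ case.

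For the \sLH\ case the same strategy applied to the linear edge $C$---using \pref{item:twist consistency} via a twist-coefficient comparison to eliminate the reversed orientation of $C'$ in $h(C)$---gives $h(C) = C'\, \xi^{(C)}$ with $\xi^{(C)} \subset G_s$ (the leading $\eta$-piece being trivial thanks to our choice $h(v_C)=v_{C'}$). Combining, $h(E) = \bar{h(C)}\cdot h(D) = \bar{\xi^{(C)}}\cdot \bar{C'}\cdot D'\cdot \xi^{(D)} = \bar\mu\, E'\, \nu$ with $\mu := \xi^{(C)} \subset G_s$ and $\nu := \xi^{(D)} \subset G_p$. The main obstacle throughout is the careful tracking of which lifts of $G_s$ in $\ti G'$ contain the relevant tightened ray pieces (needed to identify the mystery lift of $D'$ in $\ti h(\ti D)$ with $\ti D'$ itself), together with the delicate twist-coefficient check that rules out the reversed orientation of $C'$ in the \sLH\ case.
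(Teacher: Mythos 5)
Your handling of the \sH\sH\ case agrees with the paper, but the argument you give to rule out the reversed orientation in the \sH\ and \sLH\ cases has a genuine gap, and it is exactly at the point where the paper's key idea enters. You claim that if $h(D)=\eta\,\bar{D'}\,\xi$ then the terminal ray $[\xi S_D]$ of $h_\#(R_D)$ lies in $G_s$, so its end is carried by $\F^-$, contradicting ``$r_D\in\newrays$, so $r_D$ is not carried by $\F^-$.'' That last assertion is false: $r_D\notin\cR(\phi|\F^-)$ only means $r_D$ is not an attracting fixed point of a principal automorphism of the restriction; it does not mean $\ti r_D$ fails to lie in the boundary of a component of $\F^-$. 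In fact $r_D$ \emph{is} carried by $\F^-$ --- it is the terminal end of $S_D\subset G_p\subset G_s$, and $F(r_D)$, realized by $G_p$, is precisely the minimal special free factor carrying it (Remark~\ref{ray carried}, item~\pref{r equals r'}). Indeed, even with the correct orientation $h(D)=\eta D'\xi$, the ray following $D'$ in $h_\#(R_D)$ lies in $G_s$ and converges to $r_D$, so no argument based only on the terminal end of the eigenray can distinguish the two orientations. The paper rules out $\bar E'$ differently: every $L\in\sH_{\fe}(\phi)$, realized in $G$, has the form $\bar\alpha E\beta$ with $\alpha\subset G_s$, $\beta\subset G_p$, and since the identity lies in $\X$, Lemma~\ref{consistency} gives $h_\#(L)\in\sH_{\fe}(\psi)$; elements of $\sH_{\fe}(\psi)$ realized in $G'$ cross $E'$ but never $\bar E'$, and because $\alpha,\beta\subset G_s$ cannot create or cancel occurrences of $E'^{\pm1}$, this forces $h(E)$ not to cross $\bar E'$. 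The same membership immediately yields $[\nu\beta]\subset G_p$ and hence $\nu\subset G_p$, with no appeal to \pref{common terminal ray} at all.

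Two secondary problems compound this. First, in the \sLH\ case you apply \cite[Lemma 3.2.2]{bfh:tits1} edge-by-edge to $D$ and $C$ after ``arranging'' $h$ to match the new valence-two vertices; the cited lemma is about the topological arc $E=\bar C D$ added to $G_s$, and your separate treatment of $C$ rests on an unexplained ``twist-coefficient comparison'' to exclude $\bar C'$, which is not an argument (condition~\pref{item:twist consistency} compares the integers $d$ and $d'$, and it is not shown how a reversed $C'$ in $h(E)$ would contradict it). Second, even granting $[\xi S_D]=S'_{D'}=[\kappa_D S_D]$, concluding $\xi=\kappa_D$ by ``uniqueness of tightened representatives'' is not automatic (two distinct tight paths with the same endpoints can tighten against the same ray to the same result); the paper avoids this entirely by extracting $\nu\subset G_p$ from membership of $h_\#(L)$ in $\sH_{\fe}(\psi)$, as above. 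So the missing ingredient in your proposal is the use of the added-lines invariant $\sH_{\fe}$ and its naturality under elements of $\X$; without it, neither the orientation of $E'$ nor the containment $\nu\subset G_p$ is established.
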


\proof The \sH\sH\ case follows from \pref{contractible consistency} so we consider only the {\sf H} and {\sf LH} cases.

By Remark~\ref{rem:only one crossing},  $h(E) =\bar  \mu E' \nu$  or $h(E) = \bar \mu \bar E' \nu$ for some paths $\mu,\nu \subset G_s$. Each $L\in \sH_{\fe}(\phi)$ (realized in $G$)   decomposes as $L =\bar \alpha E \beta$ for some rays $\alpha  \subset G_s$ and $\beta \subset G_p$.     Likewise each $L'\in \sH_{\fe}(\psi)$ (realized in $G'$)  decomposes as $L' =\bar \alpha' E' \beta'$ for some rays $\alpha'  \subset G_s$ and some $\beta' \subset G_p$.   Since $h$ represents an element of $\X$, Lemma~\ref{consistency} implies that $ h_\#(L) \in \sH_{\fe}(\psi)$.   It follows that $h_\#(L)$ does not cross $\bar E'$ and hence that $h(E)$ does not cross $\bar E'$.  This proves that $h(E) \ne  \bar \mu \bar E' \nu$  and so $h(E) =  \bar\mu  E' \nu$.  Note also that $h_\#(L) = [\bar \alpha \bar \mu]E' [\nu \beta]$  which implies that $[\nu \beta] \subset G_p$ and hence that $\nu \subset G_p$.
\endproof

\bigskip 

\begin{lemma}  \label{controlling mu}
In case {\sf H}, $\mu$ is a Nielsen path for $f | G_s = g|G_s$.
\end{lemma}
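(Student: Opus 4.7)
The plan is to use that $h$ represents the trivial element of $\Out(F_n)$, so for any abstract bi-infinite line $L$ in $\cB$, the tight path $h_\#$ carries the realization of $L$ in $G$ to the realization of (the same) $L$ in $G'$. I will exhibit a particular line $L$ that is both $\phi$-fixed and $\psi$-fixed, whose realization crosses $D$ (resp.\ $D'$) exactly once, and then read off the Nielsen property of $\mu$ by comparing the two realizations.

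First I produce such an $L$. The hypothesis ${\sf I}_\fc(\phi) = {\sf I}_\fc(\psi)$ gives $\Fix(\phi) = \Fix(\psi)$ and $\sH_\fe(\phi) = \sH_\fe(\psi)$, from which it follows that the unique principal lift $\Phi_{\tilde r_D} \in \cP(\phi)$ fixing a chosen lift $\tilde r_D$ of $r_D$ and its counterpart $\Psi_{\tilde r_D} \in \cP(\psi)$ satisfy $\Fix(\Phi_{\tilde r_D}) = \Fix(\Psi_{\tilde r_D})$. Pick $L$ to be a line with one end $\tilde r_D$ and other end a point $\tilde a^+$ for some $a \in \Fix(\Phi_{\tilde r_D})$ (or more generally some fixed point of a $\phi$-invariant ray in $\Gamma_*(f_s)$ if $\Fix(\Phi_{\tilde r_D})$ is trivial but $\Gamma_*(f_s)$ provides such a ray). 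By construction $L \in \LW_\fe(\phi) \cap \LW_\fe(\psi)$ and $L$ is individually fixed by both $\phi$ and $\psi$. Realize $L$ in $G$ as $\bar\beta D S_D$, where $\beta \subset G_s$ is a ray from $v_1$ (initial vertex of $D$) to $\partial_- L$; realize $L$ in $G'$ similarly as $\bar{\beta'} D' S'_{D'}$. Computing $f_\#(\bar\beta D S_D) = [f_\#(\bar\beta) D \sigma f_\#(S_D)] = [f_\#(\bar\beta) D S_D]$ and using $\phi$-fixedness of $L$ gives $f_\#(\beta) = \beta$; symmetrically $g_\#(\beta') = \beta'$, and since $f|G_s = g|G_s$, both $\beta$ and $\beta'$ are invariant rays for the common restriction to $G_s$.

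Next I compute $h_\#(L)$ in two ways. Using $h|G_s = \mathrm{id}$ and $h(D) = \bar\mu D' \nu$:
$$ h_\#(\bar\beta D S_D) \;=\; [\bar\beta \cdot \bar\mu D' \nu \cdot S_D] \;=\; [\bar\beta\,\bar\mu\, D'\, \nu\, S_D]. $$
On the other hand, because $h$ represents the identity and $L$ has a unique realization in any marked graph,
$$ h_\#(L) \;=\; \bar{\beta'}\, D'\, S'_{D'}. $$
By Lemma~\ref{no need for kappa}, $\nu \subset G_s$, so both sides of the equality cross $D'$ exactly once; splitting at the single occurrence of $D'$ yields $[\bar\beta\,\bar\mu] = \bar{\beta'}$, equivalently $\beta' = [\mu\beta]$.

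Finally, applying $f_\#$ (using $\beta'$ is $f$-invariant and $\beta$ is $f$-invariant):
$$ \beta' \;=\; f_\#(\beta') \;=\; f_\#\big([\mu\beta]\big) \;=\; [f_\#(\mu) \cdot f_\#(\beta)] \;=\; [f_\#(\mu)\,\beta]. $$
Hence $[\mu\beta] = [f_\#(\mu)\beta]$ as immersed rays in $G_s$ with common initial vertex $v'_1$ and common terminal end $\partial_- L$. Lifting to a fixed lift of $\beta$ in the tree $\tilde G_s$, uniqueness of the geodesic from $\tilde v'_1$ to the prescribed boundary point forces the initial-segment decompositions to agree, giving $f_\#(\mu) = \mu$. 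Thus $\mu$ is Nielsen.

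The main obstacle is the existence of a suitable $L$ in degenerate sub-cases, where $\Gamma_*(f_s)$ is a single fixed vertex with no attached lollipops or eigenrays so that $\LW_\fe(\phi) = \emptyset$; there $v_1 = v'_1$ equals the unique vertex of $\Gamma_*(f_s)$ and $\mu$ is a loop, and one runs the same comparison with the one-ended object $R_D$ in place of $L$, using the uniqueness of the immersed ray in $\tilde G_s$ from the terminal endpoint of $\tilde D'$ to $\tilde r_D$ (together with the equality $\tilde f_{\tilde r_D}|\tilde G_s = \tilde g_{\tilde r_D}|\tilde G_s$ on the component containing $\tilde r_D$, which holds because $\tilde r_D$ is not of the form $c^\pm$ for any $c \in F_n$) to conclude $f_\#(\mu) = \mu$.
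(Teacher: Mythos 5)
There are two genuine gaps. First, your starting point --- that ${\sf I}_\fc(\phi)={\sf I}_\fc(\psi)$ forces $\Fix(\Phi_{\tilde r_D})=\Fix(\Psi_{\tilde r_D})$ at the \emph{same} lift $\tilde r_D$ --- is not what the invariants give. $\sH_{\fe}(\phi)=\sH_{\fe}(\psi)$ and $\Fix(\phi)=\Fix(\psi)$ are equalities of conjugacy pairs/classes, so (fixing the common lift $\tilde r_D$) you only obtain $\Fix(\Phi_{\tilde r_D})^c=\Fix(\Psi_{\tilde r_D})$ for some $c\in F(\tilde r_D)$, not $c=1$. Without the on-the-nose equality you cannot assert that your line $L$ is $\psi$-fixed, that $L\in\LW_\fe(\psi)$, or that its realization in $G'$ has the form $\bar\beta' D' S'_{D'}$ with $g_\#(\beta')=\beta'$; aligning these choices is exactly the content the paper's rank-$\ge 2$ argument supplies (it chooses the lift $\tilde y$ of the initial vertex of $D'$ so that the conjugacy pair matches, identifies $\ti h\ti f\ti h^{-1}$ with $\ti g$ on the relevant component, and only then concludes that $\tilde\mu$ joins two $\tilde g$-fixed points). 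Assuming the alignment at the outset is very close to assuming the lemma.

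Second, even granting all of that, the final inference is false: $[\mu\beta]=[f_\#(\mu)\beta]$ does not imply $f_\#(\mu)=\mu$, because the difference between $\mu$ and $f_\#(\mu)$ can be absorbed into $\beta$. Concretely, with $f(a)=a$, $f(b)=ba$ on $G_s$, take $\beta=a^{\infty}$ and $\mu=b$: then $f_\#(\mu)=ba\neq\mu$, yet $[\mu\beta]=ba^{\infty}=[f_\#(\mu)\beta]$; uniqueness of the geodesic ray from $\tilde y$ to the ideal point is not violated, since the two candidate lifts of $x$ simply sit at different points along that one ray. This failure mode is precisely the situation in the low-rank subcases of the paper's proof, where the only available ends are twist-path ends $w^{\pm\infty}$ and where the correct conclusions are $\mu$ trivial (rank $0$, obtained by comparing the \emph{several} eigenrays attached at $x$ and the permutation of their $\sH$-classes), or $\mu=e^m$, $\mu=C_jw^m\bar C_j$ (rank $1$); a single line $L$ cannot see this. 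If $\Fix(\Phi)$ has rank at least two one could hope to repair your cancellation step by running the comparison with two lines whose $G_s$-ends are distinct points of $\partial\Fix(\Phi)$ (the discrepancy cannot be absorbed by two divergent rays), but that still leaves the alignment problem above and the rank $\le 1$ components, which is why the paper's proof splits into the three cases it does.
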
 

\proof
There are three cases to consider, depending on  the rank   of $\Gamma_*(f_u)$, the component of $\Gamma(f_u)$ containing the ray labeled $R_{D}$.  Let $x$ be the initial vertex of $D$.  Recall that $\F^- = \F_{k-1}$ and $\F^+ = \F_{k}$.
 
In the first case, $\rk(\Gamma_*(f_u))=0$ and we will show that $\mu$ is trivial. By Remark~\ref{rem:structure of eigengraphs}, there  exists edges $  E_1,\ldots,E_m \in \E_f$, $m \ge 2$, such that  $\Gamma_*(f_u)$  is obtained from $R_D \sqcup R_{E_1} \sqcup\ldots \sqcup R_{D_m}$  by identifying the initial endpoints of all of these rays.         By construction, $\LW_{\fe}(\phi) = \{L_1,\ldots,L_m\}$   where $L_i = R_{E_i}^{-1} R_D$.      The description of  $\Gamma_*(g_{u'})$
and  $\LW_{\fe}(\psi)$ is similar with $R_D$ replaced by $R'_{D'}$. There is a permutation $\pi$ of $\{1,\ldots,m\}$ such that   $\sH(L_i) = \sH(L_{\pi(i)}')$.  
Writing $R_{E_i} = E_i S_i$ for some ray $S_i$ with height less than that of $E_i$, we have 
 $$h_\#(L_i) = [S_i^{-1} \bar E_i   \bar \mu D' \nu S_{D'}]  = [S_i^{-1} \bar E_i   \bar \mu] D'[ \nu S_{D'}] $$
 where $[ \cdot]$ is the tightening operation.  On the other hand, letting $l = \pi(i)$, we have  
$h_\#(L_i)  = \alpha^{-1}\bar E_l D' \beta $ where $\alpha$ [resp. $\beta$] has height less than that of $E_l$ [resp. $D'$]  and  so 
$$[\mu E_i S_i] = E_l \alpha$$
Note that $S_i$ has a subray that is disjoint from $E_l$.  Since $S_i =  u_i \cdot f_\#(u_i) \cdot f^2_\#({u_i}) \cdot \dots$ is a coarsening of the complete splitting of $S_i$, it follows that $S_i$ is disjoint form $E_l$; see Remark~\ref{ray carried}.   If $i \ne l$ then $E_l$ is the first edge of $\mu$.  If $i = l$ then  $\mu$ is either trivial or has the form $E_i \sigma \bar E_i$.  In either case,  $\mu$ is either trivial or begins with $E_l$.  As this is true for all $1 \le i \le m$, we conclude that $\mu$ is trivial.

If $\rk(\Gamma_*(f_u))=1$ then either   $x$ is contained in a  circle component $B$ of the core filtration element realizing $\F_0$  or    there exists $j < k$ such that $\F_{j-1} \sqsubset \F_{j}$ is an {\sf LH} extension realized by adding  a linear edge $C_j$ and a higher order edge $D_j$ with \lq new\rq\   initial endpoint $x$; in the former case, we say that $D$ is type (i)  and in the latter case  we say that $D$ is type (ii).  If $D$ is type (i) then $B$ is a single  edge $e$ (by the (Periodic Edges) property of a \ct) and   $ \LW_{\fe}(\phi) = \{  e^{\infty} D S_D, e^{-\infty}D S_D\}$ and likewise $ \LW_{\fe}(\psi) = \{  e^{\infty} D' S_{D'}, e^{-\infty}D' S_{D'}\}$.  Lemma~\ref{quotable} implies that  $h_\#(e^{\infty} D S_D) = e^{\pm\infty} D' \beta'$ for some ray $\beta' \subset G(r_D)$. We also have $h_\#(e^{\infty} D S_D) = [e^{\infty}\bar \mu] D' [\nu S_D]$. We conclude that $\mu = e^m$ for some $m$ and in particular $\mu$ is a Nielsen path. If $D$ is type (ii) then   $ \LW_{\fe}(\phi) = \{  w_j^{-\infty} \bar C_jDS_D, w_j^{\infty} \bar C_jDS_D\}$ and similarly for $\LW_{\fe}(\psi)$. As in the previous case, $h_\#(w_j^{\infty} \bar C_jDS_D)$ is equal to both  $ w_j^{\pm \infty} \bar C_jD'\beta'$ and $   [w_j^{\infty} \bar C_j \bar \mu] D' [\nu S_D]$.  It follows that $\mu C_j w_j^\infty = C_j w_j^{\pm\infty}$ which implies that $\mu = C_j w_j^m \bar C_j$.  This completes the proof if $\rk(\Gamma_*(f_u))=1$.
     
For the final case, assume that $\rk(\Gamma_*(f_u))\ge 2$ and hence that $x \in G_s$.   Given  a lift $\ti x \in  \ti G$ of the initial endpoint $x$ of $D$, we set notation as follows:     $\ti f : \ti G \to \ti G$ is the principal lift that fixes $\ti x$;  $\Phi \in \cP(\phi)$ is the principal automorphism satisfying $ \Phi | \partial \f =  \ti f | \partial \f$; $\ti D$ is the lift of $D$ with initial endpoint $\ti x$; $\ti R_{\ti D}$ is the lift of $R_{D}$ whose first edge is $\ti D$; $\ti r_{\ti D}$ is the terminal endpoint of $\ti R_{\ti D}$; and $\ti N$ is the set of lines $(\Fix(\Phi),F(\ti r_D))$ and so is a lift of $\sH_{\fe}(\phi) = \{[\Fix(\Phi),F(\ti r_D)]\}$. Similarly, given a lift $\ti y\in \ti G'$ of the initial endpoint $y$ of $D'$, we have: $\ti g : \ti G' \to \ti G'$, $\Psi$, $\ti D'$, $\ti R_{\ti D'}$, $\ti r'_{\ti D'}$ and $\ti N'$. By Lemma~\ref{consistency},    $\sH_{\fe}(\phi) =  \sH_{\fe}(\psi)$ so we may choose $\ti y$ so that $\Fix(\Phi) = \Fix(\Psi)$ and $F(\ti r_{\ti D})=F(\ti r'_{\ti D'})$.  In particular, $\ti N  = \ti N'$.
   
Let $C_s$ be the component of $G_s$ that contains  both $x$ and $y$ (which is possible because they are the endpoints of $\mu \subset G_s$)  and let  $\ti C_s \subset \ti G$ be the lift  that contains $\ti x$.   Then $\ti C_s$ is $\ti f$-invariant and $\Fix(\ti f) \subset \ti C_s$ because $G_s$ contains all Nielsen paths in $G$ with an endpoint at $x$.  There is a  free factor $F$   representing $[C_s]$ such that $\partial F = \partial \ti C_s$.   Since $\partial \Fix(\Phi)$ is contained in the closure of $\Fix(\ti f)$, we have $\partial \Fix(\Phi) \subset \partial \ti C_s = \partial F$.   Letting $\ti C'_s \subset \ti G'$ be the lift  of $C_s$  that contains $\ti y$ and   $F'$   the free factor satisfying $\partial F' = \partial \ti C'_s$, the same argument shows that    $\partial \Fix(\Psi) \subset \partial F'$.    Since $\Fix(\Phi) = \Fix(\Psi)$ is non-trivial, $F = F'$.   Since $\phi | F = \psi | F$ and $\Fix(\Phi) = \Fix(\Psi)$ has rank at least two,  $\Phi | F= \Psi | F$.

Let $\ti h : \ti G \to \ti G'$ be the lift of $h : G\to G'$ that acts as the identity on $\partial F_n$ and let $p : \ti G \to G$ and $p' : \ti G' \to G'$ be the covering projections.    Then   $\ti h|\ti C_s : \ti C_s \to  \ti C'_s$   is a homeomorphism satisfying $p' \ti h(\ti z) = p(\ti z)$ for all $\ti z \in \ti C_s$ where, as usual, we are viewing $G_s$ as a subgraph of  both $G$ and $G'$.     Moreover, $\ti h \ti f \ti h^{-1} | \ti C'_s =  \ti g | \ti C'_s$     
 because they both project to $g|C'_s$  and induce $\Psi | F'$.  In particular $\ti g$ fixes $\ti h (\ti x)$.  Choose $\ti L \in \ti N$ that decomposes as  $\ti L = \ti \alpha \ti R_{\ti D}$.  Then  $h_\#( L) = [h ( \alpha)  {\mu}^{-1}]  D' \tau'$  for  some ray $ \tau'$ with height lower than that of $\D'$. Since $\ti h_\#(\ti L)  \in  \ti N'$  we have $\ti h_\#(\ti L) = [\ti h (\ti \alpha) \ti {\mu}^{-1}] \ti D' \ti \tau'$ for some lift $\ti \mu \subset C'_s$   and   some lift $\ti \tau'$ with height lower than that of $\ti D'$.  In particular,  $ \ti \mu $ connects $\ti y$ to $\ti h (\ti x)$ and so is a Nielsen path for $\ti g$. Thus $\mu$ is a Nielsen path for $g |G_s = f|G_s$.
\endproof
  
We 
 will complete the proof  by constructing a homotopy equivalence $d :G' \to G'$ such that the outer automorphism $\theta^+$ determined by $d h:G \to G'$ is an element of $\X^+$.    By construction,  $d$ will be the identity on the complement of $E'$    and satisfy  $d(E') =  \bar \mu' E' \nu'$  where $\mu', \nu'  \subset G_s$ are closed paths.  In cases {\sf H} and {\sf LH}, $\mu'$  will be trivial.  

\medskip

\begin{definition} \label{d1 part 1}We define $\nu'$, which  always corresponds    to $D'$, as follows.   By \pref{common terminal ray}, there is a finite path $\kappa_D \subset G_p$ such that $S'_{D'}$ is obtained from  $\kappa_D S_D$ by tightening.    By construction, $h_\#(ES_D) $ is obtained from $  \bar \mu E' \nu S_D$   by tightening. Letting 
$$\nu' = [\kappa_D\bar \nu]$$
 it follows that 
 $(dh)_\#(ES_D)  = [\bar \mu \bar \mu']E' [\kappa_D \bar \nu \nu S_{D}] =   [\bar \mu \bar \mu'] E'S'_{D'}$. 
 Thus, in cases {\sf HH} and {\sf LH} we have
 $$(dh)_\#( R_{D}) = R'_{D'}$$
 and in case ${\sf H}$ we have 
 $$(dh)_\#(R_{D}) =[\bar \mu \bar \mu']R'_{D'} =  \bar \mu  R'_{D'}$$
 where the second equality comes from the fact that $\mu'$ is trivial (see below) in case {\sf H}.
Since $\nu, \kappa_D \subset G_p$, we have 
\begin{description}
\item [(Control of $\nu'$):]  $\nu'\subset G_p$.  
\end{description}

\medskip   

In cases {\sf LH} and {\sf H}, $\mu'$ is defined to be trivial.  In case  {\sf HH} we choose $\mu'$ as we did $\nu'$ replacing $D$ with $C$.  The result is that in case {\sf HH} $$(d h)_\#(R_{C}) = R'_{C'}$$
and so
 $$(dh)_\#(R^{-1}_{C}R_{D}) = {R'_{C'}}^{-1}R'_{D'}$$
 Also, 
\begin{description}
\item [(Control of $\mu'$):]  In case {\sf HH},   $\mu'\subset G_q$.  
\end{description}
This completes the definition of $d$.
\end{definition}

\begin{lemma}  \label{d in Ker Q}
The outer automorphism $\delta$ represented by $d :G' \to G'$ is an element of $\Ker(Q) \subset \X$.  
\end{lemma}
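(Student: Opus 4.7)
The plan is to verify the two assertions separately: first, that $\delta$ satisfies all seven invariance conditions of Definition~\ref{d:X} and so belongs to $\X$; second, that $m_b(\delta) = 0$ for every staple pair $b \in \cS_2(\phi)$, which by Definition~\ref{d:Q} gives $\delta \in \Ker(Q)$.

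For $\delta \in \X$, the key structural fact is that $d$ restricts to the identity on $G'_s = G_s$ and that $\mu', \nu'$ are closed paths lying in $G_s$. Consequently $\delta$ fixes the conjugacy class of every subgroup of $F_n$ represented by a subgraph of $G_s$. This immediately handles the conditions of Definition~\ref{d:X} for objects whose defining subgroups lie in $G_s$: the components of $\F^-$ and of the $\F_i \sqsubset \F^-$, the classes in $\Fix(\phi)$, the axes in $\A_\both(\phi)$, the algebraic strong axes, the algebraic added lines $\sH_{\fe' \in \fc}(\phi)$ for $\fe'$ below $\fe$, and each $\sH_\fc(L)$ for $L \in \acc(\phi)$ carried by $\F^-$. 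What remains to check is the invariance of $[F]$ for each component of each $\F_i$ containing $\F^+$ (immediate from $d(G'_{u'}) = G'_{u'}$ and $d$ being the identity above $G'_{u'}$), the invariance of $\sH_\fe(\phi)$, and the invariance of $\sH_\fc(L)$ and of $\sH_{\fe' \in \fc}(\phi)$ for lines $L$ and one-edge extensions $\fe'$ at or above the level of $\fe$. For these, I would combine the identities $(dh)_\#(R_D) = R'_{D'}$ (or $\bar\mu_k R'_{D'}$ in case \sH) and, in case \sHH, $(dh)_\#(R_C) = R'_{C'}$ from Definition~\ref{d1 part 1} with the controls $\nu' \subset G_p$ and (in case \sHH) $\mu' \subset G_q$, and the structural description of lines in $\sH_\fc(L)$ given by Lemma~\ref{quotable}, to verify that $(dh)_\#(L)$ lies in $\sH_\fc(L)$ whenever $L$ does.

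For $m_b(\delta) = 0$, fix a staple pair $b = (L_1, L_2) \in \cS_2(\phi)$ with common axis $\ti A$ stabilized by the root-free $a \in F_n$. By Definition~\ref{defn:m} it suffices to exhibit a single lift $\Theta \in \delta$ that simultaneously preserves $\sH_\fc(\ti L_1)$ and $\sH_\fc(\ti L_2)$: then $\Theta_1 = \Theta_2 = \Theta$, so $i_a^{m_b(\delta)} = \Theta_2 \Theta_1^{-1}$ is trivial and $m_b(\delta) = 0$. The natural candidate is the automorphism induced by $dh$ via compatible lifts: I would take the lift $\ti h$ of $h$ used in the proof of Lemma~\ref{controlling mu}, and lift $d$ so that it fixes an appropriate vertex of $\ti G'_s$ lying on $\ti A$ or on a shared Nielsen path. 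Since $d$ modifies $E'$ only by the closed paths $\mu', \nu' \subset G_s$, the composition $\widetilde{dh}$ does not introduce any twist along $\ti A$. The main obstacle will be the case analysis distinguishing whether $\ti A$ lies in $\F^-$ (so $b$ is inherited from $\phi|\F^-$ and the matching lift is already dictated by the $\F^-$-level algebraic strong axis) from whether $b$ involves the new ray $R_D$ or $R_C$; in case \sLH\ one must additionally invoke the twist-index consistency from item~\pref{item:twist consistency} to ensure the match. In each case, the controls $\mu' \subset G_q$ and $\nu' \subset G_p$ together with Lemma~\ref{quotable} pin down the single lift $\Theta$ preserving both algebraic pairs, completing the proof.
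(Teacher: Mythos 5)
Your proposal is correct and follows essentially the same route as the paper: membership in $\X$ comes from $d$ being the identity off $E'$ together with the controls $\mu'\subset G_q$, $\nu'\subset G_p$ and Lemma~\ref{quotable}, and $m_b(\delta)=0$ is obtained, exactly as in the paper, by producing a single lift of $d$ (equivalently of $dh$, since $h$ represents the trivial outer automorphism) that fixes the common axis pointwise and preserves both $\sH_\fc(\ti L_1)$ and $\sH_\fc(\ti L_2)$, forcing $\Theta_1=\Theta_2$. The only differences are cosmetic: the paper carries out your deferred ``case analysis'' in two short cases (both ends periodic, or one end an eigenray) using its properties (b) and (d), and your appeal to the twist-index consistency \pref{item:twist consistency} in the \sLH\ case is not actually needed.
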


\proof    We  use the following properties of  $d :G' \to G'$  to prove that $\delta \in \X$.
\begin{itemize}
\item [(a)] $d$ preserves every component of every filtration element of $G'$. In particular, $\delta$ preserves $\fc$ and every $[F]\in\F\in\fc$. 
\item [(b)] If $e'$ is not a higher order edge in $E'$ then $d(e') = e'$.  
\end{itemize}
These are both obvious from the definition.
\begin{itemize}
\item [(c)]  $d_\#$ fixes each Nielsen path of $g$.
\end{itemize}
This follows from (b) and the fact that Nielsen paths do not cross higher order edges.
\begin{itemize}
\item  [(d)] If $e'$ is a higher order edge and $G'_{p'}$ is the filtration element that realizes $F(r'_{e'})$ 
then the set of rays of the form $e'\beta'$  with $\beta' \subset G'_{p'}$ is mapped into itself by $d_\#$.
\end{itemize}
If $e'$ is neither $C$ nor $D$ then this follows from (a) and (b). Otherwise it follows from (a),  (Control of $\nu'$) and (Control of $\mu'$). 

\medspace

Item (c) implies that $\delta$ fixes each component of $\Fix(\phi)$ and every element of $\A(\phi)=\A(\psi)$.  Item (b) implies that $d | G'_s = $identity and hence that $\delta$ fixes each element of $\sa(\phi|\F_0)$ and so satisfies defining property \pref{i:last} of $\X$.   Suppose that either $L'$ is  an element of   $\acc(\psi)$  or $L'$ is an element of  $\LW_{\fe'}(\psi)$ where $\fe'\in\fc$ is not \llarge. Then (b), (d) and Lemma~\ref{quotable}  imply that  $\sH(L')$ is $\delta$-invariant.   Similarly (c) and (d) imply that if $\fe '$ is \llarge\ then $\sH_{\fe'}(\phi)$ is $\delta$-invariant. We have now shown that $\delta \in \X$. In particular, $\delta$ is in the domain of $\bar Q$.
 
To prove that $\delta \in \Ker(Q)$,  suppose that $\ti b' = (\ti L_1',\ti L_2')$ is a staple pair for $\psi$ with common axis $\ti A'$.    By (b), there is a lift  $\ti d : \ti G' \to \ti G'$  of $d$ that pointwise fixes $\ti A'$.  We claim that $\ti d$ preserves both $\nbhd(\ti L'_1)$ and $\nbhd(\ti L'_2)$.   The $\ti L'_1$ and $\ti L'_2$  cases are symmetric so we will consider $L_2'$.  If both ends of $L_2'$ are periodic then $\nbhd(L'_2) = \{L'_2\}$ by Lemma~\ref{quotable}.  Moreover,  $\ti L'_2$ does not cross any higher order edges and so is pointwise fixed by $\ti d$.  We may therefore assume that there is a decomposition $\ti L'_2 = \ti \alpha \ti R'_{\ti e'_2}$ where $\ti e'_2$ is a higher order edge and $\ti \alpha$ does not cross any higher order edges.  It follows that  $\ti d$ pointwise fixes $\ti \alpha$ and that $\ti e_2$ is the initial edge of $\ti {d}_\#(\ti R'_{\ti e'_2})$.  Item (d) now implies that $\ti d$ preserves   $\nbhd(\ti L'_2)$.   This completes the proof of the claim.   It now follows from the definitions that $m_{b'}(\delta) = 0$ and hence $Q_{b'}(\delta)=0$.  Since $b'$ is arbitrary, $\delta \in \Ker(Q)$. 
\endproof

The final step in the algorithm that proves Proposition~\ref{last step} is to return {\tt YES} and the outer automorphism $\theta^+$ represented by $dh : G \to G'$.  In conjunction with Lemma~\ref{lem:X+ is good enough}, the following lemma justifies this step.

\begin{lemma}  \label{last lemma} $dh:G \to G'$ represents an element   $\theta^+ \in \X^+$.
\end{lemma}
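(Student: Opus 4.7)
The plan is to verify the three defining properties of $\X^+$ for $\theta^+$: (i) $\theta^+\in \X$, (ii) $\theta^+\in \Ker(\bar Q)$, and (iii) $\theta^+|\F^+$ conjugates $\phi|\F^+$ to $\psi|\F^+$. The first observation is that since $h:G\to G'$ preserves markings, it represents the trivial outer automorphism, so as an outer automorphism $\theta^+=\delta$. Lemma~\ref{d in Ker Q} then gives (i) and (ii) for free (since $\Ker(Q)\subset \Ker(\bar Q)$). In particular $\theta^+\in\X^-$: indeed, $d|G'_s$ is the identity and $h|G_s$ preserves markings, so $\theta^+|\F^-$ is the trivial outer automorphism of each component of $\F^-$, hence it conjugates $\phi|\F^-=\psi|\F^-$ to itself.

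For (iii) I invoke Lemma~\ref{lem:twist index counts} applied to $\theta^+\in\X^-$. Its hypothesis \pref{item:LH index} is the twist consistency condition \pref{item:twist consistency}, which was already verified by the algorithm. Thus the remaining task is to check condition \pref{item:lifts}: a line $L$ lifts into $\Gamma(f_u)$ if and only if $\theta^+(L)=(dh)_\#(L)$ lifts into $\Gamma(g_{u'})$. Since both $\Gamma(f_u)$ and $\Gamma(g_{u'})$ differ from the common $\Gamma(f_s)=\Gamma(g_s)$ only by the extra component $\Gamma_*(f_u)$ respectively $\Gamma_*(g_{u'})$, it suffices, using Lemma~\ref{quotable}, to check that the lines lifting into $\Gamma_*(f_u)$ are mapped by $(dh)_\#$ bijectively onto the lines lifting into $\Gamma_*(g_{u'})$.

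For this bijection I use the formulas from Definition~\ref{d1 part 1}: in all three cases {\sf HH}, {\sf LH}, {\sf H} we have $(dh)_\#(R_D)=R'_{D'}$ (respectively $\bar\mu_k R'_{D'}$ in case {\sf H}, where $\mu_k$ is a Nielsen path by Lemma~\ref{controlling mu}), and in case {\sf HH} additionally $(dh)_\#(R_C)=R'_{C'}$ so $(dh)_\#(R_C^{-1}R_D)=(R'_{C'})^{-1}R'_{D'}$. Combined with the facts that $d|G'_s$ is the identity and that the Controls of $\mu'$ and $\nu'$ force $\mu'\subset G'_q$, $\nu'\subset G'_p$ (so that rays of the form $\alpha\cdot D\cdot R_D$ with $\alpha\subset G(r_D)$ map to rays of the same form on the $\psi$ side), Lemma~\ref{quotable} gives a case-by-case matching between the lines in $\nbhd(L)$ for $L\in\LW_\fe(\phi)$ and those in $\nbhd(L')$ for $L'\in\LW_\fe(\psi)$. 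Symmetry of the roles of $\phi$ and $\psi$, together with the injectivity of $(dh)_\#$ on lines, gives the reverse inclusion.

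The main obstacle is the bookkeeping in the {\sf H} case: since $(dh)_\#(R_D)=\bar\mu_k R'_{D'}$ begins with the Nielsen path $\bar\mu_k$ rather than $D'$, one must use that Nielsen paths in $G'_s$ lift uniquely into $\Gamma(g_s)=\Gamma(f_s)$ to argue that $(dh)_\#(L)$ still lifts into the one-point union defining $\Gamma_*(g_{u'})$. Once this bijection of lifting lines is established, Lemma~\ref{lem:twist index counts} yields $\theta^+|\F^+$ conjugates $\phi|\F^+$ to $\psi|\F^+$, completing the proof that $\theta^+\in\X^+$.
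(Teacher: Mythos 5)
Your skeleton is the paper's: you obtain $\theta^+=\delta\in\Ker(\bar Q)\subset\X$ from Lemma~\ref{d in Ker Q} together with the triviality of $h$ in $\Out(\f)$, note membership in $\X^-$, and then reduce property \pref{item:conjugates epsilon} of Definition~\ref{defn:Xepsilon}, via Lemma~\ref{lem:twist index counts} and the already-checked condition \pref{item:twist consistency}, to the lifting statement: $L$ lifts into $\Gamma(f_u)$ if and only if $(dh)_\#(L)$ lifts into $\Gamma(g_{u'})$. The computational inputs you cite (the formulas of Definition~\ref{d1 part 1}, the controls on $\mu'$ and $\nu'$, and the Nielsen property of $\mu$ from Lemma~\ref{controlling mu}) are exactly the ones the paper uses.

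The step that does not work as written is your enumeration of the lines to be checked. What must be verified is that every line lifting into $\Gamma_*(f_u)$ has $(dh)_\#$-image lifting into $\Gamma_*(g_{u'})$ (and conversely); these lines are read off from the structure of $\Gamma_*(f_u)$ (compare Lemma~\ref{lem:lines in gf}), not from the sets $\sH_\fc(L)$ with $L\in\LW_\fe(\phi)$. Indeed, Lemma~\ref{quotable} explicitly excludes $\LW_\fe(\phi)$ when $\fe$ is \llarge, which is precisely the delicate {\sf H} subcase; most lines in $\sH_\fc(L)$ do not lift into the eigengraph at all (Lemma~\ref{just one eigengraph line}); and some lines that do lift --- those with both ends in $\cR(\phi)$, such as $R_D^{-1}\xi R_D$ for a closed Nielsen path $\xi$, or $\beta^{-1}R_D$ with $\beta$ a ray ending down another eigenray of $\Gamma_*(f_s)$ --- belong to no $\sH_\fc(L)$ with $L\in\LW_\fe(\phi)$ in the non-contractible case, so the matching you describe never sees them. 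The repair is the mechanism you already name: since $\mu$ is a Nielsen path (Lemma~\ref{controlling mu}) and Nielsen paths lift into $\Gamma(g_s)=\Gamma(f_s)$ with endpoints at the distinguished lifts, one gets $(dh)_\#(\beta^{-1}R_D)=\beta^{-1}\bar\mu R'_{D'}$ and $(dh)_\#(R_D^{-1}\xi R_D)=R_{D'}'^{-1}[\mu\xi\bar\mu]R'_{D'}$, both of which lift into $\Gamma_*(g_{u'})$; this is exactly how the paper completes case {\sf H}, with the {\sf HH} and {\sf LH} cases handled directly from the formulas of Definition~\ref{d1 part 1}. So the idea is right, but the case analysis should be organized around the lines carried by $\Gamma_*(f_u)$ rather than around $\LW_\fe(\phi)$ and Lemma~\ref{quotable}.
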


\proof
Since $h$ represents an element of $\Ker(\bar Q)$,    Lemma~\ref{d in Ker Q} implies that $\theta^+\in \Ker(\bar Q) \subset \X$.   We are therefore reduced to showing that $\theta^+ | \F^+$ conjugates $\phi | \F^+$ to $\psi | \F^+$.  By Lemma~\ref{lem:twist index counts} and \pref{item:twist consistency}, it suffices to prove that:
 
\begin{description}
\item  [(b1)]  a line $L\subset G$ lifts to  $\Gamma(f_u)$   if and only if $\theta^+(L) \subset G'$ lifts to $\Gamma(g_{u'})$.
\end{description}

Recall that  $\Gamma(f_u)$  is obtained from  $\Gamma(f_{s})$  by either adding a single new component (the {\sf HH} and {\sf LH} cases) or by adding a ray  to one of the components $\Gamma_*(f_{s})$ of $\Gamma(f_{s})$ (the {\sf H} case).  The same is true for $\Gamma(g_{u'})$.   The component of  $\Gamma(f_u)$ that is not a component of $\Gamma(f_{s})$ is denoted $\Gamma_*(f_u)$; it is the unique component that contains a ray labeled $R_{D}$.  Likewise, the \lq new\rq\ component $\Gamma_*(g_{u'})$ of $\Gamma(g_{u'})$ is the  one that contains a ray labeled $R'_{D'}$.     Recall also that $f_{s} = g_{s}$, that $\Gamma(f_{s})=\Gamma(g_{s})$,  that $\Gamma_*(f_{s}) = \Gamma_*(g_{s})$.  

Item (b1) is obvious if $L$ lifts into a component of $\Gamma(f_s)$ so we may  assume, after reversing orientation on $L$ if necessary, that $R_D$ is a terminal ray of $L$.    

In case {\sf HH},   $L = R^{-1}_{C} R_{D}$ so  (b1)  follows from  $(dh)_\#(R^{-1}_{C}R_{D}) = {R'_{C'}}^{-1}R'_{D'}$; see
  Definition~\ref{d1 part 1}.  
  
In case ${\sf H}$, $(dh)_\#(R_{D}) = \bar \mu R'_{D'}$ by Definition~\ref{d1 part 1}.  Let $\hat x $ be the initial endpoint of the lift of $R_D$ into $\stallings(f_u)$ and let $x$ be its projection into $G_s$.  Define $\hat y$ and $y$ similarly using $R'_{D'}$ in place of $R_D$. If $R_D$ is also a terminal ray of $L^{-1}$,    then $L = R_{D}^{-1}  \xi R_{D}  $ for some Nielsen path $\xi$ and  $(dh)_\#(L) =  R'^{-1}_{D'} [\mu \xi \bar   \mu] R'_{D'}$ which lifts into   $\stallings(g_{u'})$ because $ [\mu \xi \bar   \mu]$ is a Nielsen path by Lemma~\ref{controlling mu}.  The remaining case is that   $L = \beta^{-1} R_{D}$ for some ray $\beta \subset G_s$   that lifts to a ray in $\Gamma(f_s)$ based at $\hat x$.  In this case, $(d h)_\#(L) = \beta^{-1} \bar  \mu R'_{D'}$  which lifts into $\Gamma^0(g_{u'})$ because $\mu$ is a Nielsen path that lifts to a path in $\stallings(g_s)$ connecting $\hat y$ to $\hat x$.  

In case  {\sf LH},   $\LW_\fe(\phi) = \{ L_+, L_-\}$ and  $\LW_\fe(\psi) = \{ L'_+, L'_-\}$ where 
   $L_\pm = w^{\pm \infty} \bar C R_D$ and $L'_\pm = w^{\pm \infty} \bar C' R'_{D'}$.  Since   ${\sf I}_\fc(\phi) = {\sf I}_\fc(\psi)$  and $\theta^+ \in \X$, we  have that  $\theta^+(L_+)$  is contained in either $\nbhd(L_+')$ or $\nbhd(L_-')$. By construction,    $\theta^+(L_+) = (dh)_\#(L_+) = [w^{\infty} \bar \mu] \bar C'R'_{D'}$.  Thus, $\theta^+(L_+) \in \nbhd(L_+')$  and  $[w^{\infty} \bar \mu] = w^\infty$.   It follows that $\mu = w^p$ for some $p \in \Z$ and hence that $\theta^+(L_\pm) = L_\pm'$ and    $\theta^p(R_D^{-1}w^kR_D) =  R'^{-1}_{D'}w^kR'_{D'}$ for all $k$.   This completes the proof of (b1) and hence the proof of the lemma.
   \endproof
   
\begin{appendices}
\section{More on  $\Ker \bar Q$}
The main results of this appendix are that $m_b(\theta)$ can be computed for $\theta\in\X$, that $\Ker\bar Q$ is of type $\mathsf{VF}$ (Definition~\ref{d:vf}), and that a finite presentation for $\Ker \bar Q$ can be computed. This section is needed for future work and is not used in the proof of the main theorem of this paper.

\subsection{A Stallings graph for $\sHsub(\ti b)$}\label{s:graph}
We will need the following remark.

\begin{remark} \label{rem:overlap} Suppose that $G$ is a marked graph and that for $i=1,2$, $\ti A_i$ is the axis of a covering translation $T_i: \ti G \to \ti G$ and that the number of edges in a fundamental domain for $\ti A_i$ is $s_i$.  If  $\ti A_1 \cap \ti A_2$ contains at least $s_1+s_2+1$ edges then $\ti A_1 = \ti A_2$.   To see this,  decompose     $\ti A_1 \cap \ti A_2 =e_1 e_2 \ldots$ into edges  and note that   $T_1T_2(e_1) = e_{s_1+s_2+1} =T_2T_1(e_1)$.  Since $T_1 T_2$ and $T_2T_1$ agree on an edge they are equal and so $T_1$ and $T_2$ have the same axes.
\end{remark}

\begin{notn}\label{algebraic staple pair}
Let $\fG$ be a CT for $\phi$ with $\ti f:\ti G\to\ti G$ a lift to the universal cover.  Assume notation as in the definition of $m_b$ (Definition~\ref{defn:m}).  In particular, $b=(L_1,L_2)\in \cS_2(\phi)$, $\ti b = (\ti L_1, \ti L_2)$ are lifts of $(L_1, L_2)$ such that $\ti L_1^+, \ti L_2^- \in \{\ti A^-, \ti A^+\}$ where $\ti A$ is the common axis of $\ti b$ and $a \in \f$ is a root-free element with axis $\ti A$ and orientation chosen as in the definition. For $\theta\in \X$, $\Theta_i\in\theta$ is defined uniquely by $\Theta_i(\sHsub(\ti L_i))=\sHsub(\ti L_i)$ and $m_b(\theta)$ is defined so that $\Theta_1=i_a^{m_b(\theta)}\Theta_2$.

If $\ti L_2^+ = \ti r_2 $ for some $r_2  \in  \cR(\phi)$ then define $\sHsub^2(\ti b) =  F_\fc(\ti r_2)$. Otherwise, $\ti L_2^+ \in \{c_2^-,c_2^+\}$ for some root-free $c_2 \in \f$ representing an element of $\A(\phi)$ and  $\sHsub^2(\ti b) := \langle c_2 \rangle$.  Define $\sHsub^1(\ti b)$ similarly using $\ti L_1^-$ in place of $\ti L_2^+$.  Finally, define $\sHsub(\ti b) = \langle  \sHsub^1(\ti b),  \sHsub^2(\ti b)\rangle$.

The covering transformation corresponding to $a$ is denoted $\tau$. Additionally, $H^i:=\sHsub^i(\ti b)$, $T^i$ denotes the minimal subtree for $H^i$, $\Gamma^i$ denotes the Stallings graph for $H^i$, and $\ti L(k)$ denotes the line $[\ti L_1^-,\tau^k(\ti L_2^+)]$.
\end{notn}

\begin{remark} \label{r:compare}
Comparing definitions of $\sHsub(\ti L_i)$ and $H^i$, $\Theta_i$ is the unique $\Theta\in\theta$ fixing $\langle a\rangle$ and $H^i$.
\end{remark}

\begin{lemma}\label{l:k}
There is $k\ge 0$ such that:
\begin{itemize}
\item
$T^1\cap \tau^k(T^2)=\emptyset$ and
\item
The arc $\ti \mu$ spanning between $T^1$ and $\tau^k(T^2)$ contains more than two fundamental domains of $\ti A$ with orientation agreeing with that of $\ti \mu$. 
\end{itemize}
\end{lemma}

\begin{proof}
The ends $\ti A$ are not ends of $T^i$. Indeed, if $\ti r_i$ is ray, then the associated higher order edge separates $T^i$ and the ends of $\ti A$. If not, then $T^i$ is the axis corresponding to the end of $\ti L_i$ that is not an end of $\ti A$. Hence there is a neighborhood of $\ti A^+$ that is disjoint from $T_i$. Therefore, the conclusion holds for all large $k$.
\end{proof}

\begin{corollary} \label{c:compute graph}
We may compute:
\begin{enumerate}
\item\label{i:compute graph}
for all $l\in \Z$, the Stallings graph for $\langle H^1, i_a^l(H^2)\rangle$;
\item\label{i:k}
an integer $k\ge 0$ as in Lemma~\ref{l:k}; and
\item\label{i:m}
for all $\theta\in\X$, $m_b(\theta)$.
\end{enumerate}
\end{corollary}
  
\begin{proof}
\pref{i:compute graph}: By Bass-Serre theory, for $k$ as in Lemma~\ref{l:k}, the Stallings graph for $\langle H^1, i_a^k(H^2)\rangle$ is obtained by attaching at its endpoints a copy of the arc spanning between $T^1$ and $\tau^k(T^2)$ to the Stallings graphs for $[H^1]$ and $[i_a^kH^2]=[H^2]$.

These latter graphs can be computed. Indeed, if $\ti L_1^-$ is an eigenray, then, by definition, $[H^1]$ has as its Stallings graph a component of a stratum of $G$, and otherwise is a circle representing $\langle c_1\rangle$. There is a symmetric argument for $[H^2]$. $\ti L(k)$ spans between $T^1$ and $\tau^k(T^2)$. Hence the desired Stallings graph, for large $k$, is the result of immersing the ends of $\ti L(k)$ into the Stallings graphs and then performing any folding. By Lemma~\ref{l:k}, folding stops when the copy of $\ti\mu$ is the spanning arc.

We see that, for large $k$, $\langle H^1, i_a^k(H^2)\rangle$ is an internal free product. By Remark~\ref{r:compare}, $\Phi_1^s(\langle H^1, i_a^k(H^2)\rangle)=\langle H^1, i_a^{sm_b(\phi)+k}(H^2)\rangle$ 
is also a free product and that its Stallings graph can be computed as above (but perhaps the spanning arc is folded away). Recall (Lemma~\ref{lem:m(phi)}) that $m_b(\phi)\not= 0$.  We note in passing that therefore $[H^1, i_a^l(H^2)]$ is good (Definition~\ref{d:good}).

\pref{i:k}:
For $l=0, 1, 2, \dots$, iteratively start computing Stallings graphs for  $\langle H^1, i_a^l(H^2)\rangle$. When, after folding in $\ti L(l)$, more than two correctly oriented fundamental domains of $\ti A\cap \ti L(l)$ remain in the spanning arc, stop and set $k=l$.

\pref{i:m}: Let $\theta\in \X$ and $m:=m_b(\theta)$. If $\Theta_1(H^1)=H^1$, then by definition, $\Theta_1(H^1, i_a^k(H^2))=(H^1, i_a^{k+m}(H^2))$. Hence, $m$ can be read off by comparing the Stallings graphs for $[\langle H^1, i_a^k(H^2)\rangle]$ and $[\langle H^1, i_a^{k+m}(H^2)\rangle]$ for large enough $k$. The latter, being the Stallings graph for $\theta[ \langle H^1, i_a^k(H^2)\rangle ]$, can be computed by representing $\theta$ as a topological representative $g:G\to G$, applying $g$ to the Stallings graph for $[\langle H^1, i_a^k(H^2)\rangle]$, tightening, and taking the core.
\end{proof}

\begin{corollary}
\begin{enumerate}
\item \label{i:good}
$[H_1, i_a^lH_2]$ is good for all $l\in \Z$.
\item \label{i:no a}
No non-trivial power of $a$ is in $\langle H_1, i_a^lH_2\rangle$.
\end{enumerate}
\end{corollary}
  
\begin{proof}
\pref{i:good} was noted during the proof of Corollary~\ref{c:compute graph}.

\pref{i:no a}: Since $\Theta_1(a)=a$, for the second item it is enough to show that $$a\notin\Theta^l_1(\langle H^1, i_a^l(H^2)\rangle)=\langle H^1, i_a^{l+m}(H^2)\rangle$$ for large $l+m$.
So assume that $k>0$ is as in Lemma~\ref{l:k}. If $a\in \langle H^1, i_a^k(H^2)\rangle$, then there is an immersion of $\ti A$ with image a closed loop into the Stallings graph for $\langle H^1, i_a^k(H^2)\rangle$ and that overlaps the copy of $\ti\mu$ in its intersection with $\ti A$. The immersion crosses this spanning arc at most once. Indeed, otherwise there would be a covering translation of $\ti G$ taking $\ti A$ to itself reversing orientation, but $a$ and $a^{-1}$ are not conjugate; see Remark~\ref{rem:overlap}. Hence the image of $\ti A$ is not a closed loop.
\end{proof}

\begin{corollary}  \label{c:ker Q equivalence} For $\theta \in \X$, $m_b(\theta) = 0 \Longleftrightarrow 
[\sHsub(\ti b)]$ is $\theta$-invariant.
\end{corollary}

\begin{proof}
($\implies$): Suppose $m:=m_b(\theta)=0$. Using Remark~\ref{r:compare} we have 
\begin{align*}
\Theta_1(\sHsub(\ti b))=&\Theta_1(\langle H^1, H^2\rangle)\\
=&\langle \Theta_1(H^1), \Theta_1(H^2)\rangle\\ 
=&\langle H^1, i_a^m(H^2)\rangle\\
=&\langle H^1, H^2\rangle
\end{align*}

($\Longleftarrow$): Suppose $m> 0$, the case for $m<0$ being similar. Choose $l$ so that $k:=ml$ is as in Lemma~\ref{l:k}. We saw in Corollary~\ref{c:compute graph} that the Stallings graph for $[\langle H^1, i_a^{ml}(H^2)\rangle]$ has an arc spanning between Stallings graphs for $[H^1]$ and $[H^2]$ and that the Stallings graph for $[\Theta_1^{l+1}(\langle H^1, H^2 \rangle)]$ is obtained by inserting $m$ copies of a fundamental domain for $\ti A$ into this spanning arc. Since these Stallings graphs are not equal, $\theta([\langle H^1,H^2\rangle])\not=[\langle H^1,H^2\rangle]$.
\end{proof}

\subsection{$\Ker Q$}
Recall (Definition~\ref{d:Q}) the homomorphism $Q=Q^\phi:\X\to\Q^{\cS_2(\phi)}$ given by setting the $b$th coordinate of $Q(\theta)$ equal to $m_b(\theta)/m_b(\phi)$.

\begin{proposition}\label{p:ker Q}
A finite presentation for $\Ker Q$ can be computed. $\Ker Q$ is of type {\sf VF}.  
\end{proposition}

\begin{proof}
We use the notation as in Section~\ref{s:graph}. For each $b\in\cS_2(\phi)$, choose $\ti b$ and replace it with $\Phi_1^k(\ti b)$ and compute $[\sHsub(\ti b)]$ where $k$ is as in Lemma~\ref{l:k}. By Lemma~\ref{c:ker Q equivalence}, $\theta\in \Ker Q$ iff $\theta\in\X$ and $\theta([\sHsub(\ti b)])=[\sHsub(\ti b)]$ for each $b$. Hence $\theta\in\Ker Q$ iff $\theta$ fixes the concatenation of the sequences $\big( \sf J \big)$   and $\big([\sHsub(\ti b_1)], \dots, [\sHsub(\ti b_N)]\big)$ where $(b_1, \dots b_N)$ is an ordering of $\cS_2(\phi)$ and $\sf J$   is as in Definition~\ref{d:X}. This concatenation is an element of $\ecat(\Abig)$ (see Notation~\ref{n:seq}). By Lemma~\ref{l:hat MW},  we can compute a finite presentation for $\Ker Q$ and, by Proposition~\ref{p:vf}, $\Ker Q$ is of type {\sf VF}.
\end{proof}

\subsection{$\Ker \bar Q$}
$\bar Q$ is defined in Definition~\ref{d:pair equivalence}.

\begin{proposition}
A finite presentation for $\Ker\bar Q$ can be computed.
$\Ker \bar Q$ is of type {\sf VF}.  
\end{proposition}

\begin{proof}  
Let $\pi$ denote the quotient map $Q(\X)\to \bar Q(\X)$. Since we have a finite generating set for $\X$, we can compute a finite presentation for the free abelian group $\Ker\pi$. Using Proposition~\ref{p:ker Q}, Lemma~\ref{l:bw}, and 
$$1\to \Ker Q\to \Ker \bar Q\to \Ker \pi\to 1$$
we can compute a finite presentation for $\Ker \bar Q$.

 The above short exact sequence shows that $\Ker\Bar Q$ is an extension of a group of type ${\sf VF}$ (Proposition~\ref{p:ker Q}) by a finitely generated abelian group. It follows from the proposition below that $\Ker \bar Q$ is of type {\sf VF}.
\end{proof}

  The following proposition is from Moritz Rodenhausen's thesis. As far as we know, it is not published and so for the reader's convenience we copy his proof here.

\begin{proposition}[{\cite[Proposition~13.18]{mr:thesis}}]
Suppose that in the short exact sequence $1\to G'\overset{i}{\to} G\overset{\pi}{\to}G''\to 1$, the group $G'$ is of type {\sf VF} and $G''$ is finitely generated abelian. Then $G$ is of type {\sf VF}.
\end{proposition}

\begin{proof}
Suppose first that $G''$ is infinite cyclic. Let $H'$ be a subgroup of some finite index $d$ in $G'$ such that $H'$ is of type {\sf F} . The the intersection $K'$ of all (finitely many) index $d$ subgroups of $G'$ also is of type {\sf F}. Furthermore, the group $K'$ is a characteristic subgroup of $G'$. Let now $t\in G$ be an element such that $\pi(t)$ generates $G''$. We denote by $K$ the subgroup of $G$ generated $t$ and $i(K')$. It has finite index in $G$ and fits into a short exact sequence 
$$1\to K'\to K\to G''\to 1$$
We see that  $K$ is an extension of groups of type {\sf F} and so has type {\sf F}; see \cite[Theorem~7.3.4]{rg:methods}. Hence $G$ is of type {\sf VF}.

The case where $G''$ is isomorphic to $\Z^m$ is proved by induction on $m$. Let $A''$ be an infinite cyclic summand of $G''$, so $G''/A''\cong \Z^{m-1}$. The short exact sequences
$$1\to G'\to \pi^{-1}(A'')\to A''\to 1$$
$$1\to\pi^{-1}(A'')\to G\to G''/A''\to 1$$
and the induction hypothesis completes the proof in case $G''\cong \Z^m$.

For the general case, let $H''$ be a free abelian subgroup of $G''$ of finite index and $H=\pi^{-1}(H'')\subset G$. We obtain a short exact sequence
$$1\to G'\to H\to H''\to 1$$
Hence $H$, and so also $G$, is of type {\sf VF}.
\end{proof}

\end{appendices}

\bibliographystyle{amsalpha}
\bibliography{ref}

\providecommand{\bysame}{\leavevmode\hbox to3em{\hrulefill}\thinspace}
\providecommand{\MR}{\relax\ifhmode\unskip\space\fi MR }
\providecommand{\MRhref}[2]{%
  \href{http://www.ams.org/mathscinet-getitem?mr=#1}{#2}
}
\providecommand{\href}[2]{#2}
\begin{thebibliography}{GJLL98}

\bibitem[BB97]{bb:morse}
Mladen Bestvina and Noel Brady, \emph{Morse theory and finiteness properties of
  groups}, Invent. Math. \textbf{129} (1997), no.~3, 445--470. \MR{1465330}

\bibitem[BFH97]{bfh:tits0}
Mladen Bestvina, Mark Feighn, and Michael Handel, \emph{Laminations, trees, and
  irreducible automorphisms of free groups}, Geom. Funct. Anal. \textbf{7}
  (1997), no.~2, 215--244. \MR{98c:20045}

\bibitem[BFH00]{bfh:tits1}
\bysame, \emph{The {T}its alternative for ${{\rm {O}ut}({F}\sb n)}$. {I}.
  {D}ynamics of exponentially-growing automorphisms}, Ann. of Math. (2)
  \textbf{151} (2000), no.~2, 517--623. \MR{1 765 705}

\bibitem[BFH04]{bfh:tits3}
\bysame, \emph{Solvable subgroups of {${\rm Out}(F\sb n)$} are virtually
  {A}belian}, Geom. Dedicata \textbf{104} (2004), 71--96. \MR{MR2043955}

\bibitem[BFH05]{bfh:tits2}
\bysame, \emph{The {T}its alternative for {${\rm Out}(F_n)$}. {II}. {A}
  {K}olchin type theorem}, Ann. of Math. (2) \textbf{161} (2005), no.~1, 1--59.
  \MR{2150382}

\bibitem[BFH23]{bfh:gersten}
Mladen Bestvina, Mark Feighn, and Michael Handel, \emph{A {M}c{C}ool
  {W}hitehead type theorem for finitely generated subgroups of {${\rm
  Out}(F_n)$}}, Ann. H. Lebesgue \textbf{6} (2023), 65--94.

\bibitem[BH92]{bh:tracks}
Mladen Bestvina and Michael Handel, \emph{Train tracks and automorphisms of
  free groups}, Ann. of Math. (2) \textbf{135} (1992), no.~1, 1--51.

\bibitem[BW11]{bw:fp}
Martin~R. Bridson and Henry Wilton, \emph{On the difficulty of presenting
  finitely presentable groups}, Groups Geom. Dyn. \textbf{5} (2011), no.~2,
  301--325. \MR{2782175}

\bibitem[CL99]{cl:conjugacy}
Marshall~M. Cohen and Martin Lustig, \emph{The conjugacy problem for {D}ehn
  twist automorphisms of free groups}, Comment. Math. Helv. \textbf{74} (1999),
  no.~2, 179--200. \MR{1691946}

\bibitem[Coo87]{co:bcc}
Daryl Cooper, \emph{Automorphisms of free groups have finitely generated fixed
  point sets}, J. Algebra \textbf{111} (1987), no.~2, 453--456. \MR{89a:20024}

\bibitem[Dah16]{fd:conjugacy}
Fran\c{c}ois Dahmani, \emph{On suspensions and conjugacy of hyperbolic
  automorphisms}, Trans. Amer. Math. Soc. \textbf{368} (2016), no.~8,
  5565--5577. \MR{3458391}

\bibitem[Dah17]{fd:more}
\bysame, \emph{On suspensions, and conjugacy of a few more automorphisms of
  free groups}, Hyperbolic geometry and geometric group theory, Adv. Stud. Pure
  Math., vol.~73, Math. Soc. Japan, Tokyo, 2017, pp.~135--158. \MR{3728496}

\bibitem[DFMT]{dfmt:conjugacy}
Fran\c{c}ois Dahmani, Stefano Francaviglia, Armando Martino, and Nicholas
  Touikan, \emph{The conjugacy problem for ${\Out}({F}_3)$}, arXiv:23110410.

\bibitem[DTa]{dt:conjugacy}
Fran\c{c}ois Dahmani and Nicholas Touikan, \emph{Reducing the conjugacy problem
  for relatively hyperbolic automorphisms to peripheral components},
  arXiv:2103.16602.

\bibitem[DTb]{dt:unipotent}
\bysame, \emph{Unipotent linear suspensions of free groups}, arXiv:2305.11274.

\bibitem[FH11]{fh:recognition}
Mark Feighn and Michael Handel, \emph{The recognition theorem for {${\rm
  Out}(F_n)$}}, Groups Geom. Dyn. \textbf{5} (2011), no.~1, 39--106.
  \MR{2763779 (2012b:20061)}

\bibitem[FH18]{fh:CTconjugacy}
\bysame, \emph{Algorithmic constructions of relative train track maps and
  {CT}s}, Groups Geom. Dyn. \textbf{12} (2018), no.~3, 1159--1238. \MR{3845002}

\bibitem[Geo08]{rg:methods}
Ross Geoghegan, \emph{Topological methods in group theory}, Graduate Texts in
  Mathematics, vol. 243, Springer, New York, 2008. \MR{2365352}

\bibitem[Ger84]{sg:whitehead}
S.~M. Gersten, \emph{On {W}hitehead's algorithm}, Bull. Amer. Math. Soc. (N.S.)
  \textbf{10} (1984), no.~2, 281--284. \MR{85g:20051}

\bibitem[Ger87]{sg:fixed}
\bysame, \emph{Fixed points of automorphisms of free groups}, Adv. in Math.
  \textbf{64} (1987), no.~1, 51--85. \MR{879856}

\bibitem[GJLL98]{gjll:index}
Damien Gaboriau, Andre Jaeger, Gilbert Levitt, and Martin Lustig, \emph{An
  index for counting fixed points of automorphisms of free groups}, Duke Math.
  J. \textbf{93} (1998), no.~3, 425--452.

\bibitem[HM11]{hm:axes}
Michael Handel and Lee Mosher, \emph{Axes in outer space}, Mem. Amer. Math.
  Soc. \textbf{213} (2011), no.~1004, vi+104. \MR{2858636}

\bibitem[HM20]{handelMosher:subgroups}
\bysame, \emph{Subgroup decomposition in {${\rm \Out}(F_n)$}}, Mem. Amer. Math.
  Soc. \textbf{264} (2020), no.~1280, vii+276.

\bibitem[Kal92]{sk:gersten}
Sa{\v{s}}o Kalajd{\v{z}}ievski, \emph{Automorphism group of a free group:
  centralizers and stabilizers}, J. Algebra \textbf{150} (1992), no.~2,
  435--502. \MR{93h:20039}

\bibitem[KB02]{kb:boundaries}
Ilya Kapovich and Nadia Benakli, \emph{Boundaries of hyperbolic groups},
  Combinatorial and geometric group theory ({N}ew {Y}ork, 2000/{H}oboken, {NJ},
  2001), Contemp. Math., vol. 296, Amer. Math. Soc., Providence, RI, 2002,
  pp.~39--93. \MR{1921706}

\bibitem[KLV01]{klv:conjugacy}
Sava Krsti\'{c}, Martin Lustig, and Karen Vogtmann, \emph{An equivariant
  {W}hitehead algorithm and conjugacy for roots of {D}ehn twist automorphisms},
  Proc. Edinb. Math. Soc. (2) \textbf{44} (2001), no.~1, 117--141. \MR{1879214}

\bibitem[KS96]{KS:Greenberg}
Ilya Kapovich and Hamish Short, \emph{Greenberg's theorem for quasiconvex
  subgroups of word hyperbolic groups}, Canad. J. Math. \textbf{48} (1996),
  no.~6, 1224--1244. \MR{1426902}

\bibitem[Los96]{jl:conjugacy}
J\'{e}r\^{o}me~E. Los, \emph{On the conjugacy problem for automorphisms of free
  groups}, Topology \textbf{35} (1996), no.~3, 779--808, With an addendum by
  the author. \MR{1396778}

\bibitem[LS01]{ls:book}
Roger~C. Lyndon and Paul~E. Schupp, \emph{Combinatorial group theory}, Classics
  in Mathematics, Springer-Verlag, Berlin, 2001, Reprint of the 1977 edition.
  \MR{2001i:20064}

\bibitem[Lus00]{ml:conjugacy1}
Martin Lustig, \emph{Structure and conjugacy for automorphisms of free groups
  {I}}, {\tt https://www.mpim-bonn.mpg.de/preprints}, 2000.

\bibitem[Lus01]{ml:conjugacy2}
\bysame, \emph{Structure and conjugacy for automorphisms of free groups {II}},
  {\tt https://www.mpim-bonn.mpg.de/preprints}, 2001.

\bibitem[Lus07]{ml:iwip}
\bysame, \emph{Conjugacy and centralizers for iwip automorphisms of free
  groups}, Geometric group theory, Trends Math., Birkh\"{a}user, Basel, 2007,
  pp.~197--224. \MR{2395795}

\bibitem[McC75]{jm:fp}
James McCool, \emph{Some finitely presented subgroups of the automorphism group
  of a free group}, J. Algebra \textbf{35} (1975), 205--213. \MR{0396764}

\bibitem[MKS76]{mks:book}
Wilhelm Magnus, Abraham Karrass, and Donald Solitar, \emph{Combinatorial group
  theory}, revised ed., Dover Publications, Inc., New York, 1976, Presentations
  of groups in terms of generators and relations. \MR{0422434}

\bibitem[Rod]{mr:thesis}
Moritz Rodenhausen, \emph{Centralisers of polynomially growing sutomorphisms of
  free groups}, dissertation, Bonn 2013.

\bibitem[Sel95]{zs:isomorphism}
Z.~Sela, \emph{The isomorphism problem for hyperbolic groups. {I}}, Ann. of
  Math. (2) \textbf{141} (1995), no.~2, 217--283. \MR{96b:20049}

\bibitem[Ser80]{se:trees}
Jean-Pierre Serre, \emph{Trees}, Springer-Verlag, Berlin, 1980, Translated from
  the French by John Stillwell. \MR{82c:20083}

\bibitem[Sta83]{js:folding}
J.~Stallings, \emph{Topology of finite graphs}, Inv. Math. \textbf{71} (1983),
  551--565.

\bibitem[VF21]{vf:integers}
Oswald Veblen and Philip Franklin, \emph{On matrices whose elements are
  integers}, Ann. of Math. (2) \textbf{23} (1921), no.~1, 1--15. \MR{1502588}

\bibitem[Whi36a]{jhcw:certain}
J.~H.~C. Whitehead, \emph{On certain sets of elements in a free group},
  Proc.~London~Math.~Soc. \textbf{41} (1936), 48--56.

\bibitem[Whi36b]{jhcw:equivalent}
\bysame, \emph{On equivalent sets of elements in a free group}, Ann. of Math.
  (2) \textbf{37} (1936), no.~4, 782--800. \MR{1 503 309}

\end{thebibliography}

\end{document}